%% file: HabilSiegfriedBeckus.tex
\documentclass[a4paper,11pt,reqno]{book}
\usepackage{amscd,amssymb,amsmath,latexsym,amsthm,bbm}
\usepackage{mathtools}
\usepackage{multirow}
\usepackage[ansinew]{inputenc}
\usepackage{caption}
\usepackage{enumitem}

\newcommand{\eat}[1]{}

\usepackage{pdfpages}

\usepackage{mathrsfs}

\usepackage[breaklinks=true,colorlinks=true,
linkcolor=blue,urlcolor=blue,citecolor=blue,
bookmarks=true,bookmarksopenlevel=2]{hyperref}

\usepackage{graphicx}
\DeclareMathAlphabet{\mathpzc}{OT1}{pzc}{m}{it}

\usepackage{xcolor}

\definecolor{rot}{RGB}{255,0,0}
\definecolor{gelb}{RGB}{225,225,0}
\definecolor{blau}{RGB}{0,0,255}
\definecolor{grau}{gray}{0.8}

\usepackage{tikz}
\usetikzlibrary{decorations,arrows}
\usetikzlibrary{decorations.pathmorphing}
\usepgflibrary{decorations.pathreplacing} 
\usepackage{pgf}
\usepackage{tikz-3dplot}
\usetikzlibrary{patterns}
\usepackage{tikz-cd}

\usepackage{chngcntr}
\counterwithin{figure}{chapter}

\newtheorem{theorem}{\textbf{Theorem}}[chapter]
\newtheorem{proposition}[theorem]{\textbf{Proposition}}
\newtheorem{lemma}[theorem]{\textbf{Lemma}}
\newtheorem{corollary}[theorem]{\textbf{Corollary}}
\newtheorem{example}[theorem]{\textbf{Example}}
\newtheorem{definition}[theorem]{\textbf{Definition}}

\newtheorem{remark}[theorem]{\textbf{Remark}}

\usepackage{cleveref}

\newcommand{\C}{{\mathbb C}}

\newcommand{\HH}{{\mathbb H}}

\newcommand{\N}{{\mathbb N}}
\newcommand{\Q}{{\mathbb Q}}
\newcommand{\R}{{\mathbb R}}
\newcommand{\T}{{\mathbb T}}
\newcommand{\Z}{{\mathbb Z}}

\newcommand{\Aa}{{\mathcal A}}

\newcommand{\Dd}{{\mathcal D}}
\newcommand{\Ee}{{\mathcal E}}

\newcommand{\Kk}{{\mathcal K}}
\newcommand{\Ll}{{\mathcal L}}
\newcommand{\Mm}{{\mathcal M}}

\newcommand{\Rr}{{\mathcal R}}
\newcommand{\Tt}{{\mathcal T}}

\newcommand{\Ss}{{\mathcal S}}
\newcommand{\Uu}{{\mathcal U}}
\newcommand{\Vv}{{\mathcal V}}

\newcommand{\Zz}{{\mathcal Z}}

\newcommand{\bs}{{\mathscr B}}

\newcommand{\oB}{\overline{B}}

\newcommand{\tepsilon}{{\widetilde{\varepsilon}}}

\newcommand{\inv}{{\mathscr S}}				
\newcommand{\Minv}{{M_\inv}}				
\newcommand{\Orb}{{\mathit Orb}}				
\newcommand{\Stab}{{\mathit Stab}}				
\DeclareMathOperator{\dist}{dist}               

\newcommand{\Del}{{\mathit Del}(r_0,R_0)}				
\newcommand{\DelL}{{\mathit Del}(r_0,R_0,L)}			
\newcommand{\wDel}{{\mathit Del}(r_0,R_0,\sigma)}		
\newcommand{\wPat}{{\mathcal U}(r_0,\sigma)}			
\newcommand{\wInv}{\inv_\sigma}							
\newcommand{\wInvT}{\inv_\sigma^e}						

\newcommand{\Pat}{\mathit{Pat}}							

\newcommand{\OmTabl}{\Omega_{\mathrm{table}}}			

\newcommand{\hull}{\textrm{hull}}		
\newcommand{\tran}{\textrm{tran}}		

\DeclareMathOperator{\supp}{supp}				

\newcommand{\dw}{d_{wD}}						
\newcommand{\wdelt}{\delta_{wD}}				
\newcommand{\dH}{d_H}							
\newcommand{\dJ}{\delta_H}						
\newcommand{\dAG}{d_{\Aa}}						

\newcommand{\Hamo}{{\mathit H_{\alpha,V,\theta}^{\textrm{A}}}}				
\newcommand{\HamoB}{{\mathit H_{\beta,V,\theta}^{\textrm{A}}}}				

\newcommand{\sigAMO}{\sigma_{\alpha,V}^{\textrm{A}}}				

\newcommand{\Hkoha}{{\mathit H_{\alpha,V,\theta}^{\textrm{K}}}}				

\newcommand{\AMO}{{\mathit H_{\alpha,V,0}^{\textrm{A}}}}				
\newcommand{\Koh}{{\mathit H_{\alpha,V,0}^{\textrm{K}}}}				
\newcommand{\Kohbet}{{\mathit H_{\beta,V,0}^{\textrm{K}}}}				

\newcommand{\Kohk}{{\mathit H_{\alpha_k,V,0}^{\textrm{K}}}}				

\newcommand{\Co}{\mathscr{C}}					
\newcommand{\co}{\textbf{c}}					

\newcommand{\spec}{{\mathit spec}}				

\newcommand{\Fib}{{\mathit H_{\phi,V,0}^{\textrm{K}}}}				

\newcommand{\gap}{\mathfrak{g}}				

\newcommand{\strict}{\subseteq_{\text{strict}}} 	
\newcommand{\ind}{\textrm{ind}_{\textrm{rel}}^A} 	

\newcommand{\ol}[1]{\overline{#1}}								
\global\long\def\set#1#2{\left\{  #1~:~#2\right\}  }			

\usepackage{titlesec}
\titleformat{\chapter}[hang]{\huge\bfseries}{\thechapter\hspace{7mm}}{0pt}{\huge\bfseries}

\allowdisplaybreaks

\makeatletter 
\let\orgdescriptionlabel\descriptionlabel 
\renewcommand*{\descriptionlabel}[1]{%
\let\orglabel\label   
\let\label\@gobble   
\phantomsection   
\edef\@currentlabel{#1}%
\let\label\orglabel   
\orgdescriptionlabel{#1}%
} 
\makeatother 

\makeatletter
\newcommand{\pushright}[1]{\ifmeasuring@#1\else\omit\hfill$\displaystyle#1$\fi\ignorespaces}
\newcommand{\pushleft}[1]{\ifmeasuring@#1\else\omit$\displaystyle#1$\hfill\fi\ignorespaces}
\makeatother

\usepackage{geometry}
\usepackage[
acronym,	
toc			
]
{glossaries}

\newglossary[slg]{symbolslist}{syi}{syg}{ }

\protect

\makeglossaries

\newcommand{\HabilTitel}[2]{%
  \thispagestyle{empty}

  \begin{center}

    {\Large Institut für Mathematik\par}
    \vspace{0.8cm}

    {\large Wissenschaftsdisziplin\par}	
    {\large Analysis\par}
    \vspace{1.0cm}

    \vspace*{\stretch{1}}
	{\parindent0cm \rule{\linewidth}{.7ex}}
	\begin{flushright}
		\vspace*{\stretch{1}}
		\sffamily\bfseries\Huge
		#1\\
		\vspace*{\stretch{1}}
	\end{flushright}
	\rule{\linewidth}{.7ex}
    \vspace{1.0cm}
    
    {\Large Habilitationsschrift\par}
    \vspace{0.8cm}
	
	{\Large zur Erlangung des akademischen Grades\par}
	\vspace{0.2cm}	
	{\Large\it doctor rerum naturalium habilitatus\par}
	{\Large (Dr. rer. nat. habil.)\par}
	\vspace{1.2cm}

    {\large eingereicht an der\par}	
    \vspace{0.2cm}
    {\large Mathematisch-Naturwissenschaftlichen Fakult\"at\par}
    {\large der Universit\"at Potsdam\par}
    \vspace{1.5cm}

    {\large eingereicht von\par}
    \vspace{0.2cm}
    {\Large\bfseries #2\par}
    \vspace{1.2cm}

  \end{center}
	\vfill
	
    {\large Datum des Kolloqiums: Potsdam, den 02.03.2026\par}
    {\large Datum der Probevorlesung: Potsdam, den 30.04.2026\par}
  \newpage
  \thispagestyle{empty}

  \vspace*{\stretch{1}}
  \vfill
  \noindent
  \begin{tabular}{@{}ll}
    Dekan: & Prof. Dr. Ralph Gräf \\[0.4cm]
    Gutachter: & Prof. Dr. David Damanik\\
    					& Prof. Dr. Maximilian Lein\\
    					& Prof. Dr. Hermann Schulz-Baldes
  \end{tabular}

  \cleardoublepage
}

\begin{document}

\pagenumbering{roman} 
\HabilTitel
  {The butterflies' effects}						
  {Dr.~rer.~nat. Siegfried Beckus}       			
\fontsize{12}{14}
\selectfont

\clearpage
\thispagestyle{empty}

\section*{Abstract}

This work studies spectral properties of Schrödinger operators in the context of aperiodic order, using weighted Delone sets to explore the interplay between the underlying dynamics and spectral properties. We study parameter-dependent families interpolating between periodic and aperiodic regimes, whose spectra form so-called spectral butterflies. These reflect fractal and self-similar structures of the spectra.

We review existing results, introduce additional examples, and establish new connections between works in the literature. The framework is largely dimension-independent and extends to non-Abelian groups and more general settings.

\vspace{2em}

\section*{Zusammenfassung}

Diese Arbeit studiert spektrale Eigenschaften von Schrö\-dinger\-operatoren im Kontext aperiodischer Ordnung, wobei gewichtete Delone-Mengen verwendet werden, um das Zusammenspiel zwischen der zugrundeliegenden Dynamik und den Spektraleigenschaften zu untersuchen. Wir studieren parameterabhängige Familien, die zwischen periodischen und aperiodischen dynamischen Systemen interpolieren, deren Spektren sogenannte spektrale Schmetterlinge bilden. Diese spiegeln fraktale und selbstähnliche Strukturen der Spektren wider.

Wir fassen bestehende Resultate zusammen, führen zusätzliche Beispiele ein und stellen neue Verbindungen zwischen Arbeiten in der Literatur her. Der entwickelte Rahmen ist weitgehend dimensionsunabhängig und erstreckt sich auf nicht-abelsche Gruppen und allgemeinere Situationen.

\clearpage

\cleardoublepage
\tableofcontents
\markboth{Table of Contents}{Table of Contents}
\clearpage

\markboth{Acknowledgments}{Acknowledgments}
  \include{acknowledgements}

\pagenumbering{arabic}
\setcounter{page}{1}

\chapter{Introduction}
\label{chap:Intro}

This work is devoted to the spectral analysis of Schrödinger-type operators in both discrete and continuous settings. It is based on a series of joint works~\cite{BeBeCo19,BecPinc20,BecPog20,BecEli21,BecHarPog21,BecDev22,BaBeLo24,BecHarPog25,BecTak25,BaBePoTe25,BaBeBiRaTh24,BecBelTho25}, and aims to develop a unified framework for models arising in the context of aperiodic order, in particular those motivated by the theory of quasicrystals. These models are built either from Delone sets or symbolic dynamical systems, and a central goal is to understand the spectral properties of the associated  operators via suitable approximations. This leads naturally to the study of weighted Delone sets and their associated dynamical systems.

\begin{figure}[htb]
    \centering
    \includegraphics[scale=1.15,angle=-90]{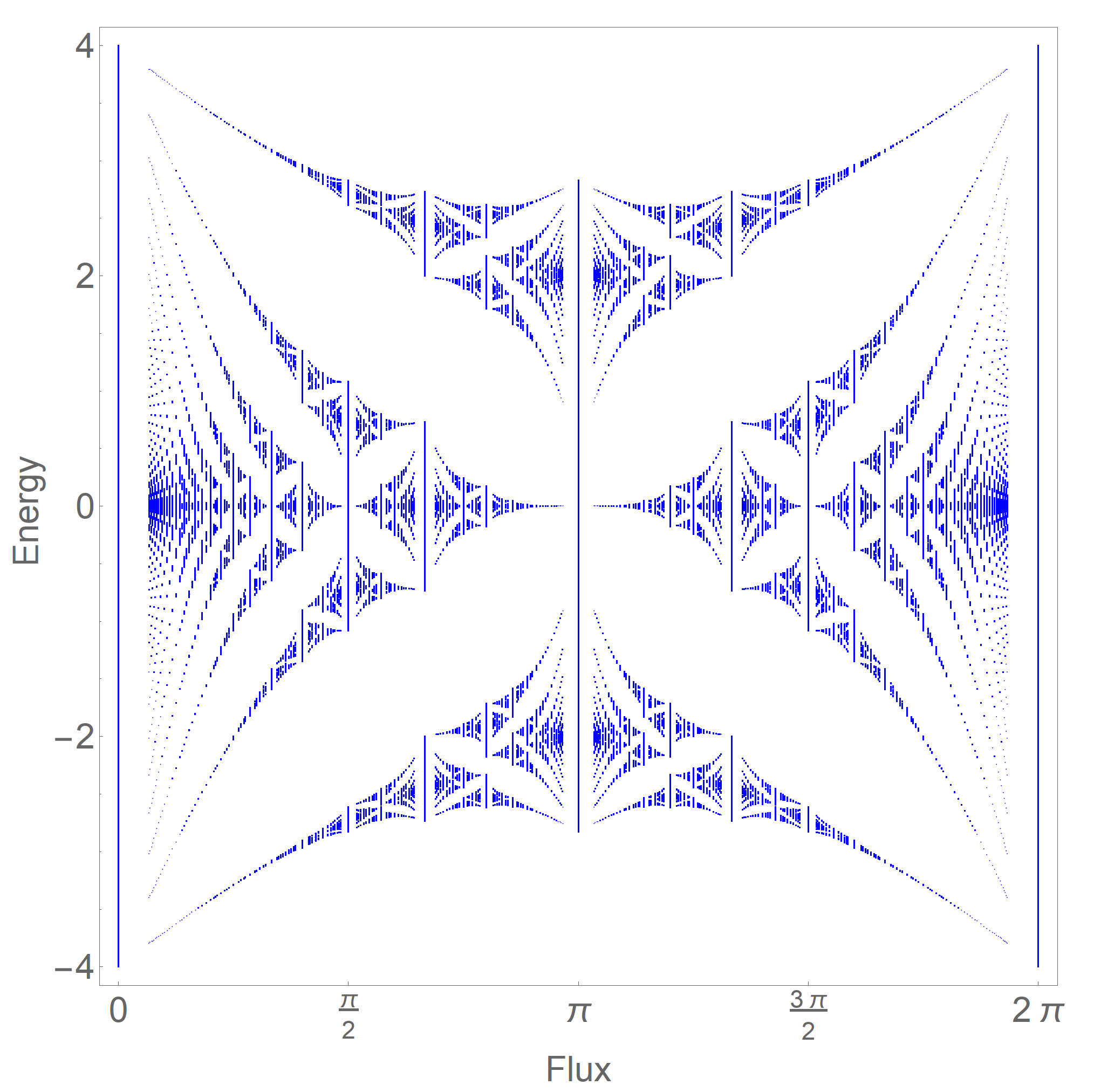}
    \captionsetup{width=0.95\linewidth}
    \caption{The Hofstadter butterfly taken from \cite{HaKaTa16}.}
    \label{fig:Hofstadter-butterfly_intro}
\end{figure}

Many of the models under consideration depend on parameters that interpolate between periodic and non-periodic dynamical systems. For periodic configurations, the spectrum of the corresponding operator can be computed numerically. Plotting the spectrum across a range of parameters reveals striking internal structures that suggest fractal or self-similar properties. The most well-known example is the Hofstadter butterfly (Figure~\ref{fig:Hofstadter-butterfly_intro}), associated with the almost Mathieu operator and closely related to the integer quantum Hall effect~\cite{ThKoNiDe82}. Numerous other spectral "butterflies" have been discovered~\cite{OK85,WilAus90,DeNittis10-thesis,BeHaJi19,BeHaJiZw21,GhaSugToh22,BarExnLip23,CedFilOng23,Nuckolls25}, each encoding complex spectral behavior.

A second prominent example is the Kohmoto butterfly \cite{KKT83,OK85}, see Figure~\ref{fig:Kohmoto-butterfly_intro}. It is connected to Sturmian dynamical systems and serves as a model for one-dimensional quasicrystals. In all these cases, a central question arises: How much spectral information about the aperiodic system is encoded in these periodic approximations? In other words, what can these "butterflies" tell us about the spectral properties of the underlying aperiodic models?

\begin{figure}[htb]
    \centering
    \includegraphics[scale=2.8]{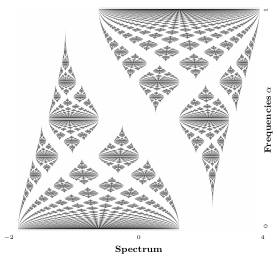}
    \captionsetup{width=0.95\linewidth}
    \caption{The Kohmoto butterfly taken from \cite{Biber22}.}
    \label{fig:Kohmoto-butterfly_intro}
\end{figure}

Several contributions addressing these questions are discussed here. We introduce the relevant concepts, establish connections to other results in the literature, and explain how the dynamical structure governs particular spectral behavior. While many of the examples considered are one-dimensional, the results themselves are often dimension-independent and extend to broader settings even to an ambient non-Abelian group.

The following three questions form the central thematic focus of this Habilitation thesis:
\begin{enumerate}[label=\textbf{(\Alph*)},labelwidth=!,align=left]
\item\label{item:StrictErg} Do there exist strictly ergodic and aperiodic dynamical systems beyond the Abelian setting?
\item\label{item:SpectrInfor} Which spectral properties are governed by the underlying structure -- such as geometry, combinatorics, or dynamics?
\item\label{item:SpectrGaps} Are all spectral gaps that are predicted by the underlying structure actually there?
\end{enumerate}
We briefly outline the central themes and methods addressed in this thesis in connection with the above questions. A more detailed discussion and outlook is provided in Chapters~\ref{chap:Dynam_LR} to~\ref{chap:discussion}.

In the study of aperiodic order, two principal classes of (weighted) Delone sets are commonly considered: cut-and-project sets and substitution systems. Some of the most prominent examples -- such as the Penrose tiling or the Fibonacci sequence -- can be described within both frameworks. In particular, the Sturmian dynamical systems mentioned earlier are one-dimensional cut-and-project sets parametrized by an irrational frequency $\alpha$ and can, under suitable conditions on the continued fraction expansion of $\alpha$, also be described via substitutions; see \cite{Fog02,BaaGri13}.

Substitution-based models are of particular interest in the spectral analysis of associated Schrödinger operators. In various one-dimensional cases, the substitution structure has been used to construct periodic approximations by iterating the substitution map to a fixed periodic configuration.

The standard literature on aperiodic order primarily focuses on dynamical systems over Abelian groups. However, recent developments have extended the construction of cut-and-project sets to certain non-Abelian groups, giving rise to so-called approximate lattices. These are studied in the context of geometric group theory; see \cite{BjHa18,BjHaPo16,BjHaPo17}. Such constructions yield point sets $D$ -- often Delone sets -- in a group $G$, where $G$ acts freely on $D$; that is, the stabilizer of $D$ is trivial. These sets are referred to as aperiodic, and the associated orbit closure (also known as the hull) defines a dynamical system.

Of particular interest are dynamical systems that are both minimal (every orbit is dense) and uniquely ergodic (there exists a unique $G$-invariant probability measure on the hull). Systems satisfying both properties are called \emph{strictly ergodic}. Specific cut-and-project models constructed in \cite{BjHaPo16} are strictly ergodic. Motivated also by spectral applications and the central question~\ref{item:StrictErg}, one aims to identify substitution systems beyond the Abelian setting that define strictly ergodic dynamical systems.

In the Abelian setting, primitive substitutions yield linearly repetitive configurations. For \( G = \R^d \), the class of linearly repetitive Delone sets is strictly ergodic; see~\cite{Dur00,LaPl03}. A Delone set is called linearly repetitive if there exists a constant \( C > 0 \) such that every patch of \( D \) supported in a translated ball of radius \( r \) occurs in every patch supported in any translated ball of radius \( Cr \).

However, when extending the analysis to general amenable and unimodular groups -- such as the lamplighter group -- balls may grow exponentially and therefore fail to form a strong F\o lner sequence. This therefore requires a metric-free criterion. To this end, the notion of tempered repetitive weighted Delone sets was introduced in the joint work~\cite{BecHarPog25}. It is shown there that the associated dynamical systems are strictly ergodic for a broad class of ambient groups. The proof draws on tools from ergodic theory, including the refined quasi-tiling techniques of Ornstein and Weiss~\cite{OrnWei87,PogSch16}.
These results generalize earlier work not only by encompassing general amenable groups and tempered repetitivity, but also by admitting weighted Delone sets that do not necessarily have finite local complexity; see Section~\ref{sec:Repet_WeightDelone}.

While linearly repetitive structures have been thoroughly studied in Abelian settings, non-trivial examples in non-Abelian groups were, until recently, not available. This gap has been addressed in the joint work \cite{BecHarPog21}, where the classical theory is extended to substitution systems on homogeneous Lie groups (Section~\ref{sec:Subst-Approximations}).

Initial attempts to generalize substitutions from the Abelian setting to the Heisenberg group -- by exploiting its representation in~$\R^3$ -- were unsuccessful. The main obstacle lies in the geometric distortion that occurs when shifting geometric objects, such as cubes or balls, in the Heisenberg group. As illustrated in Example~\ref{ex:HeisenbergExample-Subst}, such translations deform the shape in ways that are difficult to control. This challenge was overcome by introducing a systematic approach: substitutions are now defined in terms of a dilation datum and a substitution datum that respects the underlying group structure. This framework enabled the construction of a broad class of strictly ergodic and aperiodic (weighted) Delone sets beyond the Abelian setting, revealing new structural phenomena. A concrete example of a substitution system on the Heisenberg group is presented in~\cite{BecHarPog21}; see also Example~\ref{ex:HeisenbergExample-Subst}. Notably, the same substitution framework also encompasses many classical Abelian substitution systems (Section~\ref{sec:Subst_Examples}).

At this point, the connection between symbolic dynamical systems and Delone sets becomes central. Both classes of dynamical systems have been extensively studied in the literature, and the notion of weighted Delone sets provides a unifying framework for them. This is elaborated in Section~\ref{sec:Weight_Delone}, where we introduce a metric structure suitable for spectral applications discussed below. Although it is folklore that the space of (weighted) Delone sets can be equipped with a metric, we construct a specific metric under which the group action becomes Lipschitz continuous (Definition~\ref{def:LipschitzAction}). The construction is outlined in Section~\ref{sec:Weight_Delone}, and full details are deferred to Appendix~\ref{App:Weight_Delone}. This framework further allows us to characterize convergence of the underlying dynamical systems in terms of convergence of patches for general weighted Delone sets, see Corollary~\ref{cor:Charact_Converg_hulls_Patches}.

Within the author's PhD, the convergence of dynamical systems was characterized in terms of the convergence of spectra of associated operators. This was established in the joint work~\cite{BBdN18} and further extended to the setting of groupoid $C^\ast$-algebras. The proof relied on $C^\ast$-algebraic techniques and is particularly notable as it allows one to reduce spectral convergence to the convergence of the underlying dynamical structure.

This naturally leads to the central question~\ref{item:SpectrInfor}, under which we explore several key insights throughout this work.

In the joint work~\cite{BecPog20}, we proved that the convergence of dynamical systems also induces semicontinuity properties for associated invariant measures. In particular, this implies the convergence of the density of states and the autocorrelation measure, provided the limiting system is uniquely ergodic (see Section~\ref{sec:SemiContMeasure}). One of the main objectives of that work was to explore periodic approximations in higher dimensions, with a particular emphasis on cut-and-project sets. This was achieved by smoothing the characteristic function of the corresponding window. As a result, the convergence of the underlying dynamical systems also entails the convergence of relevant measure-theoretic quantities. It is worth emphasizing that the underlying geometry and combinatorics of the configurations are incorporated into the notion of dynamical convergence.

Having the spectral characterization in mind, a natural question arises: do periodic approximations exist, and if so, how fast do they converge? A review of existing spectral results shows that, for substitution systems, a common approach is to iterate the substitution rule on a fixed periodic configuration. Sufficient criteria for spectral convergence in the Hausdorff metric have been established for one-dimensional substitution systems~\cite{BBdN20} and for block substitutions on~$\Z^d$~\cite{Bec16}.

In the joint work~\cite{BaBePoTe25}, we address the general question of when the iterative application of a substitution to a configuration leads to convergence of the associated dynamical systems (see Section~\ref{sec:Subst-Approximations}). Building on the substitution framework developed in~\cite{BecHarPog21}, we establish a general characterization of convergence in terms of so-called substitution graphs. Notably, we provide a necessary and sufficient condition for the convergence based on the structure of these graphs.

A key advantage of this approach is that convergence can be verified algorithmically by analyzing these finite graphs. Moreover, we establish explicit convergence rates for the dynamical systems (Theorem~\ref{thm:Rate_Convergence_Substitutions}). Combined with recent results~\cite{BeBeCo19,BecTak25} discussed below, this also yields exponential convergence rates for the spectra (see Theorem~\ref{thm:Rate_Convergence_Substitutions_spectrum}).

Although the framework is formulated in symbolic terms, the underlying methods can be adapted to specific Delone sets. This approach is illustrated by the case of the octagonal tiling, as discussed in Section~\ref{sec:Octagonal_Penrose}.

In an ongoing project~\cite{BaBePoTe25-Spec}, the above characterization is used to demonstrate that certain natural choices of periodic approximations may fail to converge. In contrast to the one-dimensional case, new phenomena arise in higher dimensions, where such poorly chosen approximations can introduce spectral defects that alter both the essential spectrum and its Lebesgue measure.
This particularly shows that a more careful choice of periodic approximations is necessary starting from dimension two.

To study such spectral consequences, we focus on dynamically-defined operators (Chapter~\ref{chap:WorldButterflies}). Section~\ref{sec:SpectralEstimates} discusses the joint works~\cite{BeBeCo19,BecTak25} proving particular spectral estimates. More precisely, we established qualitative estimates for the Hausdorff distance between spectra in terms of the distance between the underlying dynamical systems. While spectral convergence was already known from earlier results~\cite{BBdN18} using $C^\ast$-algebraic techniques, these methods did not provide access to explicit convergence rates.

The works~\cite{BeBeCo19} and~\cite{BecTak25} approach this problem from entirely different techniques. The former focuses on symbolic dynamical systems and adapts techniques from~\cite{CoPu12,CoPu15}, including Lipschitz partitions of unity tailored to regions where patches coincide, combined with resolvent estimates. In contrast,~\cite{BecTak25} introduces a unifying method based on approximate eigenfunctions. This approach encompasses classical examples such as the almost Mathieu operator, the Kohmoto model, limit periodic potentials and Schrödinger operators assigned with skew-shifts.

Moreover, \cite{BecTak25} provides new conceptual insight into the origin of the observed square-root behavior in the spectral estimates. Specifically, it stems from the interplay between the amenability of the underlying group and the regularity of the operator coefficients, see Remark~\ref{rem:Optimization-Regularity-Amenable}. It is worth emphasizing that the qualitative behavior of these estimates is optimal, as demonstrated in the context of the Hofstadter butterfly~\cite{BeRa90} and the Kohmoto butterfly~\cite{BecBelTho25}.

These results apply to a broad class of operators and recover known estimates, such as those for the Hofstadter butterfly~\cite{AvrMouSim90} (Section~\ref{sec:Hofstadter}). Moreover, they lead to new spectral bounds for substitution systems on homogeneous Lie groups (Section~\ref{sec:Subst-Approximations}, based on the joint work~\cite{BaBePoTe25}), for the Kohmoto butterfly (Section~\ref{sec:Kohmoto}, based on the joint work~\cite{BecBelTho25}), and for the unitary almost Mathieu operator (Section~\ref{sec:Unitary-AMO}). For the latter, minor generalizations of the proof in~\cite{BecTak25} are required; these are provided in Appendix~\ref{App:SpectralEstimates} for completeness.

These spectral approximation results also allow for more refined structural insights. For instance, the analysis of spectral defects in the Kohmoto butterfly, as presented in Section~\ref{sec:Kohmoto}, is based on~\cite{BecBelTho25}. Numerical simulations in Figure~\ref{fig:Kohmoto-butterfly_intro} suggest that such defects occur at rational frequencies; see also Figure~\ref{fig:Defects-Kohmoto-butterfly} on page~\pageref{fig:Defects-Kohmoto-butterfly}. This was originally pointed out in~\cite[Rem.~3]{BIT91}. A rigorous proof of this observation is provided in~\cite{BecBelTho25}. The result relies not only on spectral convergence but also on a symbolic encoding of the periodic approximations in the Kohmoto model, discussed further below.

While the main examples discussed so far are one-dimensional, the underlying methods extend beyond this setting. In particular, they apply to more general groups such as the lamplighter group (see Section~\ref{sec:Lamplighter}), an amenable group with exponential growth. The details for proving the corresponding spectral estimates -- using the methods of \cite{BecTak25} -- are provided in Appendix~\ref{App:SpectralEstimates_Lampl}.

The previous discussion relates directly to the theme of question~\ref{item:SpectrInfor}. Moreover, we investigated criteria in the joint works~\cite{BecPinc20,BecDev22} linking the spectrum of an operator, viewed as a set, to the growth properties of its generalized eigenfunctions. These results apply to both continuous and discrete settings, including operators defined on Delone graphs; see Section~\ref{sec:EllOp_RiemManif}. A key motivation was the conjecture posed in~\cite[Conj.~9.9]{DevFraPin14}, which asks whether a generalized eigenfunction bounded by the Agmon ground state implies that the corresponding eigenvalue lies in the spectrum. This question was affirmatively answered in~\cite{BecPinc20}.

This result is closely related to so-called Shnol'-type theorems~\cite{Shnol57,Sim81}, where the growth of generalized eigenfunctions is controlled by the underlying geometry. In~\cite{BecDev22}, we provided a unified treatment of both perspectives, combining them via a Weyl sequence argument, suitable cut-off functions, and Caccioppoli-type estimates.

In~\cite{BecEli21}, we also studied the essential spectrum of Schrödinger operators on graphs. Under appropriate geometric conditions, we proved that the essential spectrum coincides with the set of generalized eigenvalues corresponding to bounded generalized eigenfunctions, using again a Shnol'-type argument; see Section~\ref{sec:DiscretOp_generEigenfct}.

We now turn to the final central question~\ref{item:SpectrGaps}, which is discussed in detail in Chapter~\ref{chap:DTMP} and brings us back to the spectral butterflies introduced at the beginning. Building on the previous developments, a natural question arises: How can the spectral approximation schemes discussed above be applied to analyze the spectral properties of the corresponding operators in the aperiodic regime? Furthermore, what insights do these methods offer into the structure of the associated butterflies? As previously indicated, the dynamical approach enables new structural results for the Kohmoto butterfly.

One is particularly interested in the aperiodic systems within the Kohmoto model namely, those operators associated with an irrational frequency. These operators are also referred to as \emph{Sturmian Hamiltonians}, since the underlying dynamical systems are Sturmian whenever the frequency is irrational.

In the joint work~\cite{BaBeLo24}, periodic approximations of irrational frequencies, together with a symbolic encoding of the spectral bands (see Section~\ref{sec:Types_Spectral_Bands}), are used to represent the spectrum as the boundary of a spectral tree (see Section~\ref{sec:SpectralTree}). This construction is inspired by a coding scheme introduced by Raymond~\cite{Raym95} for large coupling constants that control the amplitude of the potential. His approach was revisited and extended in~\cite{BaBeBiRaTh24}, incorporating recent advances from~\cite{BaBeLo24}. A key achievement of~\cite{BaBeLo24} was the extension of this symbolic encoding to all non-zero coupling constants.

Together with the spectral tree representation, these developments ultimately lead to a resolution of the \emph{dry ten Martini problem} (DTMP) for Sturmian Hamiltonians, as detailed in Section~\ref{sec:DTMP_Sturmian}. The DTMP originated in the study of the almost Mathieu operator (the Hofstadter butterfly) and was addressed in 1981 by Kac: \emph{Are all spectral gaps there?} Over the past decades, this question has been extended to other models, including the Sturmian Hamiltonians. A more comprehensive historical and mathematical account is provided in Chapter~\ref{chap:DTMP}.

Spectral gaps are the connected components of the complement of the spectrum of an operator -- visually represented as white regions on each horizontal line in the Kohmoto or Hofstadter butterfly. The set of spectral gaps is countable, and they can be indexed via the associated integrated density of states. In the case of dynamically-defined operators, the underlying dynamical system determines the set of admissible gap labels, as predicted by the Gap Labeling Theorem. With this in mind, Kac's question can be reformulated as follows: For a fixed irrational frequency, does there exist a spectral gap in the spectrum for each admissible gap label?

\begin{figure}[htb]
    \centering
    \includegraphics[scale=0.37]{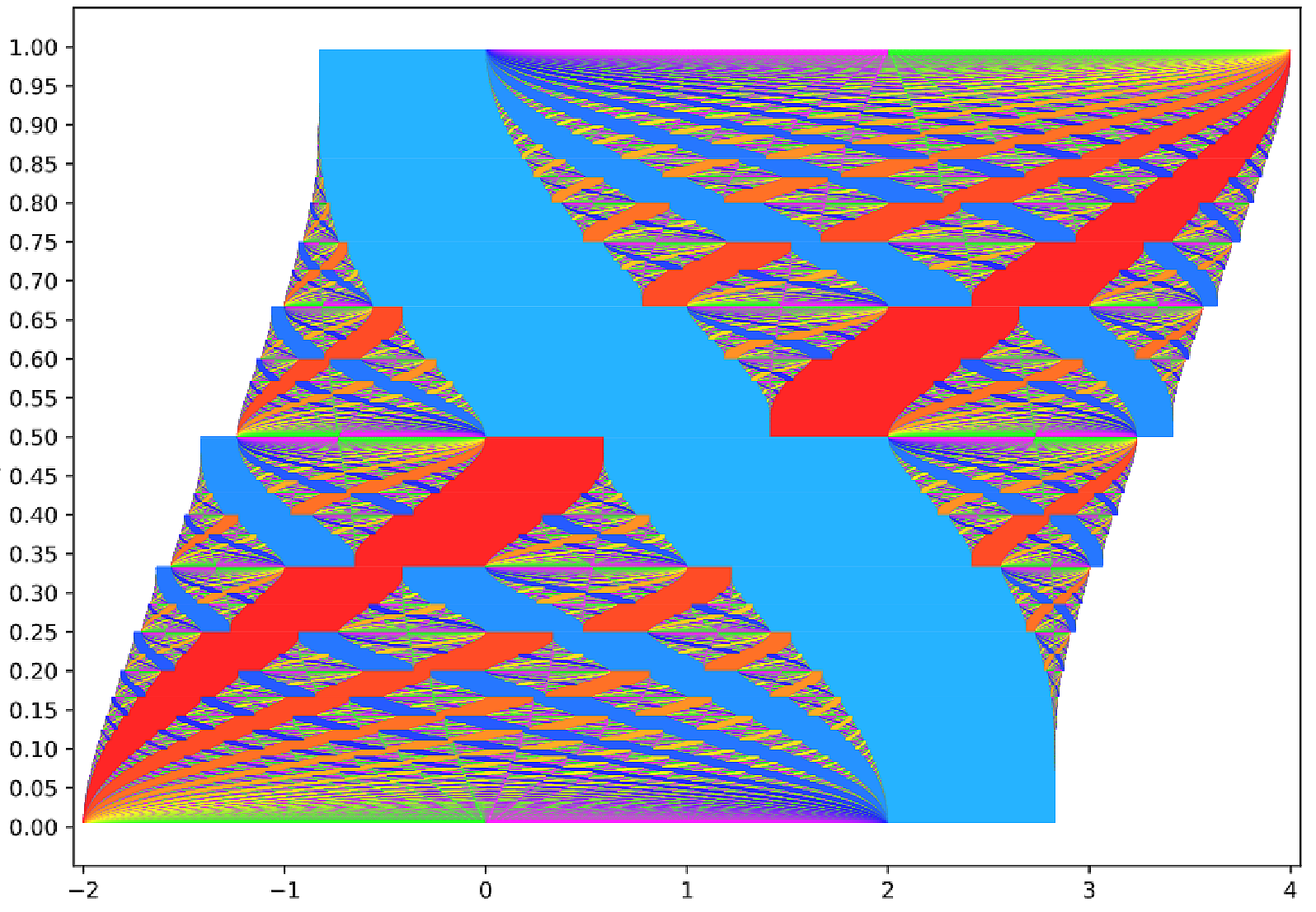}
    \captionsetup{width=0.95\linewidth}
	\caption{This figure was created in collaboration with Ram Band as part of the joint project~\cite{BanBec25}. For each rational value \( \alpha = \tfrac{p}{q} \), spectral gaps are indexed by their gap index, and each index is assigned a distinct color, see a more detailed discussion in Section~\ref{sec:DTMP_Sturmian}. We use the same color scheme as in~\cite{OsaAvr01}. 
	}
    \label{fig:ColoredKohmoto_intro}
\end{figure}

Significant progress towards resolving the original DTMP for the almost Mathieu operator has been made over the past decades~\cite{ChElYu90,Puig04,AvJit10-JEMS,LiYu15}, and most recently by Avila, You and Zhou in~\cite{AviYouZho23}. For the Sturmian Hamiltonians, the DTMP remained open for more than 30 years. A first major advance was achieved by Raymond~\cite{Raym95}, who proved that all gaps are open for sufficiently large coupling constants. For the special case where the frequency equals the inverse of the golden ratio, Damanik and Gorodetski~\cite{DamGor11} showed that all predicted gaps are open for sufficiently small couplings; this model is known as the Fibonacci Hamiltonian. Mei~\cite{Mei14} extended this result to irrational frequencies with eventually periodic continued fraction expansions. Later, Damanik, Gorodetski, and Yessen~\cite{DaGoYe16} demonstrated -- among other remarkable results -- that all spectral gaps are open in the Fibonacci case. 
All of these frequencies satisfy a Diophantine condition, and it remained unclear at that stage how to treat general irrational frequencies beyond this class.

A central difficulty arises in the small coupling regime, where the kinetic term dominates the potential, and spectral bands begin to overlap. In this setting, it was unclear whether a symbolic encoding of the spectrum was still feasible, or how to treat general irrational frequencies in the context of the DTMP. Both challenges were overcome in the joint work~\cite{BaBeLo24}, which provided a complete resolution of the DTMP for Sturmian Hamiltonians. Key innovations in that work include:
\begin{itemize}
  \item a simultaneous analysis of the space of finite continued fraction expansions, rather than focusing on a single irrational frequency;
  \item the use of eigenvalue interlacing arguments to control the positions and overlaps of spectral bands.
\end{itemize}

It is worth emphasizing the consequences of this result: no matter how small the amplitude of the potential term is, all spectral gaps open instantaneously as soon as the amplitude becomes nonzero. As a result, the spectrum becomes a Cantor set. In contrast, when the amplitude vanishes, the spectrum is an interval, and the associated spectral measure is purely absolutely continuous.

The topological indices of the almost Mathieu operator were used to color the Hofstadter butterfly in~\cite{OsaAvr01}. Adopting a similar strategy, we assigned colors to the spectral gaps of the Kohmoto butterfly according to their gap indices, as shown in Figure~\ref{fig:ColoredKohmoto_intro}. This visualization complements Figure~\ref{fig:Kohmoto-butterfly_intro}, where the spectral bands are displayed in black. Further analysis specific to the Kohmoto model is needed and is carried out in the ongoing project~\cite{BanBec25}.

Finally, let us emphasize that the new methods developed in~\cite{BaBeLo24} open the door to further investigations into the fractal nature of the Kohmoto butterfly. In a joint project with Ram Band and Yannik Thomas, we explore whether the Kohmoto butterfly exhibits self-similarity, using the spectral tree as a central tool. As noted at the beginning of this introduction, such questions lie at the heart of the study of spectral butterflies. Similar questions have also been studied in the physics literature, for instance in the context of the Hofstadter butterfly, see e.g.~\cite{SatWil20,Sat25}.

A more detailed discussion and outlook of the previously discussed material is provided in Chapters~\ref{chap:Dynam_LR} to~\ref{chap:discussion}.

The following joint works form the basis of this Habilitation thesis, which are provided in the Chapter~\ref{chap:manuscripts}. In each of them, the contributions were equally shared among the authors.
\footnote{This reflects the standard practice in mathematics, where individual contributions are typically inseparable. We refer the reader to the culture of research statement by the American Mathematical Society \cite{AMS}.}

\begin{enumerate}
\item \cite{BecPinc20}: S.~Beckus and Y.~Pinchover, \textit{Shnol-type theorem for the Agmon ground state}, J. of Spectr. Theory, \textbf{10}, no. 2, 355 -- 377 (2020), DOI: 10.4171/JST/296.
\item \cite{BecPog20}: S.~Beckus and F.~Pogorzelski, \textit{Delone dynamical systems and spectral convergence}, Ergodic Theory Dynam. Systems, \textbf{40} (2020), no. 6, 1510 -- 1544, DOI: 10.1017/etds.2018.116. 
\item \cite{BecHarPog21}: S.~Beckus, T.~Hartnick and F.~Pogorzelski, \textit{Symbolic substitution systems beyond abelian groups}, Preprint, arXiv:2109.15210 (2021)
\item \cite{BaBeLo24}:  R.~Band, S.~Beckus and R.~Loewy,  \textit{The dry ten Martini problem for Sturmian Hamiltonians}, Preprint, arXiv:2402.16703 (2024).
\item \cite{BaBePoTe25}: R.~Band, S.~Beckus, F.~Pogorzelski and L.~Tenenbaum, \textit{Spectral approximation for substitution systems}, Journal d'Analyse Mathématique (2026), DOI: 10.1007/ s11854-026-0445-0.
\item \cite{BecTak25}: S.~Beckus and A.~Takase, \textit{Spectral estimates of dynamically-defined and amen\-able operator families}, J. Spectr. Theory 15 (2025), no. 2, 563 -- 610, DOI: 10.4171/ JST/554.
\end{enumerate}

Further relevant joint works are~\cite{BeBeCo19,BecEli21,BecDev22,BecHarPog25,BaBeBiRaTh24,BecBelTho25}. We also briefly discuss their relations in the detailed discussion in Chapters~\ref{chap:Dynam_LR} to~\ref{chap:discussion}.

\chapter{Manuscripts}
\label{chap:manuscripts}

This chapter contains the following articles:
\begin{enumerate}
\item \cite{BecPinc20}: S.~Beckus and Y.~Pinchover, \textit{Shnol-type theorem for the Agmon ground state}, J. of Spectr. Theory, \textbf{10}, no. 2, 355 -- 377 (2020), DOI: 10.4171/JST/296.
\item \cite{BecPog20}: S.~Beckus and F.~Pogorzelski, \textit{Delone dynamical systems and spectral convergence}, Ergodic Theory Dynam. Systems, \textbf{40} (2020), no. 6, 1510 -- 1544, DOI: 10.1017/etds.2018.116. 
\item \cite{BecHarPog21}: S.~Beckus, T.~Hartnick and F.~Pogorzelski, \textit{Symbolic substitution systems beyond abelian groups}, Preprint, arXiv:2109.15210 (2021)
\item \cite{BaBeLo24}:  R.~Band, S.~Beckus and R.~Loewy,  \textit{The dry ten Martini problem for Sturmian Hamiltonians}, Preprint, arXiv:2402.16703 (2024).
\item \cite{BaBePoTe25}: R.~Band, S.~Beckus, F.~Pogorzelski and L.~Tenenbaum, \textit{Spectral approximation for substitution systems}, Journal d'Analyse Mathématique (2026), DOI: 10.1007/ s11854-026-0445-0.
\item \cite{BecTak25}: S.~Beckus and A.~Takase, \textit{Spectral estimates of dynamically-defined and amen\-able operator families}, J. Spectr. Theory 15 (2025), no. 2, 563 -- 610, DOI: 10.4171/ JST/554.
\end{enumerate}

\chapter{The dynamical and ergodic theoretical perspective}
\label{chap:Dynam_LR}

Throughout this work, $G$ denotes a locally compact, second countable, Hausdorff (lcsc) group. For $g,h\in G$, $gh$ denotes the group multiplication of the two elements. Further, $e\in G$ is the neutral element, i.e. $eg=ge=g$ for all $g\in G$. Then $g^{-1}$ denotes the inverse of $g\in G$, namely $gg^{-1}=g^{-1} g =e$. Following \cite[Prop.~2.A.9, Thm.~2.B.4]{CorHar16} (as well as a discussion in \cite[Sec.~2.1]{BecHarPog25}), there exists a metric $d_G:G\times G\to[0,\infty)$ on the lcsc group $G$ that is
\begin{itemize}
\item {\em continuous} (and so $d_G$ generates the topology of $G$),
\item {\em left-invariant}, i.e. $d_G(gh_1,gh_2) = d_G(h_1,h_2)$ for $g,h_1,h_2\in G$,
\item {\em proper}, i.e. \emph{closed balls} $\ol{B}(g,r) := \set{h\in G}{d_G(g,h)\leq r}$ of radius $r\geq 0$ and center $g\in G$ are compact,
\item {\em locally bounded}, i.e. every $g\in G$ admits a neighborhood with finite diameter with respect to $d_G$.
\end{itemize}
A metric  $d_G$ satisyfing the previous conditions is called {\em adapted}.
We denote by 
\[
B(g,r):= \set{h\in G}{d_G(g,h)< r}
\] 
the {\em open ball} of radius $r\geq 0$ and center $g\in G$. Furthermore, let $B(r):=B(e,r)$ and $\oB(r):=\oB(e,r)$.

The central object in this work are continuous group actions on a compact metric space -- so-called topological dynamical systems --  and the associated operators, see Chapter~\ref{chap:WorldButterflies} for details.
A {\em (topological) dynamical system} is a triple $(Z,G,\tau)$ where 
\begin{itemize}
\item $Z$ is a compact metric space with metric $d$, 
\item $G$ is an lcsc group and 
\item $\tau:G\times Z\to Z$ is a left-continuous action of the group $G$ on $Z$, namely for all $x\in Z$ and $g,h\in G$,
\[
	\tau(e,x)=x
\qquad\textrm{and}\qquad
	\tau\big(g,\tau(h,x)\big) = \tau(gh,x).
\]
\end{itemize}
Whenever the action is clear from the context, we use the short notation $gx=\tau(g,x)$ for $g\in G$ and $x\in Z$.
Like in groups $B(x,r) = \set{y\in Z}{d(x,y)<r}$ is the \emph{open ball} in $Z$ and $\oB(x,r)= \set{y\in Z}{d(x,y)\leq r}$ the \emph{closed ball}.

In general it is enough to know that the group action on $Z$ is continuous. For the spectral application, discussed in Chapter~\ref{chap:WorldButterflies}, one often requires more. For this purpose so-called Lipschitz-continuous actions are of particular interest \cite{BecTak25}.

\begin{definition}
\label{def:LipschitzAction}
Let $(Z,G,\tau)$ be a topological dynamical system and $d$ be a metric on $Z$. Then the dynamical system is called {\em Lipschitz continuous} if there exists a constant $C_\tau>0$ such that
\[
d(gx,gy) \leq \big( C_\tau d_G(e,g) + 1 \big) d(x,y),
	\qquad \text{for all }\; x,y\in Z, \, g\in G.
\]
\end{definition}

In particular, for fixed $g \in G$, the map $\tau(g,\cdot) : Z \to Z$ is Lipschitz continuous, and we require that its Lipschitz constant grows at most linearly with the distance of $g$ from the identity element $e \in G$. 
Note that $C_\tau>0$ is independent of $x,y\in Z$ and $g\in G$.
This assumption is satisfied in most examples considered in this work, see Proposition~\ref{prop:wDel_groupAction_Lipschitz}. The restriction to linear growth is mainly a practical one, as it facilitates explicit estimates of spectral distances. A relaxation of this condition -- such as sublinear growth or merely Hölder continuity of the group action -- will lead to modified, typically weaker, spectral estimates; see e.g. \cite[Thm.~2.4]{BecTak25}.

A subset $Y\subseteq Z$ is called {\em ($G$-)invariant} if $gY := \set{gx}{x\in Y} \subseteq Y$ for all $g\in G$. Particular examples are the orbits of an element $x\in Y$, namely 
\[
\Orb(x) := \set{gx}{g\in G}.
\] 
It is straightforward to check that the orbit closure $Y:=\ol{\Orb(x)}\subseteq Z$ is $G$-invariant, closed (and so compact as $Z$ is compact) and non-empty. Thus, $(Y,G,\tau)$ is again a topological dynamical system (where $\tau$ is restricted to $Y$) -- a \emph{dynamical subsystem} of $(Z,G,\tau)$.

A central object in this work is the set $\inv$ of all {\em dynamical subsystems} of $(Z,G,\tau)$ is defined by
\[
\inv := \set{Y\subseteq Z}{Y \textrm{ invariant, closed and non-empty}}.
\]
As discussed before, we have $\ol{\Orb(x)}\in\inv$ for every $x\in Z$. However, there might be more elements in $\inv$ that are not equal to the orbit closure of a single element $x\in Z$.

Clearly, $\inv\subseteq \Kk(Z) := \{K\subseteq Z \textrm{ compact}\}$ and the latter is naturally equipped with the Hausdorff metric \cite{CastaingValadier77,vMil01,Heij20} defined by
\begin{equation}
\label{eq:Hausdorff_Metric}
\delta_H:\Kk(Z)\times\Kk(Z)\to[0,\infty),\quad
	\delta_H(X,Y) := \max\{ \sup_{x\in X} d(x,Y) , \sup_{y\in Y} d(y,X) \},
\end{equation}
where $d(x,Y) := \inf_{y\in Y} d(x,y)$ denotes the distance from the point $x$ to the compact set $Y$. Since $Z$ is compact, the Hausdorff topology coincides with the Chabauty-Fell topology \cite{Cha50,Fel62}. 
For a detailed discussion of this equivalence, we refer the reader to \cite[Sec.~2.2]{Bec16} and references therein.

\begin{proposition}[\cite{BBdN18}, Prop.~4]
Let $(Z,G,\tau)$ be topological dynamical system and $\delta_H$ be the Hausdorff metric on $\Kk(Z)$ induced by the metric $d$ on $Z$. Then $(\inv,\delta_H)$ is a compact metric space.
\end{proposition}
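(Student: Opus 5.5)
The plan is to use the classical fact that for a compact metric space $(Z,d)$ the hyperspace $(\Kk(Z)\setminus\{\emptyset\},\delta_H)$ is itself a compact metric space (Blaschke selection theorem; see \cite{CastaingValadier77,vMil01}). It then suffices to prove that $\inv$ is a closed subset of $\Kk(Z)\setminus\{\emptyset\}$ with respect to $\delta_H$, since a closed subset of a compact metric space is compact.

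The substance is closedness. Take a sequence $(Y_n)_n$ in $\inv$ with $\delta_H(Y_n,Y)\to 0$ for some nonempty compact $Y\subseteq Z$. We have to show that $Y$ is $G$-invariant. Fix $g\in G$ and $y\in Y$.
\begin{itemize}
\item Since $\delta_H(Y_n,Y)\to0$, we may choose $y_n\in Y_n$ with $d(y_n,y)\to0$.
\item By invariance of $Y_n$, each $g y_n$ lies in $Y_n$.
\item By continuity of $x\mapsto gx$, we get $g y_n\to gy$.
\end{itemize}
Hence
\[
d(gy,Y)\le d(gy,gy_n)+d(gy_n,Y)\le d(gy,gy_n)+\delta_H(Y_n,Y)\longrightarrow 0 .
\]
Since $Y$ is closed, $gy\in Y$, so $Y\in\inv$.

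Only continuity of $\tau(g,\cdot)$ for each fixed $g$ is used, so no uniformity in $g$ is needed, and no Lipschitz assumption enters.

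I do not expect a real obstacle here. The only point needing care is the invocation of compactness of the hyperspace. If one prefers to avoid citing it, I would give a short self-contained argument:
\begin{itemize}
\item Completeness: for a Cauchy sequence $(K_n)$, the candidate limit is the upper limit $\bigcap_N \overline{\bigcup_{n\ge N} K_n}$. Because $Z$ is compact, this set is nonempty and compact, and it is the $\delta_H$-limit.
\item Total boundedness: given a finite $\varepsilon$-net $F$ of $Z$, the nonempty subsets of $F$ form a finite $\varepsilon$-net of $\Kk(Z)\setminus\{\emptyset\}$. Indeed, each $K$ is within $\varepsilon$ of $\{f\in F: d(f,K)<\varepsilon\}$.
\end{itemize}
Completeness together with total boundedness gives compactness, and closedness of $\inv$ then finishes the proof.
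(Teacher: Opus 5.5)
Your proposal is correct, and it is the standard argument behind the cited result. The paper gives no proof of its own and defers to \cite{BBdN18}, where the same scheme is used: the hyperspace is compact (there in the Chabauty--Fell topology, which coincides with the $\delta_H$-topology because $Z$ is compact), and the closed invariant subsets form a closed subset of it.
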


In \cite{Bec16,BBdN18,BBdN20}, it was discovered that equipping $\inv$ with the induced Hausdorff metric, the convergence of dynamical subsystems implies the convergence of the associated spectra. In fact, it turns out that this convergence of dynamical systems is equivalent to the requirement that a suitable class of associated operators has spectrum varying continuously under continuous changes of the underlying dynamical system. This relationship is made more precise in Section~\ref{sec:SpectralEstimates} and forms the starting point of the present work.

Building on this, we show that
\begin{itemize}
  \item the Hausdorff distance of spectra is bounded by the distance between the corresponding dynamical systems, see Section~\ref{sec:SpectralEstimates} based on \cite{BeBeCo19,BecTak25}; 
  \item associated measure-theoretic quantities converge, see Section~\ref{sec:SemiContMeasure} based on \cite{BecPog20};
  \item this framework can be used to construct so-called periodic approximations (Section~\ref{sec:Subst-Approximations} based on \cite{BaBePoTe25}), which in turn allow one to analyze spectral properties of more complex systems, such as quasicrystals, see Section~\ref{sec:Kohmoto} and Chapter~\ref{chap:DTMP} based on \cite{BaBeLo24,BaBeBiRaTh24,BecBelTho25}.
\end{itemize}

A particular focus will be on symbolic dynamical systems defined as follows.

\begin{example}[Symbolic dynamical systems]
\label{ex:SymbDynSyst}
Let $G$ be a discrete and countable group and $\Aa$ be a finite set (called {\em alphabet}) equipped with the discrete topology. Then $\Aa^G := \{\omega:G\to\Aa\}$ equipped with the product topology is a compact metrizable space. A particular metric suitable for the spectral applications is defined later in Section~\ref{subsec:symb_syst_as_weighted_Delone}. The group $G$ acts via the shift on $\Aa^G$, namely, 
\[
\tau:G\times \Aa^G\to\Aa^G,\qquad \tau(g,\omega) := \omega\circ g^{-1},
\]
where $\circ$ denotes the composition of maps.
Thus, $(\Aa^G,G,\tau)$ is a topological dynamical system. Elements of $\inv$ for this dynamical system are also called {\em subshift}.
\end{example}

If, for instance, $G = \Z^d$, then symbolic dynamical systems are used to model solids in which the letters $a \in \Aa$ represent different atomic species positioned on the lattice $\Z^d$. In more general settings, the atoms are not restricted to lattice positions. Instead, solids are modeled by point sets in the ambient space (e.g., $G = \R^d$), where each point is assigned a label (below called a weight) representing its species.

These point sets are not arbitrary: two atoms cannot be arbitrarily close (the set must be uniformly discrete), and there should not be arbitrarily large gaps (the set must be relatively dense). Sets satisfying both conditions are called (weighted) Delone sets, which are introduced and studied in the following subsection.

We equip the space of weighted Delone sets with a specific metric making it a compact metric space and induces the various topologies commonly used in the literature. 
The ambient group (or any subgroup) acts Lipschitz continuously (Proposition~\ref{prop:wDel_groupAction_Lipschitz}) on this space of weighted Delone sets and so defines a suitable class of dynamical systems.
Moreover, we show that this framework also encompasses symbolic dynamical systems, thereby providing a unified perspective.

Before proceeding, we introduce some basic terminology from dynamical systems that will be used throughout this work. 
We are particularly interested in both periodic and non-periodic elements of the dynamical system $(Z, G, \tau)$. To formalize this distinction, we use the concept of the \emph{stabilizer} $\Stab(x)$ of an element $x \in Z$, defined by
\[
\Stab(x) := \set{g \in G}{g x = x}.
\]
We note that the stabilizer is always a subgroup of $G$.

\begin{definition}
\label{def:(non-)periodic}
Let $(Z,G,\tau)$ be a dynamical system. We call $x\in Z$ 
\begin{itemize}
\item \emph{non-periodic} if its stabilizer $\Stab(x)$ is trivial, i.e. $\Stab(x)=\{e\}$. 
\item \emph{periodic} if there is discrete subgroup $\Gamma \subseteq \Stab(x)$ and a relative compact $V\subseteq G$ such that $G:= \bigsqcup_{\gamma\in\Gamma} V\gamma$.
\end{itemize}
If $g\in\Stab(x)\setminus\{e\}$, then we say that $x$ has {\em period $g$}.
\end{definition}

Let us comment on the notion of periodicity. Suppose $\Gamma \subseteq \Stab(x)$ is a discrete subgroup, and let \( V \subseteq G \) be a relatively compact set such that
\[
G = \bigsqcup_{\gamma \in \Gamma} V\gamma,
\]
denoting a disjoint union. Then, for each \( g \in G \), there exist \( v \in V \) and \( \gamma \in \Gamma \) such that
\[
g x = (v \gamma) x = v x,
\]
using that \( \gamma \in \Stab(x) \). Since the action of \( G \) on \( Z \) is continuous and \( V \) is relatively compact (i.e. $\ol{V}$ is compact), it follows that
\[
\overline{\Orb(x)} = \set{g x}{g \in \overline{V}} = \Orb(x).
\]
In particular, the orbit of $X$ is itself close.
Such a condition reflects classical periodicity, as seen in lattice-based models where \( \Gamma \) plays the role of a lattice and \( x \) is invariant under translation by its elements. Note that \( V \) serves as a fundamental domain of the group \( \Gamma \) in \( G \); such a fundamental cell exists if \( \Gamma \) is discrete, closed, and a normal subgroup (see, e.g., \cite[Chap.~5]{HewittRoss1963}).

If $G=\Z$, then it is straightforward to check that if there is a $g\in\Stab(x)\setminus\{0\}\subseteq \Z$, then $x$ is periodic with 
\[
\Gamma:=\set{n\cdot \gamma}{n\in\Z}\qquad 
\textrm{and}\qquad 
V:=\big[0,|\gamma|\big)\cap\Z.
\]
Here $n\cdot \gamma$ denotes the product of these real numbers.
If $G\neq \Z$, this implication is false in general since $x$ can have a period in one direction but not the others. Note that the notion of periodicity varies in the literature if $G\neq \Z$ and our notion is also called strong periodicity.

Another central concept for this work is that of a dynamical system which cannot be decomposed (topologically) into a smaller dynamical subsystem:

\begin{definition}
\label{def:minimal}
A dynamical system $(Z,G,\tau)$ is called \emph{minimal} if every orbit in $Z$ is dense; that is, for all $x \in Z$, the orbit \(\Orb(x) \) is dense in $Z$.
\end{definition}

Note that $(Z,G,\tau)$ is minimal if and only if $\inv = \{Z\}$, namely, it does not contain any other dynamical subsystem than itself. In this work, we are mainly interested in a large, unifying dynamical system $(Z,G,\tau)$ which is not minimal, and we study approximations of minimal dynamical subsystems in the space $(\inv,\delta_H)$.

\section{Semicontinuity of the measures}
\label{sec:SemiContMeasure}

Let $(Z,G,\tau)$ be a topological dynamical system and denote by $\bs(Z)$ the Borel $\sigma$-algebra of $Z$.  
A measure $\mu:\bs(Z)\to[0,\infty)$ is called a \emph{probability measure} if $\mu(Z) = 1$. The set of all probability measures on $Z$ is denoted by $\Mm^1(Z)$.  
Furthermore, a measure $\mu \in \Mm^1(Z)$ is called \emph{$G$-invariant} if
\[
\mu(g^{-1}A) = \mu(A) \qquad \text{for all } A \in \bs(Z), \ g \in G.
\]
If the group $G$ is amenable, then there exists a $G$-invariant probability measure, see e.g. \cite[Chap.~1]{Paterson1988}. We postpone the definition of amenability to Appendix~\ref{App:SpectralEstimates} when we need it specifically.
Denote the set of all $G$-invariant probability measures by
\[
\Mm^1(Z,G) := \set{\mu \in \Mm^1(Z)}{\mu \text{ is } G\text{-invariant}}.
\]
By the Banach-Alaoglu theorem, $\Mm^1(Z,G)$ is compact and metrizable in the weak\(^*\)-topology induced from the duality with $C(Z)$, the space of continuous functions on $Z$.

A measure $\mu \in \Mm^1(Z,G)$ is called \emph{ergodic} if every $G$-invariant subset $A \subseteq Z$ satisfies $\mu(A) \in \{0,1\}$. That is, the measure space $(Z, \bs(Z), \mu)$ cannot be decomposed into smaller (up to $\mu$-null sets) $G$-invariant measurable subsystems. In this sense, ergodicity is the measure-theoretic analog of (topological) minimality.

Note that $\Mm^1(Z,G)$ is a convex set. In particular, the convex combination of two distinct ergodic measures on $(Z,G,\tau)$ is again a $G$-invariant measure, but not ergodic in general.  
Using the Krein-Milman theorem (see e.g. \cite[Thm.~3.15]{Rudin1991}), one can show that $\Mm^1(Z,G)$ is the closed convex hull of its extreme points, which are precisely the ergodic measures on $(Z,G,\tau)$.

In particular, if there exists only one $G$-invariant probability measure $\mu$ on $(Z,G,\tau)$, then this measure is automatically ergodic explaining the following terminology.

\begin{definition}
\label{def:uniquely_ergodic}
We call a dynamical system $(Z,G,\tau)$ \emph{uniquely ergodic} if there exists exactly one $G$-invariant probability measure on $Z$.
\end{definition}

The \emph{support} of a measure $\mu \in \Mm^1(Z,G)$ is defined as
\[
\supp(\mu) := \set{x \in Z}{\mu(f) > 0 \text{ for all } f \in C(Z) \text{ with } f \geq 0 \text{ and } f(x) > 0},
\]
where $\mu(f) := \int_Z f(x) \, \mathrm{d}\mu(x)$.

Let \( Y \in \inv \). Then every measure \( \mu \in \Mm^1(Y,G) \) can be extended to a measure \( \tilde{\mu} \in \Mm^1(Z,G) \) by setting
\[
\tilde{\mu}(A) := \mu(A \cap Y), \quad A \in \bs(Z),
\]
a construction also referred to as the \emph{canonical extension}. Via this embedding, \( \Mm^1(Y,G) \) can be viewed as a compact subset of \( \Mm^1(Z,G) \) with respect to the weak\(^*\)-topology \cite[Cor.~6.5]{Walters1982}.

We note that, since \( Z \) is compact, the weak\(^*\)-topology on \( \Mm^1(Z,G) \) is metrizable. Hence, the collection of compact subsets \( \Kk\big(\Mm^1(Z,G)\big) \) can be equipped with the induced Hausdorff metric or, equivalently, with the Chabauty-Fell topology.

This allows us to study the continuity properties of the map
\[
\Minv \colon \inv \to \Kk\big(\Mm^1(Z,G)\big), \qquad \Minv(Y) := \Mm^1(Y,G).
\]

In general, $\Minv$ is not continuous as can be seen by the following example.

\begin{example}
Consider the dynamical system $(\Aa^\Z, \Z, \tau)$ where $\Aa = \{a, b\}$. For a finite word 
$v = a_1 a_2 \ldots a_n \in \Aa^n$ (with $a_i \in \Aa$) of length $n \in \N$ and $m \in \N$, define the $m$-times concatenation of $v$ by
\[
v^m = \underbrace{vv \ldots v}_{m \text{ times}} \in \Aa^{mn}.
\]
Moreover, let $v^\infty \in \Aa^\Z$ be the two-sided infinite concatenation of the word $v$, whose first letter of $v$ is at the origin $0 \in \Z$. Then $v^\infty$ has a finite orbit and so is periodic.

Let $\omega,\rho \in \Aa^\Z$ be defined by
\[
\omega(n) := 
\begin{cases} 
    a, & n < 0, \\
    b, & n \geq 0,
\end{cases}
\qquad
\rho(n) := 
\begin{cases} 
    b, & n < 0, \\
    a, & n \geq 0,
\end{cases}
\qquad 
n\in\Z.
\]
Consider the associated subshift 
\[
\Omega := \overline{\Orb(\omega)} \cup \overline{\Orb(\rho)} = \Orb(\omega) \cup \Orb(\rho) \cup \{a^\infty, b^\infty\}.
\] 
Any ergodic measure $\mu$ on $\Omega$ is supported on an invariant subset of $\Omega$. The only minimal invariant subsets of $\Omega$ are $\{a^\infty\}$ and $\{b^\infty\}$ with associated ergodic measures $\delta_{a^\infty}$ and $\delta_{b^\infty}$. Here
\[
\delta_\omega: B(\Aa^\Z) \to \{0, 1\}, \quad \delta_\omega(A) = 
\begin{cases} 
    1, & \omega \in A, \\
    0, & \omega \notin A.
\end{cases}
\]
Thus, the invariant measures on $\Omega$ are given by the convex hull of these measures, namely
\[
\Mm^1(\Omega, \Z) = \set{t \cdot \delta_{a^\infty} + (1 - t) \cdot \delta_{b^\infty}}{t \in [0, 1]}.
\]
For two monotone increasing sequences $(m_k)_{k \in \N}, (n_k)_{k \in \N} \subset \N$ tending to infinity, define the periodic sequence
$\omega_k = (a^{m_k} b^{n_k})^\infty \in \Aa^\Z$ and its subshift $\Omega_k:=\Orb(\omega_k)\in\inv$ is periodic (it has exactly $n_k+m_k$ elements). Since $m_k\to\infty$ and $n_k\to\infty$, it is straightforward to show that $\lim_{k\to\infty}\Omega_k=\Omega$ in $\inv$ using \cite[Cor.~5.5]{BBdN20}, see also Proposition~\ref{prop:Conv_Symb-Dyn-Syst} below. The subshift $\Omega_k$ is uniquely ergodic (since it is periodic) and its ergodic measure is given by the uniform distribution
$$
\mu_k := \frac{1}{n_k+m_k} \sum_{\omega\in\Omega_k} \delta_\omega.
$$
Choose for instance $m_k:=2^k$ and $n_k:=k$. Since $(m_k)$ escapes to infinity much faster than $(n_k)$, the measure $\mu_k$ is mainly supported in a neighborhood of $a^\infty\in\Aa^\Z$. Thus, it is straightforward to check that $(\mu_k)_{k\in\N}$ converge to $\delta_{a^\infty}$ in the weak\(^*\)-topology of $\Mm^1(\Aa^\Z,\Z)$. Thus, 
\[
\lim_{k\to\infty} \Mm^1(\Omega_k,\Z) =\{\delta_{a^\infty}\} \subsetneq \Mm^1(\Omega,\Z)
\] 
follows proving that $\Minv$ is not continuous at $\Omega\in\inv$.
In general, we see that any accumulation point of measures is an invariant measure on the limiting dynamical system (a semi-continuity of the measures, see Theorem~\ref{thm:SemiContinuity_measures}) but as in this example the convex set of invariant measures is not converging in general. For the particular example described before, it is straightforward to check that any $\mu\in\Mm^1(\Omega,\Z)$ can be approximated by the invariant measures of $\Omega_k$ for a suitable choice of $(m_k)$ and $(n_k)$.
If for instance $m_k=n_k=k$ for $k\in\N$, then 
\[
\lim_{k\to\infty} \Mm^1(\Omega_k,\Z) =\left\{ \tfrac{1}{2}\delta_{a^\infty} + \tfrac{1}{2}\delta_{b^\infty} \right\} \subsetneq \Mm^1(\Omega,\Z).
\] 
Thus, $(\mu_k)$ is not even converging to an ergodic measure, in general. Thus, the range of $\Minv$ in $\Kk\big(\Mm^1(Z,G) \big)$ is not closed.
\end{example}

Keeping the previous example in mind, we show that $\Minv$ is semi-continuous. In order to do so, we recall some basic facts for the Hausdorff metric.

Let $(X,d_X)$ be a compact metric space, and let
\[
\Kk(X) := \set{A \subseteq X }{ A \text{ compact}}
\]
denote the collection of all non-empty compact subsets of \( X \).  
In eq.~\eqref{eq:Hausdorff_Metric}, the Hausdorff metric \( \delta_H \) on \( \Kk(X) \) is defined in terms of the underlying metric \( d_X \).  
The topology induced by the Hausdorff metric \( \delta_H \) on \( \Kk(X) \) coincides with the Chabauty-Fell topology; see, e.g., \cite[Sec.~2.2]{Bec16} and references therein.

The Chabauty-Fell topology \cite{Cha50,Fel62} is generated by the sets
\begin{align*}
\Uu(F) &:= \set{A \in \Kk(X)}{A \cap F = \emptyset}, \\
\Vv(O) &:= \set{A \in \Kk(X)}{A \cap O \neq \emptyset},
\end{align*}
where \( F \subseteq X \) is compact and \( O \subseteq X \) is open.

Following \cite[Lem.~1.11.2]{vMil01}, a sequence \( (A_n)_{n \in \N} \) in \( \Kk(X) \) converges to \( A \in \Kk(X) \) in the Hausdorff metric if and only if
\begin{equation}
\label{eq:char_conv_Hausdorff_metric}
A \subseteq \liminf_{n \to \infty} A_n \subseteq \limsup_{n \to \infty} A_n \subseteq A,
\end{equation}
where
\begin{align*}
\liminf_{n \to \infty} A_n &:= \set{x \in X}{
\begin{array}{l}
\text{for each } n \in \N, \text{ there exists } x_n \in A_n \\
\text{such that } \lim_{n \to \infty} x_n = x
\end{array}
}, \\
\limsup_{n \to \infty} A_n &:= \set{x \in X}{
\begin{array}{l}
\text{there exists a subsequence } (n_k)_{k \in \N}, \text{ and for each } k \in \N, \\
\text{there is an element } x_{n_k} \in A_{n_k} \text{ such that } \lim_{k \to \infty} x_{n_k} = x
\end{array}
}.
\end{align*}
The inclusion \( \liminf_{n \to \infty} A_n \subseteq \limsup_{n \to \infty} A_n \) always holds.  
Moreover, we have
\[
\limsup_{n \to \infty} A_n = \bigcap_{n \in \N} \overline{ \bigcup_{k \geq n} A_k },
\]
see, e.g., \cite[Eq.~7.15]{Heij20}.

Let $T$ be a metric space and $f:T\to\Kk(X)$. We call $f$ {\em upper semi-continuous} (respectively {\em lower semi-continuous}) at $t_0\in T$, if for all sequences $(t_n)_{n\in\N}\subseteq T$ converging to $t_0$, we have $ \limsup_{n\to\infty}f(t_n)\subseteq f(t_0)$ (respectively $f(t_0) \subseteq \liminf_{n\to\infty}f(t_n)$).

We use these considerations for the compact space $X=\Mm^1(Z,G)$ equipped with the weak\(^*\)-topology. We do not choose here a particular metric but note that since $Z$ is compact, the weak\(^*\)-topology on $\Mm^1(Z,G)$ is compact, second countable and Hausdorff. Thus, this space is metrizable. We are in particular interested in the compact, convex subsetes $\Mm^1(Y,G)$ of $\Mm^1(Z,G)$ for $Y\in\inv$. We obtain the following semi-continuity results for the map $\Minv$ proven in the joint work \cite[Thm.~I]{BecPog20}.

\begin{theorem}
\label{thm:SemiContinuity_measures}
Let $(Z,G,\tau)$ be a dynamical system. Then  the map 
\[
\Minv:\inv\to\Kk\big(\Mm^1(Z,G)\big), \qquad \Minv(Y):= \Mm^1(Y,G),
\]
is upper semi-continuous, namely if $\lim\limits_{n\to\infty}Y_n=Y$ in $\inv$, then $\limsup_{n\to\infty} \Mm^1(Y_n,G)\subseteq \Mm^1(Y,G)$ is a compact subset.
\end{theorem}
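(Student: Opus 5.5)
We take $\mu\in\limsup_{n\to\infty}\Mm^1(Y_n,G)$ and show that $\mu\in\Mm^1(Y,G)$. Compactness of the $\limsup$ is automatic: $\limsup_{n\to\infty} A_n=\bigcap_{n}\overline{\bigcup_{k\ge n}A_k}$ is a closed subset of the compact metrizable space $\Mm^1(Z,G)$.

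By definition of the $\limsup$, there is a subsequence $(n_k)$ and measures $\mu_k\in\Mm^1(Y_{n_k},G)$ with $\mu_k\to\mu$ weak$^*$, where each $\mu_k$ is viewed in $\Mm^1(Z,G)$ via the canonical extension. Since $\Mm^1(Z,G)$ is weak$^*$-closed, $\mu$ is a $G$-invariant probability measure on $Z$. Indeed, for $f\in C(Z)$ and $g\in G$, the function $f\circ\tau(g,\cdot)$ is continuous, so $\mu(f\circ g)=\lim\mu_k(f\circ g)=\lim\mu_k(f)=\mu(f)$, and the Riesz representation theorem gives invariance.

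It therefore remains to show that $\mu$ is concentrated on $Y$, that is $\mu(Z\setminus Y)=0$, equivalently $\supp(\mu)\subseteq Y$. Then $\mu$ restricts to an element of $\Mm^1(Y,G)$ whose canonical extension is $\mu$. This support localisation is the only real step. Take $x\notin Y$ and set $r:=d(x,Y)>0$. Choose a continuous $f\ge0$ with $f(x)>0$ and $\supp f\subseteq B(x,r/2)$, for instance $f(z)=\max\{0,\,r/2-d(z,x)\}$. Since $\delta_H(Y_{n_k},Y)\to0$, there is $k_0$ with $\delta_H(Y_{n_k},Y)<r/2$ for $k\ge k_0$. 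Every point of $Y_{n_k}$ then lies within $r/2$ of $Y$, so it lies outside $B(x,r/2)$, because $d(x,Y)=r$ and the triangle inequality would otherwise give $d(x,Y)<r$. Hence $f$ vanishes on $Y_{n_k}$, and $\mu_k(f)=\int_{Y_{n_k}}f\,d\mu_k=0$ for $k\ge k_0$. Passing to the limit yields $\mu(f)=0$, so $x\notin\supp(\mu)$.

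Thus $\supp(\mu)\subseteq Y$. Since $Z$ is a compact metric space, the complement of the support is a $\mu$-null open set (it is a countable union of such balls by second countability), so $\mu(Z\setminus Y)=0$. Equivalently, one can use the monotone approximation $\mathbbm{1}_{Z\setminus Y}=\lim_m \min\{1,\,m\,d(\cdot,Y)\}$ together with the same vanishing argument applied to $\min\{1,m\,d(\cdot,Y)\}\cdot\mathbbm 1_{\{d(\cdot,Y)>1/m\}}$-type cutoffs. Either way, $\mu\in\Mm^1(Y,G)$, which proves the claimed upper semi-continuity. The main point requiring care is precisely this passage from Hausdorff convergence $Y_{n_k}\to Y$ to the vanishing of the limit measure off $Y$; the remaining steps are routine weak$^*$-closedness arguments.
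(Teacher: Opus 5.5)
Your argument is correct and follows the paper's proof. The steps match: compactness of the $\limsup$ as a closed subset of $\Mm^1(Z,G)$, a weak$^*$-convergent sequence $\mu_k\in\Mm^1(Y_{n_k},G)$, invariance of the limit, and the support localisation $\supp(\mu)\subseteq Y$, which the paper only cites from \cite[Thm.~I]{BecPog20} while you prove it directly with the bump function $\max\{0,r/2-d(\cdot,x)\}$ and the Hausdorff bound.

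The only blemish is your ``equivalent'' variant: its indicator factor makes the test function discontinuous, so weak$^*$ convergence cannot be applied to it. The plain cutoff $\min\{1,m\,d(\cdot,Y)\}$ does work, since its $\mu_k$-integral is at most $m\,\delta_H(Y_{n_k},Y)\to 0$, and in any case your main route via $\mu$-null neighbourhoods and second countability is already complete.
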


Since the presentation here deviates from the one in \cite[Thm.~I]{BecPog20}, let us shortly explain the proof.

\begin{proof}
First note that the set $\limsup_{n\to\infty} \Mm^1(Y_n,G)$ is clearly compact as an intersection of compact sets. It is also straightforward to see that this set coincides with the set of measures $\mu\in\Mm^1(Z,G)$ that are limit points of arbitrary sequences $\mu_k\in\Mm^1(Y_k,G), k\in\N$. It is worth emphasizing that $(\mu_k)_{k\in\N}\subseteq \Mm^1(Z,G)$ is not necessarily convergent in the weak\(^*\)-topology. However, since $\Mm^1(Z,G)$ is compact, it admits a limit point. That this limit point defines an invariant measure in $\Mm^1(Y,G)$ is proven \cite[Thm.~I]{BecPog20}. This relies on the principle that the support of the measure must be contained in the limiting set $Y$ if $\lim_{n\to\infty}Y_n=Y$.
\end{proof}

The reader is also referred to Proposition~\ref{prop:Semi-cont-Spectrum_app} and Remark~\ref{rem:UpperSemiCont-Spectrum} for analogous semi-continuity results of the spectrum.

The map $\Minv$ is even continuous at the points where $Y\in\Mm^1(Z,G)$ is uniquely ergodic, see \cite[Thm.~I]{BecPog20}:

\begin{corollary}
\label{cor:ContinuityMeasures_UniqueErgod}
Let $(Z,G,\tau)$ be a topological dynamical system. Then the map
\[
\Minv:\inv\to\Kk\big(\Mm^1(Z,G)\big), \qquad \Minv(Y):= \Mm^1(Y,G),
\]
is continuous in each $Y\in\inv$ that is uniquely ergodic. Specifically, if $Y$ is uniquely ergodic and $\lim\limits_{n\to\infty}Y_n=Y$ in $\inv$, then
\[
\lim_{n\to\infty}\Mm^1(Y_n,G) = \limsup_{n\to\infty}\Mm^1(Y_n,G) = \Mm^1(Y,G).
\]
\end{corollary}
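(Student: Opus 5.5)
The plan is to combine the upper semi-continuity from Theorem~\ref{thm:SemiContinuity_measures} with the fact that each $\Mm^1(Y_n,G)$ is non-empty, and then use the characterization~\eqref{eq:char_conv_Hausdorff_metric} of Hausdorff convergence. Write $\Mm^1(Y,G)=\{\mu\}$ for the unique invariant measure on $Y$.

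\textbf{Step 1 (upper bound).} By Theorem~\ref{thm:SemiContinuity_measures}, $\limsup_{n\to\infty}\Mm^1(Y_n,G)\subseteq\Mm^1(Y,G)=\{\mu\}$.

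\textbf{Step 2 (limsup is non-empty).} I would check that $\Mm^1(Y_n,G)\neq\emptyset$ for every $n$. If $G$ is amenable, this follows from the existence of invariant probability measures on the compact system $(Y_n,G,\tau)$, as recalled at the beginning of Section~\ref{sec:SemiContMeasure}. If $G$ is not amenable, I would read the statement under the implicit assumption that the sets $\Mm^1(Y_n,G)$ are non-empty, since $\Minv$ is meant to take values in $\Kk(\Mm^1(Z,G))$, i.e.\ in non-empty compact sets. Pick $\mu_n\in\Mm^1(Y_n,G)$. The space $\Mm^1(Z,G)$ is compact and metrizable, so $(\mu_n)$ has an accumulation point, and this point lies in $\limsup_{n}\Mm^1(Y_n,G)$. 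Hence the limsup is non-empty, and together with Step 1 it equals $\{\mu\}$.

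\textbf{Step 3 (liminf contains $\mu$).} Let $(\mu_n)$ be any choice with $\mu_n\in\Mm^1(Y_n,G)$. Every subsequence of $(\mu_n)$ has, by compactness, a further convergent subsequence. The limit of that further subsequence lies in the limsup, so by Step 2 it equals $\mu$. A sequence in a compact metric space whose subsequences all have sub-subsequences converging to $\mu$ converges to $\mu$. Thus $\mu_n\to\mu$, so $\mu\in\liminf_{n}\Mm^1(Y_n,G)$, which gives
\[
\Mm^1(Y,G)\subseteq\liminf_{n\to\infty}\Mm^1(Y_n,G)\subseteq\limsup_{n\to\infty}\Mm^1(Y_n,G)\subseteq\Mm^1(Y,G).
\]

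\textbf{Step 4 (conclusion).} By~\eqref{eq:char_conv_Hausdorff_metric}, this chain of inclusions shows that $\Mm^1(Y_n,G)\to\Mm^1(Y,G)$ in the Hausdorff metric on $\Kk(\Mm^1(Z,G))$. The same argument works along any sequence $Y_n\to Y$, so $\Minv$ is continuous at $Y$.

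The only real obstacle is the non-emptiness in Step 2, which relies on amenability or on the implicit assumption above. Everything else follows by a standard compactness argument.
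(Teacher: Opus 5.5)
Your proof is correct and follows the paper's argument. You combine the upper semi-continuity of Theorem~\ref{thm:SemiContinuity_measures} with a compactness argument showing that every choice $\mu_n\in\Mm^1(Y_n,G)$ converges weak$^*$ to the unique invariant measure on $Y$, and then conclude via~\eqref{eq:char_conv_Hausdorff_metric}. The paper phrases the compactness step by contradiction and cites the support lemma of \cite{BecPog20} directly; it also leaves implicit the non-emptiness of $\Mm^1(Y_n,G)$, which you rightly flag.
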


\begin{remark}
Note that in Corollary~\ref{cor:ContinuityMeasures_UniqueErgod} the convergence $\lim_{n\to\infty}Y_n=Y$ in $\inv$ together with the unique ergodicity of $Y$ imply that the limit $\lim_{n\to\infty}\Mm^1(Y_n,G)$ exists in $\Kk\big(\Mm^1(Z,G)\big)$.
\end{remark}

\begin{proof}
In order to prove continuity of the map $\Minv$ at $Y\in\inv$, fix an arbitrary sequence $Y_n\in\inv, \ n\in\N,$ such that $\lim_{n\to\infty}Y_n=Y$. 
By Theorem~\ref{thm:SemiContinuity_measures}, we have 
\[ 
\limsup_{n\to\infty}\Mm^1(Y_n,G) \subseteq \Mm^1(Y,G).
\] 
Thus, \cite[Lem~1.11.2]{vMil01} implies that it suffices to prove 
\[
\Mm^1(Y,G) \subseteq \liminf_{n\to\infty}\Mm^1(Y_n,G), 
\]
see also eq.~\eqref{eq:char_conv_Hausdorff_metric}. Specifically, we have to show that every sequence of measures $\mu_n\in \Mm^1(Y_n,G)$ converge in the weak\(^*\)-topology to $\mu\in\Mm^1(Y,G)$ where $\mu$ is the unique invariant measure on the dynamical system $(Y,G,\tau)$. 
Assume towards contradiction that this is not the case. Then there is a subsequence $\mu_{n_k}\in \Mm^1(Y_{n_k},G)$ that do not converge to $\mu$. By compactness of $\Mm^1(Z,G)$, there is no loss of generality in assuming that the limit $\lim_{k\to\infty}\mu_{n_k}=:\nu\in\Mm^1(Z,G)$ exists. Then \cite[Lem.~2.1]{BecPog20} implies that the support $\supp(\nu)$ is contained in $Y$ since $\lim_{n\to\infty}Y_n=Y$. Hence, $\nu\in\Mm^1(Y,G)$ and since $Y$ is uniquely ergodic, we conclude $\nu=\mu$, a contradiction.
\end{proof}

In the upcoming sections, we discuss certain classes of dynamical systems admitting suitable (periodic) approximations. The results from this section then imply convergence -- or, in some cases, semi-continuity -- of the corresponding invariant measures. 

This has further implications for the convergence of two central objects: the \emph{autocorrelation measure} and the \emph{density of states} of an associated operator. We briefly outline their relevance in the literature and refer to the joint work~\cite{BecPog20} for precise definitions and formal statements concerning the continuity properties of these quantities.

\begin{itemize}
\item The \emph{autocorrelation measure} was developed to provide a rigorous mathematical framework for diffraction experiments. The discovery of quasicrystals by Dan Shechtman \cite{SBGC84}, for example, was based on a diffraction experiment and relied on the crystallographic restriction theorem; see e.g. \cite[Sec.~3.1]{BaaGri13}. 

These solids are modeled by point sets in space that represent the positions of atomic nuclei. It is assumed that no two points can be arbitrarily close to each other, and that there are no arbitrarily large holes in the material -- 	a condition formalized by the notion of \emph{Delone sets}. By distinguishing different atomic species using colors (or letters), one obtains \emph{weighted Delone sets}, which are formally introduced in Section~\ref{sec:Weight_Delone}.

It has turned out that diffractive properties of Delone sets are reflected in the associated dynamical systems; see, for instance, the non-exhaustive list of references \cite{Dwo93,Sch00,LeMoSo02,BaLe04,BaaMoo04,Gou05,LeStr09,Queff10,BaaGri13} and references therein. More recently, a non-commutative spherical diffraction theory has been developed for so-called regular model sets \cite{BjHa18,BjHaPo16,BjHaPo17}, a special class of cut-and-project sets.

Assuming the conditions of Corollary~\ref{cor:ContinuityMeasures_UniqueErgod}, the autocorrelation measure converges in the weak-$\ast$ topology whenever the underlying dynamical systems converge to a uniquely ergodic dynamical system, see~\cite[Thm.~V.A]{BecPog20} for details.

\item A similar convergence result holds for the \emph{density of states measure} (DOS) associated with a random operator defined over dynamical systems. A particular instance of such operators are the \emph{dynamically-defined operators} introduced in Section~\ref{sec:SpectralEstimates}, considered together with an invariant measure on the corresponding dynamical system.

One is typically interested in the \emph{integrated density of states} (IDS), which is the cumulative distribution function of the DOS. In quantum mechanical models, it quantifies the number of energy levels per unit volume lying below a given energy threshold, see also a discussion in Chapter~\ref{chap:WorldButterflies} and its relation to the Gap Labeling theorem.

For suitable uniquely ergodic systems, the IDS can be approximated via finite-volume analogs using an ergodic theorem; see e.g. \cite{Bel86,Kirsch89,LenSto03Alg,LenSto03Del,LenSto05,LenPeyVes07,LenMulVes08,Kirsch08,LenVes09,LenSchVes11,Pog14,PogSch16,DaFi24-book_2} and references therein. In~\cite{BecPog20}, the Pastur-Shubin trace formula \cite{Pas71,Shu79} provides the appropriate framework: it expresses the IDS as an integral over the dynamical system.

Assuming the conditions of Corollary~\ref{cor:ContinuityMeasures_UniqueErgod}, the DOS measure converges in the weak-$\ast$ topology whenever the underlying dynamical systems converge to a uniquely ergodic dynamical system, see \cite[Thm.~V.B]{BecPog20} for details.
\end{itemize}

We also addressed in this work \cite{BecPog20} if the hulls of regular model sets can be approximated by changing the underlying lattice. We obtain positive results by approximating the window function via continuous functions with compact support leading naturally to the notion of weighted Delone sets, studied in the upcoming section.
In \cite{BaBePoTe25}, approximations for symbolic dynamical systems are discussed; see a discussion in Section~\ref{sec:Subst-Approximations}. Note that all of these examples define dynamical systems that are uniquely ergodic.

We note that the integrated density of states also plays a central role in Chapter~\ref{chap:DTMP}, where the joint works~\cite{BaBeBiRaTh24,BaBeLo24} are discussed solving the dry ten Martini problem for Sturmian dynamical systems.

\section{The space of weighted Delone sets}
\label{sec:Weight_Delone}

In order to analyze the convergence of dynamical systems arising in the context of aperiodic order within a unified framework, we introduce and study weighted Delone sets in this section.

Some of the results presented here are new\footnote{According to the best knowledge of the author.} in the literature, although they have been considered as folklore. The specific definition of the metric on the space of weighted Delone sets is inspired by the construction of Solomyak~\cite{Sol98,Sol06} (in the setting of tilings with a fixed collection of prototiles) and the recent joint work~\cite{BecHarPog25}, which studies weighted Delone sets as a topological space (see Section~\ref{sec:Repet_WeightDelone}).

We consider weighted Delone sets in a locally compact, second countable, Hausdorff group \( G \), define a suitable metric on this space, and characterize the convergence of their associated hulls through the convergence of local patterns.

For the sake of readability, the technical details and proofs are deferred to Appendix~\ref{App:Weight_Delone}. We also refer the reader to~\cite{ArgGil74,Baa01,LenSto03Alg,Moo02,BaaMoo04,BaLe04,LeRi07,MueRic13,Str14,HuRi15,RiSt17,BaHuSt17,BjoHarPog22} for earlier work on weighted Delone sets.

\begin{definition}[Delone sets]
\label{def:Delone}
Let $r_0,R_0>0$. A subset $D\subseteq G$ is called 
\begin{itemize}
\item {\em ($r_0$-)uniformly discrete}, if $\sharp B(x,r_0)\cap D\leq 1$ for all $x\in G$;
\item {\em ($R_0$-)relatively dense}, if $G = \bigcup_{x\in D} \ol{B}(R_0)(x)$.
\end{itemize}
If $D$ is $r_0$-uniformly discrete and $R_0$-relatively dense, then $D$ is called an {\em $(r_0,R_0)$-Delone set}. The set of all $(r_0,R_0)$-Delone sets is denoted by $\Del$.
\end{definition}

The group $G$ acts on $\Del$ via left multiplication, namely 
\[
gD := \set{gx}{x\in D},\qquad g\in G,\, D\in\Del.
\]
In order to include also symbolic dynamics within the theory developed here, we also define weighted Delone sets.

\begin{definition}[Weight]
\label{def:weight}
Let \( \sigma \geq 1 \) and let \( P \subseteq G \) be an \( r_0 \)-uniformly discrete set.  
A map \( \alpha : P \to [\sigma^{-1}, \sigma] \) is called a \emph{($\sigma$-)weight of \( P \)}.  

A tuple \( (D, \alpha) \) is called a \emph{(\( \sigma \)-)weighted Delone set} if \( D \in \Del \) and \( \alpha \) is a \( \sigma \)-weight of \( D \).  
The set of all \( \sigma \)-weighted Delone sets is denoted by
\[
\wDel := \set{(D,\alpha)}{(D,\alpha) \text{ is a } \sigma\text{-weighted Delone set}}.
\]
\end{definition}

The group \( G \) naturally acts on \( \wDel \) via
\[
\tau : G \times \wDel \to \wDel, \quad 
	\big(g,(D,\alpha)\big) \mapsto g \cdot (D,\alpha) := (gD, \alpha \circ g^{-1}).
\]
While it is elementary to verify that this defines a group action, we still need to specify a topology on \( \wDel \) in order to determine whether this action is continuous.  
To this end, we introduce a specific metric on \( \wDel \) and compare its induced topology with those considered in special cases in the literature.  
Here the assumption $\sigma\geq 1$ (namely $\sigma^{-1}>0$ allows us to work with the associated weighted dirac combs. 
Before doing so, let us briefly discuss two main examples treated in this work.

\begin{example}[Delone dynamical systems]
\label{ex:Delone-As-WeightDelone}
If $\sigma=1$, then every weight is trivial (constant map with range $\{1\}$). Thus, we identify $\Del$ with $\wDel$ if $\sigma=1$ via the bijective map $D\mapsto (D,1)$. Note that this map is compatible with the group action on $\Del$ since $g.(D,1)=(gD,1)$ for $D\in\Del$.
\end{example}

\begin{example}[Symbolic dynamical systems]
\label{ex:SymbSyst-As-WeightDelone}
Let $\Gamma\subseteq G$ be a discrete cocompact subgroup (i.e. $G/\Gamma$ is compact) and $\Aa$ a finite set. 
Then $\Gamma\in\Del$ for a suitable choice of $r_0,R_0>0$. 
The elements of the configuration space $\Aa^\Gamma:=\{\omega:\Gamma\to\Aa\}$ can be represented as weighted Delone sets. In order to do so, we assume without loss of generality (identify letters with positive real numbers) that $\Aa\subseteq [\sigma^{-1},\sigma]$ for some $\sigma>1$. Then the map
\[
\Aa^\Gamma\to \wDel, \quad 
	\omega\mapsto (\Gamma,\omega)
\]
is injective and provides an embedding of $\Aa^\Gamma$ into $\wDel$. The group $\Gamma$ naturally acts on $\Aa^\Gamma$ via $g\omega:=\omega\circ g^{-1}$ for $g\in\Gamma$ and $\omega\in\Aa^\Gamma$, see Example~\ref{ex:SymbDynSyst} where symbolic dynamical systems are defined.
Then the previously defined embedding is compatible with the \( \Gamma \)-action.

After defining the metric structure on $\wDel$, we show in Section~\ref{subsec:symb_syst_as_weighted_Delone} that this embedding is homeomorphic (even bi-Lipschitz). Therefore symbolic dynamical systems are represented by weighted Delone sets. In the literature also compact alphabets are studied. Note that the previous construction works also for alphabets being compact subsets embedded into $[\sigma^{-1},\sigma]$.
\end{example}

Both examples are naturally endowed with a metrizable topology. Specifically, \( \Aa^\Gamma \) is equipped with the product topology, which defines a compact metrizable space. In this topology, two configurations in \( \Aa^\Gamma \) are close to each other if they coincide on a large ball around the identity element \( e \in G \).

Since each Delone set is a closed subset of \( G \), the space \( \Del \) can be equipped with the Chabauty-Fell topology \cite{Cha50,Fel62}, making \( \Del \) a compact, metrizable space; see e.g.\ \cite[Sec.~3]{BjoHarPog22}. Roughly speaking, two Delone sets are close to each other if they coincide on a large ball around the identity element \( e \in G \), up to a small displacement.  

We emphasize that the Chabauty-Fell topology on the space of Delone sets is generally not induced by the Hausdorff metric on subsets of \( G \), since Delone sets are merely closed subsets of \( G \). In particular, the Hausdorff metric is defined on \( \Kk(G) \), the space of \emph{compact} subsets of \( G \), while Delone sets are typically \emph{closed and discrete}, but not compact.

For Delone sets, various similar notions of convergence and topologies have been proposed, particularly for certain subclasses such as sets of finite local complexity or repetitive sets; see, e.g., \cite{Dwo93,Sol98,Sol06}.

By classical metrization theorems, the Chabauty-Fell topology on \( \Del \) is metrizable, as it is compact, second countable, and Hausdorff. Consequently, one can find several constructions of explicit metrics compatible with this topology, for instance in the setting of Delone sets of finite local complexity \cite{Bir18} and tiling spaces \cite{Rob96,Rob04}.

Following \cite{BaLe04,BecHarPog21,BjoHarPog22}, the weighted Delone sets can be endowed with the weak\(^*\)-topology by identifying weighted Delone sets with their associated weighted dirac comb - defining a Radon measure on the Borel $\sigma$-algebra of $G$. 
We follow a similar approach here but also defining a metric on the set of $r_0$-uniformly discrete weighted sets \label{page:wPat}
\[
\wPat := \set{(P,\alpha)}{P \textrm{ is $r_0$-uniformly discrete and } \alpha \textrm{ is a $\sigma$-weight of } P}.
\] 
Clearly $\wDel\subseteq \wPat$ holds since we just dropped the assumption of being $R_0$-relatively dense. 

For $\Pi:=(P,\alpha)\in\wPat$, the weighted dirac comb on the Borel $\sigma$-algebra $\bs(G)$ of $G$ is defined by
\begin{equation}
\label{eq:DiracComp_measure}
\mu_{\Pi}:\bs(G)\to[0,\infty], \qquad
\mu_{\Pi} := \sum_{x\in P} \alpha(x) \delta_x.
\end{equation}
Then $\mu_\Pi$ is a Radon measure (regular Borel measure on $\bs(G)$). The support of the measure $\mu_{\Pi}$ satisfies 
\[
\supp(\Pi):= \supp(\mu_\Pi)=P.
\]
For $\Pi,\Xi\in\wPat$, define the \emph{local discrepancy} (illustrated in Figure~\ref{fig:local-discrepancy})
\begin{equation}
\label{eq:delta_weightedPatches}
\wdelt(\Pi,\Xi) 
	:= \inf\set{\varepsilon>0}{ 
		\begin{array}{c}
		\big|\mu_\Pi(B(x,\varepsilon)) - \mu_\Xi(B(x,\varepsilon))\big|<\varepsilon\\[0.2cm]
		\textrm{ for all } x\in B(\tfrac{1}{\varepsilon})\cap \big( \supp(\Pi)\cup \supp(\Xi) \big)
		\end{array} },
\end{equation}
where the infimum is set to be infinity if there is no such $\varepsilon>0$.
Note that $\mu_\Pi(B(x,\varepsilon))>0$ if and only if $B(x,\varepsilon)\cap \supp(\Pi)\neq 0$. 
In particular, if $\varepsilon<r_0$ and $\mu_\Pi(B(x,\varepsilon))>0$, then $B(x,\varepsilon)\cap \supp(\Pi)$ contains exactly one point, see a sketch in Figure~\ref{fig:local-discrepancy}.

\begin{figure}[htb]
    \centering
    \includegraphics[scale=1]{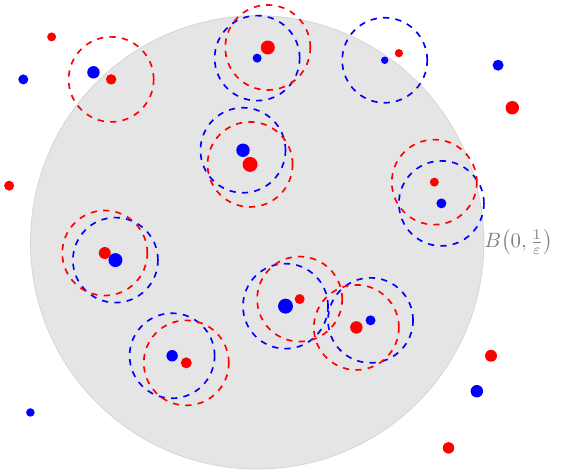}
    \captionsetup{width=0.95\linewidth}
    \caption{A sketch of the local discrepancy for two uniformly discrete weighted $\Pi$ (blue points) and $\Xi$ (red points). The dashed colored circles indicate the $\varepsilon$-balls centered at relevant points, provided the point lies within the gray ball $B\big(0,\tfrac{1}{\varepsilon}\big)$. The thickness of each point reflects the local mass contribution -- the associated weight taking values in $[\sigma^{-1},\sigma]$.}
    \label{fig:local-discrepancy}
\end{figure}

By definition, the map $\wdelt:\wPat\times\wPat\to[0,\infty]$ is symmetric and definite, which can be extended to a metric. The proof is inspired by \cite{Sol98,Sol06}.

\begin{proposition}
\label{prop:metric_weightDelone}
Let $G$ be a lcsc group, $r_0,R_0>0$ and $\sigma\geq 1$. Then $(\wPat,\dw)$ is a compact metric space where
\begin{equation}
\label{eq:Metric_wDel}
\dw:\wPat\times\wPat\to[0,\infty),\quad
	\dw(\Pi,\Xi):= \min\big\{\tfrac{r_0}{2},\tfrac{\sigma^{-1}}{2},\wdelt(\Pi,\Xi) \big\}
\end{equation}
induces the weak\(^*\)-topology on $\wPat$ viewing every element $\Pi\in\wPat$ as the Radon measure $\mu_\Pi$ defined in \eqref{eq:DiracComp_measure}.
Furthermore $\wDel\subseteq\wPat$ is a closed subset and so $(\wDel,\dw)$ is a compact metric space.
\end{proposition}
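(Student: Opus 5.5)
The plan has four steps: show that $\dw$ is a metric, show that its topology is the weak$^*$-topology, deduce compactness from Banach--Alaoglu, and check that $\wDel$ is closed.

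\textbf{Step 1: $\dw$ is a metric.} Symmetry is immediate. For definiteness, suppose $\wdelt(\Pi,\Xi)<\min\{r_0,\sigma^{-1}\}/2$ for every admissible $\varepsilon$ arbitrarily small. Take $x\in\supp(\Pi)$. Since $\Pi$ contributes mass at least $\sigma^{-1}>\varepsilon$ in $B(x,\varepsilon)$, the condition forces a point of $\supp(\Xi)$ in $B(x,\varepsilon)$. Letting $\varepsilon\to0$ gives equal supports, and $r_0$-uniform discreteness makes the matching point unique, so the weights also agree.

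The triangle inequality is where the truncation matters. Let $\varepsilon_1=\dw(\Pi,\Xi)$ and $\varepsilon_2=\dw(\Xi,\Theta)$, both $<\min\{r_0,\sigma^{-1}\}/2$; otherwise the bound is trivial because $\dw$ is capped.
\begin{itemize}
\item Below this threshold, every point $x\in\supp(\Pi)\cap B(1/\varepsilon_1)$ has a unique partner $x'\in\supp(\Xi)$ with $d_G(x,x')<\varepsilon_1$ and weight difference $<\varepsilon_1$.
\item Composing partners yields, for $\varepsilon=\varepsilon_1+\varepsilon_2$, a partner in $\supp(\Theta)$ within distance $<\varepsilon$ and with weight difference $<\varepsilon$.
\item This needs $1/(\varepsilon_1+\varepsilon_2)$-balls to sit inside $1/\varepsilon_i$-balls, which holds, with a margin of $\varepsilon_1$ absorbed by left-invariance of $d_G$.
\item One checks that $B(x,\varepsilon)$ contains no extra points. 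Uniform discreteness together with $\varepsilon<r_0$ guarantees this, adjusting the $r_0/2$ cap if necessary.
\end{itemize}
This is Solomyak's argument, and I expect this bookkeeping to be the main technical nuisance.

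\textbf{Step 2: the topology of $\dw$ is the weak$^*$-topology.} I would show that convergence in $\dw$ is equivalent to vague convergence of $\mu_{\Pi_n}$ to $\mu_\Pi$.

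For ($\Rightarrow$), small $\dw$ gives a bijection of the atoms in a large ball, with points and weights displaced by $\le\varepsilon$. For $f\in C_c(G)$, uniform continuity of $f$ and the uniform bound on the number of atoms in $\supp f$ (from uniform discreteness plus properness of $d_G$) give $\mu_{\Pi_n}(f)\to\mu_\Pi(f)$.

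For ($\Leftarrow$), fix $\varepsilon$ and test against bump functions.
\begin{itemize}
\item Near each of the finitely many atoms $x$ of $\Pi$ in $B(1/\varepsilon)$, a bump supported in $B(x,\varepsilon/2)$ forces $\Pi_n$ to have mass close to $\alpha(x)$ there. Uniform discreteness then gives exactly one atom of $\Pi_n$ nearby, with weight close to $\alpha(x)$.
\item A continuous function supported in $B(1/\varepsilon+1)$ and vanishing near $\supp\Pi$ forces $\Pi_n$ to have small mass away from the atoms of $\Pi$. Since each atom has mass $\ge\sigma^{-1}$, there are in fact no stray atoms.
\end{itemize}
The only care needed is for atoms near the boundary sphere of radius $1/\varepsilon$; shrinking the radius slightly handles this.

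\textbf{Steps 3 and 4: compactness and closedness of $\wDel$.} The measures $\mu_\Pi$ have uniformly bounded mass on each compact set (at most $\sigma$ times the maximal number of $r_0$-separated points in that set). So by Banach--Alaoglu the family is relatively compact in the vague topology on Radon measures. The plan is to show that every vague limit point is again of the form $\mu_\Xi$ with $\Xi\in\wPat$, which I would do in two parts.
\begin{itemize}
\item \emph{The limit is purely atomic and uniformly discrete.} Mass in a ball $B(x,r_0/2)$ is at most $\sigma$ and comes from a single atom of each $\Pi_n$. Extracting convergent positions and weights from these atoms yields a single atom of the limit.
\item \emph{Weights remain in $[\sigma^{-1},\sigma]$.} Atoms of weight $\ge\sigma^{-1}$ cannot vanish in the limit, because their positions converge to a point carrying mass $\ge\sigma^{-1}$.
\end{itemize}
Since by Step 2 the two topologies agree and vague topology on bounded sets is metrizable here, sequential compactness suffices.

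For closedness of $\wDel$: if $\Pi_n\to\Pi$ with each $\Pi_n$ $R_0$-relatively dense, then for each $g$ some $x_n\in\supp\Pi_n$ lies in $\ol{B}(g,R_0)$. These points converge, along a subsequence, to a point of $\supp\Pi$, which gives $R_0$-relative density of $\supp\Pi$.
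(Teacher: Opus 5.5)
Your proposal is correct and follows the paper's route: Solomyak's partner-composition argument for the triangle inequality, compactness of the uniformly translation-bounded positive measures (Banach--Alaoglu) together with closedness of $\wPat$ under vague limits, and a limiting-atom argument for closedness of $\wDel$; you are in fact more explicit than the paper on the identification with the weak$^*$-topology, which the paper simply defers to the literature. One slip needs fixing: run your atom-extraction argument on balls $B(x,r_0)$, where each $\Pi_n$ has at most one atom by definition, rather than on $B(x,r_0/2)$, since the latter only shows that each $r_0/2$-ball carries at most one atom of the limit, which is weaker than $r_0$-uniform discreteness.
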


\begin{proof}
We postpone the proof to the Appendix~\ref{App:Weight_Delone} in Proposition~\ref{prop:metric_weightDelone-App}.
\end{proof}

Recall that the group $G$ naturally acts on $\wDel$ via
\[
\tau:G\times \wPat\to\wPat,  \quad 
	\big(g,(D,\alpha)\big) \mapsto g.(D,\alpha):= (gD,\alpha\circ g^{-1}).
\]
It is well-known that $(\wPat,G,\tau)$ and $(\wDel,G,\tau)$ define a topological dynamical system \cite{ArgGil74,BaaMoo04,BaLe04,MueRic13,BjoHarPog22,BecHarPog25}. For the spectral application, we need more and so we prove that this group action is actually Lipschitz continuous (see Definition~\ref{def:LipschitzAction}). 

\begin{proposition}
\label{prop:wDel_groupAction_Lipschitz}
Let $G$ be a lcsc group, $r_0,R_0>0$ and $\sigma\geq 1$.
Then $(\wPat,G,\tau)$ and $(\wDel,G,\tau)$ are a topological dynamical system. Moreover, the group action is Lipschitz continuous, namely for all $g\in G$ and $\Pi,\Xi\in\wPat$, we have
\[
\dw(g.\Pi,g.\Xi) \leq (d_G(e,g)+1) \dw(\Pi,\Xi).
\]
\end{proposition}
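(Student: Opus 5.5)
The plan is to prove the Lipschitz estimate directly from the definition of the local discrepancy $\wdelt$, working on $\wPat$; the statement for $\wDel$ then follows by restriction. Continuity of the action is a consequence: the estimate gives continuity in $\Pi$ uniformly for $g$ in compact sets, and continuity in $g$ for fixed $\Pi$ is standard for the weak\(^*\)-topology identified in Proposition~\ref{prop:metric_weightDelone}. Closedness of $\wDel$ and its $G$-invariance ($gD$ is again $(r_0,R_0)$-Delone because $d_G$ is left-invariant) make $(\wDel,G,\tau)$ a subsystem. Two basic identities drive the argument. First, $\supp(g.\Pi)=g\,\supp(\Pi)$. Second, by left-invariance of $d_G$ we have $g^{-1}B(x,r)=B(g^{-1}x,r)$, hence
\[
\mu_{g.\Pi}\big(B(x,r)\big)=\mu_\Pi\big(B(g^{-1}x,r)\big).
\]

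Fix $g$ and set $d:=d_G(e,g)$ and $c:=\min\{\tfrac{r_0}{2},\tfrac{\sigma^{-1}}{2}\}$; note $c\le \tfrac12$ since $\sigma\ge1$. If $(d+1)\dw(\Pi,\Xi)\ge c$, the claim is trivial because $\dw\le c$ always. So assume $(d+1)\dw(\Pi,\Xi)<c$. Then $\dw=\wdelt(\Pi,\Xi)$, and we can pick $\varepsilon>\wdelt(\Pi,\Xi)$, arbitrarily close to it, such that the defining condition holds at $\varepsilon$ and $\varepsilon':=(d+1)\varepsilon<c$. The goal is to verify the defining condition for $(g.\Pi,g.\Xi)$ at radius $\varepsilon'$. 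This gives $\wdelt(g.\Pi,g.\Xi)\le\varepsilon'$, and letting $\varepsilon\downarrow\wdelt(\Pi,\Xi)$ yields the claim.

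\textbf{Localisation.} Take $x\in B(1/\varepsilon')\cap g(\supp\Pi\cup\supp\Xi)$ and put $y=g^{-1}x$. Then
\[
d_G(e,y)=d_G(g,x)< d+\tfrac1{\varepsilon'} .
\]
We need $d+\tfrac1{\varepsilon'}\le\tfrac1\varepsilon$. This is equivalent to $\tfrac{d}{(d+1)\varepsilon}\ge d$, that is, to $\varepsilon'\le1$, which holds because $\varepsilon'<c\le\tfrac12$. Hence $y$ lies in the region $B(1/\varepsilon)$ where the hypothesis on $(\Pi,\Xi)$ applies.

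\textbf{Enlarging the radius (main obstacle).} The hypothesis controls balls of radius $\varepsilon$, but we need balls of radius $\varepsilon'\ge\varepsilon$, and the masses of larger balls are not monotone in any useful way. I would resolve this by a matching argument. Since $\varepsilon<\sigma^{-1}$, the condition at $\varepsilon$ forces every support point $p$ of either set lying in $B(1/\varepsilon)$ to have a partner point of the other set within distance $\varepsilon$. Indeed, otherwise one ball has mass $\ge\sigma^{-1}>\varepsilon$ and the other has mass $0$.

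Now suppose, say, $y\in\supp\Pi$. Because $\varepsilon'<r_0/2$, the ball $B(y,\varepsilon')$ contains only $y$ from $\supp\Pi$. The partner $q$ of $y$ satisfies $d_G(y,q)<\varepsilon\le\varepsilon'$, so $q$ lies in $B(y,\varepsilon')$. Conversely, any $q'\in\supp\Xi\cap B(y,\varepsilon')$ has a partner in $\supp\Pi$ within distance $\varepsilon'+\varepsilon<r_0$ of $y$; by uniform discreteness that partner is $y$ itself. Then $q'$ and $q$ both lie in $B(y,\varepsilon)$, a ball of radius $<r_0$, so $q'=q$. Therefore
\[
\big|\mu_\Pi(B(y,\varepsilon'))-\mu_\Xi(B(y,\varepsilon'))\big|=\big|\mu_\Pi(B(y,\varepsilon))-\mu_\Xi(B(y,\varepsilon))\big|<\varepsilon\le\varepsilon'.
\]
The case $y\in\supp\Xi$ is symmetric.

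The one technical point is that the partner points used here lie near $y$, possibly slightly outside $B(1/\varepsilon)$. I would handle this by choosing $\varepsilon$ with a little slack, so that the localisation bound above is strict by a margin that accommodates these nearby points. Finally, the identity for $\mu_{g.\Pi}$ transports the estimate at $y$ to the required estimate at $x$, completing the verification.
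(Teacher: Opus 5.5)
Your proof follows the paper's argument. You transport via $\mu_{g.\Pi}(B(x,r))=\mu_\Pi(B(g^{-1}x,r))$, localise $g^{-1}x$ in $B(1/\varepsilon)$ using $\varepsilon'\le 1$, and pass from radius $\varepsilon$ to $\varepsilon'=(d_G(e,g)+1)\varepsilon$ because such small balls contain at most one support point of each set. Taking $\varepsilon$ from the admissible set in the definition of $\wdelt$ also avoids the implicit monotonicity-in-$\varepsilon$ step that the paper uses.

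The one thing to change is the detour through partners of an arbitrary $q'\in\supp\Xi\cap B(y,\varepsilon')$. It is unnecessary, and your proposed ``slack'' repair would not work in general. The available margin is $\tfrac1\varepsilon-d_G(e,g)-\tfrac1{\varepsilon'}=d_G(e,g)\big(\tfrac1{\varepsilon'}-1\big)$, which can be smaller than $\varepsilon'$ when $d_G(e,g)$ is small.

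Instead, argue as you already did for $\supp\Pi$. Since $\varepsilon'<r_0/2$, the ball $B(y,\varepsilon')$ contains at most one point of $\supp\Xi$, and it contains the partner $q$. Hence $\mu_\Xi(B(y,\varepsilon'))=\mu_\Xi(\{q\})=\mu_\Xi(B(y,\varepsilon))$, which is exactly the paper's step.
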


\begin{proof}
We postpone the proof to the Appendix~\ref{App:Weight_Delone} in Proposition~\ref{prop:wDel_groupAction_Lipschitz-App}.
\end{proof}

Following \cite{Bec16,BBdN20}, the convergence of invariant closed subsets of the dynamical system $(\wDel,G,\tau)$ can be characterized by the convergence of the associated patches. We provide a unifying framework here for weighted Delone sets.

Let \( \wInv \) denote the compact metric space of invariant, closed, and non-empty subsets of \( \wDel \), equipped with the Hausdorff metric \( \delta_H \) induced by the metric \( \dw \) on \( \wDel \).  
We refer to the elements of \( \wInv \) as \emph{(weighted) Delone dynamical systems}.

A canonical way to obtain elements of \( \wInv \) is via the hull of a weighted Delone set.  
For a weighted Delone set \( \Pi \in \wDel \), its \emph{hull} is defined as
\[
\hull(\Pi) := \overline{ \set{ g . \Pi }{ g \in G } }
= \overline{ \Orb(\Pi) } \in \wInv,
\]
where the closure is taken with respect to the metric \( \dw \).

In the literature, it is common to consider only the \emph{transversal} associated with a Delone set; see \cite{LenSto03Alg,LenSto03Del,Bel15,BBdN18,BecPog20}.  
For a weighted Delone set $\Pi \in \wDel$, its transversal is defined as \label{page:transversal}
\[
\tran(\Pi) := \set{ \Xi \in \hull(\Pi) }{ \mu_\Xi(\{e\}) > 0 }
= \overline{ \set{ g^{-1}.\Pi }{ g \in \supp(\Pi) } },
\]
where the closure is taken with respect to the metric $\dw$.

More generally, for a weighted Delone dynamical system $\Omega \in \wInv$, the associated transversal is given by
\[
\tran(\Omega) := \set{ \Xi \in \Omega }{ \mu_\Xi(\{e\}) > 0 }
= \overline{ \set{ g^{-1}.\Pi }{ \Pi \in \Omega,\; g \in \supp(\Pi) } }.
\]

Since $\mu_\Xi(\{e\}) > 0$ is equivalent to $\mu_\Xi(\{e\}) \geq \sigma^{-1} > 0$ for all $\Xi \in \wDel$, every transversal $\tran(\Omega)$ is a closed subset of $\wDel$ with respect to $\dw$, and the collection 
\[
\wInvT := \set{\tran(\Omega)}{\Omega\in\wInv}
\] 
of all such transversals can be viewed as a subset of $\Kk(\wDel)$.

It is straightforward to verify that $\wInvT$ is closed in $\Kk(\wDel)$ with respect to the induced Hausdorff metric $\delta_H$. In particular, the limit of a sequence of transversals is again a transversal. Therefore, $(\wInvT, \delta_H)$ is a compact metric space.

Note that a transversal is, in general, not invariant under the action of $G$; however, if $\Xi \in \tran(\Omega)$ and $g \in \supp(\Xi)$, then $g^{-1}.\Xi \in \tran(\Omega)$.  
To each transversal, one naturally associates a groupoid, and elements of the associated reduced groupoid $C^*$-algebra define the operators of interest in this work; see \cite{LenSto03Alg,LenSto03Del,BBdN18} and Example~\ref{ex:Graph_Delone}.

The (weighted) Delone dynamical system $\Omega \in \wInv$ can be reconstructed from its transversal $\tran(\Omega) \subseteq \Omega$ as follows.

\begin{lemma}
\label{lem:Connec_Transv-Hull}
Let $r_0,R_0>0$ and $\sigma\geq 1$. If $\Omega\in\wInv$, then 
$$
\Omega
	= \oB(R_0).\tran(\Omega) 
	:= \set{g.\Pi}{g\in \oB(R_0),\, \Pi\in\tran(\Omega)}.
$$
In particular, $\Omega=G.\tran(\Omega)$.
\end{lemma}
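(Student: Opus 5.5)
We prove the two inclusions $\oB(R_0).\tran(\Omega)\subseteq\Omega$ and $\Omega\subseteq \oB(R_0).\tran(\Omega)$; the final claim then follows at once.

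\emph{First inclusion.} Since $\tran(\Omega)\subseteq\Omega$ by definition and $\Omega$ is $G$-invariant, we get $g.\Pi\in\Omega$ for every $g\in G$ and every $\Pi\in\tran(\Omega)$. Hence
\[
\oB(R_0).\tran(\Omega)\subseteq G.\tran(\Omega)\subseteq\Omega .
\]

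\emph{Second inclusion.} Fix $\Xi=(D,\alpha)\in\Omega$. Since $D$ is $R_0$-relatively dense, there is $x\in D$ with $e\in\oB(x,R_0)$. By left-invariance of $d_G$,
\[
d_G(e,x)=d_G(x,e)\le R_0, \qquad\text{so } x\in\oB(R_0).
\]
Put $\Pi:=x^{-1}.\Xi=(x^{-1}D,\alpha\circ x)$. Then $\Pi\in\Omega$ by invariance, and $e\in x^{-1}D$ with weight $\alpha(x)\ge\sigma^{-1}>0$. This gives $\mu_\Pi(\{e\})>0$, i.e.\ $\Pi\in\tran(\Omega)$. Therefore $\Xi=x.\Pi\in\oB(R_0).\tran(\Omega)$.

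\emph{Conclusion.} Combining the two inclusions gives
\[
\Omega=\oB(R_0).\tran(\Omega)\subseteq G.\tran(\Omega)\subseteq\Omega,
\]
so all three sets coincide, which proves $\Omega=G.\tran(\Omega)$ as well.

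No step is a real obstacle. The only points to check are the direction conventions: relative density is defined via closed balls $\oB(x,R_0)$ centred at points of $D$, and the action is by left multiplication with weights transported as $\alpha\circ g^{-1}$. The weight lower bound $\sigma^{-1}$ ensures that $e\in\supp(\Pi)$ is equivalent to $\mu_\Pi(\{e\})>0$. Compactness or closedness of $\tran(\Omega)$ is not needed for this set equality.
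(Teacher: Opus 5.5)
Your proof is correct and follows the paper's argument: the inclusion $\oB(R_0).\tran(\Omega)\subseteq\Omega$ comes from invariance, and for the reverse you use relative density to pick a point of $\supp(\Xi)$ in $\oB(R_0)$, translate it to $e$, and land in $\tran(\Omega)$. (A trivial slip: the equality $d_G(e,x)=d_G(x,e)$ is just symmetry of the metric, not left-invariance.)
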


\begin{proof}
The inclusion $\oB(R_0).\tran(\Omega) \subseteq \Omega$ is obvious. For the converse, let $\Xi\in\Omega\subseteq \wDel$. Since $\supp(\Xi)$ is $R_0$-relatively dense, there is a $g\in\supp(\Xi)\cap\oB(R_0)$. Then $\Pi:=g^{-1}.\Xi\in\tran(\Omega)$ follows and so $\Xi=g.\Pi\in \oB(R_0).\tran(\Omega)$ is concluded.
\end{proof}

With this at hand, we conclude that the convergence of weighted Delone dynamical systems is characterized by the convergence of the associated transversals. This has been proven for Delone sets in \cite[Thm.~2.A]{BecPog20}. The extension to weighted Delone sets is straightforward and follows similar lines. We provide in the appendix a different proof for weighted Delone sets using Lemma~\ref{lem:Connec_Transv-Hull}.

\begin{proposition}[\cite{BecPog20}]
\label{prop:Homeo_hulls_transversals}
Let $R_0>r_0>0$ and $\sigma\geq 1$.
The map 
\[
\Phi:\wInvT\to\wInv,\qquad \Tt\mapsto G.\Tt=\oB(R_0).\Tt
\] 
is a homeomorphism. Moreover, $\Phi$ is bi-Lipschitz, namely
\[
\frac{1}{2}\delta_H(\Tt_1,\Tt_2) 
	\leq \delta_H\big( \Phi(\Tt_1),\Phi(\Tt_2) \big)
	\leq (R_0+1)\delta_H(\Tt_1,\Tt_2),
	\qquad \Tt_1,\Tt_2\in\wInvT.
\]
\end{proposition}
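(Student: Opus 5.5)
We prove the two Lipschitz inequalities directly; bijectivity and the homeomorphism property then follow. First, $\Phi$ is well defined. Any $\Tt\in\wInvT$ is of the form $\tran(\Omega)$, and Lemma~\ref{lem:Connec_Transv-Hull} gives $G.\Tt=\oB(R_0).\Tt=\Omega\in\wInv$. Moreover $\tran(G.\tran(\Omega))=\tran(\Omega)$, since $\tran$ only selects the elements with an atom at $e$. Hence $\Phi$ is a bijection with inverse $\Omega\mapsto\tran(\Omega)$. Once the bi-Lipschitz bounds hold, $\Phi$ and $\Phi^{-1}$ are continuous, so $\Phi$ is a homeomorphism.

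\emph{Upper bound.} Let $\varepsilon:=\delta_H(\Tt_1,\Tt_2)$ and take $\Xi=g.\Pi\in\Phi(\Tt_1)$ with $g\in\oB(R_0)$ and $\Pi\in\Tt_1$. Choose $\Pi'\in\Tt_2$ with $\dw(\Pi,\Pi')\le\varepsilon$ (the sets are compact, so the infimum is attained). By Proposition~\ref{prop:wDel_groupAction_Lipschitz},
\[
\dw(g.\Pi,g.\Pi')\le (d_G(e,g)+1)\,\dw(\Pi,\Pi')\le (R_0+1)\varepsilon ,
\]
and $g.\Pi'\in\Phi(\Tt_2)$. By symmetry, $\delta_H(\Phi(\Tt_1),\Phi(\Tt_2))\le(R_0+1)\varepsilon$.

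\emph{Lower bound.} Let $\eta:=\delta_H(\Phi(\Tt_1),\Phi(\Tt_2))$ and $\Pi\in\Tt_1\subseteq\Phi(\Tt_1)$. There is $\Xi\in\Phi(\Tt_2)$ with $\dw(\Pi,\Xi)\le\eta$. If $\eta$ equals the cap $\min\{r_0/2,\sigma^{-1}/2\}$, the bound $\delta_H(\Tt_1,\Tt_2)\le 2\eta$ holds trivially, because $\dw$ itself is bounded by that cap. Otherwise $\wdelt(\Pi,\Xi)\le\eta<r_0/2$ and $\eta<\sigma^{-1}/2$. Since $e\in\supp(\Pi)\cap B(1/\eta)$, we get $|\mu_\Pi(B(e,\eta'))-\mu_\Xi(B(e,\eta'))|<\eta'$ for every $\eta'>\eta$ close to $\eta$. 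As $\mu_\Pi(B(e,\eta'))\ge\sigma^{-1}>\eta'$, this forces a point $h\in\supp(\Xi)$ with $d_G(e,h)<\eta'$, and by uniform discreteness it is unique. Then $h^{-1}.\Xi\in\Tt_2$.

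The key estimate is $\dw(\Pi,h^{-1}.\Xi)\le 2\eta$. The cheap route via Proposition~\ref{prop:wDel_groupAction_Lipschitz} gives only
\[
\dw(\Pi,h^{-1}.\Xi)\le\dw(\Pi,\Xi)+\dw(\Xi,h^{-1}.\Xi),
\]
and the second term would need a separate bound on how far translation by a small $h$ moves $\Xi$. Instead I would estimate $\wdelt(\Pi,h^{-1}.\Xi)$ directly. Left multiplication by $h^{-1}$ is an isometry of $d_G$, so $\mu_{h^{-1}.\Xi}(B(x,r))=\mu_\Xi(B(hx,r))$. Every atom of $\Pi$ in $B(1/(2\eta))$ lies within $\eta$ of an atom of $\Xi$ of nearly the same weight. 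Comparing the balls $B(x,2\eta)$ and $B(hx,2\eta)$ then needs $d_G(x,hx)$ small, and this is not automatic for non-abelian $G$, since $d_G$ is only left-invariant.

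This is the main obstacle. I would handle it using the fact that on the transversal the displacement $h$ comes from the first-order matching at $e$, and check carefully whether the factor $2$ absorbs the change of balls. If it does not, I would fall back to using compactness to approximate $\Pi$ by an orbit point and exploit the definition of $\wdelt$ with radius doubled, accepting $\delta_H(\Tt_1,\Tt_2)\le 2\eta$. Symmetrizing over $\Tt_1$ and $\Tt_2$ yields the lower bound.
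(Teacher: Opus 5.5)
Your bijectivity argument (via Lemma~\ref{lem:Connec_Transv-Hull} and $\tran(G.\tran(\Omega))=\tran(\Omega)$) is fine, and your upper bound is exactly the paper's argument. The gap is the lower bound. You stop at the one step that carries the estimate, the bound on $\dw(\Pi,h^{-1}.\Xi)$, and the obstacle you name there is genuine.

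The paper passes this step by asserting $\mu_{g^{-1}.\Xi}(B(x,2\widetilde{\varepsilon}))=\mu_\Xi(B(x,\widetilde{\varepsilon}))$ whenever the right-hand side is positive. That is, an atom $y$ of $\Xi$ within $\widetilde{\varepsilon}$ of $x$ is supposed to give the atom $g^{-1}y$ of $g^{-1}.\Xi$ within $2\widetilde{\varepsilon}$ of $x$. This silently uses $d_G(y,g^{-1}y)=d_G(e,y^{-1}g^{-1}y)\le d_G(e,g)$, i.e.\ a conjugation-invariant (bi-invariant) metric, as for abelian $G$. Under that assumption your ``cheap route'' already closes the proof. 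Each atom $y$ of $\Xi$ moves to $h^{-1}y$ at distance exactly $d_G(e,h)<\eta'$ and keeps its weight, so $\dw(\Xi,h^{-1}.\Xi)\le d_G(e,h)$. Hence $\dw(\Pi,h^{-1}.\Xi)<2\eta'$ and $\delta_H(\Tt_1,\Tt_2)\le 2\eta$.

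Without bi-invariance neither of your proposed fixes can work, because the linear lower bound is false. Here is a counterexample.
\begin{itemize}
\item Take $G=H_3(\R)$ with $d_\HH$, $\Gamma:=\Gamma_\HH$ as in Section~\ref{sec:Subst_Examples}, $\sigma=1$, and $g_0=(u,0,0)$ with small $u>0$.
\item Set $\Gamma_1:=g_0\Gamma g_0^{-1}=\{(a,b,c+ub): a,b,c\in 2\Z\}$. Both sets are Delone for suitable $r_0<R_0$, and both are subgroups, so $\tran(\hull(\Gamma_1))=\{\Gamma_1\}$ and $\tran(\hull(\Gamma))=\{\Gamma\}$.
\item Pair the atom $hg_0\gamma g_0^{-1}$ of $h.\Gamma_1$ with the atom $hg_0\gamma$ of $(hg_0).\Gamma$. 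Their distance is $d_\HH(e,g_0^{-1})=u$ for every $\gamma$, so the hulls are at Hausdorff distance at most $u$.
\item The atom $(0,b,ub)$ of $\Gamma_1$ is at distance $|ub|^{1/2}$ from $\Gamma$ when $|ub|<1$, since all other lattice points lie at horizontal distance at least $2$.
\item Using $|b|\approx 1/(2\varepsilon)$, this forces $\dw(\Gamma_1,\Gamma)\gtrsim u^{1/3}$. So the ratio of the transversal distance to the hull distance is unbounded as $u\to 0$.
\end{itemize}

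Your compactness fallback only gives a modulus of continuity. Let $\omega(r):=\sup\{\dw(h.\Xi,\Xi): d_G(e,h)\le r,\ \Xi\in\wDel\}$; it tends to $0$ by uniform continuity of the jointly continuous action on $\oB(1)\times\wDel$. Your triangle inequality then yields $\delta_H(\Tt_1,\Tt_2)\le\eta'+\omega(\eta')$ for all $\eta'>\eta$. That proves continuity of $\Phi^{-1}$, hence the homeomorphism. The bi-Lipschitz claim, however, needs a bi-invariant $d_G$ (e.g.\ abelian $G$).
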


\begin{proof}
The topological statement is proven in \cite[Thm.~2.A]{BecPog20} for Delone sets without weights. An alternative proof is provided in the appendix in Proposition~\ref{prop:Homeo_hulls_transversals_Appendix}, where we also prove that $\Phi$ is bi-Lipschitz.
\end{proof}

\begin{remark}
We note that this result is false if we consider just uniformly discrete sets. In this case the hull of a $\Pi\in\wPat$ contains the empty set if $\Pi$ is not relatively dense. On the other hand, the transversal of $\Pi$ does not contain the empty set since for every element in $\Xi\in\tran(\Pi)$, we have $e\in\supp(\Xi)$.
\end{remark}

Properties of Delone sets (or tilings) are often analyzed by studying the associated patches; see, e.g., \cite{Fog02,Queff10,BaaGri13}.  
Since general Delone sets may admit infinitely many patches of a fixed size, one typically classifies them up to $\varepsilon$-similarity; see \cite{FrRi14,BecHarPog25}.

\begin{definition}[$\varepsilon$-similar patches]
\label{def:epsilon-similar}
For $\varepsilon>0$, we say two patches $\Pi,\Xi\in\wPat$ are {\em $\varepsilon$-similar ($\Pi=_\varepsilon \Xi$)}, if $\wdelt(\Pi,\Xi)<\varepsilon$. Two subsets $W,W'\subseteq \wPat$ are called {\em $\varepsilon$-similar ($W=_\varepsilon W'$)} if
\begin{itemize}
\item for all $\Lambda\in W$, there is a $\Theta\in W'$ such that $\Lambda$ and $\Theta$ are $\varepsilon$-similar, and
\item for all $\Theta\in W'$, there is a $\Lambda\in W$ such that $\Lambda$ and $\Theta$ are $\varepsilon$-similar.
\end{itemize}
\end{definition}
 
We emphasize that the relation of being $\varepsilon$-similar is not an equivalence relation. This relation is only reflexive and symmetric but not transitive. We next introduce the set of $R$-patches associated with a weighted Delone set or weighted Delone dynamical system.

\begin{definition}
\label{def:Weighted_Dictionary}
For $R>0$, and $\Pi\in\wDel$, define the {\em $R$-patches} by
$$
W_R(\Pi) : = \set{\big(g^{-1}.\Pi\big)|_{B(R)}}{g\in \supp(\Pi)}
$$
and for $\Omega\in\wInv$, the {\em $R$-patches} are defined by
$$
W_R(\Omega) : = \set{\big(g^{-1}.\Pi\big)|_{B(R)}}{\Pi\in\Omega, \, g\in \supp(\Pi)}
	= \bigcup_{\Pi\in\Omega} W_R(\Pi).
$$
\end{definition}

Note that we choose patches to be anchored at the origin (the neutral element $e\in G$).  
Specifically, for each $\Lambda \in W_R(\Pi)$, we have $\mu_\Lambda(\{e\}) \geq \sigma^{-1} > 0$.

In general, the set of $R-$patches $W_R(\Omega)$ is not closed in $(\wPat, \dw)$; see the following example.

\begin{example}
Let \( G = \R \), \( r_0 = \tfrac{1}{2} \), \( R_0 = 2 \), and \( \sigma = 2 \).  
Consider the weighted Delone set \( (\Z, \alpha) \in \wDel \), where \( \alpha : \Z \to [\sigma^{-1}, \sigma] \) is defined by
\[
\alpha(n) := 1 + \tfrac{1}{|n| + 2}.
\]
Then the set of $1$-patches is given by
\[
W_1(\Z, \alpha) = \set{ (0,\, 0 \mapsto 1 + \tfrac{1}{|n| + 2}) }{ n \in \Z }.
\]
This set is not closed in \( (\wPat, \dw) \), since its closure contains the patch \( (0,\, 0 \mapsto 1) \), which is not an element of \( W_1(\Z, \alpha) \).
A similar statement remains true for unweighted Delone sets (i.e. $\sigma=1$). An example is the Delone set
\[
D := \set{ n + \tfrac{1}{|n| + 1} }{ n \in \Z }.
\]
\end{example}

A weighted Delone set \( \Pi =(D,\alpha) \) is called of \emph{finite local complexity (FLC)} if \( W_R(\Pi) \) is finite for all \( R > 0 \).  
This condition can be characterized by the fact that the difference set \( D^{-1} D \) is locally finite and the weight function \( \alpha \) takes only finitely many values; see \cite{Lag99A,Sch00} in the unweighted case.

While not all Delone sets are of finite local complexity, one can partition any hull or transversal by neighborhoods of patches of arbitrary fixed size.

\begin{lemma}
\label{lem:Basic_Diction_Transv_Hull}
Let $r_0,R_0>0$ and $\sigma\geq 1$. 
\begin{enumerate}[label=(\alph*)]
\item For $\Pi\in\wDel$, we have $W_R(\Pi)=\set{ \Xi|_{B(R)}}{\Xi\in\tran(\Pi)}$.
\item For $\Omega\in\wInv$, we have $W_R(\Omega)=\set{ \Xi|_{B(R)}}{\Xi\in\tran(\Omega)}$.
\item For all $\Omega\in\wInv$ and each $\min\big\{ \tfrac{\sigma^{-1}}{2}, \tfrac{r_0}{2} \big\} > \varepsilon>0$, there are finitely many $\Pi_1,\ldots,\Pi_N\in \Omega$ such that 
$$
\Omega \subseteq \bigcup_{j=1}^N B(\Pi_j,\varepsilon)
	= \bigcup_{j=1}^N \set{\Xi\in\wDel}{\Xi|_{B(\tfrac{1}{\varepsilon})} =_\varepsilon \Pi_j|_{B(\tfrac{1}{\varepsilon})}}
$$
where $B(\Pi_j,\varepsilon):=\set{\Xi\in\wDel}{\dw(\Pi_j,\Xi)<\varepsilon}$ is the open ball of radius $\varepsilon$ around $\Pi_j\in\wDel$.
\end{enumerate}
\end{lemma}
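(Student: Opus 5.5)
The plan is to prove (a) and (b) by unwinding the definitions of $W_R$ and $\tran$, and to prove (c) from the compactness in Proposition~\ref{prop:metric_weightDelone} together with a direct reading of the local discrepancy \eqref{eq:delta_weightedPatches}. Part (b) reduces to (a): by Definition~\ref{def:Weighted_Dictionary}, $W_R(\Omega)=\bigcup_{\Pi\in\Omega}W_R(\Pi)$, and $\tran(\Omega)$ is the closure of $\bigcup_{\Pi\in\Omega}\{g^{-1}.\Pi : g\in\supp(\Pi)\}$. So the argument for (a) applies verbatim, with $\Pi$ ranging over $\Omega$, using that $\Omega$ is closed and invariant.

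For (a), the inclusion ``$\subseteq$'' is immediate. Every element of $W_R(\Pi)$ has the form $(g^{-1}.\Pi)|_{B(R)}$ with $g\in\supp(\Pi)$, and $g^{-1}.\Pi$ lies in $\tran(\Pi)=\ol{\{g^{-1}.\Pi : g\in\supp(\Pi)\}}$.

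For ``$\supseteq$'', fix $\Xi\in\tran(\Pi)$ and choose $g_n\in\supp(\Pi)$ with $g_n^{-1}.\Pi\to\Xi$ in $\dw$. By Proposition~\ref{prop:metric_weightDelone} this is weak$^*$ convergence of the Dirac combs. Hence, on $B(R)$, each point of $\supp(\Xi)$ is approximated by a unique point of $\supp(g_n^{-1}.\Pi)$ within distance $<r_0/2$, and the corresponding weights converge. It remains to identify $\Xi|_{B(R)}$ with one of the patches $(g_n^{-1}.\Pi)|_{B(R)}$; this is the step I expect to be the main obstacle.

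I would handle it as follows. First control the points near the boundary sphere of $B(R)$, using that $B(R)$ is open and that the point sets are $r_0$-uniformly discrete, so only finitely many points are involved. Then use this finiteness to match supports and weights pointwise. The difficulty is that the approximating patches agree with $\Xi|_{B(R)}$ only up to $\varepsilon$-similarity, and the preceding example with $\alpha(n)=1+\tfrac{1}{|n|+2}$ shows that limits of patches need not themselves be patches. The argument therefore has to exploit exactly which patches are realized along the sequence $(g_n)$. This is where I would concentrate the work, and where the closedness of $\tran(\Pi)$ enters.

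For (c), fix $0<\varepsilon<\min\{\tfrac{\sigma^{-1}}{2},\tfrac{r_0}{2}\}$. Since $\Omega$ is a closed subset of the compact space $(\wDel,\dw)$, the open cover $\{B(\Pi,\varepsilon)\}_{\Pi\in\Omega}$ has a finite subcover $B(\Pi_1,\varepsilon),\ldots,B(\Pi_N,\varepsilon)$, which gives the inclusion.

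For the displayed equality of sets, note first that for this range of $\varepsilon$ we have $\dw(\Pi_j,\Xi)<\varepsilon$ if and only if $\wdelt(\Pi_j,\Xi)<\varepsilon$. Second, \eqref{eq:delta_weightedPatches} only tests centres $x\in B(\tfrac1\varepsilon)\cap(\supp(\Pi_j)\cup\supp(\Xi))$ with radii $<\varepsilon$. Hence $\wdelt(\Pi_j,\Xi)<\varepsilon$ is equivalent to $\wdelt\big(\Pi_j|_{B(1/\varepsilon)},\Xi|_{B(1/\varepsilon)}\big)<\varepsilon$, i.e.\ to $\Xi|_{B(1/\varepsilon)}=_\varepsilon\Pi_j|_{B(1/\varepsilon)}$ in the sense of Definition~\ref{def:epsilon-similar}. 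To justify this equivalence, I would show that the infimum in \eqref{eq:delta_weightedPatches} is attained strictly below $\varepsilon$ for a slightly smaller $\varepsilon'$. I would also check that, because $\varepsilon<r_0/2$, the balls $B(x,\varepsilon')$ centred in $B(\tfrac1\varepsilon)$ see the same points in the restricted and unrestricted sets; if needed, this is done by shrinking $\varepsilon'$ and using uniform discreteness.
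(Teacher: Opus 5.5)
\textbf{Gap in (a).} Your inclusion $W_R(\Pi)\subseteq\set{\Xi|_{B(R)}}{\Xi\in\tran(\Pi)}$ is correct, and your finite-cover argument in (c) is the paper's argument. The problem is the reverse inclusion in (a), which you leave open. It cannot be closed, because in the stated generality (no finite local complexity) it is false.

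The example preceding the lemma is a counterexample. Take $\Pi=(\Z,\alpha)$ with $\alpha(n)=1+\tfrac{1}{|n|+2}$. The elements $g^{-1}.\Pi=(\Z,\alpha(\cdot+n))$, $g=n\in\Z$, converge in $\dw$ to $(\Z,1)$ as $n\to\infty$, so $(\Z,1)\in\tran(\Pi)$. But $(\Z,1)|_{B(1)}=(\{0\},0\mapsto 1)$ differs from every element $(\{0\},0\mapsto 1+\tfrac{1}{|n|+2})$ of $W_1(\Pi)$. So no analysis of ``which patches are realized along $(g_n)$'' can identify $\Xi|_{B(R)}$ with some $(g_n^{-1}.\Pi)|_{B(R)}$. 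The paper's one-line ``immediate from the definition'' likewise only covers the easy inclusion.

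What does hold is (a) with $W_R(\Pi)$ replaced by $W_R(\hull(\Pi))$. Under finite local complexity (e.g.\ symbolic systems) one moreover has $W_R(\Pi)=W_R(\hull(\Pi))$. To see this, choose $R'>R$ and pass to a subsequence along which $(g_n^{-1}.\Pi)|_{B(R')}=P$ is constant. Testing the weak$^*$ convergence against functions in $C_c(B(R'))$ gives $\mu_\Xi=\mu_P$ on $B(R')$, hence $\Xi|_{B(R)}=(g_n^{-1}.\Pi)|_{B(R)}$.

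\textbf{Part (b).} This is true and should not be routed through (a), since it needs no limit argument. If $\Xi\in\tran(\Omega)$, then $\Xi\in\Omega$ and $e\in\supp(\Xi)$, so $\Xi|_{B(R)}=(e^{-1}.\Xi)|_{B(R)}\in W_R(\Omega)$. Conversely, $g^{-1}.\Pi\in\tran(\Omega)$ for $\Pi\in\Omega$ and $g\in\supp(\Pi)$, by invariance. Applied to $\Omega=\hull(\Pi)$, whose transversal is $\tran(\Pi)$, this is the corrected form of (a).

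\textbf{Part (c).} Your repair of boundary effects by shrinking $\varepsilon'$ does not work. Suppose $\Xi$ agrees with $\Pi_j$ except that a point $x\in\supp(\Pi_j)$ just inside $B(\tfrac{1}{\varepsilon})$ is moved to a nearby point $y$ just outside. Then $\wdelt(\Pi_j,\Xi)\le d_G(x,y)<\varepsilon$. Yet $\Xi|_{B(1/\varepsilon)}$ and $\Pi_j|_{B(1/\varepsilon)}$ are not $\varepsilon$-similar: every ball $B(x,\varepsilon'')$ with $\varepsilon''<\varepsilon$ carries mass $\ge\sigma^{-1}$ for one patch and $0$ for the other. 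Shrinking $\varepsilon'$ below $d_G(x,y)$ only destroys the unrestricted estimate instead. So the displayed set equality holds only up to such boundary effects, which the paper does not address either. The finite cover $\Omega\subseteq\bigcup_j B(\Pi_j,\varepsilon)$ is unaffected.
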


\begin{proof}
Statement (a) follows immediately from the definition of the transversal. Then (a) implies (b).

\medskip

For (c), the first equality follows simply by compactness of $\Omega$ and the second one follows from the definition of $\dw$ (see eq.~\eqref{eq:Metric_wDel}) and $\varepsilon$-similar patches. Here we use that $\dw(\Xi,\Pi_j)<\varepsilon$ if and only if $\wdelt(\Xi,\Pi_j)<\varepsilon$ since $\varepsilon < \min\big\{ \tfrac{\sigma^{-1}}{2}, \tfrac{r_0}{2} \big\}$.
\end{proof}

Statement (c) is well-known and has a natural interpretation: the ``cylinder sets'' corresponding to the patches \( \Pi_j|_{B(\tfrac{1}{\varepsilon})} \) cover the hull \( \Omega \).  
The notion of cylinder sets is widely used in symbolic dynamical systems, where convergence of the associated dynamical systems is characterized by the convergence of local patches; see \cite[Thm~3.3.22]{Bec16} and \cite[Cor.~5.5]{BBdN20}.
Note however that in general, $B(\Pi_j,\varepsilon)$ may intersect $B(\Pi_i,\varepsilon)$ if $i\neq j$.

Building on this principle, the convergence of dynamical systems is used in \cite{BeBeCo19} to estimate spectral distances (see also Theorem~\ref{thm:SpecEst_DynDefOp}), and in \cite{BaBePoTe25} to characterize the existence of (periodic) approximations for substitution systems (see also Theorem~\ref{thm:Charact_Convergence_Substitutions}).

The analogue of this patchwise convergence characterization for weighted Delone sets is provided in the following.

In light of Lemma~\ref{lem:Basic_Diction_Transv_Hull}~(b) and Proposition~\ref{prop:Homeo_hulls_transversals}, we define the $R$-patches of a transversal $\Tt\in\wInvT$ by $W_R(\Tt):= W_R(\Omega)$ where $\Omega\in\wInv$ is defined by $\Omega=G.\Tt$. The Hausdorff distance of the transversals (and via Proposition~\ref{prop:Homeo_hulls_transversals} also of Delone dynamical systems) can be expressed by the infimum over positive numbers $\varepsilon$ where the $R$-patches are $\varepsilon$-similar.

\begin{proposition}
\label{prop:HausdorffDistance_transv}
Let $r_0,R_0>0$ and $\sigma\geq 1$. For $\Tt_1,\Tt_2\in\wInvT$, we have
$$
\delta_H(\Tt_1,\Tt_2) 
	= \inf\set{\varepsilon>0}{W_{1/\varepsilon}(\Tt_1) =_\varepsilon W_{1/\varepsilon}(\Tt_2)} 
		\cup
		\left\{
			\tfrac{\sigma^{-1}}{2},\, 
			\tfrac{r_0}{2}
		\right\}.
$$
\end{proposition}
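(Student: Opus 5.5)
The plan is to compare the two quantities directly. Set $\varepsilon_0 := \min\{\tfrac{\sigma^{-1}}{2}, \tfrac{r_0}{2}\}$. Observe first that for every $\Pi,\Xi\in\wDel$ we have $\dw(\Pi,\Xi)\leq \varepsilon_0$, hence $\delta_H(\Tt_1,\Tt_2)\leq\varepsilon_0$. The right-hand side is also at most $\varepsilon_0$, because the two constants sit inside the infimum. So it suffices to show
\[
\delta_H(\Tt_1,\Tt_2) = \inf\set{\varepsilon\in(0,\varepsilon_0)}{W_{1/\varepsilon}(\Tt_1) =_\varepsilon W_{1/\varepsilon}(\Tt_2)},
\]
with the convention that an empty infimum is replaced by $\varepsilon_0$. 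For $\varepsilon<\varepsilon_0$, the definition \eqref{eq:Metric_wDel} gives $\dw(\Pi,\Xi)<\varepsilon$ if and only if $\wdelt(\Pi,\Xi)<\varepsilon$, exactly as in Lemma~\ref{lem:Basic_Diction_Transv_Hull}(c).

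\textbf{Step 1 (locality of $\wdelt$).} The key observation is that $\wdelt(\Pi,\Xi)<\varepsilon$ only involves the configuration inside $B(\tfrac1\varepsilon)$, up to an $\varepsilon$-thickening. The condition tests points $x\in B(\tfrac1\varepsilon)\cap(\supp\Pi\cup\supp\Xi)$ and the masses $\mu(B(x,\varepsilon))$. For the restrictions $\Pi|_{B(1/\varepsilon)}$ I would verify that
\[
\wdelt(\Pi,\Xi)<\varepsilon \iff \wdelt\big(\Pi|_{B(1/\varepsilon)},\Xi|_{B(1/\varepsilon)}\big)<\varepsilon,
\]
possibly up to replacing $\varepsilon$ by a slightly larger $\varepsilon'$. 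This perturbation is harmless, since we only compute an infimum and the conditions are monotone in $\varepsilon$: if the condition holds at $\varepsilon$ it holds at every $\varepsilon'>\varepsilon$, because the radius $\tfrac1{\varepsilon'}$ shrinks and the tolerance grows. Monotonicity for the patch relation needs a small check, since $W_{1/\varepsilon}$ also shrinks with $\varepsilon$ and restriction commutes.

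\textbf{Step 2 ($\leq$).} Suppose $W_{1/\varepsilon}(\Tt_1)=_\varepsilon W_{1/\varepsilon}(\Tt_2)$ with $\varepsilon<\varepsilon_0$, and let $\Xi_1\in\Tt_1$. By Lemma~\ref{lem:Basic_Diction_Transv_Hull}(a),(b), $\Xi_1|_{B(1/\varepsilon)}\in W_{1/\varepsilon}(\Tt_1)$. Hence there is $\Xi_2\in\Tt_2$ whose restriction is $\varepsilon$-similar to it. By Step 1, $\dw(\Xi_1,\Xi_2)<\varepsilon'$ for every $\varepsilon'>\varepsilon$. The argument is symmetric in the two transversals, so $\delta_H\leq\varepsilon$.

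\textbf{Step 3 ($\geq$).} Suppose $\delta_H(\Tt_1,\Tt_2)<\varepsilon<\varepsilon_0$. Every $\Xi_1\in\Tt_1$ has some $\Xi_2\in\Tt_2$ with $\wdelt(\Xi_1,\Xi_2)<\varepsilon$. Restricting both to $B(1/\varepsilon)$ and using Step 1 yields $\varepsilon$-similar patches. Together with Lemma~\ref{lem:Basic_Diction_Transv_Hull}(b), this gives $W_{1/\varepsilon}(\Tt_1)=_\varepsilon W_{1/\varepsilon}(\Tt_2)$, and taking the infimum finishes the proof.

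The main obstacle is the boundary effect in Step 1. A point of $\Pi$ just outside $B(\tfrac1\varepsilon)$ can contribute mass to $B(x,\varepsilon)$ for a test point $x$ inside, so restricting can change the masses near the boundary. Conversely, a test point lying inside for one configuration corresponds to a matched point lying slightly outside for the other. I would handle this by the slack argument above: use the radius $\tfrac1\varepsilon$ in one direction and $\tfrac1{\varepsilon'}<\tfrac1\varepsilon-\varepsilon$ in the other, then let $\varepsilon'\downarrow\varepsilon$. For $\varepsilon<r_0/2$, each relevant ball contains at most one point of each set, which reduces the mass comparisons to weight and position comparisons and keeps this bookkeeping manageable.
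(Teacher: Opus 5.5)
\textbf{Where you follow the paper.} Your route is the paper's: pair each configuration with a $\dw$-nearest one in the other transversal, then restrict both to $B(1/\varepsilon)$. Step~2 is correct. Suppose the restricted patches are $\varepsilon$-similar and take $\varepsilon'\in(\varepsilon,\min\{\tfrac{r_0}{2},\tfrac{\sigma^{-1}}{2}\}]$. Then every test point of $\wdelt$ at scale $\varepsilon'$ lies in $B(1/\varepsilon)$, and each ball $B(x,\varepsilon')$ contains only the matched point. Hence $\dw(\Xi_1,\Xi_2)\le\varepsilon$, which gives $\delta_H(\Tt_1,\Tt_2)\le$ the right-hand side, with no monotonicity needed.

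\textbf{The gap.} Step~3, i.e.\ the forward half of Step~1, fails. The boundary effect you flag is not bookkeeping: it makes the claimed identity false. Take $G=\R$, $\sigma=1$, $r_0=\tfrac12$, $R_0=1$, $\Tt_1=\{\Z\}=\tran(\hull(\Z))$ and $\Tt_2=\{L\Z\}$ with $L=1+\Delta$, $\Delta=0.00995$.
\begin{itemize}
\item For $\varepsilon\in(10\Delta,\tfrac1{10})$, every test point $n$ or $nL$ of $\wdelt(\Z,L\Z)$ has its partner at distance $|n|\Delta\le 10\Delta<\varepsilon$.
\item For $\varepsilon\le 10\Delta$, the test point $10$ has no point of $L\Z$ in $B(10,\varepsilon)$.
\item Hence $\delta_H(\Tt_1,\Tt_2)=\dw(\Z,L\Z)=10\Delta=0.0995$.
\end{itemize}
Now $W_{1/\varepsilon}(\Tt_2)=\{L\Z\cap B(1/\varepsilon)\}$. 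For every $\varepsilon<\tfrac1{10}$, the point $10\in\Z\cap B(1/\varepsilon)$ is unmatched. Its only candidate $10L$ is cut off by the restriction if $\varepsilon\ge\tfrac{1}{10L}$. Otherwise it lies at distance $10\Delta>\tfrac{1}{10L}>\varepsilon$. At $\varepsilon=\tfrac1{10}$ the patches $\{-9,\dots,9\}$ and $\{-9L,\dots,9L\}$ are $\tfrac1{10}$-similar. So the right-hand side equals $\tfrac1{10}\neq\delta_H(\Tt_1,\Tt_2)$.

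\textbf{Why your repairs do not help.} The slack condition $1/\varepsilon'<1/\varepsilon-\varepsilon$ forces $\varepsilon'-\varepsilon>\varepsilon^3$, so letting $\varepsilon'\downarrow\varepsilon$ is impossible. Also, $W_{1/\varepsilon}$ cuts both transversals at the same radius, so moving the radius only moves the straddling pair. Nor is the patch relation monotone in $\varepsilon$: in the example it holds at $\tfrac1{10}$ but fails on $[\tfrac{1}{9L},\tfrac19)$, where $9$ is kept and $9L$ is cut.

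\textbf{What survives.} The paper's proof contains exactly the same unjustified step: ``$\dw(\Pi,\Xi)<\tepsilon$ implying $\wdelt(\Pi|_{B(1/\tepsilon)},\Xi|_{B(1/\tepsilon)})<\tepsilon$''. So this is a defect of the statement, not of your write-up. Only $\delta_H(\Tt_1,\Tt_2)\le$ the right-hand side actually holds. A two-sided result needs either a boundary-tolerant patch relation (e.g.\ matching $\Pi|_{B(R)}$ against restrictions to a slightly larger ball) or an approximate reverse inequality.
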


\begin{proof}
Let $\Tt_1,\Tt_2\in\wInvT$ be two transversals. Set $\delta_{\text{tran}} := \delta_H(\Tt_1,\Tt_2)$,
$$
\delta:= \inf\set{\varepsilon>0}{W_{1/\varepsilon}(\Tt_1) =_\varepsilon W_{1/\varepsilon}(\Tt_2)} 
		\cup
		\left\{
			c_0
		\right\}
		\qquad\text{and}\qquad
		c_0:=\min\left\{\frac{\sigma^{-1}}{2}, \frac{r_0}{2}\right\}.
$$
We first prove $\delta_{\text{tran}} \leq \delta$. Without loss of generality we can assume $\delta<c_0$ since $\delta_{\text{tran}}\leq c_0$.
Let $\varepsilon>0$ be such that $\tepsilon:=\delta+\varepsilon<c_0$. Then $W_{1/\tepsilon}(\Tt_1) =_\tepsilon W_{1/\tepsilon}(\Tt_2)$ holds. 
Let $\Xi\in\Tt_1$ be arbitrary, then $\Xi|_{B(1/\tepsilon)}\in W_{1/\tepsilon}(\Tt_1)$. Thus, there is a $\Pi\in\Tt_2$ such that $\Pi|_{B(1/\tepsilon)} =_\tepsilon \Xi|_{B(1/\tepsilon)}$ implying $\wdelt(\Pi,\Xi)<\tepsilon<c_0$. By interchanging the role of $\Tt_1$ and $\Tt_2$, we conclude  $\delta_{\text{tran}} \leq \tepsilon = \delta+\varepsilon$. Sending $\varepsilon$ to zero implies $\delta_{\text{tran}} \leq \delta$.

\medskip

Next we prove $\delta\leq \delta_{\text{tran}}$.
Without loss if generality assume $\delta_{\text{tran}}<c_0$ since $\delta\leq c_0$.
Let $\varepsilon>0$ be such that $\tepsilon:=\delta_{\text{tran}}+\varepsilon<c_0$. 
Let $\Pi|_{B(1/\tepsilon)}\in W_{1/\tepsilon}(\Tt_1)$ be arbitrary. Then there is a $\Xi\in \Tt_2$ satisfying $\dw(\Pi,\Xi)<\tepsilon$ implying $\wdelt(\Pi|_{B(1/\tepsilon)},\Xi|_{B(1/\tepsilon)})<\tepsilon$. Hence, $\Pi|_{B(1/\tepsilon)} =_\tepsilon \Xi|_{B(1/\tepsilon)}\in W_{1/\tepsilon}(\Tt_2)$ follows. 
By interchanging the role of $\Tt_1$ and $\Tt_2$, we conclude $\delta\leq \tepsilon = \delta_{\text{tran}}+\varepsilon$. Sending $\varepsilon$ to zero implies $\delta\leq \delta_{\text{tran}}$.
\end{proof}

The previous equality is a practical tool to construct periodic approximations and estimate the rate of convergence \cite{Bec16,BBdN20,BaBePoTe25} (see also Section~\ref{sec:Subst-Approximations}) or identify possible spectral defects \cite{BecBelTho25} (Section~\ref{sec:Kohmoto}). The estimates on the rate of convergence for the dynamical systems yields then corresponding estimates for the rate of convergence of the spectrum \cite{BeBeCo19,BecTak25} (Section~\ref{sec:SpectralEstimates}).
If one is only interested in convergence of Delone dynamical systems, we can summarize the previous results in the following corollary.

\begin{corollary}
\label{cor:Charact_Converg_hulls_Patches}
Let $r_0,R_0>0$ and $\sigma\geq 1$. Then the following are equivalent for a sequence of weighted Delone dynamical systems $\Omega\in\wInv, n\in\N$, and $\Omega\in\wInv$.
\begin{enumerate}[label=(\roman*)]
\item $\lim\limits_{n\to\infty}\Omega_n=\Omega$ in $\inv$.
\item $\lim\limits_{n\to\infty}\tran(\Omega_n)=\tran(\Omega)$ in $\inv^e$.
\item For all $R>0$ and $\varepsilon>0$, there is an $n_0\in\N$ such that
$$
W_R(\Omega_n) =_\varepsilon W_R(\Omega) \qquad \textrm{for all } n\geq n_0.
$$
\end{enumerate}
\end{corollary}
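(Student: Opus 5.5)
The equivalence (i)$\Leftrightarrow$(ii) follows directly from Proposition~\ref{prop:Homeo_hulls_transversals}. The map $\Phi:\wInvT\to\wInv$, $\Tt\mapsto G.\Tt$, is a bi-Lipschitz homeomorphism. By Lemma~\ref{lem:Connec_Transv-Hull} we have $\Phi(\tran(\Omega))=\Omega$ for every $\Omega\in\wInv$, so $\Phi^{-1}(\Omega)=\tran(\Omega)$. The estimate
\[
\tfrac12\delta_H\big(\tran(\Omega_n),\tran(\Omega)\big)\leq \delta_H(\Omega_n,\Omega)\leq (R_0+1)\,\delta_H\big(\tran(\Omega_n),\tran(\Omega)\big)
\]
then shows that one distance tends to zero if and only if the other does. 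Strictly speaking, Proposition~\ref{prop:Homeo_hulls_transversals} is stated for $R_0>r_0$. In the corollary we may enlarge $R_0$ if necessary: a Delone set that is $R_0$-relatively dense is also $R_0'$-relatively dense for $R_0'\geq R_0$, and since $\dw$ does not depend on $R_0$, the metrics are unchanged.

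For (ii)$\Leftrightarrow$(iii) I use Proposition~\ref{prop:HausdorffDistance_transv}, noting that $W_R(\tran(\Omega))=W_R(\Omega)$ by definition and Lemma~\ref{lem:Basic_Diction_Transv_Hull}(b). Set $c_0:=\min\{\sigma^{-1}/2,r_0/2\}$.

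For (ii)$\Rightarrow$(iii), fix $R>0$ and $\varepsilon>0$, and choose $\eta<\min\{c_0,\varepsilon,1/R\}$. By (ii), for large $n$ we have $\delta_H(\tran(\Omega_n),\tran(\Omega))<\eta<c_0$. Proposition~\ref{prop:HausdorffDistance_transv} then yields some $\eta'<\eta$ with $W_{1/\eta'}(\Omega_n)=_{\eta'}W_{1/\eta'}(\Omega)$.

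The key step, and the main technical point, is a \emph{monotonicity} property. If two patches of radius $R'$ are $\eta'$-similar, then their restrictions to $B(R)$ with $R\leq R'$ are $\eta''$-similar for every $\eta''\geq\eta'$ with $\eta''<\varepsilon$. The reason is that $\wdelt$ only tests balls $B(x,\eta')$ with $x\in B(1/\eta')$, and for $R\le 1/\eta'$ restriction to $B(R)$ discards only points outside that region. I expect the main care to be needed near the boundary of $B(R)$. A point of one restricted patch close to $\partial B(R)$ may lose its partner in the other patch, because the partner lies within $\eta'$ but just outside $B(R)$. I would handle this by testing at a slightly smaller scale, or by perturbing $R$. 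Alternatively, I would bypass the issue by working directly with elements of the transversals: given $\Lambda=\Xi|_{B(R)}$ with $\Xi\in\tran(\Omega_n)$, pick $\Pi\in\tran(\Omega)$ with $\wdelt(\Xi,\Pi)<\eta$. Since $\eta<1/R$, the ball condition holds on $B(1/\eta)\supseteq B(R)$, and the boundary effect has to be controlled by a mild enlargement of $\varepsilon$. With monotonicity in hand, $W_R(\Omega_n)=_\varepsilon W_R(\Omega)$ for all large $n$, which is (iii).

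For (iii)$\Rightarrow$(ii), let $\varepsilon\in(0,c_0)$ and apply (iii) with $R=1/\varepsilon$. This gives $W_{1/\varepsilon}(\Omega_n)=_\varepsilon W_{1/\varepsilon}(\Omega)$ for $n\geq n_0$. By Proposition~\ref{prop:HausdorffDistance_transv}, $\delta_H(\tran(\Omega_n),\tran(\Omega))\leq\varepsilon$ for these $n$. Since $\varepsilon$ was arbitrary, this proves (ii).
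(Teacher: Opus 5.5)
You follow the same route as the paper, which derives (i)$\Leftrightarrow$(ii) from Proposition~\ref{prop:Homeo_hulls_transversals} and (ii)$\Leftrightarrow$(iii) from Proposition~\ref{prop:HausdorffDistance_transv}. Your argument for (i)$\Leftrightarrow$(ii), including the enlargement of $R_0$, is fine. So is your argument for (iii)$\Rightarrow$(ii); there the boundary effect below at worst costs a slightly larger scale, which is harmless for convergence.

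The genuine gap is the restriction (``monotonicity'') step in (ii)$\Rightarrow$(iii) that you flagged yourself, and it cannot be closed. For $\Xi|_{B(R)}$ and $\Pi|_{B(R)}$ to be $\varepsilon$-similar you need a scale $\varepsilon'<\varepsilon$ at which the test in $\wdelt$ succeeds. Once $\varepsilon\le\min\{c_0,1/R\}$, the test region $B(1/\varepsilon')$ contains all of $B(R)$. Take a point $x\in\supp(\Xi)\cap B(R)$ close to $\partial B(R)$ whose partner in $\supp(\Pi)$ lies just outside $B(R)$. Then $\Pi|_{B(R)}$ has no mass in $B(x,\varepsilon')$, and $x$ produces a discrepancy $\geq\sigma^{-1}>\varepsilon'$. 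Your proposed repairs do not help:
\begin{itemize}
\item testing at a smaller scale only enlarges $B(1/\varepsilon')$;
\item $R$ is prescribed in (iii), so it cannot be perturbed;
\item enlarging $\varepsilon$ only helps once $\varepsilon$ exceeds roughly $1/R$.
\end{itemize}

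In fact (ii)$\Rightarrow$(iii) is false as literally stated. Take $G=\R$, $\sigma=1$, $r_0=\tfrac14$, $R_0=2$, $\Omega=\hull(\Z)$ and $\Omega_n=\hull\big((1-\tfrac1n)\Z\big)$ for $n\geq 2$. Then $\tran(\Omega_n)=\{(1-\tfrac1n)\Z\}\to\{\Z\}=\tran(\Omega)$, so (i) and (ii) hold. However, $W_1(\Omega_n)=\{\{0,\pm(1-\tfrac1n)\}\}$ and $W_1(\Omega)=\{\{0\}\}$. For every scale $\varepsilon'\leq\tfrac12$, the point $x=1-\tfrac1n$ lies in $B(1/\varepsilon')$, and $B(x,\varepsilon')$ carries mass $1$ for the first patch and $0$ for the second. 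Hence $W_1(\Omega_n)$ and $W_1(\Omega)$ are not $\varepsilon$-similar for any $\varepsilon\leq\tfrac12$ and any $n$. The same failure occurs at the radius tied to the scale, e.g.\ $R=10$, $\varepsilon=\tfrac1{10}$, $n\geq 12$. The paper's one-line appeal to Proposition~\ref{prop:HausdorffDistance_transv} passes over this boundary effect as well.

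What is true, and what your alternative argument actually proves, is a boundary-tolerant version of (iii). For all $R,\varepsilon>0$ and all large $n$, each $\Xi\in\tran(\Omega_n)$ admits some $\Pi\in\tran(\Omega)$, and each $\Pi\in\tran(\Omega)$ some $\Xi\in\tran(\Omega_n)$, with the following property: every point of $\supp(\Xi)\cap B(R)$ has a partner in $\supp(\Pi)$, and every point of $\supp(\Pi)\cap B(R)$ has a partner in $\supp(\Xi)$. Partners are at distance less than $\varepsilon$, have weights differing by less than $\varepsilon$, and are \emph{not} required to lie in $B(R)$.

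This follows from $\wdelt(\Xi,\Pi)<\eta$ with $\eta<\min\{c_0,\varepsilon,1/R\}$, because then $B(R)\subseteq B(1/\eta)$ lies inside the tested region. Conversely, take $R=1/\varepsilon$ with $\varepsilon<c_0$. The test then succeeds at every scale $b\in(\varepsilon,c_0)$, so $\wdelt(\Xi,\Pi)\leq\varepsilon$, which gives (ii). With (iii) replaced by this statement, your proof goes through.
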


\begin{proof}
\underline{(i) $\Leftrightarrow$ (ii):} This is proven in \cite[Thm.~II.A]{BecPog20} for Delone sets and extends to weighted Delone sets, see Proposition~\ref{prop:Homeo_hulls_transversals}.

\medskip

\underline{(ii) $\Leftrightarrow$ (iii):} This is an immediate consequence of Proposition~\ref{prop:HausdorffDistance_transv}.
\end{proof}

We finish the section by briefly discussing the implications for the particular case of Delone sets (without a weight) and symbolic dynamical systems of the previous results.

\subsection{Delone dynamical systems}
\label{subsec:Delone_dyn_syst}

Following Example~\ref{ex:Delone-As-WeightDelone}, we identify \( \Del \) with \( \wDel \) by setting \( \sigma = 1 \), corresponding to trivial weights.  
In this case, the topology on \( \Del = \wDel \) induced by \( \dw \) coincides with the Chabauty-Fell topology; see, e.g., \cite[Lem.~2]{BaLe04}, \cite[Sec.~2.1]{MueRic13} and \cite[Prop.~3.2]{BjoHarPog22}.  
As for weighted Delone sets, the \emph{hull} of a Delone set is defined as the orbit closure
\[
\hull(D) := \overline{ \set{ g . D }{ g \in G } }.
\]
For the sake of simplicity, we focus on Delone sets of finite local complexity (FLC), meaning that the set of associated $R$-patches
\[
W_R(D) := \set{ g^{-1} D \cap \oB(R) }{ g \in D }
\]
is finite for each \( R > 0 \). This is equivalent to the condition that the difference set \( D^{-1} D \) is locally finite, or, equivalently, that \( D^{-1} D \) is closed and discrete.
These patches describe the hull (under the FLC assumption) via 
\[
\hull(D) = \set{ P \in \Del }{ W_R(P) \subseteq W_R(D) \text{ for all } R > 0 }.
\]

Let \( L \subseteq G \) be a discrete and closed set. Then \( L \) is locally finite, and every Delone set \( D \in \Del \) with \( D^{-1} D \subseteq L \) is automatically of finite local complexity.  
The corresponding space
\[
\DelL := \set{ D \in \Del }{ D^{-1} D \subseteq L }
\]
is a closed subset of \( \Del \). This space is also endowed with the so-called local rubber topology \cite{BaLe04}, which coincides with the previously described topology on \( \Del \).

We now reformulate Corollary~\ref{cor:Charact_Converg_hulls_Patches} when restricting to \( \DelL \).

\begin{proposition}
\label{prop:Char_Conv_Hull_Delone}
Let $r_0,R_0>0$ and $L\subseteq G$ be a discrete and closed set.  
Then the following are equivalent for $D_n,D\in\DelL, n\in\N$.
\begin{enumerate}[label=(\roman*)]
\item $\lim_{n\to\infty} \hull(D_n)=\hull(D)$ in $\wInv$.
\item For all $R>0$, there exists an $n_0\in\N$ such that $W_R(D_n)=W_R(D)$ for all $n\geq n_0$.
\end{enumerate}
\end{proposition}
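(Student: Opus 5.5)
We reduce the statement to Corollary~\ref{cor:Charact_Converg_hulls_Patches} with $\sigma=1$, so that $\Del=\wDel$ carries the metric $\dw$, and use the FLC structure coming from $L$ to turn $\varepsilon$-similarity of patches into equality. Patches $W_R(D)$ are taken with closed balls $\oB(R)$, while Definition~\ref{def:Weighted_Dictionary} uses open balls. This is harmless: for $R<R'$ the $R'$-patches determine the $R$-patches by restriction, so condition (ii) for all $R$ with closed balls is equivalent to the same condition with open balls. Also note $\hull(D_n)\to\hull(D)$ is condition (i) of the corollary with $\Omega_n=\hull(D_n)$. By Lemma~\ref{lem:Basic_Diction_Transv_Hull}(a),(b) and the FLC description of the hull, $W_R(\hull(D))=W_R(D)$, since every $P\in\hull(D)$ satisfies $W_R(P)\subseteq W_R(D)$.

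The key lemma is a rigidity statement: for fixed $R$, there is $\varepsilon_R>0$ such that two $R$-patches $P,Q$ of Delone sets in $\DelL$ that are $\varepsilon$-similar, with $\varepsilon<\varepsilon_R$ and $1/\varepsilon>R$, must coincide. Since $e\in P$ and $P\subseteq D$ with $D^{-1}D\subseteq L$, we have $P\subseteq L\cap\oB(R)$, a finite set $F_R$ by local finiteness of $L$ and properness of $d_G$. Choose $\varepsilon_R$ smaller than $r_0/2$, $1/2$, and half the minimal distance between distinct points of the finite set $F_{R+1}$. If $\wdelt(P,Q)<\varepsilon$, then every $x\in P$ has a point of $Q$ within distance $\varepsilon$, because the mass of $B(x,\varepsilon)$ must agree up to $\varepsilon<1$ and masses are integers. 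That point also lies in $F_{R+1}$, hence equals $x$. By symmetry $P=Q$. One must be careful with points near the boundary sphere of radius $R$: a point of $P$ inside $\oB(R)$ could be matched with a point of $Q$ just outside. I would handle this by applying the similarity at radius $R+1$ and then restricting. Matched points coincide exactly, so both restrictions to $\oB(R)$ agree.

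With this lemma, (ii)$\Rightarrow$(i) follows directly. Given $R,\varepsilon$, equality of the patch sets at radius $\max\{R,1/\varepsilon\}+1$ for $n\ge n_0$ gives equality, and hence $\varepsilon$-similarity, of the $R$-patches. Condition (iii) of Corollary~\ref{cor:Charact_Converg_hulls_Patches} then yields (i).

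For (i)$\Rightarrow$(ii), fix $R$ and apply (iii) of the corollary at radius $R+1$ with $\varepsilon<\varepsilon_{R}$. Every patch of $D_n$ at radius $R+1$ is $\varepsilon$-similar to some patch of $D$, and conversely. By the lemma, the restrictions to $\oB(R)$ coincide, so $W_R(D_n)\subseteq W_R(D)$ and $W_R(D)\subseteq W_R(D_n)$.

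The main obstacle is this boundary and rigidity bookkeeping: checking that the infimum in $\wdelt$ forces exact point matching inside $\oB(R)$. All the analytic content is already contained in the corollary.
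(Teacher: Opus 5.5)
Your proposal is correct and follows essentially the paper's route. The constraint $D^{-1}D\subseteq L$ forces every $R$-patch into the finite set $L\cap\oB(R)$, so for small $\varepsilon$ the $\varepsilon$-similarity in the patch characterization (Proposition~\ref{prop:HausdorffDistance_transv}, equivalently Corollary~\ref{cor:Charact_Converg_hulls_Patches}(iii)) collapses to equality; your point-separation lemma is an explicit version of the paper's one-line finiteness argument. The detour through radius $R+1$ is harmless but unnecessary: two $\varepsilon$-similar $R$-patches both lie in the finite, separated set $L\cap\oB(R)$, so matched points already coincide and there is no boundary issue.
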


\begin{proof}
Since \( D^{-1} D \subseteq L \), each \( R \)-patch of \( D \in \DelL \) is a subset of \( L \cap \oB(R) \).  
This intersection is finite because \( L \) is discrete and closed.  
Consequently, there are only finitely many subsets of \( L \cap \oB(R) \), and thus only finitely many possible \( R \)-patches, uniformly for all \( D \in \DelL \).  
Hence, the statement follows immediately from Proposition~\ref{prop:HausdorffDistance_transv}.
\end{proof}

\subsection{Symbolic dynamical systems}
\label{subsec:symb_syst_as_weighted_Delone}
Let $\Gamma\subseteq G$ be a discrete cocompact subgroup (i.e. $G/\Gamma$ is compact) and $\Aa$ a finite set. If $G$ was discrete, $\Gamma=G$ would be a possible chopice. Then $\Gamma\in\Del$ for a suitable choice of $r_0,R_0>0$. 
Recall the notion of the symbolic dynamical system $\Aa^\Gamma:=\{\omega:\Gamma\to\Aa\}$ introduced in Example~\ref{ex:SymbDynSyst}. The set $\Aa^\Gamma$ is equipped with the metric
\[
\dAG( \alpha, \beta ) := \inf\set{\frac{1}{r+1}}{r\geq 0 \textrm{ such that } \alpha|_{B(r)}=\beta|_{B(r)}}, \qquad \alpha,\beta\in\Aa^\Gamma.
\]
Note that if $\alpha(e)\neq \beta(e)$, then $\dAG( \alpha, \beta )=1$. Let 
\[
\delta_\Aa:\Kk(\Aa^\Gamma)\times \Kk(\Aa^\Gamma) \to [0,1]
\] 
be the Hausdorff metric induced from $\dAG$, see eq.~\eqref{eq:Hausdorff_Metric}.
This choice of a metric is made in \cite{BeBeCo19,BecTak25,BaBePoTe25} to provide spectral estimates by the corresponding dynamical subsystems in $\Aa^\Gamma$.
We will show here that $\Aa^\Gamma$ can be represented as a closed subset of the weighted Delone sets $\wDel$ and that the Hausdorff metric $\delta_H$ inherited from $\dw$ is equivalent to the Hausdorff metric $\delta_\Aa$. 

Without loss of generality (by identifying letters with positive real numbers), we assume that \( \Aa \subseteq [\sigma^{-1}, \sigma] \) for some \( \sigma > 1 \).  
Then the configuration space \( \Aa^\Gamma \) embeds naturally into \( \wDel \) via
\[
\Aa^\Gamma \ni \omega \mapsto (\Gamma, \omega) \in \wDel.
\]
In this way, \( \Aa^\Gamma \) is regarded as a subset of \( \wDel \).

For $(\Gamma,\alpha)\in\wDel$ with $\alpha\in\Aa^\Gamma$, we have 
\[
\tran(\Gamma,\alpha) 
	= \set{(\Gamma,\omega)}{\omega \in \ol{\Orb_\Gamma(\alpha)}},
\]
where $\Orb_\Gamma(\alpha):=\set{\gamma\alpha}{\gamma\in\Gamma}$ is the orbit of $\alpha\in\Aa^\Gamma$ with respect to the \( \Gamma \)-action, see Example~\ref{ex:SymbDynSyst}. For $\alpha\in\Aa^\Gamma$, we then identify the $R$-patches $W_R(\Gamma,\alpha)$ with the set
\begin{equation}
\label{eq:R-patch_Symbolic}
W_R(\alpha) =
	\set{(\gamma\alpha)|_{B(R)}}{\gamma\in\Gamma}.
\end{equation}
The following statement shows that the metric $\dAG$ is equivalent to the metric inherited from $(\wDel,\dw)$ if we view $\Aa^\Gamma$ as a subset of $\wDel$.

\begin{proposition}
\label{prop:Conv_Symb-Dyn-Syst}
Let $r_0,R_0>0$, $\sigma>1$ and $\Aa\subseteq [\sigma^{-1},\sigma]$. Then the following assertions hold for $\Gamma\in\Del$.
\begin{enumerate}[label=(\alph*)]
\item For $\alpha,\beta\in\Aa^\Gamma$, we have
$$
c_1 \dAG(\alpha,\beta) \leq \dw\big( (\Gamma,\alpha), (\Gamma,\beta) \big) \leq 2 \dAG(\alpha,\beta) 
$$
where  $c_1 := \min\big\{ \tfrac{r_0}{2},\, \tfrac{\sigma^{-1}}{2},\, \min_{a\neq b\in\Aa}|a-b|\big\}$.
\item The symbolic dynamical system \( (\Aa^\Gamma, \Gamma,\tau) \) is Lipschitz continuous if $\Aa^\Gamma$ is equipped with the metric $\dAG$.
\item The set $\set{(\Gamma,\alpha)}{\alpha\in\Aa^\Gamma}$ is a closed (in $\dw$) and $\Gamma$-invariant subset of $\wDel$. Furthermore, the induced topology coincides with the product topology on $\Aa^\Gamma$.
\item For all $\alpha,\beta\in\Aa^\Gamma$, we have
\[
c_1\delta_\Aa \left(\ol{\Orb_\Gamma(\alpha)},\ol{\Orb_\Gamma(\beta)}\right)
	\leq \delta_H\left(\tran(\Gamma,\alpha),\tran(\Gamma,\beta)\right)
	\leq 2\delta_\Aa \left(\ol{\Orb_\Gamma(\alpha)},\ol{\Orb_\Gamma(\beta)}\right)
\]
and
$$
\delta_\Aa \left(\ol{\Orb_\Gamma(\alpha)},\ol{\Orb_\Gamma(\beta)} \right)
	= \inf\set{\frac{1}{r+1}}{r\geq 0 \textrm{ such that } W_r(\alpha)=W_r(\beta)}
$$
\end{enumerate}
\end{proposition}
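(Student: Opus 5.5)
The heart of the argument is part (a); parts (b)--(d) then follow from it together with earlier results.

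\textbf{Part (a).} Fix $\alpha,\beta\in\Aa^\Gamma$ and put $\Pi=(\Gamma,\alpha)$, $\Xi=(\Gamma,\beta)$. Both have the same support $\Gamma$, which is $r_0$-uniformly discrete. For $\varepsilon<r_0$ and $x\in\Gamma$ we therefore have $\mu_\Pi(B(x,\varepsilon))=\alpha(x)$ and $\mu_\Xi(B(x,\varepsilon))=\beta(x)$. Consequently the condition defining $\wdelt$ reads
\[
|\alpha(x)-\beta(x)|<\varepsilon \quad\text{for all } x\in\Gamma\cap B(1/\varepsilon).
\]

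For the upper bound, suppose $\alpha|_{B(r)}=\beta|_{B(r)}$ and set $\varepsilon$ slightly larger than $1/(r+1)$, so that $1/\varepsilon<r+1$. We must control the points of $\Gamma$ lying in the shell $B(1/\varepsilon)\setminus B(r)$. This is where the factor $2$ enters. Taking $\varepsilon'$ slightly above $2/(r+1)$ gives $1/\varepsilon'<(r+1)/2\le r$ whenever $r\ge1$, so $B(1/\varepsilon')\subseteq B(r)$ and the discrepancy vanishes on this ball. Hence $\wdelt\le 2/(r+1)$ for $r\ge1$. For $r<1$ we have $2\dAG\ge1$, while $\dw\le\min\{r_0/2,\sigma^{-1}/2\}\le 1/2$ (recall $\sigma>1$), so the bound is trivial. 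Taking the infimum over $r$ gives $\dw\le2\dAG$.

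For the lower bound, let $\varepsilon<c_1$ satisfy the $\wdelt$ condition. Then $|\alpha(x)-\beta(x)|<\min_{a\neq b}|a-b|$ forces $\alpha(x)=\beta(x)$ for all $x\in\Gamma\cap B(1/\varepsilon)$. Thus $\dAG(\alpha,\beta)\le 1/(1/\varepsilon+1)\le\varepsilon$, and so $\dAG\le\wdelt$. If instead $\dw\ge c_1$, the inequality $c_1\dAG\le c_1\le\dw$ holds because $\dAG\le1$. Combining the two cases gives $c_1\dAG\le\dw$.

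\textbf{Part (b).} The embedding $\omega\mapsto(\Gamma,\omega)$ is $\Gamma$-equivariant, since $\gamma\Gamma=\Gamma$. By Proposition~\ref{prop:wDel_groupAction_Lipschitz} and part (a),
\[
\dAG(\gamma\alpha,\gamma\beta)\le c_1^{-1}(d_G(e,\gamma)+1)\,2\,\dAG(\alpha,\beta).
\]
This is of the form $(C\,d_G(e,\gamma)+C)\dAG$, not the required $C\,d_G(e,\gamma)+1$. To fix this I will argue directly instead. Suppose $\alpha$ and $\beta$ agree on $B(r)$. By left-invariance of $d_G$, the shifted configurations $\gamma\alpha,\gamma\beta$ agree on $\gamma B(r)$, which contains $B(r-d_G(e,\gamma))$. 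Hence
\[
\dAG(\gamma\alpha,\gamma\beta)\le \frac{1}{\max\{r-d_G(e,\gamma),0\}+1}.
\]
An elementary inequality shows that $\frac{r+1}{r-t+1}\le t+1$ whenever $r\ge t$. The case $r<t$ is covered by $\dAG\le1\le(t+1)/(r+1)$. Together these give Lipschitz constant $C_\tau=1$.

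\textbf{Part (c).} The image of the embedding is the continuous image of the compact space $\Aa^\Gamma$, hence compact and therefore closed. It is $\Gamma$-invariant by equivariance. By part (a), the embedding is a bi-Lipschitz homeomorphism onto its image. Finally, $\dAG$ induces the product topology, since cylinder sets on balls $B(r)\cap\Gamma$, which are finite, form a basis.

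\textbf{Part (d).} The transversal of $(\Gamma,\alpha)$ is the image of $\ol{\Orb_\Gamma(\alpha)}$ under the embedding. A bi-Lipschitz map with constants $c_1$ and $2$ transports Hausdorff distances with the same constants, which gives the first chain of inequalities. For the identity, I will repeat the proof of Proposition~\ref{prop:HausdorffDistance_transv} with $\dAG$ in place of $\dw$, where $\varepsilon$-similarity becomes exact equality of patches. If $W_r(\alpha)=W_r(\beta)$, then every element of $\ol{\Orb(\alpha)}$ agrees on $B(r)$ with some element of $\ol{\Orb(\beta)}$, which gives $\delta_\Aa\le 1/(r+1)$. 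Conversely, if $\delta_\Aa<1/(r+1)$, the $r$-patches must coincide. Here one uses that the patches of the orbit closure equal the patches of the orbit, because patches are finite words on the finite set $B(r)\cap\Gamma$. I also need to check the boundary conventions concerning strict versus non-strict inequalities in the infimum; I expect this bookkeeping to be the main nuisance.
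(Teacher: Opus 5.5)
Your proposal is correct. For (a), (c) and (d) it follows the paper's own argument.

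\begin{itemize}
\item In (a), the paper also reduces the discrepancy on the common support $\Gamma$ to $|\alpha(x)-\beta(x)|$ for radii below $r_0$. It uses $c_1\le\min_{a\neq b}|a-b|$ to get $\dAG\le\dw$ when $\dw<c_1$. The factor $2$ comes from $1/r\le 2/(r+1)$ for $r\ge1$, after discarding the trivial range $\dAG\ge c_0/2$. That discarding step is also what keeps your $\varepsilon'$ below $r_0$, a fact you use but never state.
\item In (d), the paper likewise uses $W_r(\omega)\subseteq W_r(\alpha)$ for $\omega\in\overline{\Orb_\Gamma(\alpha)}$ and follows the pattern of Proposition~\ref{prop:HausdorffDistance_transv}. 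The boundary bookkeeping you worry about is routine: distances to compact sets are attained, and $\dAG<1/(r+1)$ forces agreement on $B(r)$.
\end{itemize}

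The genuine difference is in (b). The paper only asserts that it follows from (a) and Proposition~\ref{prop:wDel_groupAction_Lipschitz}. You are right that this transfer alone gives
\[
\dAG(\gamma\alpha,\gamma\beta)\le\tfrac{2}{c_1}\bigl(d_G(e,\gamma)+1\bigr)\dAG(\alpha,\beta),
\]
whose additive constant is not $1$ as Definition~\ref{def:LipschitzAction} requires.

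The paper's route can be completed by uniform discreteness. Since $e\in\Gamma$, every $\gamma\in\Gamma\setminus\{e\}$ satisfies $d_G(e,\gamma)\ge r_0$. Hence $\tfrac{2}{c_1}(d_G(e,\gamma)+1)\le\tfrac{2}{c_1}(1+r_0^{-1})\,d_G(e,\gamma)$, and the case $\gamma=e$ is trivial.

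Your direct argument takes a different path. Agreement on $B(r)$ becomes agreement on $\gamma B(r)=B(\gamma,r)\supseteq B(r-d_G(e,\gamma))$, and you combine this with $\tfrac{r+1}{r-t+1}\le t+1$ for $0\le t\le r$. This bypasses both (a) and the weighted Delone machinery and uses only left-invariance of $d_G$. It also yields $C_\tau=1$, the same constant as in Proposition~\ref{prop:wDel_groupAction_Lipschitz}. The paper's route is shorter once that proposition is available, but it gives a worse constant depending on $c_1$ and $r_0$.
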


\begin{proof}
(a) Let $\alpha,\beta\in\Aa^\Gamma$. We first prove the first inequality. If $\dw\big( (\Gamma,\alpha), (\Gamma,\beta) \big)\geq c_1$, then $\dAG(\alpha,\beta)\leq 1$ yields the estimate. If $\dw\big( (\Gamma,\alpha), (\Gamma,\beta) \big)< c_1$, choose an $r>0$ and an $\varepsilon>0$ such that
$$
\dw\big( (\Gamma,\alpha), (\Gamma,\beta) \big)
	< \frac{1}{r}
	<\dw\big( (\Gamma,\alpha), (\Gamma,\beta) \big)+\varepsilon 
	< c_1.
$$
Thus, $\alpha|_{B(r)}=\beta|_{B(r)}$ follows from the first inequality, the definition of $\dw$ and $c_1\leq \min_{a\neq b\in\Aa}|a-b|$. Then $\dAG(\alpha,\beta)\leq \tfrac{1}{r} < \dw\big( (\Gamma,\alpha), (\Gamma,\beta) \big)+\varepsilon$ is concluded proving the first claimed estimate.

Next, we prove 
\[
\dw\big( (\Gamma,\alpha), (\Gamma,\beta) \big) \leq 2 \dAG(\alpha,\beta).
\]
Recall that $\dw$ takes only values smaller or equal than $c_0:=\min\big\{\frac{\sigma^{-1}}{2}, \frac{r_0}{2}\big\}$. Thus, the claimed estimate holds whenever $\dAG(\alpha,\beta)\geq \tfrac{c_0}{2}$.
Next suppose $\dAG(\alpha,\beta)< \tfrac{c_0}{2}<\tfrac{1}{2}$. Let $\varepsilon>0$ and $r>1$ be such that
$$
\dAG(\alpha,\beta) < \tfrac{1}{r+1} \leq \dAG(\alpha,\beta) + \varepsilon<\tfrac{c_0}{2}.
$$
Observe that $\tfrac{1}{r}<c_0$.
Thus, the first inequality yields $\alpha|_{B(r)}=\beta|_{B(r)}$ implying
$$
\dw\big( (\Gamma,\alpha), (\Gamma,\beta) \big) 
	\leq \tfrac{1}{r}
	\leq 2\tfrac{1}{r+1}
	\leq 2\big(\dAG(\alpha,\beta) + \varepsilon\big).
$$
Sending $\varepsilon$ to zero provides the second inequality.

\medskip

(b) This follows by straightforward computations involving (a) and Proposition~\ref{prop:wDel_groupAction_Lipschitz}.

\medskip

(c) This is an immediate consequence of (a).

\medskip

(d) The estimates between the Hausdorff metrics is immediate from (a). If $\omega\in\ol{\Orb_\Gamma(\alpha)}$, then $W_r(\omega)\subseteq W_r(\alpha)$ follows for $r\geq 0$, see e.g. \cite[Cor.~3.3.24]{Bec16}.
The presentation of the Hausdorff metric $\delta_\Aa$ follows then straightforwardly from the definition $\dAG$ using eq.~\eqref{eq:R-patch_Symbolic}, similarly as in Proposition~\ref{prop:HausdorffDistance_transv}.
\end{proof}

\section{Repetitivity of weighted Delone sets}
\label{sec:Repet_WeightDelone}

We address the question of how dynamical properties of the hull of a Delone set are reflected in the structure of the underlying Delone set.  
In this section, we focus on minimality and unique ergodicity of Delone dynamical systems, following the framework developed in \cite{BecHarPog25}.

Minimality of a Delone dynamical system is characterized by repetitivity of the underlying Delone set; see, e.g., \cite{LaPl03, Yok05, FrRi14, BecHarPog25}.  
Roughly speaking, this means that every finite patch occurs in any sufficiently large compact region, uniformly over the space.  
If the required region size depends nicely on the patch size (e.g., linearly), then the corresponding dynamical system is even uniquely ergodic.  
More precisely, linearly repetitive (or linearly recurrent) configurations define uniquely ergodic systems -- a result due to Durand \cite{Dur00} in the symbolic setting for $G=\Z$ and Lagarias and Pleasants \cite{LaPl03} for Delone sets in \( \R^d \).  
In fact, these systems are strictly ergodic, i.e., both minimal and uniquely ergodic.

Various extensions of this principle have been developed in different directions; see \cite{DaLe01, Le02PW, Len03, Bes08, LenMulVes08, Pog13, BesBosLen13, FrRi14, PogSch16}.  
The present work \cite{BecHarPog25} generalizes these results to the non-Abelian setting and to weighted Delone sets.

Following an approach by Damanik and Lenz \cite{DaLe01}, Delone sets that are linearly repetitive satisfy uniform subadditive convergence theorems.  
Such ergodic theorems have a wide range of applications, including the existence of Banach densities \cite{DaHuZh19} and of the integrated density of states (IDS) \cite{LenSto05, PogSch16}.  
The IDS will play a fundamental role in the developments described in Chapter~\ref{chap:DTMP}.

Classically, the ergodic averages are taken over balls in the group.  
In the setting of general groups, these are replaced by so-called (strong) Følner sequences.  
To handle this in our recent work \cite{BecHarPog25}, we employed the Ornstein-Weiss machinery \cite{OW87} based on $\varepsilon$-quasi tilings, further developed in \cite{PogSch16}.

In the context of aperiodic order, two prominent classes of Delone sets are of central interest: cut-and-project sets and substitution systems \cite{Queff10, Fog02, BaaGri13}.  
Recently, there has been considerable interest in extending related results beyond the Abelian case.  
For cut-and-project sets, see e.g. \cite{BjHaPo16, BjHaPo17, BjHa18, BjHaPo22}; for substitution systems, see \cite{BecHarPog21, BaBePoTe25}.

We emphasize that, thanks to the results in \cite{BecHarPog21} (see also Section~\ref{sec:Subst_BeyondAbel}), there exists a rich class of linearly repetitive systems far beyond the Abelian setting.

In the work \cite{BecHarPog25}, the notion of linear repetitivity has been extended in several directions:
\begin{itemize}
\item to weighted Delone sets, thereby covering both classical Delone sets and symbolic dynamical systems as discussed in the previous section;
\item to amenable, unimodular lcsc groups \( G \): if the group \( G \) is not of exact polynomial growth, it is unclear whether balls still form strong F\o lner sequences.  
      This motivates a metric-free formulation, referred to as \emph{tempered repetitivity} of a (weighted) Delone set with respect to a given strong F\o lner sequence;
\item to weighted Delone sets without finite local complexity, leading to the notion of \emph{almost tempered repetitivity}, inspired by ideas from Frettlöh and Richard \cite{FrRi14}.
\end{itemize}

For the sake of presentation, we restrict ourselves here to the case where the group has exact polynomial growth, and refer the reader to \cite{BecHarPog25} for a more detailed discussion.

Let $G$ be an lcsc group with an adapted metric $d_G$ (see Chapter~\ref{chap:Dynam_LR} for the definition) and left-invariant Haar measure $m_G$ on the Borel $\sigma$-algebra of $G$ \cite{Fol16}. Recall that $B(r):=B(e,r)$ denotes the open ball of radius $r>0$ around $e\in G$.

\begin{definition}
\label{def:exact-polyn-growth}
The group $G$ is called of {\em exact polynomial growth} if there are constants $C>0$ and $\kappa\geq 0$ such that
$$
\lim_{t\to\infty} \frac{m_G(B(t))}{Ct^\kappa}=1.
$$
\end{definition}

Let $r_0,R_0>0$ and $\sigma\geq 1$ be fixed. In this section we need to sharpen the notion of a patch of a weighted Delone set and take into account of which set it is a restriction of. To make this clear consider first the following example.

\begin{example}
Consider Delone sets $D_1=\Z$ and $D_2:=2\Z$ in $\R$ and fix the compact set $\oB(2)\subseteq \R$. Then
$p:=D_2\cap \oB(2) = \{-2,0,2\}$ is a patch in $D_1$. This patch also occurs in $D_2$ in the sense that $p\subseteq D_2$. On the other hand, $p\neq D_1\cap \big(-g+\oB(2)\big)$ for all $g\in \R$. In light of this, we would like to say that the $\oB(2)$-patch $p$ does not occur in $D_1$, which is only possible if we remember that $p$ was a restriction of the set $\oB(2)$. 
\end{example}

\begin{definition}[$S$-patch]
For a relatively compact set $S\subseteq G$, a tuple $(\Pi|_S,S)$ is called an $S$-patch of $\Pi\in\wPat$. 
\end{definition}

Note that we do this particular distinction only in this section.
Recall the notion of $\varepsilon$-similar patches introduced on page~\pageref{def:epsilon-similar}. 
We say that an \( S \)-patch \( (\Lambda, S) \) \emph{\( \varepsilon \)-occurs} in a \( T \)-patch \( (\Theta, T) \) if there exists \( g \in G \) such that \( gS \subseteq T \) and
\[
\wdelt\big( (g . \Lambda)|_{gS},\; \Theta|_{gS} \big) < \varepsilon.
\]
Recall from Definition~\ref{def:epsilon-similar} that this condition simply means that \( g . \Lambda \) is \( \varepsilon \)-similar to \( \Theta|_{gS} \).

To capture a quantitative version of approximate pattern recurrence, we introduce the notion of almost repetitivity. 
Roughly speaking, this means that every \( R \)-patch \( \varepsilon \)-occurs within a patch of radius \( \varrho(\varepsilon, R)\geq 1 \).

\begin{definition}
\label{def:almost-repetitive}
A function \( \varrho : (0,1) \times [1, \infty) \to [1, \infty) \) is called an \emph{almost repetitivity function} for \( \Pi \in \wDel \) if for every \( \varepsilon > 0 \) and all \( R \geq 1 \), the following holds:  
For all \( g, h \in G \), the patch \( \big(\Pi|_{gB(R)}, gB(R)\big) \) \( \varepsilon \)-occurs in the patch \( \big(\Pi|_{hB(\varrho(\varepsilon, R))}, hB(\varrho(\varepsilon, R))\big) \).

Whenever such a function exists for \( \Pi \in \wDel \), we call \( \Pi \) \emph{almost repetitive}.
\end{definition}

Following \cite[Prop.~2.4]{BecHarPog25}, the dynamical system \( \hull(\Pi) \) is minimal if and only if \( \Pi \in \wDel \) is almost repetitive.  
The proof follows the lines of \cite{FrRi14}, where the result is established for (unweighted) Delone sets.

We are in particular interested in the following class of almost repetitive weighted Delone sets.

\begin{definition}
\label{def:linearRepetit}
Let $d_G$ be an adapted metric on the group $G$.
A weighted Delone set $\Pi\in\wDel$ is called {\em almost linearly repetitive with respect to $d_G$} if there is a function $c_\Pi:(0,\infty)\to (0,\infty)$ such that  $\varrho:(0,1)\times[1,\infty)\to[1,\infty)$ defined by
\[
\varrho(\varepsilon,R) = c_\Pi(\varepsilon) R,
	\qquad (\varepsilon,R)\in (0,1)\times[1,\infty),
\]
is an almost repetivity function of $\Pi\in\wDel$.

A weighted Delone set $\Pi\in\wDel$ is called {\em linearly repetitive (LR) with respect to $d_G$} if it is almost linearly repetitive with respect to $d_G$ and the constant $c_\Pi>0$ is uniformly bounded in $\varepsilon>0$.
\end{definition}

With these notions at hand, we proved the following result.

\begin{theorem}[\cite{BecHarPog25}]
\label{thm:LR-Minimal_UniqErg}
Let $G$ be a locally compact second countable Hausdorff group with exact polynomial growth with respect to an adapted metric $d_G$. If $\Pi\in\wDel$ is almost linearly repetitive, then $\hull(\Pi)$ is minimal and uniquely ergodic.
\end{theorem}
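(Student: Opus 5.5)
Minimality comes for free from the characterization quoted just before the statement (\cite[Prop.~2.4]{BecHarPog25}): almost linear repetitivity gives an almost repetitivity function, so $\Pi$ is almost repetitive and $\hull(\Pi)$ is minimal. The real work is unique ergodicity. For this I would use the standard criterion that a minimal system $(\Omega,G,\tau)$ with $G$ amenable is uniquely ergodic once, for every $f$ in a dense subset of $C(\Omega)$, the ergodic averages $\frac{1}{m_G(F_n)}\int_{F_n} f(g^{-1}.\Xi)\,dm_G(g)$ converge uniformly in $\Xi\in\Omega$ along a Følner sequence $(F_n)$. Exact polynomial growth is what lets me take $F_n=B(t_n)$: since $m_G(B(t+s))/m_G(B(t))\to 1$ for fixed $s$, balls form a (strong) Følner sequence.

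The dense class I would use consists of continuous functions depending only on an $\varepsilon$-approximate patch near $e$. By Lemma~\ref{lem:Basic_Diction_Transv_Hull}(c) and the definition of $\dw$, functions that factor through $\Xi\mapsto\Xi|_{B(1/\varepsilon)}$ up to $\varepsilon$-similarity separate points well enough for a Stone--Weierstrass-type density argument. Hence it suffices to show that the \emph{frequency} of each $R$-patch $\Lambda$ exists uniformly. Since there is no FLC, I would phrase this robustly: the normalized counts
\[
\nu_t(\Lambda,\varepsilon,\Xi):=\frac{1}{m_G(B(t))}\,\sharp\set{g\in\supp(\Xi)\cap B(t)}{(g^{-1}.\Xi)|_{B(R)}=_\varepsilon\Lambda}
\]
should have a limit as $t\to\infty$, uniformly in $\Xi$, for almost all $\varepsilon$. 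Continuous test functions then only need to be approximated by finite combinations of such counts.

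For uniform convergence I would follow Damanik--Lenz \cite{DaLe01}: derive a uniform subadditive ergodic theorem from linear repetitivity. There are two key estimates.
\begin{enumerate}[label=(\arabic*)]
\item A \emph{positivity} bound. By almost linear repetitivity, every ball of radius $c_\Pi(\varepsilon)R$ contains an $\varepsilon$-occurrence of $\Lambda$. Packing $B(t)$ by disjoint such balls, which exact polynomial growth allows with a uniform constant, gives $\nu_t\geq \kappa(\varepsilon,R)>0$ uniformly for large $t$. This is a lower bound that is independent of $t$ and of the location.
\item An \emph{almost additivity} (Lagarias--Pleasants) estimate. Compare the counts in two balls $B(t)$ and $hB(t)$ with $h$ arbitrary. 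Cover a large ball $B(s)$ by translates of $B(t)$ using a quasi-tiling; in the polynomial-growth case, a maximal $t$-separated set with Vitali-type bounds suffices. The boundary error is $O\big(m_G(B(t+R))-m_G(B(t))\big)/m_G(B(t))\to0$.
\end{enumerate}
Combining (1) and (2) the way Lagarias--Pleasants do, the ratio $\sup_h\nu/\inf_h\nu$ over balls of radius $t$ must contract under passing to scale $Ct$. The reason is that each large ball contains a copy of every $t$-ball pattern (again by linear repetitivity), up to $\varepsilon$-similarity. This forces the oscillation $\sup_h \nu - \inf_h \nu\to0$, which gives the uniform limit.

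The main obstacle is the non-FLC/almost setting. First, $\varepsilon$-similarity is not transitive, so counts of $\varepsilon$-occurrences are not monotone or additive in the naive sense. Second, $c_\Pi(\varepsilon)$ may blow up as $\varepsilon\to0$. I would handle this by sandwiching: compare counts at tolerance $\varepsilon$ with counts at $\varepsilon'<\varepsilon<\varepsilon''$, and run the contraction argument at each fixed tolerance, where $c_\Pi(\varepsilon)$ is a fixed constant. The resulting limits converge as $\varepsilon\to0$ except at countably many values, which suffices for density in $C(\hull(\Pi))$. If this becomes too delicate, the fallback is the Ornstein--Weiss quasi-tiling machinery of \cite{PogSch16} applied to the subadditive functional $t\mapsto\sup_h(\text{count in }hB(t))$.
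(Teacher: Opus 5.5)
Your minimality argument, your use of balls as a strong F\o lner sequence under exact polynomial growth, and the positivity estimate (1) are fine and in line with \cite{BecHarPog25}. The genuine gap is step (2): a maximal $t$-separated set with Vitali-type bounds does \emph{not} give almost additivity. It produces either disjoint balls $B(x_i,t/2)$ that fill only a fixed fraction $c<1$ of $B(s)$, or balls $B(x_i,t)$ that cover $B(s)$ with a fixed overlap ratio $M>1$. Already for Euclidean discs in $\R^2$, disjoint translates fill at most about 90.7\% of a large disc, and every covering has density at least about $1.209$, whatever $t$ and $s$ are. So all you obtain is $\sup_h\nu(hB(s))\leq M\sup_h\nu(hB(t))+o(1)$ and $\inf_h\nu(hB(s))\geq c\,\inf_h\nu(hB(t))-o(1)$. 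The error $\big(m_G(B(t+R))-m_G(B(t))\big)/m_G(B(t))$ you quote only accounts for the boundary of each individual ball. It ignores the gaps and overlaps between the balls, which are a fixed proportion of everything. Losses of that size destroy the almost monotonicity of the extremal densities. They also swamp the Lagarias--Pleasants mixing step, whose gain is only of order $c_\Pi(\varepsilon)^{-\kappa}$ times the oscillation. Lagarias--Pleasants and Damanik--Lenz work because intervals and cubes tile exactly; balls in a general $G$ do not.

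This is exactly where \cite{BecHarPog25}, as the surrounding text indicates, uses Ornstein--Weiss $\varepsilon$-quasi-tilings in the form of \cite{PogSch16}. A large ball is covered, up to a proportion $\varepsilon$, by $\varepsilon$-disjoint translates of finitely many balls $B(t_1),\ldots,B(t_N)$ at widely separated scales. Exact polynomial growth is what guarantees that balls form a strong F\o lner sequence to which this applies. So quasi-tilings are not a fallback for the non-FLC bookkeeping; they are the core ingredient, needed already under finite local complexity. With them, the Damanik--Lenz argument closes without an iterated contraction. By quasi-tiling, the supremal density over balls converges to some $u$. If some ball had density below $u-\delta$, one application of repetitivity at a single fixed tolerance plants $\varepsilon$-similar copies of it into every sufficiently large tile. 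This forces all large-ball densities below $u-\delta'$, a contradiction. Since only one tolerance is used, growth of $c_\Pi(\varepsilon)$ as $\varepsilon\to0$ is harmless. If you run this directly for $Q\mapsto\int_Q f(g^{-1}.\Xi)\,dg$ with $f\in C(\hull(\Pi))$, uniform continuity of $f$ replaces your $\varepsilon$-sandwiching. It also replaces your dense class, which as phrased is problematic: a continuous function that is constant on $\varepsilon$-similar patches near $e$ is locally constant, hence constant on a connected hull, and connected hulls do occur without finite local complexity.
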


\begin{proof}
This is proven in \cite[Thm~1.2]{BecHarPog25}.
\end{proof}

A metric-free version of this result -- formulated in terms of tempered repetitivity with respect to a strong F\o lner sequence -- is provided in \cite[Thm.~1.3]{BecHarPog25}.

In the following we construct a class of linearly repetitive weighted Delone sets.

\section{Substitution systems: Going beyond the Abelian world}
\label{sec:Subst_BeyondAbel}

At first glance, it is not obvious whether linearly repetitive (LR) Delone sets exist at all beyond the Abelian setting.  
In \( \Z^d \) or \( \R^d \), so-called primitive substitution systems give rise to LR Delone sets; see \cite{Sol98a, DaLe06-subst}.  
In the joint work \cite{BecHarPog21}, our main aim was to construct LR Delone sets for homogeneous Lie groups. Therefore we developed the notion of (primitive) substitutions for homogeneous Lie groups \( G \) and lattices \( \Gamma \subseteq G \). Since substitutions inherently involve enlarging and subdividing patterns, a compatible dilation structure is required. To this end, nilpotent Lie groups equipped with a dilation structure are considered.

It turned out that this framework produces a rich class of aperiodic, linearly repetitive Delone sets.  
Combined with the results from \cite{BecHarPog25}, presented in Section~\ref{sec:Repet_WeightDelone} (Theorem~\ref{thm:LR-Minimal_UniqErg}), the associated dynamical systems are uniquely ergodic.  
Among other consequences, this leads to the continuity of certain measure-theoretic quantities -- such as the integrated density of states and the autocorrelation -- when combined with the stability results in \cite{BecPog20} (see also Section~\ref{sec:SemiContMeasure}).

A typical example of a homogeneous Lie group is the Heisenberg group \( H_3(\R) \).  
As a set, this group coincides with \( \R^3 \), which might suggest that one could transfer LR sets from \( \R^3 \) to \( H_3(\R) \).  
However, we emphasize that the notion of LR depends on shifting a ball around within the group.  
In the non-Abelian setting -- such as \( H_3(\R) \) -- balls become distorted (as subsets of \( \R^3 \)) when translated (see Figure~\ref{fig:Substitution_Supp_N=4} in Section~\ref{sec:Subst_Examples}), and thus naive extensions of LR from the Euclidean setting have failed so far.

We note that there exist various extensions of substitution systems in the Abelian setting, including, for instance:  
random substitutions \cite{RusSpi18}, substitutions over a compact alphabet \cite{MaRuWa25}, and substitutions with multiple inflation factors \cite{SmiSol21}, see also the review \cite{Man24} for more references.  
Beyond the Abelian context, substitution systems have also been explored in non-Euclidean settings such as the hyperbolic plane \cite{BedHil13} and on trees \cite{BarLep24}.

We outline here the approach of (symbolic) substitution systems developed in \cite{BecHarPog21} and provide few examples. 

We begin with one of the guiding examples: the table \emph{tiling substitution} (also called \emph{domino tiling}) \cite{Sol97,Rob99}, which defines a strictly ergodic dynamical subsystem in \( \Aa^{\Z^2} \), where \( \Aa = \{a, b, c, d\} \) is a four-letter alphabet.  
Recall from Example~\ref{ex:SymbDynSyst} that \( (\Aa^{\Z^2}, \Z^2, \tau) \) defines a symbolic dynamical system (where $\tau$ is the shift) that can be embedded into the space of weighted Delone sets in \( \R^2 \); see Proposition~\ref{prop:Conv_Symb-Dyn-Syst}.

Let
\[
\Pat := \set{ P : M \to \Aa }{ M \subseteq \Z^2 }
\]
denote the set of all finite and infinite \( \Z^2 \)-patches colored by elements of the alphabet \( \Aa \).  
Note that \( \Aa^{\Z^2} \subseteq \Pat \).

Consider the square \( K := \{-1,0\}^2 \subseteq \Z^2 \).  
Then the substitution rule for the table tiling is given by a map \( S_0 : \Aa \to \Aa^K \) defined in Figure~\ref{fig:TableTilingRule}.

\begin{figure}[htb]
    \centering
    \includegraphics[scale=3]{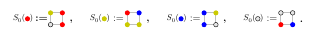}
    \captionsetup{width=0.95\linewidth}
    \caption{Substitution rule for the table tiling. For the sake of presentation, we identify the four letters  $a,b,c,d$ with the colors red, yellow, blue, and gray.}
    \label{fig:TableTilingRule}
\end{figure}

The rule \( S_0 \) naturally extends to a map \( S : \Pat \to \Pat \) by applying the substitution rule letterwise, see e.g. \cite[Chap.~5.1]{Queff10} and \cite[Chap.~4.9]{BaaGri13}.  
This process is sketched in Figure~\ref{fig:SubstitutionTableTiling-iteration}. Roughly speaking, \( S(\omega) \) is defined by scaling \( \Z^2 \) by a factor of 2 and placing at each point \( n \in \Z^2 \) the $2\times 2$ patch \( S_0(\omega(n)) \), which is supported on \( K \).  

Since \( K \) is a fundamental domain of \( 2\Z^2 \subseteq \Z^2 \), this construction is well-defined: there are no overlaps and no gaps (i.e., uncolored lattice points).  
This definition, however, is not entirely canonical, as it requires a choice of origin. This is fixed by imposing covariance with respect to the group action:
\[
S(\gamma \omega) = (2\gamma) \cdot S(\omega), \quad \text{for all } \gamma \in \Z^2.
\]

\begin{figure}[htb]
    \centering
    \includegraphics[scale=2.3]{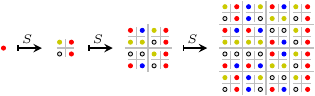}
    \captionsetup{width=0.95\linewidth}
    \caption{Iteration of the table tiling substitution on a single letter. Figure taken from \cite{BaBePoTe25}.}
    \label{fig:SubstitutionTableTiling-iteration}
\end{figure}
By a compactness argument, one concludes that the iteration on a single letter (as sketched in Figure~\ref{fig:SubstitutionTableTiling-iteration}) has a limit point \( \omega_0 \in \Aa^{\Z^2} \) satisfying \( S^2(\omega_0) = \omega_0 \), and composed only of legal patches (defined below). Such a configuration $\omega_0$ is also called fixed point of the substitution.
Since the substitution is primitive (see below), it follows that the orbit closure \( \OmTabl := \overline{\Orb(\omega_0)} \) is independent of the chosen limit point and defines a strictly ergodic dynamical system \( (\OmTabl, \Z^2, \tau) \).

This construction naturally extends to \( \Z^d \), but at first glance becomes more intricate when considering non-Abelian groups,  
as sets deform their geometry under translation. This phenomenon can already be observed in the discrete Heisenberg group \( H_3(\Z) \), as illustrated in Example~\ref{ex:HeisenbergExample-Subst} and Figure~\ref{fig:Substitution_Supp_N=4} in Section~\ref{sec:Subst_Examples}.

To overcome this difficulty, we formalized the concept in \cite{BecHarPog21} to the setting of homogeneous Lie groups \( G \).  
A substitution system in this context is described using certain geometric data, referred to as a \emph{dilation datum} \( \Dd = (G, d_G, (D_\lambda)_{\lambda > 0}, \Gamma, V) \), consisting of:
\begin{enumerate}[label=(D\arabic*)]
    \item a 1-connected, locally compact, second-countable, Hausdorff group \( G \), equipped with an adapted metric \( d_G \);
    \item a one-parameter group of automorphisms \( (D_\lambda)_{\lambda > 0} \) of \( G \), called the \emph{underlying dilation family}, such that
    \[
    d_G \big(D_\lambda(g), D_\lambda(h)\big) = \lambda \cdot d_G(g, h) \quad \text{for all } \lambda > 0 \text{ and } g, h \in G;
    \]
    \item a uniform lattice \( \Gamma < G \) (i.e., discrete and co-compact) satisfying \( D_\lambda(\Gamma) \subseteq \Gamma \) for some \( \lambda > 1 \), and a Borel measurable, relatively compact, (left) fundamental domain \( V \subseteq G \) for \( \Gamma \) (i.e. $G=\Gamma V =\bigsqcup_{\gamma\in\Gamma}\gamma V$) such that the identity element \( e \in G \) lies in the interior of \( V \).
\end{enumerate}

Alongside the dilation datum, we have combinatorial data, referred to as a \emph{substitution datum} \( \Ss = (\Aa, \lambda_0, S_0) \) over \( \Dd \), consisting of:
\begin{enumerate}[label=(S\arabic*)]
    \item a finite set \( \Aa \), referred to as the \emph{alphabet};
    \item a stretch factor \( \lambda_0 > 1 \), associated with the dilation datum \( \Dd \) (see below for details);
    \item a substitution rule \( S_0 : \Aa \to \Aa^{D_{\lambda_0}(V) \cap \Gamma} \).
\end{enumerate}

In order to define the stretch factor, we first recursively introduce the following family of sets.  
Let \( V \) be a (left) fundamental domain of a lattice \( \Gamma \subseteq G \), let \( \lambda > 1 \), \( n \in \N \), and let \( \emptyset \neq M \subseteq \Gamma \).  
We define:
\begin{equation}
\label{eq:V(n,M)-def}
V_\lambda(0, M) := M \cdot V
\qquad \text{and} \qquad
V_\lambda(n, M) := D_\lambda \big( (V_\lambda(n{-}1, M) \cap \Gamma) \cdot V \big),
\end{equation}
where \( M \cdot V := \{ x v \mid x \in M,\, v \in V \} \) denotes the product of the two subsets in the group \( G \).  
For simplicity, we write \( V_\lambda(n) := V_\lambda(n, \{e\}) \) for \( n \in \N \).

It turns out that \( V_{\lambda_0}(n, M) \) coincides with the support of the \( n \)-fold substituted patch \( S^n(\alpha) \), where \( \alpha : M \to \Aa \) and \( S \) is the substitution rule derived from the geometric and combinatorial data (see Proposition~\ref{prop:SubstRule-Properties}).  

As in the classical substitution setting, one requires that the size of these supports \( V_{\lambda_0}(n) \) grows with \( n \).  
This is expressed by demanding that, for sufficiently large \( n \), the sets \( V_{\lambda_0}(n) \) contain balls of radius proportional to \( \lambda_0^n \).  
The following definition provides a sufficient condition to ensure this growth behavior.

\begin{definition}[stretch factor]
\label{def:stretch-factor}
Let \( \Dd = (G, d_G, (D_\lambda)_{\lambda > 0}, \Gamma, V) \) be a dilation datum.  
We say that \( \lambda_0 > 1 \) is \emph{sufficiently large relative to \( V \)} if there exist a constant \( C_- > 0 \), an integer \( s \in \N_0 := \N \cup \{0\} \), and a \( z \in \Gamma \) such that for all \( n \in \N \),
\[
D_{\lambda_0}^n\big( B(z, C_-) \big) \subseteq V_{\lambda_0}(s+n).
\]
We then call \( \lambda_0 > 1 \) a \emph{stretch factor} (associated with \( \Dd \)) if \( D_{\lambda_0}(\Gamma) \subseteq \Gamma \), and  \( \lambda_0 \) is sufficiently large relative to \( V \).
\end{definition}

To simplify notation, we will henceforth write \( V(n) := V_{\lambda_0}(n) \) and \( V(n, M) := V_{\lambda_0}(n, M) \) for \( n \in \N_0 \), whenever \( \lambda_0 \) is a fixed stretch factor.

\begin{remark}
\label{rem:Histor_suff-large}
\begin{itemize}
\item[(a)] The condition that \( \lambda_0 > 1 \) is sufficiently large relative to \( V \) states that the supports \( V(n) \) contain metric balls of radius \( C_- \lambda_0^n \) using $D_{\lambda_0}$ is a dilation implying \(D_{\lambda_0}^n\big( B(z, C_-) = B\big( D_{\lambda_0}^n(z), C_-\lambda_0^n\big)\).  
This property is fundamental for proving the existence of fixed points associated with the substitution and for establishing their linear repetitivity, which in turn implies strict ergodicity; see \cite{BecHarPog21,BecHarPog25}.

\item[(b)] Historically, this notion of ``sufficiently large'' differs from the one used in the ArXiv version of \cite{BecHarPog21} (September 2021). The main focus was to construct substitutions in the non-Abelian setting and find a class of examples. We later observed that one could also include substitutions with moving centers like block substitutions substitutions with scaling factor \( \lambda_0 = 2 \), such as the table tiling.  
The revised definition presented here is weaker and accommodates all block substitutions in this framework; see \cite[Prop.~3.1]{BaBePoTe25} briefly discussed in Section~\ref{sec:Subst_Examples}.  
An updated version of \cite{BecHarPog21} will reflect this adjustment in the definition of ``sufficiently large'' accordingly. For convenience, the adjustments and proofs are provided in the Appendix~\ref{App:Substituion_Systems}.
\end{itemize}
\end{remark}

We prove in \cite{BecHarPog21} that under this condition the sets \( V(n) \) grow comparably to metric balls in $G$, see also Theorem~\ref{thm:SupportGrowth} in the Appendix~\ref{App:Substituion_Systems}. Also we provide sufficient criteria in Lemma~\ref{lem:suff-large}, which we apply in the following for some examples.

To illustrate how classical substitution systems embed into the present framework, we revisit the table tiling substitution on \( \Z^2 \) and verify that it satisfies the structural assumptions introduced above. Already in the table tiling we observe that the adjustment in the notion of sufficiently large is necessary to include this example. Specifically, $V(n)$ only contains a shifted ball with radius comparable to $\lambda_0^n$ but without the shift this fails.
Therefore, we now reinterpret the example of the table tiling in terms of the geometric and combinatorial substitution data defined in \cite{BecHarPog21}.

\begin{example}
\label{ex:Geom_Comb_data-TableTiling}
The geometric data  \( \Dd = (G, d_G, (D_\lambda)_{\lambda > 0}, \Gamma, V) \) for the table tiling example are given by:
\begin{align*}
G &= \R^2,                          & \Gamma &= \Z^2, \\
d(x, y) &= \max_{1 \leq j \leq 2} |x_j - y_j|, 
         & V &= \left[-\tfrac{1}{2}, \tfrac{1}{2}\right)^2, \\
D_\lambda(x_1, x_2) &= (\lambda x_1, \lambda x_2).
\end{align*}

In order to prove that \( \lambda_0 := 2 \) is sufficiently large relative to \( V \), we apply the sufficient criterion provided in the Appendix~\ref{App:Substituion_Systems} in Lemma~\ref{lem:suff-large}.  
Set \( r_- := \tfrac{1}{2} \) and \( r_+ := \tfrac{3}{4} \), so that
\[
B(e, r_-) \subseteq V \subseteq \overline{V} \subseteq B(e, r_+).
\]
Let \( r_0 := \tfrac{7}{4} \) and \( z_0 := (-2, -2) \in \Gamma \). Then
\[
B(z_0, r_0) = \left(-3\tfrac{3}{4}, -\tfrac{1}{4}\right)^2 \cap \Gamma = \{-3, -2, -1\}^2 \subseteq \{-3, -2, -1, 0\}^2 = V(2).
\]
Note that one can prove inductively that
\[
V(n) = \Z^2 \cap \left[-2^n + 1, 1\right)^2
\]
Hence, \( r_0 > \tfrac{r_+ \lambda_0}{\lambda_0 - 1} \) and \( B(z_0, r_0) \subseteq V(s) \) with \( s_0 = 2 \).  
According to Lemma~\ref{lem:suff-large}~(b), this implies that \( \lambda_0 = 2 \) is sufficiently large relative to \( V \), with parameters
\[
C_- = (r_0 - r_+) - \tfrac{r_0}{\lambda_0} = \tfrac{1}{8}, \qquad
s = 2, \qquad
z = (-2, -2).
\]

Observe that \( D_{\lambda_0}(V) \cap \Gamma = \{-1, 0\}^2 \).  
Now consider the finite alphabet \( \Aa = \{a, b, c, d\} \), and let \( S_0 : \Aa \to \Aa^{D_{\lambda_0}(V) \cap \Gamma} \) be the table tiling substitution as defined in Figure~\ref{fig:TableTilingRule}.  
Then \( \Ss = (\Aa, \lambda_0, S_0) \) defines a valid substitution datum.
\end{example}

This example illustrates how the classical table tiling substitution naturally fits into the general substitution framework developed in \cite{BecHarPog21}.  
More generally, this framework encompasses all block substitutions on \( \Z^d \); see Section~\ref{sec:Subst_Examples} for their definition.
The embedding of block substitutions into this framework is discussed in detail in \cite[Prop.~3.1]{BaBePoTe25}.

We now continue with the description of the substitution rule, the associated subshift, and its basic properties.  
Let \( \Aa \) be a finite alphabet and \( \Gamma \) a discrete group. Consider the set of all (possibly infinite) patches:
\[
\Pat := \left\{ P : M \to \Aa \,\middle|\, M \subseteq \Gamma \right\}.
\]
Note that \( \Aa^\Gamma \subseteq \Pat \), where \( \Aa^\Gamma \) is equipped with the product topology.  
Following \cite{BecHarPog21}, we obtain the following result.

\begin{proposition}
\label{prop:SubstRule-Properties}
Let \( \Ss = (\Aa, \lambda_0, S_0) \) be a substitution datum over the dilation datum \( \Dd = (G, d_G, (D_\lambda)_{\lambda > 0}, \Gamma, V) \).  
Then there exists a unique map \( S : \Pat \to \Pat \) satisfying the following conditions:
\begin{itemize}
\item \emph{Equivariance:}  
\( S^n(\gamma P) = D_{\lambda_0}^n(\gamma)\, S^n(P) \) for all \( P \in \Pat \), \( \gamma \in \Gamma \);
\item \emph{Restriction:}  
\( \big(S^n(P)\big)\big|_{V(n,M) \cap \Gamma} = S^n\big(P|_M\big) \) for all \( P \in \Pat \), \( n \in \N_0 \), and non-empty \( M \subseteq \Gamma \).
\end{itemize}

Moreover, the following properties hold:
\begin{enumerate}[label=(\alph*)]
\item The restriction \( S : \Aa^\Gamma \to \Aa^\Gamma \subseteq \Pat \) is continuous.
\item If \( P \in \Pat \) is supported on \( M \subseteq \Gamma \), then for each \( n \in \N_0 \), the patch \( S^n(P) \) is supported on \( V(n, M) \cap \Gamma \).
\item Let \( P : M \to \Aa \in \Pat \). Then for each \( \gamma \in V(1, M) \), there exists a unique \( \eta \in M \) such that
\[
\gamma \in D_{\lambda_0}(\eta) \big(D_{\lambda_0}(V) \cap \Gamma\big)
\quad \text{and} \quad
S(P)(\gamma) = S_0\big(P(\eta)\big)\big(D_{\lambda_0}(\eta)^{-1} \gamma\big).
\]
\end{enumerate}
\end{proposition}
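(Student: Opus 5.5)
The plan is to define $S$ explicitly by the formula in (c) and then verify the listed properties. First I need a tiling statement. Since $V$ is a fundamental domain for $\Gamma$, $D_{\lambda_0}$ is an automorphism of $G$, and $D_{\lambda_0}(\Gamma)\subseteq\Gamma$ is a subgroup, the set $K:=D_{\lambda_0}(V)\cap\Gamma$ is a fundamental domain for the left action of $D_{\lambda_0}(\Gamma)$ on $\Gamma$. Indeed, $G=\bigsqcup_{\eta\in\Gamma}D_{\lambda_0}(\eta)D_{\lambda_0}(V)$. Intersecting with $\Gamma$ and using $D_{\lambda_0}(\eta)^{-1}\gamma\in\Gamma$ gives
\[
\Gamma=\bigsqcup_{\eta\in\Gamma}D_{\lambda_0}(\eta)K .
\]
So for every $\gamma\in\Gamma$ there are a unique $\eta\in\Gamma$ and a unique $k\in K$ with $\gamma=D_{\lambda_0}(\eta)k$.

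For $P:M\to\Aa$, set $\operatorname{dom} S(P):=\bigsqcup_{\eta\in M}D_{\lambda_0}(\eta)K$ and define $S(P)(D_{\lambda_0}(\eta)k):=S_0(P(\eta))(k)$. This is well defined by the uniqueness of $(\eta,k)$. Next I would check that the domain equals $V(1,M)\cap\Gamma$:
\[
V(1,M)=D_{\lambda_0}(MV)=\bigcup_{\eta\in M}D_{\lambda_0}(\eta)D_{\lambda_0}(V),
\]
and intersecting with $\Gamma$ gives exactly $\bigsqcup_{\eta\in M} D_{\lambda_0}(\eta)K$. This proves (c) and (b) for $n=1$.

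For general $n$, I would argue by induction using the recursion \eqref{eq:V(n,M)-def}. Suppose $S^{n-1}(P)$ is supported on $V(n-1,M)\cap\Gamma$. Then the $n=1$ case applied to the set $M'=V(n-1,M)\cap\Gamma$ gives the support
\[
D_{\lambda_0}\big(M'V\big)\cap\Gamma=V(n,M)\cap\Gamma .
\]

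Equivariance for $n=1$ follows from the formula. The letter of $\gamma P$ at $\gamma\eta$ is $P(\eta)$, and $D_{\lambda_0}(\gamma\eta)k=D_{\lambda_0}(\gamma)D_{\lambda_0}(\eta)k$, so $S(\gamma P)=D_{\lambda_0}(\gamma)S(P)$. Iterating gives the statement for $S^n$, because $D_{\lambda_0}^n$ is a homomorphism. Restriction for $n=1$ also holds, because $S(P)$ on the block $D_{\lambda_0}(\eta)K$ depends only on $P(\eta)$. For general $n$ I would induct, applying the $n=1$ case to $S^{n-1}(P)$ and the set $V(n-1,M)\cap\Gamma$, and using the recursion again. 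For (a), the letter $S(\omega)(\gamma)$ depends only on $\omega(\eta)$ for a single $\eta$. Moreover $d_G(e,\eta)$ is bounded linearly in $d_G(e,\gamma)$: since $V$ is relatively compact, write $\gamma=D_{\lambda_0}(\eta)D_{\lambda_0}(v)$ with $v\in V$, so that $d_G(e,\eta)\le \lambda_0^{-1}d_G(e,\gamma)+\operatorname{diam}$-type constant. Hence agreement of $\omega,\omega'$ on a large ball forces agreement of $S(\omega),S(\omega')$ on a large ball, which is continuity in the product topology.

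Uniqueness is what I expect to require the most care. Let $S'$ be another map with both properties. Restriction with $n=1$ and $M=\{\eta\}$ gives $S'(P)|_{D_{\lambda_0}(\eta)K}=S'(P|_{\{\eta\}})$. Equivariance then reduces this to $D_{\lambda_0}(\eta)S'(a\text{ at }e)$, where $a=P(\eta)$. The point that needs care is that $S'$ on single letters at $e$ must equal $S_0$, which presumably is part of the intended meaning of ``derived from $S_0$'' and has to be imposed as a normalization. Once this holds, the blocks $D_{\lambda_0}(\eta)K$ cover the support, so $S'(P)=S(P)$. I would therefore state explicitly that $S(a)=S_0(a)$ for the one-point patch at $e$, and remark that $S'$ is determined on all of $\Pat$ by its values there.
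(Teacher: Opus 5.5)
Your proposal is correct. It is essentially the argument the paper delegates to \cite{BecHarPog21}: define $S$ blockwise via the tiling $\Gamma=\bigsqcup_{\eta\in\Gamma}D_{\lambda_0}(\eta)\big(D_{\lambda_0}(V)\cap\Gamma\big)$, which is formula (c), and then induct along the recursion \eqref{eq:V(n,M)-def}; the only tacit ingredient is $V\cap\Gamma=\{e\}$, which gives $MV\cap\Gamma=M$ and hence $V(1,M)=D_{\lambda_0}(MV)$. You are also right that uniqueness as printed needs the normalization $S(a)=S_0(a)$ for one-letter patches at $e$, since any other rule $S_0'$ yields a competitor satisfying both conditions; with that normalization your uniqueness argument is complete once you add that restriction with $M$ equal to the domain of $P$ confines the domain of $S'(P)$ to $V(1,M)\cap\Gamma$, i.e.\ to the union of the blocks.
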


\begin{proof}
This is proven in \cite[Prop.~2.7, Prop.~5.6, Prop.~5.12, Lem.~5.16]{BecHarPog21}.
\end{proof}

\begin{definition}[Substitution map]
\label{def:substitution-map}
Let \( \Ss = (\Aa, \lambda_0, S_0) \) be a substitution datum over the dilation datum \( \Dd = (G, d_G, (D_\lambda)_{\lambda > 0}, \Gamma, V) \).  
The unique map \( S : \Pat \to \Pat \) defined in Proposition~\ref{prop:SubstRule-Properties}, which satisfies the equivariance and restriction conditions, is called the \emph{substitution map} associated with \( \Dd \) and \( \Ss \).
\end{definition}

Proposition~\ref{prop:SubstRule-Properties}~(b) justifies that the recursively defined sets \( V(n, M) \) from eq.~\eqref{eq:V(n,M)-def} indeed describe the support of the \( n \)-fold substituted patch \( S^n(P) \), where \( P : M \to \Aa \).  
Moreover, Proposition~\ref{prop:SubstRule-Properties}~(c) provides an explicit expression for how the substitution map acts pointwise.

We now proceed to define the associated dynamical system \( \Omega(S) \subseteq \Aa^\Gamma \) for a substitution map \( S \) associated with \( \Dd \) and \( \Ss \).  
There are various approaches in the literature to construct such systems, either via fixed points or via the set of legal patches; see for instance \cite{Queff10,BaaGri13}.

In Section~\ref{sec:Repet_WeightDelone}, we already defined when a patch $\varepsilon$-occurs in another one. Since we study here symbolic dynamical systems over a finite alphabet, we can drop the $\varepsilon$-dependence.

\begin{definition}[Subpatch relation]
\label{def:Subpatch-relation}
A patch \( P : M \to \Aa \in \Pat \) is a \emph{subpatch of} (or \emph{occurs in patch}) \( Q : F \to \Aa \in \Pat \), if there exists \( \gamma \in \Gamma \) such that \( \gamma M \subseteq F \) and 
\[
Q(\gamma \eta) = P(\eta) \quad \text{for all } \eta \in M.
\]
In this case, we write \( P \prec Q \).  
\end{definition}

An example is illustrated in Figure~\ref{fig:Subpatches_legal}.  
Note that we do not require either patch to have finite support. Furthermore, the subpatch relation $\prec$ is an order relation.

\begin{figure}[htb]
    \centering
    \includegraphics[scale=3.6]{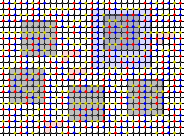}
    \captionsetup{width=0.95\linewidth}
    \caption{A larger patch is illustrated and several supatches are marked by coloring the background in gray, blue and red.}
    \label{fig:Subpatches_legal}
\end{figure}

\begin{definition}[Legal patch]
\label{def:legal_patch}
Let \( S \) be a substitution map associated with \( \Dd \) and \( \Ss \).  
A patch \( P : M \to \Aa \in \Pat \), with \( M \subseteq \Gamma \) finite, is called \emph{\( S \)-legal} if there exist \( n \in \N \) and \( a \in \Aa \) such that \( P \prec S^n(a) \). The set of all legal patches associated with the substitution map $S$ is denoted by $W(S)\subseteq \Pat$.
\end{definition}

Recall from Definition~\ref{def:Weighted_Dictionary} that \( R \)-patches were defined for weighted Delone sets and their associated dynamical systems.  
Symbolic dynamical systems can be identified with weighted Delone sets; see Proposition~\ref{prop:Conv_Symb-Dyn-Syst}.  

In analogy, for \( \omega \in \Aa^\Gamma \) and \( R > 0 \), we define the set of \( R \)-patches by
\[
W(\omega)_R := \left\{ (\gamma \omega)|_{B(e,R)} \;\middle|\; \gamma \in \Gamma \right\},
\qquad 
W(\omega) := \bigcup_{R > 0} W(\omega)_R \subseteq \Pat.
\]

Using this notation, we define the associated substitution subshift as
\[
\Omega(S) := \left\{ \omega \in \Aa^\Gamma \;\middle|\; W(\omega) \subseteq W(S) \right\} \subseteq \Aa^\Gamma.
\]
The following results (Proposition~\ref{prop:Basic_SubstitutionSubshift} and Theorem~\ref{thm:Primitive-LR}) are classical in the Euclidean setting; see e.g.~\cite{Sol97,Sol98a,AnPu98,Dur00,Fog02,DaLe06-subst,BaaGri13} and references therein.

\begin{proposition}[\cite{BecHarPog21}]
\label{prop:Basic_SubstitutionSubshift}
Let \( S \) be a substitution map associated with a dilation datum \( \Dd = (G, d_G, (D_\lambda)_{\lambda > 0}, \Gamma, V) \) and a substitution datum \( \Ss = (\Aa, \lambda_0, S_0) \).  
Then the following holds:
\begin{enumerate}[label=(\alph*)]
\item \( \Omega(S) \) is non-empty, compact, and invariant under both \( S \) and the natural \( \Gamma \)-action. In particular, \( \Omega(S) \in \inv \).
\item There exist \( k \in \N \) and \( \omega \in \Omega(S) \) such that \( S^k(\omega) = \omega \). Any such \( \omega \) is called a \emph{fixed point}.
\end{enumerate}
\end{proposition}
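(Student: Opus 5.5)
For (a), I would split the argument into closedness, invariance and non-emptiness. \emph{Closedness:} if $\omega_k\to\omega$ in the product topology and $P\in W(\omega)$ is supported in $B(e,R)$, then $P$ is a finite patch. For large $k$ the configuration $\omega_k$ agrees with $\omega$ on a ball containing $\gamma B(e,R)$, where $\gamma$ is the shift realizing $P$. Hence $P\in W(\omega_k)\subseteq W(S)$, so $\omega\in\Omega(S)$, and compactness follows from compactness of $\Aa^\Gamma$. \emph{$\Gamma$-invariance:} $W(\gamma\omega)=W(\omega)$, since patches are taken over all shifts.

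\emph{$S$-invariance:} let $\omega\in\Omega(S)$ and let $Q$ be a finite subpatch of $S(\omega)$ on a set $F$. I would choose a finite $M\subseteq\Gamma$ with $F\subseteq V(1,M)\cap\Gamma$; this is possible because $V(1,\Gamma)\cap\Gamma=\Gamma$, as $V$ is a fundamental domain. By the restriction property in Proposition~\ref{prop:SubstRule-Properties}, $Q\prec S(\omega|_M)$. Since $\omega|_M\prec S^n(a)$ for some $n$ and $a$, equivariance and restriction give $S(\omega|_M)\prec S^{n+1}(a)$. Transitivity of $\prec$ then yields $Q\in W(S)$.

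\emph{Non-emptiness:} here I would use the stretch-factor condition of Definition~\ref{def:stretch-factor}. The supports $V(s+n)$ contain the balls $B(D_{\lambda_0}^n(z),C_-\lambda_0^n)$. Take centers $\gamma_n\in\Gamma$ near $D_{\lambda_0}^n(z)$ and set $\omega_n:=\gamma_n^{-1}S^{s+n}(a)$, extended arbitrarily outside its support. Then $\omega_n$ is legal on $B(e,C_-\lambda_0^n-c)$ for a fixed constant $c$. A limit point $\omega$ of $(\omega_n)$ in $\Aa^\Gamma$ has every finite patch inside some eventually legal region, so $W(\omega)\subseteq W(S)$. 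Combining the three parts gives $\Omega(S)\in\inv$.

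For (b), I would use a pigeonhole argument on the letter placed at a suitable point. One has $\gamma_0:=D_{\lambda_0}^s(z)\in V(s)$; this needs checking, but $z\in B(z,C_-)$ and the $n=0$ analogue of the inclusion should hold after shifting $s$. Recentering, I would consider the sequence of letters $a_n:=S^{sn}(b)$ evaluated at suitable points $\eta_n$. Since $\Aa$ is finite, some letter $a$ repeats, giving $k$ and a point $\eta\in\Gamma$ with $S^k(a)(\eta)=a$ and the support of $S^k(a)$ containing a ball around $\eta$. The nested patches $\eta^{-1}S^{jk}(a)$ then agree on increasing balls around $e$ by the restriction property. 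Their limit $\omega$ satisfies $S^k(\omega)=\omega$, provided the conjugated map $\omega\mapsto \eta'^{-1}S^k(\eta'\omega)$ is used. Continuity of $S$ (Proposition~\ref{prop:SubstRule-Properties}(a)) passes the fixed-point identity to the limit, and $\omega\in\Omega(S)$ because all its patches are legal.

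\textbf{Main obstacle.} The hardest step is the non-Abelian bookkeeping in (b). The point $\eta$ must satisfy $D_{\lambda_0}^k(\eta)$-compatibility, namely $D_{\lambda_0}^k(\eta)=\eta\,\cdot$ (a position inside the supertile), so that the centered patches are genuinely nested. I would handle this by choosing $\eta$ as a fixed point of the affine map $x\mapsto D_{\lambda_0}^k(x)\,c$ on $\Gamma$. I would first try iterating this map from $e$ and using the ball containment to guarantee that the iterates remain interior. Failing that, I would pass to a further power of $k$ via pigeonhole on pairs (letter, relative position within $V(1)$), which are finite in number.
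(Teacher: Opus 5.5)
Your part (a) is correct. Closedness, $\Gamma$-invariance and $S$-invariance all go through; for the last one you cover $F$ by $V(1,M)\cap\Gamma$ and transport $\omega|_M\prec S^n(a)$ to $S(\omega|_M)\prec S^{n+1}(a)$ via equivariance and restriction. Non-emptiness also works, and there you may take $\gamma_n:=D^n_{\lambda_0}(z)$ exactly, since $D_{\lambda_0}(\Gamma)\subseteq\Gamma$ and $\gamma_n^{-1}B(\gamma_n,C_-\lambda_0^n)=B(e,C_-\lambda_0^n)$. The paper itself only cites \cite[Thm.~1.2]{BecHarPog21}. It remarks in the appendix that the original argument must be adapted to balls centred at $D^n_{\lambda_0}(z)$ and to the offset $s$.

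The genuine gap is in (b): a seed consisting of a single letter at a single site cannot work with the moving-centre notion of stretch factor.
\begin{enumerate}
\item If $S^k(a)(\eta)=a$, restriction and equivariance give $S^{(j+1)k}(a)|_{V(jk,\{\eta\})\cap\Gamma}=D^{jk}_{\lambda_0}(\eta)\,S^{jk}(a)$. So the patches $\eta^{-1}S^{jk}(a)$ are not nested.
\item Recentring at $g_j:=D^{(j-1)k}_{\lambda_0}(\eta)\cdots D^{k}_{\lambda_0}(\eta)\,\eta$ does make them nested. However, $g_{j+1}=D^k_{\lambda_0}(g_j)\eta$, so the limit only satisfies $S^k(\omega)=\eta\,\omega$.
\item A genuine fixed point needs $\eta'\in\Gamma$ with $D^k_{\lambda_0}(\eta')\eta=\eta'$, i.e.\ a periodic point of the parent map. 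Iterating $x\mapsto D^k_{\lambda_0}(x)c$ from $e$ only reproduces the divergent $g_j$, and nothing guarantees that a periodic point is interior to its supertiles.
\item Concretely, take the table tiling: $V(n)\cap\Z^2=\{-2^n+1,\dots,0\}^2$, and the parent map is componentwise $\xi\mapsto\lceil\xi/2\rceil$. Its periodic points are exactly $\{0,1\}^2$. They force $\eta$ to be a corner of $V(k)\cap\Z^2$, so there is no ball around $\eta$. For each such $\eta'$, the union $\bigcup_j D^j_{\lambda_0}(\eta')\,(V(j)\cap\Z^2)$ is a single quadrant.
\end{enumerate}
So neither the affine fixed point nor a pigeonhole on (letter, position) pairs produces an element of $\Aa^{\Z^2}$ there. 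This is precisely the example for which the modified definition was introduced.

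The missing idea is a finite seed patch rather than a single letter.
\begin{enumerate}
\item Put $T:=z^{-1}(V(s)\cap\Gamma)$, which is finite and nonempty.
\item The recursion directly gives $V(n,\gamma M)=D^n_{\lambda_0}(\gamma)V(n,M)$. Combined with $V(s+n)=V(n,V(s)\cap\Gamma)$ from \cite[Lem.~5.16]{BecHarPog21}, this yields $V(n,T)=D^n_{\lambda_0}(z)^{-1}V(s+n)\supseteq B(e,C_-\lambda_0^n)$ for $n\in\N$.
\item Choose $k$ with $T\subseteq B(e,C_-\lambda_0^k)\subseteq V(k,T)$. Then legal patches on $T$ exist, for instance restrictions of $D^k_{\lambda_0}(z)^{-1}S^{s+k}(a)$.
\item The map $\Phi(P):=S^k(P)|_T$ sends the finite set of legal patches on $T$ into itself; this is your own $S$-invariance computation. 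By the restriction property, $\Phi^m(P)=S^{km}(P)|_T$.
\item A periodic point $\Phi^m(P)=P$ gives $S^{(j+1)km}(P)|_{V(jkm,T)\cap\Gamma}=S^{jkm}(P)$. Hence these legal patches are nested, and their supports exhaust $\Gamma$.
\item Their union $\omega$ lies in $\Omega(S)$. Using $V(km,V(jkm,T)\cap\Gamma)=V((j+1)km,T)$, it satisfies $S^{km}(\omega)=\omega$.
\end{enumerate}
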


\begin{proof}
This is proven in \cite[Thm.~1.2]{BecHarPog21}.
\end{proof}

We now address the question under which conditions the subshift \( \Omega(S) \) is minimal and uniquely ergodic. The key tool is the notion of linear repetitivity.

\begin{definition}[Primitive substitution]
\label{def:primitive_substitution}
Let \( S \) be a substitution map associated with \( \Dd \) and \( \Ss = (\Aa, \lambda_0, S_0) \).  
We call \( S \) \emph{primitive} if there exists \( L \in \N \) such that for all \( a, b \in \Aa \), one has \( a \prec S^L(b) \).
\end{definition}

It is straightforward to check that the table tiling substitution is primitive with \( L = 2 \): each letter of the alphabet occurs in the second substitution iterate \( S^2(a) \) for any \( a \in \Aa \), as illustrated in Figure~\ref{fig:Table_2nd_Iteration}.

\begin{figure}[htb]
    \centering
    \includegraphics[scale=0.65]{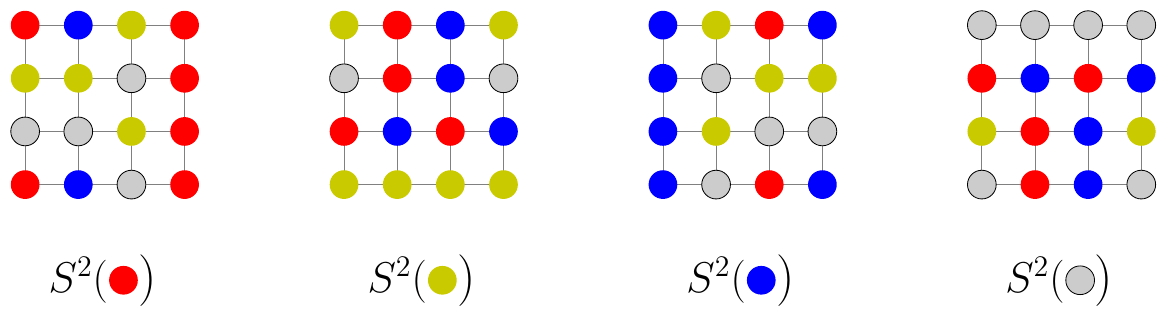}
    \captionsetup{width=0.95\linewidth}
    \caption{The second iterate \( S^2(a) \) for each $a\in \Aa$ of the table tiling contains all letters of the alphabet.}
    \label{fig:Table_2nd_Iteration}
\end{figure}

\begin{theorem}[\cite{BecHarPog21}]
\label{thm:Primitive-LR}
Let \( S \) be a substitution map associated with \( \Dd \) and \( \Ss = (\Aa, \lambda_0, S_0) \).  
If \( S \) is primitive, then the associated substitution subshift \( \Omega(S) \) is strictly ergodic, i.e., minimal and uniquely ergodic.  
Moreover, every element in \( \Omega(S) \) is linearly repetitive.
\end{theorem}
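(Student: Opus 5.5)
The strategy is to reduce strict ergodicity to linear repetitivity. Theorem~\ref{thm:LR-Minimal_UniqErg} is stated for weighted Delone sets in a group of exact polynomial growth. A homogeneous Lie group with dilations has this property: the Haar measure of $B(t)$ equals $m_G(B(1))\,t^{\kappa}$, where $\kappa$ is the homogeneous dimension, because $D_t$ scales both the metric and the measure. Via Proposition~\ref{prop:Conv_Symb-Dyn-Syst}, elements $\omega\in\Aa^\Gamma$ are identified with the weighted Delone sets $(\Gamma,\omega)$. Under this identification, linear repetitivity of $\omega$ in the symbolic sense yields linear repetitivity of $(\Gamma,\omega)$ with respect to $d_G$ (with $c_\Pi$ independent of $\varepsilon$, since patches coincide exactly). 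So it suffices to prove the following: there is $C>0$ such that for every $\omega\in\Omega(S)$, every $R\geq 1$, and all $g,h$, each $R$-ball patch of $\omega$ occurs in every $CR$-ball patch of $\omega$. Minimality and unique ergodicity of $\hull$, and hence of $\Omega(S)$, then follow.

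First, I would use the growth of supports. By the stretch factor condition (Definition~\ref{def:stretch-factor}) and Theorem~\ref{thm:SupportGrowth}, there are constants $0<C_-<C_+$ such that $V(n)$ contains a ball of radius $C_-\lambda_0^n$ and is contained in a ball of radius $C_+\lambda_0^n$. Next, I would establish a covering lemma. Every $\omega\in\Omega(S)$ decomposes as $\omega=S^n(\omega_n)$ for some $\omega_n\in\Omega(S)$ (up to a shift). This uses the invariance $S(\Omega(S))\subseteq\Omega(S)$ from Proposition~\ref{prop:Basic_SubstitutionSubshift}, together with a compactness/recognizability-free argument: legal patches are subpatches of some $S^m(a)$, hence lie inside a union of level-$n$ supertiles $S^n(b)$ placed at $D_{\lambda_0}^n(\eta)V(n)$, $\eta\in\Gamma$. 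By the restriction and equivariance properties of Proposition~\ref{prop:SubstRule-Properties}, $\omega$ is then tiled by the level-$n$ supertiles $D_{\lambda_0}^n(\eta)\,S^n(\omega_n(\eta))$.

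Given $R$, choose $n$ with $C_-\lambda_0^{n-1}\le R< C_-\lambda_0^{n}$. Every ball of radius $R$ in $\omega$ meets only boundedly many level-$n$ supertiles, say those indexed by $\eta$ in a ball of radius $c$ around some $\eta_0$ with $c$ independent of $n$. Hence the $R$-patch is contained in $S^n(P)$ for a legal patch $P$ of uniformly bounded radius. Since $P$ is legal, $P\prec S^m(a)$ for some $a$, and $m$ can be bounded uniformly because there are finitely many such $P$. By primitivity with exponent $L$, $a\prec S^L(b)$ for all $b$, so $P\prec S^{m+L}(b)$ for every $b\in\Aa$. Thus our patch occurs in every supertile of level $n+m+L$, whose support has diameter at most $2C_+\lambda_0^{n+m+L}\le C' R$. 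Conversely, any ball of radius $C'' R$ contains a full supertile of that level, by the inner-ball bound applied one level higher. This gives linear repetitivity with constant independent of $\omega$.

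The main obstacle is the step that a ball of radius $\sim\lambda_0^n$ meets only boundedly many level-$n$ supertiles, uniformly in $n$. In the non-Abelian setting supertiles are the sets $D_{\lambda_0}^n(\eta)V(n)$. I would handle this by using the outer bound $V(n)\subseteq B(C_+\lambda_0^n)$, the left-invariance of $d_G$, and the fact that $D_{\lambda_0}^n$ is a similarity, which rescales the counting problem to level $0$. At level $0$, the uniform discreteness of $\Gamma$ bounds the number of lattice points $\eta$ with $d_G(\eta,\eta_0)\le c$.
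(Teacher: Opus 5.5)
Your overall plan is the paper's: obtain linear repetitivity from primitivity (the paper cites this from \cite[Thm.~7.4]{BecHarPog21}), then apply Theorem~\ref{thm:LR-Minimal_UniqErg}. Your supertile argument for the linear bound is sound. The gaps are in the sentence ``and hence of $\Omega(S)$''.

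First, the property you reduce to does not give minimality of $\Omega(S)$. That property says every $R$-patch of $\omega$ occurs in every $CR$-ball of the \emph{same} $\omega$, uniformly in $\omega$. It only shows that each orbit closure $\ol{\Orb(\omega)}$ is minimal. It does not exclude that $\Omega(S)$ is a union of several minimal components: a disjoint union of two linearly repetitive subshifts satisfies your reduced property. For minimality of $\Omega(S)$ you need $W(\omega)=W(S)$ for every $\omega\in\Omega(S)$, i.e.\ every legal patch occurs in every element, so that $\Omega(S)=\ol{\Orb(\omega)}$ for each $\omega$. Your ingredients deliver this. A legal $Q\prec S^m(a)$ satisfies $Q\prec S^{m+L}(b)$ for all $b$, and by your desubstitution every $\omega'\in\Omega(S)$ is tiled by supertiles of level $m+L$. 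You should state the reduction in this cross-element form rather than as self-repetitivity of each $\omega$.

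Second, you apply Theorem~\ref{thm:LR-Minimal_UniqErg} with ambient group $G$, so you obtain minimality and unique ergodicity of the $G$-hull of $(\Gamma,\omega)$. Since $g.(\Gamma,\omega')=(g\Gamma,\omega'\circ g^{-1})$, this hull is the suspension $(G\times\ol{\Orb(\omega)})/\Gamma$ of the $\Gamma$-subshift, not the subshift itself. Transferring unique ergodicity back requires the correspondence between $G$-invariant measures on the suspension and $\Gamma$-invariant measures on the subshift, which you do not supply.

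The paper avoids this by applying the theorem with the discrete group $\Gamma$, equipped with the restricted adapted metric, as the ambient group. This needs exact polynomial growth of $\Gamma$ \cite[Sec.~3.4]{BecHarPog21}, not of $G$. It follows quickly from your dilation argument. If $\ol{V}\subseteq B(e,r_+)$, then left-invariance and $G=\bigsqcup_{\gamma\in\Gamma}\gamma V$ give $B(t-r_+)\subseteq(\Gamma\cap B(t))V\subseteq B(t+r_+)$ for $t>r_+$. Hence $m_G(B(1))(t-r_+)^\kappa\leq \sharp(\Gamma\cap B(t))\,m_G(V)\leq m_G(B(1))(t+r_+)^\kappa$.

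Two smaller points. The decomposition $\omega=\gamma\, S^n(\omega_n)$ comes from your compactness argument, not from the invariance $S(\Omega(S))\subseteq\Omega(S)$, which is the opposite inclusion. Also, a ball of radius $C''R$ contains a full supertile because of the outer bound $V(k)\subseteq B(e,\tilde C_+\lambda_0^k)$ applied to the supertile through its center, not because of the inner-ball bound.
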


\begin{proof}
The result is proven in \cite[Thm.~1.2]{BecHarPog21}.  
For context, we note that primitivity ensures minimality and linear repetitivity of all configurations in \( \Omega(S) \); see \cite[Thm.~7.4]{BecHarPog21}.  
Moreover, the lattice \( \Gamma \) has exact polynomial growth (see Section~\ref{sec:Repet_WeightDelone} for a definition) by \cite[Sec.~3.4]{BecHarPog21}. Thus, $\Gamma$ is amenable by \cite[Prop.~4.4]{BecHarPog25}.  
Combining these properties with the results of \cite{BecHarPog25}, in particular Theorem~\ref{thm:LR-Minimal_UniqErg}, yields unique ergodicity of \( \Omega(S) \).
\end{proof}

To further analyze the dynamical structure, we investigated under which conditions elements in the subshift are non-periodic.  
We call a subshift \( \Omega \subseteq \Aa^\Gamma \) \emph{weakly aperiodic} if it contains at least one non-periodic element; see Definition~\ref{def:(non-)periodic}. If all elements are non-periodic, we call \( \Omega \) \emph{strongly aperiodic}.  
In the Abelian case, weak aperiodicity and minimality already imply strong aperiodicity. 
This implication does not directly carry over in the non-Abelian setting and requires separate analysis; see a discussion in \cite[Sec.~8.2]{BecHarPog21}.

One classical strategy to establish aperiodicity in symbolic substitutions in $\Z^d$ involves the existence of proximal pairs \cite{BaaGri13,BarOl14}.  
In geometric settings, injectivity of the substitution often suffices to construct aperiodic configurations \cite{Sol97,Sol98a,AnPu98}.  
Inspired by the latter, we introduce a notion of non-periodicity for general substitution systems:

\begin{definition}[Non-periodic substitution]
\label{def:nonperiodic-subst}
Let \( S \) be a substitution map associated with \( \Dd \) and \( \Ss = (\Aa, \lambda_0, S_0) \).  
Then \( S \) is called \emph{non-periodic} if \( S_0 \) is injective and
\[
\left( \gamma^{-1} S(a) \right)\big|_{\gamma^{-1} D(V) \cap D(V)}
\neq S(b)\big|_{\gamma^{-1} D(V) \cap D(V)}
\]
for all \( \gamma \in D(V) \cap \Gamma \setminus \{e\} \) and \( a, b \in \Aa \).
\end{definition}

This notion of non-periodic substitution has not been introduced in the Abelian setting, as alternative methods are typically used to establish aperiodicity.

\begin{theorem}[\cite{BecHarPog21}]
\label{thm:NonPeriodicSubshift}
Let \( S \) be a non-periodic substitution. Then the associated subshift \( \Omega(S) \) is weakly aperiodic.  
In particular, every fixed point \( \omega \in \Omega(S) \) is non-periodic.
\end{theorem}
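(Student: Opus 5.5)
We argue by contradiction: we show that a fixed point $\omega$ with $S^k(\omega)=\omega$ cannot have a nontrivial period. Since Proposition~\ref{prop:Basic_SubstitutionSubshift}~(b) guarantees that such a fixed point exists in $\Omega(S)$, this also gives weak aperiodicity. Replacing $S$ by $S^k$ is not directly allowed, because the non-periodicity condition in Definition~\ref{def:nonperiodic-subst} is only formulated for one step of $S$. So we first extract a one-step \emph{recognizability} statement.

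\textbf{Step 1 (one-step recognizability).} Let $\eta\in\Aa^\Gamma$ and $\gamma\in\Gamma$ with $\gamma S(\eta)=S(\eta')$ for some $\eta'\in\Aa^\Gamma$. We claim that then $\gamma\in D_{\lambda_0}(\Gamma)$ and $\eta'=D_{\lambda_0}^{-1}(\gamma)\,\eta$.

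Write $D:=D_{\lambda_0}$. By Proposition~\ref{prop:SubstRule-Properties}~(c), $S(\eta)$ is tiled by the blocks $D(\xi)(D(V)\cap\Gamma)$, $\xi\in\Gamma$, and on each block it carries the word $S_0(\eta(\xi))$. Decompose $\gamma=D(\xi_0)\delta$ with $\xi_0\in\Gamma$ and $\delta\in D(V)\cap\Gamma$, using the fundamental domain property.

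Suppose $\delta\neq e$. Comparing $\gamma S(\eta)$ and $S(\eta')$ on the block at the identity, $D(V)\cap\Gamma$, gives the following. The translate $\delta^{-1}S(a)$, for some letter $a$ of the shifted block, agrees with $S(b)$ on $\delta^{-1}D(V)\cap D(V)$, where $b=\eta'(e)$. Here one must carefully use equivariance, $S(\xi\eta)=D(\xi)S(\eta)$, to move the shifted block to the origin. This contradicts Definition~\ref{def:nonperiodic-subst}. Hence $\delta=e$, so $\gamma=D(\xi_0)$.

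Then $S(\xi_0\eta)=S(\eta')$ by equivariance. The blocks coincide, and injectivity of $S_0$ gives $\xi_0\eta=\eta'$.

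\textbf{Step 2 (descent).} Let $\gamma\neq e$ be a period of $\omega=S^k(\omega)$, and set $\omega_j:=S^{k-j}(\omega)$, so that $S^j(\omega_j)=\omega$.

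Apply Step 1 to $\gamma S(\omega_1)=S(\omega_1)$. This gives $\gamma=D(\gamma_1)$ with $\gamma_1\omega_1=\omega_1$. Iterating, $\gamma=D^{n}(\gamma_n)$ for every $n$, passing cyclically through the fixed-point orbit, where each $\gamma_n\in\Gamma$ is a period of some $S^j(\omega)$.

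Since $D$ scales $d_G$ by $\lambda_0>1$, we have $d_G(e,\gamma_n)=\lambda_0^{-n}d_G(e,\gamma)\to0$. Because $\Gamma$ is discrete, $\gamma_n=e$ for $n$ large, and hence $\gamma=D^n(e)=e$, a contradiction. Therefore $\Stab(\omega)=\{e\}$.

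\textbf{Main obstacle.} The hardest part is Step 1, in the non-Abelian setting. One must check that the overlap region $\delta^{-1}D(V)\cap D(V)$ is exactly where the two block decompositions are compared. One must also keep the order of multiplication straight, for instance whether $\gamma=D(\xi_0)\delta$ or $\delta D(\xi_0)$, compatibly with the left fundamental domain $G=\Gamma V$ and Proposition~\ref{prop:SubstRule-Properties}~(c). I would first write out the action $(\gamma\omega)(x)=\omega(\gamma^{-1}x)$ explicitly on a single block, and then match it to the condition in Definition~\ref{def:nonperiodic-subst}.
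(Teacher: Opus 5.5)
Your proof is correct. The thesis gives no argument for this statement and only refers to \cite{BecHarPog21}. Your route has two parts: a one-step recognizability statement obtained from the overlap condition of Definition~\ref{def:nonperiodic-subst} plus injectivity of $S_0$, and then the descent $\gamma=D_{\lambda_0}^n(\gamma_n)$ with $d_G(e,\gamma_n)=\lambda_0^{-n}d_G(e,\gamma)$, which contradicts discreteness of $\Gamma$. This is precisely the mechanism that definition is built for. You are also right that one should cycle through $\omega,S(\omega),\dots,S^{k-1}(\omega)$ instead of passing to $S^k$.

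The one place to tidy up is the bookkeeping you already flagged in Step~1.

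If you decompose $\gamma=D_{\lambda_0}(\xi_0)\delta$, the clean comparison is $S(\eta)=\delta^{-1}S(\xi_0^{-1}\eta')$ on the identity block. This makes $S(\eta(e))$ and $\delta^{-1}S(\eta'(\xi_0))$ agree on $\delta^{-1}D_{\lambda_0}(V)\cap D_{\lambda_0}(V)$.

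The pairing you wrote, with $b=\eta'(e)$ and the block of $\gamma S(\eta)$ that meets $e$, belongs instead to the decomposition $\gamma^{-1}=D_{\lambda_0}(\xi_1)\delta$. In that case the block is $\delta^{-1}\big(D_{\lambda_0}(V)\cap\Gamma\big)$, carrying $\delta^{-1}S(\eta(\xi_1))$.

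Either version produces $\delta\in\big(D_{\lambda_0}(V)\cap\Gamma\big)\setminus\{e\}$, as the definition requires. Hence $\delta=e$ and $\gamma\in D_{\lambda_0}(\Gamma)$.
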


We emphasize that the non-periodicity of the substitution rule \( S \) guarantees that all of its fixed points are non-periodic.

Whether non-periodicity of \( S \) also implies strong aperiodicity of \( \Omega(S) \) remains an open and subtle question.  
Partial results have been obtained for 2-step nilpotent Lie groups; see \cite[Thm.~1.4]{BecHarPog21}.  
This includes, in particular, the discrete Heisenberg group, discussed in Example~\ref{ex:HeisenbergExample-Subst} below.

In contrast to the Abelian case -- where minimality combined with weak aperiodicity implies strong aperiodicity -- there is currently no general argument that establishes this implication in the non-Abelian setting.  
We suspect, however, that the partial result for 2-step nilpotent Lie groups may extend more broadly -- possibly even to all homogeneous Lie groups -- by an inductive argument along the lines of \cite{BecHarPog21}.

\section{A zoo of examples}
\label{sec:Subst_Examples}

Building on the substitution framework developed in \cite{BecHarPog21} (as presented in the previous section), it is natural to ask under which conditions such substitutions exist and whether classical examples fall within this setting. We briefly address these questions here and refer the reader to the more detailed discussion in \cite{BecHarPog21}. In addition, we present a selection of concrete examples.

In order to construct such substitutions explicitly, the original work \cite{BecHarPog21} introduces a structural splitting condition with respect to an Abelian component of the group. This leads to the notion of \emph{RAHOGRASP} groups (rationally homogeneous Lie groups with rational spectrum) and the concept of a good substitution rule.

\begin{theorem}[\cite{BecHarPog21}]
\label{thm:Good_Subst_Rule}
For every RAHOGRASP group \( G \) of dimension at least two and every finite alphabet \( \Aa \) with \( \#\Aa \geq 2 \), there exists a dilation datum \( \Dd \) over \( G \) and a primitive and non-periodic substitution datum \( \Ss \) over \( \Dd \) with alphabet \( \Aa \).
\end{theorem}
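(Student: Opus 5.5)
The plan is constructive. We build the dilation datum from the RAHOGRASP structure and then write down an explicit substitution rule that is primitive and non-periodic.

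\textbf{Step 1: the dilation datum.} By definition, a RAHOGRASP group \( G \) is a homogeneous Lie group whose Lie algebra carries a rational structure. Its dilations \( D_\lambda=\exp\circ\,\delta_\lambda\circ\log \) have rational (integer, after rescaling) weights. For \( \lambda_0\in\N \) large, we therefore expect \( D_{\lambda_0}(\Gamma)\subseteq\Gamma \) for a suitable uniform lattice \( \Gamma \). This lattice comes from a rational basis via Malcev theory; using the splitting one may take \( \Gamma \) to be a product of \( \Z^{d_j} \) in exponential coordinates of the second kind.
\begin{itemize}
\item We pick an adapted homogeneous quasi-norm metric \( d_G \) with \( d_G(D_\lambda g,D_\lambda h)=\lambda d_G(g,h) \).
\item We pick a fundamental domain \( V \), a "box" in the split coordinates, containing \( e \) in its interior.
\end{itemize}
Next we verify that \( \lambda_0 \) is sufficiently large relative to \( V \) in the sense of Definition~\ref{def:stretch-factor}. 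For this we use the criterion Lemma~\ref{lem:suff-large}(b): it suffices to find \( r_-,r_+ \) with \( B(e,r_-)\subseteq V\subseteq B(e,r_+) \), and a ball \( B(z_0,r_0)\subseteq V(s) \) with \( r_0>\tfrac{r_+\lambda_0}{\lambda_0-1} \). Taking \( \lambda_0 \) large makes \( D_{\lambda_0}(V) \) contain a big ball, so already \( s=1 \) should work.

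\textbf{Step 2: the substitution rule.} Let \( F:=D_{\lambda_0}(V)\cap\Gamma \). Since \( \lambda_0 \) is large, \( \#F \) is large compared to \( \#\Aa \). We define \( S_0(a)\in\Aa^F \) as follows:
\begin{itemize}
\item Each \( S_0(a) \) contains every letter of \( \Aa \). This gives primitivity with \( L=1 \), i.e.\ \( b\prec S(a) \) for all \( a,b \).
\item The patterns \( S_0(a) \) differ pairwise, so \( S_0 \) is injective.
\item Each \( S_0(a) \) has a rigid "marker" region. For instance, put a designated letter \( a_1 \) at \( e \) and in a small neighbourhood of \( e \), and let \( a_1 \) appear nowhere else in \( S_0(a) \) except in a pattern that cannot be matched by any nontrivial shift.
\end{itemize}
Here \( \#\Aa\ge2 \) is needed, and \( \dim G\ge2 \) is used to have room, i.e.\ enough shift directions in \( F \), to place asymmetric markers.

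\textbf{Step 3: non-periodicity (main obstacle).} We must show that for every \( \gamma\in F\setminus\{e\} \) and all \( a,b \), the shifted patch \( \gamma^{-1}S(a) \) disagrees with \( S(b) \) on the overlap \( \gamma^{-1}D(V)\cap D(V) \). In the non-Abelian case the overlap region is distorted. My approach is to place markers in the "Abelian component" of the splitting. Translation by \( \gamma \) then acts on that component as an ordinary translation, and only the remaining coordinates shear. Concretely, I would put a unique off-centre configuration, say letter \( a_2 \) at \( e \) and at a fixed generator \( x \), all else \( a_1 \) nearby, such that every nontrivial \( \gamma \) moves at least one marker point into the overlap onto a position carrying a different letter.

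The hard part is controlling the overlap for \( \gamma \) near the boundary of \( F \), where the overlap is small. If the overlap misses all markers, I would fall back on a density argument: fill \( F \) periodically with a marker tile of small period along the Abelian factor. Any shift then either misaligns some tile inside the overlap, or the overlap is so thin that it contains a full column where the letters differ.

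Once Steps 1–3 hold, the datum \( \Ss=(\Aa,\lambda_0,S_0) \) over \( \Dd \) is primitive and non-periodic, which proves the theorem.
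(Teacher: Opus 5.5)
Your Steps~1 and~2 follow the same constructive route as the cited proof: a dilation datum from the RAHOGRASP structure, then an explicit rule. Step~1 works as you say. Since $V\cap\Gamma=\{e\}$, one has $V(1)=D_{\lambda_0}(V)\supseteq B(e,\lambda_0 r_-)$, so $s=1$, $z=e$ works once $\lambda_0>1+r_+/r_-$; this is just Lemma~\ref{lem:suff-large}(a). Putting every letter into every $S_0(a)$ gives primitivity with $L=1$.

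The gap is Step~3, which carries all the remaining content. It is not carried out, the difficulty is misdiagnosed, and the fallback would fail.

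\textbf{Misdiagnosis.} For every $\gamma\in F\setminus\{e\}$ the overlap $\gamma^{-1}D(V)\cap D(V)\cap\Gamma$ contains $e$, because $e\in D(V)$ and $e=\gamma^{-1}\gamma$. At $x=e$ the condition compares $S_0(a)(\gamma)$ with $S_0(b)(e)$. So no shift, however close to the boundary of $F$, has an overlap that misses a marker placed at the origin. No control of the non-Abelian distortion of the overlap is needed.

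\textbf{The fallback fails.} Suppose the supertiles are filled periodically under some $\gamma\in F$. Then $\gamma^{-1}S(a)$ and $S(a)$ agree on the overlap away from the markers. That is exactly the partial self-match the definition forbids. So in the situation the fallback is designed for (an overlap missing the markers) it produces a violation instead of excluding one.

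\textbf{Injectivity for $\#\Aa=2$.} Your concrete configuration is left open and clashes with injectivity when $\#\Aa=2$. If the marker letter sits at the same positions in every $S_0(a)$ and the other letter fills the rest, all supertiles coincide. The extra occurrences needed to separate them would themselves have to be checked, and you do not do this.

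\textbf{The repair.} It is short and only evaluates at $e$ and one further point. Fix $a_1\in\Aa$.

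If $\#\Aa\geq 3$, let each $S_0(a)$ carry $a_1$ at $e$ and only letters of $\Aa\setminus\{a_1\}$ elsewhere. Each of these letters should occur, and the $S_0(a)$ should be pairwise distinct; this is possible since $\#F$ is large. Evaluating at $x=e$ gives $S_0(a)(\gamma)\neq a_1=S_0(b)(e)$ for all $\gamma\in F\setminus\{e\}$ and all $a,b$.

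If $\Aa=\{a_1,a_2\}$, choose $m\in F\setminus\{e\}$ with $m^{-1}\in F$. This is possible since $F\supseteq B(e,\lambda_0 r_-)\cap\Gamma$, which is inversion-invariant by left-invariance of $d_G$. Let $S_0(a_1)$ be $a_1$ at $e$ and $a_2$ elsewhere. Let $S_0(a_2)$ be $a_1$ at $e$ and at $m$, and $a_2$ elsewhere. Evaluating at $x=e$ settles every case except $(a,\gamma)=(a_2,m)$. In that case take $x=m^{-1}\in F\cap m^{-1}F$. Then $S_0(a_2)(mm^{-1})=a_1$, while $S_0(b)(m^{-1})=a_2$ because $m^{-1}\notin\{e,m\}$ ($\Gamma$ is torsion free). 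Both rules are injective and contain every letter in every supertile.

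A minor point in Step~1: a homogeneous quasi-norm in general only gives a quasi-metric. To get an adapted metric with $d_G(D_\lambda g,D_\lambda h)=\lambda\, d_G(g,h)$ you need a subadditive homogeneous norm, e.g.\ that of Hebisch--Sikora after normalizing the weights.
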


The proof given in \cite[Thm.~1.1]{BecHarPog21} is constructive and uses the notion of a \emph{good substitution rule} as defined in \cite[Sec.~6.2]{BecHarPog21}. This construction still leaves considerable flexibility in the choice of the actual substitution, but defines a primitive and non-periodic substitution. In Example~\ref{ex:HeisenbergExample-Subst}, we provide an example of a good substitution rule for the Heisenberg group.

As a direct consequence of Theorem~\ref{thm:Good_Subst_Rule}, we conclude in \cite[Cor.~1.5]{BecHarPog21} that every RAHOGRASP group $G$ admits a linearly repetitive and non-periodic Delone set.

Let us return to the symbolic dynamical systems. A natural question is whether this setting includes classically studied examples such as block substitutions. A \emph{block substitution} on \( \Aa^{\Z^d} \) with finite alphabet \( \Aa \) is defined via a vector \( m = (m_1, \dots, m_d) \in \N^d \) with \( m_j > 1 \) for all \( 1 \leq j \leq d \), and a substitution rule \( S_0 : \Aa \to \Aa^{K(m)} \), where
\[
K(m) := \Z^d \cap \prod_{j=1}^d \left[-\tfrac{m_j}{2}, \tfrac{m_j}{2}\right)
\]
is the support of the substitution rule. As in the case of the table tiling discussed previously, the rule extends uniquely to a substitution map \( S : \Pat \to \Pat \); see \cite{Sol97,Sol98a,Fra05,BaaGri13}.

It turns out that such substitutions can be embedded into the general substitution framework by defining a suitable dilation datum and substitution datum that generate the same substitution map, similarly as we did in Example~\ref{ex:Geom_Comb_data-TableTiling}. For a formal statement we refer to \cite[Prop.~3.1]{BaBePoTe25}.

We emphasize that the entries \( m_j \) in a block substitution may differ, allowing for non-uniform scaling in different coordinate directions. For further illustrative examples, we refer to \cite{Sol97,Sol98a,Fra05,BaaGri13} and the forthcoming PhD thesis \cite{Ten25-Phd} by Lior Tenenbaum.

To demonstrate the flexibility of the framework, we two examples showing that even certain non-block substitutions can be incorporated into the setting developed in \cite{BecHarPog21}.  
We consider here an example of a digit substitution originally discussed in \cite[Exam.~2.3]{FraMan22}.

\begin{example}
\label{ex:NonSquare-Substitution-Z2}
Consider the dilation datum \( \Dd := (\R^2, d_\infty, (D_\lambda)_{\lambda > 0}, \Z^2, V) \),
where:
\begin{itemize}
    \item \( d_\infty(x,y) := \max_{j=1,2} |x_j - y_j| \);
    \item \( D_\lambda(x,y) := (\lambda x, \lambda y) \);
    \item \( V := -\left(\tfrac{1}{4},\tfrac{1}{4}\right) + \left(\left([0,1)^2 \setminus \left[\tfrac{1}{2},1\right)^2 \right) \cup \left[-\tfrac{1}{2},0\right)^2\right) \).
\end{itemize}

\begin{figure}[htb]
    \centering
    \includegraphics[scale=3.4]{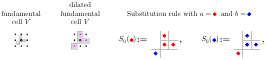}
    \captionsetup{width=0.95\linewidth}
    \caption{The gray shaded area represent first the fundamental domain $V$ and in the second picture $V(1)=D_2(V)$, where the origin is marked by a black circle. The associated lattice points $V\cap\Z^2$ and $V(1)\cap\Z^2$ are colored pink. The substitution rule \( S_0 \) of this digit substitution, originally defined in \cite{FraMan22}, is given on the right.}
    \label{fig:NonSquare-Substitution-Z2-Lam=2}
\end{figure}

In order to prove that \( \lambda_0 = 2 \) is a stretch factor for \( \Dd \), observe that $\ol{V}\subseteq B(e,r_+)$ for $r_+=1$. Then $r_0:=2.1>\tfrac{r_+\lambda_0}{\lambda_0-1}$ and for $z_0=(-6,-6)$ and $s_0=4$ we conclude $B(z_0,r_0)\subseteq V(s_0)$, see Figure~\ref{fig:NonSquare-Substitution-Z2-Lam=2-V(4)}. Thus, Lemma~\ref{lem:suff-large}~(b) implies that $\lambda_0=2$ is sufficiently large relatively to $V$.

\begin{figure}[htbp]
    \centering
    \includegraphics[scale=1]{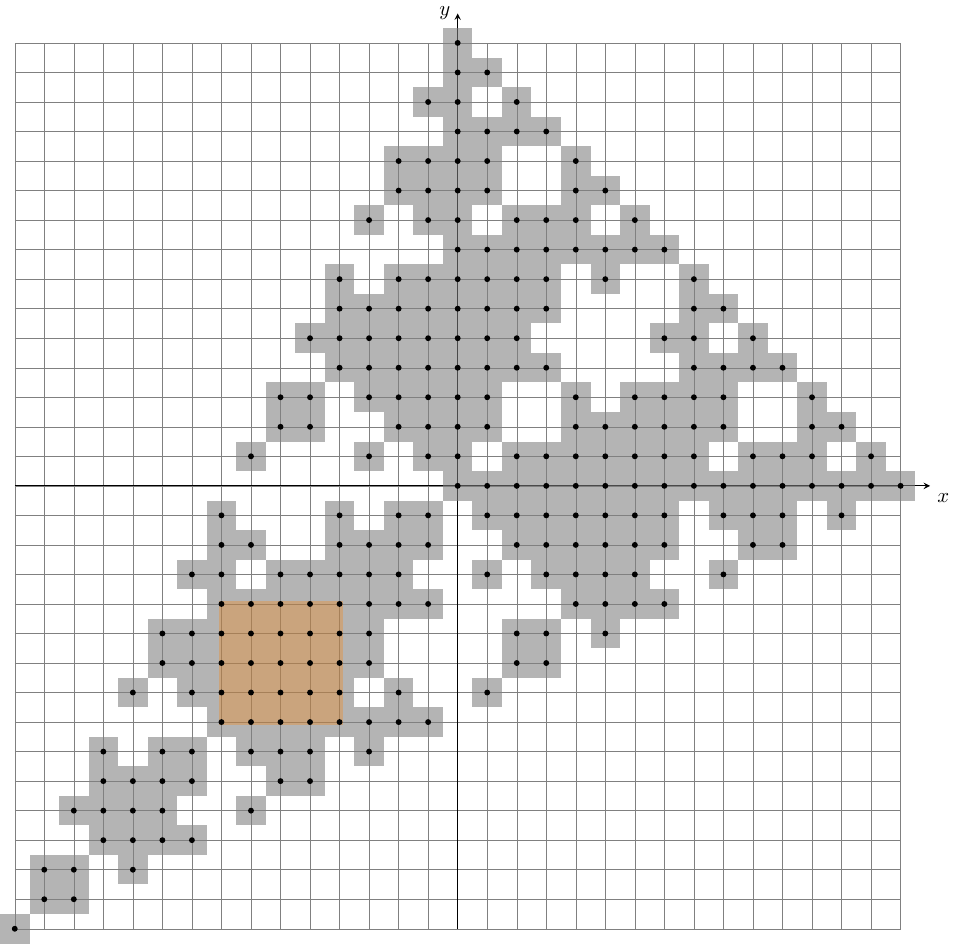}
    \captionsetup{width=0.95\linewidth}
    \caption{The gray shaded region indicates the set $V(4)$ for $\lambda_0 = 2$ in Example~\ref{ex:NonSquare-Substitution-Z2}. All lattice points in $\Z^2 \cap V(4)$ are marked by black bullets. The orange square represents the ball $B(z_0, r_0)$ in the metric $d_\infty$, which is entirely contained within $V(4)$.}
    \label{fig:NonSquare-Substitution-Z2-Lam=2-V(4)}
\end{figure}

The corresponding support after one substitution step satisfies (see also Figure~\ref{fig:NonSquare-Substitution-Z2-Lam=2} for an illistration):
\[
V(1)\cap\Z^2 = \{ (-1,-1), (0,0), (1,0), (0,1) \}.
\]

Let \( \Aa = \{a,b\} \) be the alphabet, and define the substitution rule \( S_0 : \Aa \to \Aa^{V(1)\cap\Z^2} \) as illustrated in Figure~\ref{fig:NonSquare-Substitution-Z2-Lam=2}. This rule is not of block type, as its support is not a rectangular region. Following \cite[Exam.~2.3]{FraMan22} it is a digit substitution.
Denote by $S$ the associated substitution map with $\Dd$ and $\Ss=(\Aa,2,S_0)$. Since $S$ is primitive the associated subshift $\Omega(S)\subseteq \Aa^{\Z^2}$ is strictly ergodic and every element is linearly repetitive.
\end{example}

Next, we present another example of a non-block substitution on $\Z^2$ that fits into the framework developed in \cite{BecHarPog21}.

\begin{figure}[htbp]
    \centering
    \includegraphics[scale=3.4]{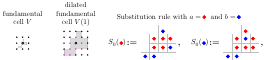}
    \captionsetup{width=0.95\linewidth}
    \caption{The gray shaded region illustrates the fundamental domain \( V \) (left) and its image under the dilation \( D_3 \), i.e., \( V(1) = D_3(V) \) (right). The origin is marked with a black circle, and the corresponding lattice points in \( V \cap \Z^2 \) and \( V(1) \cap \Z^2 \) are shown in pink. The substitution rule \( S_0 \) is defined over the support \( V(1) \cap \Z^2 \).}
    \label{fig:NonSquare-Substitution-Z2-Lam=3}
\end{figure}

\begin{example}
\label{ex:NonSquare-Substitution-Z2_lambda=3}
Consider the dilation datum \( \Dd := (\R^2, d_\infty, (D_\lambda)_{\lambda > 0}, \Z^2, V) \), where:
\begin{itemize}
    \item \( d_\infty(x,y) := \max_{j=1,2} |x_j - y_j| \);
    \item \( D_\lambda(x,y) := (\lambda x, \lambda y) \);
    \item \( 
    	V := \left[ -\tfrac{2}{3}, 0 \right) \times \left[ -\tfrac{2}{3}, -\tfrac{1}{3} \right)  
    		\cup \left[ -\tfrac{1}{3}, \tfrac{2}{3} \right) \times \left[ -\tfrac{1}{3}, \tfrac{1}{3} \right)
    		\cup \left[ 0, \tfrac{1}{3} \right) \times \left[ \tfrac{1}{3}, \tfrac{2}{3} \right)
    	\).
\end{itemize}

For \( \lambda_0 = 3 \), a direct computation yields
\[
V(1) \cap \Z^2 = \{ (-2,-2), (-1,-2), (-1,-1), (-1,0), (0,-1), (0,0), (1,-1), (1,0), (0,1) \},
\]
as visualized in Figure~\ref{fig:NonSquare-Substitution-Z2-Lam=3}.

\begin{figure}[htb]
    \centering
    \includegraphics[scale=1.22]{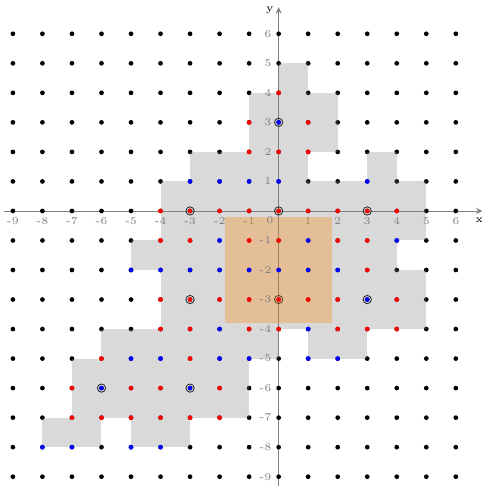}
    \captionsetup{width=0.95\linewidth}
    \caption{The second iteration \( S^2(\textcolor{rot}{\bullet}) = S^2(a) \), supported on \( V(2) \cap \Z^2 \), is shown. The gray shaded region represents \( V(2) \). Elements of \( D_{\lambda_0}(V(1) \cap \Z^2) \) are indicated by black circles. The orange square represents the ball $B\big( (0,-2), 1.8 \big)$ in the metric $d_\infty$, which is entirely contained within $V(4)$.}
    \label{fig:NonSquare-Substitution-Z2-Lam=3-support2nd}
\end{figure}
To show that \( \lambda_0 = 3 \) is a stretch factor, we apply the sufficient condition given in Lemma~\ref{lem:suff-large}~(b). Observe that 
\(\overline{V} \subseteq B(e, r_+) \)
for \( r_+ = 1 \), where balls are taken with respect to the \( d_\infty \)-metric. Furthermore, \( r_0 = 1.8 > \tfrac{r_+ \lambda_0}{\lambda_0 - 1} = 1.5 \), and one checks directly that
\[
B\big( (0,-2), r_0 \big) \subseteq V(2),
\]
as seen in Figure~\ref{fig:NonSquare-Substitution-Z2-Lam=3-support2nd}, where \( V(2) = D_{\lambda_0}( (V(1) \cap \Z^2) + V ) \) is shown.  
Thus, Lemma~\ref{lem:suff-large}~(b) implies that \( \lambda_0 = 3 \) is sufficiently large relative to \( V \), with constants \( C_- = 0.4 \), \( s = 2 \), and \( z = (0,-2) \).

Let \( \Aa = \{a, b\} \) and define the substitution rule \( S_0 : \Aa \to \Aa^{V(1)\cap\Z^2} \) as illustrated in Figure~\ref{fig:NonSquare-Substitution-Z2-Lam=3}.  
Note that this is not a block substitution, as its support is not rectangular.

Denote by $S$ the associated substitution map with $\Dd$ and $\Ss=(\Aa,3,S_0)$. Since $S$ is primitive the associated subshift $\Omega(S)\subseteq \Aa^{\Z^2}$ is strictly ergodic and every element is linearly repetitive. 
\end{example}

We conclude this section with an example in the Heisenberg group \( H_3(\R) \), a non-Abelian Lie group. This example was already provided in \cite{BecHarPog21}.
As a topological space, we identify \( H_3(\R) \) with \( \R^3 \), equipped with the group operation
\[
(x,y,z)(a,b,c) := \big(x+a,\, y+b,\, z+c + \tfrac{1}{2}(xb - ya)\big).
\]
Consider the tuple \( \Dd_\HH := (H_3(\R), d_\HH, (D^\HH_\lambda)_{\lambda > 0}, \Gamma_\HH, V) \), where:
\begin{itemize}
    \item \( d_\HH \) is the unique left-invariant metric on \( H_3(\R) \) satisfying
    \[
    d_\HH \big((x,y,z), (0,0,0)\big) = \sqrt[4]{(x^2 + y^2)^2 + z^2};
    \]
    \item \( D^\HH_\lambda(x,y,z) := (\lambda x, \lambda y, \lambda^2 z) \) defines a one-parameter dilation family;
    \item \( \Gamma_\HH := \{ (x,y,z) \in H_3(\R) \mid x,y,z \in 2\Z \} \) is a uniform lattice;
    \item \( V := [-1,1)^3 \subseteq H_3(\R) \) is a fundamental domain for \( \Gamma_\HH \).
\end{itemize}

This data defines a dilation datum \( \Dd_\HH \). Setting \( r_- = 1 \) and \( r_+ = 1.5 \), we observe
\[
B(e, r_-) \subseteq V \subseteq \overline{V} \subseteq B(e, r_+),
\]
and since \( D_{\lambda_0}^\HH(\Gamma_\HH) \subseteq \Gamma_\HH \) for \( \lambda_0 = 3 > 1 + \tfrac{r_+}{r_-} = 2.5 \), we conclude that \( \lambda_0 = 3 \) is a stretch factor for \( \Dd_\HH \) using Lemma~\ref{lem:suff-large}~(a). 

\begin{example}
\label{ex:HeisenbergExample-Subst}
Let \( \Dd_\HH \) and \( \lambda_0 = 3 \) be as above. Then
\[
V(1) \cap \Gamma_\HH 
= \big([-3,3)^2 \times [-9,9)\big) \cap \Gamma_\HH 
= \{ -2, 0, 2 \}^2 \times \{ -8, -6, \dots, 6, 8 \}.
\]
Define the substitution datum \( \Ss_\HH := (\Aa, \lambda_0, S_0) \) with \( \Aa = \{a,b\} \) and substitution rule \( S_0 \) shown in Figure~\ref{fig:SubstRule-Heisen}. 

\begin{figure}[htb]
    \centering
    \includegraphics[scale=1]{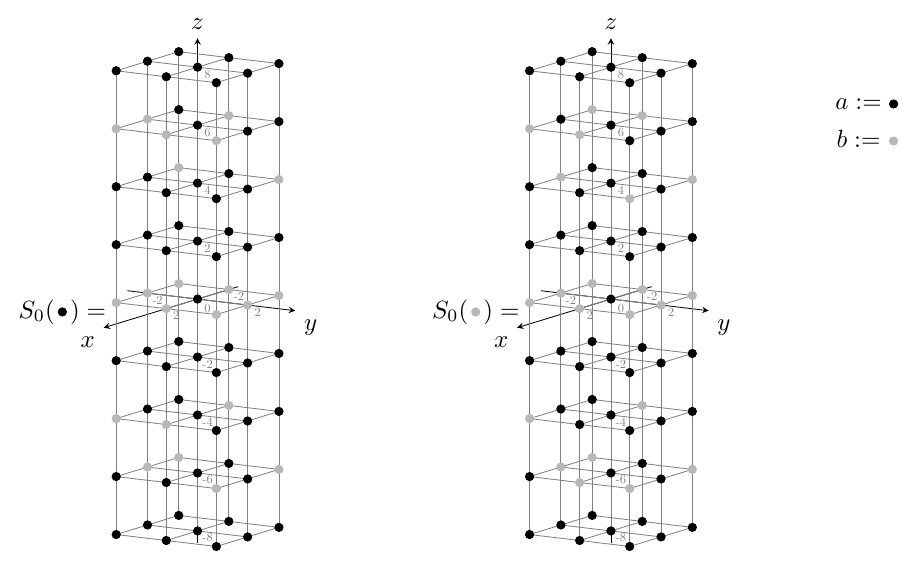}
    \captionsetup{width=0.95\linewidth}
    \caption{Substitution rule \( S_0 \) for the Heisenberg example. Figure taken from \cite{BecHarPog21}.}
    \label{fig:SubstRule-Heisen}
\end{figure}

As shown in \cite[Thm.~1.4, Exam.~6.7, Prop.~6.6]{BecHarPog21}, the associated substitution map \( S \) is primitive and non-periodic. 
Hence, the resulting subshift \( \Omega(S) \) is strictly ergodic, strongly aperiodic, and every configuration in \( \Omega(S) \) is linearly repetitive.
\end{example}

This example highlights the geometric challenges of substitutions in non-Abelian settings compared to the Abelian case.  
To illustrate this, consider the dilation datum \( \Dd_\R := (\R^3, d_\R, (D^\R_\lambda)_{\lambda > 0}, \Gamma_\R, V) \), where:
\begin{itemize}
    \item \( d_\R \) is the standard Euclidean metric;
    \item \( D^\R_\lambda(x,y,z) := (\lambda x, \lambda y, \lambda^2 z) \);
    \item \( \Gamma_\R := \{ (x,y,z) \in \R^3 \mid x,y,z \in 2\Z \} \);
    \item \( V := [-1,1)^3 \).
\end{itemize}

\begin{figure}[htb]
    \centering
    \includegraphics[scale=0.45]{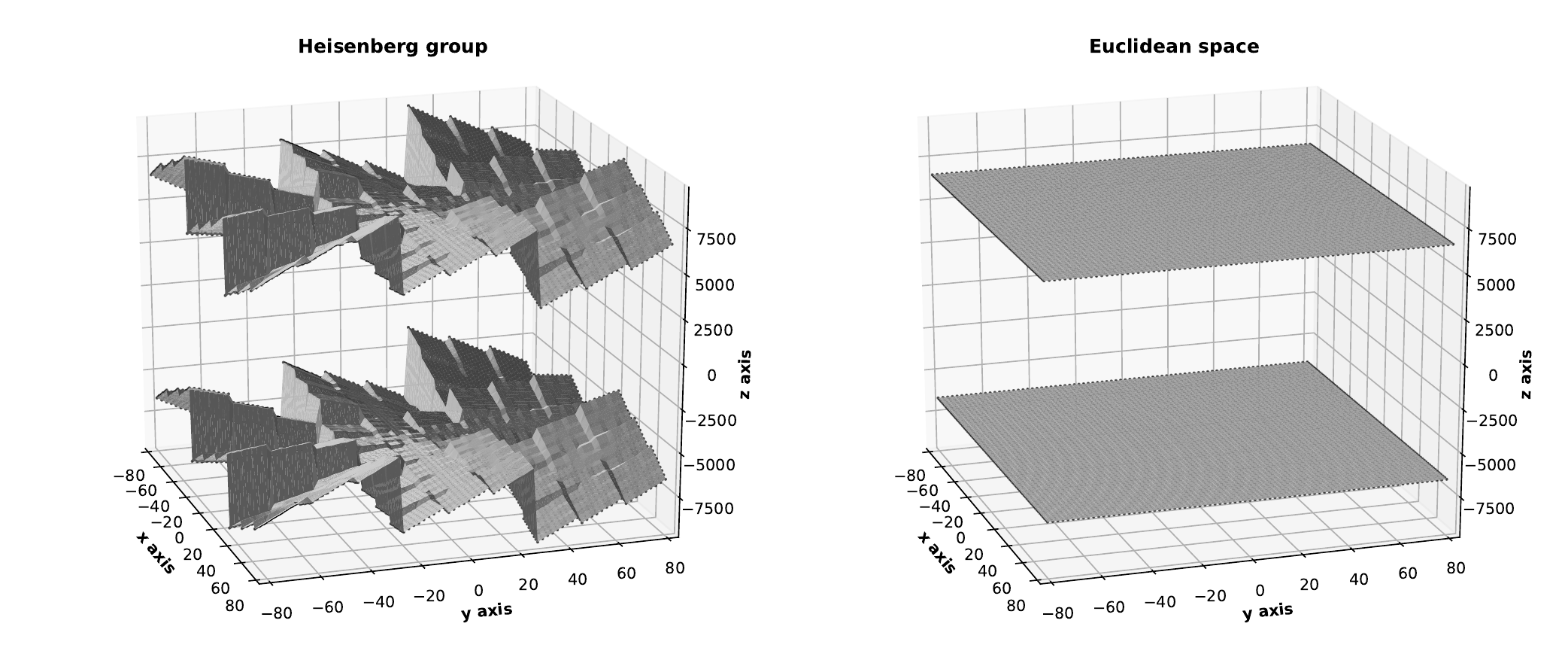}
    \captionsetup{width=0.95\linewidth}
    \caption{Support \( V(4) \) for \( \Dd_\HH \) (left) and \( \Dd_\R \) (right) with \( \lambda_0 = 3 \).  
	The maximal and minimal values of the third coordinate are plotted as functions of the \( (x,y) \)-position. 
	Figure taken from \cite{BecHarPog21}.}
    \label{fig:Substitution_Supp_N=4}
\end{figure}

The key difference between \( \Dd_\HH \) and \( \Dd_\R \) lies in the underlying group structure.  
Since \( \Ss_\HH \) also defines a substitution rule over \( \Dd_\R \), we may consider the associated data \( \Ss_\R := \Ss_\HH \) over \( \Dd_\R \).  
Figure~\ref{fig:Substitution_Supp_N=4} compares the supports \( V(4) \) for both settings.
The fluctuations in the third coordinate -- resulting from the group structure in \( H_3(\R) \) -- are clearly visible in the non-Abelian setting (with $\Dd_\HH$ and $\Ss_\HH$), but absent in the Euclidean case (with $\Dd_\R$ and $\Ss_\R$).

\section{Approximation for symbolic substitutions}
\label{sec:Subst-Approximations}

In Section~\ref{sec:Weight_Delone}, we discussed conditions under which the hulls of weighted Delone dynamical systems converge. As a direct application, we showed for symbolic systems (Proposition~\ref{prop:Conv_Symb-Dyn-Syst}) that two subshifts are close to each other if they share the same patches up to a fixed size.

As previously mentioned, convergence of dynamical systems implies convergence of certain measure theoretic quantities (Section~\ref{sec:SemiContMeasure}) and we see further spectral applications in Section~\ref{sec:SpectralEstimates}. Motivated by various approaches in one dimension, and based on preliminary ideas in \cite{Bec16}, we raised the following question in \cite{BaBePoTe25}:

Given a substitution map \( S \) associated with a dilation datum \( \Dd \) and a substitution datum \( \Ss \), for which \( \omega_0 \in \Aa^\Gamma \) does the following convergence hold
\begin{equation}
\label{eq:Conv_Subst_Subshift}
\overline{\Orb\big(S^n(\omega_0)\big)} \xrightarrow[n\to\infty]{} \Omega(S)\ ?
\end{equation}
Here, convergence of subshifts is defined via the Hausdorff metric \( \delta_\Aa \) (Section~\ref{subsec:symb_syst_as_weighted_Delone}) on the space
\[
\inv := \set{ \Omega \subseteq \Aa^\Gamma }{ \Omega \text{ closed, non-empty, } \Gamma\text{-invariant} },
\]
where the group \( \Gamma \) acts on \( \Aa^\Gamma \) by shifting (see Example~\ref{ex:SymbDynSyst}). We focus only on primitive substitutions while similar results can be obtained under the additional assumption that all letters in $\Aa$ occur in $\omega_0$.

To illustrate the problem, consider the following substitution, originally introduced in \cite{Ten24}.

\begin{example}
\label{ex:Subst-1dim-nonConv}
Consider the dilation datum 
\[
\Dd = \big(\R, |\cdot|, (D_\lambda)_{\lambda > 0}, \Z, [-\tfrac{1}{3}, \tfrac{2}{3}) \big)
\]
with dilation \( D_\lambda : \R \to \R,\, D_\lambda(x) := \lambda x \). Let \( \Ss = (\Aa, 3, S_0) \) be the substitution datum over \( \Dd \) with alphabet \( \Aa = \{a, b, c\} \) and substitution rule \( S_0 : \Aa \to \Aa^3 \) defined by
\[
S_0:\quad 
	a \mapsto aab, \qquad 
	b \mapsto caa, \qquad
	c \mapsto bac.
\]

We follow the standard convention of identifying patches $\Aa^n:=\{u:\{1,\ldots,n\}\to\Aa\}$ with finite words, see e.g.\ \cite{Queff10,BaaGri13}. The substitution is primitive and therefore the associated subshift \( \Omega(S) \) is strictly ergodic. Using standard arguments (e.g.\ proximal pairs), one also shows that \( \Omega(S) \) is strongly aperiodic. Here, \( S \) denotes the substitution map associated to \( \Ss \).

Now consider the constant configuration \( \omega_c \in \Aa^\Z \) given by \( \omega_c(n) = c \) for all \( n \in \Z \). Applying the substitution map \( S \) twice yields:
\[
\omega_c = \ldots c\ c\ \boxed{c\,|\,c}\ c\ c \ldots 
	\xmapsto{S} \ldots a\ c\ b\ a\ \boxed{c\,|\,b}\ a\ c\ b\ a \ldots 
	\xmapsto{S} \ldots a\ b\ b\ a\ \boxed{c\,|\,c}\ a\ a\ a\ a \ldots \,.
\]
Here the vertical bar \( | \) indicates the origin and we interpret \( \omega_c \) as a two-sided infinite concatenation of letters.

This computation inductively shows that the word \( cc \in \Aa^2 \) occurs in \( S^{2n}(\omega_c) \) for all \( n \in \N \), and likewise \( cb \in \Aa^2 \) occurs in \( S^{2n+1}(\omega_c) \) for all \( n \in \N \), see the boxed subwords before.

On the other hand, one can verify that the words \( cb \) and \( cc \) are not legal for this substitution, see \cite{Ten24}. Using Proposition~\ref{prop:Conv_Symb-Dyn-Syst}, this implies that the subshifts 
\[
\Omega_n := \Orb(S^{2n}(\omega_c)) \quad \text{and} \quad \tilde{\Omega}_n := \Orb(S^{2n+1}(\omega_c))
\]
do not converge to the substitution subshift \( \Omega(S) \), since both contain non-legal words \( cb \) or \( cc \), i.e. they do not occur in \( W(S) \) defined in Definition~\ref{def:legal_patch}. Note that we omitted the closure for these orbits as these configurations are periodic and so their orbit is already closed.

On the other hand, convergence does hold when starting from the constant configuration \( \omega_a \in \Aa^\Z \), defined by \( \omega_a(n) = a \) for all \( n \in \Z \).  
Since \( a \mapsto aab \), the word $aa$ is legal in $\omega_a$. Thus, one can show (see e.g. \cite{BBdN20,Ten24}) that
\[
\lim_{n \to \infty} \Orb\big(S^n(\omega_a)\big) = \Omega(S).
\]
\end{example}

\begin{figure}[htb]
    \centering
    \includegraphics[scale=1.7]{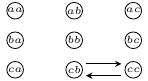}
    \captionsetup{width=0.95\linewidth}
    \caption{The substitution graph $G_S(T;N_T)$ with $T=\{0,1\}$ and $N_T=1$ discussed in Example~\ref{ex:Subst-1dim-nonConv}, see \cite{Ten24}.}
    \label{fig:SubstGraph_1dim}
\end{figure}

This simple example shows that convergence as requested in eq.~\eqref{eq:Conv_Subst_Subshift} fails in general. Already in \cite{Bec16} it was observed that if all the subpatches of $\omega$ of size two (in $\Z^d$ it would be squares $\{0,1\}^d$ of size two) are legal, then the convergence in eq.~\eqref{eq:Conv_Subst_Subshift} holds. Based on this, a characterization of this convergence was provided in \cite{Ten24} for one-dimensional substitution systems. We provided a corresponding characterization (Theorem~\ref{thm:Charact_Convergence_Substitutions}) for the general setting of substitution systems defined in \cite{BecHarPog21} (discussed in the previous Section~\ref{sec:Subst_BeyondAbel}). Moreover, we were able to estimate the rate of convergence of these subshifts (Theorem~\ref{thm:Rate_Convergence_Substitutions}) in the metric $d_\Aa$. In combination \cite{BeBeCo19,BecTak25}, these estimates lead to exponential convergence rates of the spectra discussed in Section~\ref{sec:SpectralEstimates}, see Theorem~\ref{thm:Rate_Convergence_Substitutions}. 

The formal analog of the patches of a sufficiently large size is overcome by introducing a so-called testing tuple $(T,N_T)$ with $T\subseteq \Gamma$ a finite subset and $N_T\in\N$.
We omit their formal definition here and refer the reader to \cite[Def.~2.11]{BaBePoTe25}. However, note that $(T,1)$ with $T=\{0,1\}^d$ is a testing tuple for block substitutions in $\Z^d$, see \cite[Prop.~3.2]{BaBePoTe25}. Moreover, there is a testing tuple for any dilation datum and an associated stretch factor $\lambda_0$, see \cite[Prop~5.3.]{BaBePoTe25}. 

For practical purposes one aims to minimize the size of the testing tuple \cite[Sec.~6]{BaBePoTe25}, which gets more involved in the non-Abelian setting. Therefore we also provided an algorithm to reduce the size.

We continue with the presentation of the main results from the joint work \cite{BaBePoTe25}.  
For the reader's convenience, we present the general results alongside with the guiding example of the table tiling substitution \( S \) on \( \Aa^{\Z^2} \), where \( \Aa = \{a,b,c,d\} \), introduced in Example~\ref{ex:Geom_Comb_data-TableTiling} in Section~\ref{sec:Subst_BeyondAbel}.

Recall that the set of subpatches occurring in a configuration \( \omega_0 \in \Aa^\Gamma \) is given by
\[
W(\omega_0) := \left\{ (\gamma \omega_0)|_F \,\middle|\, \gamma \in \Gamma,\; F \subseteq \Gamma \text{ finite} \right\}.
\]
For \( r > 0 \), define the set of $r$-patches in $\omega_0$ by \( W(\omega_0)_r := W(\omega_0) \cap \Aa^{B(e,r)} \), which is a finite set.

Using Proposition~\ref{prop:Conv_Symb-Dyn-Syst}, convergence $\lim_{n\to\infty}\ol{\Orb\big(S^n(\omega_0)\big)} =\Omega(S)$ in~\eqref{eq:Conv_Subst_Subshift} holds if, for all \( r > 0 \), there exists \( n_r \in \N \) such that
\[
W(S^n(\omega_0))_r = W(S)_r, \qquad \text{for all } n \geq n_r.
\]
We proved that such an \( n_r \) exists whenever \( W(S^{n_0}(\omega_0)) \cap \Aa^T \subseteq W(S) \cap \Aa^T \) for some \( n_0 \in \N \), where \( T \subseteq \Gamma \) is the finite set from the testing tuple. 

In light of this, it suffices to monitor the patches in \( \Aa^T \) that occur in \( \omega_0 \).

A central idea is to associate a finite graph to a substitution and to characterize convergence as in equation~\eqref{eq:Conv_Subst_Subshift} via graph-theoretic properties.  
To that end, recall the subpatch relation \( \prec \) (Definition~\ref{def:Subpatch-relation}) and the notion of legal patches \( W(S) \) associated with a substitution (Definition~\ref{def:legal_patch}).  

A tuple \( G = (\Vv, \Ee) \) is called a \emph{directed graph} if \( \Vv \) is a countable set and \( \Ee \subseteq \Vv \times \Vv \).  
We emphasize that elements of \( \Ee \) are ordered pairs, so in general \( (P, Q) \neq (Q, P) \).  
The set \( \Vv \) is referred to as the \emph{vertex set}, and \( \Ee \) as the \emph{edge set}.  
An element \( (P, Q) \in \Ee \) is interpreted as a directed edge from \( P \) to \( Q \). A \emph{(directed) path} in a directed graph is a chain of vertices $(P_0,\ldots,P_n)$ such that $(P_i,P_{i+1})\in\Ee$ for all $i\in\{0,\ldots,n-1\}$.

\begin{definition}[Substitution graph]
\label{def:Subst_graph}
Let \( S \) be a substitution map associated with a dilation datum \( \Dd = (G, d_G, (D_\lambda)_{\lambda > 0}, \Gamma, V) \) and a substitution datum \( \Ss = (\Aa, \lambda_0, S_0) \). For a testing tuple $(T,N_T)$, define the \emph{substitution graph} $G_S(T;N_T)$ by the directed graph with vertex set $\Vv_S=\Aa^T$ and edge set $\Ee_S$ defined by
\[
(P,Q)\in\Ee_S \qquad :\Longleftrightarrow \qquad Q\prec S^{N_T}(P) \quad\text{and}\quad P,Q\not\in W(S).
\]
\end{definition}

First note that we only draw edges between patches in $\Aa^T$ that are not legal. The graph keeps track if non-legal patches reappear under the iteration of the substitution or if they rule out. In Figure~\ref{fig:SubstGraph_1dim}, a substitution graph is sketched for the substitution defined in Example~\ref{ex:Subst-1dim-nonConv}.

\begin{figure}[htb]
    \centering
    \includegraphics[scale=3.16]{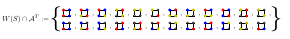}
    \captionsetup{width=0.95\linewidth}
    \caption{The legal patches of the table tiling substitution of support $T=\{0,1\}^2$. Figure taken from \cite{BaBePoTe25}.}
    \label{fig:TableTiling_2x2_patterns}
\end{figure}

Let us return to the table tiling substitution (Example~\ref{ex:Geom_Comb_data-TableTiling}). 

\begin{example}
\label{ex:TableTiling_Patches_Cycles_Graph}
The tuple $(T,1)$ with $T=\{0,1\}^2$ is a testing tuple and $T$ contains four points. Since we have four letters in the alphabet $\Aa$, the vertex set $\Vv=\Aa^T$ contains $4^4=256$ elements. Therefore it would be too much to illustrate the full graph but we illustrate some edges in the graph. By definition, only non-legal patches are relevant for the edge set. Therefore note that the table tiling has exactly $24$ legal patches in $\Aa^T$ (see e.g. \cite[Rem.~4.17]{BaaGri13}), which are illustrated in Figure~\ref{fig:TableTiling_2x2_patterns}.

\begin{figure}[htb]
    \centering
    \includegraphics[scale=2.5]{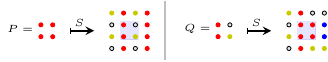}
    \captionsetup{width=0.95\linewidth}
    \caption{The non-legal patches $P$ and $Q$ for the table tiling substitution and there iterative $S(P)$ and $S(Q)$ satisfying $P\prec S(Q)$ and $Q\prec S(P)$.}
    \label{fig:TableTiling_subst_non-legal_patch}
\end{figure}

In Figure~\ref{fig:TableTiling_subst_non-legal_patch}, we observe that the non-legal patches $P, Q \notin W(S)$ satisfy $P \prec S(Q)$ and $Q \prec S(P)$. Therefore, $(P, Q), (Q, P) \in \Ee_S$, which defines a closed path $(P,Q,P)$ in the substitution graph $G_S(T;1)$. 

If either $P$ or $Q$ occurs in a configuration $\omega_0 \in \Aa^\Gamma$, then any iterative $S^n(\omega_0)$ contains $P$ or $Q$, which are not legal patches. Thus, the subshifts $\overline{\Orb(S^n(\omega_0))}, n\in\N,$ do not converge to the table tiling subshift $\OmTabl = \Omega(S)$. This is one idea to prove Theorem~\ref{thm:Charact_Convergence_Substitutions} using that $(P,Q,P)$ is a closed path in $G_S(T,1)$.
\end{example}

We include here a characterization of the convergence of the spectra while the class of operators is only introduced below in Section~\ref{sec:SpectralEstimates}. So-called dynamically-defined operators \( H \) are associated with the unifying symbolic dynamical system \( (\Aa^\Gamma, \Gamma, \tau) \). Here, \( H = (H_\omega)_{\omega \in \Aa^\Gamma} \) is a family of bounded linear operators on a Hilbert space that is strongly continuous in \( \omega \) and covariant under the action of \( \Gamma \); see a detailed discussion in Section~\ref{sec:SpectralEstimates}.

Since each \( H_\omega \) is bounded, its spectrum is a compact subset of \( \C \), and spectral convergence is understood in terms of the Hausdorff metric $d_H$ induced by the Euclidean metric on \( \C \). Furthermore, a bounded linear operator is called \emph{normal} if it commutes with its adjoint.

\begin{theorem}[\cite{BaBePoTe25}]
\label{thm:Charact_Convergence_Substitutions}
Let \( S \) be a substitution map associated with a dilation datum \( \Dd = (G, d_G, (D_\lambda)_{\lambda > 0}, \Gamma, V) \) and a substitution datum \( \Ss = (\Aa, \lambda_0, S_0) \). Assume that \( S \) is primitive and that \( (T, N_T) \) is a testing tuple. Then the following statements are equivalent:
\begin{enumerate}[label=(\roman*)]
\item For every dynamically-defined operator \( H \) that is normal, we have
\[
\lim_{n \to \infty} \sigma(H_{S^n(\omega_0)}) = \sigma(H_\omega), \qquad \text{for all } \omega \in \Omega(S).
\]
\item The iterative subshifts \( \overline{\Orb(S^n(\omega_0))} \), \( n \in \N \), converge to \( \Omega(S) \) in the metric space \( (\inv, \delta_\Aa) \).
\item Each directed path in the substitution graph \( G_S(T; N_T) \) that starts in a vertex from \( W(\omega_0) \cap \Aa^T \) does not contain a closed cycle.
\end{enumerate}
\end{theorem}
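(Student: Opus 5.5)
We establish the equivalence by proving (i)$\Leftrightarrow$(ii) and (ii)$\Leftrightarrow$(iii) separately.

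\emph{(i)$\Leftrightarrow$(ii).} We rely on the spectral characterization of convergence of dynamical systems from \cite{BBdN18}, stated in Section~\ref{sec:SpectralEstimates}. For (ii)$\Rightarrow$(i), note that $\Omega(S)$ is minimal by Theorem~\ref{thm:Primitive-LR}, so $\sigma(H_\omega)=\sigma(H_{\Omega(S)})$ for every $\omega\in\Omega(S)$. Here $\sigma(H_Y):=\overline{\bigcup_{\eta\in Y}\sigma(H_\eta)}$. Likewise $\sigma(H_{S^n(\omega_0)})=\sigma\big(H_{\overline{\Orb(S^n(\omega_0))}}\big)$ by covariance and strong continuity. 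For normal $H$, the continuity of $Y\mapsto\sigma(H_Y)$ on $(\inv,\delta_\Aa)$ then gives (i). For (i)$\Rightarrow$(ii), we use the converse part of \cite{BBdN18}: if the spectra of all normal dynamically-defined operators converge, then the subshifts converge. Concretely, if $\delta_\Aa$-convergence failed, some patch would persist in infinitely many iterates but not be legal, or conversely. We would then build a self-adjoint Schrödinger-type operator with a potential given by a continuous cylinder function, chosen so that its spectra detect this patch. This contradicts (i).

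\emph{(ii)$\Leftrightarrow$(iii).} By Proposition~\ref{prop:Conv_Symb-Dyn-Syst}(d), (ii) is equivalent to the following: for every $r>0$ we have $W(S^n(\omega_0))_r=W(S)_r$ for all large $n$. The inclusion $W(S)_r\subseteq W(S^n(\omega_0))_r$ for large $n$ holds by primitivity. Indeed, every letter occurs in $S^L(\omega_0)$, so $S^{n}(\omega_0)$ contains $S^{n-L}(a)$ for every $a\in\Aa$, and these supports contain balls of radius $\sim\lambda_0^{n}$ by Definition~\ref{def:stretch-factor}. Hence the substantive condition is that eventually all $r$-patches of $S^n(\omega_0)$ are legal. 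The defining property of a testing tuple \cite[Def.~2.11]{BaBePoTe25} reduces this to $T$: if all $T$-patches of $S^{n_0}(\omega_0)$ are legal, then all $r$-patches of $S^{n}(\omega_0)$ are legal for $n\ge n_0+n_r$.

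The main combinatorial step is the following. Every $T$-patch $Q$ of $S^{(k+1)N_T}(\omega_0)$ satisfies $Q\prec S^{N_T}(P)$ for some $T$-patch $P$ of $S^{kN_T}(\omega_0)$; this is again a testing-tuple property, derived from Proposition~\ref{prop:SubstRule-Properties}(c). If $P$ is legal, then $Q$ is legal. Consequently, every illegal $T$-patch of $S^{kN_T}(\omega_0)$ is the endpoint of a path of length $k$ in $G_S(T;N_T)$ whose vertices are all illegal and which starts in $W(\omega_0)\cap\Aa^T$. For (iii)$\Rightarrow$(ii), assume no such path contains a cycle. Since the graph has $|\Aa^T|$ vertices, every such path has length less than $|\Aa^T|$. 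Hence for $k\ge|\Aa^T|$ all $T$-patches of $S^{kN_T}(\omega_0)$ are legal, and the testing-tuple reduction gives (ii). Intermediate times $n$ not divisible by $N_T$ are handled by applying the same argument to $S^j(\omega_0)$ for each $0\le j<N_T$.

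For (ii)$\Rightarrow$(iii), argue by contraposition, as in Example~\ref{ex:TableTiling_Patches_Cycles_Graph}. Suppose a path from some $P_0\in W(\omega_0)\cap\Aa^T$ reaches a cycle $C$. Following the edges, the illegal patches of $C$ recur in $S^{kN_T}(\omega_0)$ for all large $k$ along a residue class modulo $|C|$. So some fixed illegal finite patch lies in $W(S^n(\omega_0))_r$ for infinitely many $n$, contradicting (ii).

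The main obstacle is the precise lifting statement: every $T$-patch of $S^{N_T}(\eta)$ lies in $S^{N_T}$ of a single $T$-patch of $\eta$. This is where the definition of testing tuple and the control of the supports $V(n,M)$, which are geometrically distorted in the non-Abelian case, enter. We would take this as the content of \cite[Def.~2.11, Prop.~5.3]{BaBePoTe25}. The second delicate point is the converse spectral direction, which needs the full strength of \cite{BBdN18}.
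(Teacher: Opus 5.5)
Your proposal follows the paper's proof: (i)$\Leftrightarrow$(ii) via the spectral characterization of \cite{BBdN18}, together with the constancy of the spectrum on the minimal subshift $\Omega(S)$, and (ii)$\Leftrightarrow$(iii) via \cite[Thm.~2.14]{BaBePoTe25}, which the paper only cites and which you reconstruct correctly, granted the two testing-tuple properties you take from \cite[Def.~2.11]{BaBePoTe25} (pulling illegal $T$-patches back along edges of $G_S(T;N_T)$, the length bound $\sharp\Aa^T$ for acyclic paths, and recurrence along a reachable cycle for the converse). The one superfluous step is your remark on intermediate times: as phrased it is not justified by (iii), which only controls paths starting in $W(\omega_0)\cap\Aa^T$, but it is also unnecessary, since the testing-tuple reduction applied once at $n_0=\sharp\Aa^T\cdot N_T$ already covers all $n\geq n_0+n_r$.
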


\begin{proof}
We first observe that \( \sigma(H_\omega) = \sigma(H_\rho) \) for all \( \omega, \rho \in \Omega(S) \), which follows from standard arguments using the minimality of \( \Omega(S) \), as ensured by the primitivity of \( S \); see e.g.\ Corollary~\ref{cor:SemiCont_Spectrum}. The equivalence between (i) and (ii) was established in \cite{BBdN18}, while the equivalence between (ii) and (iii) is proven in \cite[Thm.~2.14]{BaBePoTe25}.
\end{proof}

Theorem~\ref{thm:Charact_Convergence_Substitutions} provides a graph-theoretic criterion to determine the convergence described in eq.~\eqref{eq:Conv_Subst_Subshift} respectively of the spectra. This criterion enables the explicit construction of periodic approximations for the associated operators; see \cite[Cor.~2.16]{BaBePoTe25}. A sufficient criteria for the convergence of these subshifts for block substitutions in $\Z^d$ was existing before \cite{Bec16}. The new characterization to these graphs for a much larger class of substitutions opens also the door for further analysis such as the study of spectral defects and dynamical impurities \cite{BaBePoTe25-Spec}; see a similar discussion in Section~\ref{sec:Kohmoto}.

Another consequence of Theorem~\ref{thm:Charact_Convergence_Substitutions} is that there exists a primitive substitution in \( \Aa^{\Z^2} \) whose associated subshift is not periodically approximable; see  \cite[Cor.~2.17]{BaBePoTe25}. This stands in contrast to the one-dimensional setting, where all primitive substitutions yield periodically approximable subshifts; see \cite{BBdN20}.

Recall from Definition~\ref{def:(non-)periodic} the notion of periodic elements in a dynamical system. A subshift \( \Omega \in \inv \) is called \emph{periodically approximable} if there exists a sequence of periodic configurations \( (\omega_n)_{n\in\N} \) such that their orbits \( \Orb(\omega_n) \) converge to \( \Omega \) in \( (\inv, \delta_\Aa) \).

In \cite[Prop.~2.15]{BaBePoTe25}, it is shown that if \( \omega_0 \in \Aa^\Gamma \) is periodic, then \( S(\omega_0) \) is also periodic. In combination with Theorem~\ref{thm:Charact_Convergence_Substitutions}, this implies that \( \Omega(S) \) is periodically approximable whenever there exists a periodic configuration \( \omega_0 \in \Aa^\Gamma \) satisfying one of the equivalent conditions of the theorem.

To illustrate this, we return to the table tiling substitution. The following result was originally established in \cite[Prop.~7.7.1]{Bec16} using a sufficient condition for convergence, and is restated in \cite[Prop.~1.3]{BaBePoTe25}. However, Theorem~\ref{thm:Charact_Convergence_Substitutions} additional gives that single letter approximations fail to converge for the table tiling substitution:

\begin{proposition}
\label{prop:PeriodAppr_Table}
Let \( S \) be the table tiling substitution. Then the following assertions hold.
\begin{enumerate}[label=(\alph*)]
\item Let \( \omega_{rb} \in \Aa^{\Z^2} \) be the configuration defined by
\[
\omega_{rb}(\gamma) 
	:= \begin{cases}
		\textcolor{rot}{\bullet}, & \text{if } \gamma \in (2\Z)^2 \cup \big( (1,1) + (2\Z)^2 \big),\\
		\textcolor{blue}{\bullet}, & \text{otherwise},
	\end{cases}
\]
then
\[
\lim_{n\to\infty} \Orb(S^n(\omega_{rb})) = \OmTabl.
\]
\item Let $a\in\Aa$ and $\omega_a\in\Aa^{\Z^2}$ be defined by $\omega_a(n)=a$ for all $n\in\Z^2$. Then the subshifts $\big(\Orb(S^n(\omega_{a}))\big)_{n\in\N}$ do not converge to $\OmTabl$.
\end{enumerate}
\end{proposition}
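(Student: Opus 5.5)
Both parts reduce to Theorem~\ref{thm:Charact_Convergence_Substitutions}, applied with the testing tuple $(T,1)$ for $T=\{0,1\}^2$. This is admissible because the table tiling is a primitive block substitution on $\Z^2$ (see \cite[Prop.~3.2]{BaBePoTe25} and Example~\ref{ex:Geom_Comb_data-TableTiling}). By the equivalence (ii)$\Leftrightarrow$(iii), convergence of $\Orb(S^n(\omega_0))$ to $\OmTabl$ is decided by the finite set $W(\omega_0)\cap\Aa^T$ and the graph $G_S(T;1)$. Since $\omega_{rb}$ and $\omega_a$ are periodic, $S^n(\omega_0)$ is periodic by \cite[Prop.~2.15]{BaBePoTe25}. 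Hence the orbits are closed and no closures are needed.

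For (a), I first list $W(\omega_{rb})\cap\Aa^T$. The configuration $\omega_{rb}$ is a red/blue checkerboard, so its $2\times 2$ windows are exactly the two checkerboard patches: one with red on the diagonal and blue off it, and one with the colours swapped. I then compare these two patches with the list of the $24$ legal $2\times2$ patches in Figure~\ref{fig:TableTiling_2x2_patterns}. I expect both to be legal, because the two checkerboard patterns occur at the center of suitable iterates $S^2(x)$. If so, no vertex of $W(\omega_{rb})\cap\Aa^T$ carries an edge of $\Ee_S$, since edges only join non-legal patches. Therefore every directed path starting there is trivial and contains no cycle, and condition (iii) holds. Should one of the two patches turn out not to be legal, I would instead compute $S$ of it and follow its $\Ee_S$-successors. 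The aim then is to show that these successors become legal after finitely many steps, so that no cycle is reachable. This finite check is the only genuine computation in (a).

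For (b), I fix $a\in\Aa$. Then $W(\omega_a)\cap\Aa^T$ consists of the single constant patch $P_a\colon T\to\{a\}$. No constant $2\times2$ patch is legal, since none appears among the $24$ legal patches. I then compute $S(P_a)$. It is the periodic $4\times4$ block obtained by placing four copies of $S_0(a)$ side by side. I read off its $2\times2$ subpatches and keep the non-legal ones; these are the successors of $P_a$ in $G_S(T;1)$. I continue this process until it reaches the pair $P,Q$ of Figure~\ref{fig:TableTiling_subst_non-legal_patch}, which satisfies $P\prec S(Q)$ and $Q\prec S(P)$ and so forms the closed cycle $(P,Q,P)$ of Example~\ref{ex:TableTiling_Patches_Cycles_Graph}.

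The natural first candidate is a patch straddling the junction of two adjacent copies of $S_0(a)$. Such a patch combines letters from the right edge of one copy with letters from the left edge of the next. These boundary combinations are exactly the patterns that never arise legally, and I expect the pair $P,Q$ (or a patch leading to it) to appear there within one or two steps, for each of the four letters. Once a path from $P_a$ into this cycle is exhibited, condition (iii) fails, and so the orbits $\Orb(S^n(\omega_a))$ do not converge to $\OmTabl$.

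The main obstacle is this case-by-case bookkeeping over the four letters, i.e. verifying from the rule in Figure~\ref{fig:TableTilingRule} that each constant configuration really reaches the cycle. The rotational symmetry of the table rule should reduce the work to one or two cases.
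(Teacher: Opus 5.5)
Your proposal is correct and follows the paper's argument. Both parts are reduced via Theorem~\ref{thm:Charact_Convergence_Substitutions} with the testing tuple $(T,1)$, $T=\{0,1\}^2$: in (a) the two checkerboard $2\times2$ patches of $\omega_{rb}$ are legal, so no edge of $G_S(T;1)$ leaves them, and in (b) the non-legal constant patch leads into a cycle of $G_S(T;1)$ such as $(P,Q,P)$ (as a shortcut, since $G_S(T;1)$ is finite, any infinite path from the constant patch already contains a cycle, so you need not aim for that specific pair).
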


\begin{proof}
The assertion (a) follows from the inclusion \( W(\omega_{rb}) \cap \Aa^T \subseteq W(S) \), which can be verified directly by inspecting the configuration \( \omega_{rb} \); see Figure~\ref{fig:TableTiling_PeriodicAppr_color} and Figure~\ref{fig:Subpatches_legal}.

Statement (b) follows from similar computations like in Figure~\ref{fig:TableTiling_subst_non-legal_patch} stating that a patch supported on $T=\{0,1\}^2$ with a constant letter $a\in\Aa$ leads to a cycle is $G_S(T,1)$.
\end{proof}

\begin{figure}[htb]
    \centering
    \includegraphics[scale=3.3]{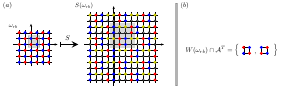}
    \captionsetup{width=0.95\linewidth}
    \caption{In panel~(a), the configuration \( \omega_{rb} \) and its image under \( S \). The subpatches \( W(\omega_{rb}) \cap \Aa^T \) are provided in panel~(b).}
    \label{fig:TableTiling_PeriodicAppr_color}
\end{figure}

We emphasize that \( \omega_{rb} \) is periodic, and thus the above result shows that the table tiling subshift \( \OmTabl \) is periodically approximable.

In a forthcoming preprint we analyze the structure of substitution graphs for block substitutions and their spectral consequences in more detail; see \cite{BaBePoTe25-Spec}. We observed that already in dimension two, new phenomena arise. These were initially observed through numerical investigations of the poor approximation behavior in the table tiling, notably when starting from constant configurations such as \( \omega_a \) from Example~\ref{ex:TableTiling_Patches_Cycles_Graph} and Proposition~\ref{prop:PeriodAppr_Table}~(b).

Once a suitable approximation is established, the natural next step is to quantify the convergence rate. This question will also be central in the upcoming  Section~\ref{sec:SpectralEstimates} discussing the results obtained in \cite{BeBeCo19,BecTak25}. Under the assumption that the subshift associated with a configuration under iteration of the substitution converges, we prove that the convergence in \( (\inv, \delta_\Aa) \) is exponentially fast:

\begin{theorem}[\cite{BaBePoTe25}]
\label{thm:Rate_Convergence_Substitutions}
Let \( S \) be a substitution map associated with a dilation datum \( \Dd = (G, d_G, (D_\lambda)_{\lambda > 0}, \Gamma, V) \) and a substitution datum \( \Ss = (\Aa, \lambda_0, S_0) \). If \( S \) is primitive, then there exist constants \( C > 0 \) and \( M_1 \in \N \) such that for every \( \omega_0 \in \Aa^\Gamma \) satisfying one of the equivalent conditions in Theorem~\ref{thm:Charact_Convergence_Substitutions}, the following estimate holds:
\[
\delta_\Aa\left( \overline{\Orb(S^n(\omega_0))} , \Omega(S) \right) \leq \frac{C}{\lambda_0^n}, \qquad \text{for all } n \geq M_1.
\]
\end{theorem}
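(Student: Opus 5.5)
By Proposition~\ref{prop:Conv_Symb-Dyn-Syst}~(d), it suffices to find constants $c>0$ and $M_1\in\N$ such that
\[
W\big(S^n(\omega_0)\big)_{c\lambda_0^n} = W(S)_{c\lambda_0^n}, \qquad n\geq M_1 .
\]
Indeed, $\delta_\Aa\le \frac{1}{r+1}$ whenever the $r$-patches coincide. Here we use that $W(\Omega(S))_r=W(S)_r$, which follows from minimality and primitivity (Theorem~\ref{thm:Primitive-LR}). This then yields the claimed bound with $C=1/c$. The proof splits into two inclusions, which I would treat separately. Throughout, the uniformity of the constants in $\omega_0$ has to be tracked. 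This is possible because the substitution graph $G_S(T;N_T)$ is finite and depends only on $S$.

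\textbf{Step 1: eventually only legal $T$-patches.} Condition (iii) of Theorem~\ref{thm:Charact_Convergence_Substitutions} says that no path starting in $W(\omega_0)\cap\Aa^T$ meets a cycle. Every such path therefore has length at most $|\Aa^T|$. Using the defining property of a testing tuple, every $T$-patch of $S^{N_T}(\omega)$ lies in $S^{N_T}(P)$ for some $T$-patch $P$ of $\omega$. Hence after $k_0:=N_T|\Aa^T|$ steps every $T$-patch of $S^{k_0}(\omega_0)$ is legal, with $k_0$ independent of $\omega_0$.

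\textbf{Step 2: from legal $T$-patches to legal large patches (upper inclusion).} Let $\omega_1:=S^{k_0}(\omega_0)$. By the restriction property in Proposition~\ref{prop:SubstRule-Properties}, a patch of $S^m(\omega_1)$ supported in a ball $B(\gamma,r)$ is determined by $\omega_1$ on a set $M$ with $V(m,M)\supseteq B(\gamma,r)$. The support growth (Theorem~\ref{thm:SupportGrowth}, via the stretch factor condition of Definition~\ref{def:stretch-factor}) shows that $M$ can be taken inside a translate of $T$ once $r\le c\lambda_0^{m}$, with $c$ depending only on $\Dd$ and $T$. This is essentially what the testing tuple encodes. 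That patch of $\omega_1$ is legal, so it occurs in some $S^j(a)$. Consequently the big patch occurs in $S^{j+m}(a)$ and is legal. This gives $W(S^{n}(\omega_0))_{c\lambda_0^{n-k_0}}\subseteq W(S)$.

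\textbf{Step 3: all legal patches appear (lower inclusion).} Every letter $a$ occurs in $\omega_0$, or at least, by primitivity, in $S^L(\omega_0)$. Then $S^{n}(\omega_0)$ contains $S^{n-L}(a)$ for every $a$. By Theorem~\ref{thm:SupportGrowth}, $S^{n-L}(a)$ contains a ball of radius $C_-\lambda_0^{n-L-s}$. Linear repetitivity of $\Omega(S)$ (Theorem~\ref{thm:Primitive-LR}) says that every legal $r$-patch occurs in every legal ball of radius $C_{LR}r$. Hence all legal patches of size $\frac{C_-}{C_{LR}}\lambda_0^{n-L-s}$ occur in $S^n(\omega_0)$.

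Combining Steps 2 and 3 and absorbing the shifts $k_0,L,s$ into the constant $C$ gives the estimate for $n\ge M_1$. I expect the main obstacle to be Step 2. There one has to control how a ball in the non-Abelian group $G$ is covered by substituted images of translates of $T$, using only the abstract testing tuple definition and the growth $V(n)\supseteq D_{\lambda_0}^{n}(B(z,C_-))$. My first approach would be to write $B(\gamma,r)$ inside $D_{\lambda_0}^m$ of a bounded neighbourhood and use left-invariance of $d_G$ together with $d_G(D_\lambda g,D_\lambda h)=\lambda d_G(g,h)$.
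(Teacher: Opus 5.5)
Your overall architecture is consistent with how the paper describes the cited proof: the reduction via Proposition~\ref{prop:Conv_Symb-Dyn-Syst}~(d) to equality of $r$-patches for $r\asymp\lambda_0^n$, an upper inclusion driven by the testing tuple and the finite graph, and a lower inclusion where primitivity, support growth and the linear repetitivity constant enter. Step 3 is fine. Step 1 is fine modulo the one-step property you attribute to testing tuples.

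The gap is Step 2. Theorem~\ref{thm:SupportGrowth} only shows the following. The indices $\xi\in\Gamma$ whose level-$m$ supertiles $D_{\lambda_0}^m(\xi)V(m)$ meet $B(\gamma,c\lambda_0^m)$ lie in $B\big(D_{\lambda_0}^{-m}(\gamma),\,c+\tilde C_+\big)$, where $\tilde C_+=r_+\big(1+\tfrac{\lambda_0}{\lambda_0-1}\big)$. This set has diameter bounded uniformly in $m$, but not small, and nothing places it inside a translate of $T$. Your suggested route via left-invariance and the scaling of $d_G$ produces exactly this index set and cannot do better.

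In fact Step 2 uses no property of $T$ at all. The only property you invoke in Step 1 (every $T$-patch of $S^{N_T}(\omega)$ lies in $S^{N_T}(P)$ for a $T$-patch $P$ of $\omega$) is also satisfied by $T=\{e\}$. For $T=\{e\}$, condition (iii) holds vacuously for every $\omega_0$, since all letters are legal by primitivity. Your argument would then prove convergence for every $\omega_0$, contradicting Example~\ref{ex:Subst-1dim-nonConv}. Even for the table tiling with $T=\{0,1\}^2$, the crude radius $c+\tilde C_+>9/4$ is far too large for $\{0,1\}^2$. The covering there comes from the exact box shape of $V(m)$, not from support growth.

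What is missing is a property that spreads legality from $T$ to a window large enough to absorb that bounded index set. Concretely you need:
\begin{itemize}
\item a fixed finite $F\supseteq B(e,c+\tilde C_++R_\Gamma)\cap\Gamma$, where every point of $G$ lies within $R_\Gamma$ of $\Gamma$;
\item some $n_F\in\N$, with $F$ and $n_F$ depending only on $S$ and $(T,N_T)$;
\item such that every translate $\gamma F$ is contained in $V(n_F,\eta T)$ for some $\eta\in\Gamma$.
\end{itemize}
With this, all $F$-patches of $S^{k_0+n_F}(\omega_0)$ are $S^{n_F}$-images of legal $T$-patches, hence legal. Your diameter argument then goes through with $F$ in place of $T$ and gives $W(S^n(\omega_0))_{c\lambda_0^{n-k_0-n_F}}\subseteq W(S)$ uniformly in $\omega_0$. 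This is exactly the extra content of a testing tuple, which you must take from \cite[Def.~2.11]{BaBePoTe25} rather than from support growth.

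Alternatively, apply once the qualitative statement quoted before Definition~\ref{def:Subst_graph} (all $T$-patches legal implies eventual equality of $r$-patches), for a single $r$ with $F\subseteq B(e,r)$. You then still owe an argument that the resulting $n_r$ does not depend on $\omega_0$. A compactness argument does this. Suppose $S^{n_k}(\omega_k)$ has illegal $F$-patches with $n_k\to\infty$. By the bounded-diameter argument, these come from finitely many patch types $P$ of the $\omega_k$. So one $P$ recurs along a subsequence, and a single configuration containing $P$ with all $T$-patches legal contradicts the qualitative statement.
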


\begin{proof}
This is proven in \cite[Thm.~2.18]{BaBePoTe25}.
\end{proof}

The constants involved in the estimate are made more explicit in \cite[Prop.~5.6, Cor.~5.7]{BaBePoTe25}, where the linear repetitivity constant plays a key role.

\section[The Ammann-Beenker tiling]{Periodic approximations: The octagonal or Ammann-Beenker tiling}
\label{sec:Octagonal_Penrose}

We conclude this chapter with a brief discussion of further applications to the particular geometric example of the octagonal tiling, also known as the Ammann--Beenker tiling. 

To every Delone set one can associate a tiling, for instance via the Voronoi construction. Conversely, one can associate to a tiling a point set. Under suitable assumptions, the dynamical properties are preserved; see e.g. \cite{GruShe89,Sen95,BaaGri13} for details.

We refer the reader to \cite{DMO89} for a comprehensive discussion of the octagonal tiling, and also to \cite[Sec.~6.1]{BaaGri13}. The tiling consists of two prototiles (up to rotation): a square and a rhombus with opening angle \( \tfrac{\pi}{4} \). In some representations, the square is replaced by a pair of triangles.

If we associate to each vertex (i.e., the point where two edges meet) a point, we obtain a Delone set of finite local complexity. The space of tilings then inherits a natural translation action of \( \R^2 \), and the resulting dynamical system can be equipped with a topology compatible with that of the associated Delone dynamical system.

\begin{figure}[htbp]
    \centering
    \includegraphics[scale=0.3]{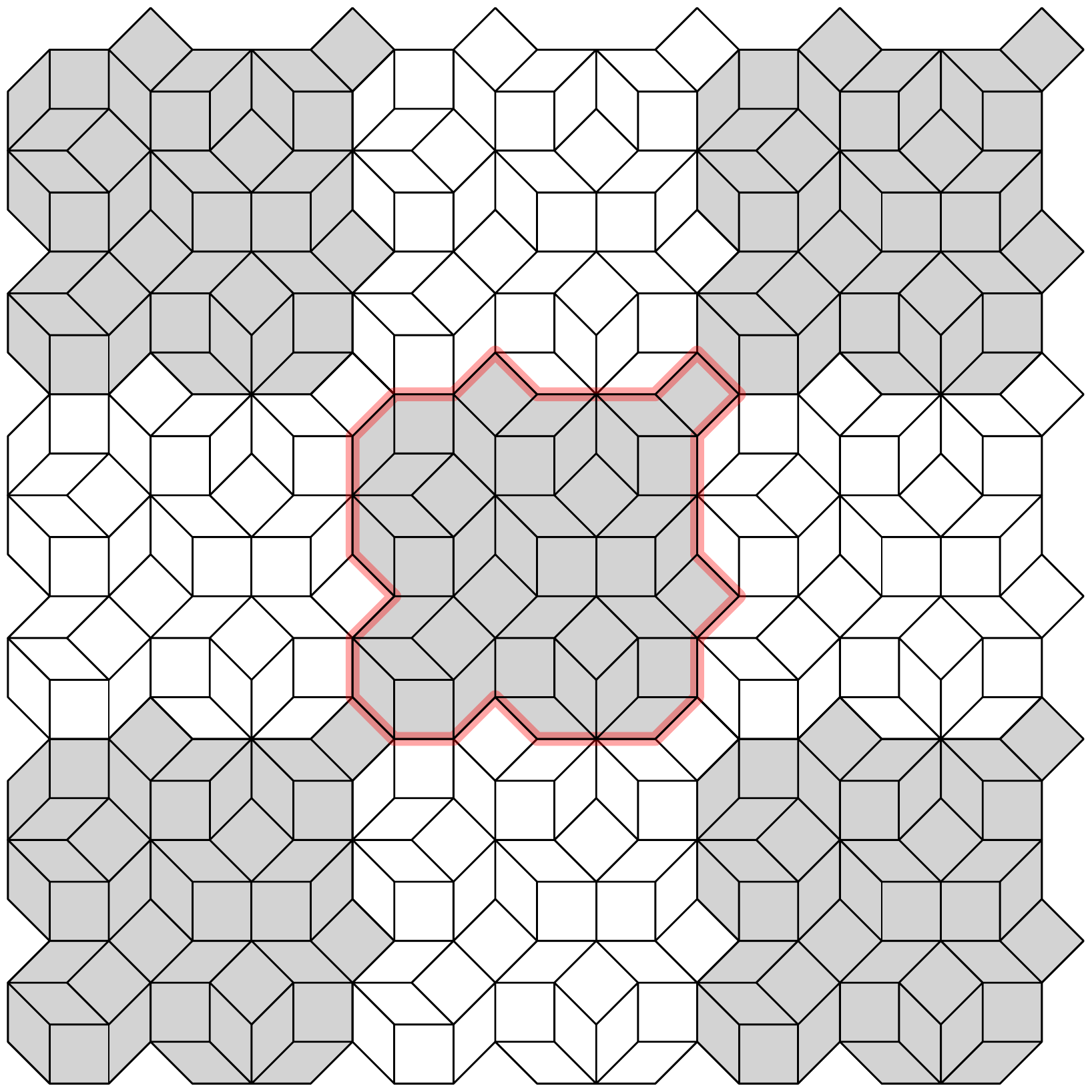}
    \captionsetup{width=0.95\linewidth}
    \caption{A sketch of a periodic tiling \( T_0 \) defined in \cite{DMO89}. For convenience, some of the fundamental cells are shaded in gray to distinguish them.}
    \label{fig:Octagonal_periodic}
\end{figure}

In \cite{DMO89}, the authors constructed a periodic tiling \( T_0 \) using the aforementioned two tiles, as sketched in Figure~\ref{fig:Octagonal_periodic}. For clarity, the boundary of a fundamental cell is marked with a thick red line. The $1$-corona of a tile \( V \) refers to the collection of all tiles intersecting \( V \), i.e., those within one adjacency step.

It can be verified that all $1$-corona in the tiling \( T_0 \) are legal patches in the octagonal tiling, meaning they appear in the infinite aperiodic tiling. These patches play the role of the set \( \Aa^T \) where $T$ was one of the parameters in the testing tuple discussed in Section~\ref{sec:Subst-Approximations}.

The octagonal tiling is defined via a substitution rule. Let \( S \) denote the associated substitution map. Adapting the convergence criteria from \cite[Sec.~4.1]{BaBePoTe25}, one concludes that the orbits of the periodic tilings \( S^n(T_0) \) converge to the hull of octagonal tiling. Hence, the octagonal tiling is periodically approximable.

\begin{remark}
In contrast to the Ammann--Beenker tiling, a Penrose tiling does not admit
periodic approximations using the same tiles.
Consider for instance the Penrose tiling described in
\cite[Sec.~10.3]{GruShe89} and assume there exists a sequence of periodic tilings
converging to it.
Then Proposition~\ref{prop:Char_Conv_Hull_Delone} implies the existence of a
periodic tiling having the same $1$-coronas as the Penrose tiling.
However, by \cite[Sec.~10.3.1]{GruShe89}, no such periodic tiling exists for this
set of tiles.
Consequently, periodic approximations of the Penrose tiling can only be obtained
by deforming the underlying tiles.
\end{remark}

\chapter{The world of spectral butterflies}
\label{chap:WorldButterflies}

In 1976, Douglas Hofstadter~\cite{Hofst76} numerically computed the spectrum of the almost Mathieu operator (see Section~\ref{sec:Hofstadter}), a family of self-adjoint operators on $\ell^2(\Z)$ parametrized by a coupling constant $V\in\R$ and a rotation number $\alpha\in[0,1]$. 
Figure~\ref{fig:Hofstadter-butterfly} shows the spectrum (based on numerical computations from~\cite{HaKaTa16}), which is known as the \emph{Hofstadter butterfly}. The spectrum is plotted only for rational values of $\alpha$, since in these cases the operator is periodic and the spectrum can be computed numerically via Floquet-Bloch theory.

\begin{figure}[htb]
    \centering
    \includegraphics[scale=1.1,angle=-90]{Hoftsatder_Hatsuda.jpg}
    \captionsetup{width=0.95\linewidth}
    \caption{The Hofstadter butterfly taken from \cite{HaKaTa16}.}
    \label{fig:Hofstadter-butterfly}
\end{figure}

These numerical computations already reveal self-similar and fractal structures\footnote{Here, we only speak about numerical observations. To the best knowledge of the author, there is currently no complete mathematically rigorous proof establishing the self-similarity of the Hofstadter butterfly. Nevertheless, this model exhibits many other intriguing spectral features, some of which are discussed in Section~\ref{sec:Hofstadter} and Chapter~\ref{chap:DTMP}.} in the spectrum, as observed earlier by Azbel~\cite{Azb64}. The reader is also referred to recent elaborations \cite{SatWil20,Sat25}. Since then, the Hofstadter butterfly has attracted considerable attention (see a more detailed discussion in Section~\ref{sec:Hofstadter} and Chapter~\ref{chap:DTMP}). Similar spectral features and butterflies have been found in other models~\cite{OK85,WilAus90,DeNittis10-thesis,BeHaJi19,BeHaJiZw21,GhaSugToh22,BarExnLip23,CedFilOng23,Nuckolls25}. Most recently, Hofstadter's butterfly was experimentally observed in twisted bilayer graphene~\cite{Nuckolls25}.

Another model is the so-called \emph{Kohmoto model}, which is studied in further detail in this work (see Section~\ref{sec:Kohmoto} and Chapter~\ref{chap:DTMP}). It was introduced in~\cite{KKT83} and today it serves as a toy model for one-dimensional quasicrystals, such as those discovered by Dan Shechtman~\cite{SBGC84}. Like the almost Mathieu operator, the Kohmoto operators are parametrized by a coupling constant $V \in \R$ and a rotation number $\alpha \in [0,1]$, and they are periodic when $\alpha$ is rational. The associated \emph{Kohmoto butterfly} (based on numerical computations from~\cite{Biber22}) is shown in Figure~\ref{fig:Kohmoto-butterfly}. Numerically, the Kohmoto model exhibits self-similar and fractal features\footnote{Here, we only speak about numerical observations. To the best knowledge of the author, there is currently no complete mathematically rigorous proof establishing the self-similarity of the Kohmoto butterfly.} like the Hofstadter butterfly.

\begin{figure}[htb]
    \centering
    \includegraphics[scale=2.8]{Kohmoto.pdf}
    \captionsetup{width=0.95\linewidth}
    \caption{The Kohmoto butterfly taken from \cite{Biber22}.}
    \label{fig:Kohmoto-butterfly}
\end{figure}

In both models -- the almost Mathieu operator and the Kohmoto model -- the spectral properties of the operators with irrational $\alpha$ are of central interest. Since these operators are no longer periodic, rational approximations of $\alpha$ are employed to study their spectral nature; see the detailed discussion in~\cite{DaFi24-book_2} and in Chapter~\ref{chap:DTMP}. For these models, the convergence (and in some cases convergence rates) of the spectra under rational approximation has been rigorously established~\cite{Ell86,ChElYu90,AvrMouSim90,BIST89,BIT91,Bell94,BecBelTho25}.

While a variety of methods have been used -- including continuous fields of $C^*$-algebras and detailed analyses of approximate eigenfunctions -- we highlight a unifying strategy based on the convergence of the underlying dynamical systems. This approach was initiated in earlier joint works~\cite{BB16,BBdN18}, which used $C^*$-algebraic techniques to prove spectral convergence. In this chapter, we present recent results from the joint works~\cite{BeBeCo19,BecTak25} based on different analytic methods yielding explicit quantitative estimates. These results not only unify spectral estimates for the different models discussed so far but also show that a common mechanism governs the spectral behavior under periodic approximation. Moreover, the spectral estimates obtained are qualitatively optimal with respect to the convergence behavior \cite{BeRa90,BecBelTho25}.

In the case of the almost Mathieu operator, a square-root behavior of spectral gaps that close was previously observed \cite{BeRa90} (see also a discussion in~\cite{BB16,BecTak25}). In the joint work~\cite{BecTak25}, this phenomenon naturally emerges from the interaction between the amenability of the underlying group (in this case $\Z$) and the regularity of the operator coefficients, see also a discussion in Remark~\ref{rem:Optimization-Regularity-Amenable} and Remark~\ref{rem:Choice-cut_Interplay-amenable}.

The underlying dynamical system for the almost Mathieu operator is a torus rotation. For the Kohmoto model, the corresponding dynamical system is symbolic and, for irrational $\alpha$, these are called \emph{Sturmian dynamical system}. Hence, the operators in the Kohmoto model with irrational $\alpha$ are also referred to as \emph{Sturmian Hamiltonians}.

We demonstrate in this section that the results of~\cite{BeBeCo19,BecTak25} extend far beyond the classical models, including:
\begin{itemize}
\item higher-dimensional models (see Section~\ref{sec:Operator_Subst}), with a particular emphasis on substitution systems, building on results from the previous chapter,
\item other models studied in the literature (e.g. the unitary almsot Mathieu operator, see Section~\ref{sec:Unitary-AMO}),
\item operators defined over non-Abelian homogeneous Lie groups (see Section~\ref{sec:Operator_Subst}),
\item operators over general amenable groups possibly with exponential growth, such as the Lamplighter group \cite{KaiVer83,Lin01,Butk01}, see Section~\ref{sec:Lamplighter}.
\end{itemize}

This dynamical perspective on spectral convergence also sheds light on the structure of the Kohmoto butterfly. In the joint work~\cite{BecBelTho25}, spectral defects at rational~$\alpha$ are analyzed and previous numerical findings are confirmed rigorously. We discuss this in more detail in Section~\ref{sec:Kohmoto}.

Periodic approximations are used to analyze the spectral nature of the operators; see a detailed discussion for the Kohmoto model in Chapter~\ref{chap:DTMP}. Besides that and numerical computations, such approximations can also be used to detect spectral gaps \cite{BeSi82,ChElYu90,HegMosTeu22}. In \cite[Thm.~5.1]{ChElYu90}, the authors used spectral estimates to prove that all possible gaps are there -- thus solving the dry ten Martini problem for particular models.

In Section~\ref{sec:SpectralEstimates}, we discuss the general results on the spectral estimates obtained in \cite{BeBeCo19,BecTak25}. In the subsequent section, we apply these results to substitution systems (Section~\ref{sec:Operator_Subst}), the almost Mathieu operator (Section~\ref{sec:Hofstadter}), the unitary almost Mathieu operator (Section~\ref{sec:Unitary-AMO}), the Kohmoto model (Section~\ref{sec:Kohmoto}) and the Lamplighter group (Section~\ref{sec:Lamplighter}).

\section{Spectral estimates through the underlying dynamics}
\label{sec:SpectralEstimates}

This section is devoted to the results of \cite{BeBeCo19,BecTak25}. In both works, spectral estimates in terms of the distance of the underlying dynamics were established.

The work \cite{BeBeCo19} focuses on symbolic dynamical systems and Schrödinger-type operators acting on the Hilbert space $\ell^2(G)\otimes\C^N$, where $G$ is a lattice in $\R^d$ and $N\in\N$. In this setting we prove spectral estimates. The approach is based on localizing the resolvent using Lipschitz-partition of unity and the convergence criterion involving local patterns, see Proposition~\ref{prop:Conv_Symb-Dyn-Syst}.

In \cite{BecTak25}, this result is generalized using approximate eigenfunctions to general topological dynamical systems $(\Zz,G,\tau)$ where $G$ is an amenable, unimodular lcsc group. This setting includes the result of \cite{BeBeCo19} in the case $N=1$ and extends to other models such as the almost Mathieu operator. As a consequence, it provides a unifying principle explaining the square root behavior observed in several previously studied models.
We provide here a slight generalization of this result by allowing $N$ to take values different from $1$. Specifically, we consider dynamically-defined operators on the Hilbert space $H_N:= L^2(G) \otimes\C^N$ for some $N\in\N$. The proof extends directly following the lines of \cite{BecTak25}, and we provide the details for completeness in the appendix, see Appendix~\ref{App:SpectralEstimates}. This slight generalization allows us to apply our model also to the unitary or Almost-Mathieu operator discussed in Section~\ref{sec:Unitary-AMO}.

Recall the notion of a topological dynamical system \( (Z, G, \tau) \) introduced in Chapter~\ref{chap:Dynam_LR}. Since \( G \) is a locally compact Hausdorff group, there exists a unique left-invariant Haar measure \( m_G \) on the Borel \( \sigma \)-algebra of \( G \); see, e.g., \cite{Fol16}. Let \( L^2(G) := L^2(G, m_G) \) be the Hilbert space of square integrable functions on $G$. For simplicity, we write \( dh \) when integrating with respect to this Haar measure $m_G$. Throughout this section, we additionally assume that \( G \) is unimodular, i.e. $m_G$ is left and right $G$-invariant. For $N\in\N$,  consider the Hilbert space 
\[
H_N:=L^2(G)\otimes \C^N
\] 
that we also identify with the Hilbert space $\bigoplus_{n=1}^N \ell^2(G)$, see e.g. \cite[Rem.~2.6.8]{KadRin97}.

Let $\C^{N\times N}$ be the set of $N\times N$-matrices $A = (a_{m,n})_{m,n=1}^N$ with values in $\C$ (i.e. $a_{m,n}\in\C$) equipped with the norm
$$
\|A\|_M := \max_{1\leq m,n\leq N} |a_{m,n}|.
$$

\begin{definition}[Dynamically-defined operators]	
\label{def:DynDefOp}
Let \( (Z, G, \tau) \) be a dynamical system. A measurable function \( k: G \times Z \to \C^{N\times N} \) is called a {\em (bounded) kernel} if for each \( g \in G \), the map \( k(g,\cdot): Z \to \C^{N\times N} \) is continuous (in $\|\cdot\|_M$) and
\[
\sup_{x \in Z} \int_G \sup_{g\in G}\left\| k\big( h, g^{-1}x \big) \right\|_M \, dh < \infty.
\]
Then the operator family \( A_x \in \Ll\big(H_N\big) \) for \( x \in Z \) defined by
\[
(A_x \psi)_n(g) := \int_G  \left(k\big( g^{-1}h, g^{-1}x \big) \psi(h)\right)_n\, dh 
		= \int_G  \left(k\big( h, g^{-1}x \big) \psi(gh)\right)_n\, dh,\qquad 1\leq n \leq N,
\]
is called a \emph{dynamically-defined operator} associated with the kernel $k$.
\end{definition}

Note that $k\big( g^{-1}h, g^{-1}x \big)\in \C^{N\times N}$ is a matrix acting by matrix multiplication on the vector $\psi(h)\in\C^N$. By definition of a dynamically-defined operator it acts in the $n$th position like an integral operator.

Each operator \( A_x \) is bounded and linear, and the map \( Z \ni x \mapsto A_x \in \Ll(H_N) \) is strongly continuous. Moreover, the operator family satisfies the covariance relation
\[
A_x U_g = U_g A_{g^{-1}x}, \qquad x \in Z,\ g \in G,
\]
where \( U_g \in \Ll(L^2(G)) \) is the translation operator defined by \( (U_g \psi)(h) := \psi(g^{-1}h) \). This expresses that the operator family is covariant with respect to the group action.

Let us start with a particular example for $N=2$.

\begin{example}
\label{ex:Unitary_AMO-first}
Let \( \T \) be the torus, identified with the interval \( [0,1) \) and equipped with the induced Euclidean metric \( |\cdot| \). The compact metric space \( Z := [0,1] \times \T \) equipped with the metric
\[
d_Z\left( \begin{pmatrix} \alpha \\ \theta \end{pmatrix}, \begin{pmatrix} \beta \\ \rho \end{pmatrix} \right)
	:= \max\left\{ |\alpha - \beta|, |\theta - \rho| \right\}.
\]
For a real number \( x\in\R \), we define its \emph{fractional part} by
\[
\{x\} := x - \lfloor x \rfloor,
\]
where \( \lfloor x \rfloor := \sup \set{n \in \Z}{n \leq x} \) is the \emph{integer part} of \( x \).
The discrete group \( G = \Z \) with action
\[
\tau : \Z \times Z \to Z, \qquad
\tau\left(n, \begin{pmatrix} \alpha \\ \theta \end{pmatrix} \right)
	:= \begin{pmatrix} \alpha \\ \{-n \cdot \alpha + \theta\} \end{pmatrix}.
\]
Define the kernel $k:\Z\times Z\to\C^{2\times 2}$ by
\[
k(n,\tbinom{\alpha}{\theta}) :=
	\begin{pmatrix}
		\cos\big(2\pi (\theta-\alpha)\big) \delta_{-1}(n) 	& -\sin\big(2\pi(\theta-\alpha)\big)\delta_{-1}(n) \\
	\sin\big(2\pi (\theta+\alpha)\big) \delta_{1}(n) 		& \cos\big(2\pi(\theta+\alpha)\big) \delta_{1}(n)
	\end{pmatrix}.
\]
In fact $k$ defines a bounded kernel using that $k(n,x)$ vanished whenever $n\not\in\{-1,1\}$ for all $x\in Z$. 
This kernel defines a dynamically-defined operator family $(A_x)_{x\in Z}$ on the Hilbert space $\ell^2(\Z)\otimes\C^2$. 
For $x=\tbinom{\alpha}{\theta}$, $\psi=\tbinom{\psi^+(n)}{\psi^-(n)}_{n\in\Z}\in\ell^2(\Z)\otimes\C^2$ and $n\in\Z$, we compute
\begin{align*}
(A_x\psi)(n) 
	= &\sum_{m\in\Z} k\left(-n+m,\tau_{-n}\tbinom{\alpha}{\theta}\right) \binom{\psi^+(m)}{\psi^-(m)}\\
	= &\sum_{m\in\Z} k\left(-n+m,\tbinom{\alpha}{n\alpha + \theta}\right) \binom{\psi^+(m)}{\psi^-(m)}\\
	= 
	&\begin{pmatrix}
		\cos\big(2\pi\big((n-1)\alpha + \theta\big)\big) 	&  	-\sin\big(2\pi\big((n-1)\alpha + \theta\big)\big)\\
		0													& 	0
	\end{pmatrix}
	\binom{\psi^+(n-1)}{\psi^-(n-1)}\\[0.2cm]
	& + \begin{pmatrix}
		0												 	&  	0\\
		\sin\big(2\pi\big((n+1)\alpha + \theta\big)\big)	& 	\cos\big(2\pi\big((n+1)\alpha + \theta\big)\big)
	\end{pmatrix}
	\binom{\psi^+(n+1)}{\psi^-(n+1)}\\[0.2cm]
	=&\begin{pmatrix}
		\cos\big(2\pi\big((n-1)\alpha + \theta\big)\big)\psi^+(n-1) 	&  	-\sin\big(2\pi\big((n-1)\alpha + \theta\big)\big)\psi^-(n-1)\\
		\sin\big(2\pi\big((n+1)\alpha + \theta\big)\big)\psi^+(n+1)		& 	\cos\big(2\pi\big((n+1)\alpha + \theta\big)\big)\psi^-(n+1)
	\end{pmatrix}.
\end{align*}
This is actually the unitary almost Mathieu operator (see \cite[Lem.~4.1]{CedFilOng23} for the coupling constants $\lambda_1=\lambda_2=1$. The reader is referred to Section~\ref{sec:Unitary-AMO} for further elaborations.
\end{example}

We also mentioned various one-dimensional Schr\"odinger operators before such as the almost Mathieu operator and the Kohmoto model. All these operators are dynamically-defined:

\begin{example}[One-dimensional Schrödinger operators]
\label{ex:DynDefOp_integers}
Let \( (Z, \Z, \tau) \) be a dynamical system and $N=1$. Note that $\ell^2(\Z)\otimes\C$ is identified with $\ell^2(\Z)$. Then the Haar measure on \( \Z \) is the counting measure. For a continuous potential function \( v: Z \to \R \), define the kernel
\[
k_v: \Z \times Z \to \R, \quad 
k_v(m,x) :=
\begin{cases}
v(x), & \text{if } m = 0, \\
1, & \text{if } m \in \{-1, +1\}, \\
0, & \text{otherwise}.
\end{cases}
\]
By continuity of $v$, we have that $k_v(m,\cdot):Z\to Z$ is continuous. Since the left-invariant (normalized) Haar measure on $\Z$ is given by the counting measure, observe
\[
 \sup_{x \in Z} \sum_{h\in\Z}  \sup_{n \in \Z} \left| k\big( h, \tau(-n,x) \big) \right|
	\leq  2 + \sup_{x \in Z} |v(x)| 
	< \infty.
\]
The latter supremum is finite, since $v:Z\to\R$ is continuous and $Z$ is a compact space. Thus, $k_v$ is a bounded kernel.
A short computation yields that the associated dynamically-defined operator family \( A_x \in \Ll\big(\ell^2(\Z)\big) \) with $k_v$ satisfies
\[
(A_x \psi)(n) = \psi(n-1) + \psi(n+1) + v\big( \tau(n,x) \big) \psi(n),
\qquad\psi\in\ell^2(\Z), \, n\in\Z,
\]
which is a discrete one-dimensional Schrödinger operator with potential determined by the orbit of \( x \in Z \); see examples in Section~\ref{sec:Hofstadter} and Section~\ref{sec:Kohmoto}.
\end{example}

Already this example includes several models studied in the literature, such as discrete quasiperiodic Schrödinger operators (e.g., the almost Mathieu operator), Sturmian Hamiltonians, skew-shift Schrödinger operators, and limit-periodic Schrödinger operators. All of these examples are discussed in the joint work \cite{BecTak25}, and further examples will be provided below.
The reader is referred to Section~\ref{sec:Unitary-AMO} where the unitary almost Mathieu operator is considered -- an example of a dynamically-defined operator with $N=2$.

In \cite{BeBeCo19}, we proved Lipschitz continuity of the spectral map in the following setting:
\begin{itemize}
\item dynamical systems \( (\Aa^G, G, \tau) \) with \( \Aa \) a compact metric space and \( G = A\Z^d \), where \( A \) is an invertible \( d \times d \)-matrix;
\item strongly pattern-equivariant Hamiltonians (i.e., the kernel coefficients are locally constant).
\end{itemize}
This result relies crucially on the fact that two dynamical systems are close in \( \delta_\Aa \) if they agree on patches of large enough size; see Proposition~\ref{prop:Conv_Symb-Dyn-Syst}. The proof adapts techniques from \cite{CoPu12,CoPu15}, using a Lipschitz partition of unity (adapted to the size where the patches coincide) and resolvent estimates.

This covers, for instance, the Kohmoto model but not operators such as the almost Mathieu operator, which is defined by an irrational rotation on the torus. For that model, spectral estimates in terms of the rotation number were already known \cite{ChElYu90,AvrMouSim90,Bell94}.

These observations served as the starting point for \cite{BecTak25}, which aims to provide a unified proof that also explains the differing behavior observed across models. Specifically, why a square-root convergence rate appears in some cases but not in others, see Remark~\ref{rem:Choice-cut_Interplay-amenable}.

We return to general dynamically-defined operator families $(A_x)_{x\in Z}$. The spectrum of each operator \( A_x \) is denoted by \( \sigma(A_x) \), which is a compact subset of \( \C \). For a dynamical subsystem \( Y \in \inv \) of \( (Z, G, \tau) \), the spectrum of the operator family \( A_Y := (A_y)_{y \in Y} \) is defined as
\[
\sigma(A_Y) := \overline{\bigcup_{y \in Y} \sigma(A_y)}.
\]
Note that the restriction of the kernel \( k \) to \( Y \) corresponds to an element of the reduced \( C^* \)-algebra of the dynamical system \( (Y, G, \tau) \). In this context, \( \sigma(A_Y) \) coincides with the spectrum of this element in the \( C^* \)-algebra; see e.g. \cite{Bec16,BBdN18}.

If each \( A_x \) is normal (i.e., \( A_x \) commutes with its adjoint), then standard arguments (see e.g. Corollary~\ref{cor:SemiCont_Spectrum}) imply
\[
\sigma(A_x) = \sigma\big( A_{\overline{\Orb(x)}} \big), \qquad y \in \overline{\Orb(x)}\in\inv.
\]
We note that this equality of the spectra is a well-known fact for strongly continuous and covariant operator families; see e.g.~ \cite{CyconFroeseKirschSimon87,BIST89,Jit95,Len99,LenSto03Alg}.

It was shown in \cite{BBdN18}, using \( C^* \)-algebraic techniques, that the spectral map
\[
\Sigma_k: \inv \to \Kk(\C), \qquad Y \mapsto \sigma(A_Y),
\]
is continuous, where \( \Kk(\C) := \{ K \subseteq \C \text{ compact} \} \) is equipped with the Hausdorff metric \( d_H \); see also Chapter~\ref{chap:Dynam_LR} for the definition and the background. The regularity of this spectral map --- in particular, whether it is Lipschitz or Hölder continuous --- was further investigated in \cite{BeBeCo19,BecTak25} under suitable assumptions on \( G \) and the kernel \( k \).

To study the spectral map, we require additional assumptions on the kernel. On the one hand, the off-diagonals of the kernel (i.e., $k(g,x)$ with $g \neq e$) need to decay uniformly in \( x \in Z \) as \( g \) tends to infinity (i.e., leaves every compact set). On the other hand, the functions \( k(g,\cdot): Z \to \C \) must satisfy further regularity conditions.

In this context -- similar to \cite{BeBeCo19,BecTak25} -- we focus on kernels that satisfy a linear decay condition and are either Lipschitz continuous or locally constant in their second argument.

\begin{definition}
\label{def:Regularity_kernel}
Let \( (Z, G, \tau) \) be a dynamical system where $G$ is an unimodular lcsc group with adapted metric $d_G$. Consider a (bounded) kernel \( k: G \times Z \to \C^{N\times N} \) for $N\in\N$.
\begin{enumerate}[label=(\alph*)]
\item The kernel \( k \) is \emph{linearly decaying} if there exists a constant \( c_s > 0 \) such that
\[
\sup_{x \in Z} \int_G  \sup_{g\in G} \big\| k\big(h,g^{-1}x\big) \big\|_M \ d_G(e,h) \, dh \leq c_s.
\]

\item The kernel \( k \) \emph{Lipschitz continuous} if there exists \( c_k \in L^1_+(G) \) such that for \( m_G \)-a.e.\ \( h \in G \),
\[
\|k(h,x) - k(h,y)\|_M \leq c_k(h) \cdot d(x,y), \qquad x,y \in Z.
\]

\item The kernel \( k \) is \emph{locally constant} if there exists \( c_k \in L^\infty_+(G) \) such that for \( m_G \)-a.e.\ \( h \in G \),
\[
d(x,y) < \frac{1}{c_k(h)} \quad \Rightarrow \quad k(h,x) = k(h,y).
\]
\end{enumerate}
\end{definition}

In~\cite{BeBeCo19}, we also studied weaker decay conditions on the off-diagonal elements (than in Definition~\ref{def:Regularity_kernel}~(a)), measured in terms of the Schur \( \beta \)-norm.
Also Lipschitz continuity in (b) can be replaced by H\"older continuity if needed. All such changes will influence the particular regularity of the spectral map, see e.g. \cite[Thm.~1.2]{BeBeCo19}. Another instance is discussed in \cite[Thm.~2.4]{BecTak25} where the dynamical system is not acting Lipschitz continuously affecting the regularity of the spectral map. 
The required modifications are straightforward and are illustrated for various cases both here and in \cite{BeBeCo19,BecTak25}.

Let us return to our Example~\ref{ex:DynDefOp_integers} over the integers \( \Z \).

\begin{example}
\label{ex:DynDefOp_integers_Lipsch_LocConst}
Let \( (Z, \Z, \tau) \) be a dynamical system, $N=1$ and \( v: Z \to \R \) a continuous function. Then the kernel \( k_v: \Z \times Z \to \R \), as defined in Example~\ref{ex:DynDefOp_integers}, satisfies a linear decay condition since \( k_v(n,x) = 0 \) for all \( n \notin \{-1,0,1\} \).

Suppose \( v: Z \to \R \) is Lipschitz continuous, i.e., there exists a constant \( c_v > 0 \) such that \( |v(x) - v(y)| \leq c_v \cdot d(x,y) \). Then the kernel $k_v$ is Lipschitz continuous with
\[
c_{k_v}(h) = 
	\begin{cases}
		c_v, & \text{if } h = 0, \\
		0,   & \text{otherwise}.
	\end{cases}
\]
An example of such a kernel is provided by the almost Mathieu operator discussed in Section~\ref{sec:Hofstadter}.

Now suppose \( v: Z \to \R \) is locally constant, i.e., there exists a constant \( c_v > 0 \) such that \( d(x,y) < \frac{1}{c_v} \) implies \( v(x) = v(y) \). Then the kernel $k_v$ is locally constant with
\[
c_{k_v}(h) = 
	\begin{cases}
		c_v, & \text{if } h = 0, \\
		1,   & \text{otherwise}.
	\end{cases}
\]
An example of such a kernel is provided by the Kohmoto model or the Sturmian Hamiltonian discussed in Section~\ref{sec:Kohmoto}.
\end{example}

In \cite{BecTak25}, we raised the question of whether the various spectral regularity results known in the literature follow from a common underlying principle. This turned out to be the case. We also provided a conceptual explanation for the well-known root-Hölder behavior observed for the spectrum of the almost Mathieu operator \cite{BeRa90}; see also the discussion in \cite{BB16}. This behavior can be traced back to the interaction between the amenability of the acting group and the regularity of the kernel \( k \); see \cite{BecTak25} for details as well as Appendix~\ref{App:SpectralEstimates}.

\begin{definition}
\label{def:strict-polyn-growth}
The group $G$ has \emph{strict polynomial growth} if there exists constants \( 0 < c_0 \leq c_1 \) and \( \kappa >0 \)
such that
\[
c_0 r^\kappa \leq m_G(B(r)) \leq c_1 r^\kappa,\qquad \text{ for each } r\geq 1.
\]
\end{definition}

We now state the corresponding regularity result for the spectral map 
\[ 
\Sigma_k :\inv\to\Kk(\C),\qquad Y\mapsto \sigma(A_Y),
\] 
where $(\inv,\delta_H)$ and $(\Kk(\C),d_H)$ are the corresponding Hausdorff metrics, introduced in Chapter~\ref{chap:Dynam_LR}.
The regularity of $\Sigma_k$ is proven in the realm that the acting group \( G \) has strictly polynomial growth.\footnote{This is a weaker assumption than the exact polynomial growth considered earlier in Section~\ref{sec:Repet_WeightDelone}.} Note that every strictly polynomially growing group $G$ is amenable, see also a discussion in Appendix~\ref{App:SpectralEstimates}. In Section~\ref{sec:Lamplighter}, we also provide spectral estimates for the Lamplighter group that is exponentially growing but still amenable using cut-off functions supported on an associated F\o lner sequence.
Recall also the notion of Lipschitz continuous dynamical system defined in Definition~\ref{def:LipschitzAction}.

\begin{theorem}
\label{thm:SpecEst_DynDefOp}
Let \( (Z, G, \tau) \) be a Lipschitz continuous dynamical system such that \( G \) is unimodular and strictly polynomially growing. For $N\in\N$, let \( k : G \times Z \to \C^{N\times N} \) be a linearly decaying and bounded kernel. Assume \( A_x \) is a normal operator for each $x\in Z$ where \( (A_x)_{x\in Z} \) is the dynamically-defined operator family associated with \( k \). 
\begin{enumerate}[label=(\alph*)]
\item If \( k \) is Lipschitz continuous, then the spectral map \( \Sigma_k \) is root-Hölder continuous. More precisely, there exists a constant \( C := C(G, k) > 0 \) such that
\[
d_H\left( \sigma(A_X), \sigma(A_Y) \right) \leq C \cdot \delta_H(X,Y)^{1/2}, \qquad \text{for all } X,Y \in \inv.
\]

\item If \( k \) is locally constant, then the spectral map \( \Sigma_k \) is Lipschitz continuous. That is, there exists a constant \( C := C(G,k) > 0 \) such that
\[
d_H\left( \sigma(A_X), \sigma(A_Y) \right) \leq C \cdot \delta_H(X,Y), \qquad \text{for all } X,Y \in \inv.
\]
\end{enumerate}
\end{theorem}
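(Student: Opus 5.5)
The plan is the approximate-eigenfunction method of \cite{BecTak25}. By symmetry it suffices to bound $\sup_{E\in\sigma(A_X)}\dist(E,\sigma(A_Y))$. Since each $A_x$ is normal, $\sigma(A_X)=\sigma(A_x)$ for every $x\in X$ in the minimal case, and in general it is the closure of the union. Set $\delta:=\delta_H(X,Y)$ and fix $E\in\sigma(A_x)$ with $x\in X$. Normality gives a Weyl sequence: for every $\eta>0$ there is $\psi\in H_N$ with $\|\psi\|=1$ and $\|(A_x-E)\psi\|<\eta$. Conversely, for the normal operator $A_y$ we have $\dist(E,\sigma(A_y))\le\|(A_y-E)\phi\|/\|\phi\|$ for any $\phi\neq0$. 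The goal is therefore to turn $\psi$ into a test vector for $A_y$, for some $y\in Y$ with $d(x,y)$ small, and to control $\|(A_y-E)\phi\|$.

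\textbf{Step 1: localize.} I use a cut-off $\chi_L(h)=\max\{0,1-d_G(e,h)/L\}$, which is Lipschitz with constant $1/L$ and supported in $B(L)$. Strict polynomial growth, via the amenability/averaging argument of \cite{BecTak25}, gives a translate $g\in G$ such that $\phi_0:=U_g\chi_L\cdot\psi$ satisfies $\|(A_x-E)\phi_0\|\lesssim(\eta+c_s/L)\|\phi_0\|$. The commutator bound $\|[A_x,\chi_L]\|\le c_s/L$ follows from linear decay, since $|\chi_L(h)-\chi_L(h')|\le d_G(h,h')/L$ and the Schur test applies. Averaging $\|\chi_L(g^{-1}\cdot)\psi\|^2$ over $g$ and comparing it with the commutator term yields such a good translate. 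By covariance, $U_{g}^{-1}$ moves everything to the origin, replacing $x$ by $g^{-1}x$ and making $\phi$ supported in $B(L)$.

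\textbf{Step 2: transfer.} Choose $y\in Y$ with $d(g^{-1}x,y)\le\delta$. By the Lipschitz action, for $h\in B(L)$ we get $d(h^{-1}g^{-1}x,h^{-1}y)\le(C_\tau L+1)\delta$. Acting on a vector supported in $B(L)$, the operator $A_{g^{-1}x}-A_y$ then involves only kernel differences $k(h',h^{-1}g^{-1}x)-k(h',h^{-1}y)$ with $h\in B(L)$.

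In case (a), the Lipschitz kernel gives $\|(A_{g^{-1}x}-A_y)\phi\|\lesssim\|c_k\|_1(C_\tau L+1)\delta\|\phi\|$. Hence $\dist(E,\sigma(A_y))\lesssim\eta+c_s/L+C L\delta$. Optimizing with $L=\delta^{-1/2}$ and letting $\eta\to0$ gives $C\delta^{1/2}$.

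In case (b), take $L=c/\delta$ with $c$ chosen so that $(C_\tau L+1)\delta<1/\|c_k\|_\infty$. Local constancy then makes the difference vanish exactly, so $\dist\lesssim c_s/L\sim\delta$. One caveat: for large $\delta$ the bound is trivial from $\|A\|$, so I may assume $\delta$ is small.

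\textbf{Main obstacle.} I expect Step 1 to be the crux, specifically the $N$-component version. There I need to produce a good translate such that the localized vector keeps norm comparable to $\psi$ while the commutator error is $O(1/L)$, with constants depending only on $c_0,c_1,\kappa$. My plan is to follow the Følner-type averaging of \cite{BecTak25}: integrate $\|[A,\chi_L(g^{-1}\cdot)]\psi\|^2$ and $\|\chi_L(g^{-1}\cdot)\psi\|^2$ over $g$, use $m_G(B(L))$ from the growth bounds, and pick $g$ by a pigeonhole argument. For the $N\times N$ case, $\|\cdot\|_M$ introduces only an extra factor of $N$ in the Schur bounds. These details go to Appendix~\ref{App:SpectralEstimates}.
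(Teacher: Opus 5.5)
Your overall architecture is the paper's: Weyl vector, averaging over translates of a tent cut-off, covariance, transfer to a nearby $y\in Y$, and the choices $L=\delta^{-1/2}$ resp.\ $L\sim\delta^{-1}$ (cf.\ Lemma~\ref{lem:po(3)} and Theorems~\ref{thm:Lipschitz-Spectrum_app}, \ref{thm:LocalConst-Spectrum_app}). However, the transfer step in Step~2 is wrong as you wrote it.

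For the operator itself, $(A_z\phi)(h)=\int_G k(u,h^{-1}z)\phi(hu)\,du$, so the second argument of the kernel is evaluated at the \emph{output} point $h$. If $\supp\phi\subseteq B(L)$, the contributing points are $h\in B(L)u^{-1}$, not $h\in B(L)$; this is the set $\supp(\psi)h^{-1}$ in Proposition~\ref{prop:Basic_DynDefOp_app}~(a),(c). There $d_G(e,h)<L+d_G(e,u)$, so the Lipschitz action only gives $d(h^{-1}g^{-1}x,h^{-1}y)\le\big(C_\tau(L+d_G(e,u))+1\big)\delta$. In case (a), your bound $\|(A_{g^{-1}x}-A_y)\phi\|\lesssim\|c_k\|_1(C_\tau L+1)\delta\|\phi\|$ would then need $\int_G c_k(u)\,d_G(e,u)\,du<\infty$, which is not assumed. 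In case (b), the exact cancellation fails for every $u$ with $d_G(e,u)\gtrsim L$.

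The missing idea is the one the paper stresses after Proposition~\ref{prop:Basic_DynDefOp_app}: pass to the adjoint. By \eqref{eq:Adjoint_DynDefOp}, in $(A_z^*\phi)(h)$ the kernel is evaluated at $(hu)^{-1}z$ with $hu\in\supp\phi$. So only points of $B(L)$ enter, and your claim becomes true (Proposition~\ref{prop:Basic_DynDefOp_app}~(d)). Since $A_y-E$ and $A_{g^{-1}x}-E$ are normal,
$\|(A_y-E)\phi\|=\|(A_y-E)^*\phi\|\le\|(A_{g^{-1}x}-E)\phi\|+\|(A_y-A_{g^{-1}x})^*\phi\|$,
and from there your estimates in (a) and (b) go through verbatim.

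Alternatively, keep $A$ and split the $u$-integral at $d_G(e,u)=L$. On $d_G(e,u)\le L$, use the Lipschitz resp.\ locally constant property with factor $(2C_\tau L+1)\delta$. Bound the tail by $N\int_{d_G(e,u)>L}\big(\sup_{h}\|k(u,h^{-1}g^{-1}x)\|_M+\sup_{h}\|k(u,h^{-1}y)\|_M\big)du\le 2Nc_s/L$ via linear decay. This only adds a term of the size of your localization error and leaves the exponents unchanged.

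This point, not the $N$-component bookkeeping, is the real subtlety; the latter costs only a factor $N$ via Lemma~\ref{lem:Matrix}.

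A smaller remark on Step~1: the operator-norm Schur bound $\|[A_x,\chi_L]\|\lesssim Nc_s/L$ is true, but it measures the error against $\|\psi\|$ rather than $\|\chi_L(g^{-1}\cdot)\psi\|$. The averaging needs $\int_G\|[A_x,\chi_L(j^{-1}\cdot)]\psi\|^2\,dj\lesssim (Nc_s/L)^2\|\chi_L\|_2^2\|\psi\|^2$. This comes from $\|\chi_L^h-\chi_L\|_2\lesssim (d_G(e,h)/L)\|\chi_L\|_2$, which is where strict polynomial growth enters (Lemma~\ref{lem:Technical_SpectralEst_app}). Your ``main obstacle'' paragraph indicates you intend exactly this.
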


\begin{proof}
This is proven in \cite[Thm.~1.3]{BecTak25} for $N=1$. Slight modifications of \cite[Thm.~1.3]{BecTak25} allow to prove the statement for all $N\in\N$, see Appendix~\ref{App:SpectralEstimates}. Statement~(b) recovers and generalizes the results previously established in \cite{BeBeCo19}. 
\end{proof}

While the theorem is formulated for a broad class of groups and kernels, the obtained regularity bounds are in fact sharp:
\begin{itemize}
\item The root-Hölder estimate in Theorem~\ref{thm:SpecEst_DynDefOp}~(a) is known to be optimal for the almost Mathieu operator; see \cite{BeRa90} and a discussion in \cite{BecTak25}.
\item The Lipschitz estimate in Theorem~\ref{thm:SpecEst_DynDefOp}~(b) is proven to be optimal for the Kohmoto model; see \cite{Tho23_MSc,BecBelTho25} as well as the discussion around eq.~\eqref{eq:Optimality_Kohmoto} in Section~\ref{sec:Kohmoto} and draw further conclusions from them.
\end{itemize}
In both cases the optimality of the spectral estimates is obtained at points where spectral gaps close in the limit, see also a discussion in \cite{BB16}.

In the following sections, we discuss various applications of this result. Note that all presented results deal with discrete groups $G$ while Theorem~\ref{thm:SpecEst_DynDefOp} applies also for non-discrete groups. 
For non-discrete groups, $\exp(-H_x)$ -- where $H_x$ is a Schr\"odinger type operator -- can be represented as a dynamically-defined operators, see a short discussion in Chapter~\ref{chap:discussion}. The main difficulty is to prove the regularity of the kernel with respect to the underlying dynamical system. Furthermore, first elaborations in a joint project \cite{BeJoUl-Frame} show potential applications to frame operators where $G$ is non-discrete as well.

\section{Operators associated with substitution systems}
\label{sec:Operator_Subst}

We can combine Theorem~\ref{thm:Rate_Convergence_Substitutions} with Theorem~\ref{thm:SpecEst_DynDefOp} to obtain explicit rates of convergence for substitution systems. A version of this result can be found in~\cite[Thm.~2.18]{BaBePoTe25}.

\begin{theorem}[\cite{BaBePoTe25}]
\label{thm:Rate_Convergence_Substitutions_spectrum}
Let \( S \) be a substitution map associated with a dilation datum \( \Dd = (G, d_G, (D_\lambda)_{\lambda > 0}, \Gamma, V) \) and a substitution datum \( \Ss = (\Aa,\lambda_0,S_0) \). 
Let $k:\Gamma\times \Aa^\Gamma \to\C$ be a linearly decaying, locally constant and bounded kernel. Assume that $A_\omega$ is normal for each $\omega\in \Aa^\Gamma$ where \( (A_\omega)_{\omega\in\Aa^\Gamma} \) is the associated dynamically-defined operator family with $k$.

If \( S \) is primitive, then there exist a constant \( C > 0 \) such that for every \( \omega_0 \in \Aa^\Gamma \) satisfying one of the equivalent conditions in Theorem~\ref{thm:Charact_Convergence_Substitutions}, the following estimate holds:
\[
d_H\left( \sigma(A_{S^n(\omega_0)}),\sigma(A_\omega) \right)  \leq \frac{C}{\lambda_0^n}, \qquad \text{for all } n \in\N,\, \omega\in\Omega(S).
\]
\end{theorem}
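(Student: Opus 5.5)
The plan is to combine the dynamical convergence rate of Theorem~\ref{thm:Rate_Convergence_Substitutions} with the Lipschitz estimate of Theorem~\ref{thm:SpecEst_DynDefOp}~(b). The argument has three steps: check the hypotheses of Theorem~\ref{thm:SpecEst_DynDefOp}, identify the spectra of single operators with spectra over subshifts, and then handle the small values of $n$ that the dynamical rate does not cover.

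\textbf{Step 1: hypotheses of Theorem~\ref{thm:SpecEst_DynDefOp}.} We apply it to the dynamical system $(\Aa^\Gamma,\Gamma,\tau)$, with $\Aa^\Gamma$ carrying the metric $\dAG$.
\begin{itemize}
\item By Proposition~\ref{prop:Conv_Symb-Dyn-Syst}~(b), this action is Lipschitz continuous.
\item $\Gamma$ is discrete, hence unimodular.
\item By \cite[Sec.~3.4]{BecHarPog21}, as used in the proof of Theorem~\ref{thm:Primitive-LR}, $\Gamma$ has exact polynomial growth. Exact polynomial growth gives constants $c_0,c_1$ with $c_0r^\kappa\le m_\Gamma(B(r))\le c_1r^\kappa$ for large $r$. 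Adjusting the constants on the compact range of small $r\ge1$ then yields strict polynomial growth.
\item The kernel $k$ is linearly decaying, locally constant and bounded by assumption, and $A_\omega$ is normal for every $\omega$.
\end{itemize}
Theorem~\ref{thm:SpecEst_DynDefOp}~(b) therefore gives a constant $C'>0$ with $d_H(\sigma(A_X),\sigma(A_Y))\le C'\delta_\Aa(X,Y)$ for all $X,Y\in\inv$.

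\textbf{Step 2: from subshifts to single operators.} Since each $A_x$ is normal, $\sigma(A_x)=\sigma(A_{\overline{\Orb(x)}})$, as noted before Theorem~\ref{thm:SpecEst_DynDefOp}. Applied to $x=S^n(\omega_0)$, this gives $\sigma(A_{S^n(\omega_0)})=\sigma(A_{\overline{\Orb(S^n(\omega_0))}})$.

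For $\omega\in\Omega(S)$, primitivity makes $\Omega(S)$ minimal by Theorem~\ref{thm:Primitive-LR}. Hence $\overline{\Orb(\omega)}=\Omega(S)$ and $\sigma(A_\omega)=\sigma(A_{\Omega(S)})$. Combining with Step 1 and Theorem~\ref{thm:Rate_Convergence_Substitutions}, for $n\ge M_1$ we get
\[
d_H\big(\sigma(A_{S^n(\omega_0)}),\sigma(A_\omega)\big)\le C'\,\delta_\Aa\big(\overline{\Orb(S^n(\omega_0))},\Omega(S)\big)\le \frac{C'C}{\lambda_0^n}.
\]
The constant $C$ from Theorem~\ref{thm:Rate_Convergence_Substitutions} is uniform in $\omega_0$ satisfying the equivalent conditions of Theorem~\ref{thm:Charact_Convergence_Substitutions}. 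The constant $C'$ depends only on $\Gamma$ and $k$. So $C'C$ is uniform as well.

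\textbf{Step 3: small $n$.} This is the only mildly delicate point, because the statement claims the bound for all $n\in\N$ rather than $n\ge M_1$. All spectra lie in the closed disc of radius $\sup_x\|A_x\|$, which is bounded by the kernel norm $K:=\sup_x\int\sup_g\|k(h,g^{-1}x)\|_M\,dh$ up to a factor depending on $N$. Hence every Hausdorff distance is at most $2\sup_x\|A_x\|$. For $n<M_1$ this gives
\[
d_H\big(\sigma(A_{S^n(\omega_0)}),\sigma(A_\omega)\big)\le 2\sup_x\|A_x\|\,\lambda_0^{M_1}\lambda_0^{-n}.
\]
Taking the final constant to be the maximum of $C'C$ and $2\sup_x\|A_x\|\lambda_0^{M_1}$ gives the claim for all $n$, uniformly in $\omega_0$.
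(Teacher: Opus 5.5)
Your proposal is correct and follows the paper's proof. As there, normality and the minimality of $\Omega(S)$ (Theorem~\ref{thm:Primitive-LR}) identify the single-operator spectra with $\sigma(A_{\overline{\Orb(S^n(\omega_0))}})$ and $\sigma(A_{\Omega(S)})$, and Theorem~\ref{thm:SpecEst_DynDefOp}~(b) for the Lipschitz system $(\Aa^\Gamma,\Gamma,\tau)$ is then composed with the rate of Theorem~\ref{thm:Rate_Convergence_Substitutions}. Two of your details are handled correctly but left implicit in the paper: Step~3, which absorbs the finitely many $n<M_1$ into the constant via the trivial bound $2\sup_x\|A_x\|$, and the passage from exact to strict polynomial growth.
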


\begin{proof}
Following \cite{BecHarPog21}, the lattice $\Gamma$ has exact polynomial growth (Definition~\ref{def:exact-polyn-growth}) and therefore it has strict polynomial growth (Definition~\ref{def:strict-polyn-growth}).
Let $\Omega_n:=\overline{\Orb(S^n(\omega_0))}$ for $n\in\N$. Since $S$ is primitive, Theorem~\ref{thm:Primitive-LR} implies that $\Omega(S)$ is minimal, i.e. $\Omega(S)=\overline{\Orb(\omega)}$ for all $\omega\in\Omega(S)$.
Since the operator family $A_\omega$ is normal for each $\omega\in \Aa^\Gamma$, we conclude (see e.g. Corollary~\ref{cor:SemiCont_Spectrum})
\[
\sigma(A_{S^n(\omega_0)}) = \sigma(A_{\Omega_n})
	\qquad\text{and}\qquad
\sigma(A_{S^n(\omega)}) = \sigma(A_{\Omega(S)}), \quad \omega\in\Omega(S).
\]
Let $\Aa^\Gamma$ be equipped with the metric $\dAG$ defined in Section~\ref{subsec:symb_syst_as_weighted_Delone}, and let $\delta_\Aa$ denote the Hausdorff metric on $\inv$ induced by $\dAG$.

According to Proposition~\ref{prop:Conv_Symb-Dyn-Syst}~(b), the symbolic dynamical system $(\Aa^\Gamma,\Gamma,\tau)$ is Lipschitz continuous.
Thus, Theorem~\ref{thm:SpecEst_DynDefOp} implies that there is a constant $C_1>0$ such that 
\[
d_H\left( \sigma(A_{S^n(\omega_0)}),\sigma(A_\omega) \right)  \leq C_1 \delta_\Aa\big( \Omega_n,\Omega(S) \big).
\]
Since \( \omega_0 \in \Aa^\Gamma \) satisfies one of the equivalent conditions in Theorem~\ref{thm:Charact_Convergence_Substitutions}, Theorem~\ref{thm:Rate_Convergence_Substitutions} leads to the claimed estimate.
\end{proof}

In view of the discussion on the Octagonal tiling (Section~\ref{sec:Octagonal_Penrose}), we note that the proof could be modified to Delone dynamical systems and associated kernels.

\section{The Hofstadter butterfly}
\label{sec:Hofstadter}

The almost Mathieu operator \( \Hamo : \ell^2(\Z) \to \ell^2(\Z) \) is defined by
\begin{equation}
\label{eq:AMO}
(\Hamo\psi)(n):= \psi(n+1)+\psi(n-1) + 2V\cos\big(2\pi (\alpha\cdot n +\theta)\big)\psi(n)
\end{equation}
for $\psi\in\ell^2(\Z)$ and $n\in\Z$, where
\begin{itemize}
\item \( \alpha\in[0,1] \) is the \emph{rotation number},
\item \( V\in\R \) is the \emph{coupling constant}, and
\item \( \theta\in\T \) is the phase.
\end{itemize}

Here \( \T \) denotes the torus, identified with the interval \( [0,1) \) and equipped with the induced Euclidean metric \( |\cdot| \). For a real number \( x\in\R \), we define its \emph{fractional part} by
\begin{equation}
\label{eq:fractional_part}
\{x\} := x - \lfloor x \rfloor \in\T,
\end{equation}
where \( \lfloor x \rfloor := \sup \set{n \in \Z}{n \leq x} \) is the \emph{integer part} of \( x \).

The almost Mathieu operator is a dynamically-defined operator associated to the following dynamical system:
\begin{itemize}
\item The compact metric space \( Z := [0,1] \times \T \) equipped with the metric
\[
d_Z\left( \begin{pmatrix} \alpha \\ \theta \end{pmatrix}, \begin{pmatrix} \beta \\ \rho \end{pmatrix} \right)
	:= \max\left\{ |\alpha - \beta|, |\theta - \rho| \right\}.
\]
\item The discrete group \( G = \Z \) with action
\[
\tau : \Z \times Z \to Z, \qquad
\tau\left(n, \begin{pmatrix} \alpha \\ \theta \end{pmatrix} \right)
	:= \begin{pmatrix} \alpha \\ \{-n \cdot \alpha + \theta\} \end{pmatrix}.
\]
\end{itemize}
For given \( \alpha \in [0,1] \), one is interested in the spectrum
\[
\sigAMO := \overline{\bigcup\limits_{\theta\in\T}\sigma(\Hamo)}
\]
which is a compact subset of \( \R \). If \( \alpha \) is rational, the operators are periodic and so the spectrum can be computed numerically. Plotting these spectra for different \( \alpha \) produces the so-called \emph{Hofstadter butterfly}~\cite{Hofst76}, sketched in Figure~\ref{fig:Hofstadter-butterfly}. One point of view is that while only the spectrum of rational \( \alpha \) is plotted in the Hofstadter butterfly, it should also reflect the structure for the irrational case. Therefore, the continuity of the spectrum at irrational \( \alpha \) is crucial.

Note that if \( \alpha \in [0,1] \) is irrational, then the spectrum is independent of \( \theta \)~\cite{AvrSim83}, namely
\begin{equation}
\label{eq:SpectrConst_almMath}
\sigAMO = \sigma\left( \Hamo \right),\qquad \theta \in \T,\ \alpha \in [0,1] \setminus \Q.
\end{equation}
This also follows from the general fact that the irrational rotation is minimal and the spectra of these self-adjoint operators do not change over a minimal dynamical system, see e.g.~\cite{LenSto03Alg,Bec16} as well as Corollary~\ref{cor:SemiCont_Spectrum}.

This figure has drawn a lot of attention from both the physical and mathematical communities. 
The Hofstadter butterfly reflects a fractal and self-similar structure that has raised significant interest. Moreover, the nature of the spectral measure depending on the coupling constant is challenging and has been intensively studied in the past. We refer the reader to~\cite{Jit95,MarJit_etds17,Jitomirskaya2019,DaFi24-book_2} and to the more detailed discussion in Chapter~\ref{chap:DTMP} on the dry ten Martini problem.

As discussed before in Section~\ref{sec:SpectralEstimates}, the change of the spectrum in \( \alpha \), measured in the Hausdorff metric, has been studied in~\cite{Ell86,ChElYu90,AvrMouSim90,Bell94}. Using Theorem~\ref{thm:SpecEst_DynDefOp}, we recover these spectral estimates; see Proposition~\ref{prop:AMO_SpecEst}~(a).

\begin{proposition}
\label{prop:AMO_SpecEst}
Let $\alpha,\beta\in[0,1]$, $V\in\R$ and $\theta\in\T$. Then the following assertion hold for the almost Mathieu operator $\Hamo$.
\begin{enumerate}[label=(\alph*)]
\item There exists a constant $C>0$ such that
\[
\dH\left( \sigAMO,\sigma_{\beta,V}^{\textrm{A}} \right) \leq C |\alpha-\beta|^{\tfrac{1}{2}},
\qquad \alpha,\beta\in [0,1].
\]
\item There exists a constant $C>0$ such that
\[
\dH\left( \sigma(\Hamo),\sigma(H_{\alpha,V,\rho}^{\textrm{A}}) \right) \leq C |\theta-\rho|^{\tfrac{1}{2}},
\qquad \theta,\rho\in\T.
\]
\item Let $\theta\in\T$ and suppose $\beta=\tfrac{p}{q}$ where $p,q$ are coprime and $|\alpha-\beta|<\frac{1}{q^2}$. Then
\[
\dH\left( \sigma(\Hamo),\sigma(\HamoB) \right) \leq C \tfrac{1}{\sqrt{q}}.
\]
\end{enumerate}
\end{proposition}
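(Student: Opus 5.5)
Our plan is to deduce all three assertions from Theorem~\ref{thm:SpecEst_DynDefOp}~(a), applied to the dynamical system $(Z,\Z,\tau)$ with $Z=[0,1]\times\T$ introduced above and the kernel $k_v$ of Example~\ref{ex:DynDefOp_integers} with potential $v\tbinom{\alpha}{\theta}:=2V\cos(2\pi\theta)$. The sign convention in $\tau$ has to be matched to \eqref{eq:AMO}, possibly replacing $\theta$ by $-\theta$, which changes nothing since $\cos$ is even. We first check the hypotheses. The group $\Z$ is unimodular and strictly polynomially growing with $\kappa=1$. $k_v$ is bounded and linearly decaying by Example~\ref{ex:DynDefOp_integers_Lipsch_LocConst}. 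It is Lipschitz with $c_v=4\pi|V|$, because $v$ is Lipschitz on $\T$ for the quotient metric. Each $A_x$ is self-adjoint, hence normal. The only nontrivial point is Lipschitz continuity of the action in the sense of Definition~\ref{def:LipschitzAction}. We have
\[
d_Z(\tau_n x,\tau_n y)=\max\{|\alpha-\beta|,\ |\{-n\alpha+\theta\}-\{-n\beta+\rho\}|_{\T}\},
\]
and the second term is at most $|n||\alpha-\beta|+|\theta-\rho|\le(|n|+1)\,d_Z(x,y)$. This bound requires the torus metric to be the quotient metric and the $\alpha$-coordinate to be fixed by $\tau$. We expect this to be the main (if mild) technical check: with the literal interval metric on $[0,1)$ the action would fail to be continuous, so one must interpret $|\cdot|$ on $\T$ as the distance to the nearest integer.

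For (a), take the subsystems $X_\alpha:=\{\alpha\}\times\T$ and $X_\beta:=\{\beta\}\times\T$; both are closed and invariant, so they lie in $\inv$. Their Hausdorff distance is exactly $|\alpha-\beta|$, since each $(\alpha,\theta)$ is matched with $(\beta,\theta)$. By definition $\sigma(A_{X_\alpha})=\overline{\bigcup_\theta\sigma(H^{\mathrm A}_{\alpha,V,\theta})}=\sigAMO$. Theorem~\ref{thm:SpecEst_DynDefOp}~(a) then gives (a) with $C=C(\Z,k_v)$.

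For (b), take the orbit closures $Y_\theta:=\overline{\Orb(\alpha,\theta)}$ and $Y_\rho:=\overline{\Orb(\alpha,\rho)}$. The rotation $\tau_n$ acts by translation in the second coordinate and is an isometry of $\{\alpha\}\times\T$. Hence $\tau_n(\alpha,\theta)$ and $\tau_n(\alpha,\rho)$ are at distance $|\theta-\rho|$, which gives $\delta_H(Y_\theta,Y_\rho)\le|\theta-\rho|$ after passing to closures. By normality, Corollary~\ref{cor:SemiCont_Spectrum} yields $\sigma(H^{\mathrm A}_{\alpha,V,\theta})=\sigma(A_{Y_\theta})$. Applying the theorem again gives (b).

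For (c), let $\beta=p/q$. The orbit $\Orb(\beta,\theta)=\{\beta\}\times(\theta+\tfrac1q\Z)$ is finite and closed. For the orbit closure $Y$ of $(\alpha,\theta)$, we compare $\{-n\alpha+\theta\}$ with $\{-n\beta+\theta\}$ for $0\le n<q$, where the difference is at most $q|\alpha-\beta|<1/q$. Every point of $Y$ is of the form $(\alpha,\theta')$, and $\theta'$ lies within $1/q$ of a point of the $1/q$-grid $\theta+\tfrac1q\Z$. Conversely, each grid point $\theta-n p/q$ is within $q|\alpha-\beta|<1/q$ of the orbit point $\theta-n\alpha$. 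In both directions the $\alpha$-coordinate differs by $|\alpha-\beta|<1/q^2$. Hence $\delta_H(Y,\Orb(\beta,\theta))\le 1/q$, and the theorem together with Corollary~\ref{cor:SemiCont_Spectrum} gives $\dH\le C q^{-1/2}$.
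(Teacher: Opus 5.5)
Your proposal is correct and follows the paper's overall strategy. You use the same system $(Z,\Z,\tau)$, the same kernel, and apply Theorem~\ref{thm:SpecEst_DynDefOp}~(a) to the same kind of subsystems; part (a) is identical to the paper's argument. The routes differ only in (b) and (c).

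In (b), the paper splits into two cases. For irrational $\alpha$ the orbit closures coincide; for rational $\alpha$ it uses the explicit $q$-point description \eqref{eq:Om_alp,thet-AMO}. Your observation that every $\tau_n$ is an isometry of $\{\alpha\}\times\T$ handles both cases at once.

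In (c), the paper first invokes the Farey lemma \cite[Lem.~2.4]{BecBelTho25}: $|\alpha-\beta|<\tfrac{1}{q^2}$ forces $\alpha$ to be irrational or to have denominator $q'>q$. It then uses that $\Omega_{\alpha,\theta}$ is either the full circle or a $\tfrac{1}{q'}$-spaced grid, and obtains a Hausdorff bound of $\tfrac{1}{2q}$. You instead compare the first $q$ orbit points $\theta-n\alpha$ and $\theta-np/q$, $0\le n<q$, directly. This avoids both the Farey lemma and the classification of orbit closures, at the irrelevant cost of the bound $\tfrac1q$ in place of $\tfrac{1}{2q}$.

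Your remark that $|\cdot|$ on $\T$ must be read as the quotient (nearest-integer) metric is a correct clarification. The paper's Lipschitz estimate for the action depends on it, and so do the compactness of $Z$ and the continuity of $\tau$.
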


\begin{proof}
Observe that the action of $\Z$ on $Z := [0,1]\times\T$ is Lipschitz continuous, since
\[
d_Z\left( \tau_n\tbinom{\alpha}{\omega}, \tau_n \tbinom{\beta}{\rho}\right) 
	\leq \max\left\{ |\alpha-\beta|, |n|\cdot |\alpha-\beta| + |\omega-\rho| \right\}
	\leq (|n|+1)\cdot d_Z\left( \tbinom{\alpha}{\omega}, \tbinom{\beta}{\rho} \right).
\]
Furthermore, $\Z$ is strictly polynomially growing.
Define the linearly decaying, Lipschitz continuous and bounded kernel
\[
k_V:\Z\times Z\to\R, \quad 
	k_V\left(m,\tbinom{\alpha}{\theta}\right)
		:= 
			\begin{cases}
				1, \qquad &\textrm{if } m\in\{-1,1\},\\
				2V\cdot \cos(2\pi\theta), \qquad &\textrm{if } m=0,\\
				0, \qquad &\textrm{otherwise}.
			\end{cases}
\]
The associated dynamically-defined operator $(A_x)_{x\in Z}$ (see Definition~\ref{def:DynDefOp}) is given by
\[
(A_x\psi)(n) = \sum_{m\in\Z} k_V\left(-n+m,\tau_{-n}\tbinom{\alpha}{\theta}\right) \psi(m)
	= (\Hamo\psi)(n)
\]
for $x=\tbinom{\alpha}{\theta}\in Z$, where we used that the Haar measure on $\Z$ is the counting measure.

Let us now consider the orbit closures $\Omega_{\alpha,\theta}:=\overline{\Orb\big(\tbinom{\alpha}{\theta}\big)}$ for $\tbinom{\alpha}{\theta}\in Z$. 

If $\alpha = \tfrac{p}{q}$ is rational with $p,q$ coprime, then $\Omega_{\alpha,\theta}$ contains exactly $q$ points given by
\begin{equation}
\label{eq:Om_alp,thet-AMO}
\Omega_{\alpha,\theta} = \set{ \binom{\alpha}{\big\{\theta + \tfrac{j}{q}\big\}}}{ 0\leq j \leq q-1}.
\end{equation}
If $\alpha$ is irrational, we observe $\Omega_{\alpha,\theta} = \{\alpha\}\times\T$.

\medskip

(a) Consider the dynamical subsystems $\{\alpha\}\times\T$ and $\{\beta\}\times\T$ of $(Z,\Z,\tau)$. Then
\[
\dJ(\{\alpha\}\times\T,\{\beta\}\times\T) = |\alpha-\beta|,
\]
and so Theorem~\ref{thm:SpecEst_DynDefOp}~(a) yields the claimed estimate.

\medskip

(b) Consider the dynamical subsystems $\Omega_{\alpha,\theta}$ and $\Omega_{\alpha,\rho}$. If $\alpha$ is irrational, then $\Omega_{\alpha,\theta}=\Omega_{\alpha,\rho}$ and hence $\sigma(\Hamo)=\sigma(H_{\alpha,V,\rho}^{\textrm{A}})$. If $\alpha$ is rational, then $\Omega_{\alpha,\theta}$ and $\Omega_{\alpha,\rho}$ are finite subsets of $\{\alpha\}\times\T$, and using~\eqref{eq:Om_alp,thet-AMO} we conclude
\[
\dJ(\Omega_{\alpha,\theta},\Omega_{\alpha,\rho}) \leq |\theta-\rho|.
\]
Thus, Theorem~\ref{thm:SpecEst_DynDefOp}~(a) yields the claimed estimate.

\medskip

(c) Consider the dynamical subsystems $\Omega_{\alpha,\theta}$ and $\Omega_{\beta,\theta}$.
Since $\beta = \tfrac{p}{q}$ with $p,q$ coprime, $\Omega_{\beta,\theta}\subseteq \{\beta\}\times\T$ contains $q$ equidistributed points in the second component. Since $|\alpha-\beta| < \tfrac{1}{q^2}$, $\alpha$ is either irrational or a rational number $\alpha = \tfrac{p'}{q'}$ with $p',q'$ coprime and $q' > q$. This follows from the fact that two Farey numbers cannot be too close; see~\cite[Lem.~2.4]{BecBelTho25}. Since $q' > q$, a similar argument as for $\Omega_{\beta,\theta}$ gives
\[
\dJ(\Omega_{\beta,\theta},\Omega_{\alpha,\theta}) \leq \tfrac{1}{2q}.
\]
Thus, Theorem~\ref{thm:SpecEst_DynDefOp}~(a) yields the claimed estimate.
\end{proof}

Let us briefly comment on the previous result. The estimate in (a) was already proven in \cite{AvrMouSim90,Bell94}. Statement~(c) shows that, when approximating the spectrum of an irrational $\alpha \in [0,1]$ by a sequence of rationals $\tfrac{p_k}{q_k} \in [0,1]$ with $p_k, q_k$ coprime and $|\alpha - \tfrac{p_k}{q_k}| < \tfrac{1}{q_k^2}$, the spectrum still converges for fixed $\theta \in \T$. In fact, we obtain $\tfrac{1}{4}$-Hölder continuity in this setting.

Statement~(b) can actually be improved, since the almost Mathieu operator depends norm-continuously on $\theta$:
\[
\left\| \Hamo - H_{\alpha,V,\rho}^{\textrm{A}} \right\| \leq 4\pi |V| \cdot |\theta - \rho|, \qquad \theta,\rho\in\T.
\]
This shows that the square root behavior is not present in this case. As discussed earlier, the appearance of square root behavior in general reflects the possibility of spectral gap closures, which typically reduces regularity; see a detailed elaboration in \cite{BB16}.

At the same time, it was proven in \cite{BeRa90} (see also the discussion in \cite{BecTak25}) that the square root behavior in (a) is optimal. Thus, we cannot expect an improvement of the general unifying result Theorem~\ref{thm:SpecEst_DynDefOp} in this direction.

On the other hand, it may be possible to improve the spectral estimates if the spectral gaps do not close in the limit; see a discussion in \cite{BB16}. In the case of the rotation algebra (which includes the almost Mathieu operator), such improvement has been proven in \cite{Bell94}. Current investigations aim at adapting the methods from \cite{BecTak25} to derive Lipschitz continuity under the assumption that spectral gaps remain open, see Chapter~\ref{chap:discussion}.

\section{The unitary almost Mathieu operator}
\label{sec:Unitary-AMO}

In this section, we show that the spectral estimates obtained in Theorem~\ref{thm:SpecEst_DynDefOp} apply to the unitary almost Mathieu operator. To the best of our knowledge, these estimates are new in the literature.

The unitary almost Mathieu operator models a quantum walk in a homogeneous external magnetic field. We refer the reader to \cite{FilOngZha17,CeFiGeWe20,CedFilOng23,CedFil24,CedLi25} for further background and references.
Note that also the unitary almost Mathieu operator gives rise to a butterfly and this model shares similar spectral phenomena like the almost Mathieu operator, see \cite{CeFiGeWe20,CedFilOng23} for an illustration.

As the name suggests, this operator is closely related to the almost Mathieu operator and is defined over the same dynamical system considered in Section~\ref{sec:Hofstadter}. For completeness, we briefly recall the setting.

The compact metric space \( Z := [0,1] \times \T \) is equipped with the metric
\[
d_Z\left( \begin{pmatrix} \alpha \\ \theta \end{pmatrix}, \begin{pmatrix} \beta \\ \rho \end{pmatrix} \right)
:= \max\left\{ |\alpha - \beta|,\; |\theta - \rho| \right\}.
\]
The discrete group \( \Z \) acts on \( Z \) via
\[
\tau : \Z \times Z \to Z, \qquad
\tau\left(n, \begin{pmatrix} \alpha \\ \theta \end{pmatrix} \right)
:= \begin{pmatrix} \alpha \\ \{-n \cdot \alpha + \theta\} \end{pmatrix},
\]
where \( \{x\} \) denotes the fractional part of the real number \( x \), see eq.~\eqref{eq:fractional_part}.

In Example~\ref{ex:Unitary_AMO-first}, we studied a particular kernel which belongs to the class of unitary almost Mathieu operators with coupling constants $\lambda_1 = \lambda_2 = 1$. It is straightforward to verify that the kernel in Example~\ref{ex:Unitary_AMO-first} is Lipschitz continuous and exhibits linear decay.

Since the dynamical system \( (Z, \Z, \tau) \) is Lipschitz continuous (see the proof of Proposition~\ref{prop:AMO_SpecEst}), Theorem~\ref{thm:SpecEst_DynDefOp}\,(a) implies that the associated spectral map is root-Hölder continuous. We discuss this here for general coupling constants $\lambda_1$ and $\lambda_2$.

For $\lambda \in [0,1]$, we define the shorthand $\lambda' := \sqrt{1 - \lambda^2}$. With this, we define the kernel
\[
k_{\lambda_1, \lambda_2} : \Z \times Z \to \C^{2 \times 2}
\]
for $\lambda_1, \lambda_2 \in [0,1]$ by
\[
k_{\lambda_1,\lambda_2}(n,\tbinom{\alpha}{\theta}) :=
\left(
\begin{array}{cc}
\begin{array}{c}
	\scalebox{0.9}{$ \lambda_1\big(\lambda_2 \cos\big(2\pi (\theta-\alpha)\big) + i \lambda_2'\big) \delta_{-1}(n) $}\\
	\scalebox{0.9}{$\textstyle - \lambda_1' \lambda_2\sin(2\pi\theta) \delta_0(n) $}
\end{array}
&
\begin{array}{c}
	\scalebox{0.9}{$ -\lambda_1\lambda_2 \sin\big(2\pi(\theta-\alpha)\big) \delta_{-1}(n) $} \\
	\scalebox{0.9}{$ - \lambda_1'\big( \lambda_2\cos(2\pi\theta) - i\lambda_2' \big) \delta_0(n) $}
\end{array}
\\[0.6cm]
\begin{array}{c}
	\scalebox{0.9}{$ \lambda_1 \lambda_2 \sin\big(2\pi (\theta+\alpha)\big) \delta_{1}(n) $} \\
	\scalebox{0.9}{$ + \lambda_1'\big( \lambda_2\cos(2\pi\theta) +  i\lambda_2'\big) \delta_0(n) $}
\end{array}
&
\begin{array}{c}
	\scalebox{0.9}{$ \lambda_1\big(\lambda_2 \cos\big(2\pi(\theta+\alpha)\big) - i \lambda_2' \big) \delta_{1}(n) $} \\
	\scalebox{0.9}{$ - \lambda_1' \lambda_2\sin(2\pi\theta) \delta_0(n) $}
\end{array}
\end{array}
\right)
\]
Clearly, this defines a bounded kernel, since all involved functions are continuous in $\theta$ and $\alpha$, and the kernel vanishes for all \( n \in \Z \setminus \{-1, 0, 1\} \). We also note that if \( \lambda_1 = \lambda_2 = 1 \), then we recover the kernel from Example~\ref{ex:Unitary_AMO-first} since $\lambda_1'=\lambda_2'=0$ in this case.

Next, we determine the associated dynamically-defined operator family \( (A_x)_{x \in Z} \), analogous to the computations made in Example~\ref{ex:Unitary_AMO-first}. Specifically, for
\[
x = \begin{pmatrix} \alpha \\ \theta \end{pmatrix},
\qquad
\psi = \big( \psi(n) \big)_{n \in \Z}
\in \ell^2(\Z) \otimes \C^2,
\quad \text{with } \psi(n) = \begin{pmatrix} \psi^+(n) \\ \psi^-(n) \end{pmatrix},
\]
and for each \( n \in \Z \), we obtain:
\begin{align*}
&(A_x\psi)(n) \\
	= &\sum_{m\in\Z} k\left(-n+m,\tau_{-n}\tbinom{\alpha}{\theta}\right) \binom{\psi^+(m)}{\psi^-(m)}\\
	= &\sum_{m\in\Z} k\left(-n+m,\tbinom{\alpha}{n\alpha + \theta}\right) \binom{\psi^+(m)}{\psi^-(m)}\\
	=&\left(
\begin{array}{c}
	\scalebox{0.9}{$ \lambda_1 \Big( \big(\lambda_2 \cos\big(2\pi\big((n-1)\alpha + \theta\big)\big) + i \lambda_2'\big) \psi^+(n-1) 
			- \lambda_2 \sin\big(2\pi\big((n-1)\alpha + \theta\big)\big) \psi^-(n-1)\Big) $}\\
	\scalebox{0.9}{$ -\lambda_1' \Big( \lambda_2 \sin\big(2\pi(n\alpha + \theta)\big) \psi^+(n) 
			+ \big(\lambda_2 \cos\big(2\pi(n\alpha + \theta)\big) - i \lambda_2' \big) \psi^-(n)\Big) $}\\
\\
	\scalebox{0.9}{$ \lambda_1 \Big( \lambda_2 \sin\big(2\pi\big((n+1)\alpha + \theta\big)\big) \psi^+(n+1) 
			+ \big(\lambda_2 \cos\big(2\pi\big((n+1)\alpha + \theta\big)\big) - i \lambda_2' \big) \psi^-(n+1)\Big) $}\\
	\scalebox{0.9}{$ +\lambda_1' \Big( \big(\lambda_2 \cos\big(2\pi(n\alpha + \theta)\big) + i \lambda_2'\big) \psi^+(n) 
			- \lambda_2 \sin\big(2\pi(n\alpha + \theta)\big) \psi^-(n)\Big) $}
\end{array}
\right).
\end{align*}

This shows that the dynamically-defined operator family indeed corresponds to the unitary almost Mathieu operator. Specifically, we have
\[
A_{x} = W_{\lambda_1, \lambda_2, \alpha, \theta} \qquad \text{ if } x=\tbinom{\alpha}{\theta},
\]
for the kernel \( k_{\lambda_1, \lambda_2} \) using the notation from \cite{CedFilOng23}. 
Thus, the operator $A_x$ is a unitary operator on $\ell^2(\Z)\otimes\C^2$.

The main object of interest is the spectrum of this operator family.
As in the case of the almost Mathieu operator, we define
\[
\sigma_{\lambda_1, \lambda_2, \alpha}
:= \ol{\bigcup_{\theta \in \T} \sigma\left( W_{\lambda_1, \lambda_2, \alpha, \theta} \right)},
\qquad \theta \in \T,\quad \alpha \in [0,1] \setminus \Q.
\]
Since $W_{\lambda_1, \lambda_2, \alpha, \theta}$ is a unitary (bounded) operator, the spectrum $\sigma_{\lambda_1, \lambda_2, \alpha}$ is a compact subset of $\C$ respectively the unit circle in $\C$.
If $\alpha\in[0,1]$ is irrational, the associated dynamical system is minimal and so
\[
\sigma_{\lambda_1, \lambda_2, \alpha} = \sigma\left( W_{\lambda_1, \lambda_2, \alpha, \theta} \right), \qquad \text{ for all } \theta\in\T,
\]
follows again by standard arguments, see e.g. Corollary~\ref{cor:SemiCont_Spectrum}.

In analogy with Proposition~\ref{prop:AMO_SpecEst}, we obtain the following result.

\begin{proposition}
\label{prop:unitaryAMO_SpecEst}
Let $\alpha,\beta\in[0,1]$, $\lambda_1,\lambda_2\in[0,1]$ and $\theta\in\T$. Then the following assertion hold for the unitary almost Mathieu operator $W_{\lambda_1,\lambda_2,\alpha,\theta}$.
\begin{enumerate}[label=(\alph*)]
\item There exists a constant $C>0$ such that
\[
\dH\left( \sigma_{\lambda_1,\lambda_2,\alpha}\ , \ \sigma_{\lambda_1,\lambda_2,\beta} \right) \leq C |\alpha-\beta|^{\tfrac{1}{2}},
\qquad \alpha,\beta\in [0,1].
\]
\item There exists a constant $C>0$ such that
\[
\dH\left( \sigma\left( W_{\lambda_1,\lambda_2,\alpha,\theta} \right), \ \sigma\left( W_{\lambda_1,\lambda_2,\alpha,\rho} \right) \right) \leq C |\theta-\rho|^{\tfrac{1}{2}},
\qquad \theta,\rho\in\T.
\]
\item Let $\theta\in\T$ and suppose $\beta=\tfrac{p}{q}$ where $p,q$ are coprime and $|\alpha-\beta|<\frac{1}{q^2}$. Then
\[
\dH\left( \sigma\left( W_{\lambda_1,\lambda_2,\alpha,\theta} \right) ,\sigma\left( W_{\lambda_1,\lambda_2,\beta,\theta} \right) \right) \leq C \tfrac{1}{\sqrt{q}}.
\]
\end{enumerate}
\end{proposition}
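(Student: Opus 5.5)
The plan is to copy the proof of Proposition~\ref{prop:AMO_SpecEst} almost verbatim, replacing the scalar kernel $k_V$ by the matrix-valued kernel $k_{\lambda_1,\lambda_2}$ and using the $N=2$ version of Theorem~\ref{thm:SpecEst_DynDefOp}~(a). First I verify the hypotheses of that theorem. The action of $\Z$ on $Z=[0,1]\times\T$ is Lipschitz continuous with constant $C_\tau=1$, as computed in the proof of Proposition~\ref{prop:AMO_SpecEst}, and $\Z$ is unimodular and strictly polynomially growing. The kernel $k_{\lambda_1,\lambda_2}$ vanishes for $n\notin\{-1,0,1\}$. Hence it is bounded and linearly decaying, with $c_s\le 2\sup_{n,x}\|k_{\lambda_1,\lambda_2}(n,x)\|_M\le 4$.

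Each entry is a combination of $\cos(2\pi(\theta\pm\alpha))$, $\sin(2\pi(\theta\pm\alpha))$, $\cos(2\pi\theta)$ and $\sin(2\pi\theta)$ with coefficients bounded by $1$. These functions are Lipschitz in $(\alpha,\theta)$ with respect to $d_Z$, with constant at most $4\pi$. Here one must note that they are $1$-periodic in $\theta$, so they are well defined and Lipschitz on $\T$ with its quotient metric. Hence $k_{\lambda_1,\lambda_2}$ is Lipschitz continuous with $c_k(n)=4\pi\,\mathbbm{1}_{\{-1,0,1\}}(n)\in L^1_+(\Z)$. The computation preceding the proposition gives $A_x=W_{\lambda_1,\lambda_2,\alpha,\theta}$, which is unitary and therefore normal. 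Theorem~\ref{thm:SpecEst_DynDefOp}~(a) thus yields a constant $C>0$ with $d_H(\sigma(A_X),\sigma(A_Y))\le C\,\delta_H(X,Y)^{1/2}$ for all $X,Y\in\inv$.

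The three statements then follow by choosing subsystems exactly as in Proposition~\ref{prop:AMO_SpecEst}. Write $\Omega_{\alpha,\theta}=\overline{\Orb(\tbinom{\alpha}{\theta})}$; by normality and Corollary~\ref{cor:SemiCont_Spectrum}, $\sigma(W_{\lambda_1,\lambda_2,\alpha,\theta})=\sigma(A_{\Omega_{\alpha,\theta}})$.

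For (a), take $X=\{\alpha\}\times\T$ and $Y=\{\beta\}\times\T$, which gives $\delta_H(X,Y)=|\alpha-\beta|$. One still has to check that $\sigma(A_{\{\alpha\}\times\T})=\sigma_{\lambda_1,\lambda_2,\alpha}$. This holds because $\{\alpha\}\times\T$ is the closure of the union of the orbit closures $\Omega_{\alpha,\theta}$ over $\theta\in\T$. The spectrum of $A_Y$ is by definition the closure of the union of the pointwise spectra, which matches the definition of $\sigma_{\lambda_1,\lambda_2,\alpha}$ also for rational $\alpha$.

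For (b), take $X=\Omega_{\alpha,\theta}$ and $Y=\Omega_{\alpha,\rho}$. If $\alpha$ is irrational both equal $\{\alpha\}\times\T$ and there is nothing to prove. If $\alpha=p/q$, both are finite, and by \eqref{eq:Om_alp,thet-AMO} they are translates of each other, so $\delta_H\le|\theta-\rho|$.

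For (c), $\Omega_{\beta,\theta}$ consists of $q$ equidistributed points in $\{\beta\}\times\T$. The Farey argument \cite[Lem.~2.4]{BecBelTho25} shows that $\alpha$ is irrational or has denominator $q'>q$. As in the proof of Proposition~\ref{prop:AMO_SpecEst}~(c), this gives $\delta_H(\Omega_{\beta,\theta},\Omega_{\alpha,\theta})\le \tfrac{1}{2q}$, where the difference in the first coordinate is $|\alpha-\beta|<q^{-2}\le\tfrac{1}{2q}$ for $q\ge2$, and the case $q=1$ is trivial since $\delta_H\le 1$. Hence $d_H\le C(2q)^{-1/2}$.

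The main obstacle is not in these reductions but in the fact that Theorem~\ref{thm:SpecEst_DynDefOp} is only proven for $N=1$ in \cite{BecTak25}. The $N=2$ case relies on Appendix~\ref{App:SpectralEstimates}, which I would assume. Among the checks that remain, the one needing care is the Lipschitz property of the kernel on the torus, in particular the wrap-around in $\theta$, together with the normality of $A_x$.
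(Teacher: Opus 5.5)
Your proposal is correct and follows the paper's proof essentially verbatim. You check that the $\Z$-action on $Z=[0,1]\times\T$ is Lipschitz, that $k_{\lambda_1,\lambda_2}$ is bounded, finitely supported (hence linearly decaying) and Lipschitz in $(\alpha,\theta)$, and that $A_x=W_{\lambda_1,\lambda_2,\alpha,\theta}$ is unitary and hence normal; you then apply the $N=2$ version of Theorem~\ref{thm:SpecEst_DynDefOp}~(a) from Appendix~\ref{App:SpectralEstimates} and reuse the subsystems from Proposition~\ref{prop:AMO_SpecEst}, with your explicit constants, torus wrap-around check and separate $q=1$ case merely spelling out what the paper leaves implicit.
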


\begin{proof}
First note that the dynamical system \( (Z,\Z,\tau) \) is Lipschitz continuous (see the proof of Proposition~\ref{prop:AMO_SpecEst}). Also the group $\Z$ is strictly polynomially growing and unimodular. We already showed that the kernel $k_{\lambda_1,\lambda_2}:\Z\times Z\to\C^{2\times 2}$ for $\lambda_1,\lambda_2\in[0,1]$ is a bounded kernel and that it gives rise to the unitary almost Mathieu operator. Thus, $A_x$ is unitary for each $x\in Z$ and so it is in particular a normal operator. 

The kernel $k_{\lambda_1,\lambda_2}(n,x)$ vanishes whenever $n\in\Z\setminus\{-1,0,1\}$ and so the kernel is linearly decaying. Moreover, observe that the kernel is Lipschitz continuous since each coefficients of the matrix $k_{\lambda_1,\lambda_2}(n,\tbinom{\alpha}{\theta})$ depends Lipschitz continuously on the parameter $\alpha$ and $\theta$. Thus, Theorem~\ref{thm:SpecEst_DynDefOp}~(a) asserts that there is a constant $C>0$ such that for all $X,Y\in\inv$ (non-empty, closed and invariant subsets of $\Z$) the spectral estimate
\[
d_H\left( \sigma(A_X), \sigma(A_Y) \right) \leq C \cdot \delta_H(X,Y)^{1/2}, \qquad \text{for all } X,Y \in \inv.
\]
holds. Now statement (a), (b) and (c) follow by choosing the appropriate dynamical systems. These are exactly the same as in the proof of Proposition~\ref{prop:AMO_SpecEst}.
\end{proof}

Like for the almost Mathieu operator the estimate in (b) can be improved.

\section{The Kohmoto butterfly}
\label{sec:Kohmoto}

Let $\Aa$ be a finite set and consider the symbolic dynamical system $(\Aa^\Z,\Z,\tau)$; see Example~\ref{ex:SymbDynSyst}. Note that $\Z$ is strictly polynomially growing and recall that this dynamical system is Lipschitz continuous, see Proposition~\ref{prop:wDel_groupAction_Lipschitz-App} and Proposition~\ref{prop:Conv_Symb-Dyn-Syst}.

For $V\in\R$ and $\omega\in\Aa^\Z$, define the self-adjoint discrete Schrödinger operators $H_{\omega,V} : \ell^2(\mathbb{Z}) \to \ell^2(\mathbb{Z})$ by
\begin{equation} \label{eq:Schroedinger_Z}
(H_{\omega,V} \psi)(n) = \psi(n-1) + \psi(n+1) + V \cdot \omega(n) \cdot \psi(n),
\quad \psi \in \ell^2(\mathbb{Z}), \; n \in \mathbb{Z}.
\end{equation}
Here $V \in \mathbb{R}$ is called the coupling constant -- the amplitude of the potential term. 
A short computation yields that $H_{\omega,V}$ is a dynamically-defined operator for the bounded kernel
\[
k_V:\Z\times \Aa^\Z\to\R, \quad 
	k_V\left(m,\omega\right)
		:= 
			\begin{cases}
				1, \qquad &\textrm{if } m\in\{-1,1\},\\
				V\cdot \omega(0), \qquad &\textrm{if } m=0,\\
				0, \qquad &\textrm{otherwise}.
			\end{cases}
\]
It is straightforward to check that this kernel is linearly decaying and locally constant; see also \cite[Sec.~2.2]{BecTak25} and Example~\ref{ex:DynDefOp_integers_Lipsch_LocConst}. Thus, Theorem~\ref{thm:SpecEst_DynDefOp}~(b) applies to the operator family $(H_{\omega,V})_{\omega\in\Aa^\Z}$ with $V\in\R$.

In this section, we focus on a particular class of operators defined over the alphabet $\Aa := \{0,1\}$, sharing the same parameter space as the almost Mathieu operator discussed in Section~\ref{sec:Hofstadter}. 

Recall that $\T$ denotes the one-dimensional torus, identified with the interval $[0,1)$, and that the fractional part of a real number $x \in \R$ is given by $\{x\} := x - \lfloor x \rfloor$; see eq.~\eqref{eq:fractional_part}.

For a rotation parameter $\alpha \in [0,1]$ and a phase $\theta \in \T$, we define the configuration
\[
\omega_{\alpha,\theta} \in \Aa^\Z, \qquad \omega_{\alpha,\theta}(n) := \chi_{[1-\alpha,1)}\big( \{n\alpha + \theta\} \big) \in\Aa.
\]
Here $\chi_{[1-\alpha,1)}$ denotes the characteristic function of the interval $[1-\alpha,1)\subseteq \T$.
For simplicity, we abbreviate $\omega_\alpha := \omega_{\alpha,0}$. The associated dynamical systems are given by the orbit closures
\begin{equation}
\label{eq:Omega_alpha}
\Omega_\alpha := \overline{\Orb(\omega_\alpha)} \in \inv,
\end{equation}
in the symbolic dynamical system $(\Aa^\Z, \Z, \tau)$.

It can be shown (see e.g. \cite[Sec.~6]{Fog02} or \cite[App.~5]{DaLe04}) that $\omega_{\alpha,\theta} \in \Omega_\alpha$ for all $\theta \in \T$ and that $\Omega_\alpha$ is a minimal dynamical system in the sense of Definition~\ref{def:minimal} for every $\alpha \in [0,1]$. In the case where $\alpha = \frac{p}{q} \in [0,1]$ is rational with coprime integers $p$ and $q$, the sequence $\omega_{\alpha,\theta}$ is periodic under the shift $\tau$. More precisely, one has $\tau(q,\omega_{\alpha,\theta}) = \omega_{\alpha,\theta}$ for all $\theta \in \T$, and the orbit closure $\Omega_\alpha$ consists of exactly $q$ distinct elements.

If $\alpha \in [0,1]$ is irrational, then $\omega_\alpha$ is a non-periodic sequence and the associated dynamical system $\Omega_\alpha$ is referred to as a \emph{Sturmian dynamical system}. For a detailed exposition and further references, we refer the reader to \cite{Fog02,Queff10,BaaGri13,DaFi24-book_2}.

The so-called \emph{Kohmoto model} \cite{KKT83,OPRSS83,KO84,OK85} is the family of discrete Schrödinger operators $\Hkoha : \ell^2(\mathbb{Z}) \to \ell^2(\mathbb{Z})$ defined by
\[
\Hkoha := H_{\omega_{\alpha,\theta},V}.
\]
The operator $\Hkoha$ is also referred to as the \emph{Sturmian Hamiltonian} if $\alpha$ is irrational. This is a prototypical toy model for one-dimensional quasicrystals

Due to the minimality of the associated dynamical systems, standard arguments (see, e.g.,~\cite{LenSto03Alg,Bec16} and Corollary~\ref{cor:SemiCont_Spectrum}) yield
\begin{equation}
\label{eq:SpectrConst_Kohm}
\sigma(\Hkoha) = \sigma(\Koh), \qquad \theta \in \T, \; \alpha \in [0,1].
\end{equation}
Hence, we will omit the $\theta$-dependence in the sequel and focus on the operator 
\[
H_{\alpha,V} := \Koh
\]
and the associated configuration $\omega_\alpha \in \Aa^\Z$ for $\alpha \in [0,1]$. We emphasize that, in contrast to the almost Mathieu operator, where the spectrum is $\theta$-independent only for irrational $\alpha$, the Kohmoto model exhibits this property for all $\alpha \in [0,1]$.
This follows from the fact that $\Omega_\alpha$ is minimal for all rational $\alpha \in [0,1]$.

Let now $\alpha = \frac{p}{q} \in [0,1]$ be rational with coprime integers $p$ and $q$. Since $\omega_\alpha$ is $q$-periodic, the spectrum $\sigma(H_{\alpha,V})$ consists of a disjoint union of $q$ intervals for $V \neq 0$, known as \emph{spectral bands}; see, e.g.,~\cite{Tes00,Simon11,DaFi24-book_2}, and the detailed discussion in~\cite[Prop.~3.5]{BaBeBiRaTh24}. 

In general, one can only guarantee the presence of at most $q$ spectral bands for such periodic models. For instance, the almost Mathieu operator with $\alpha=\tfrac{1}{2}$ has only one spectral band, see Figure~\ref{fig:Hofstadter-butterfly}. However, in the case of the Kohmoto model, one obtains exactly $q$ spectral bands whenever $V \neq 0$. If $V = 0$, the potential vanishes and the spectrum equals to $[-2,2]$, independently of $\alpha \in [0,1]$.

For varying rational values of $\alpha$, the corresponding spectra are depicted in Figure~\ref{fig:Kohmoto-butterfly}, illustrating the structure commonly referred to as the \emph{Kohmoto butterfly}. As in the case of the almost Mathieu operator, this butterfly structure reflects features of the spectrum in the irrational regime. Therefore, the continuity of the spectrum at irrational $\alpha$ is of central importance -- a topic discussed in the following, along with further implications; see also Chapter~\ref{chap:DTMP}.

The mathematical foundation for the study of the Kohmoto model was laid in \cite{Cas86,Sut89,BIST89,BIT91,Raym95}. The spectrum of the Sturmian Hamiltonians ($H_{\alpha,V}$ with irrational $\alpha$) forms a Cantor set of Lebesgue measure zero \cite{Sut89,BIST89}. Building on earlier ideas by Casdagli \cite{Cas86}, Raymond \cite{Raym95,Raym95-thesis} established a coding of the spectral bands for rational $\alpha$ in the regime $V > 4$, see a more detailed discussion in Section~\ref{sec:Types_Spectral_Bands}. A comprehensive review of Raymond's results was recently provided in the joint work \cite{BaBeBiRaTh24}.

This hierarchical structure was further extended in \cite{BaBeLo24} to arbitrary couplings $V \neq 0$, which in turn allowed solving the associated dry ten Martini problem for Sturmian Hamiltonians; see Chapter~\ref{chap:DTMP} for details. 

Given that the spectrum in the irrational case is a Cantor set of zero Lebesgue measure, various fractal characteristics have been investigated in the literature; see, e.g.,~\cite{Raym95,KiKiLa03,DaEmGoTc08,LiQuWe14,DaGoYe16}. For a more comprehensive discussion, we refer to \cite{Dam07_survey,DaEmGo15-survey,Dam17-Survey,DaFi24-book_2} and references therein.

In this section, we follow the joint work \cite{BecBelTho25}, where both the regularity of the spectral map
\[
\Sigma_V : [0,1] \to \Kk(\R), \qquad \alpha \mapsto \sigma(H_{\alpha,V}),
\]
and the emergence of spectral defects in the Kohmoto butterfly were analyzed.

Using $C^\ast$-algebraic methods, the authors of \cite{BIT91} proved that the spectral map $\Sigma_V$ is continuous at all irrational values of $\alpha$, while being discontinuous at all rational $\alpha$, provided that $[0,1]$ is equipped with the Euclidean topology. This discontinuity is already visible in numerical simulations -- as illustrated in Figure~\ref{fig:Defects-Kohmoto-butterfly}.

Consider for example $r = \tfrac{2}{3}$, for which the spectrum $\sigma(H_{r,V})$ consists of three disjoint intervals. When approaching $r$ from left or right, the corresponding limiting spectra $\sigma(H_{r_+,V})$ and $\sigma(H_{r_-,V})$ exist by Theorem~\ref{thm:SpectralEstim_Sturm}, and contain isolated eigenvalues located in the spectral gaps of $\sigma(H_{r,V})$, indicated by red points in Figure~\ref{fig:Defects-Kohmoto-butterfly}.

Already at the numerical level, one observes that these limiting spectra differ: the eigenvalues in $\sigma(H_{r_+,V})$ and $\sigma(H_{r_-,V})$ do not coincide. We refer to these additional eigenvalues -- present in the limit spectra but absent in $\sigma(H_{r,V})$ itself -- as \emph{spectral defects}. 
This numerical observation -- already noted in~\cite[Rem.~3]{BIT91} -- is rigorously proven in~\cite[Prop.~5.5]{BecBelTho25} by building on the hierarchical structure of the spectral bands established in~\cite{Raym95,BaBeBiRaTh24,BaBeLo24}; see also the discussion in Section~\ref{sec:Types_Spectral_Bands}.

\begin{figure}[hptb]
    \centering
    \includegraphics[scale=1.2, angle=90]{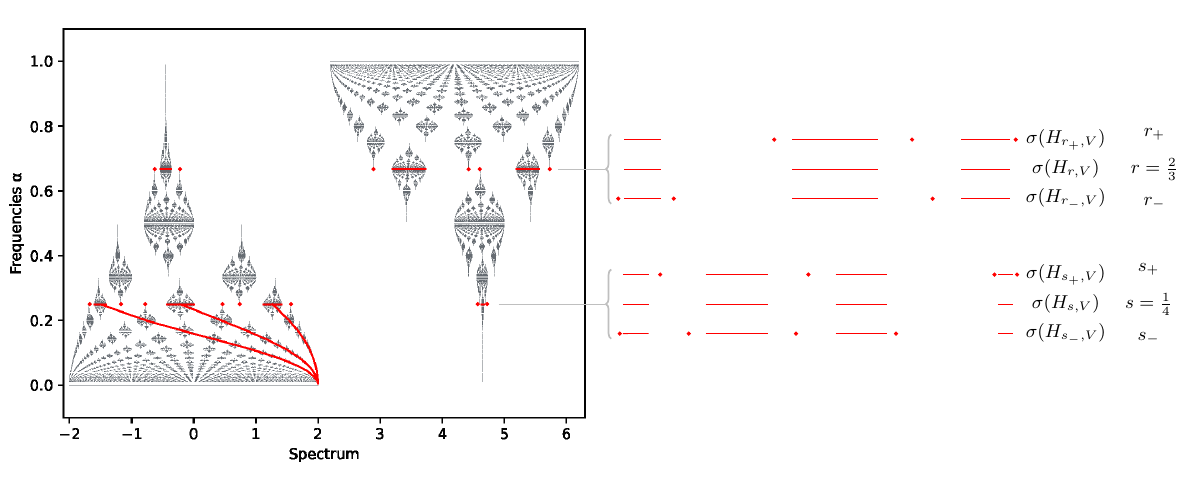}
    \captionsetup{width=0.95\linewidth}
    \caption{The Kohmoto butterfly: For rational $\alpha \in [0,1]$, the spectrum $\sigma(H_{\alpha,V})$ is shown for coupling $V = 4$. At selected rational values $r, s \in [0,1]$, spectral defects (marked as red points) arising from the left-sided limits ($r_-$ and $s_-$) and right-sided limits ($r_+$ and $s_+$) are highlighted. The number of such defects depends on the denominators of $r$ and $s$. For details, see \cite{BecBelTho25} and the discussion below.}
    \label{fig:Defects-Kohmoto-butterfly}
\end{figure}

To rigorously study both the spectral defects and the regularity of the spectral map, consider the closed subspace
\[
\overline{\mathfrak{S}} := \overline{\set{\Omega_\alpha}{\alpha \in [0,1]}}
\]
in the metric space $(\inv, \delta_\Aa)$ as introduced in Section~\ref{subsec:symb_syst_as_weighted_Delone}. In \cite{BecBelTho25}, a \emph{Farey topology} on $[0,1]$ was introduced, which characterizes the pullback topology induced by the parameter map $[0,1]\ni \alpha \mapsto \Omega_\alpha \in \inv$. This topology is strictly finer than the standard Euclidean topology and -- when combined with \cite{BBdN18} -- ensures the continuity of the spectral map $\Sigma_V$.

Each rational number $r \in [0,1]$ is identified with its unique irreducible representation $r = \tfrac{p}{q}$, where $p$ and $q$ are coprime integers. In particular, we adopt the convention that $0 = \tfrac{0}{1}$ and $1 = \tfrac{1}{1}$. A reduced rational number $r = \tfrac{p}{q}$ is called an \emph{$m$-Farey number} for $m \in \N$ if its denominator satisfies $q \leq m$. The set of all such numbers is denoted by $F_m$, which is clearly finite. For instance,
\[
F_1 = \{0,1\}, \quad 
F_2 = \left\{ 0, \tfrac{1}{2}, 1 \right\}, \quad 
F_3 = \left\{ 0, \tfrac{1}{3}, \tfrac{1}{2}, \tfrac{2}{3}, 1 \right\}.
\]

The $m$-Farey numbers form a totally ordered set. Given $r, s \in F_m$ with $r < s$, we say that $r$ and $s$ are \emph{$m$-Farey neighbors} if the open interval $(r, s)$ contains no other elements of $F_m$. For each $r \in F_m$, we denote by $r_\ast$ and $r^\ast$ the unique $m$-Farey neighbors satisfying $r_\ast < r < r^\ast$, except in the cases $r = 0$ or $r = 1$, where only one such neighbor exists.

Based on the Farey numbers, we define the \emph{Farey metric} $d_F: [0,1] \times [0,1] \to [0,\infty)$ by
\[
d_F(\alpha, \beta) :=
\begin{cases}
	0, & \alpha = \beta, \\
	1, & \alpha \in \{0,1\} \text{ or } \beta \in \{0,1\}, \\
	\min \left\{ \tfrac{1}{m+1} \;\middle|\; \exists\, r \in F_m \text{ with } \alpha, \beta \in (r, r^\ast) \right\}, & \text{otherwise}.
\end{cases}
\]

Following \cite{BecBelTho25}, the metric space $([0,1], d_F)$ is not complete. Its completion, denoted by $\overline{[0,1]}_F$, defines a compact metric space which we refer to as the \emph{Farey space}. The topology induced by $d_F$ is strictly finer than the Euclidean topology -- that is, convergence in the Farey metric implies convergence in the usual Euclidean sense.

In \cite[Cor.~3.4]{BecBelTho25}, we showed that the Farey space $\overline{[0,1]}_F$ can be realized by adjoining to each rational number $r \in [0,1]$ a left- and right-sided limit point $r_-$ and $r_+$, respectively:
\[
\overline{[0,1]}_F = [0,1] \cup \{ r_- \mid r \in \Q \cap (0,1] \} \cup \{ r_+ \mid r \in \Q \cap [0,1) \}.
\]

Moreover, by \cite[Thm.~3.8]{BecBelTho25}, there exists a unique surjective isometry
\[
\Phi: \overline{[0,1]}_F \to \overline{\mathfrak{S}}
\]
satisfying $\Phi(\alpha) = \Omega_\alpha$ for all $\alpha \in [0,1]$.

\begin{proposition}[\cite{BecBelTho25}]
\label{thm:Sturm_Defects}
For each $r \in [0,1]$, there exist configurations $\omega_{r_+}, \omega_{r_-} \in \Aa^\Z$ such that
\[
\Phi(r_+) = \overline{\Orb(\omega_{r_+})}
\qquad\text{and}\qquad
\Phi(r_-) = \overline{\Orb(\omega_{r_-})}.
\]
\end{proposition}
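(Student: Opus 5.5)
The plan is to construct $\omega_{r_\pm}$ explicitly as limits of Sturmian configurations and then identify their orbit closures with $\Phi(r_\pm)$ using the isometry $\Phi$ and the patch characterization of convergence.

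\emph{Step 1: realize $r_\pm$ as limits.} Fix a rational $r=\tfrac{p}{q}\in[0,1)$ (the case $r_-$ is symmetric). By the description of the Farey space, $r_+$ is the limit in $d_F$ of any sequence $\alpha_n\downarrow r$ with $\alpha_n\in(r,r^\ast)$ for the $m$-Farey neighbour $r^\ast$ and every fixed $m$. Since $\Phi$ is an isometry extending $\alpha\mapsto\Omega_\alpha$, we get
\[
\Phi(r_+)=\lim_{n\to\infty}\Omega_{\alpha_n}\quad\text{in }(\inv,\delta_\Aa).
\]

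\emph{Step 2: candidate configuration.} Define $\omega_{r_+}(n):=\lim_{\alpha\downarrow r}\omega_{\alpha}(n)$. The limit exists pointwise: for fixed $n$, the map $\alpha\mapsto\chi_{[1-\alpha,1)}(\{n\alpha\})$ is piecewise constant in $\alpha$ with finitely many jumps on compact ranges, so it is eventually constant as $\alpha\downarrow r$. One expects this to be the $q$-periodic word $\omega_r$ with a single defect, namely a transposed pair of letters near the origin. Thus $\omega_{\alpha_n}\to\omega_{r_+}$ in the product topology, so $\omega_{r_+}\in\limsup_n\Omega_{\alpha_n}=\Phi(r_+)$. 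Because $\Phi(r_+)$ is closed and invariant, this gives
\[
\overline{\Orb(\omega_{r_+})}\subseteq\Phi(r_+).
\]

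\emph{Step 3: the reverse inclusion (main obstacle).} By Corollary~\ref{cor:Charact_Converg_hulls_Patches} and Proposition~\ref{prop:Conv_Symb-Dyn-Syst}(d), it suffices to show that every word of length $R$ occurring in $\omega_{\alpha_n}$, for $n$ large, already occurs in $\omega_{r_+}$. I would argue via the rotation coding. For $\alpha$ slightly bigger than $r$, the length-$R$ words of $\omega_{\alpha}$ correspond to the intervals cut out of $\T$ by the points $\{-j\alpha\}$ and $\{1-\alpha-j\alpha\}$ for $0\le j<R$. As $\alpha\downarrow r$ these points cluster near the $q$ points $\tfrac{k}{q}$, and their cyclic order stabilizes. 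Consequently each word of length $R$ is either a word of $\omega_r$ or one of the finitely many "boundary" words arising from the small intervals near a cluster point. The point of choosing the phase $0$ limit is that all these boundary words occur in $\omega_{r_+}$ near its defect. Verifying this combinatorially, and checking that $\theta=0$ indeed sits at the cluster point that yields all such words, is the delicate part.

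Once this is done, $W_R(\Omega_{\alpha_n})=W_R(\omega_{r_+})$ for large $n$, which gives
\[
\Phi(r_+)=\overline{\Orb(\omega_{r_+})}.
\]
The case $r_-$ follows by the analogous argument with $\alpha\uparrow r$, or by the symmetry $\alpha\mapsto1-\alpha$ combined with exchanging the letters $0\leftrightarrow1$.
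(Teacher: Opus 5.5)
Steps~1 and~2 are sound. Any sequence $\alpha_n\downarrow r$ converges to $r_+$ in $d_F$, so the isometry gives $\Phi(r_+)=\lim_n\Omega_{\alpha_n}$. The one-sided pointwise limit $\omega_{r_+}$ exists, and $\overline{\Orb(\omega_{r_+})}\subseteq\Phi(r_+)$ follows. The gap is Step~3. The reverse inclusion is the entire content of the proposition, and you only announce it ("the delicate part"). Moreover, the picture of $\omega_{r_+}$ against which you want to check the boundary words is wrong: your limit is not $\omega_r$ with a single transposed pair. For $r=\tfrac{p}{q}$ with $q\ge 2$ a direct computation gives $\omega_{r_+}=\omega_{r,0}$ on $[-1,\infty)$ and $\omega_{r_+}=\lim_{\eta\downarrow0}\omega_{r,1-\eta}$ on $(-\infty,-2]$. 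These are two different phases of the periodic word, equivalently $\omega_r$ with an inserted block. For $r=\tfrac12$ one gets $\ldots0101\,1\,0101\ldots$ with $\omega_{r_+}(-2)=\omega_{r_+}(-1)=1$, which is the paper's example up to a shift. A single transposition in $\ldots0101\ldots$ would create both $00$ and $11$. That word is unbalanced, so it cannot lie in a limit of Sturmian languages, and checking the boundary words against that picture would fail.

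The cleanest way to close Step~3 avoids the cluster bookkeeping altogether.
\begin{itemize}
\item Take the $\alpha_n$ irrational, so each $\Omega_{\alpha_n}$ has exactly $R+1$ words of length $R$. By Proposition~\ref{prop:Conv_Symb-Dyn-Syst}(d) the same holds for $\Phi(r_+)$, and by Step~2 the words of $\omega_{r_+}$ form a subset of these.
\item $\omega_{r_+}$ is not periodic: it is $q$-periodic on $[-1,\infty)$, but $\omega_{r_+}(-1-q)=0\neq 1=\omega_{r_+}(-1)$. For $r=0$ the limit is $\ldots0\,1\,0\ldots$.
\item By the Morse--Hedlund theorem, $\omega_{r_+}$ therefore has at least $R+1$ words of length $R$. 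So the languages coincide and $\Phi(r_+)=\overline{\Orb(\omega_{r_+})}$. The case $r_-$ is identical.
\end{itemize}
Your cluster argument can also be completed. The cut points are $\{-j\alpha\}$, $0\le j\le R$, since $1-\alpha-j\alpha\equiv-(j+1)\alpha$, and the small cells inside the cluster near $\tfrac{k}{q}$ yield windows of shifts of $\omega_{r_+}$.

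The paper simply invokes \cite[Prop.~4.6]{BecBelTho25}, where $\omega_{r_\pm}$ is given explicitly as $\omega_r$ with an impurity determined by the Farey neighbours and the continued fraction expansion of $r$. Your limit construction is more elementary and gives existence plus one inclusion for free. It does not, however, produce the explicit form of the impurity, which is what the later spectral-defect analysis relies on.
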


\begin{proof}
This is proven in \cite[Prop.~4.6]{BecBelTho25}.
\end{proof}

The proposition \cite[Prop.~4.6]{BecBelTho25} asserts even more. In particular, the configurations $\omega_{r_\pm}$ can be regarded as perturbations of the original periodic sequence $\omega_r\in\Aa^\Z$ for $r \in [0,1] \cap \Q$. The dynamical impurity is explicitly determined by the Farey neighbors and the continued fraction expansion of $r$; see again \cite[Prop.~4.6]{BecBelTho25}. The phenomenon is illustrated by the following example.

\begin{example}
Let $r = \tfrac{1}{2}$, whose associated configuration is
\[
\omega_r = \ldots\, 01\ 01\ 01 \cdot \underline{01}\ 01\ 01\, \ldots\,.
\]
Here, the symbol $\cdot$ marks the origin, i.e., the first letter to the right is $\omega_r(0)$. The sequence $\omega_r$ is periodic with period $2$, and the underlined word $01$ repeats indefinitely. 
Observe that $r=\tfrac{1}{2}$ appears in $F_2$ the first time.
In $F_2$, the number $r$ has the Farey neighbors $r_\ast = 0$ and $r^\ast = 1$. Their associated configurations are given by
\[
\omega_0 = \ldots\, 0\ 0\ 0 \cdot \underline{0}\ 0\ 0\, \ldots
\qquad\text{and}\qquad
\omega_1 = \ldots\, 1\ 1\ 1 \cdot \underline{1}\ 1\ 1\, \ldots\,.
\]
Both are periodic with period one, and the underlined words represent the building blocks for the impurities in $\omega_{r_-}$ and $\omega_{r_+}$ for $r=\tfrac{1}{2}$. These underlined words appear as local impurities (highlighted by a box) of the periodic structure of $\omega_r$, namely
\[
\omega_{r_-} = \ldots\, 01\ 01\ 01\ \boxed{0} \cdot 01\ 01\ 01\, \ldots
\qquad\text{and}\qquad
\omega_{r_+} = \ldots\, 10\ 10\ 10\ \boxed{1} \cdot 10\ 10\ 10\, \ldots\,.
\]
\end{example}

Using the general version \cite[Prop.~4.6]{BecBelTho25}, each $x \in \ol{[0,1]}_F$ can be associated with a configuration $\omega_x \in \Aa^\Z$ such that
\[
\Phi(x) = \overline{\Orb(\omega_x)}.
\]
To be consistent with the above definition on $\omega_\alpha$, we require for $x \in [0,1]$,
\[
\omega_x(n) = \chi_{[1 - x, 1)}(\{n x\}), \qquad n\in\Z.
\]

For each such configuration $\omega_x$ and every coupling constant $V \in \R$, we consider the corresponding Schrödinger operator
\[
H_{\omega_x,V} : \ell^2(\Z) \to \ell^2(\Z),
\]
as defined in~\eqref{eq:Schroedinger_Z}. Recall that $H_{\omega_x,V} = H_{x,V}$ whenever $x \in [0,1]$.

\begin{theorem}[\cite{BecBelTho25}]
\label{thm:SpectralEstim_Sturm}
For all $V \in \R$, the spectral map
\[
\Sigma_V : \ol{[0,1]}_F \to \Kk(\R), \qquad x \mapsto \sigma(H_{\omega_x,V}),
\]
is Lipschitz continuous. More precisely, there exists a constant $C > 0$ such that
\[
d_H\big( \sigma(H_{\omega_x,V}), \sigma(H_{\omega_y,V}) \big) \leq C \cdot d_F(x,y), \qquad x,y \in \ol{[0,1]}_F.
\]
\end{theorem}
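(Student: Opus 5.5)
The plan is to factor the spectral map through the isometry $\Phi:\ol{[0,1]}_F\to\ol{\mathfrak{S}}$ and then apply Theorem~\ref{thm:SpecEst_DynDefOp}~(b). Concretely,
\[
\Sigma_V(x)=\sigma(H_{\omega_x,V}) = \sigma\big(A_{\Phi(x)}\big),
\]
where $A$ is the dynamically-defined operator family on $(\Aa^\Z,\Z,\tau)$ associated with the kernel $k_V$ introduced above. Once this identity is in place, the Lipschitz estimate in the Farey metric reduces to the Lipschitz estimate in $\delta_\Aa$ (or $\delta_H$), because $\Phi$ is an isometry.

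\textbf{Step 1 (identification of spectra).} By Proposition~\ref{thm:Sturm_Defects} and the general version \cite[Prop.~4.6]{BecBelTho25}, we have $\Phi(x)=\overline{\Orb(\omega_x)}$ for every $x\in\ol{[0,1]}_F$. Each $H_{\omega,V}$ is self-adjoint, hence normal. The standard equality of spectra along orbit closures (Corollary~\ref{cor:SemiCont_Spectrum}) therefore gives
\[
\sigma(H_{\omega_x,V})=\sigma\big(A_{\overline{\Orb(\omega_x)}}\big)=\sigma\big(A_{\Phi(x)}\big).
\]
This step needs no minimality of $\Phi(x)$: equality for the single point $\omega_x$ and its own orbit closure is enough. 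This matters at the defect points $r_\pm$, where $\Phi(r_\pm)$ need not be minimal.

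\textbf{Step 2 (hypotheses of Theorem~\ref{thm:SpecEst_DynDefOp}~(b)).} The group $\Z$ is unimodular and strictly polynomially growing. By Proposition~\ref{prop:Conv_Symb-Dyn-Syst}~(b), the action on $(\Aa^\Z,\dAG)$ is Lipschitz continuous. The kernel $k_V$ vanishes outside $\{-1,0,1\}$, so it is bounded and linearly decaying. It is locally constant, since $k_V(0,\omega)=V\omega(0)$ depends only on $\omega(0)$; as in Example~\ref{ex:DynDefOp_integers_Lipsch_LocConst}, one may take $c_k\equiv 1$ for $\dAG$, because $\dAG(\omega,\rho)<1$ forces $\omega(0)=\rho(0)$. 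Theorem~\ref{thm:SpecEst_DynDefOp}~(b) then yields a constant $C_1=C_1(V)$ such that
\[
d_H\big(\sigma(A_X),\sigma(A_Y)\big)\le C_1\,\delta_\Aa(X,Y)\qquad\text{for all } X,Y\in\inv.
\]
The case $V=0$ is trivial, since then all the spectra equal $[-2,2]$.

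\textbf{Step 3 (conclusion).} Combining the two steps gives
\[
d_H\big(\Sigma_V(x),\Sigma_V(y)\big)\le C_1\,\delta_\Aa\big(\Phi(x),\Phi(y)\big)=C_1\,d_F(x,y),
\]
using that $\Phi$ is an isometry by \cite[Thm.~3.8]{BecBelTho25}. This proves the theorem with $C=C_1$.

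The main obstacle I expect is bookkeeping of metrics. I need to confirm that the isometry $\Phi$ in \cite{BecBelTho25} is with respect to $\delta_\Aa$ built from $\dAG$, rather than the weighted-Delone metric $\delta_H$. If it is $\delta_H$ instead, I would insert Proposition~\ref{prop:Conv_Symb-Dyn-Syst}~(d), which makes the two Hausdorff metrics equivalent up to the constants $c_1$ and $2$; this only changes the constant. I also need to justify extending $d_F$ to the completion, but that is immediate because isometries extend uniquely to completions.
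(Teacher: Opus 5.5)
Your proposal is correct and takes the same route as the paper, whose proof defers to \cite[Thm.~1.2]{BecBelTho25}: there one combines the isometry $\Phi$ onto $(\ol{\mathfrak{S}},\delta_\Aa)$ from \cite[Thm.~3.8]{BecBelTho25}, the identification $\Phi(x)=\overline{\Orb(\omega_x)}$, and the Lipschitz spectral estimate for locally constant kernels (Theorem~\ref{thm:SpecEst_DynDefOp}~(b), going back to \cite{BeBeCo19,BecTak25}). Your two side remarks are also sound: Step~1 needs only that $\Phi(r_\pm)$ is an orbit closure, not that it is minimal, and any mismatch in the metric convention is absorbed into the constant by Proposition~\ref{prop:Conv_Symb-Dyn-Syst}~(d).
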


\begin{proof}
This is proven in \cite[Thm.~1.2]{BecBelTho25}.
\end{proof}

\begin{corollary}[\cite{BecBelTho25}]
\label{cor:SpectralEstim_Sturm}
For each $V \in \R$, there exists a constant $C > 0$ such that for all $\alpha, \tfrac{p}{q} \in [0,1]$ with $0 < \big|\alpha - \tfrac{p}{q}\big| < \tfrac{1}{q^2}$, the estimate
\[
d_H \left( \sigma(H_{\alpha,V}), \sigma\big(H_{\tfrac{p}{q},V}\big) \right) \leq \frac{C}{q}
\]
holds.
\end{corollary}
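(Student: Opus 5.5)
The plan is to deduce the corollary from Theorem~\ref{thm:SpectralEstim_Sturm}, which states that $\Sigma_V$ is Lipschitz with respect to the Farey metric $d_F$. Given that result, it suffices to show that
\[
d_F\big(\alpha,\tfrac{p}{q}\big) \leq \frac{c}{q} \qquad\text{whenever } 0<\big|\alpha-\tfrac{p}{q}\big|<\tfrac{1}{q^2},
\]
for some universal constant $c$. The estimate then follows with constant $cC$. The point $\tfrac{p}{q}$ itself lies in $[0,1]\subseteq\ol{[0,1]}_F$, so the theorem applies directly to the pair $x=\alpha$, $y=\tfrac pq$.

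To bound the Farey distance, set $r=\tfrac pq$ and let $m:=q-1$. Consider $F_m$, which does not contain $r$ when $q\geq 2$. Then $r$ lies in an open interval $(s,s^\ast)$ between two $m$-Farey neighbors. The key arithmetic input, already used in the proof of Proposition~\ref{prop:AMO_SpecEst}~(c) via \cite[Lem.~2.4]{BecBelTho25}, is that distinct fractions are separated: $|r-\tfrac{p'}{q'}|\geq \tfrac{1}{qq'}$. Take an endpoint $\tfrac{p'}{q'}\in F_m$, so $q'\leq q-1$. Then $|r-\tfrac{p'}{q'}|\geq\tfrac{1}{q(q-1)}>\tfrac1{q^2}$. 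Hence the Euclidean ball of radius $\tfrac1{q^2}$ around $r$ contains no element of $F_{q-1}$, so $\alpha$ lies in the same open interval $(s,s^\ast)$ as $r$. By the definition of $d_F$ this gives $d_F(\alpha,r)\leq \tfrac{1}{(q-1)+1}=\tfrac1q$.

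Two edge cases need checking. The first is $r$ or $\alpha$ in $\{0,1\}$, where $d_F=1$. If $r\in\{0,1\}$ then $q=1$, and the bound $C/q=C$ is trivially fine once $C$ is at least the Lipschitz constant times $1$. If instead $\alpha\in\{0,1\}$ and $q\geq2$, then $|\alpha-r|\geq\tfrac1q>\tfrac1{q^2}$ is impossible, so this case does not occur. The second is ensuring $m\geq1$, which holds since $q\geq 2$ in the nontrivial case.

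The main obstacle is only the careful matching with the definition of $d_F$, namely the open interval $(r,r^\ast)$ with $r\in F_m$, including the boundary conventions. The rest is an immediate consequence of Theorem~\ref{thm:SpectralEstim_Sturm}.
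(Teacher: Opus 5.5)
Your argument is correct and is essentially the derivation behind the cited \cite[Cor.~1.3]{BecBelTho25}, to which the paper defers: Lipschitz continuity in $d_F$ (Theorem~\ref{thm:SpectralEstim_Sturm}) combined with the Farey separation $\big|\tfrac{p}{q}-\tfrac{p'}{q'}\big|\geq\tfrac{1}{qq'}$, which gives $d_F\big(\alpha,\tfrac{p}{q}\big)\leq\tfrac{1}{q}$. The only implicit point is that $\tfrac{p}{q}$ must be in lowest terms, which is the paper's convention; this is what guarantees $\tfrac{p}{q}\notin F_{q-1}$.
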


\begin{proof}
This is proven in \cite[Cor.~1.3]{BecBelTho25}.
\end{proof}

This recovers the continuity result obtained in \cite{BIT91} and provides a quantitative refinement. In contrast to earlier approaches, the proof is based on different techniques, relying on the results from \cite{Mig91,BeBeCo19,BecTak25} partially discussed in Section~\ref{sec:SpectralEstimates}. Moreover, it is proven in \cite[Thm.~6.2]{BecBelTho25} that the obtained spectral estimates are optimal at all $r_{\pm}$ provided that $V > 4$. More precisely, one constructs a sequence $(x_n)_{n\in\N} \subseteq [0,1]\cap\Q$ converging to $r_{\pm}$ in the Farey metric such that there exist constants $0 < C_1 \leq C_2$ satisfying
\begin{equation}
\label{eq:Optimality_Kohmoto}
C_1\, d_F(x_n, r_{\pm}) 
\;\leq\; 
d_H\big( \sigma(H_{\omega_{x_n},V}),\, \sigma(H_{\omega_{r_{\pm}},V}) \big) 
\;\leq\;
C_2\, d_F(x_n, r_{\pm}).
\end{equation}
As an example, for $r_+ = 0_+$ one can trace spectral gaps that close as $x_n \to r_+$, illustrated by the red lines in Figure~\ref{fig:Defects-Kohmoto-butterfly}. A related result for the almost Mathieu operator is proven in \cite{BeRa90}; see also the discussion in \cite{BecTak25}.

\section{The Lamplighter group}
\label{sec:Lamplighter}

Let us consider the Lamplighter group~\cite{KaiVer83,Lin01,Butk01}, which is the countable discrete group given by 
\[
G := \left\{ (m,\gamma) \;\middle|\; m \in \Z,\; \gamma \in \bigoplus_{j\in\Z} \Z_2 \right\},
\]
where $\Z_2$ denotes the cyclic group of order two. The second component $\gamma$ is a finitely supported map $\gamma:\Z\to\Z_2$ (since we consider the direct sum).

The group multiplication is defined by
\[
(m,\gamma)(n,\eta) := \left(m+n,\, S^n(\gamma) + \eta\right),
\]
where $S : \bigoplus_{j\in\Z} \Z_2 \to \bigoplus_{j\in\Z} \Z_2$ is the shift given by $S(\gamma)(j) := \gamma(j+1)$ for all $j \in \Z$.

The inverse of an element is given by
\[
(m,\gamma)^{-1} = \left(-m,\, S^{-m}(\gamma)\right).
\]
The lamplighter group is discrete and so the Haar measure (being the counting measure) is left and right invariant, namely $G$ is unimodular.

This group is for instance generated by the set $S = \{t, at\}$ where $t = (1,0)$ and $a = (0,\delta_0)$, and $\delta_0 : \Z \to \Z_2$ is the sequence that vanishes everywhere except at zero. We use a word metric\footnote{This is the shortest path metric on the Cayley graph of $G$ with generating set $S = \{t, at\}$.} $d_G$ by the generator set $S = \{t, at\}$ following \cite[Sec.~2]{JoKe18}. 

The Lamplighter group is exponentially growing (so it is not strictly polynomially growing), but $G$ is amenable; see e.g.\ \cite[Example~1.50]{Butk01} and \cite[Ex.~3.7]{Pog10_Dipl}.

\begin{example}\label{ex:LL_Schroedinger}
Let $G$ be the Lamplighter group, and consider the symbolic dynamical system $(\Aa^G,G,\tau)$ introduced in Example~\ref{ex:SymbDynSyst} with finite alphabet $\Aa\subseteq \R$.
Fix a function $v : \Aa^G \to \R$ which is locally constant, e.g.
\[
v(\gamma) := \gamma(e),
\]
where $e = (0,0)$ is the neutral element in $G$. Define the kernel $k : G \times \Aa^G \to \R$ by
\[
k(g, \gamma) := \begin{cases}
		1 & \text{if } g \in S \cup S^{-1},\\
		v(\gamma) & \text{if } g = e,\\
		0 & \text{otherwise},
	\end{cases}
\]
where $S = \{t, at\}$ is a generating set of $G$.

It is straightforward to check that this is a bounded and locally constant kernel with finite support $\{e\}\cup S \cup S^{-1}$ in $G$. The associated dynamically-defined operator $A_\gamma : \ell^2(G) \to \ell^2(G)$ with $k$ is self-adjoint and
\[
(A_\gamma \psi)(g) 	= \left(\sum_{s \in S \cup S^{-1}} \psi(gs) \right) + \gamma(g) \psi(g)
	, \qquad \psi \in \ell^2(G),\ g \in G.
\]
This defines a Schr\"odinger operator on the Cayley graph of the lamplighter group with generating set $S$. We now address the question if we can prove also spectral estimates for such operators.
\end{example}

Using the method developed in \cite{BecTak25} (see Theorem~\ref{thm:SpecEst_DynDefOp} as well as the detailed presentation in Appendix~\ref{App:SpectralEstimates}), we establish spectral estimates for dynamically-defined operators on the Lamplighter group. In this section, we focus on locally constant potentials, motivated by the symbolic dynamical system $(\Aa^G, G, \tau)$ associated with the Lamplighter group $G$, see Example~\ref{ex:LL_Schroedinger}. Similar results can be obtained for Lipschitz continuous kernels.

\begin{proposition}
\label{prop:Lamplighter}
Let \( (\Aa^G,G,\tau) \) be a symbolic dynamical system for a finite alphabet $\Aa$ where $G$ is the lamplighter group equipped with the word metric $d_G$ defined by the generating set $S=\{t,at\}$.
For $N\in\N$, let \( k: G \times Z \to \C^{N\times N} \) be a bounded kernel with dynamically-defined operator $(A_x)_{x\in Z}$ such that $A_x$ is normal for every $x \in Z$. 
If furthermore
\begin{enumerate}[label=(\alph*)]
\item 
$k$ is decaying: there exists $c_s > 0$ such that 
\[
\sup_{x \in Z} \sum_{(j,\eta)\in G}  \sup_{g\in G} \big\| k\big((j,\eta),g^{-1}x\big)\|_M \, |j| \leq c_s.
\]
\item 
$k$ is locally constant: there exists \( 0< c_k \in \ell^\infty(G) \) such that for all \( h \in G \),
\[
d(x,y) < \frac{1}{c_k(h)} \quad \Rightarrow \quad k(h,x) = k(h,y).
\]
\end{enumerate}
Then there exists a constant $C>0$ such that
\[
d_H\big(\sigma(A_X), \sigma(A_Y)\big) \le C\, \delta_H(X, Y), \qquad X,Y\in\inv.
\]
\end{proposition}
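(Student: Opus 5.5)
We follow the approximate-eigenfunction strategy of \cite{BecTak25} underlying Theorem~\ref{thm:SpecEst_DynDefOp}, with balls replaced by a Følner sequence adapted to the lamplighter structure. By symmetry of $d_H$ it suffices to show: for $X,Y\in\inv$, $E\in\sigma(A_X)$ and $\delta:=\delta_H(X,Y)$ small, one has $\dist(E,\sigma(A_Y))\le C\delta$; for $\delta$ bounded below the claim is trivial because $\|A_x\|$ is uniformly bounded. Since every $A_x$ is normal, $\sigma(A_Y)$ is the closure of $\bigcup_{y\in Y}\sigma(A_y)$, so the distance to it is controlled by $\|(A_y-E)\phi\|/\|\phi\|$ for any $\phi$ and $y\in Y$. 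Thus we only need one good test function for one $y\in Y$.

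\textbf{Construction.} Fix $x\in X$ and a normalized Weyl vector $\psi$ with $\|(A_x-E)\psi\|\le\varepsilon$, where $\varepsilon>0$ is arbitrary. As Følner sets take the boxes
\[
F_n:=\set{(m,\gamma)}{0\le m<n,\ \supp\gamma\subseteq[0,n)},
\]
with cut-off $\chi_n:=\mathbb 1_{F_n}$. The decisive feature is that right multiplication $g\mapsto g(j,\eta)$ shifts the first coordinate by $j$, and leaves $F_n$ only if $0\le m+j<n$ fails or if the lamp configuration acquires support outside $[0,n)$. Choosing the box orientation so that lamp changes occur at the cursor position, the commutator $[A_x,\chi_n]$ is then controlled only by the first-coordinate displacement $|j|$, and this is the reason hypothesis (a) weights the kernel by $|j|$ and not by $d_G$.

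\textbf{Key step and main obstacle.} We need the discrete Leibniz/averaging estimate
\[
\sum_{g\in G}\|[A_x,\chi_n(\cdot\, g)]\psi\|^2\le \frac{c\, c_s}{n}\,|F_n|\,\|\psi\|^2,
\]
i.e.\ the analogue of the estimate of \cite[Lem.~3.x]{BecTak25}, which there uses polynomial growth of balls. For the lamplighter group this is the step I expect to be hardest: one must check carefully that the boundary, measured in the $|j|$-weighted sense, is only a $1/n$-fraction in the above sum, even though $d_G$-balls grow exponentially. The estimate follows by counting: for fixed $h=(j,\eta)$, the number of $g\in G$ with $\chi_n(gh\cdot)\neq\chi_n(g\cdot)$ at a given point is at most $c|j|\,|F_n|/n$ (horizontal slices of the box). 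By averaging over translates there is $g_0$ with
\[
\|(A_x-E)\chi_n(\cdot\,g_0)\psi\|\le\left(\varepsilon+\tfrac{C}{n}\right)\|\chi_n(\cdot\,g_0)\psi\|.
\]
Using covariance we may rewrite $\phi:=U\chi_n\psi$ as localized on $F_n$ for the configuration $x':=g_0^{-1}x\in X$.

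\textbf{Transfer to $Y$.} The support $F_n$ together with its $k$-neighbourhood has $d_G$-diameter at most $c\,n$ around $e$. Since $\delta_H(X,Y)\le\delta$, there is $y\in Y$ with $d(x',y)\le\delta$, and by the Lipschitz action (Proposition~\ref{prop:Conv_Symb-Dyn-Syst}(b)) we get $d(g^{-1}x',g^{-1}y)\le(d_G(e,g)+1)\delta$. Choose $n\sim 1/(c\,\|c_k\|_\infty\delta)$. Local constancy (b) then gives $k(h,g^{-1}x')=k(h,g^{-1}y)$ for all relevant $g$ in the support, up to a tail that is controlled by (a) after truncating at range of size $\sim n$. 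Hence $\|(A_y-A_{x'})\phi\|\le C\delta\|\phi\|$, and therefore $\dist(E,\sigma(A_y))\le\varepsilon+C/n+C\delta\le\varepsilon+C'\delta$. Letting $\varepsilon\to0$ gives the Lipschitz bound, with no square root, because locally constant kernels incur no regularity loss (cf.\ Remark~\ref{rem:Optimization-Regularity-Amenable}).
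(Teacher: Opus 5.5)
Your architecture is the paper's: a Weyl vector, averaging over translates of the indicator of a F\o lner set as in Lemma~\ref{lem:Technical_SpectralEst_app} and Lemma~\ref{lem:po(3)}, then local constancy with $n\sim 1/\delta$. However, two steps fail.

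\textbf{The boundary term is not governed by $|j|$ alone.} With $(m,\gamma)(j,\eta)=(m+j,S^j\gamma+\eta)$, your box $F_n$ is not even right-F\o lner. Indeed, $(m,\gamma)t=(m+1,S\gamma)$ shifts the lamp support, so about half of $F_n$ leaves $F_n$ under $t$. More importantly, in any orientation, right multiplication by $h=(0,\eta)$ with $\eta$ lighting a lamp far outside the window moves the whole box off itself, although $|j|=0$. For example, take $F_\ell=\{(m,\gamma):|m|\le\ell,\ \supp\gamma\subseteq[-2\ell,2\ell]\}$ and $h=(0,\delta_p)$ with $p>3\ell$; then $hF_\ell\cap F_\ell=\emptyset$. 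So your count $c|j|\,|F_n|/n$ is not uniform in $\eta$. The boundary term contains $\sum_h\sup_g\|k(h,g^{-1}z)\|_M$ over all $h$ whose lamps reach outside the window, and (a) gives no rate for this tail. The same tail reappears in your transfer step: you compare $A_y$ with $A_{x'}$ rather than their adjoints, so you need agreement on a $k$-neighbourhood of the support. You need a decay weight that sees the lamp spread, e.g.\ $d_G(e,h)$. Note that the written proof of Proposition~\ref{prop:Lamplighter_app} rests on the same bound, $\sharp(F_\ell\Delta hF_\ell)/\sharp F_\ell\le|j|/(2\ell+1)$ for all $h=(j,\eta)$, which this example contradicts.

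\textbf{Your own estimate does not give a linear rate.} For an indicator, $\|\chi_n^h-\chi_n\|_2/\|\chi_n\|_2=(\sharp\Delta/\sharp F_n)^{1/2}$. Your squared-sum bound $\frac{c\,c_s}{n}|F_n|\,\|\psi\|^2$, compared with $\sum_g\|\chi_n(\cdot\,g)\psi\|^2=|F_n|\,\|\psi\|^2$, only produces a $g_0$ with error $\varepsilon+C/\sqrt n$, not $\varepsilon+C/n$. The transfer step forces $n\lesssim1/\delta$: the Lipschitz factor of the action on the support is $\sim n$, and this factor times $\delta$ must stay below $1/\|c_k\|_\infty$. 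Hence you end with $\delta^{1/2}$, which is precisely the indicator-versus-tent loss of Remark~\ref{rem:Choice-cut_Interplay-amenable}. The paper's display \eqref{eq:Decay_CutOff_Lampl} drops the same square root.

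A radial tent in $d_G$ does not help, since exponential growth puts most of the $L^2$-mass near the boundary. A cut-off that is a tent in the cursor coordinate and flat in the lamps does work. Take $\chi(g):=\tilde\chi(g^{-1})$ with $\tilde\chi(m,\gamma):=(n-|m|)_+\,\mathbf{1}[\supp\gamma\subseteq[-2n,2n]]$. Then $\|\chi^h-\chi\|_2/\|\chi\|_2\lesssim|j|/n$ whenever the lamps of $h^{-1}$ lie in $[-n,n]$, and $\le\sqrt2$ otherwise, while $\supp\chi$ has $d_G$-diameter $O(n)$. Feed this into Lemma~\ref{lem:po(3)}, which works with adjoints, so only agreement on $\supp\chi$ is needed. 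With decay weighted by $d_G(e,h)$ and $n\sim1/\delta$, this yields the Lipschitz bound.
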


\begin{proof}
We postpone the proof to the appendix since it is slightly more involved, see Proposition~\ref{prop:Lamplighter_app}. 
The argument essentially follows the lines of the proof of Theorem~\ref{thm:LocalConst-Spectrum_app} (see also \cite[Thm.~4.7]{BecTak25}). We use a F\o lner sequence defined in \cite{Butk01,Pog10_Dipl} to define a suitable cut-off function.
\end{proof}

\chapter[The DTMP for Sturmian Hamiltonians]{The dry ten Martini problem for Sturmian Hamiltonians}
\label{chap:DTMP}

During the AMS Annual Meeting in 1981, Mark Kac asked whether the almost Mathieu operator ``has all its gaps there''~\cite{Sim82-review} for irrational \( \alpha \), offering ten Martinis to whoever could solve the problem. Shortly thereafter, Barry Simon~\cite{Sim82-review} coined the name of the \emph{(dry) ten Martini problem}.

The question is closely tied to the study of spectral gaps in quasiperiodic operators and the so-called Gap Labeling Theorem, which predicts the set of possible gap labels for the spectrum based on the underlying dynamical system. 
In the case of the almost Mathieu operator, it was conjectured that the spectrum is a Cantor set and that all allowed gaps are there -- that is, the spectrum realizes the maximal number of spectral bands separated by actual gaps.

A closely related question also remained a long-standing open problem for the Sturmian Hamiltonians -- a prototypical toy model for one-dimensional quasicrystals. It was recently fully resolved in the joint work \cite{BaBeLo24}.

We discuss these problems in parallel for both the almost Mathieu operator and the Sturmian Hamiltonians, highlighting their structural similarities.

For \( \alpha \in [0,1] \setminus \Q \) and \( V \in \R \), recall the definitions of the almost Mathieu operator \( \AMO : \ell^2(\Z) \to \ell^2(\Z) \) from Section~\ref{sec:Hofstadter} and the Kohmoto model \( \Koh : \ell^2(\Z) \to \ell^2(\Z) \) from Section~\ref{sec:Kohmoto}.\footnote{We omit the dependence on \( \theta \) in both the almost Mathieu operator and the Kohmoto model, since for all irrational \( \alpha \) the spectrum is independent of \( \theta \); see eq.~\eqref{eq:SpectrConst_almMath} and eq.~\eqref{eq:SpectrConst_Kohm}.} Recall that $\Koh$ is also called a Sturmian Hamiltonian if $\alpha$ is irrational.
In the following, we write \( H_{\alpha,V} \) to refer to either \( \AMO \) or \( \Koh \).

\medskip

\underline{\textit{The ten Martini problem (TMP):}} One asks whether the spectrum \( \sigma(H_{\alpha,V}) \) is a Cantor set for \( V \neq 0 \) and irrational \( \alpha \in [0,1] \setminus \Q \).

For the almost Mathieu operator, the final solution of the TMP was given by the breakthrough work of Avila and Jitomirskaya~\cite{AviJit09-TMP}, building on a long line of partial results and techniques developed in~\cite{BeSi82,Sin87,HelSjo89,ChElYu90,Last94,Puig04,AvKri06}.

For the Sturmian Hamiltonians, it is known that the spectrum is a Cantor set of zero Lebesgue measure~\cite{Sut89,BIST89}. Substantial generalizations were established in~\cite{Len02,DaLe06_Boshernitzan,DaLe06_ZeroMeasure} based on the so-called \emph{Boshernitzan condition}~\cite{Bos84}, which was further extended to Jacobi operators in the joint work~\cite{BecPog13}. A key tool in this analysis is Kotani theory~\cite{Sim83,Kot84,Kotani89}.

\medskip

\underline{\textit{The dry ten Martini problem (DTMP):}}\\
Let \( (H_{\alpha,V})|_{[0,n-1]} \) denote the Hermitian \( n \times n \) matrix given by the restriction of \( H_{\alpha,V} \) to the finite-dimensional subspace \( \ell^2(\{0,\ldots,n-1\}) \). This self-adjoint matrix has exactly \( n \) real eigenvalues (counted with multiplicity), denoted by \( \sigma\left((H_{\alpha,V})|_{[0,n-1]}\right) \).

The \emph{integrated density of states} (IDS) \( N_{\alpha,V} : \R \to [0,1] \) is defined by
\[
N_{\alpha,V}(E) := \lim_{n\to\infty} \frac{\sharp \set{\lambda\in \sigma\left( (H_{\alpha,V})|_{[0,n-1]}\right) }{\lambda\leq E} }{n}.
\]
This limit exists for irrational \( \alpha \in [0,1] \setminus \Q \) and arbitrary \( V \in \R \), since the associated dynamical systems are uniquely ergodic; see e.g.~\cite{Kirsch08,DaFi22-book_1,DaFi24-book_2}. The IDS satisfies the following properties:

\begin{enumerate}[label=\textbf{(IDS--\arabic*)},ref=IDS--\arabic*,labelwidth=!,align=left]
\item \label{item:IDS-1} The function \( N_{\alpha,V} : \R \to [0,1] \) is monotone increasing and continuous.

\item \label{item:IDS-2} We have \( E \in \R \setminus \sigma(H_{\alpha,V}) \) if and only if there exists \( \varepsilon > 0 \) such that the restriction \( N_{\alpha,V}|_{(E - \varepsilon,\, E + \varepsilon)} \) is constant.
\end{enumerate}

An open interval \( \gap := (a,b) \subseteq \R \) is called a \emph{spectral gap} if
\[
 a, b \in \sigma(H_{\alpha,V}) 
 \qquad \text{and} \qquad 
 \gap \cap \sigma(H_{\alpha,V}) = \emptyset. 
\] 
Fix such a spectral gap \( \gap \). Using~\eqref{item:IDS-1} and~\eqref{item:IDS-2}, we conclude that the IDS is constant on \( \gap \), i.e.,
\[
N_{\alpha,V}(E) = N_{\alpha,V}(E') \quad \text{for all } E, E' \in \gap.
\]
The common value of the IDS on \( \gap \) is called the \emph{gap label} of \( \gap \), see Figure~\ref{fig:IDS_gap_label}.

\begin{figure}[htb]
    \centering
    \includegraphics[scale=1.4]{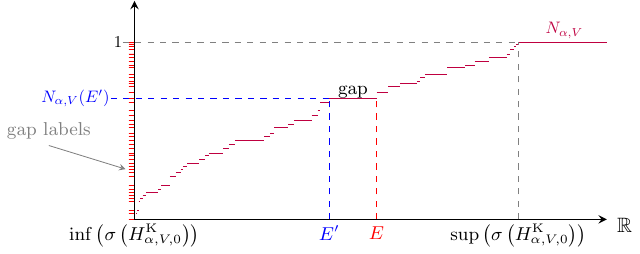}
    \captionsetup{width=0.95\linewidth}
    \caption{An illustration of the plateaus of an integrated density of states and a gap $(E',E)$ with gap label $N_{\alpha,V}(E')=N_{\alpha,V}(E)$.
    }
    \label{fig:IDS_gap_label}
\end{figure}

The possible gap labels are described by the \emph{Gap Labeling Theorem}~\cite{Bellis82_Overview,JohMos82,Bell92-Gap,BelBovGhe92,DamFill23-GapLabel,DamFilZha23}, which can be approached via different frameworks. One approach, originally developed by Jean Bellissard, is based on \( K \)-theory. For one-dimensional models such as \( H_{\alpha,V} \), an alternative approach is due to Johnson and Moser.

For both the almost Mathieu operator and the Sturmian Hamiltonians, one obtains the same set of possible gap labels, namely
\[
\set{N_{\alpha,V}(E)}{E \in \R \setminus \sigma(H_{\alpha,V})} \subseteq \set{ \{n\alpha\} }{n \in \Z} \cup \{1\}, \qquad \alpha \in [0,1] \setminus \Q,
\]
where \( \{n\alpha\} := n\alpha - \lfloor n\alpha \rfloor \) denotes the fractional part of \( n\alpha \). With this in hand, we can formally state the question originally raised by Kac: When do we have equality in the above inclusion for irrational $\alpha$? That is, does there exist, for each \( n \in \Z \), a spectral gap \( \gap \subseteq \R \setminus \sigma(H_{\alpha,V}) \) such that \( N_{\alpha,V}(E) = \{n\alpha\} \) for all \( E \in \gap \)? Note that the possible gap labels are dense in $[0,1]$, if $\alpha\in[0,1]$ is irrational. Also the gap label \( 1 \) is always attained for any energy above the supremum of the spectrum.

Substantial progress toward resolving this question for the almost Mathieu operator has been made over the past decades~\cite{ChElYu90,Puig04,AvJit10-JEMS,LiYu15}, and most recently in~\cite{AviYouZho23}. 

While the question was originally formulated for the almost Mathieu operator, it has since been studied for a wide range of other models; see, for instance,~\cite{Han18,DamLi24,DamEmiFil24,CedLi25,GeWanXu25}. 
We emphasize, however, that the set of possible gap labels can vary depending on the underlying model.

For the Sturmian Hamiltonians, the set of possible gap labels coincides with that of the almost Mathieu operator. The solution of the DTMP for the Sturmian Hamiltonians was achieved in the joint work~\cite{BaBeLo24}; see also the corresponding Oberwolfach preprint~\cite{BaBeLo23-MFO}. A first significant step in this direction was taken by Raymond in his PhD thesis~\cite{Raym95-thesis}, see also~\cite{Raym95}. This preprint had a notable influence on the field (see e.g.~\cite{Dam07_survey,DaEmGo15-survey,Dam17-Survey,DaFi24-book_2} and references therein), and a modernized presentation of these ideas is provided in the joint work~\cite{BaBeBiRaTh24}, adapted to the perspective of~\cite{BaBeLo24}. Raymond's results will also appear in revised form in~\cite{Raym-AperiodicOrder}.

In his work, Raymond classified the spectral bands of periodic approximations into three types under the assumption \( V > 4 \). The resulting coding allowed him to express the IDS explicitly (see Proposition~\ref{prop:IDS_Kohmoto}) and thereby to resolve the DTMP for the Sturmian Hamiltonians if \( V > 4 \).

The case \( V > 4 \) corresponds to the regime in which the potential dominates the kinetic term of the Schrödinger operator. Similar assumptions -- often even stronger -- appear in the literature when analyzing the fractal nature of the spectrum. The main technical difficulty beyond this regime is the overlap of spectral band in different approximations and the lack of techniques to control such overlaps.

A particular focus in the literature has been placed on the case where \( \alpha \) equals the inverse of the golden mean
\[
\phi := \frac{\sqrt{5} - 1}{2} = \left(\frac{\sqrt{5} + 1}{2}\right)^{-1},
\]
whose continued fraction expansion consists entirely of ones. The associated operator \( \Fib \) is commonly referred to as the \emph{Fibonacci Hamiltonian}, since its rational approximations are determined by the Fibonacci numbers.

Damanik and Gorodetski~\cite{DamGor11} showed that, for the Fibonacci Hamiltonian, all predicted gap labels appear if the coupling constant \( V \) is sufficiently small. This technique was extended by Mei~\cite{Mei14} to irrational \( \alpha \) with eventually periodic continued fraction expansions. In this setting, the threshold for how small the coupling must be depends on the continued fraction expansion of the given irrational number.
Later, Damanik, Gorodetski, and Yessen~\cite{DaGoYe16} proved -- among many other remarkable results -- that all spectral gaps are there for the Fibonacci Hamiltonian solving the DTMP for the Fibonacci Hamiltonian.
Note that the golden mean, as well as irrational numbers with eventually periodic continued fraction expansions, satisfy a Diophantine condition excluding many other irrational numbers.

A unifying theme in the aforementioned results is the study of traces of the transfer matrices and their recursive relations. While these algebraic relations provide deep insight, they are not sufficient on their own to resolve the DTMP for the Sturmian Hamiltonians.

In our joint work~\cite{BaBeLo24}, two new ingredients --  allowing us to control also overlaps of spectral bands -- were introduced:
\begin{itemize}
\item We studied the space of all continued fraction expansions simultaneously, rather than fixing a single irrational \( \alpha \).
\item We employed interlacing arguments for eigenvalues to control spectral bands in the small coupling regime \( 0 < V \leq 4 \).
\end{itemize}
Using these tools, we established the solution of the DTMP for Sturmian Hamiltonians: all gaps are there for all Sturmian Hamiltonians \( \Koh \) with non-zero coupling $V$.

\begin{theorem}[\cite{BaBeLo24}]
\label{thm:DTMP}
For all \( \alpha \in [0,1] \setminus \Q \) and all \( V \neq 0 \), the equality
\[
\set{N_{\alpha,V}(E)}{E \in \R \setminus \sigma(\Koh)} = \set{ \{n\alpha\} }{n \in \Z} \cup \{1\}
\]
holds.
\end{theorem}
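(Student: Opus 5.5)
The inclusion ``$\subseteq$'' is the Gap Labeling Theorem quoted above. The gap label $1$ is always attained above $\max\sigma(\Koh)$. So the task is to show that every label $\{n\alpha\}$, $n\in\Z$, is realised by an actual open gap. The plan is to work through periodic approximations. Let $p_k/q_k$ be the continued fraction convergents of $\alpha=[0;a_1,a_2,\dots]$. For each $k$ the periodic operator $H_{p_k/q_k,V}$ has exactly $q_k$ spectral bands separated by $q_k-1$ open gaps when $V\neq0$. By Floquet theory the IDS of the periodic operator takes the value $j/q_k$ on the $j$-th gap. We use the spectral estimates of Section~\ref{sec:Kohmoto}, namely Corollary~\ref{cor:SpectralEstim_Sturm} and the Lipschitz continuity in the Farey metric from Theorem~\ref{thm:SpectralEstim_Sturm}. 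Together with the continuity of the IDS under convergence of uniquely ergodic systems (Corollary~\ref{cor:ContinuityMeasures_UniqueErgod}), they give $\sigma(H_{p_k/q_k,V})\to\sigma(\Koh)$ and $N_{p_k/q_k,V}\to N_{\alpha,V}$. A gap of the limit with label $\{n\alpha\}$ should then be the limit of gaps of the approximants with labels $j_k/q_k\to\{n\alpha\}$. The whole point is to show that these gaps do not collapse.

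First I would set up a symbolic coding of the bands of $H_{p_k/q_k,V}$, in the spirit of Raymond's $A/B$-type classification for $V>4$. The classification, to be extended to all $V\neq0$, records how each band of level $k+1$ sits inside a band of level $k$ or $k-1$. This yields a spectral tree whose boundary is $\sigma(\Koh)$. The coding gives an explicit formula for the IDS: each band carries mass $1/q_k$, and the IDS at a gap is a sum over the tree path, expressible as $\{n\alpha\}$ with $n$ read off from the coding (Proposition~\ref{prop:IDS_Kohmoto}). Next I would show that every $n$ is hit. Consider a gap between two consecutive level-$k$ bands that is not later filled by descendant bands. The coding forces it to persist in all subsequent levels as a gap containing a nested sequence of open intervals. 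Its label is then the limit of the eventually constant-in-$n$ periodic labels, and a counting argument over tree paths shows that all $n\in\Z$ occur.

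The main obstacle is the small coupling regime $0<V\le4$. There bands of consecutive approximants overlap, Raymond's strict nesting fails, and the tree structure is not a priori valid. My first attempt would be to use the trace map recursion $x_{k+1}=x_kx_{k-1}\cdots$ only to describe the band edges as eigenvalues of periodic and antiperiodic truncated matrices. Eigenvalue interlacing, via Cauchy interlacing or rank-one perturbation between the $q_{k+1}$-periodic and the $q_k$, $q_{k-1}$ Dirichlet problems, would then show that gap endpoints of level $k+1$ strictly interlace those of level $k$. That gives a weak nesting which suffices for the coding. The interlacing estimate must be uniform in $a_{k+1}$. To achieve this I would treat all finite continued fraction expansions simultaneously, inducting on the length of the expansion rather than fixing $\alpha$, so that no Diophantine hypothesis enters. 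A final compactness step, using the Farey-space continuity of Theorem~\ref{thm:SpectralEstim_Sturm}, is needed to ensure that a gap which is open for every approximant along a tree branch has length bounded below. One can then pass to the irrational limit without the gap closing.
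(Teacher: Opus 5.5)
Your architecture matches the paper's ingredients: periodic approximants, extending the $A/B$ classification to all $V\neq 0$ by interlacing of band edges and an induction over all finite continued fraction expansions, the spectral tree, and the tree formula for the IDS. The gap is in the step that is supposed to produce open gaps.

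\textbf{The compactness step cannot work.} You want a lower bound on gap lengths from the Farey continuity of Theorem~\ref{thm:SpectralEstim_Sturm}. But Hausdorff (or Farey--Lipschitz) convergence of spectra, and weak$^*$ convergence of the density of states, only identify the limit. They never prevent open gaps from shrinking to a point: the sets $[0,1]\cup[1+\tfrac{1}{k},2]$ converge to $[0,2]$. In the Kohmoto model, gaps of approximants really do close in the limit, for instance when approaching $r_\pm$. A lower bound on gap length is the content of the theorem, so it must come from spectral structure, not from continuity or compactness.

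\textbf{The coding does not force gaps for small coupling.} Your claim that the coding ``forces'' a gap between consecutive level-$k$ bands to persist fails for $0<V\le 4$. The interlaced bands $K^{(j)}\prec J^{(j)}$ of Proposition~\ref{prop:Forwardtype} may overlap. So at a fixed level there need be no open interval in the complement of $\sigma_{\co_{k+1}}(V)\cup\sigma_{\co_{k+2}}(V)$ at the relevant place. Weak nesting also does not exclude two different branches of the tree shrinking to the same energy. The paper stresses that strict inclusions, not weak ones, are essential for $V\le 4$. For $V>4$ the three-intersection property (Proposition~\ref{prop:Dichotomy_Spectrum_Kohmoto_V>4}) makes such siblings disjoint, which is why a finite-level argument works there.

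\textbf{The missing idea is injectivity of the encoding map.} Theorem~\ref{thm:EncodingSpectrum}~(a) states that $E_\alpha:\partial\Tt_\alpha\to\sigma(\Koh)$ is injective: distinct infinite paths of the spectral tree give distinct energies. This is the central difficulty of the paper. It is proved by a refined analysis of how bands of consecutive levels can intersect when $V\le 4$, using the type classification and an extended three-intersection property.

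\textbf{How the argument then runs.} A single path $\eta$ with $N_\alpha(\eta)=\{n\alpha\}$ only yields a spectral point with that IDS value, so your ``counting argument over tree paths'' is not enough. For each $n$ the paper does the following.
\begin{enumerate}
\item Construct explicitly a path $\eta$ with $N_\alpha(\eta)=\{n\alpha\}$ whose summands in \eqref{eq:IDS-formula} vanish beyond some level.
\item Modify it to a second path $\gamma\prec\eta$, $\gamma\neq\eta$, with $N_\alpha(\gamma)=N_\alpha(\eta)$.
\item Injectivity and order preservation give $E_\alpha(\gamma;V)<E_\alpha(\eta;V)$.
\item Proposition~\ref{prop:IDS_Kohmoto} gives IDS value $\{n\alpha\}$ at both points, so by monotonicity the IDS is constant between them.
\item By (IDS--2) the interval between them is a spectral gap with label $\{n\alpha\}$.
\end{enumerate}
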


\begin{remark}
By standard arguments, Theorem~\ref{thm:DTMP} also implies that \(\sigma(\Koh)\) is a Cantor set for all \(\alpha \in [0,1] \setminus \Q\); see the brief discussion in Remark~\ref{rem:DTMP->TMP}. That the spectrum is a Cantor set -- indeed, even of Lebesgue measure zero -- was previously established in \cite{BIST89}. Theorem~\ref{thm:DTMP} thus provides an alternative proof of the Cantor structure of the spectrum.
\end{remark}

In the following, we provide an overview of the strategy used to prove this result, as well as additional results obtained along the way.

\section[Classifying the spectral bands]{Classifying the spectral bands of periodic approximations}
\label{sec:Types_Spectral_Bands}

The first step in the proof is to classify the spectral bands of periodic approximations. This classification is of independent interest and has various other applications. Inspired by the work of Casdagli~\cite{Cas86}, Raymond~\cite{Raym95} encoded the spectral bands of \( \Koh \) for rational \( \alpha \in [0,1] \cap \Q \) and \( V > 4 \) into three types: a spectral gap (type~I), and two types of spectral bands (type~II and type~III).
Inspired by this work,  a tree representation of the periodic approximations was developed in \cite{BaBeLo24}. 
In~\cite{BaBeBiRaTh24}, we revisit Raymond's classification including the new perspectives from the joint work \cite{BaBeLo24}. Only two types of spectral bands are used: type $A$ (corresponding to type~III) and type $B$ (corresponding to type~II) -- a notion also used before for the Fibonacci Hamiltonian \cite{KiKiLa03}.

This classification depends on the finite continued fraction expansion of the rational number \( \alpha \). Recall that every \( \alpha \in [0,1] \) defines a symbolic subshift \( \Omega_\alpha \subseteq \Aa^\Z \) with \( \Aa := \{0,1\} \), see eq.~\eqref{eq:Omega_alpha} in Section~\ref{sec:Kohmoto}. Moreover, as discussed in that section, the spectrum of \( \Koh \) for irrational \( \alpha \in [0,1] \setminus \Q \) can be approximated by the spectra of \( \Kohbet \) with rational \( \beta \in [0,1] \cap \Q \); see Corollary~\ref{cor:SpectralEstim_Sturm}.

Every irrational number \( \alpha \in [0,1] \) admits a unique continued fraction expansion
\[
\alpha = c_0 + \frac{1}{c_1 + \frac{1}{c_2+ \frac{1}{\ddots}}},
\]
where \( c_k \in \N \) for \( k \geq 0 \). The corresponding finite truncations define rational approximants
\[
\alpha_k 
	:= c_0 + \frac{1}{c_1 + \frac{1}{c_2+ \frac{1}{\ddots+\frac{1}{c_k}}}} \in\Q.
\]
Since \( \alpha \in [0,1] \), we have \( c_0 = 0 \) in our setting. 
For conceptual reasons that will become clear later, we prepend an additional digit \( c_{-1} := 0 \) to the expansion. This digit later serves as the root in an associated tree structure; see Section~\ref{sec:SpectralTree}.

Summing up, we identify \( \alpha \) with an infinite tuple and \( \alpha_k \) with its finite prefix:
\[
\alpha \mapsto [0,0,c_1,c_2,c_3,\ldots], \qquad \alpha_k \mapsto [0,0,c_1,c_2,\ldots,c_k].
\]

The rational approximants \( \alpha_k \) are known as the \emph{best approximations of the second kind} to \( \alpha \); see~\cite[Thm.~17]{Kh64}. In particular, if \( \alpha_k = \tfrac{p_k}{q_k} \) with $p_k$ and $q_k$ coprime, then
\[
\left| \alpha - \frac{p_k}{q_k} \right| < \frac{1}{q_k^2}.
\]
Each such \( \alpha_k \) defines a periodic subshift $\Omega_{\alpha_k}$ approximating the Sturmian dynamical system \( \Omega_\alpha \). Thus, the operators \( \big( \Kohk \big)_{k\in\N} \) define periodic approximations of the Sturmian Hamiltonian \( \Koh \) with converging spectra; see Theorem~\ref{thm:SpectralEstim_Sturm} and Corollary~\ref{cor:SpectralEstim_Sturm}.

Following the previous considerations, we introduce the following \emph{set of (finite) continued fraction expansions}
\[
\Co := \left\{ [0],~[0,0] \right\} \cup \bigcup_{k \in \N} \set{ [0,0,c_1,\ldots,c_k] }{ c_1,\ldots,c_{k-1} \in \N,~ c_k \in \N_{-1} },
\]
where \( \N_{-1} := \N \cup \{-1,0\} \). Note that we allow the last digit \( c_k \) of a tuple \( \co = [0,0,c_1,\ldots,c_k] \) with \( k \geq 1 \) to also take the values \( 0 \) and \( -1 \). This extension enables us to describe the different types of spectral bands. For \( \co = [0,0,c_1,\ldots,c_k] \) with \( k \geq 0 \), we use the shorthand notation
\[
[\co,m] := [0,0,c_1,\ldots,c_k,m] \in \Co,
\]
whenever it is defined. We define the \emph{evaluation map} \( \varphi : \Co \to \R \cup \{\infty\} \) by setting \( \varphi([0]) := \infty \), \( \varphi([0,0,-1]) := -1 \), and
\[
\varphi([0,c_0,c_1,\dots,c_k]) :=
\begin{cases}
\varphi([0,c_0,c_1,\dots,c_{k-2},c_{k-1}-1]), & \text{if } k \geq 2 \text{ and } c_k = -1, \\[0.2em]
\varphi([0,c_0,c_1,\dots,c_{k-2}]), & \text{if } k \in \N \text{ and } c_k = 0, \\[0.2em]
c_0 + \dfrac{1}{c_1 + \dfrac{1}{\ddots + \dfrac{1}{c_k}}}, & \text{otherwise}.
\end{cases}
\]

This definition connects \( \Co \) with the rational numbers. For a more detailed explanation and motivation, we refer the reader to~\cite{BaBeBiRaTh24,BaBeLo24}.

We only note that if \( [0,0,c_1,\ldots,c_k] \in \Co \) with \( c_k > 1 \), then
\[
\varphi\big( [0,0,c_1,\ldots,c_k] \big) = \varphi\big( [0,0,c_1,\ldots,c_k-1,1] \big).
\]
This reflects the fact that every rational number admits exactly two representations as a finite continued fraction~\cite[Chap.~I.4]{Kh64}. On the other hand, for irrational \( \alpha \in [0,1] \setminus \Q \), the truncation at level \( k \) defines a unique finite continued fraction expansion \( [0,0,c_1,\ldots,c_k] \), called the \( k \)-th convergent of \( \alpha \).

\begin{remark}
Let \( \Aa := \N_{-1} \cup \{\infty\} \) be the one-point compactification of \( \N_{-1} \). This compact alphabet can be used to describe the Farey space (excluding the rational number itself) as discussed in Section~\ref{sec:Kohmoto}. Every irrational \( \alpha \in [0,1] \setminus \Q \) can be identified with its unique infinite continued fraction \( [0,0,c_1,c_2,c_3,\ldots] \), and thus embedded in the product space \( X := \{ x : \N_{-1} \to \Aa \} \). Each \( \co \in \Co \) is embedded into \( X \) by continuing the finite tuple with the constant letter \( \infty \). Let \( Y \subseteq X \) be the subset consisting of all such embedded \( \co \in \Co \) and all irrational numbers, equipped with the induced product topology.

By the previous considerations, every rational \( r \in [0,1] \cap \Q \) appears with two continued fraction expansions, which can be interpreted as left and right limit points $r_-,r_+$ of \( r \). Indeed, the sequence \( x_m := [\co,m,\infty,\infty,\ldots] \) converges to \( [\co,\infty,\infty,\ldots] \) in the product topology. The same convergents were used in~\cite{BecBelTho25} to describe spectral defects.
\end{remark}

Let \( \beta = \varphi(\co) = \tfrac{p}{q} \in [0,1] \) for some \( \co \in \Co \), where \( p \) and \( q \) are coprime. Then the spectrum
\[
\sigma_\co(V) := \sigma(\Kohbet)
\]
is the disjoint union of exactly \( q \) intervals; see, e.g.,~\cite[Prop.~3.1]{Raym95} or \cite[Prop.~4.1]{BaBeBiRaTh24}. Each such interval is referred to as a \emph{spectral band of} \( \Kohbet \). In the following, we restrict the coupling constant \( V \) to be nonzero. In various considerations throughout~\cite{BaBeLo24,BaBeBiRaTh24}, the case \( V > 0 \) is also treated separately for simplicity. Negative coupling constants are handled via the observation (see~\cite[Lem.~7.5]{BaBeLo24})
\[
\sigma_\co(V) = -\sigma_\co(-V), \qquad V \in \R.
\]

Since we are dealing with one-dimensional operators acting on \( \ell^2(\Z) \), the spectrum \( \sigma_\co(V) \) can be described via trace properties of the associated transfer matrices; see e.g.~\cite{Simon11,DaFi22-book_1,DaFi24-book_2,BaBeBiRaTh24}. In the setting of the Kohmoto model, these traces satisfy recursive relations governed by the continued fraction expansion. These trace identities constitute a key tool in the spectral analysis of these operators. For instance, the following proposition is well known; see~\cite{BIST89,BIT91,Raym95}.

\begin{proposition}
\label{prop:MonotonConv_Spectrum_Kohmoto}
Let \( V \neq 0 \), \( \alpha \in [0,1] \setminus \Q \) with continued fraction expansion \( [0,0,c_1,c_2,\ldots] \) and the $k$th convergent \( \co_k = [0,0,c_1,\ldots,c_k] \in \Co \) for \( k \in \N_0 \). Then the compact sets
\[
\spec_{k,V} := \sigma_{\co_k}(V) \cup \sigma_{\co_{k+1}}(V) \subseteq \R
\]
satisfy \( \spec_{k+1,V} \subseteq \spec_{k,V} \), and
\[
\lim_{k \to \infty} \spec_{k,V} = \bigcap_{k \in \N} \spec_{k,V} = \sigma(\Koh),
\]
where the limit is taken in the Hausdorff metric on compact subsets of \( \R \).
\end{proposition}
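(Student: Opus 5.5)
The plan is to combine the classical trace-map machinery with the Farey-space results from Section~\ref{sec:Kohmoto}. For $\co\in\Co$ with $\varphi(\co)=p/q$ let $M_\co(E)$ be the transfer matrix over one period $\omega_{\varphi(\co)}|_{[0,q-1]}$ and set $t_\co(E):=\operatorname{tr}M_\co(E)$. Floquet--Bloch theory gives $\sigma_\co(V)=\{E: |t_\co(E)|\le 2\}$. The Sturmian structure gives the standard recursion along the continued fraction expansion: the period word of $\co_{k+1}$ is a concatenation of $c_{k+1}$ copies of the word of $\co_k$ with the word of $\co_{k-1}$ (up to a cyclic shift). This yields the trace recursion $t_{k+1}=t_{k-1}\,U_{c_{k+1}}(t_k/2)\ldots$ and the Fricke--Vogt invariant
\[
t_{k+1}^2+t_k^2+t_{k,k+1}^2-t_{k+1}t_kt_{k,k+1}=4+V^2 .
\]
I would take this as known from \cite{BIST89,Raym95}.

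\emph{Monotonicity.} I would show $\spec_{k+1,V}\subseteq\spec_{k,V}$, i.e.\ $\sigma_{\co_{k+2}}(V)\subseteq\sigma_{\co_k}(V)\cup\sigma_{\co_{k+1}}(V)$. Take $E$ with $|t_k(E)|>2$ and $|t_{k+1}(E)|>2$. Using the invariant and $V\neq0$, one argues as in \cite{Sut89,BIST89} that the sequence $|t_j(E)|$ then grows for all $j\ge k+2$: a pair of consecutive traces exceeding $2$ forces escape, so $|t_{k+2}(E)|>2$. This step needs care because of the Chebyshev factor $U_{c_{k+1}}$. I would handle it with the $2\times2$ matrix identity $M_{k+2}=M_{k}M_{k+1}^{c_{k+2}}$ together with the Cayley--Hamilton expansion of powers. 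This escape lemma is the main obstacle, and I would follow Sütő's argument adapted to general $c_j$.

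\emph{Identification of the limit.} Because the sequence of compact sets decreases, the Hausdorff limit exists and equals the intersection $\bigcap_k\spec_{k,V}$, by the characterization \eqref{eq:char_conv_Hausdorff_metric}.

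To show this intersection is $\sigma(\Koh)$, I would use Corollary~\ref{cor:SpectralEstim_Sturm}. Since $|\alpha-p_k/q_k|<q_k^{-2}$ and $q_k\to\infty$, it gives $\sigma_{\co_k}(V)\to\sigma(\Koh)$ in $d_H$, and hence $\spec_{k,V}\to\sigma(\Koh)$ as well. Uniqueness of limits then closes the argument.

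Alternatively, for a direct check: if $E\in\sigma(\Koh)$, the bounded-trace criterion of \cite{Sut89} shows the traces are bounded, so $E\in\spec_{k,V}$ for all $k$. Conversely, if $E$ lies in all $\spec_{k,V}$, the traces do not escape, and hence $E\in\sigma(\Koh)$. This direction is only needed as a consistency check, since the corollary already settles it.
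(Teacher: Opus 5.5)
Your proposal follows the paper's proof. The paper also takes the monotonicity $\sigma_{\co_{k+2}}(V)\subseteq\sigma_{\co_k}(V)\cup\sigma_{\co_{k+1}}(V)$ from the trace-map literature \cite{BIST89,BIT91,Raym95}, and then obtains the Hausdorff limit and its identification with $\sigma(\Koh)$ from Corollary~\ref{cor:SpectralEstim_Sturm}, exactly as you do with the decreasing-intersection argument.

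If you carry out the escape step yourself, note that $|t_k|,|t_{k+1}|>2$ does not imply $|t_{k+2}|>2$ for arbitrary points on the invariant surface; for $c_j\equiv1$ and $V^2=14$, the triple of traces $(9,3,3)$ is followed by $0$. So the argument must be an induction anchored at the initial traces, most cleanly through the intermediate expansions $[\co_k,p]$ and the one-copy inclusion $\sigma_\co(V)\subseteq\sigma_{[\co,0]}(V)\cup\sigma_{[\co,-1]}(V)$ of Proposition~\ref{prop:Dichotomy_Spectrum_Kohmoto_V>4}.
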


\begin{proof}
This is proven in~\cite{BIST89,BIT91,Raym95}, see also~\cite[Cor.~4.5, Prop.~4.6]{BaBeBiRaTh24}. Once the monotonicity \( \spec_{k+1,V} \subseteq \spec_{k,V} \) is established, the remaining assertions follow directly from Corollary~\ref{cor:SpectralEstim_Sturm}.
\end{proof}

A more refined statement was shown in~\cite{Raym95}; see also~\cite[Lem.~4.3, Prop.~4.7]{BaBeBiRaTh24}. By convention, we set 
\[
\sigma_{[0]}(V) := \R 
\qquad \text{ for all } \qquad
V \in \R.
\]

\begin{proposition}
\label{prop:Dichotomy_Spectrum_Kohmoto_V>4}
Let \( V \neq 0 \), \( \co = [0,0,c_1,\ldots,c_k] \in \Co \) with \( \varphi(\co) \geq 0 \), and \( k \in \N_0 \). Then
\[
\sigma_\co(V) \subseteq \sigma_{[\co,0]}(V) \cup \sigma_{[\co,-1]}(V),
\]
and, if in addition \( V > 4 \), then
\[
\sigma_\co(V) \cap \sigma_{[\co,0]}(V) \cap \sigma_{[\co,-1]}(V) = \emptyset.
\]
\end{proposition}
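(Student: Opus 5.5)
The natural route goes through traces of transfer matrices and the Kohmoto-type recursion. For $\co\in\Co$ with $\varphi(\co)=\tfrac{p}{q}$, let $M_\co(E)\in SL(2,\R)$ be the transfer matrix over one period of $\omega_{\varphi(\co)}$, and set $t_\co(E):=\operatorname{tr} M_\co(E)$. By Floquet--Bloch theory,
\[
\sigma_\co(V)=\set{E\in\R}{|t_\co(E)|\leq 2}.
\]
The convention $\sigma_{[0]}(V)=\R$ fits this picture, since formally $t_{[0]}\equiv 2$.

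The continued fraction structure gives a recursion of Kohmoto type. Writing $[\co,m]$ for the child with last digit $m$, one has $M_{[\co,m+1]}=M_{[\co,m]}M_{\co}$ up to the ordering convention. By Cayley--Hamilton,
\[
t_{[\co,m+1]}=t_\co\, t_{[\co,m]}-t_{[\co,m-1]}.
\]
Applying this at $m=0$ expresses $t_{[\co,1]}$ through $t_\co$, $t_{[\co,0]}$ and $t_{[\co,-1]}$. Here $[\co,0]$ evaluates to the grandparent and $[\co,-1]$ to the modified expansion, as in the definition of $\varphi$. The trace map also preserves the Fricke--Vogt invariant
\[
I = x^2+y^2+z^2-xyz-4 = V^2
\]
on the triple $(x,y,z)=\big(t_\co,t_{[\co,0]},t_{[\co,-1]}\big)$. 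So the first step is to fix the indexing so that these three traces form a consecutive triple of the trace map.

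For the inclusion, I would argue by contradiction. Take $E$ with $|x|\leq 2$ and suppose $|y|>2$ and $|z|>2$. Then
\[
x^2+y^2+z^2-xyz-4 = V^2 .
\]
Viewing this as a quadratic in $x$ with $|y|,|z|>2$, one checks that it has no real root with $|x|\leq 2$. Concretely, completing the square gives
\[
\Big(x-\tfrac{yz}{2}\Big)^2 = \tfrac{(y^2-4)(z^2-4)}{4}+V^2 ,
\]
whose right-hand side exceeds $\tfrac{(y^2-4)(z^2-4)}{4}$. Hence
\[
|x| \;\geq\; \tfrac{|yz|}{2}-\sqrt{\tfrac{(y^2-4)(z^2-4)}{4}+V^2}.
\]
The concern is that this bound may not exceed $2$ when $V$ is large. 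If that happens, I would instead use the standard argument of \cite{BIST89,Raym95}: if two consecutive traces exceed $2$ in absolute value with the right sign pattern, then all subsequent traces blow up. This contradicts $|t_\co(E)|\leq 2$ when $\co$ is a descendant in the recursion.

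For the disjointness when $V>4$, suppose $|x|,|y|,|z|\leq 2$ simultaneously. Then
\[
x^2+y^2+z^2-xyz-4\leq 4+4+4+8-4=16 .
\]
So $V^2\leq 16$, which contradicts $V>4$. This part is immediate once the invariant is identified. The case $V<0$ reduces to $V>0$ via $\sigma_\co(V)=-\sigma_\co(-V)$.

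The main obstacle is the first step: getting the precise correspondence between $\co$, $[\co,0]$ and $[\co,-1]$ and the trace-map triple, including the edge cases $k=0$ and $[0,0,-1]$ with $\varphi=-1$, and checking that the invariant equals exactly $V^2$ for these initial triples. I would verify the invariant directly on the base triple
\[
t_{[0]}=2,\qquad t_{[0,0]}=E,\qquad t_{[0,0,-1]} = E - V ,
\]
where the sign of $V$ depends on the convention. The recursion then propagates the invariant to all triples.
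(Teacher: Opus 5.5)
Your Fricke--Vogt setup is right. With $\co=[\co',c_k]$ one has $t_{[\co,0]}=t_{\co'}$ and $t_{[\co,-1]}=t_{[\co',c_k-1]}$, and your recursion at $m=c_k-1$ gives $t_\co=t_{\co'}t_{[\co',c_k-1]}-t_{[\co',c_k-2]}$. For $\co=[0,0]$ the triple is $(E,2,E+V)$, with invariant $V^2$. Your proof that the triple intersection is empty for $V>4$ is complete.

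The gap is the inclusion $\sigma_\co(V)\subseteq\sigma_{[\co,0]}(V)\cup\sigma_{[\co,-1]}(V)$. The invariant cannot give it for \emph{any} $V\neq0$, not only for large $V$. The point $y=z=2\cosh a$, $x=2\cosh^2a-\sqrt{4\sinh^4a+V^2}$ lies on $x^2+y^2+z^2-xyz-4=V^2$. It satisfies $x<2$ and $x\to2$ as $a\to\infty$. So there are real triples on the surface with $|y|,|z|>2$ and $|x|<2$. The argument must therefore use that the triple lies on the specific orbit starting from $t_{[0]}=2$. Your fallback does not supply this. The claim ``two consecutive traces of modulus $>2$ with the right sign pattern force blow-up'' is false for general points of the surface, and signs are not what matters. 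Without an anchoring condition and an induction it is only a citation.

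The missing step is an induction on magnitudes. Put $y=t_{[\co,0]}$, $z=t_{[\co,-1]}$ and $\tau:=t_{[\co',c_k-2]}$. Then $\tau$ is the other root of your quadratic, i.e. $t_\co=yz-\tau$. Prove by induction over $\Co$ (on the length of $\co$, and for fixed length on $c_k$) the stronger claim
\[
|t_{[\co,0]}(E)|>2 \ \text{and}\ |t_{[\co,-1]}(E)|>2 \quad\Longrightarrow\quad |t_\co(E)|>\max\big\{|t_{[\co,0]}(E)|,|t_{[\co,-1]}(E)|\big\}.
\]
If $|\tau|\leq2$, use $|t_\co|\geq|y||z|-2$. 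If $|\tau|>2$, split on $c_k$:
\begin{itemize}
\item For $c_k\geq2$, the claim for $[\co',c_k-1]$, whose parents have traces $y$ and $\tau$, gives $|\tau|<|z|$.
\item For $c_k=1$, the claim for $\co'$, whose parents have traces $z$ and $\tau$, gives $|\tau|<|y|$.
\end{itemize}
In all cases $|t_\co|\geq|y||z|-|\tau|>\max\{|y|,|z|\}$, which is elementary since $|y|,|z|>2$. The base cases are vacuous, because one parent is $[0]$ with trace $2$. This proves the inclusion, without using the invariant. Combined with your bound for $V>4$, it gives the proposition.
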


We emphasize that the intersection of the three spectra -- the \emph{three-intersection property} -- in Proposition~\ref{prop:Dichotomy_Spectrum_Kohmoto_V>4} is only empty under the additional assumption \( V > 4 \). The following example demonstrates that this no longer holds for \( V \leq 4 \).

\begin{example}
\label{ex:ThreeIntersection}
Let \( \co = [0,0,1] \). Then
\[
\varphi(\co) = 1, \quad
\varphi([\co,0]) = \varphi([0,0]) = 0, \quad
\varphi([\co,-1]) = \varphi([0]) = \infty.
\]
The associated spectra can be computed explicitly (see, e.g., \cite[Ex.~2.9]{BaBeLo24} and \cite[Ex.~4.12]{BaBeBiRaTh24}) for \( V \in \R \):
\[
\sigma_\co(V) = [-2+V, 2+V], \quad
\sigma_{[\co,0]}(V) = [-2,2], \quad
\sigma_{[\co,-1]}(V) = \R.
\]
Thus,
\[
\sigma_\co(V) \cap \sigma_{[\co,0]}(V) \cap \sigma_{[\co,-1]}(V) = \{2\} \quad \text{if } V = 4,
\]
and
\[
\sigma_\co(V) \cap \sigma_{[\co,0]}(V) \cap \sigma_{[\co,-1]}(V) = [0,2] \quad \text{if } V = 2.
\]

Let \( \widetilde{\co} = [0,0,1,1] \). Then
\[
\varphi(\widetilde{\co}) = \tfrac{1}{2}, \quad
\varphi([\widetilde{\co},0]) = \varphi([0,0,1]) = 1, \quad
\varphi([\widetilde{\co},-1]) = \varphi([0,0]) = 0.
\]
The associated spectra (see, e.g.,~\cite[Ex.~2.3, 2.9]{BaBeLo24} and~\cite[Ex.~4.12]{BaBeBiRaTh24}) are
\[
\sigma_{\widetilde{\co}}(V) = [E_0(V), 0] \cup [V, E_2(V)], \quad
\sigma_{[\widetilde{\co},0]}(V) = [-2+V, 2+V], \quad
\sigma_{[\widetilde{\co},-1]}(V) = [-2, 2],
\]
with \( E_0(V) < 0 \) and \( E_2(V) > V \). Thus,
\[
\sigma_{\widetilde{\co}}(V) \cap \sigma_{[\widetilde{\co},0]}(V) \cap \sigma_{[\widetilde{\co},-1]}(V) = \{0,2\} \quad \text{if } V = 2.
\]
\end{example}

Returning to Proposition~\ref{prop:Dichotomy_Spectrum_Kohmoto_V>4}, we conclude that for \( V > 4 \), each spectral band \( I(V) \subseteq \sigma_\co(V) \) is strictly contained either in a spectral band of \( \sigma_{[\co,0]}(V) \) or of \( \sigma_{[\co,-1]}(V) \). This characterizes the spectral bands as one of two types. Here, an interval \( [a,b] \) is said to be \emph{strictly contained} in \( [c,d] \) if 
\[ 
[a,b] \strict [c,d] 
\quad:\Leftrightarrow\quad 
c < a \leq b < d. 
\] 
In~\cite{Raym95}, only non-strict inclusions of spectral bands were studied. The refinement to strict inclusions is essential when extending these ideas to the small coupling regime $0<V\leq 4$.

Recall from the definition of the evaluation map $\varphi$ that appending a ``\( 0 \)'' to a finite continued fraction \( \co \in \Co \) effectively deletes its last digit -- this operation characterizes type \( A \). In contrast, appending a ``\( -1 \)'' reduces the last digit by one; repeating this procedure until the last digit becomes zero corresponds to deleting the last two digits under the evaluation map -- this defines type \( B \).

This inclusion-based behavior provides a natural classification of spectral bands into two types, as illustrated in panel~(a) of Figure~\ref{fig:Type_Bands}. Each spectral band from a given approximation \( \alpha_k = \varphi(\co_k) \) of an irrational \( \alpha \) can thus be assigned a type based on its containment in the bands of previous approximations\footnote{In \cite{BaBeLo24,BaBeBiRaTh24}, we refer to this condition as backward type.}.

\begin{definition}[Type \( A \) and \( B \)]
\label{def:SpectralType}
Let \( V \neq 0 \), \( \alpha \in [0,1] \setminus \Q \) with continued fraction expansion \( [0,0,c_1,c_2,\ldots] \), the $k$th convergent \( \co_k = [0,0,c_1,\ldots,c_k] \in \Co \) and \( \alpha_k = \varphi(\co_k) \) for \( k \in \N \). A spectral band \( I_{\co_k}(V) \subseteq \sigma_{\co_k}(V) \) is called
\begin{itemize}
\item of \emph{type \( A \)} if \( I_{\co_k}(V) \strict J \) for some spectral band \( J \subseteq \sigma_{\co_{k-1}}(V) \),
\item of \emph{type \( B \)} if \( I_{\co_k}(V) \strict J \) for some spectral band \( J \subseteq \sigma_{\co_{k-2}}(V) \) and \( I_{\co_k}(V) \) is not contained in any spectral band of \( \sigma_{\co_{k-1}}(V) \).
\end{itemize}
\end{definition}

\begin{figure}[htb]
    \centering
    \includegraphics[scale=1.2]{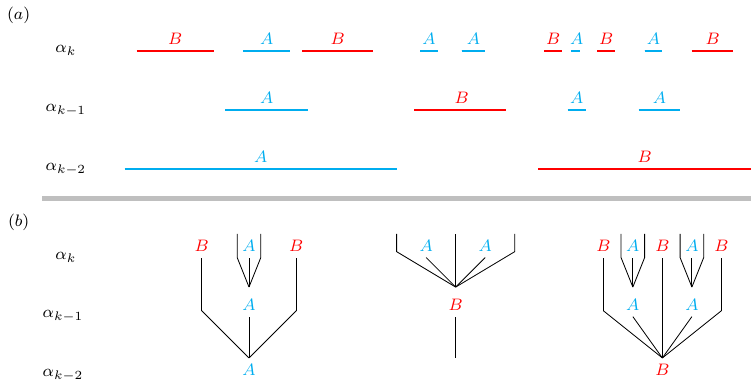}
    \captionsetup{width=0.95\linewidth}
    \caption{(a) Illustration of the classification of spectral bands into types~$A$ and~$B$ for $\alpha_k=\varphi(\co_k)$: For a given band in $\sigma_{\co_k}(V)$, inclusion into a band of $\sigma_{\co_{k-1}}(V)$ (type~$A$) or of $\sigma_{\co_{k-2}}(V)$ but not $\sigma_{\co_{k-1}}(V)$ (type~$B$) is visualized.\\
    (b) First idea towards constructing a spectral tree: Each spectral band from panel~(a) is represented as a vertex, and directed edges encode strict containment in bands from previous levels. The relative horizontal position of the bands is preserved to reflect their location on the real line. This graphical representation suggests how a tree structure may be built by organizing spectral bands according to their inclusion relations, see Section~\ref{sec:SpectralTree}.}
    \label{fig:Type_Bands}
\end{figure}

Note that the definition of spectral band types provided here differs from the formulation in~\cite{BaBeLo24,BaBeBiRaTh24}, but is equivalent; see~\cite[Prop.~7.19]{BaBeLo24}. Establishing this equivalence for arbitrary \( V \neq 0 \) is non-trivial and requires proving first that every spectral band possesses a well-defined type; see Theorem~\ref{thm:Types_A-B} below. For the purposes of presenting the spectral tree in the next section, the above definition suffices and is simpler to describe. A more detailed discussion is given in~\cite{BaBeLo24,BaBeBiRaTh24}.

Let \( \co \in \Co \) be such that \( \varphi(\co) = \tfrac{p}{q} \in [0,1] \), where \( p \) and \( q \) are coprime integers. Recall that for \( V \neq 0 \), the spectrum \( \sigma_\co(V) \) consists of exactly \( q \) disjoint intervals, each referred to as a \emph{spectral band} and denoted \( I_\co(V) \).

Following~\cite[Def.~1.5]{BaBeLo24}, we extend the notion of a spectral band by exploiting the continuity of the map \( V \mapsto \sigma_\co(V) \); see, e.g.,~\cite[Cor.~3.2]{BaBeLo24}. More precisely, for a fixed index \( 0 \leq j < q \), let \( I_\co(V) \) denote the \( j \)-th spectral band in \( \sigma_\co(V) \), counted from left to right. Then the map
\[
(0,\infty) \ni V \mapsto I_\co(V)
\]
is continuous in the Hausdorff metric, meaning the band edges vary continuously with \( V \). With a slight abuse of terminology, we will also refer to the function \( (0,\infty)\ni V \mapsto I_\co(V) \) as a \emph{spectral band}. An analogous statement holds for negative coupling constants \( V < 0 \).

Since the spectral bands vary continuously in \( V \), the strict inclusion relations that define their type cannot change as long as \( V > 4 \) using the three-intersection property in Proposition~\ref{prop:Dichotomy_Spectrum_Kohmoto_V>4}. Thus, the type of each spectral band remains invariant in this regime. However, for \( V \leq 4 \), overlaps between spectral bands at different approximation levels may occur, as illustrated in Example~\ref{ex:ThreeIntersection}. These overlaps can lead to changes or even loss of type, presenting a major conceptual difficulty.

A central achievement of~\cite{BaBeLo24} is to overcome this issue by proving that every spectral band has a well-defined and consistent type for all \( V \neq 0 \), as formulated in the following theorem.

\begin{theorem}
\label{thm:Types_A-B}
For all \( V \neq 0 \) and all \( \co \in \Co \) with \( \varphi(\co) \not\in \{-1,\infty\} \), each spectral band \( I_\co(V) \subseteq \sigma_\co(V) \) is of either type~\( A \) or type~\( B \), and its type remains invariant for all \( V > 0 \), respectively all \( V < 0 \).
\end{theorem}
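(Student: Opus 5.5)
The plan is to combine a continuity argument in the coupling \(V\) with the large-coupling result of Raymond, and to use eigenvalue interlacing to rule out any change of type. The induction runs over the length \(k\) of \(\co=[0,0,c_1,\ldots,c_k]\), with \(\co\) ranging over the whole set \(\Co\) at once rather than along the convergents of a single irrational; the statement at level \(k\) uses the statement at levels \(k-1\) and \(k-2\). By the symmetry \(\sigma_\co(V)=-\sigma_\co(-V)\) (see \cite[Lem.~7.5]{BaBeLo24}) it suffices to treat \(V>0\).

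\emph{Step 1 (regime \(V>4\)).} For \(V>4\), Proposition~\ref{prop:Dichotomy_Spectrum_Kohmoto_V>4} gives \(\sigma_\co(V)\subseteq \sigma_{[\co,0]}(V)\cup\sigma_{[\co,-1]}(V)\) together with the three-intersection property. Unwinding \(\varphi\), the sets \(\sigma_{[\co,0]}\) and \(\sigma_{[\co,-1]}\) are exactly the spectra at the two preceding levels. Hence each band of \(\sigma_{\co_k}(V)\) lies in a band of \(\sigma_{\co_{k-1}}(V)\) or of \(\sigma_{\co_{k-2}}(V)\), but not in both. Strictness comes for free: an endpoint coincidence would put a point in the triple intersection. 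Since the bands \(V\mapsto I_\co(V)\) vary continuously in \(V\) and strict inclusions are open conditions, the type of each band is constant on \((4,\infty)\).

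\emph{Step 2 (continuation to \(0<V\le4\)).} Label each band by its index \(j\) and follow it continuously in \(V\). Consider the set of \(V>0\) on which the band keeps its large-coupling type. It is open, because strict inclusion is preserved under small perturbations. It is nonempty by Step 1. It remains to show it is closed in \((0,\infty)\). Suppose the inclusion \(I_{\co_k}(V)\strict J(V)\) degenerates at some \(V_*>0\). Then an endpoint of \(I_{\co_k}(V_*)\) coincides with an endpoint of the band \(J(V_*)\) of the earlier level. Band edges are eigenvalues of periodic or antiperiodic \(q\times q\) matrices, so such a coincidence amounts to a shared eigenvalue between matrices attached to \(\co_k\) and to \(\co_{k-1}\) or \(\co_{k-2}\).

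\emph{Step 3 (main obstacle).} The hard part is showing that these edge coincidences cannot happen for \(V\neq0\). I would write the periodic matrix at level \(k\) as a rank-one or rank-two perturbation of a block-diagonal matrix built from the matrices at levels \(k-1\) and \(k-2\); this decomposition follows the recursion of the words \(\omega_{\co}\) under the continued fraction digits. Cauchy interlacing then gives the ordering of edges, and a strict-interlacing argument shows that edges of level \(k\) lie strictly inside the earlier bands. The strict version should come from nondegeneracy of the perturbation vector, which I expect to follow from \(V\neq 0\) via the trace recursion. Example~\ref{ex:ThreeIntersection} shows that bands may overlap for \(V\le 4\), so the argument cannot exclude overlaps; it must only exclude edge touching of the specific parent band. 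This distinction is where I expect the real difficulty, in particular in keeping track of which band plays the role of the parent \(J\).

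\emph{Step 4 (conclusion).} With Step 3, the set in Step 2 is closed, hence equal to \((0,\infty)\) by connectedness. So each band has a type for every \(V>0\), and that type is the one fixed at large coupling. A band cannot be both type \(A\) and type \(B\), since type \(B\) explicitly excludes containment in a band of the previous level. The base cases \(k\le1\) are the explicit spectra in Example~\ref{ex:ThreeIntersection}, with the conventions \(\sigma_{[0]}=\R\) and the exclusion \(\varphi(\co)\notin\{-1,\infty\}\).
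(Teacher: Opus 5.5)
\textbf{There is a genuine gap.} Your architecture matches the ingredients the paper lists for \cite[Thm.~2.15, Cor.~7.7]{BaBeLo24}: start from the large-coupling picture, continue in $V$, use eigenvalue interlacing, and induct over all of $\Co$. But Step~3, which carries the whole theorem, is only stated as an expectation, and the mechanism you sketch cannot deliver it.

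First, the interlacing step gives too little. The level-$k$ cell matrix and a direct sum of lower-level cell matrices carry the same potential entries. Their difference therefore only rewires hopping terms and has zero diagonal. So it cannot be rank one, and a rank-two difference of this kind is traceless, hence indefinite. Weyl/Cauchy interlacing then only gives $\lambda_{i-1}\le\mu_i\le\lambda_{i+1}$. That does not decide which band an edge of $I_{\co_k}(V)$ lies in, let alone that it lies strictly inside.

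Second, strictness cannot come from a blanket ``nondegeneracy for $V\neq 0$''. Edge coincidences between different levels really occur for $V\le 4$. In Example~\ref{ex:ThreeIntersection} at $V=2$ you have $\sigma_{\widetilde{\co}}(2)=[E_0(2),0]\cup[2,E_2(2)]$, $\sigma_{[0,0,1]}(2)=[0,4]$ and $\sigma_{[0,0]}(2)=[-2,2]$. So $0$ is a common edge of a level-$2$ band and a level-$1$ band, and $2$ is a common edge of a level-$2$ band and a level-$0$ band. What is needed is a result saying exactly which pairs of edges are strictly ordered. That is the interlacing theorem for admissible pairs of band edges in \cite{BaBeLo24}, fed by the Chebyshev recursion of the trace polynomials, and it is the missing idea.

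Third, your closedness argument needs more than ``no touching with the parent''. Type $B$ also requires that $I(V)$ lies in no band of level $k-1$. Losing this at some $V_\ast$ means touching a non-parent band, and the example shows such contact happens. There, the type-$B$ band $[E_0(2),0]$ touches the level-$1$ band $[0,4]$ without entering it. You must rule out entering, and excluding parent-edge touching does not do that.

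Step~1 also has errors.
\begin{itemize}
\item $\varphi([\co,-1])=\varphi([0,0,c_1,\ldots,c_{k-1},c_k-1])$. This is the $(k-2)$-nd convergent only when $c_k=1$. For $c_k>1$, the dichotomy at $\co_k$ compares with a non-convergent expansion. Converting ``strictly inside a band of $\sigma_{[\co,0]}(V)$ or of $\sigma_{[\co,-1]}(V)$'' into the convergent-based types of Definition~\ref{def:SpectralType} is a separate, non-trivial step. This is the equivalence \cite[Prop.~7.19]{BaBeLo24}, which itself needs the theorem. That is why the two-level induction in \cite{BaBeLo24} is set up with types relative to $[\co,0]$ and $[\co,-1]$ over all of $\Co$.
\item Strictness does not come for free. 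A common endpoint of $I_\co(V)$ and a band of $\sigma_{[\co,0]}(V)$ lies in $\sigma_\co(V)\cap\sigma_{[\co,0]}(V)$. Nothing forces it into $\sigma_{[\co,-1]}(V)$, so the three-intersection property yields no contradiction.
\item Your base case relies on Example~\ref{ex:ThreeIntersection}, which only treats $c_1=1$. Level $1$ with general $c_1$ has $c_1$ bands that are not computed there.
\end{itemize}
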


\begin{proof}
For the case \( V > 4 \), the result was shown by Raymond~\cite{Raym95}; see also~\cite{BaBeBiRaTh24}. The general case \( V \neq 0 \) was proven in~\cite[Thm.~2.15, Cor.~7.7]{BaBeLo24}. Conceptually, the latter proof relies on the following key ingredients:
\begin{itemize}
\item the recursive structure of trace polynomials of the transfer matrices via Chebyshev polynomials~\cite[Sec.~4.6]{BaBeLo24},
\item interlacing properties of spectral band edges for admissible pairs of spectral band edges~\cite[Thm.~3.4]{BaBeLo24},
\item uniform (in \( \co \in \Co \)) Lipschitz continuity of band edges~\cite[Cor.~3.2]{BaBeLo24},
\item and a two-level induction over the space $\Co$ of finite continued fraction expansions~\cite[Sec.~5 and 6]{BaBeLo24}.\qedhere
\end{itemize}
\end{proof}

This classification has also been employed to estimate the fractal dimension of the spectrum \cite{KiKiLa03,LiuWen04,DaEmGoTc08,DamGor11,LiQuWe14,DamaGor15,CaoQu23,Lun25}. Therefore it is interesting in itself as mentioned in Section~\ref{sec:Kohmoto} before.

At this point, we emphasize again that the type of a spectral band \( I_\co(V) \subseteq \sigma_\co(V) \) depends on the specific continued fraction expansion \( \co \), and not merely on its evaluation \( \varphi(\co) \); see~\cite[Lem.~2.10]{BaBeLo24}. However, the rational approximations $\alpha_k$ of an irrational $\alpha\in[0,1]$ have a unique finite continued fraction expansion given by the $k$th convergent $\co_k$.

Following Definition~\ref{def:SpectralType}, the type of a spectral band \( I_{\co_k}(V) \) is determined by whether it is strictly contained in a band at level \( k-1 \) or level \( k-2 \). Moreover, knowing the next digit \( c_{k+1} \) of the continued fraction expansion allows us to determine how many spectral bands of type~\( A \) and type~\( B \) appear inside \( I_{\co_k}(V) \) at the following two levels. This condition is referred to as forward type in \cite{BaBeLo24,BaBeBiRaTh24}. These bands are interlaced in a precise way, for which we define the order relation\footnote{We use the convention $a\leq b$ and $c\leq d$.}
\[
[a,b] \prec [c,d] \qquad \Longleftrightarrow \qquad a < c \quad \text{and} \quad b < d.
\]

\begin{figure}[htb]
    \centering
    \includegraphics[scale=0.82]{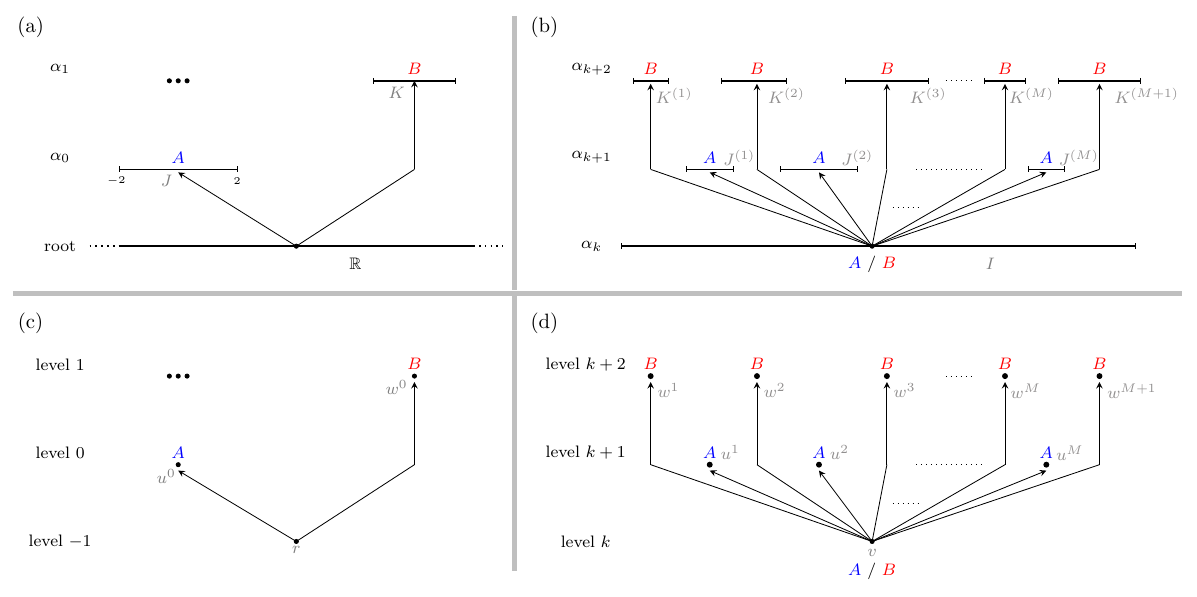}
    \captionsetup{width=0.95\linewidth}
    \caption{Panel~(a) illustrates the local structure of the first spectral bands for $\alpha_0$ and $\alpha_1$ where $J \prec K$ for $V>0$. The dots serve as placeholders for potential additional spectral bands strictly contained in $J$ for $\alpha_1$, as predicted in Proposition~\ref{prop:Forwardtype}, depending on the value of $c_1 \in \N$.
Panel~(b) visualizes the forward type structure described in Proposition~\ref{prop:Forwardtype}.
Panels~(c) and (d) show how the structure depicted in panels~(a) and (b) is encoded in a tree, which is formally defined in Definition~\ref{def:SpectralTree}. In this representation, $J$ corresponds to $u^0$, $K$ to $w^0$, $I$ to $v$, $J^{(i)}$ to $u^i$, and $K^{(j)}$ to $w^j$.
    }
    \label{fig:TreeRoot}
\end{figure}

\begin{proposition}[Forward type structure]
\label{prop:Forwardtype}
Let \( V \neq 0 \), and let \( \alpha \in [0,1] \setminus \Q \) with continued fraction expansion \( [0,0,c_1,c_2,\ldots] \). Consider the $k$th convergent \( \co_k := [0,0,c_1,\ldots,c_k] \in \Co \) with evaluation let \( \alpha_k := \varphi(\co_k) \in[0,1]\cap\Q \). 

Let \( I_{\co_k}(V) \subseteq \sigma_{\co_{k}}(V) \) be a spectral band, and define
\[
M :=
	\begin{cases}
		c_{k+1}-1, & \text{if } I_{\co_k}(V) \text{ is of type } A,\\
		c_{k+1}, & \text{if } I_{\co_k}(V) \text{ is of type } B.\\
	\end{cases}
\]
Then there exist exactly
\begin{itemize}
	\item \( M \) spectral bands \( J^{(1)}(V),\ldots, J^{(M)}(V) \subseteq \sigma_{\co_{k+1}}(V) \) of type~\( A \), and
	\item \( M+1 \) spectral bands \( K^{(1)}(V),\ldots, K^{(M+1)}(V) \subseteq \sigma_{\co_{k+2}}(V) \) of type~\( B \),
\end{itemize}
such that all these bands are strictly contained in \( I_{\co_k}(V) \) and interlace as
\[
K^{(j)}(V) \prec J^{(j)}(V) \prec K^{(j+1)}(V), \qquad 1 \leq j \leq M.
\]
\end{proposition}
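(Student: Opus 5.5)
I would prove the forward type structure by a two-step argument: first settle the large-coupling regime \(V>4\), then transport the picture to arbitrary \(V\neq 0\) using continuity in \(V\) and Theorem~\ref{thm:Types_A-B}. By the symmetry \(\sigma_\co(V)=-\sigma_\co(-V)\) it suffices to treat \(V>0\), which reverses the order \(\prec\) appropriately.

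\emph{Counting bands inside \(I_{\co_k}(V)\).} Write \(\co_{k+1}=[\co_k,c_{k+1}]\) and \(\co_{k+2}=[\co_{k+1},c_{k+2}]\). I would use the trace recursion for the transfer matrices along the continued fraction: the trace polynomial of \([\co,m]\) is obtained from those of \([\co,0]\) and \([\co,-1]\) via Chebyshev polynomials of the second kind in the trace of \(\co\). On a band \(I_{\co_k}(V)\) the trace of \(\co_k\) runs monotonically through \([-2,2]\), so the Chebyshev factor \(U_{m}\) has exactly the expected number of zeros there. This yields exactly \(c_{k+1}-1\) or \(c_{k+1}\) bands of \(\sigma_{\co_{k+1}}(V)\) inside \(I_{\co_k}(V)\), depending on whether \(I_{\co_k}(V)\) was obtained from the level \(k-1\) data (type \(A\)) or from level \(k-2\) (type \(B\)). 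This is the origin of the two cases defining \(M\). A parallel count at level \(k+2\) gives the \(M+1\) bands that lie in \(I_{\co_k}(V)\) but in no band of \(\sigma_{\co_{k+1}}(V)\). These are type \(B\) by Definition~\ref{def:SpectralType}, and the \(M\) bands at level \(k+1\) are type \(A\). For \(V>4\) the three-intersection property (Proposition~\ref{prop:Dichotomy_Spectrum_Kohmoto_V>4}) makes all inclusions strict and disjoint, so this is Raymond's classification. I would present it in the form of \cite{BaBeBiRaTh24}.

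\emph{Interlacing.} For \(V>4\) the bands are pairwise disjoint, and the alternation \(K^{(j)}\prec J^{(j)}\prec K^{(j+1)}\) follows by comparing the zeros of the relevant trace polynomials on \(I_{\co_k}(V)\). The spectral gaps of level \(k+1\) inside \(I_{\co_k}(V)\) must contain the level-\(k+2\) type-\(B\) bands, by Proposition~\ref{prop:MonotonConv_Spectrum_Kohmoto}.

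\emph{Extension to \(0<V\le 4\).} Each band \(V\mapsto I_\co(V)\) is Hausdorff-continuous, in fact uniformly Lipschitz. By Theorem~\ref{thm:Types_A-B} every band keeps its type for all \(V>0\). The count of bands of given type strictly contained in \(I_{\co_k}(V)\) is therefore locally constant in \(V\) and hence equal to its value for \(V>4\), provided strict containment is never lost along the path.

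The relation \(\prec\) only compares left endpoints with left endpoints and right endpoints with right endpoints, so it tolerates overlaps. I would show it cannot switch by proving that band edges of \(\sigma_{\co_{k+1}}\) and \(\sigma_{\co_{k+2}}\) never coincide for \(V\neq0\). This is the interlacing theorem for admissible pairs of band edges in \cite[Thm.~3.4]{BaBeLo24}, an eigenvalue-interlacing statement for the periodic (Floquet) matrices.

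The main obstacle is precisely this step. I must ensure that no endpoints collide as \(V\) decreases below \(4\), and that strict containment in \(I_{\co_k}(V)\) persists even though bands from different levels now overlap. I would handle both by the two-level induction over \(\Co\) of \cite{BaBeLo24}, proving type invariance, containment and interlacing simultaneously.
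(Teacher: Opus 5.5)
Your proposal is essentially the paper's argument: the case $V>4$ is Raymond's lemma (as presented in \cite{BaBeBiRaTh24}), and the case $0<V\le 4$ is deferred to the two-level induction of \cite{BaBeLo24}. The one variation is how you get ``exactly'': you carry the $V>4$ counts over to all $V>0$ using that parent bands are $V$-independent (a valid consequence of Theorem~\ref{thm:Types_A-B}), whereas the paper obtains uniqueness from a count of the total number of spectral bands.

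One correction: the non-coincidence of band edges should be claimed only for same-side edges (left with left, right with right), since your statement as written is false. In Example~\ref{ex:ThreeIntersection} at $V=2$, the point $0$ is both a right edge of $\sigma_{[0,0,1,1]}(2)$ and the left edge of $\sigma_{[0,0,1]}(2)$.
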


\begin{proof}
For \( V > 4 \), the result was shown in~\cite[Lem.~3.3]{Raym95}; see also~\cite[Lem.~4.14]{BaBeBiRaTh24}. The general case \( V \neq 0 \) was established in~\cite[Thm.~2.15, Cor.~7.7]{BaBeLo24} using the equivalent notion of types defined in~\cite[Prop.~7.19]{BaBeLo24}.
Note that the formal definition of forward type in \cite{BaBeLo24,BaBeBiRaTh24} does not require the uniqueness of the \(M\) spectral bands of type~\(A\) and the \(M+1\) spectral bands of type~\(B\). The uniqueness stated in this proposition follows instead from a counting argument based on the total number of spectral bands.
\end{proof}

An illustration of this forward type structure is provided in panel~(b) of Figure~\ref{fig:TreeRoot}. As sketched there, neighboring bands \( K^{(j)}(V) \) and \( J^{(j)}(V) \) may overlap for small coupling $0<V\leq 4$. If \( V > 4 \), such overlaps do not occur by the three-intersection property (Proposition~\ref{prop:Dichotomy_Spectrum_Kohmoto_V>4}), and a stricter interlacing property holds.

Proposition~\ref{prop:Forwardtype} shows that the continued fraction expansion of \( \alpha \) uniquely determines how many spectral bands of type~\( A \) and type~\( B \) are found in each spectral band at level \( k \), together with their relative position. This provides a combinatorial encoding of the spectral structure, which will serve as the foundation for the tree construction discussed below; see also Figure~\ref{fig:Type_Bands}.

\section[The spectral tree]{The spectrum as the boundary of the spectral tree}
\label{sec:SpectralTree}

The forward classification of spectral bands described in Proposition~\ref{prop:Forwardtype} naturally leads to a hierarchical and recursive organization of spectral data. This structure can be encoded in a rooted tree, referred to as the \emph{spectral $\alpha$-tree} associated with a fixed irrational number \( \alpha \in [0,1] \setminus \Q \). Each level \( k \) in this tree corresponds to the rational approximation \( \alpha_k = \varphi(\co_k) \), and its vertices represent the spectral bands in \( \sigma_{\co_k}(V) \), as determined by Theorem~\ref{thm:Types_A-B} and Proposition~\ref{prop:Forwardtype}.

Figure~\ref{fig:Type_Bands} and Figure~\ref{fig:TreeRoot} illustrate the core idea behind the construction: each spectral band at level \( k \) gives rise to a fixed number of type \( A \) and type \( B \) spectral bands at the next levels, encoded via the branching of the tree. This spectral $\alpha$-tree retains all combinatorial information relevant to the associated integrated density of state; see Proposition~\ref{prop:IDS_Kohmoto}. At the same time, it omits non-essential details such as the precise positions of band edges.

Importantly, the spectral $\alpha$-tree is entirely independent of the coupling constant \( V > 0 \); its structure is governed solely by the continued fraction expansion of \( \alpha \).

We begin by recalling the formal notion of a directed graph, as previously used in Section~\ref{sec:Subst-Approximations}.

A tuple \( G = (\Vv, \Ee) \) is called a \emph{directed graph} if \( \Vv \) is a countable set (the \emph{vertex set}) and \( \Ee \subseteq \Vv \times \Vv \) is a set of directed edges. An element \( (u, w) \in \Ee \) is a directed edge from \( u \) to \( w \), with \( (u, w) \neq (w, u) \) in general.

A \emph{(directed) path} in \( G \) is a finite or infinite sequence of vertices \( (u_0, u_1, u_2, \ldots) \) such that \( (u_i, u_{i+1}) \in \Ee \) for all \( i \in \N_0 \).  
For two vertices \( u, w \in \Vv \), we write \( u \to w \) if there exists a finite path \( (u, u_1, \ldots, u_m, w) \)  from \( u \) to \( w \).  
A path \( (u_0, u_1, \ldots, u_m) \) is called a \emph{cycle} if \( u_0 = u_m \) and \( m \in \N \).  
A directed graph is called a \emph{tree} if it contains no cycles.  
It is called a \emph{rooted tree} if there exists a unique vertex \( r \in \Vv \) (the \emph{root}) such that for all \( v \in \Vv \), there exists a path \( r \to v \).  
A rooted tree is called an \emph{ordered tree} if the vertex set \( \Vv \) carries a strict (i.e., irreflexive) partial order relation \( \prec \).

Motivated by Proposition~\ref{prop:Forwardtype}, we now define the \emph{spectral $\alpha$-tree}, which organizes the spectral data in a recursive and hierarchical manner; see the illustration in Figure~\ref{fig:TreeRoot}.

\begin{definition}[Spectral \( \alpha \)-tree]
\label{def:SpectralTree}
Let \( \alpha \in [0,1] \setminus \Q \) with continued fraction expansion \( [0,0,c_1,c_2,\ldots] \).  
The \emph{spectral \( \alpha \)-tree} \( \Tt_\alpha = (\Vv_\alpha, \Ee_\alpha) \) is defined recursively as follows:

\begin{itemize}
\item Let \( r \in \Vv_\alpha \) be the root at level \( k = -1 \).
\item Add a vertex \( u^0 \in \Vv_\alpha \) at level \( k = 0 \) with label \( A \), and an edge \( (r, u^0) \in \Ee_\alpha \).
\item Add a vertex \( w^0 \in \Vv_\alpha \) at level \( k = 1 \) with label \( B \), and an edge \( (r, w^0) \in \Ee_\alpha \).
\item Declare the initial order relation \( u^0 \prec w^0 \).
\end{itemize}

Now, recursively extend the tree:  
For each vertex \( v \in \Vv_\alpha \) at level \( k \geq 0 \), set
\[
M := 
\begin{cases}
c_{k+1} - 1, & \text{if } v \text{ has label } A, \\
c_{k+1},     & \text{if } v \text{ has label } B.
\end{cases}
\]
Then:
\begin{itemize}
\item Add $M$ vertices \( u^1, \ldots, u^M \in \Vv_\alpha \) at level \( k+1 \) with label \( A \), and connect each via \( (v, u^i) \in \Ee_\alpha \) for \( 1 \leq i \leq M \).
\item Add $M+1$ vertices \( w^1, \ldots, w^{M+1} \in \Vv_\alpha \) at level \( k+2 \) with label \( B \), and connect each via \( (v, w^j) \in \Ee_\alpha \) for \( 1 \leq j \leq M+1 \).
\item Define the strict order relation on these new vertices as\footnote{In particular, we have $u^i\prec u^{i+1}$ and $w^j\prec w^{j+1}$ by transitivity.}
\[
w^1 \prec u^1 \prec w^2 \prec \ldots \prec u^M \prec w^{M+1}.
\]
\end{itemize}

Extend the strict partial order \( \prec \) on \( \Vv_\alpha \) by declaring \( w_1 \prec w_2 \) for \( w_1, w_2 \in \Vv_\alpha \) if there exist \( u_1, u_2 \in \Vv_\alpha \) such that
\[
u_1 \prec u_2,
\quad \text{and} \quad 
(u_1 \to w_1 \text{ or } u_1 = w_1),
\quad \text{and} \quad 
(u_2 \to w_2 \text{ or } u_2 = w_2).
\]
The \emph{level function} $\ell_\alpha:\Vv_\alpha\to\N_{-1}$ assigns to each vertex $v\in \Vv_\alpha$ its level used in the construction.
\end{definition}

We emphasize here that the level of a vertex in the spectral \( \alpha \)-tree should not be confused with its combinatorial distance from the root \( r \). Rather, the level refers to the index \( k \) of the corresponding rational approximation \( \alpha_k \). For convenience, an example of a spectral \( \alpha \)-tree \( \Tt_\alpha \) is plotted in Figure~\ref{fig:SpectralTree}. Note also that the spectral \( \alpha \)-tree up to level \( k \) is uniquely determined by the initial segment \( [0,0,c_1,\ldots,c_k] \) of the continued fraction expansion.

\begin{figure}[htb]
    \centering
    \includegraphics[scale=1.05]{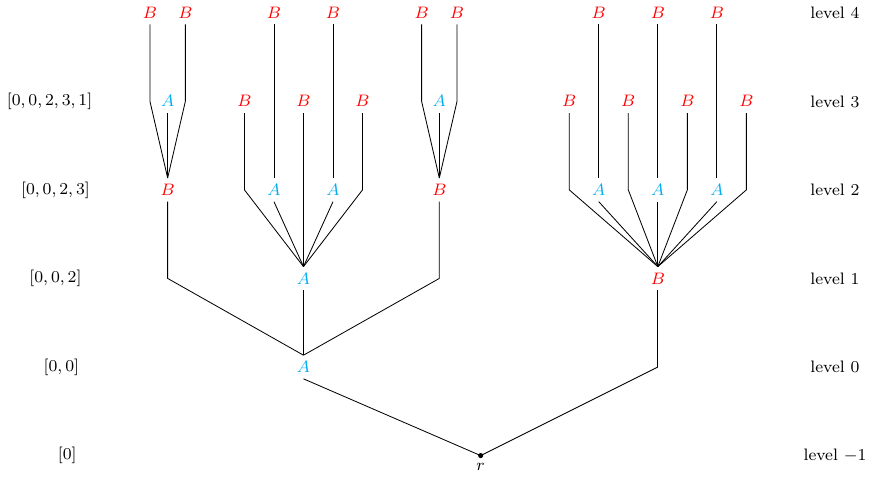}
    \captionsetup{width=0.95\linewidth}
    \caption{Initial part of the spectral \( \alpha \)-tree for a continued fraction expansion beginning with $[0,0,2,3,1]$.
    }
    \label{fig:SpectralTree}
\end{figure}

Let \( \alpha \in [0,1] \setminus \Q \) have continued fraction expansion \( [0,0,c_1,c_2,\ldots] \). Recall that we define the $k$th convergent \( \co_k := [0,0,c_1,\ldots,c_k] \) and denote by \( \alpha_k := \varphi(\co_k) \in[0,1]\cap\Q \) its evaluation. In order to consistently identify vertices of the spectral \( \alpha \)-tree with spectral bands, we extend the indexing to \( k = -1 \) by setting 
\[ 
	\alpha_{-1} := \varphi([0]) 
	\qquad \text{and}\qquad 
	\sigma_{[0]}(V) := \R  \quad \text{for all } V \in \R.
\] 
We also call the connected component of \( \sigma_{[0]}(V) \) a spectral band. This spectral set corresponds to the root $r$ of the tree, see Figure~\ref{fig:TreeRoot}~(a) and (c).

As discussed in Section~\ref{sec:Types_Spectral_Bands}, spectral bands can either be viewed as fixed intervals \( I_\co(V) \subseteq \sigma_\co(V) \), or, more generally, as continuous maps \( (0,\infty) \ni V \mapsto I_\co(V) \), where each \( I_\co(V) \) remains a connected component of \( \sigma_\co(V) \) and retains a fixed type \( A \) or \( B \) for all \( V > 0 \); see Theorem~\ref{thm:Types_A-B}.

We now formalize the correspondence between the spectral \( \alpha \)-tree and the set of spectral bands of rational approximations. 

\begin{proposition}[\cite{BaBeLo24}, the spectral $\alpha$-tree]
\label{prop:BijectionSpectralTreeBands}
Let \( \alpha \in [0,1] \setminus \Q \) with continued fraction expansion \( [0,0,c_1,c_2,\ldots] \). Then there exists a unique bijection
\[
\Psi : \Vv_\alpha \to \left\{ \text{spectral bands of } \sigma_{\co_k}(V),~ k \in \N_{-1} \right\}
\]
(viewed as continuous maps on \( (0,\infty) \)) satisfying the following properties:
\begin{enumerate}[label=(\alph*)]
\item For each \( k \in \N_{-1} \), the restriction of \( \Psi \) to level \( k \) gives a bijection between the vertices in level \( k \) and the spectral bands in \( \sigma_{\co_k} \).
\item If \( u, w \in \Vv_\alpha \) satisfy \( u \to w \), then \( \Psi(w)(V) \strict \Psi(u)(V) \) for all \( V > 0 \).
\item If \( u_1, u_2 \in \Vv_\alpha \) are vertices in levels \( k_1, k_2 \in \N_{-1} \) with \( |k_1 - k_2| \leq 1 \), then
\[
u_1 \prec u_2 \quad \Longleftrightarrow \quad \Psi(u_1)(V) \prec \Psi(u_2)(V) \quad \text{for all } V > 0.
\]
\item A vertex \( u \in \Vv_\alpha \setminus \{r\} \) is labeled \( A \) (respectively \( B \)) if and only if \( \Psi(u)(V) \) is of type \( A \) (respectively \( B \)) for all \( V > 0 \).
\end{enumerate}
\end{proposition}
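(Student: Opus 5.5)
We construct $\Psi$ level by level, by induction on $k\in\N_{-1}$, using Theorem~\ref{thm:Types_A-B} and Proposition~\ref{prop:Forwardtype} as the engine.

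\textbf{Base levels.} The root $r$ is sent to the single band $\R=\sigma_{[0]}(V)$.
By Example~\ref{ex:ThreeIntersection} and its analogues, $\sigma_{\co_0}(V)=\sigma_{[0,0]}(V)=[-2,2]$ is one band, which we assign to $u^0$. Likewise $\sigma_{\co_1}(V)$, with $\varphi(\co_1)=1/c_1$, has $c_1$ bands. Viewed from the root (itself treated as ``type B'' at level $-1$), Proposition~\ref{prop:Forwardtype} would formally give $c_1-1$ type-$A$ bands at level $1$ inside $[-2,2]$ plus the type-$B$ band $w^0$. We have to check by hand that $w^0$ corresponds to the unique band at level $1$ not strictly contained in $[-2,2]$, namely the rightmost one for $V>0$. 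This verifies $u^0\prec w^0$ in the sense of (c).

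\textbf{Induction step.} Suppose $\Psi$ is defined on all levels $\le k+1$ and satisfies (a)--(d) there.

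Every band at level $k+2$ has a type by Theorem~\ref{thm:Types_A-B}.
\begin{itemize}
\item If it is of type $A$, it is strictly contained in a unique band at level $k+1$.
\item If it is of type $B$, it is strictly contained in a unique band at level $k$.
\end{itemize}
Uniqueness holds because bands at one level are disjoint. This defines a parent map from bands to bands.

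Now apply Proposition~\ref{prop:Forwardtype} to each band $I=\Psi(v)$.
\begin{itemize}
\item If $v$ is at level $k+1$, the proposition yields exactly $M$ type-$A$ children at level $k+2$.
\item If $v$ is at level $k$, it yields exactly $M+1$ type-$B$ children at level $k+2$.
\end{itemize}
Here $M$ is determined by the type of $I$, which by (d) equals the label of $v$, and by $c_{k+1}$ or $c_{k+2}$ respectively. These counts match the numbers of new vertices in Definition~\ref{def:SpectralTree}.

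We then define $\Psi$ on the new vertices $u^i,w^j$ of $v$ by ordering its children from left to right. The interlacing $K^{(j)}\prec J^{(j)}\prec K^{(j+1)}$ matches the order $w^1\prec u^1\prec\dots\prec w^{M+1}$.

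\textbf{Bijectivity at level $k+2$.} Since each band at level $k+2$ has exactly one parent, $\Psi$ is injective on that level. For surjectivity we count: the number of level-$(k+2)$ vertices must equal the denominator $q_{k+2}=c_{k+2}q_{k+1}+q_k$. This follows by the same recursion applied to the numbers $a_k$ of $A$-labelled and $b_k$ of $B$-labelled vertices, which gives $a_k+b_k=q_k$. This proves (a) and (d).

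\textbf{Properties (b) and (c).} Property (b) follows by transitivity of strict inclusion along paths.

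Property (c) is the delicate part. For siblings it follows directly from the interlacing. For general vertices $u_1,u_2$ with $|k_1-k_2|\le1$, the order in Definition~\ref{def:SpectralTree} is inherited from ancestors $u_1'\prec u_2'$. If the ancestor bands $I_1$ and $I_2$ satisfy $I_1\prec I_2$ and are either disjoint or merely interlaced, then strict containment of descendants gives the band order. However, when ancestors overlap (possible for $0<V\le4$), $I_1\prec I_2$ does not by itself force descendants to be ordered.

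\textbf{Main obstacle.} This last point is where I expect the real difficulty. I would handle it by strengthening the induction hypothesis. Namely, I would prove simultaneously that consecutive-level bands which are $\prec$-ordered and intersect are always a pair of the form $K^{(j)},J^{(j)}$ or $J^{(j)},K^{(j+1)}$ under a common parent. Their descendants then lie in the disjoint gaps guaranteed by the interlacing of the next level.

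The required disjointness of non-consecutive bands relies on the inclusion $\sigma_{\co}\subseteq\sigma_{[\co,0]}\cup\sigma_{[\co,-1]}$ from Proposition~\ref{prop:Dichotomy_Spectrum_Kohmoto_V>4}. The $V$-invariance of this order uses the continuity of band edges and the $V$-independence of types.

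Finally, uniqueness of $\Psi$ follows by induction, because at each step the order in (c) leaves no freedom in the assignment.
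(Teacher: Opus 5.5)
Your construction of $\Psi$ is the natural one and correctly gives (a), (b), (d) and uniqueness: parents come from Theorem~\ref{thm:Types_A-B}, and children are counted and ordered by Proposition~\ref{prop:Forwardtype}. Two small remarks there. Surjectivity is immediate, since every band at level $k+2$ has a type and hence a parent $\Psi(v)$, so no counting via $q_{k+2}$ is needed. Also, a vertex at level $k+1$ uses the digit $c_{k+2}$ and one at level $k$ uses $c_{k+1}$, not the other way round.

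The genuine gap is (c). Your strengthened hypothesis is false: it claims that $\prec$-ordered, intersecting bands at consecutive levels are always adjacent siblings $K^{(j)},J^{(j)}$ or $J^{(j)},K^{(j+1)}$. Take $c_1=c_2=1$ and Example~\ref{ex:ThreeIntersection}. Levels $0,1,2$ carry $[-2,2]$, $[-2+V,2+V]$ and $[E_0(V),0]\cup[V,E_2(V)]$. Since $[V,E_2(V)]\not\subseteq[-2,2]$ for $V\geq 2$, it is of type $A$. So the $B$-child of $u^0$ goes to $[E_0(V),0]$, which being of type $B$ forces $E_0(V)<-2+V$. Meanwhile $\Psi(w^0)(V)=[-2+V,2+V]$. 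For $0<V\leq 2$ both bands contain $0$ and $[E_0(V),0]\prec[-2+V,2+V]$, yet their parents are $u^0$ and $r$. So the induction you sketch cannot run. The remarks about ``disjoint gaps'' and $\sigma_\co\subseteq\sigma_{[\co,0]}\cup\sigma_{[\co,-1]}$ do not replace it.

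The missing idea is that overlaps need not be excluded; the type dichotomy already forbids misordering. Fix $V>0$ and write $\hat u:=\Psi(u)(V)$. Prove by induction on $n$ the statement $S(n)$: if $u_1\prec u_2$ are not ancestor-related and have levels in $\{n,n+1\}$, then $\hat u_1\prec\hat u_2$. The base case $S(0)$ is your check $[-2,2]\prec\hat w^0$ together with a containment argument for the $A$-children of $u^0$.

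For $S(n+1)$, first take $Y$ at level $n+2$ with parent $p$, and $X$ at level $n+1$ not ancestor-related to $Y$.
\begin{itemize}
\item If $X$ and $Y$ are siblings, use the interlacing.
\item Otherwise $X$ and $p$ are unrelated with levels in $\{n,n+1\}$. Say $X\prec p$, so $\hat X\prec\hat p$ by $S(n)$. Then $\min\hat Y>\min\hat p>\min\hat X$. If $\max\hat Y\leq\max\hat X$, we would get $\hat Y\subseteq\hat X$. This is impossible: for $p$ at level $n+1$, $\hat Y$ would lie in two disjoint bands of level $n+1$; for $p$ at level $n$, $\hat Y$ is of type $B$ and lies in no band of level $n+1$. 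Hence $\hat X\prec\hat Y$, and the case $p\prec X$ is symmetric.
\end{itemize}

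Next take $Y_1\prec Y_2$ at level $n+2$ with parents $p_1\neq p_2$.
\begin{itemize}
\item If $p_i$ is the parent of $p_j$, then $Y_i$ and $p_j$ are siblings. Interlacing, together with $\hat Y_j\strict\hat p_j$ and $\hat Y_1\cap\hat Y_2=\emptyset$, gives the order.
\item Otherwise $\hat p_1\prec\hat p_2$ by $S(n)$. If $\hat Y_2$ lay to the left of $\hat Y_1$, one checks $\hat Y_1\subseteq\hat p_2$ and $\hat Y_2\subseteq\hat p_1$. This again puts a type-$B$ band of level $n+2$ inside a band of level $n+1$, or a band inside two disjoint ones.
\end{itemize}

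The converse in (c) then follows for three reasons. The tree order is total on ancestor-unrelated pairs, and $\prec$ on intervals is asymmetric. Ancestor-related vertices have strictly nested bands, which are $\prec$-incomparable. No continuity in $V$ is needed beyond the $V$-independence of $\Psi$ itself.
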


\begin{proof}
This is proven in \cite[Prop.~7.1]{BaBeLo24}.
\end{proof}

Combining Proposition~\ref{prop:BijectionSpectralTreeBands} with the monotone convergence of the spectrum from Proposition~\ref{prop:MonotonConv_Spectrum_Kohmoto}, we obtain that the boundary of the spectral $\alpha$-tree encodes the spectrum $\sigma(\Koh)$ for $V>0$:

\begin{figure}[htb]
    \centering
    \includegraphics[scale=1.1]{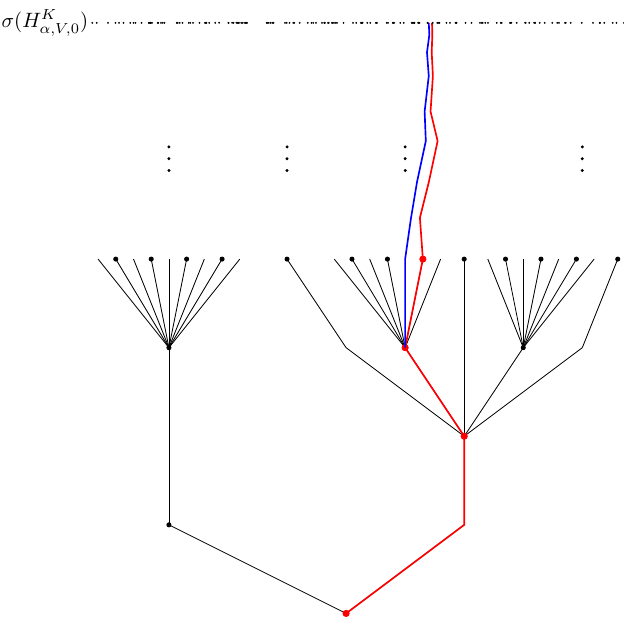}
    \captionsetup{width=0.95\linewidth}
    \caption{
    	A sketch of a spectral $\alpha$-tree. The two infinite paths $\gamma\in\partial\Tt_\alpha$ (blue) and $\eta\in\partial\Tt_\alpha$ (red) satisfy $\gamma\prec \eta$ and \( N_\alpha(\gamma)=N_\alpha(\eta) = \{n\alpha\} \).
    }
    \label{fig:TreeSpectrum_OhneLabels}
\end{figure}

The boundary of the tree $\Tt_\alpha$ is defined as
\[
\partial\Tt_\alpha := 
	\set{\gamma=(u_0,u_1,u_2,\ldots)}{u_0=r, \ (u_m,u_{m+1})\in\Ee_\alpha \text{ for all } m\in\N_0},
\]
that is, the set of all infinite directed paths starting at the root $r$. It inherits a natural total order from the partial order $\prec$ on $\Vv_\alpha$: given two elements $\gamma=(u_0,u_1,u_2,\ldots)$ and $\eta=(w_0,w_1,w_2,\ldots)$ in $\partial\Tt_\alpha$, we define (see Figure~\ref{fig:TreeSpectrum_OhneLabels} for two paths $\gamma\preceq \eta$)
\[
\gamma \preceq \eta 
	\quad:\Longleftrightarrow\quad
	\gamma=\eta \quad\text{ or }\quad \exists k_0\in\N_0 \;\;\text{ such that }\;\; u_{k_0}=w_{k_0} \text{ and } u_{k_0+1}\prec w_{k_0+1}.
\]
By construction, such an index $k_0$ is uniquely determined, and the ordering persists for all subsequent vertices, namely $u_k\prec w_k$ for all $k>k_0$.

Let $V>0$. Following \cite{BaBeLo24}, we associate each path $\gamma=(u_0,u_1,u_2,\ldots)$ with a point in the spectrum via the nested intervals $\Psi(u_m)(V)$, where $\Psi$ is the bijection from Proposition~\ref{prop:BijectionSpectralTreeBands}. Since $\Psi(u_{m+1})(V) \strict \Psi(u_m)(V)$ for all $m\in\N_0$, the intersection
\[
I_\gamma(V) := \bigcap_{m\in\N} \Psi(u_m)(V)
\]
is a non-empty compact interval. Proposition~\ref{prop:MonotonConv_Spectrum_Kohmoto} ensures that $I_\gamma(V) \subseteq \sigma(\Koh)$. As $\sigma(\Koh)$ is a Cantor set of Lebesgue measure zero~\cite{BIST89}, $I_\gamma(V)$ must be a singleton. We denote its unique element by $E_\alpha(\gamma;V)$, which defines a map
\begin{equation}
\label{eq:SpectralEncodingMap}
E_\alpha:\partial\Tt_\alpha \times(0,\infty) \to \sigma(\Koh), \qquad (\gamma,V) \mapsto E_\alpha(\gamma;V).
\end{equation}
This map encodes our spectrum as the boundary of a tree satisfying the following properties:

\begin{theorem}[\cite{BaBeLo24}, Encoding of the spectrum]
\label{thm:EncodingSpectrum}
Let $\alpha \in [0,1] \setminus \Q$. Then the map
\[
E_\alpha : \partial \Tt_\alpha \times(0,\infty)  \to \sigma(\Koh), \qquad (\gamma,V) \mapsto E_\alpha(\gamma;V),
\]
is
\begin{enumerate}[label=(\alph*)]
\item a bijection for each fixed $V > 0$,
\item order-preserving, i.e., $\gamma \preceq \eta$ implies $E_\alpha(\gamma;V) \leq E_\alpha(\eta;V)$ for all $V > 0$,
\item Lipschitz continuous in $V$.
\end{enumerate}
\end{theorem}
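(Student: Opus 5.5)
The proof splits along the three assertions. Throughout, $V>0$ and $\alpha$ are fixed except in (c). The main tools are the bijection $\Psi$ of Proposition~\ref{prop:BijectionSpectralTreeBands}, the nested structure from Proposition~\ref{prop:MonotonConv_Spectrum_Kohmoto}, and the fact that $\sigma(\Koh)$ is a Cantor set of measure zero.

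\emph{Order preservation (b).} Let $\gamma\preceq\eta$ with $\gamma\neq\eta$, and let $k_0$ be the branching index. Then $u_{k}\prec w_{k}$ for all $k>k_0$. Pick two consecutive entries $u_{k}$, $w_{k}$ whose levels differ by at most one; this is possible because each step of a path raises the level by one or two. Proposition~\ref{prop:BijectionSpectralTreeBands}~(c) then gives $\Psi(u_k)(V)\prec\Psi(w_k)(V)$. Since $E_\alpha(\gamma;V)\in\Psi(u_m)(V)$ and $E_\alpha(\eta;V)\in\Psi(w_m)(V)$ for all $m$, I use the ordering $\prec$ of deeper and deeper bands together with shrinking band lengths to conclude $E_\alpha(\gamma;V)\leq E_\alpha(\eta;V)$. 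The band lengths must tend to zero, since otherwise the intersection would not be a singleton.

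\emph{Bijectivity (a).} \emph{Surjectivity:} take $E\in\sigma(\Koh)$. By Proposition~\ref{prop:MonotonConv_Spectrum_Kohmoto}, $E\in\sigma_{\co_k}(V)\cup\sigma_{\co_{k+1}}(V)$ for every $k$. I build a path greedily from the root. If $E$ lies in the band $\Psi(v)(V)$, then by Proposition~\ref{prop:Forwardtype} the band $\Psi(v)(V)$ contains exactly the children bands at levels $\ell(v)+1$ and $\ell(v)+2$. I then argue that $E$ lies in one of the children. For this I use that the spectra at these levels cover $\spec_{\ell(v)+1,V}\cap\Psi(v)(V)$, which I derive from the counting of bands together with Proposition~\ref{prop:Dichotomy_Spectrum_Kohmoto_V>4}, with its covering part valid for all $V\neq0$. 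Iterating yields an infinite path $\gamma$ with $E\in I_\gamma(V)$, so $E=E_\alpha(\gamma;V)$. \emph{Injectivity:} if $\gamma\prec\eta$, then the bands $\Psi(u_k)(V)\prec\Psi(w_k)(V)$ are eventually disjoint, because their lengths tend to zero while the left endpoints are strictly ordered with a gap. I expect this step to need the gap labeling and the DTMP-type input. Concretely, between the two paths one finds a type-$B$ band or gap separating them, which gives a genuine gap of $\sigma(\Koh)$ and hence $E_\alpha(\gamma;V)<E_\alpha(\eta;V)$. \textbf{This is the main obstacle}, because for $V\leq4$ neighbouring bands $K^{(j)}$ and $J^{(j)}$ may overlap. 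I would try to use the strict interlacing $\prec$ at successive levels. The point is that two distinct paths eventually pass through bands at equal level, and bands at equal level are disjoint since $\sigma_\co(V)$ has exactly $q$ disjoint bands.

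\emph{Lipschitz continuity (c).} Fix $\gamma$. By \cite[Cor.~3.2]{BaBeLo24} the band edges $V\mapsto I_\co(V)$ are Lipschitz uniformly in $\co$, with a constant $L$. Hence for every $m$ the point $E_\alpha(\gamma;V)$ lies in $\Psi(u_m)(V)$, which is within distance $L|V-V'|$ of $\Psi(u_m)(V')$ in the Hausdorff sense. It follows that
\[
|E_\alpha(\gamma;V)-E_\alpha(\gamma;V')|\leq L|V-V'|+|\Psi(u_m)(V)|+|\Psi(u_m)(V')|.
\]
Letting $m\to\infty$ makes the band lengths vanish and gives the claim.
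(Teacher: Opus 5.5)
\textbf{Parts (b) and (c).} Your arguments are fine and match what the paper indicates: nesting via Proposition~\ref{prop:BijectionSpectralTreeBands}, shrinking bands, and uniformly Lipschitz band edges. In (b), pick vertices of $\gamma$ and $\eta$ at possibly different positions whose levels differ by at most one. This works because $\prec$ passes to all descendants of $u_{k_0+1}$ and $w_{k_0+1}$.

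\textbf{The genuine gap: injectivity in (a).} This is exactly where the paper places the whole difficulty, and your argument does not get past it.

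Your claim that two distinct paths eventually pass through a common level is false, because a path may advance by two levels at every step. Take $c_k=1$ for all $k$. A type-$B$ vertex $v$ at level $k$ has an $A$-child $J^{(1)}$ at level $k+1$, whose only child is a $B$-vertex at level $k+3$. It also has $B$-children $K^{(1)}\prec J^{(1)}\prec K^{(2)}$ at level $k+2$. The path through $K^{(1)}$ that always takes $B$-children visits only the levels $k+2,k+4,\dots$. The path through $J^{(1)}$ that then takes $B$-children visits only $k+1,k+3,\dots$.

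For such a pair you can only compare bands at adjacent levels. For $0<V\le4$ these may overlap, as noted after Proposition~\ref{prop:Forwardtype} (see also Example~\ref{ex:ThreeIntersection}). So nesting and shrinking lengths do not exclude $E_\alpha(\gamma;V)=E_\alpha(\eta;V)$. Strict ordering of the bands level by level gives no uniform separation either.

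Appealing to the DTMP is circular. Theorem~\ref{thm:DTMP} is deduced from the present theorem, and its step~3 uses precisely the strict inequality $E_\alpha(\gamma;V)<E_\alpha(\eta;V)$ that injectivity provides. Gap labeling only restricts which labels can occur; it produces no gaps.

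The missing ingredient is the refined analysis of band overlaps for $V\le4$ from \cite[Thm.~1.9]{BaBeLo24}. It rests on the $V$-invariance of band types (Theorem~\ref{thm:Types_A-B}) and on an extended three-intersection property replacing Proposition~\ref{prop:Dichotomy_Spectrum_Kohmoto_V>4}. Only for $V>4$ does the latter already exclude these overlaps.

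\textbf{Surjectivity.} Your greedy step is false as stated.
\begin{itemize}
\item Take $c_1=c_2=1$. Then $u^0$ has a single child, whose band is the left band $[E_0(V),0]$ of $\sigma_{[0,0,1,1]}(V)$. The right band $[V,E_2(V)]$ is the $A$-child of $w^0$.
\item Comparing with the free Laplacian gives $d_H(\sigma(\Koh),[-2,2])\le|V|$.
\item Hence for $0<V<\tfrac12$ there are points of $\sigma(\Koh)$ in $[\tfrac12,\tfrac32]\subseteq(V,2)\subseteq[-2,2]=\Psi(u^0)(V)$ that lie in no child band of $u^0$. Choosing $u^0$ at the root is therefore a dead end.
\end{itemize}
Replace the greedy construction by a compactness argument.
\begin{itemize}
\item By Proposition~\ref{prop:MonotonConv_Spectrum_Kohmoto}, for every $L$ the energy $E$ lies in a band at level $L$ or $L+1$.
\item By Proposition~\ref{prop:BijectionSpectralTreeBands}~(b), every band on the path from $r$ to that vertex contains $E$.
\item So the vertices whose band contains $E$ form an infinite, finitely branching subtree containing $r$.
\item K\"onig's lemma then yields $\gamma\in\partial\Tt_\alpha$ with $E\in I_\gamma(V)$.
\end{itemize}
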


\begin{proof}
This result is proven in \cite[Thm.~1.9]{BaBeLo24}. Properties (b) and (c) follow directly from the nested structure of spectral bands established in Proposition~\ref{prop:BijectionSpectralTreeBands} and the monotone convergence from Proposition~\ref{prop:MonotonConv_Spectrum_Kohmoto}. The surjectivity in (a) is a direct consequence of the convergence of the spectrum.

The central difficulty lies in proving injectivity of the map. This is resolved by a refined analysis of how spectral bands intersect in the critical regime $V \leq 4$, building on the classification of band types from Theorem~\ref{thm:Types_A-B} and an extended version of the three-intersection property in Proposition~\ref{prop:Dichotomy_Spectrum_Kohmoto_V>4}. For details, we refer to the comprehensive argumentation in \cite[Thm.~1.9]{BaBeLo24}.
\end{proof}

\begin{remark}
\label{rem:DTMP->TMP}
In \cite{BIST89}, the authors prove that \(\sigma(\Koh)\) is a Cantor set of Lebesgue measure zero for irrational \(\alpha\), thereby solving the ten Martini problem (TMP) for Sturmian Hamiltonians. We use this result only once in \cite{BaBeLo24}, as mentioned above -- specifically, to show that \(I_\gamma(V)\) is a singleton. However, the application of the TMP for Sturmian Hamiltonians can be avoided at this point, as follows:

By construction, \(\Psi(u_m)(V)\) is a spectral band of a rational approximation \(\tfrac{p_m}{q_m}\) of \(\alpha\), where \(p_m\) and \(q_m\) are coprime. The length of such a spectral band can be estimated by \(\tfrac{2\pi}{q_m}\); see, e.g., \cite[Thm~7.5.1]{DaFi24-book_2}. Since \(\alpha\) is irrational, the denominators \(q_m\) tend to infinity as \(m \to \infty\). Consequently, the length of the band \(\Psi(u_m)(V)\) tends to zero, which implies that \(I_\gamma(V)\) consists of a single point.

This argument shows that one can establish the solution of the DTMP for Sturmian Hamiltonians without relying on the fact that the spectrum is a Cantor set of Lebesgue measure zero. 
Since the gap labels (for irrational \(\alpha\)) are dense in \([0,1]\), the DTMP implies the TMP by standard arguments, see, e.g., \cite[Rem~5.2]{ChElYu90}. Specifically, Theorem~\ref{thm:DTMP} implies that the spectrum \(\sigma(\Koh)\) is a Cantor set for all $\alpha\in[0,1]\setminus\Q$. Thus, \cite{BaBeLo24} also provides an alternative proof of this result.
\end{remark}

\medskip

\noindent\textbf{Comparison to existing descriptions of the spectrum.} Theorem~\ref{thm:EncodingSpectrum} shows that the spectral tree $\Tt_\alpha$ fully encodes the spectrum of the operator $\Koh$ in terms of its boundary. Various strategies have been employed in the literature (see \cite[Thm.~10.5.2]{DaFi24-book_2}) to characterize the spectrum $\sigma(\Koh)$ of the Sturmian Hamiltonians:
\begin{itemize}
\item The set $\mathcal{Z}$ of energies where the Lyapunov exponent vanishes plays a key role in proving that $\sigma(\Koh)$ is a Cantor set of zero Lebesgue measure. This is based on Kotani theory~\cite{Sim83,Kot84,Kotani89} and was developed in works such as \cite{Sut89,BIST89,Len02,DaLe06_Boshernitzan,DaLe06_ZeroMeasure}, see also a discussion in \cite[Sec.~4.8]{DaFi22-book_1}.

\item The set $\mathcal{B}$ of energies for which the positive orbit under the associated trace map remains bounded has been used to analyze the fractal geometry of the spectrum, including its Hausdorff dimension; see \cite{Cas86,Raym95,DaEmGoTc08,DaGoYe16}.

\item A symbolic \emph{coding scheme} $\Pi$, originally inspired by the trace map analysis of Casdagli~\cite{Cas86} and further developed by Raymond~\cite{Raym95} for $V>4$ was used to prove that all spectral gaps are there if $V>4$. This coding also provides insight into the fractal geometry of the spectrum and related quantities such as transport exponents \cite{KiKiLa03,LiuWen04,DaEmGoTc08,DamGor11,LiQuWe14,DamaGor15,CaoQu23,Lun25}. Coding methods have also been applied to other substitution models, e.g., the period-doubling sequence in \cite{LiQuYa22}.
\end{itemize}

In \cite{BaBeLo24}, the perspective was changed replacing the symbolic coding $\Pi$ with the boundary of a spectral tree $\partial\Tt_\alpha$; see also a discussion in \cite{BaBeBiRaTh24}.

\section{Solving the dry ten Martini problem for Sturmian Hamiltonians}
\label{sec:DTMP_Sturmian}

Building on the hierarchical structure of spectral data encoded in the spectral $\alpha$-tree and the bijective correspondence with the spectrum $\sigma(\Koh)$, we now turn to the resolution of the dry ten Martini problem for the Sturmian Hamiltonians. In particular, Theorem~\ref{thm:DTMP} asserts for all \( \alpha \in [0,1] \setminus \Q \) and all \( V \neq 0 \), that
\[
\set{N_{\alpha,V}(E)}{E \in \R \setminus \sigma(\Koh)} = \set{ \{n\alpha\} }{n \in \Z} \cup \{1\}.
\]
Recall that \( \{n\alpha\}=n\alpha-\lfloor n\alpha\rfloor \) is the fractional part of $n\alpha$.
Thanks to the Gap Labeling Theorem, it suffices to show that for each $n\in\Z$, there is an $E \in \R \setminus \sigma(\Koh)$ such that $N_{\alpha,V}(E)=\{n\alpha\}$.

Representing \( \sigma(\Koh) \) as the boundary of a tree provides a natural and combinatorial framework for describing the integrated density of states \( N_{\alpha,V}:\R \to [0,1] \) on the spectrum. This approach makes the structure explicit while suppressing the dependence on the coupling parameter \( V \).

Let \( \alpha \in [0,1] \setminus \Q \) be fixed, with rational approximants \( \alpha_k = \frac{p_k}{q_k} \) obtained by truncating the continued fraction expansion of \( \alpha \) after the \( k \)-th digit, where \( p_k \) and \( q_k \) are coprime integers. Recall from Definition~\ref{def:SpectralTree} that the level function \( \ell_\alpha: \Vv_\alpha \to \N_{-1} \)  (Definition~\ref{def:SpectralTree}) assigns to each vertex its level in the spectral \( \alpha \)-tree.

To quantify the relative position of a given path \( \gamma \in \partial\Tt_\alpha \), we introduce the \emph{relative \( A \)-index}
\[
\ind(\gamma,j) := \sharp \set{u\in\Vv_\alpha}{\ell_\alpha(u)=\ell_\alpha\big(\gamma(j)\big) +1, \, \big(\gamma(j),u\big)\in\Ee_\alpha,\, u\prec \gamma(j+1)}.
\]
This counts how many vertices in the next level -- each\footnote{This follows from $\ell_\alpha(u)=\ell_\alpha\big(\gamma(j)\big) +1$ and $\big(\gamma(j),u\big)\in\Ee_\alpha$.} with label \( A \) -- are connected to \( \gamma(j) \) and lie to the left of \( \gamma(j+1) \) in the partial order \( \prec \).
Due to the interlacing in the forward property, this also approximates (up to an error of one) the number of vertices \( w \) with label \( B \) satisfying \( ( \gamma(j), w ) \in \Ee_\alpha \) and \( w \prec \gamma(j+1) \).

Additionally, define the indicator function
\[
\delta_A(\gamma,j) :=
\begin{cases}
1, & \text{if } \gamma(j) \text{ is of type } A, \\
0, & \text{if } \gamma(j) \text{ is of type } B.
\end{cases}
\]

With these definitions, we define a map \( N_\alpha: \partial\Tt_\alpha \to [0,1] \) by\footnote{Note that in \cite[Prop.~5.21]{BaBeBiRaTh24}, the levels do not explicitly appear in the summation. However, all coefficients $\mu_j$ vanish whenever the path $\gamma$ contains no vertex in level $j$.
}
\begin{equation}
\label{eq:IDS-formula}
N_\alpha(\gamma) := -\alpha + \sum_{j \in \N_-1} 
(-1)^{\ell_\alpha(\gamma(j))} 
\big( \ind(\gamma,j) + \delta_A(\gamma,j) \big)
\big( q_{\ell_\alpha(\gamma(j))} \alpha - p_{\ell_\alpha(\gamma(j))} \big).
\end{equation}
Intuitively, each summand in the series captures the normalized number of vertices at the corresponding level that lie to the left of the path \( \gamma \); a detailed explanation is provided in \cite[Sec.~5.3]{BaBeBiRaTh24}.
Via the bijection \( \Psi \) from Proposition~\ref{prop:BijectionSpectralTreeBands}, this corresponds to counting the number of spectral bands to the left of \( \Psi(\gamma(j)) \). In turn, this is closely related to counting the eigenvalues of the finite matrix \( \Koh|_{[0,q_{\ell_\alpha(\gamma(j))}-1]} \) that lie below \( E_\alpha(\gamma;V) \), thus connecting the construction directly to the integrated density of states. A rigorous justification of this correspondence is provided in \cite[Lem.~7.5]{BaBeLo24}.

Summing up, the function \( N_\alpha \) gives an explicit and purely combinatorial (independent of $V>0$) description of the integrated density of states on \( \sigma(\Koh) \), parametrized by the boundary \( \partial\Tt_\alpha \) of the spectral tree:

\begin{proposition}
\label{prop:IDS_Kohmoto}
Let \( \alpha \in [0,1] \setminus \Q \) and let \( E_\alpha : \partial \Tt_\alpha\times(0,\infty) \to \sigma(\Koh) \) be the map defined in \eqref{eq:SpectralEncodingMap} encoding the spectrum \( \sigma(\Koh) \). Then
\[
N_{\alpha,V}\big(E_\alpha(\gamma;V)\big) = N_\alpha(\gamma)
\]
holds for all \( V > 0 \).
\end{proposition}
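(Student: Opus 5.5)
The plan is to prove the identity by approximating the IDS through the periodic approximants $\alpha_k=\varphi(\co_k)=p_k/q_k$. For the approximants the counting is exact and purely combinatorial.

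\textbf{Step 1: counting for a single approximant.} Fix $V>0$ and $\gamma\in\partial\Tt_\alpha$, and write $E:=E_\alpha(\gamma;V)$. For each level $k$, let $u_k$ be the vertex of $\gamma$ at level $k$, if $\gamma$ passes through level $k$. Otherwise use the last vertex of $\gamma$ of level below $k$ together with its descendants. By Proposition~\ref{prop:BijectionSpectralTreeBands}(a),(c), the bands of $\sigma_{\co_k}(V)$ lying strictly to the left of $\Psi(u_k)(V)$ correspond bijectively to the level-$k$ vertices $w\prec u_k$. Denote the number of these by $L_k(\gamma)$; it is independent of $V$.

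For the $q_k$-periodic operator, each spectral band carries exactly $1/q_k$ of the IDS. Hence the IDS $N_k$ of $H_{\alpha_k,V}$ satisfies
\[
N_k(E)\in\Big[\tfrac{L_k(\gamma)}{q_k},\tfrac{L_k(\gamma)+1}{q_k}\Big]
\]
because $E\in\Psi(u_k)(V)$.

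\textbf{Step 2: recursion for $L_k$.} Next I compute $L_k$ recursively from the tree. By Proposition~\ref{prop:Forwardtype}, the vertices $w\prec u_{k}$ at level $k$ split into two groups:
\begin{itemize}
\item descendants of vertices to the left of $\gamma$ at levels $k-1$ and $k-2$;
\item children of $\gamma(j)$ placed to the left of $\gamma(j+1)$.
\end{itemize}
The second group is counted by $\ind(\gamma,j)$, plus the interlaced $B$-vertices, which number $\ind(\gamma,j)+\delta_A(\gamma,j)$ up to the interlacing shift.

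Using the band counts $q_{k}=c_kq_{k-1}+q_{k-2}$ (each subtree of a vertex reproduces the correct number of bands), this gives a recursion of the form
\[
L_k=c_kL_{k-1}+L_{k-2}+\text{(local correction)}.
\]
This is the same recursion as for $p_k,q_k$. Dividing by $q_k$ and telescoping with the identity $q_k\alpha-p_k=(-1)^k|q_k\alpha-p_k|$ should produce the partial sums of \eqref{eq:IDS-formula}. The constant $-\alpha$ comes from the initial terms at levels $-1,0$. The precise bookkeeping is carried out in \cite[Sec.~5.3]{BaBeBiRaTh24}, and I would follow it.

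\textbf{Step 3: passage to the limit.} Since $|q_k\alpha-p_k|<1/q_{k+1}$, the series in \eqref{eq:IDS-formula} converges. The partial sums equal $L_k/q_k+O(1/q_k)$, so they converge to $N_\alpha(\gamma)$. On the other hand, $\Omega_{\alpha_k}\to\Omega_\alpha$ and $\Omega_\alpha$ is uniquely ergodic. By Corollary~\ref{cor:ContinuityMeasures_UniqueErgod} and the DOS convergence \cite[Thm.~V.B]{BecPog20}, the DOS measures converge weakly. Because $N_{\alpha,V}$ is continuous by (IDS--1), this yields $N_k\to N_{\alpha,V}$ uniformly. Therefore
\[
N_{\alpha,V}(E)=\lim_k N_k(E)=\lim_k L_k/q_k=N_\alpha(\gamma).
\]

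\textbf{Main obstacle.} The hard step is Step 2, the exact combinatorial count. Two issues arise. First, for $0<V\le4$ neighbouring bands of levels $k$ and $k+1$ may overlap. This is why I only count bands of a single level, using order (c) of Proposition~\ref{prop:BijectionSpectralTreeBands}, which holds for all $V>0$. Second, one must justify that each band of a periodic approximant carries IDS mass exactly $1/q_k$ and that $N_k(E)$ is trapped as claimed; \cite[Lem.~7.5]{BaBeLo24} supplies this relation to eigenvalue counting.
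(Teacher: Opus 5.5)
Your argument is correct and follows essentially the route the paper describes just before the statement. Via the bijection $\Psi$ of Proposition~\ref{prop:BijectionSpectralTreeBands}, the normalized count $L_k/q_k$ of level-$k$ vertices to the left of $\gamma$ traps the IDS of the periodic approximant at $E_\alpha(\gamma;V)$ up to $1/q_k$. The bookkeeping that turns $\lim_k L_k/q_k$ into \eqref{eq:IDS-formula} is exactly \cite[Sec.~5.3]{BaBeBiRaTh24}, and the link to the IDS is \cite[Lem.~7.5]{BaBeLo24}.

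The paper's proof itself only cites Raymond~\cite{Raym95} for $V>4$ and \cite[Thm.~1.9~(d)]{BaBeLo24} for $0<V\le 4$. Your version makes explicit that once $\Psi$ is available for all $V>0$, the identity follows uniformly in $V$. Your limit step is also a legitimate substitute for finite-volume eigenvalue counting: you use weak convergence of the DOS from unique ergodicity of $\Omega_\alpha$ and continuity of $N_{\alpha,V}$, taken along the infinitely many levels visited by $\gamma$.
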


\begin{proof}
For \( V > 4 \), this formula was first described by Raymond~\cite{Raym95} using the coding scheme. An adapted version based on the spectral \( \alpha \)-tree is provided in~\cite[Sec.~5.3]{BaBeBiRaTh24}. We show in~\cite[Thm.~1.9~(d)]{BaBeLo24} that the identity remains valid for all \( 0 < V \leq 4 \).
\end{proof}

Note that the integrated density of states \( N_{\alpha,V}:\R \to [0,1] \) for \( V < 0 \) can be expressed in terms of the integrated density of states for positive coupling $V>0$; see~\cite[Thm.~1.9~(e)]{BaBeLo24}. This allows the extension of the previous results to negative coupling constants as well. Alternatively, one may define a spectral $\alpha$-tree directly for \( V \in (-\infty,0) \), requiring only a minor adjustment in Definition~\ref{def:SpectralTree}, namely replacing the ordering \( u^0 \prec w^0 \) with \( w^0 \prec u^0 \).

With all preparations in place, Theorem~\ref{thm:DTMP} now follows from the preceding considerations; see~\cite[Thm.~1.1]{BaBeLo24}. We sketch the main steps of the argument for fixed \( V > 0 \) and the reader is referred also to Figure~\ref{fig:IDS_gap_label} and Figure~\ref{fig:TreeSpectrum_OhneLabels}

\begin{enumerate}
\item Fix an integer \( n \in \Z \). Using Proposition~\ref{prop:IDS_Kohmoto}, one explicitly constructs a path \( \eta = (u_0, u_1, u_2, \ldots) \in \partial\Tt_\alpha \) such that
\[
N_\alpha(\eta) = \{n\alpha\} = n\alpha - \lfloor n\alpha \rfloor.
\]
This path is constructed so that the associated summands (see eq.~\eqref{eq:IDS-formula}) in \( N_\alpha \) vanish beyond some index \( k_0 \in \N_0 \).

\item The path \( \eta \) is then modified to a second path \( \gamma \in \partial\Tt_\alpha \) such that \( N_\alpha(\gamma) = N_\alpha(\eta) \), \( \gamma \preceq \eta \) and  \( \gamma \neq \eta \). A detailed construction of \( \eta \) is provided in~\cite{BaBeLo24,BaBeBiRaTh24}. 

\item The encoding map \( E_\alpha \) is order preserving and injective by Theorem~\ref{thm:EncodingSpectrum}, and in combination with Proposition~\ref{prop:IDS_Kohmoto}, we obtain
\[
E_\alpha(\gamma;V) < E_\alpha(\eta;V)
\quad\text{and}\quad
N_{\alpha,V}(E_\alpha(\gamma;V)) = \{n\alpha\} = N_{\alpha,V}(E_\alpha(\eta;V)).
\]
Hence, property~\eqref{item:IDS-1} implies that the integrated density of states remains constant on the interval \( (E_\alpha(\gamma;V), E_\alpha(\eta;V)) \). By property~\eqref{item:IDS-2}, this interval lies entirely in the resolvent set and thus forms a spectral gap. The value \( \{n\alpha\} \) is then the corresponding gap label.
\end{enumerate}

Let us conclude this chapter with a brief discussion on how to label or color the Kohmoto butterfly, now that Theorem~\ref{thm:DTMP} ensures that all spectral gaps are open.

Let $\alpha \in [0,1]$ be irrational. Then, by Theorem~\ref{thm:DTMP}, for each fractional part $\{n\alpha\}$, there exists a spectral gap $\gap$ of $\Koh$ such that
\[
N_{\alpha,V}(E) = \{n\alpha\} = k\alpha + m, \qquad E \in \gap,
\]
where $k, m \in \Z$ are uniquely determined due to the irrationality of $\alpha$. In the following, we refer to the integer $k$ as the \emph{gap index} of $\gap$.

In the context of the almost Mathieu operator, this integer $k$ corresponds to the Hall conductance~\cite{OsaAvr01}. In that work, all spectral gaps are labeled by identifying each gap index $k$ with a distinct color. We refer the reader to~\cite{OsaAvr01} for further background and references.

Let us now return to the Kohmoto butterfly. At first glance, the butterfly seems to consist of connected spectral gaps (white regions in Figure~\ref{fig:Kohmoto-butterfly}). However, as emphasized in~\cite{BecBelTho25} and discussed in Section~\ref{sec:Kohmoto}, the Euclidean topology on $[0,1]$ is not adequate for analyzing the Kohmoto model. Instead, one should consider the finer Farey topology in which $[0,1]$ is cut at every rational value disconnecting the spectral gaps.

As previously mentioned, the butterfly is generated numerically by computing the spectra for rational values of $\alpha$. Consequently, only these spectral gaps can be colored. Let $\alpha = \tfrac{p}{q} \in [0,1]$ with $p$ and $q$ coprime. Then
\[
\set{N_{\alpha,V}(E)}{E \in \R \setminus \sigma(\Koh)} = \left\{ \frac{j}{q} \ \middle| \ 0 \leq j \leq q-1 \right\}.
\]
Thus, to determine the gap index, we solve
\[
\frac{j}{q} = k\frac{p}{q} + m
\qquad \Longleftrightarrow \qquad
j = kp + mq.
\]
However, the integer \(k \in \Z\) is not uniquely determined in this case. In fact it is defined modulo~$q$.

In the joint project~\cite{BanBec25}, we propose for assigning the gap index \( k \) within the interval \( \big[ -\tfrac{q}{2}, \tfrac{q}{2} \big) \cap \Z \) (left-closed, right-open). For a large class of spectral gaps, this choice can be justified by structural arguments showing that the corresponding indices stabilize in the limit as the system approaches an irrational parameter \( \alpha \). For the remaining cases, this convention provides a consistent extension as well. The resulting coloring is displayed in Figure~\ref{fig:ColoredKohmoto}.

Inspired by~\cite{KelPro17,KelSca25}, one may further seek to associate this gap index with a topological winding number. 
It would be interesting to explore whether such a winding number is compatible with the labeling scheme proposed in~\cite{BanBec25}.

\begin{figure}[htb]
    \centering
    \includegraphics[scale=0.37]{Colored_V=2}
    \captionsetup{width=0.95\linewidth}
	\caption{This figure was created in collaboration with Ram Band as part of the joint project~\cite{BanBec25}. For each rational value \( \alpha = \tfrac{p}{q} \), spectral gaps are indexed by their gap index, and each index is assigned a distinct color, see a more detailed discussion in Section~\ref{sec:DTMP_Sturmian}. We use the same color scheme as in~\cite{OsaAvr01}. 
	}
    \label{fig:ColoredKohmoto}
\end{figure}

\chapter[Generalized eigenfunctions]{Generalized eigenfunctions of Schr\"odinger operators}
\label{chap:GeneralEigenfct}

In Chapter~\ref{chap:WorldButterflies}, we presented several results to approximate the spectrum by the spectra of other operators, which was employed in Chapter~\ref{chap:DTMP}. We now return to the more general setting of Schrödinger-type operators in both the continuous and discrete framework. In this chapter, we present results from the joint works \cite{BecPinc20,BecDev22}, where we analyze when a generalized eigenfunction gives rise to an element of the spectrum. In the subsequent section, we characterize the essential spectrum of Schrödinger operators on graphs, following the joint work \cite{BecEli21}. Note that discrete Schrödinger operators associated with Delone sets fall into this framework of Schrödinger operators on graphs; see \cite[Sec.~5]{BecPog20} and Example~\ref{ex:Graph_Delone}.

Under suitable assumptions on the underlying space, the results of this chapter can be summarized as follows:
\begin{itemize}
\item The spectrum $\sigma(H)$ of a Schrödinger-type operator can be characterized by generalized eigenfunctions whose growth is either controlled by the geometry (e.g., subexponential growth) or by an intrinsic object such as the Agmon ground state; see \cite{BecPinc20,BecDev22} and Section~\ref{sec:EllOp_RiemManif}.
\item The essential spectrum $\sigma_{\mathrm{ess}}(H)$ is characterized by bounded generalized eigenfunctions under suitable growth conditions of the underlying graph; see \cite{BecEli21} and Section~\ref{sec:DiscretOp_generEigenfct}.
\end{itemize}

\section[Shnol type theorem]{The spectrum of Schrödinger-type operators: Shnol type theorem}
\label{sec:EllOp_RiemManif}

This section follows the joint works \cite{BecPinc20,BecDev22}, which address the question under which conditions a generalized eigenvalue \( \lambda \), associated with a generalized eigenfunction of a self-adjoint operator \( H \), belongs to the spectrum of \( H \).

Before introducing the formal framework, we briefly outline the background of this question. Shnol \cite{Shnol57} showed that for a Schrödinger operator \( H \) on \( \R^d \) with a potential bounded from below, any generalized eigenfunction with at most polynomial growth implies that the corresponding energy lies in the spectrum of \( H \); see also \cite{Sim81}. Furthermore, it was shown in \cite{Shu79,Sim81,CyconFroeseKirschSimon87} that there exists a dense set of spectral values admitting such generalized eigenfunctions.

Extensions to the framework of Dirichlet forms were provided in \cite{BouSto03,BoLeSt09,HaeKel11,FraLenWin14}, where the polynomial growth condition is replaced by a subexponential growth requirement for the generalized eigenfunction.

The underlying proofs use:
\begin{itemize}
\item a Weyl sequence argument (approximate eigenfunctions),\footnote{These methods were also applied in \cite{BecTak25}, see Section~\ref{sec:SpectralEstimates} and Appendix~\ref{App:SpectralEstimates}.}
\item suitable cut-off functions, called admissible cut-off sequences in \cite{BecDev22},
\item local gradient estimates -- so-called Caccioppoli type estimates \cite{BirMos95,HeiKilMar06,BoLeSt09}.
\end{itemize}

The work \cite{BecPinc20} was motivated by the conjecture stated in \cite[Conj.~9.9]{DevFraPin14}, which asks whether the polynomial growth condition can be replaced by a bound involving the Agmon ground state of the Schrödinger operator (provided it exists). In \cite{BecDev22}, a more general framework was developed that unifies various growth bounds under a single concept. It is also shown that the resulting condition is close to optimal (see \cite[Rem.~1.5]{BecDev22}).

Additionally, \cite[Exam.~7.3]{BecDev22} provides examples of generalized eigenfunctions exhibiting exponential growth but where one can still prove with similar techniques that the generalized eigenvalue lies in the spectrum of the operator.

In the following, let $\Omega$ denote a domain in $\R^d$ or, more generally, a domain in a non-compact, connected, $d$-dimensional Riemannian manifold. Let $m$ be a strictly positive measurable function on $\Omega$ such that both $m$ and $m^{-1}$ are bounded on every compact subset of $\Omega$. We define the weighted measure by $dm := m(x)\,dx$, where $dx$ denotes the Riemannian volume form on $\Omega$. In the case $\Omega \subseteq \R^d$, this corresponds to the Lebesgue measure. Then \( (\Omega,dm) \) is a \emph{weighted manifold}.

Let $T_x\Omega$ be the tangent space to $\Omega$ at $x\in \Omega$ and $T\Omega$ be the tangent bundle.
The bundle of endomorphisms of the tangent bundle $T\Omega$ is denoted by $\text{End}(T\Omega)$, equipped with the induced inner product $\langle\cdot,\cdot\rangle$ and corresponding norm $|\cdot|$.

A symmetric measurable section $A$ of $\text{End}(T\Omega)$ is called \emph{locally uniformly elliptic} if for every compact set $K \subseteq \Omega$, there exists a constant $\lambda_K > 0$ such that
\[
\frac{1}{\lambda_K} |\xi|^2 \leq \langle A(x)\xi, \xi \rangle \leq \lambda_K |\xi|^2, \qquad \text{for all } x \in K \text{ and } (x,\xi) \in T\Omega.
\]

Fix such an $A$ and a potential $V \in L^p_{\mathrm{loc}}(\Omega,\R)$ with $p > \tfrac{d}{2}$. Here $L^p_{\mathrm{loc}}(\Omega,\R)$ denotes the equivalence classes of all measurable functions $f:\Omega\to\R$ that have finite $L^p$-norm if restricted to any compact subset in $\Omega$. We define the symmetric bilinear form
\[
Q : C_c^\infty(\Omega) \times C_c^\infty(\Omega) \to \C, \qquad
Q(u, w) := \int_\Omega \big(\langle A \nabla u, \nabla w \rangle + V u \overline{w}\big) \,dm,
\]
where $C_c^\infty(\Omega)$ denotes the space of smooth, compactly supported functions on $\Omega$.
We assume that $Q$ is semibounded from below, i.e., there exists a constant $c \in \R$ such that
\[
Q(u, u) \geq c\, \|u\|_{2,m}^2, \qquad u \in C_c^\infty(\Omega),
\]
where $\|u\|_{2,m}$ denotes the $L^2$-norm of $u$ in $L^2(\Omega, dm)$. Under this assumption, the form $Q$ is closable (see e.g.~\cite{Stoll01}), and we denote its closure by the same symbol. The domain of the closed form is given by $D(Q) := \overline{C_c^\infty(\Omega)}^{\|\cdot\|_Q}$, where the $Q$-norm is defined by
\[
\|u\|_Q := \sqrt{Q(u, u) + (1 - c)\, \|u\|_{2,m}^2}.
\]
By construction, $C_c^\infty(\Omega)$ is a core for $D(Q)$, and by general theory, there exists a unique self-adjoint operator $H_V$ with domain $D(H_V) \subseteq D(Q)$ such that
\[
Q(u, w) = \langle H_V u, w \rangle \qquad \text{for all } u\in D(H_V),\,  w \in D(Q).
\]
This Schrödinger-type operator has the formal expression
\begin{equation}
\label{eq:SchroedType_Continuous}
H_V = -\mathrm{div}(A \nabla \cdot) + V
\end{equation}
and we are interested in the spectrum $\sigma(H_V)$ of $H_V$. The operator is called \emph{nonnegative} (i.e., $H_V \geq 0$) if $Q(u,u) \geq 0$ for all $u \in C_c^\infty(\Omega)$.

The framework of criticality theory \cite{Pinsky95,PinTin06,Pin07,PinPsa16,KelPinPog20} is fundamental.

\begin{definition}
Let $H_V$ be as in~\eqref{eq:SchroedType_Continuous}. The operator is called
\begin{itemize}
\item \emph{supercritical} if $H_V$ is not nonnegative,
\item \emph{critical} if $H_V \geq 0$ and for every nonzero, nonnegative $W \in L^p_{\mathrm{loc}}(\Omega, \R)$ with $p > \tfrac{d}{2}$, the perturbed operator $H_{V - W}$ is supercritical,
\item \emph{subcritical} if $H_V \geq 0$ and not critical.
\end{itemize}
\end{definition}

If $H_V$ is critical, then there exists a unique (up to a multiplicative constant) positive function $u \in W^{1,2}_{\mathrm{loc}}(\Omega)$ solving (in the weak sense)
\[
\langle H_V u, w \rangle = 0, \qquad w \in C_c^\infty(\Omega).
\]
This function is called the \emph{Agmon ground state} or a minimal positive \emph{$H_V$-harmonic function}. As shown in the above references (see also \cite[Thm.~3.5]{BecPinc20}), the operator $H_V$ is critical if and only if there exists a \emph{null sequence} $(\varphi_n)_{n \in \N}$, i.e., a sequence in $C_c^\infty(\Omega)$ such that for some bounded ball $B \subseteq \Omega$,
\[
\int_B \varphi_n^2\,dm = 1
\qquad \text{and} \qquad
\lim_{n \to \infty} \langle H_V \varphi_n, \varphi_n \rangle = 0.
\]

We now return to the notion of generalized eigenfunctions.

\begin{definition}
\label{def:generEigenfct}
Let $(\Omega, dm)$ be a weighted manifold and let $H_V$ be a Schrödinger-type operator of the form~\eqref{eq:SchroedType_Continuous}. A function $u \in W^{1,2}_{\mathrm{loc}}(\Omega)$ is called a \emph{generalized eigenfunction} of $H_V$ with \emph{(generalized) eigenvalue} $\lambda \in \R$ if
\[
\langle H_V u, v \rangle = \lambda \int_\Omega u\, \overline{v} \, dm, \qquad \text{for all } v \in C_c^\infty(\Omega).
\]
\end{definition}

To verify whether a generalized eigenvalue $\lambda$ belongs to the spectrum $\sigma(H_V)$, the Weyl criterion is employed. This involves the construction of approximate eigenfunctions via suitable cut-off functions. In \cite{BecPinc20}, null sequences are used as cut-off functions to resolve the conjecture \cite[Conj.~9.9]{DevFraPin14} formulated in the following statement.

\begin{theorem}[\cite{BecPinc20}]
\label{thm:ShnolType_AgmonGroundState}
Let $(\Omega, dm)$ be a weighted manifold and let $H_V$ be a Schrödinger-type operator on $(\Omega, dm)$ of the form~\eqref{eq:SchroedType_Continuous}. Suppose there exists a function $W \in L^\infty(\Omega, dm)$ such that $H_{V + W}$ is critical with Agmon ground state $\varphi$.

If $u$ is a generalized eigenfunction of $H_V$ with generalized eigenvalue $\lambda \in \R$ satisfying
\[
|u| \leq C \varphi \quad \text{in } \Omega
\]
for some constant $C > 0$, then $\lambda \in \sigma(H_V)$.
\end{theorem}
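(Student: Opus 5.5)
We argue via the Weyl criterion: it suffices to construct functions $\psi_n \in D(Q)$ with $\|\psi_n\|_{2,m}$ bounded away from zero and
\[
\sup\set{ \big| Q(\psi_n,w) - \lambda\langle \psi_n,w\rangle \big| }{ \|w\|_Q\le 1 } \longrightarrow 0 .
\]
This is the form version of the Weyl criterion, and it implies $\lambda\in\sigma(H_V)$. As approximate eigenfunctions we take $\psi_n := \eta_n u$, where $\eta_n := \varphi_n/\varphi$ and $(\varphi_n)_{n\in\N}$ is a null sequence for the critical operator $H_{V+W}$. Criticality guarantees such a sequence; moreover, one may choose it with $0\le \varphi_n \to \varphi$ locally uniformly, or at least with $\varphi_n\le C'\varphi$ after the standard truncation $\varphi_n\mapsto \min\{\varphi_n,\varphi\}$. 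The cut-offs $\eta_n$ are then bounded by $1$ and compactly supported, and $\eta_n\to 1$ locally.

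The key tool is the ground state representation. For the critical operator with ground state $\varphi>0$ and every compactly supported $f=\eta\varphi$ we have
\[
\langle H_{V+W}f,f\rangle = \int_\Omega \varphi^2 \langle A\nabla\eta,\nabla\eta\rangle\,dm .
\]
Consequently, $(\varphi_n)$ being a null sequence means exactly
\[
\int_\Omega \varphi^2\langle A\nabla \eta_n,\nabla\eta_n\rangle\,dm \to 0 .
\]
Next we compute the defect using the eigenfunction equation for $u$, tested with $\overline{\eta_n} w$ and with $\eta_n$ real. This gives
\[
Q(\eta_n u, w) - \lambda \langle \eta_n u , w\rangle = \int_\Omega \big( u\langle A\nabla\eta_n,\nabla w\rangle - \overline{w}\langle A\nabla u,\nabla \eta_n\rangle \big)\,dm .
\]
By Cauchy--Schwarz with the weight $A$, and using $|u|\le C\varphi$, the first term is bounded by
\[
C\Big(\int \varphi^2\langle A\nabla\eta_n,\nabla\eta_n\rangle\Big)^{1/2}\Big(\int\langle A\nabla w,\nabla w\rangle\Big)^{1/2}.
\]
The quantity $\int\langle A\nabla w,\nabla w\rangle$ is controlled by $\|w\|_Q$. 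To see this, note that $Q(w,w)\ge c\|w\|^2$, and the $A$-energy is bounded by $Q(w,w)$ plus a multiple of $\|w\|^2$ thanks to $V\in L^p_{loc}$. For the global estimate we use instead $H_{V+W}\ge0$ with $W\in L^\infty$, which gives $\int\langle A\nabla w,\nabla w\rangle\le Q(w,w)+\|W\|_\infty\|w\|^2-\int V_-$-type terms. This step may need care; at worst we absorb the remainder through the form bound of $V_-$ relative to the kinetic part.

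The second term is the main obstacle, since it involves $\nabla u$ on the support of $\nabla\eta_n$ and requires a Caccioppoli-type estimate. We first integrate by parts to move the derivative onto $w\eta_n$-type products; alternatively, we use the identity
\[
Q(\eta_n u,\eta_n u) - \lambda\|\eta_n u\|^2 = \int |u|^2\langle A\nabla\eta_n,\nabla\eta_n\rangle\,dm ,
\]
which is obtained by testing the eigenequation with $\eta_n^2\overline u$. By polarization this bounds
\[
\Big|\int \eta_n\,\overline w\,\langle A\nabla u,\nabla\eta_n\rangle\Big|
\]
in terms of
\[
\Big(\int|u|^2\langle A\nabla\eta_n,\nabla\eta_n\rangle\Big)^{1/2}\le C\Big(\int\varphi^2\langle A\nabla\eta_n,\nabla\eta_n\rangle\Big)^{1/2}
\]
and $\|w\|_Q$. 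Concretely, we rewrite $\overline w\,\nabla u = \nabla(u\overline w)-u\nabla\overline w$ and apply the eigenequation to the test function $u\overline w$ times the gradient term. If this local manipulation turns out to be delicate, we replace it by a Caccioppoli inequality for $u$ on $\supp\nabla\eta_n$, weighted by $\varphi$.

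It then remains to check non-degeneracy. Since $\eta_n\to1$ locally and $u\not\equiv0$, we get $\|\psi_n\|_{2,m}\ge \|\eta_n u\|_{L^2(B)}\to\|u\|_{L^2(B)}>0$ on a suitable ball $B$; if needed we renormalize the null sequence on $B$. Finally, $\psi_n$ is compactly supported and lies in $W^{1,2}$, so $\psi_n\in D(Q)$. By the Weyl criterion, $\lambda\in\sigma(H_V)$.
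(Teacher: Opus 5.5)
Your overall strategy (a Weyl sequence $\psi_n=\frac{\varphi_n}{\varphi}u$ built from a null sequence of $H_{V+W}$, the ground state representation, and a Caccioppoli-type estimate) is the one behind \cite{BecPinc20}. Your defect formula, the localization identity $Q(\eta u,\eta u)-\lambda\|\eta u\|_{2,m}^2=\int|u|^2\langle A\nabla\eta,\nabla\eta\rangle\,dm$ and the non-degeneracy argument are correct. However, both defect estimates have genuine gaps.

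\textbf{First term.} You need $\int\langle A\nabla w,\nabla w\rangle\,dm\lesssim\|w\|_Q^2$, and this fails in this generality. With $V\in L^p_{\mathrm{loc}}$ and $Q$ merely semibounded, $V_-$ need not be form bounded relative to the kinetic part. Moreover, $H_{V+W}\geq0$ gives the inequality in the wrong direction: it yields a lower bound $\int|\nabla w|_A^2\geq-\int(V+W)|w|^2$. For instance, $-\Delta-\frac{(d-2)^2}{4|x|^2}$ on $\R^d\setminus\{0\}$, $d\geq3$, is critical with ground state $|x|^{(2-d)/2}$. The scaling $w\mapsto s^{(d-2)/2}w(s\,\cdot)$ produces $w$ with $\int|\nabla w|^2\,dx=1$ and $\|w\|_Q$ arbitrarily small. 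Since the cut-offs must also act near the origin, where the local constants blow up by scaling, restricting to $\supp\nabla\eta_n$ does not help.

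The real issue is your splitting. Both of your terms contain $\int u\bar w\langle A\nabla\eta_n,\nabla\log\varphi\rangle\,dm$, which $\|w\|_Q$ does not control but which cancels in the difference. With $g=u/\varphi$ and $f=w/\varphi$ one has
\[
Q(\eta_n u,w)-\lambda\langle\eta_n u,w\rangle=\int_\Omega\varphi^2\big(g\langle A\nabla\eta_n,\nabla f\rangle-\bar f\langle A\nabla g,\nabla\eta_n\rangle\big)\,dm .
\]
The ground state representation of $H_{V+W}$ gives $\int\varphi^2\langle A\nabla f,\nabla f\rangle\,dm\leq Q(w,w)+\|W\|_\infty\|w\|_{2,m}^2$. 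This is exactly where criticality and $W\in L^\infty$ enter. Together with $|g|\leq C$, it bounds the first term by $C\big(\int\varphi^2\langle A\nabla\eta_n,\nabla\eta_n\rangle\,dm\big)^{1/2}$ times a multiple of $\|w\|_Q$.

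\textbf{Second term.} This is the technical core, and your proposal does not handle it.
\begin{itemize}
\item Polarization does not apply. Your localization identity concerns the single vector $\eta_n u$, since it comes from testing the eigenequation with $\eta_n^2\bar u$. It polarizes only to pairs $\eta_n u_1,\eta_n u_2$ of generalized eigenfunctions, not to $(\eta_n u,w)$ with arbitrary $w\in D(Q)$. Indeed, $Q(\psi_n,\psi_n)-\lambda\|\psi_n\|_{2,m}^2\to0$ only places $\lambda$ in the closed numerical range $[\inf\sigma(H_V),\infty)$ and cannot rule out a spectral gap.
\item Rewriting $\bar w\nabla u=\nabla(u\bar w)-u\nabla\bar w$ is circular. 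The integral $\int\langle A\nabla(u\bar w),\nabla\eta_n\rangle\,dm$ is not an instance of the equation for $u$, because the derivative falls on $\eta_n$.
\end{itemize}
What is required is a Caccioppoli estimate for $g$ with respect to the transformed equation $-\varphi^{-2}\mathrm{div}(\varphi^2A\nabla g)=(\lambda+W)g$, whose zeroth-order term is bounded. A Caccioppoli estimate for $u$ itself involves $V_-$ on regions escaping to infinity, with no uniform control. The estimate must be localized to the transition region of $\eta_n$ via neighbouring cut-offs and combined with $|g|\leq C$.

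Null-sequence cut-offs only have small weighted energy and no pointwise gradient bounds. So pairing $\nabla g$ with an arbitrary $f$ on that region does not close by Cauchy--Schwarz, and the cut-off sequence must be chosen compatibly. This is the role of the neighbouring cut-offs $\varphi_{n\pm1}$ and the sets $A_n$ in Theorem~\ref{thm:ShnolType}\,(i). You name this step only as a fallback, so as written the proposal does not prove the theorem.
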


\begin{proof}
This is proven in \cite[Thm.~1.2]{BecPinc20}.
\end{proof}

The Agmon ground state is intrinsically defined by a minimal positive $H_V$-harmonic function. Theorem~\ref{thm:ShnolType_AgmonGroundState} states that whenever a generalized eigenfunction is not growing faster than an $H_V$-harmonic function, then the corresponding generalized eigenvalue is in the spectrum of $H_V$. Examples illustrating this result can be found in \cite[Sec.~1]{BecPinc20}.

\medskip

As mentioned before, discrete structures are also treated in \cite[Sec.~6]{BecPinc20}. We summarize the relevant framework and results here. Let $X$ be a countably infinite set equipped with the discrete topology and let $m:X\to(0,\infty)$ be a measure with full support. We consider the Hilbert space $\ell^2(X,m)$ of real-valued functions $u:X\to\R$ with finite $\ell^2$-norm
\[
\|u\|_{2,m}^2 := \sum_{x\in X} |u(x)|^2\, m(x) < \infty.
\]

\begin{definition}
\label{def:weightGraph}
A (weighted) graph $(X,b)$ is a countable set $X$ and a function $b:X\times X\to [0,\infty)$ satisfying:
\begin{itemize}
\item $b(x,x) = 0$ for all $x \in X$ (no loops),
\item $b(x,y) = b(y,x)$ for all $x,y \in X$ (undirected graph),
\item $\deg(x):=\sum_{y\in X} b(x,y) < \infty$ for all $x \in X$ (bounded weighted vertex degree).
\end{itemize}
\end{definition}

In the notation used in Chapter~\ref{chap:WorldButterflies} and Chapter~\ref{chap:DTMP}, the edge set of this graph is given by $\set{(x,y) \in X \times X}{b(x,y) > 0}$. If $b(x,y)>0$, we say that $x$ and $y$ are connected by an edge with edge weight $b(x,y)$. The graph $(X,b)$ is called \emph{connected} if every pair of vertices can be joined by a finite path.

Analogous to the continuous case, one defines a symmetric bilinear form on $C_c(X)\times C_c(X)$ associated with the graph $(X,b)$ and a potential $c:X\to[0,\infty]$ (also referred to as a killing term) by
\[
Q_{b,c}(u,w) := \sum_{x,y\in X} b(x,y)\, (u(x)-u(y))(w(x)-w(y)) + \sum_{x\in X} c(x)\, u(x) w(x).
\]
Note that in the discrete setting, $C_c(X)$ simply denotes the space of finitely supported functions on $X$. The form $Q_{b,c}$ is semibounded from below and hence closable. Its closure, still denoted by $Q_{b,c}$, has domain given by the completion $D(Q_{b,c}) := \overline{C_c(X)}^{\|\cdot\|_{Q_{b,c}}}$ with form norm
\[
\|u\|_{Q_{b,c}} := \sqrt{Q_{b,c}(u,u) + \|u\|_{2,m}^2}.
\]
By general theory (see e.g.~\cite{Stoll01}), there exists a unique self-adjoint operator $H_c$ on $\ell^2(X,m)$ associated with $Q_{b,c}$. This operator acts as
\begin{equation}
\label{eq:GraphSchrOp}
(H_c u)(x) = \sum_{y\in X} b(x,y)\, (u(x) - u(y)) + c(x)\, u(x), \qquad u \in C_c(X).
\end{equation}

According to \cite[Thm.~7]{KelLen12}, every regular Dirichlet form on $\ell^2(X,m)$ arises from such a graph representation. That is, for each regular Dirichlet form $Q$, there exists a pair $(b,c)$ such that $Q = Q_{b,c}$. For definitions and further details regarding regular Dirichlet forms in the discrete setting, we refer the reader to \cite[Sec.~6]{BecPinc20} and the references therein.

The notions of (super-/sub-)criticality in the discrete setting are analogous to those in the continuous case. A comprehensive criticality theory for discrete structures was developed in \cite{KelPinPog20}, where the existence of null sequences and minimal positive $H_c$-harmonic functions was also established.

To define a notion of generalized eigenfunction that may lie outside the domain of $H_c$ but still makes sense pointwise, we introduce the function space
\[
F(X) := \set{u : X \to \R}{\sum_{y \in X} b(x,y) |u(y)| < \infty \text{ for each } x \in X}.
\]
A function $u \in F(X)$ is called \emph{$H_c$-harmonic} if $H_c u = 0$ in $X$.

Let $C_b(X)$ denote the space of bounded, real-valued functions on $X$. With these definitions in place, we now state the discrete analogue of Theorem~\ref{thm:ShnolType_AgmonGroundState}.

\begin{theorem}[\cite{BecPinc20}]
\label{thm:ShnolType_AgmonGroundState_Discrete}
Let $b$ be a connected graph on $X$, let $c : X \to [0,\infty)$, and let $Q_{b,c}$ be the associated closed, semibounded, symmetric form on $\ell^2(X,m)$ with associated self-adjoint operator $H_c$. Suppose $W \in C_b(X)$ is such that $H_{c+W}$ is critical in $X$ with Agmon ground state $\varphi$. If $u \in F(X)$ is a generalized eigenfunction of $H_c$ with generalized eigenvalue $\lambda \in \R$ satisfying
\[
|u(x)| \leq C \varphi(x) \quad \text{for all } x \in X
\]
for some constant $C > 0$, then $\lambda \in \sigma(H_c)$.
\end{theorem}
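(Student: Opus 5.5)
We plan to prove Theorem~\ref{thm:ShnolType_AgmonGroundState_Discrete} through the Weyl criterion. Since $H_c$ is self-adjoint, it suffices to construct finitely supported functions $\psi_n\in C_c(X)$ with $\|\psi_n\|_{2,m}\geq c_0>0$ and $\|(H_c-\lambda)\psi_n\|_{2,m}\to 0$. These approximate eigenfunctions will be built by multiplying $u$ by a null sequence of the critical operator $H_{c+W}$. Criticality theory on graphs \cite{KelPinPog20} provides a null sequence $(e_n)$ in $C_c(X)$ with $0\leq e_n$, $e_n\to\varphi$ pointwise (after normalization at a fixed vertex $o$), and $Q_{b,c+W}(e_n,e_n)\to 0$. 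We then set $\psi_n:=(u/\varphi)\,e_n$. This quotient is well defined because $\varphi>0$ on the connected graph, and the hypothesis gives $|u/\varphi|\leq C$.

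The first step is the lower bound. Since $e_n\to\varphi$ pointwise, $\psi_n(x)\to u(x)$ for all $x$. We choose $o$ with $u(o)\neq 0$, which is possible whenever $u\not\equiv0$. Then $\|\psi_n\|_{2,m}^2\geq |\psi_n(o)|^2m(o)\to|u(o)|^2m(o)>0$.

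The second step is a ground state representation (Leibniz rule) for $H_c-\lambda$ applied to a product $g\,e$, where $g=u/\varphi$ and $e$ is finitely supported. Writing $u=g\varphi$, we will use the pointwise discrete product formula
\[
(H_c(g e))(x)-\lambda g(x)e(x)= \sum_{y} b(x,y)\big(g(x)e(x)-g(y)e(y)\big)+c(x)g(x)e(x)-\lambda g(x)e(x).
\]
We expand $g(x)e(x)-g(y)e(y)=g(y)\,(e(x)-e(y))+e(x)\,(g(x)-g(y))$ and subtract $e(x)\big[(H_c-\lambda)u\big](x)/\varphi(x)\cdot\varphi(x)$, which equals $0$. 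With $W$ bounded, the resulting error terms are controlled by sums of the form $\sum_y b(x,y)|e(x)-e(y)|$ and $\sum_y b(x,y)|g(y)|\,|\varphi(x)-\varphi(y)|$-type differences, weighted against the boundedness of $g$. The aim is the estimate
\[
\|(H_c-\lambda)\psi_n\|_{2,m}^2\lesssim C^2\,\big(Q_{b,c+W}(e_n,e_n)+\text{terms vanishing as }n\to\infty\big),
\]
using Cauchy–Schwarz and the fact that $H_{c+W}\varphi=0$ replaces the $\varphi$-differences.

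The third step, which we expect to be the main obstacle, is handling the $m$-weighted $\ell^2$ norm of the first-order term $\sum_y b(x,y)(e_n(x)-e_n(y))g(y)$. Cauchy–Schwarz gives $\deg(x)\sum_y b(x,y)(e_n(x)-e_n(y))^2$, but there is a factor $\deg(x)/m(x)$ that need not be bounded. We will try two approaches. The first is to take the Weyl sequence instead in form sense, showing $\sup_{\|v\|_{Q}\le1}|Q_{b,c}(\psi_n,v)-\lambda\langle\psi_n,v\rangle|\to0$, which is equivalent to $\lambda\in\sigma(H_c)$ for the closed form. In this form version, only $Q_{b,0}$-energies of $e_n$ appear, and these are exactly controlled by the null sequence property through the ground state transform $Q_{b,c+W}(g\varphi,g\varphi)=\sum b(x,y)\varphi(x)\varphi(y)(g(x)-g(y))^2$. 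The second, as a fallback, is to replace $m$ by $\deg$-comparable weights locally. The remaining term $W\psi_n$ is bounded by $\|W\|_\infty C e_n$, but it is not small. We would absorb it by working with $H_{c+W}$'s null sequence, where $\langle H_{c+W}\psi_n,\psi_n\rangle\to 0$ together with $\psi_n\to u$. If this fails, we would follow \cite{BecPinc20} and note that the $W$-term cancels, since $u$ is a generalized eigenfunction of $H_c$ and the error involves only derivatives of $e_n$.
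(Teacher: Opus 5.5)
Your frame is sound and matches how the paper describes the proof in \cite{BecPinc20}: a Weyl sequence $\psi_n=(u/\varphi)e_n$ built from a null sequence of the critical operator $H_{c+W}$, the lower bound at a vertex with $u(o)\neq0$, and a form version of Weyl's criterion. However, the estimate that actually proves the theorem is never obtained, and the claim your third step relies on is false.

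Put $f_n:=e_n/\varphi$ and $g:=u/\varphi$, so $|g|\le C$. For $v\in C_c(X)$ let $w:=v/\varphi$, and write $\Delta a:=a(x)-a(y)$, $\bar a:=\tfrac12(a(x)+a(y))$. Since $(H_c-\lambda)u=0$ pointwise, the potential $c$ cancels exactly and $W$ enters only through $\varphi$; there is no term $W\psi_n$ to absorb. Up to the normalization constant of the form,
\[
Q_{b,c}(\psi_n,v)-\lambda\langle\psi_n,v\rangle=\sum_x v(x)\sum_y b(x,y)u(y)\,\Delta f_n=\tfrac12\sum_{x,y}b(x,y)\varphi(x)\varphi(y)\,\Delta f_n\,\big(\bar g\,\Delta w-\bar w\,\Delta g\big).
\]
The null-sequence property controls exactly $\mathcal{E}(f_n):=\tfrac12\sum_{x,y}b(x,y)\varphi(x)\varphi(y)(\Delta f_n)^2$. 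By the ground state representation this equals $Q_{b,c+W}(e_n,e_n)\to0$. It does \emph{not} control $Q_{b,0}(e_n,e_n)$. If the latter tended to $0$, Fatou's lemma and $e_n\to\varphi$ would give $Q_{b,0}(\varphi,\varphi)=0$, so $\varphi$ would be constant and $c+W\equiv0$. Hence your expansion in differences of $e_n$ and of $g$, and the assertion that only $Q_{b,0}$-energies of $e_n$ appear, do not lead to small terms.

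Estimating the error as a whole by Cauchy--Schwarz and $|u|\le C\varphi$ uses $\sum_y b(x,y)\varphi(y)=(\deg(x)+c(x)+W(x))\varphi(x)$, which follows from $H_{c+W}\varphi=0$. It leaves the factor $\sum_x(\deg(x)+c(x)+W(x))v(x)^2$. Its $\deg$-part is not bounded by $\|v\|_Q^2$ when $\deg/m$ is unbounded. This is exactly the obstruction you noticed, and neither passing to the form version nor your fallbacks remove it; the plain argument closes only if $\sup_x\deg(x)/m(x)<\infty$.

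The way out is the splitting above. The $\bar g\,\Delta w$ part is harmless: it is at most $C\,\mathcal{E}(f_n)^{1/2}\mathcal{E}(w)^{1/2}$, and $\mathcal{E}(w)=Q_{b,c+W}(v,v)\lesssim\|v\|_Q^2$. The $\bar w\,\Delta g$ part contains the gradient of $u$ and needs a Caccioppoli-type estimate in which the test function itself is the weight. Test $(H_c-\lambda)u=0$ against the finitely supported function $uv^2/\varphi^2=\varphi g w^2$ and rewrite with $H_{c+W}\varphi=0$. Then use $\Delta(gw^2)=\bar g\,\Delta(w^2)+\overline{w^2}\,\Delta g$, $\Delta(w^2)=2\bar w\,\Delta w$, $\overline{w^2}\ge\bar w^2$ and $|g|\le C$. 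One obtains
\[
\sum_{x,y}b(x,y)\varphi(x)\varphi(y)(\Delta g)^2\,\bar w^2\lesssim C^2\big(\mathcal{E}(w)+\|v\|_{2,m}^2\big),
\]
with constants depending on $\lambda$ and $W$. Consequently the second part is also $\lesssim C\,\mathcal{E}(f_n)^{1/2}\|v\|_Q$. This gives $\sup_{\|v\|_Q\le1}|Q_{b,c}(\psi_n,v)-\lambda\langle\psi_n,v\rangle|\to0$, which together with your lower bound yields $\lambda\in\sigma(H_c)$. Without this local energy estimate, or an explicit bounded-degree assumption, your proposal does not prove the theorem.
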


\begin{proof}
This is proven in \cite[Thm.~6.3]{BecPinc20}.
\end{proof}

Before turning to the generalization of Theorem~\ref{thm:ShnolType_AgmonGroundState} established in \cite{BecDev22}, we discuss the connection to Delone sets. In particular, we explain how a graph structure can be naturally associated with a Delone set.

\begin{example}
\label{ex:Graph_Delone}
Recall the basic notions introduced in Definition~\ref{def:Delone} for Delone sets in a locally compact, second countable Hausdorff group $G$. Every Delone set $D$ is countable and will serve as the vertex set $X$ in the previous setting.

Let $D \in \Del$ with fixed parameters $r_0, R_0 > 0$. We define a graph structure on $D$ via a function
\[
b = b_f : D \times D \to \R, \qquad b(x,y) := f(x^{-1}y),
\]
where $f : G \to [0,\infty)$ is a continuous function with compact support satisfying $f(e) = 0$ and $f(g) = f(g^{-1})$ for all $g \in G$. This symmetry condition can be ensured by symmetrizing an arbitrary continuous function $\tilde{f} : G \to [0,\infty)$ via
\[
f(g) := \tilde{f}(g) + \tilde{f}(g^{-1}), \qquad g\in G.
\]

The assumptions imply:
\begin{itemize}
\item $b(x,x) = 0$ for all $x \in D$ since $f(e)=0$,
\item $b(x,y) = b(y,x)$ for all $x, y \in D$ since $f(g) = f(g^{-1})$ for all $g \in G$,
\item since $x^{-1}D$ is uniformly discrete for all $x \in D$ and $f$ has compact support, we have
\[
\deg(x) = \sum_{y\in D} b(x,y)
	= \sum_{y\in D} f(x^{-1}y)
	= \sum_{z\in x^{-1}D\cap\supp(f)} f(z)
	<\infty,
	\qquad x\in D.
\] 
\end{itemize}
Thus, $(D, b_f)$ defines a weighted, undirected graph, which we refer to as the \emph{Delone graph} associated with $D$ and $f$.
The condition on $f$ can be relaxed by requiring sufficient decay to ensure summability. 

Figure~\ref{fig:DeloneGraph} illustrates the principle: edges are determined by evaluating $f$ on the relative positions of elements in $D$. Note that the Delone graph $(D,b_f)$ has bounded vertex degree $\sup_{x\in D} \deg(x)<\infty$ if $f$ is continuous with compact support.

\begin{figure}[htb]
    \centering
    \includegraphics[scale=1.5]{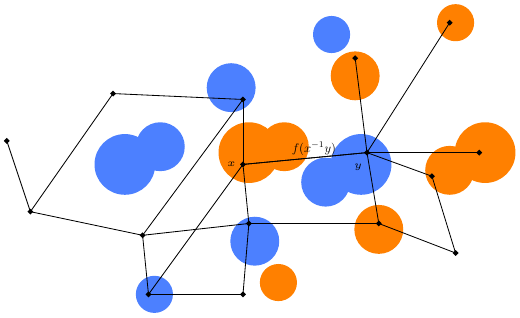}
    \captionsetup{width=0.95\linewidth}
    \caption{Sketch of the Delone graph. Black points represent elements of $D$, with particular points $x,y \in D$ indicated. The blue and orange regions illustrate the support of the shifted functions $f(x^{-1} \cdot)$ and $f(y^{-1} \cdot)$, respectively.}
    \label{fig:DeloneGraph}
\end{figure}

To define a potential term, let $V : \Del \to [0,\infty)$ be a continuous function. We set $c : D \to [0,\infty)$ by
\[
c(x) := V(x^{-1}D).
\]
This yields a self-adjoint operator $H_{D,c}$ on $C_c(D)$ acting as in equation~\eqref{eq:GraphSchrOp}.

Although this construction does not produce a dynamically-defined operator family in the strict sense of Definition~\ref{def:DynDefOp} (since $D$ is not a group, in general), the operator $H_{D,c}$ formally resembles such an operator. To explain this, we recall that $\Del$ is a compact metric space with metric $\dw$, see Proposition~\ref{prop:metric_weightDelone}. The topology coincides with the Chabauty-Fell topology. Define
\[
\varphi_f : \Del \to \R, \qquad \varphi_f(D) := \sum_{z \in D} f(z),
\]
which is continuous by \cite[Lem.~3.5]{BecPog20}. Observe that 
\[
\varphi_f(x^{-1}D) = \sum_{y \in D} f(x^{-1}y) = \sum_{y \in D} b(x,y), \qquad x\in D,
\]
which coincides with the weighted vertex degree $\deg(x)$ at $x \in D$ in the associated Delone graph $(D,b_f)$.

Let $\rho : G \to [0,1]$ be a continuous function with $\rho(e) = 1$ and $\rho(g) = 0$ for $g \in G \setminus B_{r_0}(e)$. Define a kernel
\[
k : G \times \Del \to \C, \qquad k(g,D) := -f(g) + \rho(g) \cdot \big(V(D) + \varphi_f(D)\big),
\]
which is continuous and has compact support in $G \times \Del$.

Since $D$ is $r_0$-uniformly discrete and $\rho$ vanishes outside $B_{r_0}(e)$, for $x,y \in D$ the condition $\rho(x^{-1}y) \neq 0$ implies $x = y$. Inserting this into the operator expression yields:
\begin{align*}
(H_{D,c}u)(x) = &\sum_{y\in D} -f(x^{-1}y)\cdot u(y) + \left(V(x^{-1}D) + \sum_{z\in x^{-1}D}f(z) \right)\cdot u(x)\\
	= &\sum_{y\in D} k\big(x^{-1}y,x^{-1}D\big) \cdot u(y)
\end{align*}
for $x \in D$ and $u \in C_c(D)$.

This representation suggests that $H_{D,c}$ resembles a dynamically-defined operator restricted to a transversal of $D$, see the definition of the transveral on page~\pageref{page:transversal}. However, since $D$ is generally not a group, it does not directly fit into the standard framework. The correct setting involves groupoids over the transversal of $D$, as used in \cite{BBdN18}. 
In particular, combining the above considerations with the results from \cite[Thm.~3]{BBdN18} shows that the spectral map
\[
\Sigma_c : \inv \to \Kk(\R), \qquad \hull(D) \mapsto \sigma(H_{D,c}),
\]
is continuous if $G$ is amenable, where $(\inv, \delta_H)$ is the compact metric space introduced in Section~\ref{sec:Weight_Delone}.
Proposition~\ref{prop:Homeo_hulls_transversals} ensures that convergence in $\delta_H$ is equivalent to convergence of the corresponding transversals.

While this yields qualitative spectral continuity, quantitative spectral estimates analogous to Theorem~\ref{thm:SpecEst_DynDefOp} are currently unavailable in the literature and are the subject of ongoing research; see also the discussion in Chapter~\ref{chap:discussion}.
\end{example}

Checking the Shnol-type theorems, one observes that approximate eigenfunctions can also be constructed using distance functions defined by so-called intrinsic metrics. In \cite[Sec.~5 and 6]{BecDev22}, the notion of an \emph{admissible cut-off sequence} is introduced. It is shown that this notion encompasses both null sequences and distance functions associated with intrinsic metrics, and that it suffices to construct a Weyl sequence yielding a result analogous to Theorem~\ref{thm:ShnolType_AgmonGroundState}.
We use the notation $|\nabla v|_A^2:=\langle A\nabla v, \nabla v\rangle$.

\begin{theorem}[\cite{BecDev22}]
\label{thm:ShnolType}
Let $(\Omega,dm)$ be a weighted manifold and $H_V$ be a Schrödinger-type operator on $(\Omega,dm)$ of the form \eqref{eq:SchroedType_Continuous}. Let $\varphi \in C(\Omega)\cap W^{1,2}_{\mathrm{loc}}(\Omega)$ and $\{\varphi_n\}_{n\in\N}\subseteq C_c(\Omega)\cap D(Q_V)$ be an admissible cut-off sequence for $(H,\varphi)$. Let $u\in W^{1,2}_{\mathrm{loc}}(\Omega)$ be a generalized eigenfunction of $H$ with associated eigenvalue $\lambda\in\R$. Let $A_n$ be the support of the function $\varphi_{n+1}(\varphi - \varphi_{n-1})$. Suppose that one of the following growth conditions on $u$ holds:

\begin{itemize}
\item[(i)] There exists a bounded potential $W:\Omega\to\R$ such that $(H_{V+W})\varphi=0$ in the weak sense, and
\[
\liminf_{n\to\infty} 
	\frac{\max_{A_n}\left|\frac{u}{\varphi}\right|}{\|\varphi_n\,\frac{u}{\varphi}\|_{2,m}}
	\left( \int_\Omega \left|\nabla \left(\frac{\varphi_n}{\varphi}\right)\right|^2_A\, \varphi^2 \,dm \right)^{1/2}
	= 0,
\]

\item[(ii)] $\varphi$ is constant, and
\[
\liminf_{n\to\infty}
\frac{
\|u\|_{L^2(A_n,dm)}
+
\left( \int_\Omega |u|^2\left(|\nabla \varphi_{n-1}|^2_A + |\nabla \varphi_n|^2_A + |\nabla \varphi_{n+1}|^2_A \right)\,dm \right)^{1/2}
}{\|\varphi_n u\|_{2,m}} = 0.
\]
\end{itemize}
Then $\lambda \in \sigma(H)$.
\end{theorem}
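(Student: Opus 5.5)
We argue via the Weyl criterion. For self-adjoint $H$ it suffices to produce a sequence $w_n\in D(Q)$ with $\|w_n\|_{2,m}=1$ and
\[
\sup\set{|Q(w_n,v)-\lambda\langle w_n,v\rangle|}{v\in D(Q),\ \|v\|_Q\leq 1}\longrightarrow 0 .
\]
This weak-form version of Weyl's criterion avoids requiring $w_n\in D(H_V)$, and it is the form used in \cite{BecPinc20,BecDev22}.

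The candidate is $w_n:=\varphi_n u/\|\varphi_n u\|_{2,m}$. In case (i) we write this as $w_n=(\varphi_n/\varphi)\cdot(u/\varphi)\cdot\varphi$, so that the ground state transform can be used.

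\textbf{Step 1 (commutator identity).} For a generalized eigenfunction $u$ and a compactly supported cut-off $\psi$, testing the eigenvalue equation against $\psi v$ gives
\[
Q(\psi u,v)-\lambda\langle \psi u,v\rangle=\int_\Omega \big(\langle A\nabla\psi,\nabla v\rangle u-\langle A\nabla u,\nabla\psi\rangle v\big)\,dm .
\]
Only $\nabla\psi$ appears on the right-hand side, so the error is supported on $\supp\nabla\varphi_n$. By admissibility of the cut-off sequence this set is contained in $A_n$, where $\varphi_{n\pm1}$ are constant.

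\textbf{Step 2 (Caccioppoli estimate).} The term containing $\nabla u$ is bounded by a local energy estimate,
\[
\int |\nabla u|_A^2\,\varphi_n^2\,dm\lesssim \int |u|^2\big(|\nabla\varphi_{n-1}|_A^2+|\nabla\varphi_n|_A^2+|\nabla\varphi_{n+1}|_A^2\big)\,dm+\|u\|^2_{L^2(A_n)} .
\]
This is proved by testing the eigenvalue equation with $\eta^2 u$ for $\eta$ built from $\varphi_{n+1}$ and $\varphi_{n-1}$, and using the semiboundedness of $Q$ to absorb the potential $V$. The term $\|v\|_Q$ then controls $\|\nabla v\|$. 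Dividing by $\|\varphi_n u\|_{2,m}$, condition (ii) makes the Weyl error tend to zero along a subsequence, which settles case (ii).

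\textbf{Step 3 (case (i), ground state transform).} Since $(H_{V+W})\varphi=0$ and $W$ is bounded, replacing $V$ by $V+W$ shifts $\lambda$ only by a bounded perturbation. So we may work with the transformed form
\[
Q_{V+W}(\varphi f,\varphi f)=\int |\nabla f|_A^2\,\varphi^2\,dm .
\]
Writing $\psi=\varphi_n/\varphi$ and $g=u/\varphi$, the error in Step 1 becomes an integral of $\langle A\nabla\psi,\cdot\rangle$ weighted by $\varphi^2$. We estimate it by
\[
\max_{A_n}|g|\cdot\Big(\int|\nabla(\varphi_n/\varphi)|_A^2\,\varphi^2\,dm\Big)^{1/2},
\]
plus a Caccioppoli-type term for $\nabla g$ handled as in Step 2. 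After normalization by $\|\varphi_n g\|_{2,m}$, this is exactly the quantity in (i), which tends to zero along a subsequence.

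\textbf{Main obstacle.} The hardest step is the uniform control, over $\|v\|_Q\leq1$, of the term $\int\langle A\nabla u,\nabla\psi\rangle v$ in case (i). Here $\max_{A_n}|g|$ must appear instead of an $L^2$ norm, and the Caccioppoli estimate must be carried out in the weighted space $L^2(\varphi^2 dm)$. We would first follow \cite{BecPinc20}, treating $g$ as a weak solution of the $\varphi$-transformed equation and deriving the local gradient bound on $A_n$. Then we would use the bounded potential $W$ to pass from $H_{V+W}$ back to $H_V$, with an error bounded by $\|W\|_\infty\|w_n\|$ that only shifts the spectral parameter consistently.
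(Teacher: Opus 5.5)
Your outline uses the same ingredients the paper attributes to \cite{BecDev22}: a weak-form Weyl criterion, the potential-free commutator identity, and a Caccioppoli estimate. However, the two steps that actually carry the theorem do not work as written.

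\textbf{The use of $W$ in case (i) is wrong.} A non-constant bounded $W$ does not shift the spectral parameter. The function $u$ solves $(H_{V+W}-\lambda)u=Wu$, so it is not a generalized eigenfunction of $H_{V+W}$. An error of size $\|W\|_\infty\|w_n\|=\|W\|_\infty$ does not tend to zero, so the Weyl criterion cannot be concluded. The correct observation is that $W$ never enters the error at all. Write $u=\varphi g$, $v=\varphi f$, $\psi=\varphi_n/\varphi$, and use the ground state identity $Q_V(\varphi a,\varphi b)=\int_\Omega\varphi^2\langle A\nabla a,\nabla b\rangle\,dm-\int_\Omega W\varphi^2ab\,dm$ twice: in $Q_V(\psi u,v)$ and in the eigenvalue equation tested with $\psi v$. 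The $W$-terms then cancel exactly, leaving
\[
Q_V(\psi u,v)-\lambda\langle\psi u,v\rangle=\int_\Omega\varphi^2\bigl(g\,\langle A\nabla\psi,\nabla f\rangle-f\,\langle A\nabla g,\nabla\psi\rangle\bigr)\,dm .
\]
Thus $W$ is needed only to obtain $\int_\Omega\varphi^2|\nabla f|_A^2\,dm=Q_V(v,v)+\int_\Omega W|v|^2\,dm\lesssim\|v\|_Q^2$. This bound is also what must replace your claim in Step 2 that ``$\|v\|_Q$ controls $\|\nabla v\|$''. Semiboundedness of $Q$ gives no gradient control when $V$ is unbounded below; critical Hardy-type potentials are an example. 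With this bound, the first term is at most $\max_{A_n}|g|\,\|\nabla(\varphi_n/\varphi)\|_{L^2(\varphi^2dm)}\,\|v\|_Q$, which is where the expression in (i) comes from. Note also that in (i) the Weyl function must be $\varphi_n u/\varphi$, as the normalization in (i) shows, not $\varphi_n u$. Consequently the error lives on $\supp\nabla(\varphi_n/\varphi)$, not on $\supp\nabla\varphi_n$.

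\textbf{The $\nabla u$-term is never controlled.} This is the crux: you do not bound $\int_\Omega v\,\langle A\nabla u,\nabla\psi\rangle\,dm$ (in (i), $\int_\Omega\varphi^2 f\langle A\nabla g,\nabla\psi\rangle\,dm$) uniformly over $\|v\|_Q\le 1$.
\begin{itemize}
\item Your displayed Caccioppoli inequality bounds $\int|\nabla u|_A^2\varphi_n^2\,dm$, an integral over all of $\supp\varphi_n$, by data on $A_n$ alone. Testing with $\eta^2u$ does not give that, since it produces the zero-order term $\int|V-\lambda|\eta^2|u|^2\,dm$ over all of $\supp\eta$. What is needed is a bound for $\nabla u$ on $A_n$ only, with an auxiliary cut-off $\eta$ built from $\varphi_{n\pm1}$.
\item Even with that bound, Cauchy--Schwarz leaves the factor $\bigl(\int_\Omega|v|^2|\nabla\psi|_A^2\,dm\bigr)^{1/2}$. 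Nothing in your argument bounds this by $\|v\|_Q$, because the hypotheses control $\nabla\varphi_n$ only in weighted $L^2$, not in $L^\infty$. Further structural properties of the admissible cut-off sequence must enter here. The only property you use is the location of the gradient's support.
\item In (i), the Caccioppoli inequality for $g$, namely $\int\varphi^2\eta^2|\nabla g|_A^2\le 4\int\varphi^2g^2|\nabla\eta|_A^2+2\int|W+\lambda|\varphi^2\eta^2g^2$, produces terms that do not appear in the numerator of (i). So the assertion that Step 3 yields ``exactly the quantity in (i)'' is unsupported.
\end{itemize}
Since you yourself leave this step as the ``main obstacle'', the proposal is an outline rather than a proof.
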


\begin{proof}
This is proven in \cite[Thm.~1.3]{BecDev22}.
\end{proof}

Statement (i) generalizes Theorem~\ref{thm:ShnolType_AgmonGroundState} from \cite{BecPinc20}, see \cite[Sec.~5]{BecDev22}. Therein, a particular null sequence is constructed using the so-called Evans potential. The condition in (i) is strictly weaker than the bound by the ground state and allows applications to a broader class of examples; see \cite[Exam.~7.4]{BecDev22}.

Statement (ii) follows by similar arguments as in \cite[Thm.~4.4]{BoLeSt09}. However, (ii) is more general than their theorem, as illustrated in \cite[Exam.~7.1]{BecDev22}. Moreover, \cite[Exam.~7.3]{BecDev22} discusses the Laplace-Beltrami operator on the hyperbolic plane, where the generalized eigenfunctions exhibit exponential growth but Theorem~\ref{thm:ShnolType} is still applicable. 

Finally, condition (i) does not follow from (ii), as shown in \cite[Exam.~7.2]{BecDev22}.

\section[The essential spectrum]{The essential spectrum and bounded generalized eigenfunctions}
\label{sec:DiscretOp_generEigenfct}

This section follows \cite{BecEli21}, where the essential spectrum of discrete Schrödinger operators on graphs is analyzed in terms of generalized eienfunctions.

Recall the notion of a weighted graph $(X,b)$ introduced in Definition~\ref{def:weightGraph}. Such a graph is called \emph{combinatorial} if the edge weights $b$ take only values in $\{0,1\}$. In this case, the relation $y \sim x$ defined by $b(x,y) = 1$ induces a symmetric relation on $X$. A graph $(X,b)$ is said to be \emph{$d$-bounded} if the vertex degree is uniformly bounded by $d \in\N$, that is,
\[
\sup_{x\in X} \deg(x) \leq d.
\]
Thus, a $d$-bounded combinatorial graph has uniformly bounded vertex degrees. Throughout this section, we fix the weight $m : X \to (0,\infty)$ to be identically one.

Let $(X,b)$ be a combinatorial $d$-bounded graph and let $c : X \to [0,\infty)$ be a bounded potential. We consider the associated self-adjoint operator $H_c$ defined by
\[
(H_c u)(x) := \sum_{y \sim x} \big(u(x) - u(y)\big) + c(x) u(x), \qquad u \in \ell^2(X).
\]
The \emph{discrete spectrum} $\sigma_{\mathrm{disc}}(H_c)$ is defined as the set of isolated eigenvalues of $H_c$ with finite multiplicity. The \emph{essential spectrum} is then given by
\[
\sigma_{\mathrm{ess}}(H_c) := \sigma(H_c) \setminus \sigma_{\mathrm{disc}}(H_c).
\]
By Weyl's theorem, the essential spectrum is stable under compact perturbations.

Recall the notion of generalized eigenfunctions introduced in the previous section. There, we established that if such functions exhibit sufficiently slow growth, the corresponding generalized eigenvalue lies in the spectrum $\sigma(H_c)$. In the characterization of the essential spectrum, only bounded generalized eigenfunctions are relevant under suitable growth assumptions of the graph. We define the set
\[
\sigma_\infty(H_c) 
	:= \set{ \lambda \in \C }{ 
		\begin{array}{c}
			\lambda \text{ is a generalized eigenvalue of }\\
			\text{a bounded generalized eigenfunction }
		\end{array}
		}.
\]

We have previously encountered the essential spectrum in the context of systems with purely essential spectrum, a concept that is intimately related to minimality. As shown in \cite{LenSto03Alg,Bec16}, minimality is equivalent to the constancy of the spectrum across the entire dynamical system, and this common spectrum coincides with the essential spectrum. For weighted Delone sets in $G$, minimality is further characterized by repetitivity; see \cite[Prop.~2.4]{BecHarPog25} and \cite{FrRi14} for unweighted Delone sets. Roughly speaking, repetitivity means that every finite pattern (or patch) appears relatively densely in the Delone set, i.e., with uniformly bounded gaps. Hence, repetitive weighted Delone sets exhibit no dynamical impurities, meaning that no patch is confined to a region of the space without recurring throughout with bounded gaps.

Minimality implies, even for non-normal operators, that the spectrum is constant; see \cite{BLLS17,BLLS18} for further discussion. This phenomenon underlines the intrinsic relation between the spectral type and the global topological dynamical structure of the system.

The underlying philosophy of these results is that the essential spectrum captures the behavior "at infinity" of the space or structure in question. In \cite{BecEli21}, we established such a result for $d$-bounded combinatorial graphs. Earlier results of this type were obtained for operators on $\N$ or $\Z^d$; see \cite{ChaLin11,Simon11}. There, the concept of \emph{right limits} plays a central role. Informally, a right limit is a limit point in the shift space of the system that corresponds to the strong limit of a sequence of shifted operators. This notion was extended to higher dimensions in \cite{LastSimon06}, and to general $d$-bounded graphs in \cite{BreDenEli18}, where it was adapted to the concept of \emph{$\Rr$-limits}.

We omit the technical definition of $\Rr$-limits here and refer the reader to \cite[Def.~4.2]{BecEli21}. It suffices to note that the set $\Rr(H_c)$ of all $\Rr$-limits consists of bounded self-adjoint operators acting on possibly different graphs $(X', b')$ that arise as a limits of $(X,b)$ shifting a root.

In \cite{BreDenEli18}, it was proven that the union of the spectra of $\Rr$-limits contains the essential spectrum of $H_c$. Moreover, they showed that the reverse inclusion holds for regular trees. On the other hand, they also provided an example of an infinite, connected, $d$-bounded graph for which the equality fails. This observation served as a motivation for the joint work \cite{BecEli21}. 

A \emph{rooted graph} $(X,b,v)$ is a graph $(X,b)$ together with a distinguished vertex $v\in X$. The ball of radius $r\in\N$ centered at $v$ is defined by
\[
B_r(v) 
	:= \set{u\in X}{ 
		\begin{array}{c}
			\exists\, n\leq r,\ \exists\, x_0,\ldots,x_n\in X \text{ with }\\ 
			x_0=v,\ x_n=u,\ x_{i-1}\sim x_i\ \text{ for all }\, 1\leq i\leq n
		\end{array}
	},
\]
which is the ball with respect to the combinatorial graph distance.
The rooted graph $(X,b,v)$ is said to have \emph{subexponential growth} if for every $\gamma > 0$, there exists a constant $C = C(\gamma, v) > 0$ such that
\[
\sharp B_r(v) < C \gamma^r, \qquad r\in\N,
\]
where $\sharp B_r(v)$ denotes the number of elements in $B_r(v)$.
If the constant $C(\gamma,v)$ can be chosen uniformly in $v\in X$, the graph $(X,b)$ is said to have \emph{uniform subexponential growth}, i.e. $\sup_{v\in X} C(\gamma,v)<\infty$ for all $\gamma>0$.

\begin{theorem}[\cite{BecEli21}]
\label{thm:EssSpectr_subset_siginf}
Let $(X,b,v)$ be an infinite, connected, $d$-bounded rooted graph of subexponential growth. For every bounded Schrödinger operator $H_c$ on $\ell^2(X)$ with bounded potential $c:X\to[0,\infty)$, we have
\[
\sigma_{\mathrm{ess}}(H_c) \subseteq \bigcup_{H'\in \Rr(H_c)} \sigma_\infty(H').
\]
\end{theorem}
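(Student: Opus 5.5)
Fix $\lambda\in\sigma_{\mathrm{ess}}(H_c)$. The plan is to produce an $\Rr$-limit $H'$ together with a bounded generalized eigenfunction for $\lambda$. Since $H_c$ is bounded and self-adjoint, the Weyl criterion gives a Weyl sequence of normalized $\psi_n\in\ell^2(X)$ with $\|(H_c-\lambda)\psi_n\|\to0$ and $\psi_n\rightharpoonup 0$ weakly. Because $H_c$ has finite hopping range, one may truncate and assume each $\psi_n$ has finite support. Weak convergence to zero then lets us pass to a subsequence whose supports escape every fixed ball $B_r(v)$.

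The core step is a localization argument driven by subexponential growth. For each $n$ we look for a vertex $x_n$ and a radius $r_n\to\infty$ such that the restriction of $\psi_n$ to $B_{r_n}(x_n)$ still almost solves the eigenvalue equation there. We also want the value at the center, normalized by $\max_{B_{r_n}(x_n)}|\psi_n|$, to be comparable to that maximum, so that the rescaled function is bounded by $1$ and of order $1$ at $x_n$. The argument is a pigeonhole/Shnol-type one:
\begin{enumerate}
\item Pick $x_n$ where $|\psi_n|$ is close to $\|\psi_n\|_\infty$ in a suitable weighted sense. Concretely, maximize $|\psi_n(x)|e^{-\varepsilon\, d(x,\cdot)}$-type weighted quantities, or use a Banach--Mazur style selection of the point where local $\ell^2$ mass relative to mass on neighboring annuli is maximal.
\item If no such good center existed, the masses $\|\psi_n\|_{\ell^2(B_r(x))}$ would have to grow exponentially in $r$.
\item Combined with $\sharp B_r(x)\le C\gamma^r$ (arbitrarily small exponential rate $\gamma$), this contradicts the normalization $\|\psi_n\|=1$.
\end{enumerate}
This is the analogue of the argument in \cite{BecEli21} and in the Shnol theorems of Section~\ref{sec:EllOp_RiemManif}, with cut-offs adapted to balls.

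Given such centers, we use compactness of the space of rooted $d$-bounded graphs with bounded potentials. The rooted balls $(B_{r}(x_n),b,c,x_n)$ take only finitely many isomorphism types for each fixed $r$, so a diagonal subsequence converges locally to a rooted graph $(X',b',c',v')$, and the associated limiting operator is by definition an $\Rr$-limit $H'\in\Rr(H_c)$. The rescaled functions $\phi_n:=\psi_n/\max_{B_{r_n}(x_n)}|\psi_n|$, transported along the local isomorphisms, are bounded by $1$. By diagonalization they converge pointwise to some $\phi:X'\to\C$ with $|\phi|\le1$ and $|\phi(v')|\ge c_0>0$. Since $H_c$ is local, the pointwise defect $(H_c-\lambda)\phi_n$ on $B_{r_n-1}(x_n)$ tends to zero, and this passes to the limit to give $H'\phi=\lambda\phi$ pointwise. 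Hence $\phi$ is a nonzero bounded generalized eigenfunction and $\lambda\in\sigma_\infty(H')$.

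\textbf{Main obstacle.} The hard part is the localization in step~2: converting the $\ell^2$ smallness of $(H_c-\lambda)\psi_n$ into \emph{pointwise} smallness relative to the local sup. The error is only small in $\ell^2$, whereas the normalization is by a local maximum. The approach I would try first is to compare the ratio $\|(H_c-\lambda)\psi_n\|_{\ell^2(B_r(x))}/\|\psi_n\|_{\ell^\infty(B_r(x))}$ over all centers. Using $\sharp B_r\le C\gamma^r$, one converts $\ell^\infty$ to $\ell^2$ losing only a subexponential factor, which is absorbed by choosing $r_n$ growing slowly relative to $\|(H_c-\lambda)\psi_n\|^{-1}$. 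Since growth is subexponential only at the fixed root $v$, and not uniformly, the centers should be chosen within controlled distance of $v$ so that $B_r(x_n)\subseteq B_{r+d(v,x_n)}(v)$. Doing so while still escaping to infinity is the delicate bookkeeping.
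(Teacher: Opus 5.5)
\paragraph{The gap.} Your outer frame works. Suppose you have centers $x_n\to\infty$, radii $r_n\to\infty$, a fixed $K$ and $\delta_n\to0$ with $\max_{B_{r_n}(x_n)}|\psi_n|\le K|\psi_n(x_n)|$ and $\max_{B_{r_n}(x_n)}|(H_c-\lambda)\psi_n|\le\delta_n|\psi_n(x_n)|$. Then compactness of rooted $d$-bounded balls with bounded potentials, together with the pointwise limit of $\psi_n/\psi_n(x_n)$, does give $\lambda\in\sigma_\infty(H')$ for some $H'\in\Rr(H_c)$.

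But producing these centers is the entire content of the theorem, and your step~2 does not do it. The global maximum of $|\psi_n|$ already dominates every ball around it, so the negation of ``good center'' is a large local \emph{defect}, not growth of $|\psi_n|$. Moreover, exponential growth of $\|\psi_n\|_{\ell^2(B_r(x))}$ starting from $|\psi_n(x)|$, which may be arbitrarily small, is compatible with $\|\psi_n\|=1$ for all $r\lesssim\log(1/|\psi_n(x)|)$. So neither the normalization nor the volume bound is contradicted.

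The $\ell^2$-data also do not certify pointwise smallness. On $\Z^3$ the Weyl functions $L^{-3/2}e^{ik\cdot x}\eta(x/L)$ have $\|\psi\|_\infty\sim L^{-3/2}\ll L^{-1}\sim\|(H-\lambda)\psi\|_2$. The natural way to extract a dominating center with small local defect from an $\ell^2$ Weyl sequence is a mass count. Defect-bad points carry mass at most $\delta^{-2}\sum_y|f(y)|^2\sharp B_r(y)$, and climbing chains transfer mass with weights $K^{-2m}\sharp B_{mr}(z)$ for $z\in\supp\psi_n$. Under \emph{uniform} subexponential growth this count closes by letting $r_n\to\infty$ slowly. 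With growth only at $v$, however, $\sharp B_{mr}(z)\le C(\gamma)\gamma^{mr+d(v,z)}$, and $d(v,z)\to\infty$ is forced. You would then need $\|(H_c-\lambda)\psi_n\|$ to be exponentially small in the distance of $\supp\psi_n$ from $v$, and nothing provides that. So the ``delicate bookkeeping'' you defer is not bookkeeping: it is exactly where the hypothesis must enter, and the Weyl-sequence route offers no way to use a growth bound at a single root.

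\paragraph{What is missing.} You need \emph{exact} solutions whose growth is measured from $v$, so that no mass count at far centers is needed. One way to do this is by contradiction.
\begin{enumerate}
\item Suppose $\lambda\notin\bigcup_{H'\in\Rr(H_c)}\sigma_\infty(H')$. Your compactness argument then yields $r,R_0\in\N$ and $\eta>0$ with the following property: for $|\lambda'-\lambda|<\eta$ and $d(v,x)\ge R_0$, no $\phi$ on $B_r(x)$ has $\phi(x)\ne0$, $\max_{B_r(x)}|\phi|\le 2|\phi(x)|$ and $(H_c-\lambda')\phi=0$ on $B_{r-1}(x)$.
\item Let $H_R$ be the Dirichlet restriction of $H_c$ to $X\setminus B_R(v)$ with $R\ge R_0$. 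It has the same essential spectrum as $H_c$.
\item Subexponential growth at $v$ gives a summable weight $w=e^{-g(d(v,\cdot))}$ with $g(t)=o(t)$. The expansion in generalized eigenfunctions then yields, for spectrally a.e.\ $\lambda'$, a solution $u\neq0$ of $H_Ru=\lambda'u$ on $X\setminus B_R(v)$ with $\sum|u|^2w<\infty$. Hence $|u(x)|\le Ce^{g(d(v,x))/2}$.
\item At every $x$ with $d(v,x)\ge R+r$ and $u(x)\ne0$, step 1 forces $|u|$ to increase by a factor $2$ within $B_r(x)$. By the subexponential bound, such a climbing chain must end in the finite set $B_{R+r}(v)$. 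Therefore $|u(x)|\le 2^{(R+r-d(v,x))/r}\max_{B_{R+r}(v)}|u|$, so $u\in\ell^2$, again by growth at $v$.
\item Consequently the spectrum of $H_R$ in $(\lambda-\eta,\lambda+\eta)$ is pure point, with uniformly exponentially localized normalized eigenfunctions. Bessel's inequality on a large ball $B_N(v)$ shows that the corresponding spectral projection has finite rank, contradicting $\lambda\in\sigma_{\mathrm{ess}}(H_c)$.
\end{enumerate}
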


\begin{proof}
This is proven in \cite[Thm.~2.2]{BecEli21}. 
\end{proof}

In general, the inclusion in Theorem~\ref{thm:EssSpectr_subset_siginf} is strict. For instance, such a strict inclusion occurs for regular trees, which exhibit exponential growth; see \cite{FigPic83,Broo91}. Moreover, in \cite[Prop.~2.3]{BecEli21}, we construct an explicit example of a rooted graph with subexponential growth where the inclusion remains strict.

However, under the stronger assumption of uniform subexponential growth, one obtains equality:

\begin{theorem}[\cite{BecEli21}]
\label{thm:EssSpectr_=_siginf_unif-SubExp}
Let $(X,b)$ be an infinite, connected, $d$-bounded graph of uniform subexponential growth. For every bounded Schrödinger operator $H_c$ on $\ell^2(X)$ with bounded potential $c : X \to [0,\infty)$, we have
\[
\sigma_{\mathrm{ess}}(H_c) = \bigcup_{H'\in \Rr(H_c)} \sigma(H') = \bigcup_{H'\in \Rr(H_c)} \sigma_\infty(H').
\]
\end{theorem}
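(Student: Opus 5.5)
The plan is to prove the chain of inclusions
\[
\sigma_{\mathrm{ess}}(H_c) \subseteq \bigcup_{H'\in \Rr(H_c)} \sigma_\infty(H') \subseteq \bigcup_{H'\in \Rr(H_c)} \sigma(H') \subseteq \sigma_{\mathrm{ess}}(H_c).
\]
Uniform subexponential growth implies subexponential growth at every root. The first inclusion is therefore exactly Theorem~\ref{thm:EssSpectr_subset_siginf}, which we invoke for an arbitrary fixed root $v\in X$. The last inclusion should follow by a standard compactness argument: an $\Rr$-limit $H'$ is a strong limit of operators obtained from $H_c$ by moving the root along a sequence $v_n\to\infty$. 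Take a Weyl sequence for $\lambda\in\sigma(H')$. Approximate it by finitely supported functions on balls in $X'$, and transplant these to the isomorphic balls around $v_n$ in $X$. Since $v_n$ leaves every finite set, we may choose the supports disjoint. This gives an orthonormal Weyl sequence for $H_c$ converging weakly to zero, so $\lambda\in\sigma_{\mathrm{ess}}(H_c)$. This step is in the spirit of \cite{BreDenEli18} and uses only $d$-boundedness and boundedness of $c$.

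The middle inclusion $\sigma_\infty(H')\subseteq\sigma(H')$ is a Shnol-type statement on the limit graph $(X',b')$, and here uniform subexponential growth enters. We first check that every $\Rr$-limit graph inherits uniform subexponential growth with the same constants $C(\gamma)$. Balls of radius $r$ in $X'$ are isomorphic to balls of radius $r$ around suitable vertices of $X$, so $\sharp B_r'(w)\le \sup_{v\in X}C(\gamma,v)\gamma^r$ for every $w\in X'$. Let $u$ be a bounded generalized eigenfunction of $H'$ with eigenvalue $\lambda$. Fix a root $w$ and use cut-offs $\varphi_n$ that equal $1$ on $B_n'(w)$, vanish outside $B_{n+1}'(w)$, and are linear in the graph distance. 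The commutator $[H',\varphi_n]u$ is supported on the sphere-annulus $B_{n+1}'\setminus B_{n-1}'$ and is bounded pointwise by $d\|u\|_\infty$. This gives
\[
\frac{\|(H'-\lambda)\varphi_n u\|^2}{\|\varphi_n u\|^2}\lesssim \frac{\sharp(B_{n+1}'(w)\setminus B_{n-1}'(w))\,\|u\|_\infty^2}{\|u\|_{\ell^2(B_n'(w))}^2}.
\]

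The main obstacle is to make this ratio small along a subsequence. The numerator is controlled by subexponential growth. The denominator, however, requires $u$ not to be too small on balls compared with annuli, and boundedness of $u$ alone does not give this. I would use the standard growth-ratio trick. Suppose $\|u\|^2_{\ell^2(B_{n+1}')}\ge(1+\varepsilon)\|u\|^2_{\ell^2(B_{n-1}')}$ for all large $n$. Then $\|u\|^2_{\ell^2(B_n')}$ grows exponentially, which contradicts $\|u\|^2_{\ell^2(B_n')}\le\|u\|_\infty^2\,\sharp B_n'\le C\gamma^n$ with $\gamma<(1+\varepsilon)^{1/2}$. This yields infinitely many $n$ with small relative annulus mass. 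The bound by $\sharp(\text{annulus})\|u\|_\infty^2$ is too crude for this, so I would instead estimate the commutator term by $d\,\|u\|_{\ell^2(\text{annulus})}^2$, which is legitimate because the relevant sums only involve values of $u$ on the annulus and its neighbours.

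If $u$ fails to have mass near the chosen root, we shift to a root where $u(w)\neq0$. Uniformity of the growth constants is essential precisely here, because the root on $X'$ is not fixed in advance. This produces a Weyl sequence, hence $\lambda\in\sigma(H')$, which closes the chain.
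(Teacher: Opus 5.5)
Your proposal is correct and follows the paper's route: the same chain of three inclusions, with the first from Theorem~\ref{thm:EssSpectr_subset_siginf}, the last from the transplanted-Weyl-sequence argument of \cite{BreDenEli18}, and the middle from a Shnol-type theorem. The paper simply cites \cite[Thm.~4.8]{HaeKel11} for that Shnol step; you reprove it by the growth-ratio trick, after correctly checking that uniform subexponential growth passes to the $\Rr$-limit graphs.

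One small correction to your last paragraph: uniformity is needed only for that inheritance step, not for moving the root on $X'$. The ratio argument works at any fixed root once $u\neq 0$, and subexponential growth at one root of a connected graph already implies it at every other root.
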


\begin{proof}
This is proven in \cite[Thm.~2.4]{BecEli21}. We briefly outline the main steps: As shown in \cite[Thm.~2]{BreDenEli18}, the inclusion
\[
\bigcup_{H'\in \Rr(H_c)} \sigma(H') \subseteq \sigma_{\mathrm{ess}}(H_c)
\]
holds for general $d$-bounded graphs. Theorem~\ref{thm:EssSpectr_subset_siginf} complements this result by establishing the reverse inclusion
\[
\sigma_{\mathrm{ess}}(H_c) \subseteq \bigcup_{H'\in \Rr(H_c)} \sigma_\infty(H').
\]
Under the additional assumption of uniform subexponential growth, a Shnol-type theorem \cite[Thm.~4.8]{HaeKel11} implies that $\sigma_\infty(H') \subseteq \sigma(H')$ for every $H' \in \Rr(H_c)$. Combining these inclusions yields
\[
\bigcup_{H'\in \Rr(H_c)} \sigma(H') = \sigma_{\mathrm{ess}}(H_c) = \bigcup_{H'\in \Rr(H_c)} \sigma_\infty(H'),
\]
which completes the proof.
\end{proof}

We refer to \cite[Sect.~1--3]{BecEli21} for additional background and further examples. Let us also point out that the identity
\[
\sigma_{\mathrm{ess}}(H_c) = \bigcup_{H'\in \Rr(H_c)} \sigma(H')
\]
was independently obtained in \cite{HagSei20} for certain classes of metric spaces. In contrast, the main focus of \cite{BecEli21} lies in establishing a direct connection to bounded generalized eigenfunctions.

\markboth{Outlook}{Outlook}
  \include{discussion}

\bibliographystyle{plain}
\bibliography{referencesHabil}

\cleardoublepage

\markboth{Appendix}{Appendix}
  \include{appendix}

\end{document}

%% file: acknowledgements.tex
\addcontentsline{toc}{chapter}{\protect Acknowledgments}

\chapter*{Acknowledgments}

The following acknowledgements are directed to colleagues and friends who have supported me in various ways. As is customary in mathematics, the names are listed alphabetically and not according to the extent of their contributions, which I would not be in a position to rank appropriately.

I am particularly grateful to the following colleagues and friends for reading parts of this work, for discussing its structure or overall approach, or for providing valuable feedback: 
Ram Band, Juliane Beckus, Paul Beckus, Peter Beckus, Jean Bellissard, Christopher Cedzich, Baptiste Devyver, Tobias Hartnick, 
Johannes Kellendonk, Matthias Keller, Marie Landsem, Daniel Lenz, Raphael Loewy, 
Yehuda Pinchover, Felix Pogorzelski, Sven Raum, Sylvie Roelly, Annette Sviland, Alberto Takase, Lior Tenenbaum and Yannik Thomas.

In particular, I am also deeply thankful to my coauthors for many stimulating discussions that challenged me at times, but ultimately pushed me further: 
Ram Band, Jean Bellissard, Barak Biber, Horia Cornean, Giuseppe De Nittis, Baptiste Devyver, Latif Eliaz, Ulrik Enstad, Tobias Hartnick, Daniel Lenz, Marko Lindner, Raphael Loewy, 
Yehuda Pinchover, Felix Pogorzelski, Laurent Raymond, Christian Seifert, Alberto Takase, Lior Tenenbaum, Yannik Thomas and Jordy van Velthoven.

In addition to my coauthors, I am grateful to many others for inspiring and insightful conversations that have played a key role in shaping my perspective, and I am thankful for each of them. I am particularly grateful to the following individuals: 
Lior Alon, Eric Akkerman, Kyle Austin, Yossi Avron, Philipp Bartmann, Michael Baake, Simon Becker, Gregory Berkolaiko, Jonathan Breuer, Christopher Cedzich, Aviv Censor, David Damanik, Adam Dor-on, 
Mark Embree, Jake Fillman, Florian Fischer, Anton Gorodetski, Franz G\"ahler, Giles Gardam, Uwe Grimm, Johannes Happich, Alan Haynes, Paul Hege, Bernard Helffer, 
Svetlana Jitomirskaya, Johannes Kellendonk, Victor Kleptsyn, Daniel Lenz, Renaud Leplaideur, Wencai Liu, Franz Luef, 
Neil Manibo, May Mei, Noema Nicolussi, Tobias Oertel-J\"ager, Sanaz Pooya, Emil Prodan, Sven Raum, Christoph Richard, Matti Richter,  
Alexander Schmeding, Marcel Schmidt, Claude Schochet, Christian Scholz, Hermann Schulz-Baldes, Daniel Sell, Mehran Seyed Hosseini, Orr Shalit, Uzy Smilansky, Yotam Smilansky, Peter Stollmann, Boris Solomyak, Gilad Sofer, 
Pascal Vanier, Jamie Walton, Melchior Wirth, William Wood, Qi Zhou and Elias Zimmermann.


My sincere thanks goes to Ram Band and Yehuda Pinchover for their warm hospitality, both at the Technion -- Israel Institute of Technology -- and in Israel, as well as for their constant support and friendship. I am glad to have found in them two generous hosts who not only support young mathematicians with great commitment.

I am particularly thankful to Ram Band for the time we spent together, first in Haifa and later in Potsdam, which I greatly enjoyed. I am also especially grateful for all the stimulating and enlightening discussions -- both personal and professional.

Special thanks goes to Jean Bellissard for many insightful discussions in mathematics, physics, and beyond over the past years, and for his constant support.

I am thankful to Horia Cornean for the many inspiring discussions. I still like to recall how we first got in touch: after uploading an arXiv preprint in the evening, I received an email the next morning at 9 a.m., which ultimately led to our collaboration. I very much enjoyed the stays in Aalborg.

The many inspiring discussions with Raphael Loewy, Orr Shalit and Claude Schochet have greatly influenced my perspective and introduced me to new mathematical techniques, for which I am sincerely grateful.

In particular, I am deeply thankful to Felix Pogorzelski for encouraging me to come to the Technion. I am very glad that we were able to continue our fruitful collaboration,  and I have always valued his constant support and the openness with which he shared ideas and thoughtful advice.

I am grateful to my PhD students for bringing fresh perspectives and challenges into the projects. I am glad I had the chance to work with each of them: Alberto Takase, Lior Tenenbaum and Yannik Thomas.


Beyond my coauthors, I am thankful to Michael Baake for many enlightening discussions and organizing a meeting, where we got in touch with Laurent Raymond.

I would like to express my special thanks to David Damanik, Anton Gorodetski and Jake Fillman for numerous stimulating discussions at conferences, workshops, and other occasions, as well as for the opportunity to present my research to a wider audience.

Moreover, I am deeply grateful to Matthias Keller for the constant support I have received. He opened the door for me in Potsdam and has consistently supported me in all matters ever since.

I am very thankful to Marcel Schmidt for his openness and for the many enriching conversations, both personal and scientific.


My special thanks also go to my colleagues in Potsdam for encouraging me to complete this habilitation, the many valuable discussions, and the morning smiles that made even difficult days easier. In particular, I am grateful to: 
Christian B\"ar, Andreas Braunss, Alexandra Carpentier, J\"org Enders, Heiko Etzold, Myfanwy Evans, Melina Freitag, Moritz Gerlach, Claudia Grabs, 
Florian Hanisch, Niklas Hartung, Lukas Hellwig, Wilhelm Huisinga, 
Markus Klein, J\"org Koppitz, Ulrich Kortenkamp, Tania Kosenkova, Max Lein, Hannelore Liero, 
Thomas Mach, Jan Metzger, Sylvie Paycha, Sanaz Pooya, Sven Raum, Sylvie Roelly, Elke Rosenberger, Bernhard Stankewitz, Christoph Stephan and Gert Z\"oller.

Finally, I am also grateful to Winnie Enders, Peter Grabs, Volker Gustavs, Katrin Kania, Shiri Kaplan-Shabat, Galya Khanin, Christian Molle, Sylke Pfeiffer, Manuela Scheffel, Yael Stern and Frauke Stobbe for their valueable support behind the scenes.


This work was supported by the Deutsche Forschungsgemeinschaft [BE 6789/1-1] and the Maria-Weber Grant 2022 offered by the Hans B\"ockler Stiftung.
The research was also supported through the program ``Research in Pairs'' by the Mathematisches Forschungsinstitut Oberwolfach in 2018. 
I acknowledge the support of the Israel Science Foundation (grants No. 970/15) founded by the Israel Academy of Sciences and Humanities.

%% file: discussion.tex
\chapter{Outlook}
\label{chap:discussion}

Throughout this work, we have highlighted several open questions, which we briefly revisit here. The primary focus lies on potential spectral applications in higher-dimensional settings. We also discuss possible future uses of the spectral tree construction developed for the Kohmoto model. In certain contexts, it may be of independent interest to investigate extensions of the presented results to broader classes of operators, more general dynamical systems, or frameworks beyond the specific setting studied in this work. The topics are organized according to the chapter structure of this thesis.

\section*{Linearly repetitive Delone sets}

In \cite{BecHarPog21}, we constructed a class of symbolic dynamical systems over non-Abelian groups that exhibit linear repetitivity. To the best of our knowledge, these represent the first non-trivial examples in this setting. From these symbolic systems, one can also derive linearly repetitive Delone sets.

An interesting question is whether there exist geometric constructions of linearly repetitive Delone sets in homogeneous Lie groups that do not arise from symbolic substitutions. One possible approach would be to formulate a suitable notion of geometric substitutions for these groups. 

Another promising direction lies in the theory of cut-and-project sets, where linear repetitivity is well understood in the Euclidean case \cite{HaKoWa18,KoWa22}. It remains to be seen whether similar arguments can be extended to non-Abelian settings.

Compared to substitution systems, the greater structural flexibility of cut-and-project sets offers additional possibilities for constructing periodic approximations. Following \cite{DMO89}, periodic approximations were provided for the octagonal tiling by exploiting its substitutive structure; see the discussion in Section~\ref{sec:Octagonal_Penrose}. The initial periodic configuration was constructed using a symmetry of the underlying cut-and-project set. It remains an open question whether a general approximation scheme can be developed for a broader class of cut-and-project sets. In particular, it is unknown whether this approach can yield periodic approximations for the Penrose tiling.

If not, one should study the nature of possible dynamical impurities and spectral defects in analogy to our analysis of the Kohmoto model; see \cite{BecBelTho25} and the discussion in Section~\ref{sec:Kohmoto}. Recent developments \cite{HaKoWa18,KoWa22} providing characterizations of linear repetitivity for regular model sets may serve as a promising starting point for further investigations.

\section*{Higher-dimensional models}

A central motivation for the approximation method developed in this work is the hope that it provides access to the spectral theory of higher-dimensional aperiodic Schr\"odinger operators, where currently only limited results are available. A natural next step is to investigate which additional spectral properties can be recovered or passed to the limit. 

More precisely, let \( A_n \) be a sequence of dynamically defined operators. We have studied whether the spectra \( \sigma(A_n) \) converge to the spectrum \( \sigma(A) \) of a limiting operator \( A \), as discussed in Chapter~\ref{chap:WorldButterflies}. In this context, each operator \( A_n \) (respectively \( A \)) is defined via a kernel on a dynamical system \( X_n \) (respectively \( X \)), and we assume that the sequence of dynamical systems \( (X_n)_{n\in\N} \) converges to \( X \).

However, spectral convergence alone does not generally imply convergence of other spectral properties. For instance, convergence of the spectra does not guarantee convergence of the Lebesgue measure. In fact, only the inequality
\[
\limsup_{n \to \infty} \lambda(\sigma(A_n)) \leq \lambda(\sigma(A))
\]
holds in general where $\lambda$ denotes the Lebesgue measure. Additional structure, such as monotonic convergence (discussed below), can imply convergence of the Lebesgue measure. Alternatively, quantitative control on spectral convergence combined with information on the number of spectral gaps may also lead to convergence of the Lebesgue measure. A particular application can be found in \cite[Thm.~2]{Last93} and \cite[Thm.~1]{Last94} for the almost Mathieu operator, see also a discussion in \cite{Ten25-Phd}.

To explore these questions, we initiated a joint project \cite{BaBePoTe25-Spec} using numerical computations. These computations indicate new phenomena that have not been observed in one-dimensional models:

\begin{itemize}
\item Consider the table tiling and its periodic approximations as constructed in Proposition~\ref{prop:PeriodAppr_Table}~(a), where the associated dynamical systems converge. The spectral bands rapidly become extremely thin. Numerically, the spectrum appears to vanish after only a few iterations. If one could confirm that the Lebesgue measures of the spectra converge and that the limit has zero measure, this would imply that the spectrum of the associated Schr\"odinger operator has zero Lebesgue measure as well.

\item In contrast, the single-letter approximations discussed in Proposition~\ref{prop:PeriodAppr_Table}~(b) yield dynamical systems that do not converge. In the numerics, these approximations lead to substantially thicker spectra, raising the question of how large the deviation in the limit can be when using poorly chosen approximations.
\end{itemize}

In the forthcoming work \cite{BaBePoTe25-Spec} (see also \cite{Ten25-Phd}), we demonstrate that for specific Schr\"odinger operators, the limiting essential spectra may differ if one takes poor approximations. In one-dimensional systems, only compact perturbations are possible, so differences in the limit can affect at most the discrete spectrum. In higher dimensions, however, we show that the limiting Lebesgue measure itself may even change.

Our methods build on the computation of spectral limits and identification of dynamical impurities, in the spirit of \cite{BecBelTho25} for the Kohmoto model; see also Section~\ref{sec:Kohmoto}. In higher dimensions, these impurities may take the form of infinite strips, which can perturb the essential spectrum and even alter its Lebesgue measure.

These observations mark only the beginning of a range of interesting open problems.

\section*{Monotonicity of the Spectrum}

In Chapter~\ref{chap:DTMP}, we studied the classification of spectral bands arising from approximations of a specific aperiodic Schr\"odinger operator. This classification was based on tracking whether spectral bands from a given approximation were contained in those of earlier ones; see Proposition~\ref{prop:MonotonConv_Spectrum_Kohmoto}. While this provides a first structural insight, a more detailed understanding of the spectral bands requires additional information; see \cite{Raym95,Raym95-thesis,BaBeBiRaTh24,BaBeLo24}. Nevertheless, the observed monotonicity of the spectrum, i.e.,
\[
\sigma(A_n) \subseteq \sigma(A_{n-1}) \cup \sigma(A_{n-2}),
\]
serves as a key guiding principle at an initial stage.

Analogous hierarchical structures and codings have been investigated for other models, such as the period-doubling chain~\cite{LiQuYa22}.

In Chapter~\ref{chap:WorldButterflies}, we observed that spectral convergence is tightly linked to the convergence of the underlying dynamical systems. This naturally leads to the question of whether one can establish monotonicity of the spectrum in a more abstract dynamical framework. More precisely, let \( (Z, G, \tau) \) be a dynamical system, and let \( \inv \) denote the space of invariant dynamical subsystems of \( Z \). Can one find a partial order relation \( \prec \) on \( \inv \) such that
\[
X \prec Y \quad \Longrightarrow \quad \sigma(A_X) \subseteq \sigma(A_Y),
\]
where \( (A_x)_{x \in Z} \) is a dynamically-defined operator family?

A quick but trivial answer would be to define \( X \prec Y \) by \( X \subseteq Y \), in which case the above implication holds by definition of the spectrum. However, the question should be understood as seeking nontrivial order structures that induce monotonicity of the associated spectra beyond such elementary inclusions. 

A natural starting point is to analyze substitution systems, where the recursive definition of the approximations via the substitution rule may lead to spectral inclusions.

\section*{Spectral estimates beyond operators on lattices}

In Example~\ref{ex:Graph_Delone}, we described how to construct a Delone graph associated with a given Delone set. We observed that the corresponding graph Schr\"odinger operator almost fits into the framework of dynamically-defined operators, except that it acts on the Hilbert space $\ell^2(D)$ over the Delone set $D$, which typically does not possess a group structure. 

To study such operators, a natural framework is provided by groupoids, as used in \cite{BBdN18}. A central open question is whether the spectral estimates obtained in \cite{BeBeCo19,BecTak25} can be extended to this more general setting. This requires techniques for comparing operators acting on different Hilbert spaces, as a change of the Delone set alters the underlying $\ell^2$-space. Preliminary investigations with Horia Cornean suggest that such a comparison might be feasible.

With such tools at hand, one could revisit the question of periodic approximations for well-known aperiodic tilings such as the octagonal tiling or the Ammann-Beenker tiling (see Section~\ref{sec:Octagonal_Penrose}). Moreover, there exist many additional substitution-based models to which the approximation results developed in \cite{BaBePoTe25}, discussed in Section~\ref{sec:Subst-Approximations}, could be extended.

\section*{Improving spectral estimates when gaps do not close}

In Chapter~\ref{chap:WorldButterflies}, we discussed that the spectral map associated with a family of dynamically-defined operators typically exhibits a square root behavior; see also Remark~\ref{rem:Choice-cut_Interplay-amenable}. More precisely, if \( (A_z)_{z \in Z} \) is a family of self-adjoint operators with Lipschitz continuous kernel, then Theorem~\ref{thm:SpecEst_DynDefOp}~(a) implies
\[
d_H(\sigma(A_X), \sigma(A_Y)) \leq C \, \delta_H(X,Y)^{\tfrac{1}{2}},
\]
i.e., although the kernel is Lipschitz continuous, the spectral map is only \( \tfrac{1}{2} \)-H\"older continuous. This reduced regularity originates from the possible closing of spectral gaps~\cite{BB16}.

More precisely, let \( (X_n)_{n \in \N} \subseteq \inv \) be a sequence converging to \( X \in \inv \) in the Hausdorff topology. A spectral gap of \( A_X \) is an open interval \( g = (a,b) \subseteq \R \) such that \( a, b \in \sigma(A_X) \) and \( (a,b) \cap \sigma(A_X) = \emptyset \). By continuity of the spectrum with respect to the dynamical system, there exists a sequence of spectral gaps \( g_n = (a_n, b_n) \subseteq \R\) of \( A_{X_n} \) such that
\[
|a_n - a| \leq C \, \delta_H(X_n,X)^{\tfrac{1}{2}}, \qquad |b_n - b| \leq C \, \delta_H(X_n,X)^{\tfrac{1}{2}}.
\]
In particular, \( \lim_{n \to \infty} |a_n - b_n| > 0 \), so the gaps do not close in the limit.

This raises the natural question whether the above estimates can be improved under the additional assumption that the limiting gap \( g = (a,b) \) remains open. More precisely, can one prove that
\[
|a_n - a| \leq \tilde{C} \, \delta_H(X_n,X), \qquad |b_n - b| \leq \tilde{C} \, \delta_H(X_n,X),
\]
for some constant \( \tilde{C} > 0 \)?

This question is motivated by several observations. First, when spectral gaps close (i.e., \( |a_n - b_n| \to 0 \)), the convergence of the spectrum slows down -- a phenomenon exploited in the analysis of the almost Mathieu operator~\cite{BeRa90} and the Kohmoto model~\cite{BecBelTho25} to show optimality of the estimate in Theorem~\ref{thm:SpecEst_DynDefOp}; see also the discussions in Sections~\ref{sec:Hofstadter} and Section~\ref{sec:Kohmoto}. This reduction in spectral regularity is also discussed in~\cite{BB16}.

On the other hand, using \( C^* \)-algebraic methods, it was shown in~\cite{Bell94} that this square root behavior disappears if spectral gaps do not close, at least for the rotation algebra including the almost Mathieu operator. This raises the question of whether such an improvement is a general phenomenon. In particular, it would be interesting to explore whether the techniques developed in~\cite{BeBeCo19} or~\cite{BecTak25} can be refined to yield Lipschitz spectral estimates under the assumption that gaps remain open.

\section*{Examples beyond the discrete case}

In the discussion following Theorem~\ref{thm:SpecEst_DynDefOp}, we exclusively considered dynamically-defined operator families with a discrete underlying group. It is a natural and intriguing question whether the general framework also extends to continuous settings, since we allow the underlying group to be non-discrete in general. 

To be more specific, let us consider the continuous group $G=\R^n$ and the Laplace operator $-\Delta$. Fix parameters $r_0, R_0 > 0$ and consider the compact metric space $\Del$ of Delone sets as introduced in Section~\ref{sec:Weight_Delone}. Given a continuous potential function $V : \Del \to \R$ (which is automatically bounded due to compactness), we define for each $D \in \Del$ the Schr\"odinger operator
\[
H_D := -\Delta + V_D,
\]
where the multiplication operator \( V_D \) is given by
\[
(V_D \psi)(x) := V(-x + D)\, \psi(x).
\]
We omit the technical details regarding domains, which can be defined via suitable sesquilinear forms as in Chapter~\ref{chap:GeneralEigenfct}, using a completion of smooth, compactly supported functions on \( \R^n \). Since \( V_D \) is uniformly bounded in \( D \in \Del \) and \( -\Delta \) is nonnegative, the operator \( \exp(-H_D) \) is well defined and bounded on \( L^2(\R^n) \).

By the Dunford-Pettis criterion for kernels, \( \exp(-H_D) \) is an integral operator on \( \R^n \), and hence possesses an integral kernel \( k_D \). A direct computation shows that the family of integral kernels \( (k_D)_{D \in \Del} \) arises from a bounded kernel \( k:\R^n\times\Del \to\R \) defined on the dynamical system \( (\Del, \R^n, \tau) \). Moreover, classical heat kernel estimates provide quantitative decay bounds on \( k \) in \( \R^n \).

The open question is whether Lipschitz continuity of the function \( V:\Del \to \R \) implies that the associated kernel \( k \) is also Lipschitz continuous in the sense of Definition~\ref{def:Regularity_kernel}. If this is the case -- or even under a weaker assumption -- then Theorem~\ref{thm:SpecEst_DynDefOp} would apply to the operator family \( (\exp(-H_D))_{D \in \Del} \), enabling spectral estimates for the corresponding Schr\"odinger operators.

This approach would also yield local spectral estimates for the original operators \( H_D \), see the related discussion in \cite[Sec.~2.5]{Bec16}.

\section*{The Kohmoto butterfly: Coloring and self-similarity}

Following Chapter~\ref{chap:DTMP}, the resolution of the dry ten Martini Problem (DTMP) for Sturmian Hamiltonians~\cite{BaBeLo24} opens up various further questions. As mentioned earlier, the Kohmoto butterfly (and similar spectral figures) is numerically generated from periodic approximations, i.e., for rational values of the frequency parameter. In contrast, the DTMP addresses the spectral properties of irrational rotations. 

In~\cite{BanBec25}, we propose a coloring scheme for the Kohmoto butterfly (see Figure~\ref{fig:ColoredKohmoto}), as briefly discussed at the end of Section~\ref{sec:DTMP_Sturmian}. The labeling of spectral gaps for rational parameters is chosen to be consistent with any continuation of their continued fraction expansion, yielding a diagram structurally analogous to that of the Hofstadter butterfly~\cite{OsaAvr01}. This construction raises the question of whether a winding number can be associated with these gap labels. Initial steps in this direction have been made in~\cite{KelPro17,KelSca25}.

The symbolic coding developed by Raymond~\cite{Raym95} has been applied in the literature (see e.g. \cite{KiKiLa03,LiuWen04,DaEmGoTc08,DamGor11,LiQuWe14,DamaGor15,CaoQu23,Lun25}) to estimate the fractal dimensions of spectra. Since the coding structure --- represented in the spectral tree --- is now available for all coupling constants, a natural question is whether this framework can be used to study fractal dimension properties more systematically, particularly for small coupling constants. It is worth noting, however, that existing estimates often rely on further estimations that also require the coupling constant to be sufficiently large.

As already hinted at in the introduction, these butterfly plots suggest an intrinsic self-similarity of the underlying spectra. Proving such self-similarity rigorously remains a longstanding open problem. In a joint project with Ram Band and Yannik Thomas, we address this question using the techniques developed in~\cite{BaBeLo24,BaBeBiRaTh24}. It would also be of interest to investigate whether these methods can be extended to other models, most notably the Hofstadter butterfly. A potential starting point is the nested covering constructed in~\cite{HelSjo88,HelLiuQuZho25}.

%% file: appendix.tex
\chapter{Appendix}

\section{Weighted Delone sets}
\label{App:Weight_Delone}

In this section, we provide the proofs of the statements presented in Section~\ref{sec:Weight_Delone}.  
We recall the relevant notions introduced therein, including weighted Delone sets $\Del$ and the space $\wPat$ of $r_0$-uniformly discrete sets (see page~\pageref{page:wPat}) defined on a locally compact second countable (lcsc) group $G$ equipped with an adapted metric $d_G$.

We denote by $B(x, r)$ the open ball of radius $r > 0$ centered at $x \in G$, and write $B(r) := B(e, r)$ for the ball centered at the identity element $e \in G$. Furthermore, $C_c(G)$ denotes the space of continuous functions with compact support in $G$.

For $\Pi:=(P,\alpha)\in\wPat$, the weighted dirac comb on the Borel $\sigma$-algebra $\bs(G)$ of $G$ is defined in eq.~\eqref{eq:DiracComp_measure} by
\begin{equation}
\label{eq:DiracComp_measure-appendix}
\mu_{\Pi}:\bs(G)\to[0,\infty], \qquad
\mu_{\Pi} := \sum_{x\in P} \alpha(x) \delta_x.
\end{equation}
Clearly, $\mu_\Pi$ is a Radon measure (regular Borel measure on $\bs(G)$). The support of the measure $\mu_{\Pi}$ satisfies $\supp(\Pi):=\supp(\mu_\Pi)=P$.
For $\Pi,\Xi\in\wPat$, define (see  eq.~\eqref{eq:delta_weightedPatches})
\begin{equation}
\label{eq:wdelt-appendix}
\wdelt(\Pi,\Xi) 
	:= \inf\set{\varepsilon>0}{ 
		\begin{array}{c}
		\big|\mu_\Pi(B(x,\varepsilon)) - \mu_\Xi(B(x,\varepsilon))\big|<\varepsilon\\[0.2cm]
		\textrm{ for all } x\in B(1/\varepsilon)\cap \big( \supp(\Pi)\cup \supp(\Xi) \big)
		\end{array} },
\end{equation}
where the infimum is infinite, if there is no such $\varepsilon$. If $\varepsilon<r_0$, then $\mu_\Pi(B(x,\varepsilon))>0$ then $B(x,\varepsilon)\cap \supp(\Pi)$ contains exactly one point.
We restate and prove Proposition~\ref{prop:metric_weightDelone} where the following proof is inspired by \cite{Sol98,Sol06}.

\begin{proposition}
\label{prop:metric_weightDelone-App}
Let $G$ be a lcsc group, $r_0,R_0>0$ and $\sigma\geq 1$. Then $(\wPat,\dw)$ is a compact metric space where
\[
\dw:\wPat\times\wPat\to[0,\infty),\quad
	\dw(\Pi,\Xi):= \min\big\{\tfrac{r_0}{2},\tfrac{\sigma^{-1}}{2},\wdelt(\Pi,\Xi) \big\}
\]
induces the weak* topology on $\wPat$ viewing every element $\Pi\in\wPat$ as the Radon measure $\mu_\Pi$ defined in \eqref{eq:DiracComp_measure-appendix}.
Furthermore $\wDel\subseteq\wPat$ is a closed subset and so $(\wDel,\dw)$ is a compact metric space.
\end{proposition}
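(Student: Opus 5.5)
The plan has four steps: show $\dw$ is a metric, show it induces the weak$^*$ (vague) topology, get compactness from a compactness theorem for measures, and check that relative density survives limits.

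\emph{Step 1: $\dw$ is a metric.} Symmetry is immediate, and so is $\dw(\Pi,\Pi)=0$. For definiteness, suppose $\wdelt(\Pi,\Xi)=0$ and take $\varepsilon<\min\{r_0,\sigma^{-1}\}$. Every point $x\in\supp(\Pi)\cap B(1/\varepsilon)$ has $\mu_\Pi(B(x,\varepsilon))\geq\sigma^{-1}>\varepsilon$. Hence $\mu_\Xi(B(x,\varepsilon))>0$, and by uniform discreteness there is exactly one point of $\supp(\Xi)$ in $B(x,\varepsilon)$, whose weight is within $\varepsilon$ of $\alpha(x)$. Letting $\varepsilon\to0$ forces the supports and weights to coincide on every ball.

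For the triangle inequality, truncation is the key. Because $\dw\leq\min\{r_0/2,\sigma^{-1}/2\}$, it suffices to treat the case $\wdelt(\Pi,\Xi)+\wdelt(\Xi,\Lambda)<c_0:=\min\{r_0/2,\sigma^{-1}/2\}$. Choose $\varepsilon_1,\varepsilon_2$ admissible with $\varepsilon_1+\varepsilon_2<c_0$ and set $\varepsilon=\varepsilon_1+\varepsilon_2$. Take $x\in B(1/\varepsilon)\cap(\supp\Pi\cup\supp\Lambda)$, say $x\in\supp\Pi$. Since $1/\varepsilon<1/\varepsilon_1$, there is a unique $y\in\supp\Xi$ with $d_G(x,y)<\varepsilon_1$. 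Then $y$ lies in $B(1/\varepsilon_2)$ provided $1/\varepsilon+\varepsilon_1\leq 1/\varepsilon_2$; this holds for small $\varepsilon$, and otherwise one adjusts constants as in \cite{Sol98}. So there is a unique $z\in\supp\Lambda$ with $d_G(y,z)<\varepsilon_2$ and weight difference below $\varepsilon_2$. By left invariance, $z\in B(x,\varepsilon)$, and the separation $r_0$ makes it the only such point. Summing the weight differences gives $|\mu_\Pi(B(x,\varepsilon))-\mu_\Lambda(B(x,\varepsilon))|<\varepsilon$. I expect this bookkeeping of the shrinking radius $1/\varepsilon$ and its boundary effects to be the main technical obstacle. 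The remedy, if needed, is to replace $B(1/\varepsilon)$ by a slightly smaller ball via the Solomyak trick, or to prove instead $\wdelt(\Pi,\Lambda)\leq 2(\wdelt(\Pi,\Xi)+\wdelt(\Xi,\Lambda))$ and pass to the induced metric. I would first try the direct version.

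\emph{Step 2: the topology of $\dw$ is the weak$^*$ topology.}

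For the direction from $\dw$ to weak$^*$: if $\dw(\Pi_n,\Pi)\to0$, fix $f\in C_c(G)$ with support in $B(R)$. For large $n$ the points of $\Pi_n$ and of $\Pi$ in $B(R)$ are matched bijectively within distance $\varepsilon_n$, with weights differing by less than $\varepsilon_n$. There are at most $K$ such points, where $K$ is uniform by $r_0$-discreteness and the properness of the balls. Uniform continuity of $f$ then gives $\mu_{\Pi_n}(f)\to\mu_\Pi(f)$.

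For the converse direction, use bump functions supported in $B(x,\varepsilon)$ together with the fact that the limiting measures again have atoms of mass at least $\sigma^{-1}$, separated by $r_0$. These show that vague convergence forces the point matching on each $B(R)$, hence $\wdelt\to0$. Since both topologies are metrizable (second countability), it suffices to work with sequences.

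\emph{Step 3: compactness.} The measures $\mu_\Pi$ are uniformly locally bounded: $\mu_\Pi(K)\leq\sigma\cdot\sharp(\text{$r_0$-separated set in }K)$. By Banach--Alaoglu, or the vague compactness theorem on the lcsc group $G$, every sequence has a vague limit point $\mu$. It remains to check that $\mu=\mu_\Xi$ for some $\Xi\in\wPat$. Locally, each $\mu_{\Pi_n}$ restricted to $B(R)$ consists of at most $K$ atoms. Passing to subsequences, the atom positions converge in the compact set $\oB(R)$ and the weights converge in $[\sigma^{-1},\sigma]$. Limit positions stay at distance at least $r_0$ from one another, so no two atoms merge. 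Atoms escaping through the boundary are handled by taking a slightly larger radius and a diagonal argument over $R\in\N$. The result is $\Xi\in\wPat$ with $\mu=\mu_\Xi$.

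\emph{Step 4: $\wDel$ is closed.} Let $\Pi_n\in\wDel$ converge to $\Pi$ and fix $x\in G$. Each $\Pi_n$ has a point $y_n\in\oB(x,R_0)$. By compactness, $y_n\to y\in\oB(x,R_0)$ along a subsequence, and the matching from Step 2 shows $y\in\supp(\Pi)$. Hence $\supp(\Pi)$ is $R_0$-relatively dense. Closed subsets of compact spaces are compact, which gives the final claim.
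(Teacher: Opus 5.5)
Your proposal is correct and follows essentially the same route as the paper: the same $x\mapsto y\mapsto z$ point matching for the triangle inequality, identification of the $\dw$-topology with the vague topology, and compactness via vague compactness of uniformly locally bounded measures plus closedness of the set of weighted combs (you track atoms along subsequences where the paper uses Urysohn functions), followed by a limiting argument for $R_0$-relative density. The inequality you flag as the main obstacle is automatic, since $\tfrac{1}{\varepsilon_2}-\tfrac{1}{\varepsilon}=\tfrac{\varepsilon_1}{\varepsilon\varepsilon_2}\geq\varepsilon_1$ because $\varepsilon\varepsilon_2<c_0^2\leq\tfrac14$ (the paper uses $a,b<1/\sqrt{2}$ for exactly this), so neither fallback is needed; also check in Step 3 that the limit satisfies the ball condition $\sharp B(x,r_0)\cap P\leq 1$ rather than mere pairwise separation, which passes to limits because $d_G(x,p)<r_0$, $d_G(x,q)<r_0$ is an open condition.
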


\begin{proof}
\underline{$\dw$ is a metric:} Clearly, $\dw$ is symmetric and $\dw(\Pi,\Xi)=0$ holds if and only if $\Pi=\Xi$. It is left to show the triangle inequality. 

Let $\Pi,\Xi,\Theta\in\wPat$ and we prove
$$
\dw(\Pi,\Xi) \leq \underbrace{\dw(\Pi,\Theta)}_{=:a'} + \underbrace{\dw(\Theta,\Xi)}_{=:b'}.
$$
Set $c_0:=\min\big\{\tfrac{r_0}{2},\tfrac{\sigma^{-1}}{2}\big\}$. Since the range of $\dw$ is bounded by $c_0$, there is no loss of generality in assuming $a'+b'<c_0$. Choose $0<\tepsilon<c_0-(a'+b')<c_0$ and set
$$
	a:=a'+\tfrac{\tepsilon}{2} 
\qquad \textrm{and}\qquad 
	b:=b'+\tfrac{\tepsilon}{2}.
$$
Observe that $a<c_0<\frac{1}{\sqrt{2}}$ and $b<c_0<\frac{1}{\sqrt{2}}$ since $\sigma^{-1}<1$. Thus,
\begin{equation}
\label{eq:proof_metric_weightDelone}
\frac{1}{a+b}\leq \frac{1}{a}-b
	\qquad\textrm{and}\qquad
\frac{1}{a+b}\leq \frac{1}{b}-a
\end{equation}
follow implying
$$
B\big(\tfrac{1}{a+b}\big)\subseteq B\big(\tfrac{1}{a}-b\big)
	\qquad\textrm{and}\qquad
B\big(\tfrac{1}{a+b}\big)\subseteq B\big(\tfrac{1}{b}-a\big).
$$
Since $\dw(\Pi,\Theta)=a'< a$, we conclude
$$
\big|\mu_\Pi(B(x,a)) - \mu_\Theta(B(x,a))\big|<a, \qquad
	\textrm{for all } x\in B\big(\tfrac{1}{a}\big)\cap \big( \supp(\Pi)\cup \supp(\Theta) \big).
$$
Let $x\in B\big(\tfrac{1}{a+b}\big)\cap \supp(\Pi) \subseteq B\big(\frac{1}{a}-b\big) \cap \supp(\Pi)$. Then the previous considerations and $a<c_0\leq \frac{\sigma^{-1}}{2}$ lead to
$$
\mu_\Theta\big(B(x,a)\big) 
	\geq \underbrace{\mu_\Pi\big(B(x,a)\big)}_{=\mu_\Pi(\{x\})\geq \sigma^{-1}} - a 
	>\frac{\sigma^{-1}}{2} 
	>0.
$$
Hence, there is a unique $y=y(x)\in B(x,a)\cap \supp(\Theta)$ where uniqueness follows from $a\leq \tfrac{r_0}{2}$ and that $\supp(\Theta)$ is $r_0$-uniformly discrete. Using \eqref{eq:proof_metric_weightDelone}, we conclude
$$
d_G(e,y)
	\leq d_G(e,x) + d_G(x,y)
	< \frac{1}{a+b} + a
	\leq \frac{1}{b}.
$$
Thus, $y=y(x)\in B\big(\tfrac{1}{b}\big)\cap \supp(\Theta)$.
Since $\dw(\Theta,\Xi)=b'< b$, 
$$
\big|\mu_\Theta\big(B(y(x),b)\big) - \mu_\Xi\big(B(y(x),b)\big)\big|< b.
$$
Using again $b<c_0\leq \frac{\sigma^{-1}}{2}$, we conclude $\mu_\Xi\big(B(y(x),b)\big)>\tfrac{\sigma^{-1}}{2}>0$. Since $b\leq \tfrac{r_0}{2}$ and $\supp(\Xi)$ is $r_0$-uniformly discrete, there is a unique $z=z(y,x)\in B(y(x),b)\cap \supp(\Xi)$. 
Then
$$
d_G(x,z)
	\leq d_G(x,y) + d_G(y,z) 
	= a+b
$$
follows. Since the support of $\mu_\Pi,\mu_\Theta$ and $\mu_\Xi$ are all $r_0$-uniformly discrete and $a+b < r_0$, we have
\begin{align*}
&\mu_\Pi\big(B(x,a+b)\big) = \mu_\Pi(\{x\}),\\ 
&\mu_\Theta\big(B(x,a+b)\big) = \mu_\Theta(\{y(x)\}),\\
&\mu_\Xi\big(B(x,a+b)\big) = \mu_\Xi(\{z(y,x)\}).
\end{align*} 
Summing up, we get for each $x\in B\big(\tfrac{1}{a+b}\big)\cap \supp(\Pi)$,
\begin{align*}
&\left| \mu_\Pi\big(B(x,a+b)\big) - \mu_\Xi\big(B(x,a+b)\big) \right|\\
	\leq &\left| \mu_\Pi(\{x\}) - \mu_\Theta(\{y(x)\}) \right| + \left| \mu_\Theta(\{y(x)\}) - \mu_\Xi(\{z(y,x)\}) \right|\\
	< &a+b.
\end{align*}
Switching the role of $\Pi$ and $\Xi$, we yield for all $x\in B\big(\tfrac{1}{a+b}\big)\cap \supp(\Xi)$,
$$
\left| \mu_\Pi\big(B(x,a+b)\big) - \mu_\Xi\big(B(x,a+b)\big) \right|
	< a+b.
$$
This shows $\wdelt(\Pi,\Xi) <a+b <c_0$ implying $\wdelt(\Pi,\Xi) = \dw(\Pi,\Xi)$ and
$$
\dw(\Pi,\Xi)
	<a+b
	= \dw(\Pi,\Theta) + \dw(\Theta,\Xi) + \tepsilon.
$$
Sending $\tepsilon$ to zero implies $\dw(\Pi,\Xi) \leq \dw(\Pi,\Theta) + \dw(\Theta,\Xi)$ as claimed.

\medskip

\underline{$\dw$ induces the weak* topology:} Recall that the weak*-topology on the Radon measures $\mu$ on $G$ is the coarsest topology so that $\mu\mapsto \mu(\varphi)$ is continuous for all $\varphi\in C_c(G)$. By definition of $\dw$ it is elementary to see that $\dw$ induces the weak*-topology see also a detailed elaboration in \cite[Prop.~B.1]{BecHarPog25}.
We refer the reder also to \cite{BjoHarPog22} where this topology is compared to the Chabauty-Fell topology if $\sigma=1$.

\medskip

\underline{$(\wPat,\dw)$ is compact:} Following \cite{BaLe04,BjoHarPog22}, a Radon measure $\mu$ on $G$ is called $(C,V)$-translation bounded if $C>0$, $V\subseteq G$ is a relatively compact open subset and 
$$
|\mu|(gV)\leq C,\qquad \textrm{for all } g\in G,
$$
where $|\mu|$ denotes the total variation of the measure $\mu$. Then \cite[Thm.~2]{BaLe04} respectively \cite[Lem.~3.5]{BjoHarPog22} imply that the space $\mathcal{M}_{C,V}(G)$ of $(C,V)$-translation bounded measures (for fixed $C$ and $V$) is compact in the weak*-topology. Note that in \cite{BaLe04} the group $G$ is assumed to be a $\sigma$-compact locally compact Abelian group. The presented result is only based on fundamental functional analysis tools and is not using that the group is Abelian. Using \cite[Prop.~2.A.10]{CorHar16}, we conclude that the group $G$ that we consider is also $\sigma$-compact and so $\mathcal{M}_{C,V}(G)$ is compact in our situation. 

Set $C:= \sigma$ and $V:=B(r_0)$ be the open ball of radius $r_0$ around $e\in G$. 
Since $d_G$ is a proper metric $V$ is relatively compact and open. 
For $\Pi\in\wPat$ and $g\in G$, we have $|\mu_\Pi|(gV)\leq C$ by construction of the measure $\mu_\Pi$ and using that the support $\supp(\mu_\Pi)$ is $r_0$-uniformly discrete. 
Hence, via the identification $\wPat \ni\Pi \mapsto \mu_\Pi$ we embed $\wPat$ into $\mathcal{M}_{C,V}(G)$ and this embedding is a homeomorphism (onto its image) by the previous considerations stating that $\wPat$ equipped with the topology inherited from $\dw$ coincides with the weak*-topology on this embedding. To simplify notation we view $\wPat$ as a subset of $\mathcal{M}_{C,V}(G)$ via this embedding. Since $\mathcal{M}_{C,V}(G)$ is compact, the metric space $(\wPat,\dw)$ is compact if $\wPat$ is a closed subset of $\mathcal{M}_{C,V}(G)$.
For colored point sets this is proven in \cite[Sec.~2.1, Prop.~3.5]{MueRic13}. For completeness, we provide the details here.

Let $\Pi_n\in\wPat, n\in\N,$ be such that $\mu_n:=\mu_{\Pi_n}$ converges in the weak*-topology to $\mu\in \mathcal{M}_{C,V}(G)$ and we show $\mu=\mu_\Pi$ for a suitable $\Pi\in\wPat$.  

We first prove that $\supp(\mu)$ is $r_0$-uniformly discrete. Assume towards contradiction that this is not the case. Thus, there is a $g\in G$ and $x,y\in B(g,r_0)$ with $x,y\in\supp(\mu)$ and $x\neq y$. By Urysohn's lemma there are two continuous, non-negative functions $f_x,f_y$ with $\supp(f_x)\subseteq B(g,r_0)$, $\supp(f_y)\subseteq B(g,r_0)$, $\supp(f_x)\cap \supp(f_x)=\emptyset$ and $f_x(x)=1=f_y(y)$. Since $B(g,r_0)\cap\supp(\mu_n)$ is a singleton for each $n\in\N$, it cannot intersect both supports of $f_x$ and $f_y$ simultaneously. Hence, there is no loss of generality in assuming that there is a subsequence $\mu_{n_k}$ such that $\mu_{n_k}(f_x)=0$ for all $k\in\N$. Consequently, 
$$
0<\mu(f_x)=\lim_{n\to\infty} \mu_{n_k}(f_x) = 0,
$$
a contradiction. 
Thus, $\supp(\mu_n)\subseteq G$ is $r_0$-uniformly discrete implying that $\mu$ is a dirac measure. 

Define $P:=\supp(\mu)$ and $\alpha:\supp(\mu)\to\R$ by $\alpha(x):=\mu(\{x\})$. It is left to show that the range of $\alpha$ is contained in $[\sigma^{-1},\sigma]$.
Therefore, let $x\in \supp(\mu)$ and choose $f\in C_c(G)$ such that $f(x)=1$, $0\leq f\leq 1$ and $\supp(f)\subseteq B(x,r_0)$. 
Since $\mu_n$ converges in the weak*-topology to $\mu$, there exists an $n_0\in\N$ such that $\mu_n(f)>0$ for all $n\geq n_0$.
Then, $\mu_n(f)\in[\sigma^{-1},\sigma]$ for all $n\geq n_0$, lead to
$$
\mu(\{x\}) = \mu(f)-\mu_n(f) + \mu_n(f) 
	\leq \mu(f)-\mu_n(f) + \sigma 
	\overset{n\to\infty}{\longrightarrow} \sigma
$$
and
$$
\sigma^{-1} \leq \mu_n(f)-\mu(f) + \mu(f) 
	\overset{n\to\infty}{\longrightarrow} \mu(f) 
	= \mu(\{x\}).
$$
Thus, $\alpha(x)=\mu(\{x\})\in[\sigma^{-1},\sigma]$ follows proving that $(P,\alpha)\in\wPat$ and $\mu=\mu_{(P,\alpha)}$.

\medskip

\underline{$(\wPat,\dw)$ is a complete metric space:} Since $(\wPat,\dw)$ is a compact metric space, a generalization of the Heine-Borel theorem yields that $(\wPat,\dw)$ is also a complete metric space.

\underline{$(\wDel,\dw)$ is compact metric space:} Since $(\wPat,\dw)$ is a compact metric space, it suffices to prove that $\wDel\subseteq\wPat$ is a closed subset. Let $\Pi_n\in\wDel, n\in\N,$ and $\Pi\in\wPat$ be such that $\Pi_n\to\Pi$. We prove that $\Pi\in\wDel$, namely $\supp(\Pi)\subseteq G$ is $R_0$-relatively dense.

Assume towards contradiction that $\supp(\Pi)$ is not $R_0$-relatively dense, namely $\supp(\Pi)\cap\oB(g,R_0)=\emptyset$ for some $g\in G$. Then there is an open set $U\subseteq G$ such that $\oB(g,R_0)\subseteq U$ and $\mu(U)=0$. By Urysohn's lemma, there is a continuous function $f$ with $\supp(f)\subseteq U$, $0\leq f\leq 1$ and $f|_{\oB(g,R_0)}\equiv 1$. For $n\in\N$, $\supp(\Pi_n)\cap \oB(g,R_0)\neq \emptyset$ since $\Pi_n\in\wDel$ implying $\mu_{\Pi_n}(f)\geq \sigma^{-1}$. Thus,
$$
0=\mu_\Pi(f) = \lim_{n\to\infty} \mu_{\Pi_n}(f) \geq \sigma^{-1}>0
$$
follows, a contradiction.
\end{proof}

Next, we prove that this group actions is actually Lipschitz continuous (see Definition~\ref{def:LipschitzAction}) stated in Proposition~\ref{prop:wDel_groupAction_Lipschitz}. 

\begin{proposition}
\label{prop:wDel_groupAction_Lipschitz-App}
Let $G$ be a lcsc group, $r_0,R_0>0$ and $\sigma\geq 1$.
Then $(\wPat,G,\tau)$ and $(\wDel,G,\tau)$ are a topological dynamical system. Moreover, the group action is Lipschitz continuous, namely for all $g\in G$ and $\Pi,\Xi\in\wPat$, we have
$$
\dw(g.\Pi,g.\Xi) \leq (d_G(e,g)+1) \dw(\Pi,\Xi).
$$
\end{proposition}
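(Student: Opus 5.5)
The plan is to prove the Lipschitz estimate first; continuity of the action then follows from it together with continuity in the group variable. Fix $g\in G$ and $\Pi,\Xi\in\wPat$, and set $c_0:=\min\{\tfrac{r_0}{2},\tfrac{\sigma^{-1}}{2}\}$ and $L:=d_G(e,g)+1\geq 1$. If $L\,\dw(\Pi,\Xi)\geq c_0$ there is nothing to prove, because $\dw$ is bounded by $c_0$. So assume $L\,\dw(\Pi,\Xi)<c_0$. Pick $\varepsilon$ with $\dw(\Pi,\Xi)<\varepsilon$ and $L\varepsilon<c_0$. In this range $\dw=\wdelt$, so
\[
|\mu_\Pi(B(x,\varepsilon))-\mu_\Xi(B(x,\varepsilon))|<\varepsilon
\qquad\text{for all } x\in B(1/\varepsilon)\cap(\supp\Pi\cup\supp\Xi).
\]
The aim is to show that $\eta:=L\varepsilon$ is admissible in the definition of $\wdelt(g.\Pi,g.\Xi)$.

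First record how the action transforms the measures: $\mu_{g.\Pi}(A)=\mu_\Pi(g^{-1}A)$. Left invariance of $d_G$ gives $g^{-1}B(y,r)=B(g^{-1}y,r)$. Hence for $y\in\supp(g.\Pi)\cup\supp(g.\Xi)$ we have $x:=g^{-1}y\in\supp\Pi\cup\supp\Xi$ and
\[
\mu_{g.\Pi}(B(y,\eta))-\mu_{g.\Xi}(B(y,\eta))=\mu_\Pi(B(x,\eta))-\mu_\Xi(B(x,\eta)).
\]

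Two things must be checked. The first is \emph{localization}: if $y\in B(1/\eta)$, then $x\in B(1/\varepsilon)$. Indeed $d_G(e,x)\leq d_G(e,g)+d_G(e,y)<d_G(e,g)+\tfrac{1}{L\varepsilon}$, and we need this to be at most $\tfrac1\varepsilon$. That holds because $d_G(e,g)=L-1\leq (L-1)\tfrac{1}{L\varepsilon}\cdot$ (using $\varepsilon<c_0<1$, so $\tfrac1{L\varepsilon}\geq \tfrac1L$), and more precisely
\[
\tfrac1\varepsilon-\tfrac1{L\varepsilon}=\tfrac{L-1}{L\varepsilon}\geq L-1,
\]
since $L\varepsilon<1$.

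The second is \emph{enlarging the radius} from $\varepsilon$ to $\eta$. Since $\eta<c_0\leq r_0/2$, uniform discreteness says each of $B(x,\varepsilon)$ and $B(x,\eta)$ contains at most one support point of $\Pi$ and at most one of $\Xi$. The step needing care is that enlarging the ball could capture a new point of one configuration. Treat it as follows, assuming without loss of generality $x\in\supp\Pi$. The $\varepsilon$-estimate together with $\varepsilon<\sigma^{-1}/2$ forces a unique point $x'\in\supp\Xi\cap B(x,\varepsilon)$ satisfying $|\alpha(x)-\beta(x')|<\varepsilon$. Any point of $\supp\Xi$ in $B(x,\eta)$ lies within $2\eta<r_0$ of $x'$, so it equals $x'$. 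The same reasoning applies to $\supp\Pi$. Therefore
\[
|\mu_\Pi(B(x,\eta))-\mu_\Xi(B(x,\eta))|=|\alpha(x)-\beta(x')|<\varepsilon\leq\eta.
\]

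Combining the two checks gives $\wdelt(g.\Pi,g.\Xi)\leq L\varepsilon$. Letting $\varepsilon\downarrow\dw(\Pi,\Xi)$ yields the claimed bound. Note that $g.\Pi\in\wPat$, and also $g.\Pi\in\wDel$ when $\Pi\in\wDel$, since left translation preserves $r_0$-uniform discreteness and $R_0$-relative density.

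For the topological dynamical system claim, the action axioms are elementary. For joint continuity, take $g_n\to g$ and $\Pi_n\to\Pi$, and estimate
\[
\dw(g_n.\Pi_n,g.\Pi)\leq\dw(g_n.\Pi_n,g_n.\Pi)+\dw(g_n.\Pi,g.\Pi).
\]
The first term is at most $(d_G(e,g_n)+1)\dw(\Pi_n,\Pi)\to0$, because $d_G(e,g_n)$ stays bounded. For the second term, Proposition~\ref{prop:metric_weightDelone-App} says $\dw$ induces the weak$^*$ topology, so it suffices to show $\mu_{g_n.\Pi}(\varphi)=\mu_\Pi(\varphi(g_n\cdot))\to\mu_\Pi(\varphi(g\cdot))$ for $\varphi\in C_c(G)$. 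This follows from uniform continuity of $\varphi$ and from the fact that only finitely many support points, uniformly bounded in number, meet a fixed compact set.

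The main obstacle is the radius-enlargement step, namely excluding new points that enter the larger ball. It is handled by the uniqueness argument through $x'$ given above.
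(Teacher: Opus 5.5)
Your proof is correct and follows the paper's argument essentially step for step: you transport via $\mu_{g.\Pi}=\mu_\Pi\circ g^{-1}$, you use left invariance together with $L\varepsilon<1$ to pull $B(1/(L\varepsilon))$ back into $B(1/\varepsilon)$, and you use $r_0$-uniform discreteness to see that enlarging the radius from $\varepsilon$ to $L\varepsilon<c_0$ captures no new support points. Your weak$^*$ argument for $g_n\to g$ supplies a step the paper skips, since its proof only remarks that the Lipschitz bound in the space variable suffices, which does not address continuity in the group variable for non-discrete $G$.
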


\begin{proof}
By Proposition~\ref{prop:metric_weightDelone}, $(\wPat,\dw)$ and  $(\wDel,\dw)$ are compact metric spaces. Thus, if we prove the claimed Lipschitz continuity, then the statement follows using $\wDel\subseteq \wPat$.

Let $g\in G$ and $\Pi,\Xi\in\wPat$.
To shorten the notation set $|g|:=d_G(e,g)\geq 0$ and $a:= \dw(\Pi,\Xi)$. Since $\dw$ is a metric, there is no loss of generality in assuming $\dw(\Pi,\Xi)>0$. Since $\dw(g.\Pi,g.\Xi)\leq c_0$, we can assume without loss of generality that $0<(|g|+1) a<c_0$. Choose $\varepsilon>0$ such that $0<b:=(|g|+1) (a+\varepsilon)<c_0$. 
We first prove that for all $x\in B\big(\tfrac{1}{b}\big) \cap \big( \supp(g.\Pi)\cup \supp(g.\Xi)\big)$, we have
\begin{equation}
\label{eq:proof_WeightDelone_Action_Lipschitz}
\left| \mu_{g.\Pi}\big(B(x,b)\big) - \mu_{g.\Xi}\big(B(x,b)\big) \right| < b.
\end{equation}
If $\Pi=(D,\alpha)$, then $g.\Pi=(gD,\alpha\circ g^{-1})$ and $\supp(g.\Pi)=gD = g\supp(\Pi)$. Furthermore, a short computation yields
$$
\mu_{g.\Pi} 
	= \sum_{x\in gD} \alpha(g^{-1}x) \ \delta_x
	= \sum_{g^{-1}x\in D} \alpha(g^{-1}x) \ \delta_{gg^{-1}x}\\
	= \sum_{y\in D} \alpha(y) \ \delta_{y}\circ g^{-1}
	= \mu_\Pi \circ g^{-1}.
$$
Let $x\in B\big(\tfrac{1}{b}\big) \cap \supp(g.\Pi)$. By the previous considerations, we have $g^{-1}x\in\supp(\Pi)$. Then the left-invariance of the metric $d_G$ and $b<c_0<1$ lead to
$$
d_G(g^{-1}x,e) 
	= d_G(x,g) 
	\leq d_G(x,e) + d_G(e,g)
	\leq \frac{1}{b} + \frac{|g|}{b}
	= \frac{1}{a+\varepsilon}.
$$
Thus, $g^{-1}x\in B\big(\tfrac{1}{a+\varepsilon}\big) \cap \supp(\Pi)$ follows. Since $a+\varepsilon<c_0$, we have $\wdelt(\Pi,\Xi)=\dw(\Pi,\Xi)<a+\varepsilon$ where $\wdelt(\Pi,\Xi)$ is defined in \eqref{eq:wdelt-appendix}. Thus, 
$$
\left| \mu_\Pi\big(B(g^{-1}x,a+\varepsilon)\big) - \mu_\Xi\big(B(g^{-1}x,a+\varepsilon)\big) \right| < a+\varepsilon <c_0 \leq \frac{\sigma^{-1}}{2}
$$
and so $g^{-1}x\in \supp(\Pi)$ implies
$$
\mu_\Xi\big(B(g^{-1}x,a+\varepsilon)\big) 
	\geq \underbrace{\mu_\Pi\big(B(g^{-1}x,a+\varepsilon)\big)}_{\geq \sigma^{-1}} - (a+\varepsilon)
	\geq \frac{\sigma^{-1}}{2}
	>0.
$$
Since $a+\varepsilon<r_0$ and $\supp(\Xi)$ is $r_0$-uniformly discrete, there is a unique element $y\in B(g^{-1}x,a+\varepsilon)\cap \supp(\Xi)$. Since $(|g|+1)(a+\varepsilon)<c_0<r_0$, we conclude
$\mu_\Xi\big(B(g^{-1}x,b)\big) = \mu_\Xi(\{y\})$. Combining the previous considerations, we deduce
\begin{align*}
\left| \mu_{g.\Pi}\big(B(x,b)\big) - \mu_{g.\Xi}\big(B(x,b)\big) \right| 
	= &\left| \mu_{\Pi}\big(B(g^{-1}x,b)\big) - \mu_{\Xi}\big(B(g^{-1}x,b)\big) \right| \\
	= &\left| \mu_{\Pi}(\{g^{-1}x\}) - \mu_{\Xi}(\{y\}) \right| \\
	= &\left| \mu_\Pi\big(B(g^{-1}x,a+\varepsilon)\big) - \mu_\Xi\big(B(g^{-1}x,a+\varepsilon)\big) \right|\\
	< &a+\varepsilon
	< b.
\end{align*}
The case $x\in B\big(\tfrac{1}{b}\big) \cap \supp(g.\Xi)$ is treated similarly.

We have proven \eqref{eq:proof_WeightDelone_Action_Lipschitz} for all $x\in B\big(\tfrac{1}{b}\big) \cap \big( \supp(g.\Pi)\cup \supp(g.\Xi)\big)$ where $0<b:=(|g|+1) (a+\varepsilon)<c_0$. 
Hence, 
$$
d(g.\Pi,g.\Xi) = \wdelt(g.\Pi,g.\Xi)
	\leq b 
	= (|g|+1) (a+\varepsilon).
$$
Thus the desired claim follows by sending $\varepsilon$ to zero and using $a:= \dw(\Pi,\Xi)$.
\end{proof}

We now present an alternative proof of Proposition~\ref{prop:Homeo_hulls_transversals}, which characterizes the convergence of weighted Delone dynamical systems in terms of the convergence of the associated transversals; see also \cite[Thm.~2.A]{BecPog20} for the unweighted case. To this end, recall that \(\wInvT\) denotes the space of all transversals of elements in \(\wDel\), and \(\wInv\) the space of all (weighted) Delone dynamical systems.

\begin{proposition}
\label{prop:Homeo_hulls_transversals_Appendix}
Let $R_0>r_0>0$ and $\sigma\geq 1$.
The map 
\[
\Phi:\wInvT\to\wInv, \Tt\mapsto G.\Tt=\oB(R_0).\Tt
\] 
is a homeomorphism. Moreover, $\Phi$ is bi-Lipschitz, namely
\[
\frac{1}{2}\delta_H(\Tt_1,\Tt_2) 
	\leq \delta_H\big( \Phi(\Tt_1),\Phi(\Tt_2) \big)
	\leq (R_0+1)\delta_H(\Tt_1,\Tt_2),
	\qquad \Tt_1,\Tt_2\in\wInvT.
\]
\end{proposition}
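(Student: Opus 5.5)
We prove the two Lipschitz inequalities; continuity of $\Phi$ and of its inverse then follow at once. Well-definedness and bijectivity come from earlier results. Lemma~\ref{lem:Connec_Transv-Hull} gives $\Omega=\oB(R_0).\tran(\Omega)$. Moreover $\tran(G.\Tt)=\Tt$ for $\Tt=\tran(\Omega)$, since $\Xi\in G.\Tt$ with $e\in\supp(\Xi)$ lies in $\Omega$ and hence in $\tran(\Omega)$. So $\Phi$ is a bijection with inverse $\Omega\mapsto\tran(\Omega)$. We write $c_0:=\min\{\tfrac{r_0}{2},\tfrac{\sigma^{-1}}{2}\}$ and recall that $\dw$, and hence $\delta_H$, is bounded by $c_0$.

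\emph{Upper bound.} Let $\Omega_i=\Phi(\Tt_i)$ and $\delta:=\delta_H(\Tt_1,\Tt_2)$. We may assume $(R_0+1)\delta<c_0$, since otherwise the bound is trivial. Take $\Xi\in\Omega_1$ and write $\Xi=g.\Pi$ with $g\in\oB(R_0)$ and $\Pi\in\Tt_1$. For $\varepsilon>0$ pick $\Pi'\in\Tt_2$ with $\dw(\Pi,\Pi')<\delta+\varepsilon$. By Proposition~\ref{prop:wDel_groupAction_Lipschitz},
\[
\dw(g.\Pi,g.\Pi')\le (R_0+1)(\delta+\varepsilon),
\]
and $g.\Pi'\in\Omega_2$. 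Symmetrizing and letting $\varepsilon\to0$ gives $\delta_H(\Omega_1,\Omega_2)\le(R_0+1)\delta$.

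\emph{Lower bound.} This is the main step. Set $\Delta:=\delta_H(\Omega_1,\Omega_2)$; if $2\Delta\ge c_0$ we are done. Otherwise fix $\varepsilon$ with $a:=\Delta+\varepsilon<c_0/2$. Take $\Pi\in\Tt_1$ and choose $\Xi\in\Omega_2$ with $\dw(\Pi,\Xi)<a$. Since $a<c_0$, we have $\wdelt(\Pi,\Xi)<a$. Because $e\in\supp(\Pi)\cap B(1/a)$, the ball $B(e,a)$ contains a unique point $y\in\supp(\Xi)$ (by $\mu_\Pi(\{e\})\ge\sigma^{-1}>2a$ and $r_0$-uniform discreteness), and $d_G(e,y)<a$. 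Set $\Pi':=y^{-1}.\Xi\in\Tt_2$. The key estimate is
\[
\wdelt(\Pi,\Pi')\le 2a .
\]
We check it with the pointwise matching in the definition of $\wdelt$. Take $x\in B(\tfrac1{2a})\cap\supp(\Pi)$. Then $x\in B(\tfrac1a)$ because $a<1$, so there is a unique $z\in\supp(\Xi)\cap B(x,a)$ with $|\mu_\Pi(\{x\})-\mu_\Xi(\{z\})|<a$. The point $y^{-1}z\in\supp(\Pi')$ satisfies $d_G(x,y^{-1}z)\le d_G(x,z)+d_G(z,y^{-1}z)$, and here lies the only subtlety, which I expect to be the real obstacle. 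By left invariance, $d_G(z,y^{-1}z)=d_G(yz,z)$, which is a \emph{right} translation and is not controlled by $d_G(e,y)$ in a non-Abelian group.

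To avoid this, I would first try to compare the right objects. By left invariance,
\[
\wdelt(\Pi,y^{-1}.\Xi)=\wdelt(y.\Pi,\Xi),
\]
and $y.\Pi$ versus $\Pi$ is again a left translation of all points. This is the same difficulty, so I would instead use the Lipschitz bound of Proposition~\ref{prop:wDel_groupAction_Lipschitz}:
\[
\dw(\Pi,\Pi')=\dw\bigl(y^{-1}.(y.\Pi),\,y^{-1}.\Xi\bigr)\le (d_G(e,y)+1)\,\dw(y.\Pi,\Xi),
\]
combined with a direct estimate $\dw(y.\Pi,\Xi)\le \dw(\Pi,\Xi)+$ (small). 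If a right-translation error of size $a$ cannot be avoided, I would accept a constant slightly worse than $\tfrac12$ and then absorb it by tuning $\varepsilon$ and using $a<c_0/2$, possibly at the cost of replacing $\tfrac12$ by some explicit constant. In either case we obtain $d(\Pi,\Tt_2)\le 2a$. By symmetry $\delta_H(\Tt_1,\Tt_2)\le2(\Delta+\varepsilon)$, and letting $\varepsilon\to0$ finishes the argument.
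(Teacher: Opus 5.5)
Your bijectivity argument and the upper bound are correct and match the paper's: Lemma~\ref{lem:Connec_Transv-Hull}, Proposition~\ref{prop:wDel_groupAction_Lipschitz} and $d_G(e,g)\le R_0$. The lower bound, however, is not proved. The step you flag is a genuine obstruction, and none of your patches removes it.

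\begin{itemize}
\item The identity $\wdelt(\Pi,y^{-1}.\Xi)=\wdelt(y.\Pi,\Xi)$ does not hold in general. The quantity $\wdelt$ only tests points in the window $B(\tfrac{1}{\varepsilon})$ centred at $e$, so it is not $G$-invariant.
\item The Lipschitz route needs $\dw(y.\Pi,\Pi)=O(d_G(e,y))$. But left multiplication by $y$ moves $p\in\supp(\Pi)$ by $d_G(p,yp)=d_G(e,p^{-1}yp)$, the distance from $e$ to a conjugate of $y$. For $d_G(e,p)\asymp\tfrac{1}{\varepsilon}$ this is not linear in $d_G(e,y)$.
\end{itemize}
So the error is not ``of size $a$'', and it cannot be absorbed into the constant.

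In fact no argument closes this step for general $G$, because the lower bound fails in the Heisenberg group. Take the group law of Example~\ref{ex:HeisenbergExample-Subst}, a homogeneous left-invariant metric, $\sigma=1$, $\Gamma=(2\Z)^3$ and $h=(\eta,0,0)$ with $\eta>0$ small (and fixed suitable $r_0<R_0$). Then $h^{-1}(a,b,c)h=(a,b,c-b\eta)$. Put $\Omega_1:=G.\Gamma$ and $\Omega_2:=G.(\Gamma h)$.

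\begin{itemize}
\item Since $d_G(p,ph)=d_G(e,h)\asymp\eta$ for every $p$, matching $g.\Gamma$ with $g.(\Gamma h)$ gives $\delta_H(\Omega_1,\Omega_2)\lesssim\eta$.
\item On the other hand, $\tran(\Omega_1)=\{\Gamma\}$ and $\tran(\Omega_2)=\{h^{-1}\Gamma h\}$.
\item For $b\in2\Z$ with $|b|\eta<1$, the point $(0,b,0)\in\Gamma$ lies at distance $\asymp|b|$ from $e$. Its nearest point in $h^{-1}\Gamma h$ is $(0,b,-b\eta)$, at distance $\asymp\sqrt{|b|\eta}$.
\item Testing with $|b|\asymp\tfrac{1}{\varepsilon}$ yields $\dw(\Gamma,h^{-1}\Gamma h)\gtrsim\eta^{1/3}$, which exceeds $2\,\delta_H(\Omega_1,\Omega_2)$ for small $\eta$.
\end{itemize}

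The paper's proof contains the same hidden step. Its identity $\mu_{g^{-1}\Xi}(B(x,2\tepsilon))=\mu_\Xi(B(x,\tepsilon))$ reads $\mu_\Xi(B(gx,2\tepsilon))=\mu_\Xi(B(x,\tepsilon))$, and therefore needs $d_G(x,gx)<\tepsilon$.

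Two things do survive.
\begin{itemize}
\item If $G$ is Abelian, or more generally $d_G$ is bi-invariant, then $d_G(z,y^{-1}z)=d_G(e,y)<a$. Your matching then gives $d_G(x,y^{-1}z)<2a$ with weights differing by less than $a$, and symmetrically for points of $y^{-1}.\Xi$ in $B(\tfrac{1}{2a})$. Hence $\wdelt(\Pi,y^{-1}.\Xi)\le 2a$, and the factor $\tfrac12$ follows.
\item The homeomorphism claim never needs the lower bound. The map $\Phi$ is a continuous bijection from the compact space $\wInvT$ onto the metric space $\wInv$.
\end{itemize}
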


\begin{proof}
The map $\Phi$ is clearly surjective using that $\tran(\Omega)\in\wInvT$ for $\Omega\in\wInv$ satisfying $\Phi(\tran(\Omega))=\Omega$ by Lemma~\ref{lem:Connec_Transv-Hull}. Now the claimed statement follows if we prove $\Phi$ is bi-Lipschitz.

Let $\Tt_1,\Tt_2\in\wInvT$ be fixed and for the sake of simplicity set 
\[
\Omega_1:=\Phi(\Tt_1),\qquad
\Omega_2:=\Phi(\Tt_2),\qquad
\delta_{\text{hull}} := \delta_H\big( \Omega_1,\Omega_2 \big)
\qquad\textrm{and}\qquad
\delta_{\text{tran}} := \delta_H(\Tt_1,\Tt_2).
\]
We first prove $\delta_{\text{hull}}\leq (R_0+1)\delta_{\text{tran}}$. 
Let $\Pi\in\Omega_1$.
Then there is a $g\in\supp(\Pi)\cap \oB(R_0)$ and so $g^{-1}\Pi\in\Tt_1$. By definition of the Hausdorff metric, there is a $\Xi\in\Tt_2$ satisfying $\dw(g^{-1}\Pi,\Xi)<\delta_{\text{tran}}+\varepsilon$. Then Proposition~\ref{prop:wDel_groupAction_Lipschitz-App} and $g^{-1}\in \oB(R_0)$ (here we use that the metric $d_G$ is left-invariant) imply
\[
\dw(\Pi,g\Xi) \leq \big( d_G(e,g^{-1})+1\big) \dw(g^{-1}\Pi,\Xi) 
	\leq \big( R_0+1\big) \big( \delta_{\text{tran}}+\varepsilon \big).
\]
A similar estimate follows if $\Pi\in\Omega_2$ proving 
\[
\delta_{\text{hull}}\leq (R_0+1) (\delta_{\text{tran}}+\varepsilon)
\]
for all $\varepsilon>0$. Thus, $\Phi$ is Lipschitz continuous by sending $\varepsilon$ to zero.

\begin{figure}[htb]
    \centering
    \includegraphics[scale=1]{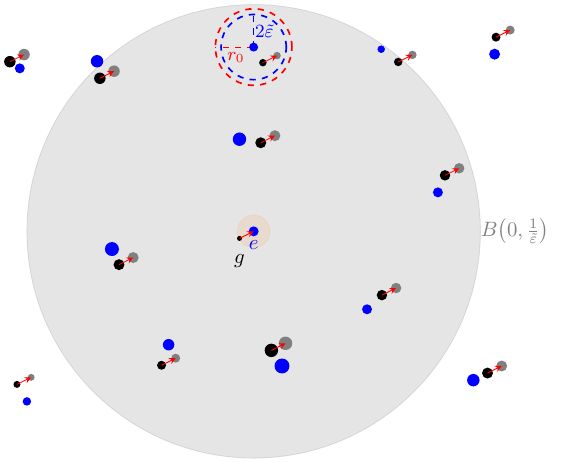}
	\captionsetup{width=0.95\linewidth}
	\caption{The weighted Delone set \(\Pi\) is shown in blue, \(\Xi\) in black, and the shifted set \(g^{-1} \Xi\) in gray.}
    \label{fig:Homeo-Hull-Transversal}
\end{figure}

Next we prove $\delta_{\text{tran}} \leq 2\delta_{\text{hull}}$. Since $\delta_{\text{tran}}\leq c_0:=\min\big\{\tfrac{r_0}{2},\tfrac{\sigma^{-1}}{2}\big\}$, there is no loss in generality in assuming $\delta_{\text{hull}}<c_0$. 
Let $\varepsilon>0$ be such that  $\tepsilon:= \delta_{\text{hull}} +\varepsilon<c_0$. 
For $\Pi\in\Tt_1 \subseteq \Omega_1$, there exists a $\Xi\in\Omega_2$ such that $\dw(\Pi,\Xi)<\tepsilon$. 
Using eq.~\eqref{eq:wdelt-appendix}, we conclude
\begin{equation}
\label{eq:proof_Homeo_hulls_transversals}
\big|\mu_\Pi(B(x,\tepsilon)) - \mu_\Xi(B(x,\tepsilon))\big|<\tepsilon,\quad
		\textrm{ for all } x\in B(\tfrac{1}{\tepsilon})\cap \big( \supp(\Pi)\cup \supp(\Xi) \big).
\end{equation}
Since $e\in\supp(\Pi)$ and $\tepsilon<\tfrac{\sigma^{-1}}{2}$, there is a $g\in B(\tepsilon)\cap \supp(\Xi)$, see Figure~\ref{fig:Homeo-Hull-Transversal}. Hence, $g^{-1}\Xi\in\Tt_2$. 
Since $\tepsilon< c_0$, we conclude $2\tepsilon<r_0$. Thus, $\mu_\Lambda(B(x,\tepsilon)) = \mu_\Lambda(B(x,2\tepsilon))$ holds for all $\Lambda\in\wDel$ and $x\in G$. 
In particular, $\mu_{g^{-1}\Xi}(B(x,2\tepsilon))=\mu_\Xi(B(x,\tepsilon))$ follows whenever $\mu_\Xi(B(x,\tepsilon))>0$.
Combining the previous considerations with eq.~\eqref{eq:proof_Homeo_hulls_transversals}, we derive
\[
\big|\mu_\Pi(B(x,2\tepsilon)) - \mu_{g^{-1}\Xi}(B(x,2\tepsilon))\big|<\tepsilon < 2\tepsilon,\quad
		\textrm{ for all } x\in B\big(\tfrac{1}{\tepsilon}-\tepsilon\big)\cap \big( \supp(\Pi)\cup \supp(g^{-1}\Xi) \big).
\]
Since $\sigma\geq 1$, we have $\tepsilon< c_0\leq \tfrac{1}{2} \leq \tfrac{1}{\sqrt{2}}$. Thus, $\tfrac{1}{\tepsilon}-\tepsilon\geq \tfrac{1}{2\tepsilon}$ follows like in eq.~\eqref{eq:proof_metric_weightDelone}. Hence,
$\dw(\Pi,g^{-1}\Xi)<2\tepsilon = 2(\delta_{\text{hull}} +\varepsilon)$ follows where $g^{-1}\Xi\in\Tt_2$. A similar argument for an arbitrary element $\Pi\in\Tt_2$ leads then to
\[
\delta_{\text{tran}} \leq 2(\delta_{\text{hull}} +\varepsilon).
\]
This finishes the proof by sending $\varepsilon$ to zero.
\end{proof}

\section{Substitution systems}
\label{App:Substituion_Systems}

Following the discussion in Section~\ref{sec:Subst_BeyondAbel} (see the remark on page~\pageref{rem:Histor_suff-large}), a modified notion of ``sufficiently large'' is used here and in \cite{BaBePoTe25}, compared to the version employed in the original ArXiv manuscript \cite{BecHarPog21} from September 2021.\footnote{Note that all explicit references to \cite{BecHarPog21} refer to the version posted in September 2021.} 

The main focus of \cite{BecHarPog21} was to prove the existence of linearly repetitive systems when the underlying group is no longer abelian. The modified notion introduced here allows one to incorporate a broader class of examples.
For the sake of transparency, we include the main adjustments required in the proofs from \cite{BecHarPog21} when working with this new notion of ``sufficiently large''. These modifications are also reflected in an updated version of \cite{BecHarPog21}.

The notion of ``sufficiently large'' is introduced to ensure that the supports of iterated substitution patches eventually cover the entire space in the limit. More precisely, it guarantees that the support of an $n$-fold substitution contains any given compact subset of the group (up to translation), which in turn ensures that the associated subshift is non-empty.

To establish linear repetitivity, one further requires that these supports can be bounded from both inside and outside by balls whose radii grow proportionally to $\lambda_0^n$, where $n$ denotes the number of iterations and $\lambda_0 > 1$ is the stretch factor. 

As discussed in Section~\ref{sec:Subst_BeyondAbel}, the sets $V(n)$ defined in equation~\eqref{eq:V(n,M)-def} (see page~\pageref{eq:V(n,M)-def}) represent the support of the $n$-fold substitution. As illustrated in Example~\ref{ex:HeisenbergExample-Subst} (see also \cite{BecHarPog21}), these supports may exhibit substantially different behavior in the non-Abelian case compared to the classical Abelian setting.

Recall the setting introduced in Section~\ref{chap:Dynam_LR}, $G$ is a locally compact, second countable Hausdorff group (lcsc) with an adapted metric $d_G$.
The open ball around $x\in G$ with radius $r>0$ is denoted by $B(x,r):=\set{y\in G}{d_G(x,y)<r}$. For a subset $F\subseteq G$ and $r>0$, define
\[
F_{+r} := \bigcup_{x\in F} B(x,r)
\qquad\text{and}\qquad
F_{-r} := \set{x\in F}{B(x,r)\subseteq F}.
\]
By construction we have for $r',r>0$,
\begin{equation}
\label{eq:Inner-outer_approx-balls}
B(x,r'-r)\subseteq B(x,r')_{-r}
\qquad\text{and}\qquad
B(x,r')_{+r}\subseteq B(x,r'+r).
\end{equation}
Let $\Gamma\subseteq G$ be a uniform lattice with a (left) fundamental domain $V$ such that the identity $e\in G$ lies in the interior of $V$. Then the \emph{inner $\Gamma$-approximation} of a bounded set $F\subseteq G$ is defined by
\[
F_\Gamma:= (F\cap\Gamma)V
\]

\begin{figure}[htb]
    \centering
    \includegraphics[scale=1]{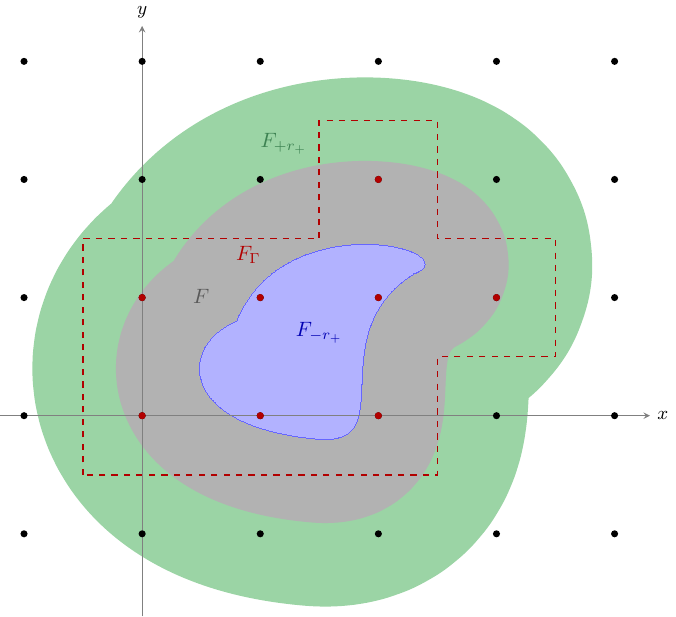}
	\captionsetup{width=0.95\linewidth}
	\caption{The set \(F \subseteq\R^2\) (gray), its inner approximation \(F_{-r_+}\) (blue), and outer approximation \(F_{+r_+}\) (green) are shown. Lattice points in \(\Gamma = \mathbb{Z}^2\) that lie in \(F\) are marked in red. The dashed red line marks the boundary of the set \(F_\Gamma\), where \(V = [-\tfrac{1}{2}, \tfrac{1}{2})^2\).}
    \label{fig:F_Gamma-set}
\end{figure}

Note that neither $F_\Gamma\subseteq F$ nor $F\subseteq F_\Gamma$ needs to hold in general. However, the following inclusion are exploited (proven in \cite[Prop.~5.11]{BecHarPog21})
\begin{equation}
\label{eq:prop5.11-suff-large}
F_{-r_+} \subseteq F_\Gamma \subseteq F_{+r_+},
\end{equation}
where $r_+>0$ is chosen such that $\overline{V} \subseteq B(e, r_+)$. These properties are illustrated by the example in Figure~\ref{fig:F_Gamma-set}.

Let \( \Dd = (G, d_G, (D_\lambda)_{\lambda > 0}, \Gamma, V) \) be a dilation datum, see Section~\ref{sec:Subst_BeyondAbel} for the definition.  
Recall that \( \lambda_0 > 1 \) is sufficiently large relative to \( V \) if there exist a constant \( C_- > 0 \), an integer \( s \in \N_0 \), and a point \( z \in \Gamma \) such that for all \( n \in \N_0 \),
\[
D_{\lambda_0}^n\big( B(z, C_-) \big) \subseteq V(s+n).
\]
Here $V(k)=V_{\lambda_0}(k)$ are recursively defined in eq.~\eqref{eq:V(n,M)-def} (for $M=\{e\}$) via
\[
V(0) :=  V
\quad \text{and} \quad
V(n) := D_{\lambda_0} \big( (V(n-1) \cap \Gamma) \cdot V \big) = D_{\lambda_0}\big( V(n-1)_\Gamma \big), \quad n\in\N.
\]
We begin by providing two sufficient criteria ensuring that \(\lambda_0\) is sufficiently large relative to \(V\). In Lemma~\ref{lem:suff-large}~(a), we show that the modified notion of ``sufficiently large'' generalizes the original definition used in \cite{BecHarPog21}. Lemma~\ref{lem:suff-large}~(b) presents an alternative sufficient criterion, which is applied in Examples~\ref{ex:Geom_Comb_data-TableTiling}, \ref{ex:NonSquare-Substitution-Z2}, and \ref{ex:NonSquare-Substitution-Z2_lambda=3}.

\begin{lemma}
\label{lem:suff-large}
Let \( \Dd = (G, d_G, (D_\lambda)_{\lambda > 0}, \Gamma, V) \) be a dilation datum, and let \( r_-, r_+ > 0 \) be such that
\[
B(e, r_-) \subseteq V \subseteq \overline{V} \subseteq B(e, r_+).
\]
\begin{enumerate}[label=(\alph*)]
\item If \( \lambda_0 > 1 + \tfrac{r_+}{r_-} \), then \( \lambda_0 \) is sufficiently large relative to \( V \), with parameters
\[
C_- = \tfrac{r_-}{\lambda_0} \left( \lambda_0 - \left(1 + \tfrac{r_+}{r_-} \right) \right), \qquad s = 0, \qquad z = e.
\]
\item If there exist \( r_0 > \tfrac{r_+ \lambda_0}{\lambda_0 - 1} \), \( s_0 \in \N_0 \), and \( z_0 \in \Gamma \) such that \( B(z_0, r_0) \subseteq V(s_0) \), then \( \lambda_0 \) is sufficiently large relative to \( V \), with parameters
\[
C_- = (r_0 - r_+) - \tfrac{r_0}{\lambda_0}, \qquad s=s_0, \qquad z=z_0.
\]
\end{enumerate}
\end{lemma}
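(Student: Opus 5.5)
Both parts rest on one recursive step. Suppose $B(w,\rho)\subseteq V(k)$ for some $w\in\Gamma$ and $\rho>r_+$. Combining \eqref{eq:prop5.11-suff-large} with \eqref{eq:Inner-outer_approx-balls} gives
\[
B(w,\rho-r_+)\subseteq B(w,\rho)_{-r_+}\subseteq V(k)_{-r_+}\subseteq V(k)_\Gamma .
\]
Here the middle inclusion uses that $F\mapsto F_{-r}$ is monotone. We need $V(k)_{-r_+}\subseteq V(k)_\Gamma$; this is literally \eqref{eq:prop5.11-suff-large} only when $V(k)$ is bounded, which holds since $V(k)$ is relatively compact. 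Applying $D_{\lambda_0}$, which is a dilation, yields
\[
B\big(D_{\lambda_0}(w),\lambda_0(\rho-r_+)\big)=D_{\lambda_0}\big(B(w,\rho-r_+)\big)\subseteq D_{\lambda_0}\big(V(k)_\Gamma\big)=V(k+1).
\]
The new center $D_{\lambda_0}(w)$ stays in $\Gamma$ because $D_{\lambda_0}(\Gamma)\subseteq\Gamma$. The radius therefore evolves by the affine map $\rho\mapsto\lambda_0(\rho-r_+)$, whose repelling fixed point is $\rho^\ast=\tfrac{\lambda_0 r_+}{\lambda_0-1}$.

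For (b), start with $\rho_0=r_0>\rho^\ast$. Iterating gives
\[
\rho_n-\rho^\ast=\lambda_0^n(r_0-\rho^\ast),
\]
so $\rho_n\ge \lambda_0^n(r_0-\rho^\ast)$ and in particular $\rho_n>r_+$ at every stage, which keeps the recursion valid. By induction,
\[
B\big(D_{\lambda_0}^n(z_0),\rho_n\big)\subseteq V(s_0+n).
\]
It remains to compare with $D^n_{\lambda_0}(B(z_0,C_-))=B(D^n_{\lambda_0}(z_0),\lambda_0^n C_-)$, which requires $\lambda_0^nC_-\le\rho_n$ for all $n$. With $C_-=(r_0-r_+)-\tfrac{r_0}{\lambda_0}$, this reduces to the elementary inequality
\[
C_-\le r_0-\rho^\ast+\lambda_0^{-n}(\rho^\ast-\ldots)
\]
for all $n\ge0$. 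I would verify it by expanding $\rho_n/\lambda_0^n=r_0-r_+\sum_{j=1}^{n}\lambda_0^{1-j}$, i.e.\ $\rho_n=\lambda_0^n r_0-r_+(\lambda_0+\dots+\lambda_0^n)$, and bounding the geometric sum.

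I expect this constant bookkeeping to be the main obstacle. The stated $C_-$ must be positive, and positivity is equivalent to $r_0(\lambda_0-1)>\lambda_0 r_+$, which is exactly the hypothesis. Checking that $C_-\lambda_0^n\le\rho_n$ holds for every $n$, and not only asymptotically, needs care. If the crude estimate fails for small $n$, I would instead use the inner ball $B(z_0,r_0)\subseteq V(s_0)$ for $n=0$ directly and run the induction on the inequality $\rho_n\ge\lambda_0^nC_-+r_+$.

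For (a), take $k=0$, $w=e$, $\rho=r_-$. Then $B(e,r_-)\subseteq V=V(0)$, but $r_-$ may be smaller than $r_+$, so we cannot start the recursion at level $0$. Instead we use directly that $V(1)=D_{\lambda_0}(V_\Gamma)\supseteq D_{\lambda_0}(V)$, since $e\in\Gamma\cap V$. This gives a ball of radius $\lambda_0 r_-$ in $V(1)$, and $\lambda_0 r_->r_++r_-$. I would then apply part (b) with $s_0=1$ and $r_0=\lambda_0r_-$, or run the recursion by hand, to reach the stated $C_-=\tfrac{r_-}{\lambda_0}(\lambda_0-1-\tfrac{r_+}{r_-})$ with $s=0$, $z=e$. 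The case $n=0$ is immediate because $C_-<r_-$.
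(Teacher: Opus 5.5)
Your proof of (b) is the paper's argument. You iterate $B(w,\rho)\subseteq V(k)\Rightarrow B(D_{\lambda_0}(w),\lambda_0(\rho-r_+))\subseteq V(k+1)$ via \eqref{eq:Inner-outer_approx-balls} and \eqref{eq:prop5.11-suff-large}. The only difference is bookkeeping. The paper carries the inductive lower bound $\lambda_0^kC_-+r_0$ for the radius, which propagates because $\lambda_0(r_0-r_+)>r_0$. You instead compute the radius exactly.

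The step you flagged as the main obstacle takes one line. Since $C_-=\tfrac{\lambda_0-1}{\lambda_0}(r_0-\rho^\ast)<r_0-\rho^\ast$, you get $\lambda_0^nC_-<\lambda_0^n(r_0-\rho^\ast)<\rho_n$ for every $n\ge 0$, with no exceptional small $n$. Your fallback invariant $\rho_n\ge\lambda_0^nC_-+r_+$ would not propagate by itself, because the step loses the $+r_+$. The invariant that does propagate is the paper's, with $r_0$ in place of $r_+$.

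For (a), the paper gives no argument and only cites \cite[Thm.~5.13]{BecHarPog21}, where this condition was the original definition of ``sufficiently large''. Your reduction to (b) is a genuinely different, self-contained route, and it works.

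Since $e\in V\cap\Gamma$, you have $V(1)\supseteq D_{\lambda_0}(V)\supseteq B(e,\lambda_0 r_-)$. The hypothesis of (b) with $r_0=\lambda_0 r_-$, $s_0=1$, $z_0=e$ reads $r_-(\lambda_0-1)>r_+$, which is exactly $\lambda_0>1+\tfrac{r_+}{r_-}$. Part (b) then gives $s=1$ with constant $\lambda_0r_--r_--r_+=\lambda_0C_-$.

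You left the index shift implicit; it is $D_{\lambda_0}^n\big(B(e,C_-)\big)=D_{\lambda_0}^{n-1}\big(B(e,\lambda_0C_-)\big)\subseteq V(n)$ for $n\ge1$. For $n=0$ use $C_-<r_-$, as you noted.

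Your route makes (a) a corollary of (b) with exactly the stated constants. It shows directly that the new notion of sufficiently large subsumes the old one, as Remark~\ref{rem:Histor_suff-large} asserts. The paper's route instead depends on the original version of \cite{BecHarPog21}.
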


\begin{proof}
(a) This is proven in \cite[Thm.~5.13]{BecHarPog21} using that \( \lambda_0 > 1 + \tfrac{r_+}{r_-} \) was the original definition of $\lambda_0$ being sufficiently large.

\medskip

(b) Let \( r_0 > \tfrac{r_+ \lambda_0}{\lambda_0 - 1} \), \( s_0 \in \N_0 \), and \( z_0 \in \Gamma \) be such that \( B(z_0, r_0) \subseteq V(s_0) \). First note that
\begin{equation}
\label{eq:proof_suff-large}
r_0 > \tfrac{r_+ \lambda_0}{\lambda_0 - 1} 
	\quad\Longleftrightarrow\quad
(r_0-r_+)\lambda_0>r_0
	\quad\Longleftrightarrow\quad	
\frac{r_0-r_+}{r_0} > \frac{1}{\lambda_0}.
\end{equation}
In particular, $C_->0$ holds. Define $\varepsilon:= \tfrac{C_-}{r_0}>0$ and observe $\varepsilon+\tfrac{1}{\lambda_0} = \tfrac{r_0-r_+}{r_0}$. 
Next, we prove by induction that
\[
\text{for all } k \in \N: \quad
	B\left(D_{\lambda_0}^k(z_0),\, \lambda_0^k \varepsilon r_0 + r_0 \right) \subseteq V(s_0 + k).
\]
Before proceeding with the induction, we show that this implication yields the desired statement.  
Since \( D_{\lambda_0} \) is a dilation, we obtain for all \( k \in \N \) that
\[
D_{\lambda_0}^k\big(B(z_0, C_-)\big) = B\big(D_{\lambda_0}^k(z_0),\, C_- \lambda_0^k \big).
\]
Thus, the above inclusion and \( \varepsilon r_0 = C_- \) imply
\[
D_{\lambda_0}^k\left(B(z_0, C_-)\right) \subseteq B\left(D_{\lambda_0}^k(z_0),\, \lambda_0^k \varepsilon r_0 + r_0 \right) \subseteq V(s_0 + k).
\]
This completes the proof of the desired statement.
It remains to prove the claimed inclusion by induction.

For the induction base $k=1$, the assumed inclusion \( B(z_0, r_0) \subseteq V(s_0) \) together with eq.~\eqref{eq:Inner-outer_approx-balls} and eq.~\eqref{eq:prop5.11-suff-large} imply
\[
B(z_0,r_0-r_+) \subseteq B(z_0,r_0)_{-r_+} \subseteq V(s_0)_\Gamma.
\]
Applying the dilation and the choice of $C_-$ yield
\begin{align*}
B\left(D_{\lambda_0}(z_0), \lambda_0\varepsilon r_0 +r_0 \right) 
	= B\left(D_{\lambda_0}(z_0), \lambda_0(r_0-r_+)\right)
	= &D_{\lambda_0} \left(B(z_0, r_0-r_+)\right)\\
	\subseteq &D_{\lambda_0}\big( V(s_0)_\Gamma\big)\\
	= &V(s_0+1),
\end{align*}
where we used the recursive definition of $V(n)$ in the last step.
For the induction step suppose 
\[
B\left(D_{\lambda_0}^k(z_0),\, \lambda_0^k \varepsilon r_0 + r_0 \right) \subseteq V(s_0 + k)
\]
holds for a given $k\in\N$. Applying eq.~\eqref{eq:proof_suff-large}, eq.~\eqref{eq:Inner-outer_approx-balls} and eq.~\eqref{eq:prop5.11-suff-large} lead to
\begin{align*}
B\left(D_{\lambda_0}^{k+1}(z_0),\, \lambda_0^{k+1} \varepsilon r_0 + r_0 \right)
	\subseteq &B\left(D_{\lambda_0}^{k+1}(z_0),\, \lambda_0^{k+1} \varepsilon r_0 + (r_0-r_+)\lambda_0 \right)\\
	= &D_{\lambda_0}\left( B\left(D_{\lambda_0}^{k}(z_0),\, \lambda_0^{k} \varepsilon r_0 + r_0-r_+ \right) \right)\\
	\subseteq &D_{\lambda_0}\left( V(s_0 + k)_\Gamma \right) \\
	= &V(s_0+k+1)
\end{align*}
finishing the proof.
\end{proof}

We now provide an adjusted version of \cite[Thm.~5.13, Lem.~5.15]{BecHarPog21} stating that the sets $V(n)$ can be bounded from the inside and outside by balls whose radii are proportional to $\lambda_0^n$.

\begin{lemma}\label{lem:GrowthLemma1} 
Let \( \Dd = (G, d_G, (D_\lambda)_{\lambda > 0}, \Gamma, V) \) be a dilation datum and $r_+>0$ be such that $\overline{V}\subseteq B(e,r_+)$. Let $s\in\N_0$, $z\in\Gamma$ and $R_+>0$ be such that $V(s)\subseteq B(z,R_+)$. Then
$$
V(s+k) \subseteq B\big(D_{\lambda_0}^k(z),C_+ \lambda_0^{k}\big)
$$
holds for all $k\in\N_0$, where $C_+:=R_+ +\tfrac{r_+\lambda_0}{\lambda_0-1}>0$. 
\end{lemma}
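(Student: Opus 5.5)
The plan is to prove a slightly stronger statement by induction on $k$, namely
\[
V(s+k) \subseteq B\Big(D_{\lambda_0}^k(z),\, R_+\lambda_0^k + r_+\sum_{j=1}^{k}\lambda_0^{j}\Big), \qquad k\in\N_0 .
\]
Since $\lambda_0>1$, the geometric sum satisfies
\[
\sum_{j=1}^k \lambda_0^j = \lambda_0\,\frac{\lambda_0^k-1}{\lambda_0-1} \leq \frac{\lambda_0}{\lambda_0-1}\,\lambda_0^k .
\]
Hence the radius above is at most $\big(R_+ + \tfrac{r_+\lambda_0}{\lambda_0-1}\big)\lambda_0^k = C_+\lambda_0^k$, which is the claim. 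The base case $k=0$ is exactly the hypothesis $V(s)\subseteq B(z,R_+)$.

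For the induction step, assume the inclusion holds for $k$ and write $\rho_k$ for the radius. By the recursive definition $V(s+k+1)=D_{\lambda_0}\big(V(s+k)_\Gamma\big)$. The right inclusion in eq.~\eqref{eq:prop5.11-suff-large} gives $V(s+k)_\Gamma\subseteq V(s+k)_{+r_+}$. Monotonicity of $F\mapsto F_{+r}$ together with eq.~\eqref{eq:Inner-outer_approx-balls} then yields
\[
V(s+k)_\Gamma \subseteq B\big(D_{\lambda_0}^k(z),\rho_k\big)_{+r_+} \subseteq B\big(D_{\lambda_0}^k(z),\rho_k+r_+\big).
\]
Applying the dilation, which maps $B(x,r)$ onto $B(D_{\lambda_0}(x),\lambda_0 r)$ by (D2), we obtain
\[
V(s+k+1)\subseteq B\big(D_{\lambda_0}^{k+1}(z),\lambda_0\rho_k+\lambda_0 r_+\big).
\]
Finally, $\lambda_0\rho_k+\lambda_0 r_+ = R_+\lambda_0^{k+1}+r_+\sum_{j=1}^{k+1}\lambda_0^j=\rho_{k+1}$, which closes the induction.

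The only point needing care is that eq.~\eqref{eq:prop5.11-suff-large} is stated for bounded sets $F$. We therefore need $V(s+k)$ to be bounded, which the induction hypothesis guarantees. Beyond that there is no real obstacle; the argument is the outer-ball analogue of the induction in Lemma~\ref{lem:suff-large}~(b), and $z\in\Gamma$ is not even needed for the outer bound.
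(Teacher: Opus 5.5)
Your proof is correct and is essentially the paper's argument. The paper runs the same induction with radius $\lambda_0^k(R_+ + r_+S_{k-1})$, where $S_{k-1}=\sum_{n=0}^{k-1}\lambda_0^{-n}$; this is exactly your $\rho_k$, obtained from eq.~\eqref{eq:prop5.11-suff-large}, the ball-enlargement inclusion and the dilation property, with the geometric series bounded by $\tfrac{\lambda_0}{\lambda_0-1}$. The only cosmetic differences are that you start at $k=0$ and apply the $\Gamma$-approximation inclusion to $V(s+k)$ rather than to the enclosing ball; your boundedness caveat is harmless anyway, since $F_\Gamma\subseteq F_{+r_+}$ holds for arbitrary $F$.
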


\begin{proof} 
First note that the inclusion holds trivially for $k=0$ since $C_+>R_+$. 
Define $S_k:=\sum_{n=0}^k \frac{1}{\lambda_0^n}$ for $k\in\N$. We first inductively prove
\[
\text{for all } k\in\N:
	\quad
	V(s+k) \subseteq B\big(D_{\lambda_0}^k(z),\lambda_0^{k} (R_+ + r_+S_{k-1})\big).
\]
For the induction base $k=1$, the inclusion \(V(s)\subseteq B(z,R_+)\) and eq.~\eqref{eq:prop5.11-suff-large} yield
\[
V(s)_\Gamma \subseteq B(z,R_+)_\Gamma \subseteq B(z,R_+ + r_+).
\]
Together with $S_{k-1}=S_0=1$, we conclude
\[
V(s+1) 
	= D_{\lambda_0}\big(V(s)_\Gamma\big) 
	\subseteq  B\big(D_{\lambda_0}(z),\lambda_0 (R_+ + r_+S_{k-1})\big)
\]
follows. For the induction step, the induction hypothesis combined with  eq.~\eqref{eq:prop5.11-suff-large} lead to
\[
V(s+k)_\Gamma 
	\subseteq B\big(D_{\lambda_0}^k(z),\lambda_0^{k} (R_+ + r_+S_{k-1})\big)_\Gamma
	\subseteq B\big(D_{\lambda_0}^k(z),\lambda_0^{k} (R_+ + r_+S_{k-1}) + r_+\big).
\]
Applying the dilation $D_{\lambda_0}$ yields
\begin{align*}
V(s+k+1)
	= D_{\lambda_0}\big(V(s+k)_\Gamma \big)
	\subseteq &B\left(D_{\lambda_0}^{k+1}(z),\lambda_0^{k+1} \big(R_+ + r_+S_{k-1} + r_+\tfrac{1}{\lambda_0^k}\big)\right)\\
	= &B\big(D_{\lambda_0}^{k+1}(z),\lambda_0^{k+1} (R_+ + r_+S_{k})\big)
\end{align*}
finishing the induction.

Observe that $(S_k)_{k\in\N}$ is a monotonically increasing sequence converging to $\tfrac{\lambda_0}{\lambda_0-1}$ (geometric series) using $\lambda_0>1$. Thus, the previous considerations yield for all $k\in\N$,
\[
	V(s+k) \subseteq B\big(D_{\lambda_0}^k(z),\lambda_0^{k} (R_+ + r_+S_{k-1})\big)
		\subseteq B\big(D_{\lambda_0}^k(z),\lambda_0^{k} (R_+ + \tfrac{r_+\lambda_0}{\lambda_0-1})\big).
		\qedhere
\]
\end{proof}

\begin{theorem}
\label{thm:SupportGrowth}
Let \( \Dd = (G, d_G, (D_\lambda)_{\lambda > 0}, \Gamma, V) \) be a dilation datum, and let \( \lambda_0 > 1 \) be sufficiently large relative to \( V \), with parameters \( C_- > 0 \), \( s \in \N_0 \), and \( z \in \Gamma \). Let $R_+>0$ be such that $V(s)\subseteq B(z,R_+)$. Then for all \( n \in \N \),
\[
D_{\lambda_0}^n(z) B\big(e, C_-\lambda_0^n\big) \big) \subseteq V(s+n) \subseteq D_{\lambda_0}^n(z) B\big(e, C_+\lambda_0^n\big).
\]
where \( C_+:=R_+ +\tfrac{r_+\lambda_0}{\lambda_0-1}>0 \).
Moreover, for all $n\in\N_0$,
\[
V(n) \subseteq B\big(e, \tilde{C}_+\lambda_0^n\big)
\]
holds with $\tilde{C}_+=r_+\big( 1+ \tfrac{\lambda_0}{\lambda_0-1}\big)>0$.
\end{theorem}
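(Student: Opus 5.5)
The inner inclusion is essentially the definition of ``sufficiently large''. By assumption, $D_{\lambda_0}^n(B(z,C_-))\subseteq V(s+n)$ for all $n$. Since $D_{\lambda_0}$ is a group automorphism scaling $d_G$ by $\lambda_0$, we get $D_{\lambda_0}^n(B(z,C_-))=B(D_{\lambda_0}^n(z),C_-\lambda_0^n)$; this was already used in Remark~\ref{rem:Histor_suff-large}~(a). Left-invariance of $d_G$ then gives $B(D_{\lambda_0}^n(z),r)=D_{\lambda_0}^n(z)B(e,r)$. Together these yield the left inclusion.

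The outer inclusion follows from Lemma~\ref{lem:GrowthLemma1} with the given $R_+$, $z$ and $s$. That lemma gives $V(s+n)\subseteq B(D_{\lambda_0}^n(z),C_+\lambda_0^n)$, and rewriting the ball as a left translate of $B(e,C_+\lambda_0^n)$ as above finishes it. The only point to check is that $R_+$ exists at all. This holds because $V(s)$ is relatively compact: it is obtained from the relatively compact set $V$ by finitely many steps, each intersecting with the discrete set $\Gamma$ (leaving finitely many points), multiplying by $V$, and applying the continuous map $D_{\lambda_0}$. Since $d_G$ is proper, $V(s)$ is then bounded.

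For the final statement, I would apply Lemma~\ref{lem:GrowthLemma1} again, now with $s=0$, $z=e$ and $R_+=r_+$. This is admissible because $V(0)=V\subseteq\overline{V}\subseteq B(e,r_+)$ and $D_{\lambda_0}^n(e)=e$. The lemma gives $V(n)\subseteq B(e,\lambda_0^n(r_+ + \tfrac{r_+\lambda_0}{\lambda_0-1}))=B(e,\tilde C_+\lambda_0^n)$ for all $n\in\N_0$, with $n=0$ holding trivially since $\tilde C_+>r_+$.

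I do not expect a real obstacle here. The only delicate bookkeeping is the identification of dilated balls with left translates of centered balls, which uses that $D_{\lambda_0}$ is an automorphism and $d_G$ is left-invariant.
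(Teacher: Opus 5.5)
Your proof is correct and follows the paper's argument. The inner inclusion is the \emph{sufficiently large} hypothesis rewritten via $D_{\lambda_0}^n(B(z,r))=B(D_{\lambda_0}^n(z),\lambda_0^n r)=D_{\lambda_0}^n(z)B(e,\lambda_0^n r)$, and both outer bounds come from Lemma~\ref{lem:GrowthLemma1}, the last one with $s=0$, $z=e$, $R_+=r_+$. Your check that some $R_+$ exists is superfluous, since $R_+$ is part of the hypotheses; also, a relatively compact set is bounded in any metric space, so properness of $d_G$ is not what gives this.
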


\begin{proof}
Since $D_{\lambda_0}:G\to G$ is a dilation and the metric $d_G$ is left-invariant, we conclude 
\[
D_{\lambda_0}(z) B\big(e, r\lambda_0\big) 
	= B\big(D_{\lambda_0}(z) , r\lambda_0\big)
	= D_{\lambda_0} \big(B(z, r)\big).
\]
The first inclusion follows since \( \lambda_0 > 1 \) is sufficiently large relative to \( V \), with parameters \( C_- > 0 \), \( s \in \N_0 \), and \( z \in \Gamma \). The second inclusion follows directly from Lemma~\ref{lem:GrowthLemma1}. Also the inclusion
\[
V(n) \subseteq B\big(e, \tilde{C}_+\lambda_0^n\big)
\]
is a consequence of Lemma~\ref{lem:GrowthLemma1} using $V(0)=V\subseteq B(e,r_+)$.
\end{proof}

Comparing Theorem~\ref{thm:SupportGrowth} with the original version \cite[Thm.~5.13]{BecHarPog21}, we observe the following differences:
\begin{enumerate}[label=(\alph*)]
\item The constants $C_-, C_+$ (respectively $\tilde{C}_+$) are modified;
\item The sets $V(s+n)$ contain only a ball of radius comparable to $\lambda_0^n$ centered at $D_{\lambda_0}^n(z)$, where $s \in \N$ may be larger than $1$.
\end{enumerate}
The difference in (a) affects only the involved constants and does not change the qualitative nature of the results in \cite{BecHarPog21}. 

Regarding (b), we note that up to a shift by $D_{\lambda_0}^n(z)$, the set $V(s+n)$ still contains arbitrarily large finite subsets of $\Gamma$. This likewise leaves the qualitative content of the results in \cite{BecHarPog21} unaffected. In particular, due to the equivariance condition from Proposition~\ref{prop:SubstRule-Properties}, all relevant arguments remain valid. The additional parameter $s$ can be controlled using $V(s+n) = V(n,V(s)\cap\Gamma)$ proven in \cite[Lem.~5.16]{BecHarPog21}.

More precisely, the inclusion of such balls is used only in \cite[Lem.~7.3 and Lem.~7.8]{BecHarPog21}, where the arguments can be straightforwardly adapted to incorporate the corresponding shift.

\section{Proving spectral estimates of dynamically-defined operators}
\label{App:SpectralEstimates}

In this section, we provide the modified proofs of \cite{BecTak25} stated in Section~\ref{sec:SpectralEstimates} to in cooperate that the kernel not only takes values in $\C$ but in $\C^{N\times N}$ for some $N\in\N$. Here $\C^{N\times N}$ denotes the set of $N\times N$-matrices $A = (a_{m,n})_{m,n=1}^N$ with values in $\C$ (i.e. $a_{m,n}\in\C$) equipped with the norm
$$
\|A\|_M := \max_{1\leq m,n\leq N} |a_{m,n}|.
$$

Let $G$ be a second countable, locally compact, Hausdorff group $G$. Then $G$ admits a unique left-invariant Haar measure \( m_G \) on the Borel \( \sigma \)-algebra of \( G \); see, e.g., \cite{Fol16}. Let $L^2(G):=L^2(G,m_G)$ be the Hilbert space of square integrable (w.r.t. $m_G$) functions with complex values. Denote by $\|\psi\|_2^2 := \int_G |\psi(g)|^2 \ dg$ the $L^2$-norm of $\psi\in L^2(G)$. Here $dg$ denotes the integration with respect to the Haar measure $m_G$. 

Throughout this section we assume the group to be unimodular, i.e. $m_G$ is left and right-invariant with respect to the group action, i.e. $m_G(gA) = m_G(A) = m_G(Ag)$ for all measurable $A$ and $g\in G$.

We are interested in linear bounded operators on the Hilbert space $H_N:=L^2(G)\otimes \C^N$, which is also identified with (see \cite[Rem.~2.6.8]{KadRin97})
$$
\bigoplus_{n=1}^N L^2(G)
	:= \set{ \psi:= (\psi_1,\ldots,\psi_N) \in\prod_{n=1}^N L^2(G) }{ \|\psi\|_{2,N}^2:=\sum_{n=1}^N \|\psi_n\|_2^2<\infty}
$$
equipped with norm $\| \cdot \|_{2,N}$.

Before discussing the dynamically-defined operators, let us observe the following.
\begin{lemma}
\label{lem:Matrix}
Let $A(h)= (a_{m,n}(h))_{m,n=1}^N \in\C^{N\times N}, h\in G,$ be a family of matrices with measurable coefficients $G\ni h\mapsto a_{m,n}(h)$ defining a linear and bounded operator on the Hilbert space $H_N$ via 
$$
(A\psi)_n(h) := (A(h)\psi(h))_n = \sum_{m=1}^N a_{m,n}(h)\psi_m(h), 
	\qquad 1\leq n\leq N.
$$
Then the following estimate holds
$$
\sum_{n=1}^N \left( \int_G \big| \big(A(h)\psi(h)\big)_n \big| dh \right)^2 
	\leq N^2 \cdot \sum_{m=1}^N \left(\int_G \; \|A(h)\|_M |\psi_m(h)| \ dh \right)^2.
$$
\end{lemma}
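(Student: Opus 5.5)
The estimate is purely pointwise linear algebra followed by elementary inequalities, so I will bound each component pointwise in $h$, integrate, and then control the square of a finite sum by the Cauchy--Schwarz inequality in $\R^N$.

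First, for fixed $h\in G$ and $1\leq n\leq N$, the triangle inequality and the definition of $\|\cdot\|_M$ give
\[
\big|\big(A(h)\psi(h)\big)_n\big| \leq \sum_{m=1}^N |a_{m,n}(h)|\,|\psi_m(h)| \leq \|A(h)\|_M \sum_{m=1}^N |\psi_m(h)|.
\]
Integrating over $G$ with respect to the Haar measure (everything is nonnegative and measurable, so Tonelli applies to interchange sum and integral) yields
\[
\int_G \big|\big(A(h)\psi(h)\big)_n\big|\, dh \leq \sum_{m=1}^N I_m, \qquad I_m := \int_G \|A(h)\|_M\, |\psi_m(h)|\, dh \in [0,\infty].
\]
The right-hand side no longer depends on $n$. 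This is the reason the final constant picks up one factor $N$ from the sum over $n$.

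Next, square this bound and use $\big(\sum_{m=1}^N I_m\big)^2 \leq N \sum_{m=1}^N I_m^2$, which is Cauchy--Schwarz for the vectors $(1,\ldots,1)$ and $(I_1,\ldots,I_N)$. Summing over $n=1,\ldots,N$ then gives
\[
\sum_{n=1}^N \left(\int_G \big|\big(A(h)\psi(h)\big)_n\big|\, dh\right)^2 \leq N\cdot N \sum_{m=1}^N I_m^2,
\]
which is exactly the claimed inequality with constant $N^2$.

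The only point needing care is measurability and the possibility that some $I_m=\infty$. In that case the inequality holds trivially. Measurability of $h\mapsto\|A(h)\|_M$ follows because it is a maximum of finitely many measurable functions. There is no real obstacle here. The boundedness assumption on the operator is not even needed for the estimate itself, and a sharper constant is not pursued.
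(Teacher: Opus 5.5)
Your proof is correct and follows essentially the paper's argument: the triangle inequality, then the Cauchy--Schwarz bound $\bigl(\sum_{m}|v_m|\bigr)^2\le N\sum_m|v_m|^2$, then the entrywise bound $|a_{m,n}(h)|\le\|A(h)\|_M$, with the sum over $n$ contributing the second factor $N$. The only difference is that you apply the entrywise bound before squaring, whereas the paper applies it afterwards; this reordering is immaterial.
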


\begin{proof}
By the Cauchy-Schwartz inequality on $\C^N$, observe for a vector $(v_1,\ldots, v_N)\in\C^N$ that
$$
\left( \sum_{m=1}^N 1\cdot |v_m| \right)^2 \leq N \cdot \left( \sum_{m=1}^N |v_m|^2 \right).
$$
Thus, a short computation yields
\begin{align*}
\sum_{n=1}^N \left( \int_G \big| (A\psi)(h)_n \big| dh \right)^2 
	\leq &\sum_{n=1}^N \left( \sum_{m=1}^N  \int_G |a_{m,n}(h)\psi(h)_m| dh \right)^2\\
	\leq &N\cdot \sum_{n=1}^N \sum_{m=1}^N \left(\int_G |a_{m,n}(h)\psi(h)_m| dh \right)^2
\end{align*}
proving the claimed estimate.
\end{proof}

Following Section~\ref{sec:SpectralEstimates}, we are interested in dynamically-defined operators. We use the notation that \( k: G \times Z \to \C^{N\times N} \) is a map into the $N\times N$-matrices and its $m,n$th coefficients are given by $k(\cdot,\cdot)_{m,n}$.

\begin{definition}[Dynamically-defined operators]	
Let \( (Z, G, \tau) \) be a dynamical system. A measurable function \( k: G \times Z \to \C^{N\times N} \) is called a {\em (bounded) kernel} if for each \( g \in G \), the map \( k(g,\cdot): Z \to \C^{N\times N} \) is continuous and
\[
\sup_{x \in Z} \int_G \sup_{g\in G}\left\| k\big( h, g^{-1}x \big) \right\|_M \, dh < \infty.
\]
Then the operator family \( A_x \in \Ll\big(H_N\big) \) for \( x \in Z \) defined by
\[
(A_x \psi)_n(g) := \int_G  \left(k\big( g^{-1}h, g^{-1}x \big) \psi(h)\right)_n\, dh, 
		= \int_G  \left(k\big( h, g^{-1}x \big) \psi(gh)\right)_n\, dh,\qquad 1\leq n \leq N,
\]
is called a \emph{dynamically-defined operator} associated with the kernel $k$.
\end{definition}

Note that $k\big( g^{-1}h, g^{-1}x \big)$ is an $N\times N$ matrix and $\psi(h) = (\psi_1(h),\ldots,\psi_N(h))\in\C^N$ is a vector for each $h\in G$ (after choosing a suitable representative of each coefficient of $\psi$). Then the matrix acts via matrix $k\big( g^{-1}h, g^{-1}x \big)$ multiplication on the vector $\psi(h)$ and in the integral, we take the $n$th coefficient that we integrate with respect to $m_G$. Specifically, the operator acts on the $n$th coefficient like an integral operator. Writing the matrix multiplication out, gives
$$
\left(k\big( g^{-1}h, g^{-1}x \big) \psi(h)\right)_n
	= \sum_{m=1}^N k\big( g^{-1}h, g^{-1}x \big)_{m,n} \psi_m(h),
	\qquad 1\leq n\leq N.
$$
All the statements in \cite{BecTak25} are proven for normal operators, i.e. $A_x^\ast A_x = A_x A_x^\ast$ for $x\in Z$. Here $A_x^\ast$ is the adjoint operator of $A_x$. Since $A_x$ is dynamically-defined a short computation gives
\begin{equation}
\label{eq:Adjoint_DynDefOp}
(A_x^\ast \psi)_n(g) := \int_G  \left( k\big( h^{-1}, (gh)^{-1}x \big)^\ast \psi(gh)\right)_n\, dh, \qquad 1\leq n \leq N, g\in G,
\end{equation}
where $k(h,y)^\ast$ denotes the adjoint matrix of $k(h,y)$.

We now provide the analogs of various statements in \cite{BecTak25}. We stay relatively close to the original formulation to make it easier to spot the modifications. We start with \cite[Prop.~5.1]{BecTak25}.

In the following, we consider the function 
$$
\mathbf{1}_{F}:G\to\{0,1\},
	\qquad \mathbf{1}_F(g) :=
		\begin{cases}
			1, \qquad &g\in F,\\
			0, \qquad &g\not\in F,
		\end{cases}
$$
for a subset $F\subseteq G$,

Let \( C_c(G,\C^N) \) be the set of all \( \psi = (\psi_1,\ldots,\psi_N) \in H_N \) such that each component \( \psi_n : G \to \C \) is continuous and has compact support in \( G \). The support of this vector-valued function \( \psi \) is defined by
\[
\supp(\psi) := \bigcup_{n=1}^N \supp(\psi_n).
\]
Then \( C_c(G,\C^N) \) embeds into \( H_N \), and is dense with respect to the \( L^2 \)-norm \( \|\cdot\|_{2,N} \). We formulate the following statement for $\psi\in C_c(G,\C^N)$, which is sufficient for our further applications. However, the statement extends to all $\psi:G\to\C^N$ being $L^2$-integrable componentwise. 

\begin{proposition}
\label{prop:Basic_DynDefOp_app}
Let \( (Z, G, \tau) \) be a dynamical system where $G$ is an unimodular lcsc group. 
For $N\in\N$, consider a bounded kernel \( k: G \times Z \to \C^{N\times N} \) with dynamically-defined operator $(A_x)_{x\in Z}$.
Let $x,y\in Z$, $\psi\in C_c(G,\C^N)$ for $N\in\N$ and $j\in G$.
Then 
\begin{align*}
&(a) \ \|A_x\psi\|_{2,N} \le
N\cdot \left(\int_G \; \sup_{g\in \supp(\psi) h^{-1}} \|k(h,g^{-1}x)\|_M \ dh \right) \|\psi\|_{2,N}, \\
&(b) \ \|A_x^*\psi\|_{2,N} \le
N\cdot \left(\int_G \; \sup_{g\in \supp(\psi)} \|k(h,g^{-1}x)\|_M\cdot \ dh \right) \|\psi\|_{2,N}, \\
&(c) \ \|(A_y - A_x)\psi\|_{2,N} \le
N\cdot \left( \int_G \sup_{g \in \supp(\psi) h^{-1}}  \|k(h,g^{-1}y) - k(h,g^{-1}x)\|_M \ dh \right) \|\psi\|_{2,N}, \\
&(d) \ \|(A_y - A_x)^*\psi\|_{2,N} \le
N\cdot \left( \int_G \sup_{g \in \supp(\psi)} \|k(h,g^{-1}y) - k(h,g^{-1}x)\|_M \ dh \right) \|\psi\|_{2,N}, \\
&(e) \ U_{j^{-1}} A_x U_j = A_{j^{-1}x}.
\end{align*}
\end{proposition}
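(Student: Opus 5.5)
The plan is a vector-valued Schur-type bound (generalized Young inequality), reduced to the scalar estimate of \cite[Prop.~5.1]{BecTak25} through Lemma~\ref{lem:Matrix}. For (a) I start from the second representation of the operator,
\[
(A_x\psi)_n(g)=\int_G \big(k(h,g^{-1}x)\psi(gh)\big)_n\,dh .
\]
Note that $\psi(gh)\neq 0$ forces $g\in\supp(\psi)h^{-1}$. Set $K_x(h):=\sup_{g\in\supp(\psi)h^{-1}}\|k(h,g^{-1}x)\|_M$. Then, pointwise in $g$,
\[
\big|(A_x\psi)_n(g)\big|\le \sum_{m=1}^N\int_G K_x(h)\,|\psi_m(gh)|\,dh .
\]

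Next I take the $L^2(G)$-norm in $g$ and apply Minkowski's integral inequality. By right invariance of the Haar measure (unimodularity), $\|\psi_m(\cdot\,h)\|_2=\|\psi_m\|_2$. This gives
\[
\|(A_x\psi)_n\|_2\le \Big(\int_G K_x(h)\,dh\Big)\sum_{m=1}^N\|\psi_m\|_2 .
\]
Squaring and summing over $n$, I then use Cauchy--Schwarz in $\C^N$ in the form $(\sum_m\|\psi_m\|_2)^2\le N\|\psi\|_{2,N}^2$. This is the same elementary step as in the proof of Lemma~\ref{lem:Matrix}. Together with the factor $N$ from the sum over $n$, it yields
\[
\|A_x\psi\|_{2,N}\le N\Big(\int_G K_x\,dh\Big)\|\psi\|_{2,N},
\]
which is (a). The estimate (c) follows verbatim, since $A_y-A_x$ is the dynamically-defined operator with kernel $k(\cdot,y\text{-dependence})-k(\cdot,x\text{-dependence})$, i.e.\ the kernel is replaced by the difference $k(h,g^{-1}y)-k(h,g^{-1}x)$. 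The supremum over $\supp(\psi)h^{-1}$ is handled exactly as before.

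For (b) and (d) I use the adjoint formula \eqref{eq:Adjoint_DynDefOp}:
\[
(A_x^\ast\psi)_n(g)=\int_G \big(k(h^{-1},(gh)^{-1}x)^\ast\psi(gh)\big)_n\,dh .
\]
First I substitute $h\mapsto h^{-1}$, which is allowed since inversion preserves the Haar measure in the unimodular case. The integrand becomes $k(h,(gh^{-1})^{-1}x)^\ast\psi(gh^{-1})$. The entries of $k^\ast$ are conjugated entries of $k$, so $\|k^\ast\|_M=\|k\|_M$. Moreover, $\psi(gh^{-1})\neq0$ forces $gh^{-1}\in\supp(\psi)$. Hence the kernel is evaluated at $g'^{-1}x$ with $g'=gh^{-1}\in\supp(\psi)$, and it is dominated by $\sup_{g'\in\supp(\psi)}\|k(h,g'^{-1}x)\|_M$. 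The Minkowski, right-invariance and Cauchy--Schwarz steps then go through unchanged. Statement (d) is obtained in the same way, using the difference kernel.

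For (e) I compute directly, with $(U_j\psi)(h)=\psi(j^{-1}h)$:
\[
(U_{j^{-1}}A_xU_j\psi)_n(g)=(A_xU_j\psi)_n(jg)=\int_G\big(k(h,g^{-1}j^{-1}x)\psi(gh)\big)_n\,dh=(A_{j^{-1}x}\psi)_n(g).
\]

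The main obstacle I expect is measurability and bookkeeping of the suprema over $g$-dependent sets. In particular, I need to ensure that the dominating function in $h$ is measurable so that Minkowski's inequality applies. If measurability of $K_x$ is unclear, I would work with upper integrals, or use that $\supp(\psi)$ is compact and $k(h,\cdot)$ is continuous, so that the supremum may be taken over a countable dense subset.
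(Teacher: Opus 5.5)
Your proof is correct and is essentially the paper's argument. Both dominate the integrand, using the support of $\psi$, by a function of $h$ alone, use unimodularity (right invariance and $h\mapsto h^{-1}$), and pay a factor $N$ for the matrix structure. The paper writes out only (b), applying Lemma~\ref{lem:Matrix} pointwise and then a Schur-type H\"older/Fubini--Tonelli estimate; you instead use Minkowski's integral inequality and apply Cauchy--Schwarz in $\C^N$ afterwards, which is an equivalent proof of the same Young-type bound with the same constant $N$. Your direct treatment of (c) and (d) via the difference kernel and your countable-dense-subset argument for measurability of the supremum are sound; the paper does not address the latter point.
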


\begin{proof}
Statements (c) and (d) follow directly from (a) and (b) respectively using that linear combinations of dynamically-defined operators are again dynamically-defined operators. 
We only provide the proof for (b), since we mainly employ these estimates for the adjoint operator. Set 
\[
C(k,\psi) := \int_G \; \sup_{\tilde{g}\in \supp(\psi)} \|k(h^{-1},\tilde{g}^{-1}x)\|_M  dh.
\]
Observe
\begin{align*}
\|A_x^\ast\psi\|_{2,N}^2
	&= \int_G  	\sum_{n=1}^N \left| \big(A_x^\ast\psi\big)_n(g)\right|^2 dg\\
	&= \int_G  	\sum_{n=1}^N 	\left| \int_G \left( k(h^{-1}, (gh)^{-1}x)^\ast \psi(gh) \right)_n dh \right|^2 dg \\
\text{(Lemma~\ref{lem:Matrix})}\qquad	&\leq N^2  \int_G  \sum_{m=1}^N 	\left( \int_G \left\| k(h^{-1}, (gh)^{-1}x)\right\|_M \cdot \left| \psi_m(gh) \right| dh \right)^2 dg.
\end{align*}
Here we used that $\|A^\ast\|_M=\|A\|_M$ for any matrix $A\in\C^{N\times N}$. Then H\"older's inequality and Fubini-Tonelli lead to
\begin{align*}
&\|A_x^\ast\psi\|_{2,N}^2 \\
	&\le N^2  \int_G \sum_{m=1}^N  \underbrace{ \Big(\scalebox{0.9}{$ \int_G  \left\|k(h^{-1},(gh)^{-1}x)\right\|_M\cdot \mathbf{1}_{\supp(\psi)}(gh) dh $}\Big) }_{\le C(k,\psi)}
        	\cdot\Big( \scalebox{0.9}{$\int_G \left\|k(h^{-1},(gh)^{-1}x)\right\|_M \cdot |\psi_m(gh)|^2 dh$} \Big) dg \\
	&\le N^2 C(k,\psi) \int_G \left( \sup_{\tilde{g}\in \supp(\psi)} \|k(h^{-1},\tilde{g}^{-1}x)\|_M \right) 
			\underbrace{\int_G \sum_{m=1}^N |\psi(gh)_m|^2 dg}_{=\|\psi\|_{2,N}^2} \ dh\\
	&\le  N^2 C(k,\psi)^2 \|\psi\|_{2,N}^2. 
\end{align*}
The covariance property $U_{j^{-1}} A_x U_j = A_{j^{-1}x}$ follows by a straightforward computation.
\end{proof}

Let us highlight the difference that arises from taking the adjoint operator in the previous estimates. When the adjoint operator is considered, the support of $\psi$ remains unchanged, whereas without taking the adjoint, the support is shifted. 
Not having this additional shift is central to the proof, and we therefore primarily work with the adjoint operator.

This can be employed using that the operator $A_x$ is normal if and only if $\|A_x \psi\| = \|A_x^\ast \psi\|$ for all $\psi \in H_N$.

Another central tool is a Weyl sequence argument, stating that a normal operator admits approximate eigenfunctions. More precisely, if \( A:H_N \to H_N \) is a linear bounded operator that is normal, then
\begin{equation}
\label{eq:ApproxEigenfct}
E \in \sigma(A)
	\quad\Longleftrightarrow\quad
\begin{array}{c}
\forall \varepsilon>0, \ \exists\, \psi_\varepsilon \in C_c(G,\C^N) \text{ such that} \\ 
\|\psi_\varepsilon\|_{2,N} = 1 \quad \text{and} \quad \|(A-E)\psi_\varepsilon\|_{2,N} < \varepsilon.
\end{array}
\end{equation}

We continue with the proof of semi-continuity of the spectrum as stated in \cite[Prop.~5.2]{BecTak25}. Note that the semi-continuity of spectra, especially for strongly convergent self-adjoint operators, is a classical result. We provide a proof here as it follows naturally from our techniques.

For a compact set \( K \subseteq \C \) and a point \( E \in \C \), we define
\[
\dist(E,K) := \inf_{z \in K} |E - z|.
\]

\begin{proposition}
\label{prop:Semi-cont-Spectrum_app}
Let \( (Z, G, \tau) \) be a dynamical system where $G$ is an unimodular lcsc group. 
For $N\in\N$, let \( k: G \times Z \to \C^{N\times N} \) be a bounded kernel with dynamically-defined operator $(A_x)_{x\in Z}$ such that $A_x$ is normal for every $x \in Z$ and
\begin{enumerate}[label=(\alph*)]
\item \label{item:Semi-cont:a} $k$ satisfies a continuity condition: for each $x \in Z$, each $\varepsilon > 0$, and each nonempty compact subset $F \subseteq G$, there exists an open neighborhood $U_c\subseteq Z$ of $x$ such that
\[
\int_G \|k(h,y) - k(h,x)\|_M \cdot \mathbf{1}_F(h)\, dh < \varepsilon, \qquad y\in U_c,
\]
\item \label{item:Semi-cont:b} $k$ satisfies a decay condition: for each $x \in Z$ and each $\varepsilon > 0$, there exists a nonempty compact subset $F \subseteq G$ and an open neighborhood $U_d\subseteq Z$ of $x$ such that
\[
\int_G \; \sup_{g\in G} \|k(h,g^{-1}y)\|_M \cdot \mathbf{1}_{G\setminus F}(h)\, dh < \varepsilon, \qquad y\in U_d.
\]
\end{enumerate}
Then for every $x \in Z$ and each $\varepsilon > 0$, there exists an open neighborhood $U\subseteq Z$ of $x$ such that
\[
\sup_{E \in \sigma(A_x)} \dist(E, \sigma(A_y)) < \varepsilon, \qquad y\in U.
\]
In particular, we have
\[
\lim_{y \to x} \sup_{E \in \sigma(A_x)} \dist(E, \sigma(A_y)) = 0.
\]
\end{proposition}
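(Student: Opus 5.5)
The plan is to use the Weyl criterion \eqref{eq:ApproxEigenfct} together with the adjoint estimates of Proposition~\ref{prop:Basic_DynDefOp_app}. We work with the adjoint because there the support of the test function is not shifted. Fix $x\in Z$ and $\varepsilon>0$. Since $\sigma(A_x)$ is compact, we first choose finitely many points $E_1,\ldots,E_L\in\sigma(A_x)$ such that every $E\in\sigma(A_x)$ lies within $\varepsilon/2$ of some $E_l$. It then suffices to find one neighborhood $U$ of $x$ with $\dist(E_l,\sigma(A_y))<\varepsilon/2$ for all $l$ and all $y\in U$. For each $l$, normality of $A_x$ and \eqref{eq:ApproxEigenfct} give $\psi_l\in C_c(G,\C^N)$ with $\|\psi_l\|_{2,N}=1$ and $\|(A_x-E_l)\psi_l\|_{2,N}<\varepsilon/8$. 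Because $(A_x-E_l)$ is normal, we equally have $\|(A_x-E_l)^\ast\psi_l\|_{2,N}<\varepsilon/8$.

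Next we show that $\|(A_y-A_x)^\ast\psi_l\|_{2,N}<\varepsilon/8$ for all $y$ in a neighborhood of $x$. Granting this, $\|(A_y-E_l)^\ast\psi_l\|_{2,N}<\varepsilon/4$. Since $A_y-E_l$ is normal, the same bound holds for $\|(A_y-E_l)\psi_l\|_{2,N}$. For a normal operator $T$ one has $\|T^{-1}\|=\dist(0,\sigma(T))^{-1}$, so $\dist(E_l,\sigma(A_y))\le\|(A_y-E_l)\psi_l\|_{2,N}<\varepsilon/2$. Taking $U$ as the intersection over $l$ of the finitely many neighborhoods then proves the claim.

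The main obstacle is the uniform control of $(A_y-A_x)^\ast\psi_l$. By Proposition~\ref{prop:Basic_DynDefOp_app}~(d) it suffices to make
\[
\int_G \sup_{g\in K}\|k(h,g^{-1}y)-k(h,g^{-1}x)\|_M\,dh
\]
small, where $K:=\supp(\psi_l)$ is compact. We split this integral into the part over $F$ and the part over $G\setminus F$.

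For the tail over $G\setminus F$ we use the decay condition~\ref{item:Semi-cont:b}. It is applied at the points $g^{-1}x$ for $g\in K$. The set $K^{-1}x$ is compact, so we cover it by finitely many of the neighborhoods $U_d$ and take $F$ to be the union of the corresponding compact sets. This bounds the tail by $2\varepsilon'$ for every $y$ with $g^{-1}y$ in the relevant neighborhood for all $g\in K$. Producing such a neighborhood of $x$ requires that the map $(g,y)\mapsto g^{-1}y$ is continuous and that $K$ is compact, via a standard tube lemma argument.

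For the part over $F$ we need the supremum over $g\in K$ inside the integral, while condition~\ref{item:Semi-cont:a} is pointwise in $x$. The plan here is a compactness argument: use a finite net $g_1,\ldots,g_M$ of $K$ together with condition~\ref{item:Semi-cont:a} at each point $g_i^{-1}x$. Passing from the net to all $g\in K$ needs an equicontinuity of $g\mapsto k(h,g^{-1}x)$ in $L^1(F)$, and we would try to derive it from (a) by the same tube-lemma reasoning. If that fails, we would strengthen the estimate by using (c) instead of (d), or assume (a) uniformly on compact sets.

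Once both pieces are bounded by a multiple of $\varepsilon'$, choosing $\varepsilon'$ such that $N\cdot 3\varepsilon'<\varepsilon/8$ finishes the proof. The limit statement $\lim_{y\to x}\sup_{E\in\sigma(A_x)}\dist(E,\sigma(A_y))=0$ is then just a reformulation.
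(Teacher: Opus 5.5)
Your overall route is the paper's: Weyl vectors for the normal operator $A_x-E$, passage to the adjoint, Proposition~\ref{prop:Basic_DynDefOp_app}~(d), and a splitting of the $h$-integral into $F$ and $G\setminus F$. The finite net in $\sigma(A_x)$ is a correct way to make ``by compactness'' explicit. The tail is easier than you make it. Condition~(b) already contains $\sup_{g\in G}$, and that quantity is constant along orbits. So applying (b) once at $x$ bounds the tail by $2\varepsilon'$ for all $y\in U_d$, and no covering of $K^{-1}x$ is needed.

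\textbf{The gap is the part over $F$, and your proposed repair does not close it.} Estimate~(d) needs
\[
\int_F \sup_{g\in K}\|k(h,g^{-1}y)-k(h,g^{-1}x)\|_M\,dh
\]
to be small, with the supremum \emph{inside} the integral.

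\begin{itemize}
\item A finite net $g_1,\dots,g_M$ together with equicontinuity of $g\mapsto k(\cdot,g^{-1}x)$ in $L^1(F)$ is exactly what the tube-lemma argument extracts from (a). But it only controls $\sup_{g\in K}\int_F\|\cdots\|_M\,dh$.
\item For non-discrete $G$ the supremum and the integral cannot be interchanged, because (a) is merely continuity of $z\mapsto k(\cdot,z)$ in $L^1(F)$.
\item Concretely, take a bump in $h$ of height $n$ and width $n^{-2}$ whose centre sweeps an interval of length $1/n$ as $g$ runs through $K$. For each fixed $g$ its $L^1$-norm is of order $1/n$, but the supremum over $g$ has integral of order one. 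Such kernels can satisfy (a), (b) and the bounded-kernel condition.
\item Your fallbacks do not help. Estimate (c) has the same structure, with the supremum over $\supp(\psi)h^{-1}$ inside the integral. Assuming (a) uniformly on compact sets changes the hypotheses.
\end{itemize}

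For discrete $G$ everything is fine: $K$ and $F$ are finite, $\int_F\sup_{g\in K}\le\sum_{g\in K}\sum_{h\in F}$, and (a) at the finitely many points $g^{-1}x$ suffices. This is also the only setting in which the paper's one-line justification of this step is immediate.

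For general $G$, avoid (d) and use Schur's test with the supremum outside. Substitute $u=gh$ in \eqref{eq:Adjoint_DynDefOp} and write $\Delta_u(h):=k(h,u^{-1}y)-k(h,u^{-1}x)$. Then
\[
\big((A_y-A_x)^\ast\psi\big)(g)=\int_K \Delta_u(u^{-1}g)^\ast\,\psi(u)\,du ,
\]
and Schur's test gives
\[
\|(A_y-A_x)^\ast\psi\|_{2,N}\le N\Big(\sup_{u\in K}\|\Delta_u\|_{L^1(G)}\Big)^{1/2}\Big(\sup_{g\in G}\int_K\|\Delta_u(u^{-1}g)\|_M\,du\Big)^{1/2}\|\psi\|_{2,N}.
\]
\begin{itemize}
\item The second factor is bounded. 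By unimodularity (substitute $h=u^{-1}g$) it is at most $2\sup_{z\in Z}\int_G\sup_{g'\in G}\|k(h,g'^{-1}z)\|_M\,dh<\infty$.
\item The first factor is small for $y$ near $x$. Its tail is controlled by (b) at $x$. Its $F$-part is controlled by precisely your tube-lemma argument, since $(u,y)\mapsto k(\cdot,u^{-1}y)|_F\in L^1(F)$ is continuous on $K\times Z$.
\end{itemize}
With this replacement for (d), the rest of your argument goes through unchanged.
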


\begin{remark}
\label{rem:UpperSemiCont-Spectrum}
Proposition~\ref{prop:Semi-cont-Spectrum_app} yields the following upper semi-continuity of the spectra: If \( y_n \to x \) in \( Z \), then
\[
\limsup_{n \to \infty} \sigma(A_{y_n}) 
:= \bigcap_{n \in \N} \overline{\bigcup_{k \geq n} \sigma(A_{y_k})}
\supseteq \sigma(A_x).
\]
We also refer the reader to Section~\ref{sec:SemiContMeasure} and Theorem~\ref{thm:SemiContinuity_measures} for analogous results concerning the space of invariant measures.
\end{remark}

\begin{proof}
By compactness of $\sigma(A_x)$, it suffices to show that for each $E \in \sigma(A_x)$ and each $\varepsilon > 0$, we have $\dist(E, \sigma(A_y)) < \varepsilon$ for every $y$ in some neighborhood of $x$.

Let $E \in \sigma(A_x)$ and $\varepsilon > 0$.
Without loss of generality, assume $E \notin \sigma(A_y)$.
By the decay condition, there exists a nonempty compact subset $F \subseteq G$ and an open neighborhood $U_d\subseteq Z$ of $x$ such that
\[
\int  \sup_{g\in G} \|k(h,g^{-1}y)\|_M \cdot \mathbf{1}_{G\setminus F}(h)\, dh < \frac{1}{4N}\varepsilon, \qquad y\in U_d.
\]
Since $A_x$ is normal, Weyl's criterion (see eq.~\eqref{eq:ApproxEigenfct}) implies that there is a $0\neq \psi\in C_c(G,\C^N) \subseteq H_N$ such that
\[
\|(A_x - E)\psi\|_{2,N} < \frac{1}{4}\varepsilon \cdot \|\psi\|_{2,N}.
\]
By continuity of the group action, compactness of $\supp(\psi)$ and the continuity condition on $k$, there exists an open neighborhood $U_c\subseteq Z$ of $x$ such that
\[
\int_G  \left(\sup_{g\in \supp(\psi)}\|(k(h, g^{-1}y) - k(h, g^{-1}x))\|_M \right) \cdot \mathbf{1}_F(h)\, dh < \frac{1}{4N}\varepsilon, \qquad y\in U_c.
\]
Since $A_z$ is normal for all $z \in Z$, we conclude for every $y \in U:=U_c \cap U_d$:
\begin{align*}
\|(A_y - E)^{-1}\|^{-1} \cdot \|\psi\|_{2,N}
\le \|(A_y - E)\psi\|_{2,N}
= &\|(A_y - E)^*\psi\|_{2,N}\\
< &\left( \left\|(A_y - A_x)^* \tfrac{\psi}{\|\psi\|}_{2,N} \right\|_{2,N} + \tfrac{1}{4}\varepsilon \right) \cdot \|\psi\|_{2,N}.
\end{align*}
Hence, normality of $A_y$ for $y\in U$ leads to
\begin{align*}
\dist(E, \sigma(A_y))
	&= \|(A_y - E)^{-1}\|^{-1} \\
	&< \left\|(A_y - A_x)^* \tfrac{\psi}{\|\psi\|_{2,N}} \right\|_{2,N} + \tfrac{1}{4}\varepsilon \\
(\text{\small Proposition~\ref{prop:Basic_DynDefOp_app}~(d)} )\quad	&\le N\cdot \int_G \sup_{g\in \supp(\psi)}\| (k(h, g^{-1}y) - k(h, g^{-1}x))\|_M \  dh + \tfrac{1}{4}\varepsilon \\
( \small{y\in U_d} )\quad	&  < N\cdot \int_G \left(\sup_{g\in \supp(\psi)} \| (k(h, g^{-1}y) - k(h, g^{-1}x))\|_M \right) \cdot \mathbf{1}_F(h) dh + \tfrac{3}{4}\varepsilon \\
( \small{y\in U_c} )\quad		&< \varepsilon. \qedhere
\end{align*}
\end{proof}

Recall that $\inv$ denotes the set of all invariant, closed and non-empty subsets of $Z$, if \( (Z, G, \tau) \) be a dynamical system.
For $Y\in \inv$, the spectrum of a dynamically-defined operator $(A_x)_{x\in Z}$ associated with $Y$ is defined by
$$
\sigma(A_Y) := \ol{\bigcup_{y\in Y} \sigma(A_y)}
$$
Whenever $Y$ is topological transitive (i.e. there exists a $y\in Y$ such that $Y=\ol{\Orb(y)}$), then the spectrum $\sigma(A_Y)$ equals to the spectrum $\sigma(A_y)$. This follows directly from the semi-continuity of the spectra proven in Proposition~\ref{prop:Semi-cont-Spectrum_app} and the covariance property of dynamically-defined operators.

\begin{corollary}
\label{cor:SemiCont_Spectrum}
Let \( (Z, G, \tau) \) be a dynamical system where $G$ is an unimodular lcsc group. 
For $N\in\N$, let \( k: G \times Z \to \C^{N\times N} \) be a bounded kernel with dynamically-defined operator $(A_x)_{x\in Z}$ such that $A_x$ is normal for every $x \in Z$. Suppose further that the kernel $k$ satisfies \ref{item:Semi-cont:a} and \ref{item:Semi-cont:b} of Proposition~\ref{prop:Semi-cont-Spectrum_app}. Then the implication holds 
$$
Y=\ol{\Orb(y)} \in\inv \text{ for some } y\in Y
	\qquad\Longrightarrow\qquad \sigma(A_Y)=\sigma(A_y).
$$
\end{corollary}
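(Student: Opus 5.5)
The plan is to prove the two inclusions $\sigma(A_y)\subseteq\sigma(A_Y)$ and $\sigma(A_Y)\subseteq\sigma(A_y)$ separately. The first is immediate from the definition $\sigma(A_Y)=\overline{\bigcup_{z\in Y}\sigma(A_z)}$, since $y\in Y$. So the whole content lies in showing $\sigma(A_z)\subseteq\sigma(A_y)$ for every $z\in Y$. Once this holds, the union is contained in the closed set $\sigma(A_y)$, and hence so is its closure.

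First, I would show that the spectrum is constant along the orbit of $y$. By Proposition~\ref{prop:Basic_DynDefOp_app}~(e), we have $U_{j^{-1}}A_yU_j=A_{j^{-1}y}$ for all $j\in G$. The translation $U_j$ is unitary on $L^2(G)$, and hence also on $H_N$, because it acts componentwise and $G$ is unimodular. Therefore $A_{j^{-1}y}$ is unitarily equivalent to $A_y$, which gives $\sigma(A_{gy})=\sigma(A_y)$ for all $g\in G$.

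Second, I would pass to the closure of the orbit using the semi-continuity result. Fix $z\in Y=\overline{\Orb(y)}$ and choose $g_n\in G$ with $g_ny\to z$ in $Z$. Proposition~\ref{prop:Semi-cont-Spectrum_app} applies, because $k$ satisfies \ref{item:Semi-cont:a} and \ref{item:Semi-cont:b} and every $A_x$ is normal. It yields, for each $\varepsilon>0$, a neighborhood $U$ of $z$ such that $\sup_{E\in\sigma(A_z)}\dist(E,\sigma(A_{w}))<\varepsilon$ for all $w\in U$. Taking $w=g_ny$ for large $n$ and using the first step, we get $\sup_{E\in\sigma(A_z)}\dist(E,\sigma(A_y))<\varepsilon$. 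Since $\varepsilon$ is arbitrary and $\sigma(A_y)$ is compact, each $E\in\sigma(A_z)$ has distance zero to $\sigma(A_y)$, so $E\in\sigma(A_y)$.

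The only point requiring care is the correct direction of the semi-continuity. Proposition~\ref{prop:Semi-cont-Spectrum_app} controls the spectrum at the limit point $z$ from the inside by the spectra at nearby points. It is therefore applied at the limit $z$, with the orbit points $g_ny$ serving as the nearby points, and not the other way round. Beyond this, I expect no real obstacle. The unitarity of $U_j$ on $H_N=L^2(G)\otimes\C^N$ is routine.
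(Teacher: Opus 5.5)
Your proof is correct and follows the paper's argument. The inclusion $\sigma(A_y)\subseteq\sigma(A_Y)$ holds by definition, and for the reverse you use the covariance relation of Proposition~\ref{prop:Basic_DynDefOp_app}~(e) to get $\sigma(A_{gy})=\sigma(A_y)$, then apply Proposition~\ref{prop:Semi-cont-Spectrum_app} at the limit point in exactly the direction you identify; the only cosmetic difference is that you first show $\sigma(A_z)\subseteq\sigma(A_y)$ for each $z\in Y$ and then pass to the closure, whereas the paper treats $E\in\sigma(A_Y)$ directly with an $\tfrac{\varepsilon}{2}$-argument.
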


\begin{proof}
By definition of $\sigma(A_Y)$, we have $\sigma(A_y)\subseteq \sigma(A_Y)$. For the converse inclusion we use $Y=\ol{\Orb(y)} \in\inv$. 

Let $E\in\sigma(A_Y)$ and $\varepsilon>0$, then there is an $x\in Y$ and an $E_1\in \sigma(A_x)$ such that $|E-E_1|<\tfrac{\varepsilon}{2}$. Since $Y=\ol{\Orb(y)}$, Proposition~\ref{prop:Semi-cont-Spectrum_app} implies that there is a $g\in G$ and an $E_2\in \sigma(A_{gy})$ such that $|E_1-E_2|<\tfrac{\varepsilon}{2}$. 
Using the covariance condition in Proposition~\ref{prop:Basic_DynDefOp_app}~(e), we conclude $E_2\in \sigma(A_{gy})=\sigma(A_{y})$ since unitary transformation do not change the spectrum.
Thus, $|E-E_2|<\varepsilon$ where $E_2\in\sigma(A_y)$. Since $\varepsilon>0$ was arbitrary and $\sigma(A_y)$ is closed, we conclude $E\in\sigma(A_y)$ finishing the proof.
\end{proof}

To prove the spectral estimates, we use cut-off functions \( \chi \). In the following, we first estimate the corresponding error term arising when commuting a dynamically-defined operator with such a cut-off function. 

Let $L^\infty(G)$ denote the space of (equivalence classes of) all essentially bounded functions. In the following, we always identify an element of $L^\infty(G)$ with a bounded representative $f:G\to \C$ , i.e. \( \sup_{g \in G} f(g) < \infty \).

For \( 0 \neq \chi \in L^\infty(G) \), define the linear bounded operator (componentwise multiplication operator)
\[
[\chi] : H_N \to H_N, \qquad ([\chi]\psi)_n(g) := \chi(g)\psi_n(g).
\]
Furthermore, for \( j, h \in G \), define the corresponding left and right shifts of \( \chi \) by
\begin{align*}
\chi_j : G \to [0,\infty), \qquad &\chi_j(g) := \chi(j^{-1}g), \\
\chi^h : G \to [0,\infty), \qquad &\chi^h(g) := \chi(gh).
\end{align*}
The commutator of two linear bounded operators $A$ and $B$, is defined by $[A,B]:= AB-BA$.
We continue with the analog of \cite[Lem.~4.2]{BecTak25}. 

\begin{lemma}\label{lem:Technical_SpectralEst_app}
Let \( (Z, G, \tau) \) be a dynamical system where $G$ is an unimodular lcsc group. 
For $N\in\N$, consider a bounded kernel \( k: G \times Z \to \C^{N\times N} \) with dynamically-defined operator $(A_x)_{x\in Z}$.
Let $0\neq \chi \in L^\infty(G)\cap L^2(G)$, $x \in Z$ and $\psi \in C_c(G,\C^N)$. Then
\begin{align*}
&(a) \ \int_G \big\|[\chi_j]\psi\big\|_{2,N}^2 \, dj
	= \|\chi\|_2^2 \cdot \|\psi\|_{2,N}^2, \\
&(b) \ \int_G \big\|\big[A_x^*, [\chi_j]\big]\psi\big\|_{2,N}^2 \, dj
	\le N\cdot \left( \int_G \sup_{g\in \supp(\psi)} \|k(h^{-1}, g^{-1}x)\|_M \|\chi^h - \chi\|_2\, dh \right)^2 \|\psi\|_{2,N}^2, \\
&(c) \ \int_G \big\|\big[A_x, [\chi_j]\big]\psi\big\|_{2,N}^2 \, dj
\le N\cdot \left( \int_G \sup_{g\in \supp(\psi) h^{-1}} \|k(h, g^{-1}x)\|_M \|\chi^h - \chi\|_2\, dh \right)^2 \|\psi\|_{2,N}^2.
\end{align*}
\end{lemma}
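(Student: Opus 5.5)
Part (a) is a direct computation with Fubini--Tonelli. We have $\|[\chi_j]\psi\|_{2,N}^2=\sum_n\int_G|\chi(j^{-1}g)|^2|\psi_n(g)|^2\,dg$. Integrating in $j$ and interchanging the integrals, the inner integral is $\int_G|\chi(j^{-1}g)|^2\,dj=\|\chi\|_2^2$ for every fixed $g$. This uses unimodularity: the substitution $j\mapsto gj$ is left invariance, and $j\mapsto j^{-1}$ preserves $m_G$ because $G$ is unimodular. Summing over $n$ gives (a).

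For (b) we first write the commutator explicitly using the adjoint formula \eqref{eq:Adjoint_DynDefOp}. For $1\le n\le N$,
\[
\big([A_x^*,[\chi_j]]\psi\big)_n(g)=\int_G\Big(k\big(h^{-1},(gh)^{-1}x\big)^*\psi(gh)\Big)_n\big(\chi_j(gh)-\chi_j(g)\big)\,dh .
\]
The difference $\chi_j(gh)-\chi_j(g)=(\chi^h-\chi)(j^{-1}g)$ carries all the $j$-dependence. Only points with $gh\in\supp(\psi)$ contribute, so the kernel norm is bounded by $s(h):=\sup_{\tilde g\in\supp(\psi)}\|k(h^{-1},\tilde g^{-1}x)\|_M$.

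We then apply Minkowski's integral inequality in the variables $(j,g)$ and in the component index, with respect to the outer integral over $h$. This gives
\[
\Big(\int_G\big\|[A_x^*,[\chi_j]]\psi\big\|_{2,N}^2\,dj\Big)^{1/2}\le\int_G\Big(\sum_n\iint\big|\big(k(\cdots)^*\psi(gh)\big)_n\big|^2\,|(\chi^h-\chi)(j^{-1}g)|^2\,dg\,dj\Big)^{1/2}dh .
\]
For fixed $h$ we integrate in $j$ first. By the computation in (a), applied to $\chi^h-\chi$, this produces the factor $\|\chi^h-\chi\|_2^2$. It remains to estimate $\int_G\sum_n|(k^*\psi(gh))_n|^2\,dg$. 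By Cauchy--Schwarz in $\C^N$ and $\|A^*\|_M=\|A\|_M$, we get $|(k^*v)_n|^2\le N\,\|k\|_M^2\sum_m|v_m|^2$. Right invariance of $m_G$ then yields the bound $N\,s(h)^2\|\psi\|_{2,N}^2$. Squaring the Minkowski bound gives the claimed estimate. The factor $N$ may differ from the one written in the statement depending on how the component sum is arranged; we expect to arrange the Cauchy--Schwarz step so that exactly the stated constant appears, or a harmless multiple of it.

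Part (c) follows the same pattern. Using the second form of $A_x$, the commutator is $\int_G(k(h,g^{-1}x)\psi(gh))_n(\chi_j(gh)-\chi_j(g))\,dh$. Now the relevant sup runs over $g\in\supp(\psi)h^{-1}$, because $gh\in\supp(\psi)$. The remaining steps are identical.

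The main obstacle is the bookkeeping of the Minkowski/Fubini step. The $j$-integration has to be performed before the supremum is taken, so that $\|\chi^h-\chi\|_2$ factors out cleanly. It is also the point where unimodularity is needed, when the $j$-translate is converted into the $L^2$ norm of $\chi^h-\chi$.
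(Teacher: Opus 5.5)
Part (a) matches the paper. Your Minkowski route for (b) and (c) is sound, but the constant is mishandled, and the constant you aim for cannot be reached.

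\textbf{The error in the constant.} The per-component bound $|(k^*v)_n|^2\le N\|k\|_M^2\sum_m|v_m|^2$ is correct. You must still sum it over $n=1,\dots,N$, which gives $\sum_n|(k^*v)_n|^2\le N^2\|k\|_M^2\sum_m|v_m|^2$. So for fixed $h$ your integrand has norm at most $N\,s(h)\,\|\chi^h-\chi\|_2\,\|\psi\|_{2,N}$. Squaring the Minkowski bound then yields the factor $N^2$, not $N$.

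\textbf{The statement with $N$ is false.} No rearrangement of the Cauchy--Schwarz step helps: $\|\cdot\|_M$ controls the operator norm only up to a factor $N$, and this is sharp for the all-ones matrix $J$. Take $G=\Z$, $Z$ a point, $N=2$, $k(h,x):=J\,\mathbf{1}_{\{1\}}(h)$, $\psi:=(\delta_0,\delta_0)$ and $\chi:=\delta_0$. Then $(A_x^*\psi)(g)=J\psi(g-1)$, so $([A_x^*,[\chi_j]]\psi)(g)=J\psi(g-1)\big(\chi_j(g-1)-\chi_j(g)\big)$. Summing over $g$ and $j$ gives $16$ on the left-hand side of (b). The right-hand side is $N\,\|\chi^{h_0}-\chi\|_2^2\,\|\psi\|_{2,N}^2=2\cdot 2\cdot 2=8$ with $h_0=-1$. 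With $N^2$ in place of $N$ one gets exactly $16$, so $N^2$ is sharp. The analogous example with $(A_x\psi)(g)=J\psi(g+1)$ does the same for (c).

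The paper's own proof also ends with $N^2\big(\int\cdots\,dh\big)^2\|\psi\|_{2,N}^2$; the $N^2$ comes from Lemma~\ref{lem:Matrix}. Lemma~\ref{lem:po(3)} uses the bound in the form $\big(N\int\cdots\,dh\big)^2$. So the printed factor $N$ should read $N^2$, and that is what you should prove.

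\textbf{Comparison with the paper's route.} With that correction your argument is complete, and it differs from the paper's. The paper first applies Lemma~\ref{lem:Matrix} to pass to the scalar quantities $\|k\|_M\,|\psi_m(gh)|$. It then applies Cauchy--Schwarz in $h$ with the weight split $\|\chi^h-\chi\|_2^{1/2}\cdot\|\chi^h-\chi\|_2^{-1/2}$, together with Fubini. One factor is bounded, via $\mathbf{1}_{\supp(\psi)}(gh)$, by $\int s(h)\|\chi^h-\chi\|_2\,dh$. In the other factor, the $j$-integral of $|\chi_j(gh)-\chi_j(g)|^2/\|\chi^h-\chi\|_2$ returns $\|\chi^h-\chi\|_2$.

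Minkowski's integral inequality does this in one step and never divides by $\|\chi^h-\chi\|_2$, which can vanish. It would also let you replace $N\|k\|_M$ by the operator norm of the matrix, giving a sharper constant for most kernels. Unimodularity enters at the same two places in both arguments: inversion invariance for the $j$-integral and right invariance for the $g$-integral.
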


\begin{proof}
Using Fubini-Tonelli, a short computation gives
\begin{align*}
\int \|[\chi_j]\psi\|_{2,N}^2 dj
= \iint \sum_{m=1}^N|\chi_j(g)|^2\, |\psi_m(g)|^2\, dg\, dj
= &\int \sum_{m=1}^N \left(\int |\chi(j^{-1}g)|^2\, dj \right) |\psi_m(g)|^2\, dg\\
= &\|\chi\|_2^2 \cdot \|\psi\|_{2,N}^2. 
\end{align*}
This proves (a). 

Using eq.~\eqref{eq:Adjoint_DynDefOp} (representation of the adjoint) and Lemma~\ref{lem:Matrix}, we obtain
\begin{align*}
&\int \|[A_x^*,[\chi_j]]\psi\|^2 dj\\
&= \iint \sum_{n=1}^N \left| \int \sum_{m=1}^N \overline{k(h^{-1}, (gh)^{-1}x)}_{n,m} (\chi_j(gh) - \chi_j(g)) \psi_m(gh)\, dh \right|^2 dg\, dj \\
&\le N^2\iint \sum_{m=1}^N \left(\int \left\|k(h^{-1}, (gh)^{-1}x)\right\|_M \cdot \frac{\|\chi^h - \chi\|^{1/2}}{\|\chi^h - \chi\|^{1/2}} \cdot |\chi_j(gh) - \chi_j(g)|  \cdot |\psi_m(gh)|\,  dh\right)^2 dg\, dj .
\end{align*}
Using H\"older's inequality and Fubini-Tonelli, we further estimate the integral
\begin{align*}
&\int \|[A_x^*,[\chi_j]]\psi\|_{2,N}^2 dj \\
&\le N^2\iint \sum_{m=1}^N
	\left( \int \left\|k(h^{-1}, (gh)^{-1}x)\right\|_M\cdot \mathbf{1}_{\supp(\psi)}(gh)\cdot \|\chi^h - \chi\|_2\, dh \right) \\
&\quad \cdot
	\left( \int \left\|k(h^{-1}, (gh)^{-1}x)\right\|_M\cdot \frac{|\chi_j(gh) - \chi_j(g)|^2}{\|\chi^h - \chi\|_2}\cdot |\psi_m(gh)|^2 \, dh \right) dg\, dj \\
&\le N^2 \sum_{m=1}^N
	\left( \int \left(\sup_{\check{g}\in \supp(\psi)}\|k(h^{-1}, \check{g}^{-1}x)\|_M \right)\cdot \|\chi^h - \chi\|_2\, dh \right) \\
&\quad \cdot \int
	\left( \left(\sup_{\check{g}\in \supp(\psi)} \|k(h^{-1}, \check{g}^{-1}x)\|_M \right)\cdot 
		\int \underbrace{\int \frac{|\chi_j(gh) - \chi_j(g)|^2}{\|\chi^h - \chi\|_2}\, dj}_{=\|\chi^h - \chi\|_2}
		 \cdot |\psi_m(gh)|^2 \, dg \right) dh \\
&= N^2\left( \int \left(\sup_{\check{g}\in \supp(\psi)} \|k(h^{-1}, \check{g}^{-1}x)\|_M \right)\cdot \|\chi^h - \chi\|_2\, dh \right)^2 \|\psi\|_{2,N}^2. 
\end{align*}
This completes the proof of the estimate in~(b). The corresponding estimate in~(c) can be shown similarly.
\end{proof}

Next we formulate and prove the analog of \cite[Lem.~4.3]{BecTak25}, which is the heart of the proof. Recall the notion of the Hausdorff metric discussed in Chapter~\ref{chap:Dynam_LR} (see page~\pageref{eq:Hausdorff_Metric}). The Hausdorff metric on the compact subsets of $\C$ is denoted by $d_H$ and is used to measure the distance of spectra of dynamically-defined operators. The induced Hausdorff metric on $\inv$ was denoted by $\delta_H$. 

With these notions at hand, we provide the key estimate for the Hausdorff distance between the spectra of two operators using a cut-off function \(0 \neq \chi \in L^\infty(G)\cap L^2(G)\). In concrete situations, this function will either be the characteristic function of a compact set (see Section~\ref{sec:Lamplighter} and Proposition~\ref{prop:Lamplighter_app} below) or a continuous function with compact support, as used in the proofs of Theorem~\ref{thm:Lipschitz-Spectrum_app} and Theorem~\ref{thm:LocalConst-Spectrum_app}.

\begin{lemma}\label{lem:po(3)}
Let \( (Z, G, \tau) \) be a dynamical system where $G$ is an unimodular lcsc group. 
For $N\in\N$, let \( k: G \times Z \to \C^{N\times N} \) be a bounded kernel with dynamically-defined operator $(A_x)_{x\in Z}$ such that $A_x$ is normal for every $x \in Z$. 
Let $0 \neq \chi \in L^\infty(G)\cap L^2(G)$ be bounded. 
Fix $X, Y \in \inv$.
Then for each $\rho > 0$,
\begin{align*}
d_H(\sigma(A_X), \sigma(A_Y))
\le \sup_{\substack{x \in X,\ y \in Y,\ z \in Z \\ d(x,y) < \delta_H(X, Y) + \rho}} 
	&\int_G \left( N\cdot\sup_{g\in \supp(\chi)}\|k(h, g^{-1} y) - k(h, g^{-1} x)\|_M \right)\, dh\\
	&\qquad + \int_G \left( \sup_{g\in G} \|k(h, g^{-1} z)\|_M \right) \cdot \frac{\|\chi^h - \chi\|_2}{\|\chi\|_2} \, dh.
\end{align*}
\end{lemma}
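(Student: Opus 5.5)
By symmetry of the Hausdorff distance, it suffices to bound $\sup_{E\in\sigma(A_X)}\dist(E,\sigma(A_Y))$ by the right-hand side; the reverse estimate follows by exchanging $X$ and $Y$, since the supremum is symmetric in $x$ and $y$ and $z$ ranges over all of $Z$. Fix $E\in\sigma(A_x)$ for some $x\in X$; by density of $\bigcup_{x\in X}\sigma(A_x)$ in $\sigma(A_X)$ this suffices. Fix also $\varepsilon>0$. Since $A_x$ is normal, the Weyl criterion \eqref{eq:ApproxEigenfct} gives $\psi\in C_c(G,\C^N)$ with $\|\psi\|_{2,N}=1$ and $\|(A_x-E)\psi\|_{2,N}<\varepsilon$.

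The plan is to localize $\psi$ with the translated cut-offs $[\chi_j]$, $j\in G$, and compare $A_x$ with a nearby operator $A_y$ on each localized piece. Using the covariance in Proposition~\ref{prop:Basic_DynDefOp_app}~(e), I move each piece back to the support of $\chi$. Concretely, $U_{j^{-1}}[\chi_j]\psi$ is supported in $\supp(\chi)$ and $U_{j^{-1}}A_xU_j=A_{j^{-1}x}$. For each $j$ I pick $y_j\in Y$ with $d(j^{-1}x,y_j)<\delta_H(X,Y)+\rho$; this is possible since $j^{-1}x\in X$ by invariance. Then $A_{j\,y_j}$ has the same spectrum as $A_{y_j}$, and $\sigma(A_{y_j})\subseteq\sigma(A_Y)$, so by normality
\[
\dist(E,\sigma(A_Y))\,\|[\chi_j]\psi\|\le\|(A_{jy_j}-E)[\chi_j]\psi\|=\|(A_{jy_j}-E)^*[\chi_j]\psi\|.
\]
I work with adjoints because then the support stays fixed, as remarked after Proposition~\ref{prop:Basic_DynDefOp_app}. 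A measurable selection $j\mapsto y_j$ may be needed; alternatively, one can avoid it by arguing pointwise as below.

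Next I split
\[
(A_{jy_j}-E)^*[\chi_j]\psi=(A_{jy_j}-A_x)^*[\chi_j]\psi+[\chi_j](A_x-E)^*\psi+[A_x^*,[\chi_j]]\psi.
\]
I square, integrate in $j$, and use Lemma~\ref{lem:Technical_SpectralEst_app}~(a) on the left, where the integral gives $\|\chi\|_2^2$. On the right, the middle term contributes at most $\|\chi\|_2^2\varepsilon^2$ by (a) and normality. The commutator term is bounded via (b) by $N\big(\int\sup_g\|k(h^{-1},g^{-1}x)\|_M\|\chi^h-\chi\|_2\,dh\big)^2$. After the substitution $h\mapsto h^{-1}$ (unimodularity) and the identity $\|\chi^{h^{-1}}-\chi\|_2=\|\chi^h-\chi\|_2$, this becomes the second integral with $z=x$.

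For the first term, I conjugate by $U_{j^{-1}}$ and apply Proposition~\ref{prop:Basic_DynDefOp_app}~(d) to the piece supported in $\supp(\chi)$ with the pair $(y_j,j^{-1}x)$. This yields the factor $N\int\sup_{g\in\supp(\chi)}\|k(h,g^{-1}y_j)-k(h,g^{-1}j^{-1}x)\|_M\,dh$, which is bounded by the first supremum, times $\|[\chi_j]\psi\|$. Integrating in $j$ and using (a) again turns $\|[\chi_j]\psi\|^2$ into $\|\chi\|_2^2$. Putting the three terms together with Minkowski's inequality in $L^2(dj)$, dividing by $\|\chi\|_2$ and letting $\varepsilon\to0$ gives the claim.

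The main obstacle is the first term, because the comparison point $y_j$ depends on $j$, so the pointwise-in-$j$ estimate must be combined carefully with the $j$-integration. I would handle this by bounding each $j$-slice by the uniform supremum before integrating. The constant bookkeeping, namely the factor $N$ versus $\sqrt N$ from Lemma~\ref{lem:Technical_SpectralEst_app}, may need to be absorbed, and I expect this to be harmless.
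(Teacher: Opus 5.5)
Your argument is correct and is essentially the paper's proof. It uses the same Weyl vector localized by the translates $[\chi_j]$, Lemma~\ref{lem:Technical_SpectralEst_app}~(a),(b), and Proposition~\ref{prop:Basic_DynDefOp_app}~(d) applied to a piece supported in $j\,\supp(\chi)$ against a point of $Y$ close to $j^{-1}x$. The only difference is the order of steps: the paper uses Peter--Paul to find one good $j$ and then picks $y$ for it. You instead bound each slice by the uniform supremum and conclude with Minkowski in $L^2(dj)$, which is slightly cleaner, needs no measurable selection, and makes explicit the $h\mapsto h^{-1}$ substitution the paper skips.

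On the constant you flag: it cannot simply be absorbed into the statement as written. Both arguments actually yield an extra factor $N$ in front of the second integral, or $\sqrt N$ if Lemma~\ref{lem:Technical_SpectralEst_app}~(b) is used as stated. The paper's proof drops this factor silently when it rewrites $(C\varepsilon+\varepsilon)$, even though $C$ contains $N$. So for $N\ge 2$ this is an issue with the stated constant that you share with the paper, not a defect of your route, and it is harmless for the later theorems.
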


\begin{proof}
Fix $\rho > 0$ and let $E \in \sigma(A_X)$. 
By the definition of the Hausdorff metric, we need to prove
\begin{align*}
\dist(E,\sigma(A_Y))
\le \sup_{\substack{x \in X,\ y \in Y,\ z \in Z \\ d(x,y) < \delta_H(X, Y) + \rho}} 
	&\int_G \left( N\cdot\sup_{g\in \supp(\chi)}\|k(h, g^{-1} y) - k(h, g^{-1} x)\|_M \right)\\
	&\qquad + \left( \sup_{g\in G} \|k(h, g^{-1} z)\|_M \right) \cdot \frac{\|\chi^h - \chi\|_2}{\|\chi\|_2} \, dh.
\end{align*}
For any $\varepsilon > 0$, there exist $x \in X$ and $E' \in \sigma(A_x)$ such that $|E - E'| < \varepsilon$.
Without loss of generality, assume $E' \notin \sigma(A_Y)$.

Since $A_x$ is normal, there exists a $0\neq \psi \in C_c(G,\C^N)$ such that 
\[
\|(A_x - E')^\ast\psi\|_{2,N} = \|(A_x - E')\psi\|_{2,N} < \varepsilon \|\psi\|_{2,N},
\]
using Weyl's criterion, see eq.~\eqref{eq:ApproxEigenfct}.
For $j \in G$, define $\chi_j(g) := \chi(j^{-1}g)$, and set
\[
C := \frac{N}{\varepsilon} \int \sup_{g\in G}\|k(h, g^{-1} x)\|_M \cdot \frac{\|\chi^h - \chi\|_2}{\|\chi\|_2} \, dh.
\]
Note that $C$ is a finite value since $k$ is a bounded kernel.
We now estimate:
\begin{align*}
\int \|(A_x - E')^\ast [\chi_j] \psi\|_{2,N}^2 dj
		&= \int \left\|\big[(A_x - E')^\ast, [\chi_j]\big] \psi + [\chi_j] (A_x - E') ^\ast\psi \right\|_{2,N}^2 dj \\
	(\text{\small Peter-Paul's inequality})\quad 
		&\le  \int\scalebox{0.9}{$ \frac{1+C}{C} \big\|[(A_x - E')^\ast, \chi_j] \psi\big\|_{2,N}^2 + (1 + C) \big\|[\chi_j] (A_x - E')^\ast\psi\big\|_{2,N}^2 $} \, dj \\
	(\text{\small Lemma~\ref{lem:Technical_SpectralEst_app}~(a), (b)})\quad 
		&\le \frac{1+C}{C} \left( 
				\underbrace{N\int \sup_{g\in G}\|k(h, g^{-1} x)\|_M \cdot \|\chi^h - \chi\|_2 \, dh}_{=C\varepsilon \|\chi\|_2}
			\right)^2 \|\psi\|_{2,N}^2 \\
			&\quad + (1 + C) \ \|\chi\|_2^2 \ \|(A_x - E') \ \psi\|_{2,N}^2 \\
		&< (1+C) \ C \ \varepsilon^2 \ \|\chi\|_2^2 \ \|\psi\|_{2,N}^2
			+ (1 + C) \ \varepsilon^2 \ \|\chi\|_2^2 \ \|\psi\|_{2,N}^2.
\end{align*}
Define $\psi_j := [\chi_j] \psi$. Applying Lemma~\ref{lem:Technical_SpectralEst_app}~(a) to the previous estimate leads to
\begin{align*}
\int \|(A_x - E')^\ast \psi_j\|_{2,N}^2 dj
	< &\big((1+C) \ C \ \varepsilon^2 + (1 + C) \ \varepsilon^2\big) \ \int\|\psi_j\|_{2,N}^2 \, dj \\
	= &\int (C\ \varepsilon + \varepsilon)^2 \|\psi_j\|_{2,N}^2 \, dj.
\end{align*}
The previous estimate can only hold if there exists a $j\in G$ such that
\[
\|(A_x - E')^\ast \psi_j\|_{2,N}
	< (C\ \varepsilon + \varepsilon) \ \|\psi_j\|_{2,N}
	= \left( \int \sup_{g\in G}\|k(h, g^{-1} x)\|_M \cdot \frac{\|\chi^h - \chi\|_2}{\|\chi\|_2} \, dh + \varepsilon \right) \|\psi_j\|_{2,N}.
\]
Next, we use that $X$ and $Y$ are invariant subsets.
For this particular $x\in X$ and $j\in G$, there exists $y \in Y$ such that
\[
d(j^{-1}x, j^{-1}y) <  \delta_H(X, Y)  + \rho.
\]
Moreover, $A_y$ is normal and the previous considerations yield
\begin{align*}
\dist(E, \sigma(A_Y))
	&< \dist(E', \sigma(A_y)) + \varepsilon \\
	&< \|(A_y-E')^{-1}\|^{-1} + \varepsilon \\
	&\le \|(A_y - E')^\ast \tfrac{\psi_j}{\|\psi_j\|_{2,N}}\|_{2,N} + \varepsilon \\
	&<  \|(A_y - A_x)^* \tfrac{\psi_j}{\|\psi_j\|_{2,N}} \|_{2,N}
		+ \int \sup_{g\in G}\|k(h, g^{-1} x)\|_M \cdot \frac{\|\chi^h - \chi\|_2}{\|\chi\|_2}  \, dh + 2\varepsilon .
\end{align*}
We continue estimating the first term. By construction, we have $\supp(\psi_j) \subseteq \supp(\chi_j) = j \supp(\chi)$. Thus, Proposition~\ref{prop:Basic_DynDefOp_app}~(d) implies for 
\begin{align*}
\|(A_y - A_x)^* \tfrac{\psi_j}{\|\psi_j\|_{2,N}} \|_{2,N}
	\leq &N\cdot  \int_G \sup_{g \in \supp(\psi_j)} \|k(h,g^{-1}y) - k(h,g^{-1}x)\|_M \ dh \\
	= & \int_G N\cdot \sup_{g \in \supp(\chi)} \left\|k\big(h,g^{-1}(j^{-1}y)\big) - k\big(h,g^{-1}(j^{-1}x)\big) \right\|_M \ dh 
\end{align*}
Combining the last two estimates and the choice of $y\in Y$ before, the desired estimate is concluded by sending $\varepsilon$ to zero.
\end{proof}

\begin{remark}
\label{rem:Optimization-Regularity-Amenable}
Lemma~\ref{lem:po(3)} provides an estimate on the Hausdorff distance between dynamically-defined operators restricted to different dynamical subsystems, with two main contributions:
\begin{itemize}
\item The first term depends on the regularity of the kernel (i.e., the operator coefficients) with respect to the second variable \(Z\). Its size increases with the support of \(\chi\), as the group action explores larger regions in \(Z\).
\item The second term depends on the choice of the cut-off function \(\chi\). It becomes small if \(\chi\) approximates an invariant mean, which exists when the group \(G\) is amenable. Such approximations typically require \(\chi\) to have large support.
\end{itemize}
Hence, there is a trade-off: small support of \(\chi\) improves the first term, while large support improves the second. Balancing these effects leads to the characteristic square-root behavior in the spectral map; see Theorem~\ref{thm:Lipschitz-Spectrum_app}. We emphasize that this square-root behavior is optimal in general and reflects a conceptual insight rather than a technical artifact of the proof.
\end{remark}

Following the previous remark, this is the point where additional assumptions on the group become necessary -- in particular, we require the group to be amenable. 
In order to make the upper bound in Lemma~\ref{lem:po(3)} small, we require the term
\[
\frac{\|\chi^h - \chi\|_2}{\|\chi\|_2}
\]
to be small for a suitable choice of $\chi$. Suppose for the moment that $\chi$ is compactly supported, i.e., there exists a compact set \( K \subseteq G \) such that \( \chi(g) = 0 \) for all \( g \notin K \). Since $\chi^h$ denotes the right shift of $\chi$, this condition expresses that the $L^2$-mass on the ``boundary'', as captured by $\|\chi^h - \chi\|_2$, must be small relative to the total mass $\|\chi\|_2$.

More precisely, a unimodular locally compact second countable (lcsc) group \( G \) with Haar measure \( m_G \) is called \emph{amenable} if for every \( \varepsilon > 0 \) and every compact set \( K \subseteq G \), there exists a bounded function \( 0 \neq \chi:G \to [0,\infty) \) such that
\[
\sup_{h \in K} \|\chi^h - \chi\|_2 < \varepsilon \|\chi\|_2.
\]
In this case, one also says that \( G \) satisfies the Reiter condition on \( L^2 \); see \cite[Thm.~10.15]{EinWar17}.

One natural choice is to define \( \chi \) as the characteristic function supported on the inverse of a left-F\o lner sequence. A sequence \( (F_n)_{n \in \N} \) of compact subsets \( F_n \subseteq G \) is called a \emph{(left-)F\o lner sequence} if for every compact subset \( K \subseteq G \), we have
\[
\lim_{n \to \infty} \sup_{h \in K} \frac{m_G(hF_n \Delta F_n)}{m_G(F_n)} = 0,
\]
where \( hF_n \Delta F_n := (hF_n \setminus F_n) \cup (F_n \setminus hF_n) \) denotes the symmetric difference of these sets. 

For the Lamplighter group (discussed in Section~\ref{sec:Lamplighter}), this is done; see the proof of Proposition~\ref{prop:Lamplighter_app}. The Lamplighter group has exponential growth, and thus balls do not yield a F\o lner sequence. However, alternative sets can be employed to construct appropriate cut-off functions~$\chi$.

In \cite{BecTak25} and in the following sections, we primarily focus on groups of strict polynomial growth. These groups are amenable and admit suitable cut-off functions~$\chi$ supported on balls; see, for instance, \cite[Lem.~4.5]{BecTak25} and Equation~\eqref{eq:cut-off_ball-estim_app} below. Balls in such groups form a F\o lner sequence. Note, however, that we do not use characteristic functions on balls, but rather tent-shaped functions; see Remark~\ref{rem:Choice-cut_Interplay-amenable} and Figure~\ref{fig:CutOff} for an illustration. In fact, choosing tent-shaped functions optimizes the spectral estimates.

With the previous estimates at hand, one could prove an analog of \cite[Thm.~4.4]{BecTak25} -- stating the continuity of the spectrum -- for $N\in\N$. We omit this here and only provide the quantitative estimates. We also note that the continuity of the spectrum follows also by different methods from \cite{BBdN18}.

We now strengthen the assumptions of Proposition~\ref{prop:Semi-cont-Spectrum_app} by requiring explicit decay estimates of the off-diagonal entries of the kernel (linear decay) as well as regularity of the kernel in \( Z \) (either Lipschitz continuity or local constancy). These assumptions may be relaxed depending on the specific model, which then leads to modified spectral estimates.

Recall the notion of Lipschitz continuous dynamical system (Definition~\ref{def:LipschitzAction}) and strictly polynomially growing group (Definition~\ref{def:strict-polyn-growth}).

\begin{theorem}\label{thm:Lipschitz-Spectrum_app}
Let \( (Z, G, \tau) \) be a dynamical system where $G$ is an unimodular lcsc group with adapted metric $d_G$. 
For $N\in\N$, let \( k: G \times Z \to \C^{N\times N} \) be a bounded kernel with dynamically-defined operator $(A_x)_{x\in Z}$ such that $A_x$ is normal for every $x \in Z$. 
If furthermore
\begin{enumerate}[label=(\alph*)]
\item \label{item:Lip-cont:a}
\( (Z, G, \tau) \) is Lipschitz continuous: there exists $c_\tau > 0$ such that
\[
d(gx, gy) \le (c_\tau d_G(e,g) + 1)\, d(x, y), \qquad g\in G, \, x,y\in Z.
\]
\item \label{item:Lip-cont:b}
$G$ is strictly polynomially growing: there exist $\kappa > 0$ and $c_1 \ge c_0 > 0$ such that for each $r \ge 1$,
\[
c_0 r^\kappa \le m_G\big(B(r)\big) \le c_1 r^\kappa.
\]
\item \label{item:Lip-cont:c}
$k$ is linearly decaying: there exists $c_s > 0$ such that 
\[
\sup_{x \in Z} \int_G  \sup_{g\in G} \big\| k\big(h,g^{-1}x\big) \big\|_M \, d_G(e,h) \, dh \leq c_s.
\]
\item \label{item:Lip-cont:d}
$k$ is Lipschitz continuous:  there exists \( c_k \in L^1_+(G) \) such that for \( m_G \)-a.e.\ \( h \in G \),
\[
\|k(h,x) - k(h,y)\|_M \leq c_k(h) \cdot d(x,y), \qquad x,y \in Z.
\]

\end{enumerate}
Fix $X, Y \in \inv$. Then for each $r \ge 1$,
\[
d_H(\sigma(A_X), \sigma(A_Y))
\le N\|c_k\|_1 (c_\tau r + 1)\, \delta_H(X, Y)
+ \frac{c_s}{r} \left( \frac{(2 + \kappa)^{2 + \kappa} c_1}{2 \kappa^\kappa c_0} \right)^{1/2},
\]
and 
\[
\delta_H(X, Y) \le 1 \qquad \Longrightarrow\qquad
	d_H(\sigma(A_X), \sigma(A_Y)) \le C\, \delta_H(X, Y)^{1/2},
\]
where
\[
C :=  N \|c_k\|_1 (c_\tau+1) + c_s \left( \frac{(2 + \kappa)^{2 + \kappa} c_1}{2 \kappa^\kappa c_0} \right)^{1/2}.
\]
\end{theorem}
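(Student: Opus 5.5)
The plan is to apply Lemma~\ref{lem:po(3)} with a tent-shaped cut-off adapted to the ball $B(r)$, and then balance the two resulting terms by choosing $r$. Fix $r\ge 1$ and set
\[
\chi(g):=\max\{0,\ r-d_G(e,g)\},\qquad g\in G.
\]
This function is bounded, satisfies $\supp(\chi)\subseteq \overline{B}(r)$, lies in $L^2(G)$ because $d_G$ is proper, and is nonzero. It is also $1$-Lipschitz with respect to $d_G$. Lemma~\ref{lem:po(3)} then bounds $d_H(\sigma(A_X),\sigma(A_Y))$ by the supremum of a first term, involving the kernel difference over $g\in\supp(\chi)$, plus a second term, involving $\|\chi^h-\chi\|_2/\|\chi\|_2$. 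We estimate the two terms separately and let $\rho\to 0$ at the end.

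\emph{First term.} Take $x\in X$, $y\in Y$ with $d(x,y)<\delta_H(X,Y)+\rho$, and $g\in\supp(\chi)$, so that $d_G(e,g^{-1})=d_G(g,e)\le r$. Assumption \ref{item:Lip-cont:d} and then \ref{item:Lip-cont:a} give
\[
\|k(h,g^{-1}y)-k(h,g^{-1}x)\|_M\le c_k(h)\,(c_\tau r+1)\,\big(\delta_H(X,Y)+\rho\big).
\]
Integrating in $h$ yields $N\|c_k\|_1(c_\tau r+1)(\delta_H(X,Y)+\rho)$.

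\emph{Second term.} By left-invariance of $d_G$ we have $|\chi(gh)-\chi(g)|\le d_G(gh,g)=d_G(h,e)$. The function $\chi^h-\chi$ is supported in $\overline{B}(r)h^{-1}\cup\overline{B}(r)$, and by unimodularity this set has measure at most $2m_G(\overline{B}(r))\le 2c_1r^\kappa$. (Strictly speaking, the growth bound is stated for open balls; one can pass to closed balls by a limiting argument.) Hence
\[
\|\chi^h-\chi\|_2^2\le 2c_1r^\kappa\, d_G(e,h)^2 .
\]
For the lower bound, for any $s\in(0,r)$ we have $\chi\ge r-s$ on $B(s)$, so
\[
\|\chi\|_2^2\ge (r-s)^2 c_0 s^\kappa .
\]
Choosing $s=\kappa r/(2+\kappa)$ maximizes the right-hand side and gives exactly
\[
\frac{\|\chi^h-\chi\|_2^2}{\|\chi\|_2^2}\le \frac{d_G(e,h)^2}{r^2}\cdot\frac{(2+\kappa)^{2+\kappa}c_1}{2\kappa^\kappa c_0}.
\]
Multiplying the square root of this bound by $\sup_g\|k(h,g^{-1}z)\|_M$, integrating in $h$, and using \ref{item:Lip-cont:c} produces $\frac{c_s}{r}\big(\cdots\big)^{1/2}$. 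Letting $\rho\to0$ then gives the first claimed inequality.

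The main obstacle is the lower bound on $\|\chi\|_2$. The growth assumption \ref{item:Lip-cont:b} is only available for radii $\ge 1$, while $s=\kappa r/(2+\kappa)$ may be smaller than $1$ when $r$ is close to $1$. I would first try restricting to the range of $r$ where $s\ge1$. For small $r$, one can instead absorb the discrepancy into the constant, since $m_G(B(s))$ is bounded below for $s$ bounded away from $0$. Alternatively, one can argue as in \cite[Lem.~4.5]{BecTak25}, where this exact constant appears.

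\emph{Square-root bound.} If $\delta:=\delta_H(X,Y)=0$, then $X=Y$ and there is nothing to prove. Otherwise $0<\delta\le1$, and we take $r=\delta^{-1/2}\ge1$ in the first inequality. Then $(c_\tau r+1)\delta=c_\tau\delta^{1/2}+\delta\le (c_\tau+1)\delta^{1/2}$ and $1/r=\delta^{1/2}$, which yields $d_H(\sigma(A_X),\sigma(A_Y))\le C\delta^{1/2}$ with the stated constant $C$.
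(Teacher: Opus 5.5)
Your argument is the paper's proof. You apply Lemma~\ref{lem:po(3)} with the tent cut-off on $B(r)$ and control the first term by the Lipschitz kernel and the Lipschitz action, using $d_G(e,g^{-1})\le r$ on the support; you control the second term by the cut-off ratio bound and linear decay, then let $\rho\to0$ and take $r=\delta_H(X,Y)^{-1/2}$.

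The only difference is that the paper takes the ratio bound \eqref{eq:cut-off_ball-estim_app} for all $r\ge1$ directly from \cite[Lem.~4.5]{BecTak25}. Your computation reproduces it, with exactly the stated constant, only for $r\ge(2+\kappa)/\kappa$, because below that it needs a lower bound on $m_G(B(s))$ for $s<1$ that the hypotheses do not supply. For $1\le r<(2+\kappa)/\kappa$ you therefore have to rely on that citation, as the paper does. Your other two fallbacks, restricting the range of $r$ or enlarging the constant, would not give the statement as written.
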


\begin{proof}
For $r \ge 1$, define the cut-off function 
\[
\chi_r\in C_c(G), \qquad \chi_r(g) := \left( \frac{r - d_G(e,g)}{r} \right) \cdot \mathbf{1}_{B(r)}(g).
\] 
Observe that $\chi_r$ is supported on the ball $B(r):=B(e,r)$, see a sketch in Figure~\ref{fig:CutOff}~(b).
Recall for $h\in G$ that $\chi_r^h\in C_c(G)$ denotes the function $\chi_r^h(g):= \chi_r(gh),\, g\in G$.
Since $G$ has strict polynomial growth, \cite[Lem.~4.5]{BecTak25} leads to the estimate for $h\in G$,
\begin{equation}
\label{eq:cut-off_ball-estim_app}
\frac{\|\chi_r^h - \chi_r\|_2}{\|\chi_r\|_2} \leq
	\eta_r(h) 
	:= \min\left\{ \frac{d_G(e,h)}{r} \left( \frac{(2 + \kappa)^{2 + \kappa} c_1}{2 \kappa^\kappa c_0} \right)^{1/2}, \sqrt{2} \right\}
\end{equation}

Let $\rho > 0$.
By Lemma~\ref{lem:po(3)}, we estimate:
\begin{align*}
d_H(\sigma(A_X), \sigma(A_Y))
	&\le \sup_{\substack{x \in X,\ y \in Y,\ z \in Z \\ d(x,y) < \delta_H(X, Y) + \rho}} 
		\int_G \left( N\cdot\sup_{g\in \supp(\chi_r)}\|k(h, g^{-1} y) - k(h, g^{-1} x)\|_M \right) \, dh\\
		&\quad\qquad + \int_G \left( \sup_{g\in G} \|k(h, g^{-1} z)\|_M \right) \cdot \frac{\|\chi_r^h - \chi_r\|_2}{\|\chi_r\|_2} \, dh\\
\left(\substack{\small \text{$k$ Lipschitz \ref{item:Lip-cont:d},} \\ \text{eq.~\eqref{eq:cut-off_ball-estim_app}}}\right)\quad
	&\le \sup_{\substack{x, y, z\in Z \\ d(x, y) < \delta_H(X,Y) + \rho}}
		N \cdot \int_G c_k(h) \cdot \sup_{g\in \supp(\chi_r)} d(g^{-1} y, g^{-1} x) \, dh \\
		&\quad\qquad + \sup_{z \in Z} \int_G \sup_{g\in G}\|k(h, g^{-1} z)\|_M \cdot \eta_r(h)\, dh\\
\left(\substack{\small \text{action Lipschitz \ref{item:Lip-cont:a},} \\ \supp(\chi_r) = B(r), \\ \text{$k$ linearly decaying}}\right) \quad
	&\le N \ \|c_k\|_1 \ (c_\tau r + 1)(\delta_H(X,Y) + \rho) + \frac{c_s}{r} \left( \frac{(2 + \kappa)^{2 + \kappa} c_1}{2 \kappa^\kappa c_0} \right)^{1/2}
\end{align*}
This proves the first claimed estimate by sending $\rho$ to zero. 
If $\delta_H(X,Y) \le 1$ and we choose
\[
r := \left( \frac{1}{\delta_H(X,Y)} \right)^{1/2} \ge 1,
\]
then
\begin{align*}
d_H(\sigma(A_X), \sigma(A_Y))
&\le N \, \|c_k\|_1 \, (c_\tau r + 1) \,  \delta_H(X,Y)
+ \frac{c_s}{r} \left( \frac{(2 + \kappa)^{2 + \kappa} c_1}{2 \kappa^\kappa c_0} \right)^{1/2} \\
&\le C\, \delta_H(X,Y)^{1/2}
\end{align*}
follows.
\end{proof}

\begin{figure}[htb]
    \centering
    \includegraphics[scale=1]{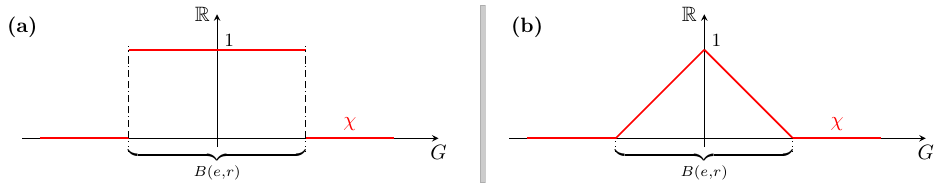}
    \captionsetup{width=0.95\linewidth}
    \caption{Different choices for the cut-off function $\chi_r$ supported on the ball $B(e,r)$ are illustrated. In panel~(a), $\chi_r$ is chosen as the characteristic function of the ball, while in panel~(b), it is given by a tent-shaped function.}
    \label{fig:CutOff}
\end{figure}

\begin{remark}
\label{rem:Choice-cut_Interplay-amenable}
In the final part of the proof (after the first estimate was established), we observe the interplay between the amenability of the group and the regularity of the kernel. The first summand becomes small when the radius is small, while the second term decreases as the radius grows. The smallness of the second term originates from the amenability property. This trade-off ultimately results in the characteristic square-root behavior in the spectral estimates.

Furthermore, we emphasize the importance of the specific choice of the cut-off function $\chi$, as illustrated by the two alternatives shown in Figure~\ref{fig:CutOff}. For the almost Mathieu operator, the characteristic function of a ball $B(e,r)$ was initially employed in~\cite{ChElYu90}, which led to a spectral estimate with a $\tfrac{1}{3}$-H\"older behavior. In contrast, the use of a tent function in~\cite{AvrMouSim90} resulted in a $\tfrac{1}{2}$-H\"older behavior, consistent with Theorem~\ref{thm:Lipschitz-Spectrum_app}. This exponent is in fact optimal for the almost Mathieu operator, as established in~\cite{BeRa90}; see also the discussion in~\cite{BecTak25}. In light of this, a tent-shaped cut off function is preferable than a characteristic function.
\end{remark}

Next we prove the analog of \cite[Thm.~4.7]{BecTak25} for general $N\in\N$.

\begin{theorem}\label{thm:LocalConst-Spectrum_app}
Let \( (Z, G, \tau) \) be a dynamical system where $G$ is an unimodular lcsc group with adapted metric $d_G$. 
For $N\in\N$, let \( k: G \times Z \to \C^{N\times N} \) be a bounded kernel with dynamically-defined operator $(A_x)_{x\in Z}$ such that $A_x$ is normal for every $x \in Z$. 
If furthermore
\begin{enumerate}[label=(\alph*)]
\item \label{item:Loc-const:a}
\( (Z, G, \tau) \) is Lipschitz continuous: there exists $c_\tau > 0$ such that
\[
d(gx, gy) \le (c_\tau d_G(e,g) + 1)\, d(x, y), \qquad g\in G,\, x,y\in Z.
\]
\item \label{item:Loc-const:b}
$G$ is strictly polynomially growing: there exist $\kappa \ge 0$ and $c_1 \ge c_0 > 0$ such that for each $r \ge 1$,
\[
c_0 r^\kappa \le m_G\big(B(r)\big) \le c_1 r^\kappa.
\]
\item \label{item:Loc-const:c}
$k$ is linearly decaying: there exists $c_s > 0$ such that 
\[
\sup_{x \in Z} \int_G  \sup_{g\in G} \big\| k\big(h,g^{-1}x\big) \big\|_M \, d_G(e,h)  \, dh \leq c_s.
\]
\item \label{item:Loc-const:d}
$k$ is locally constant: there exists \( c_k \in L^\infty_+(G) \) such that for \( m_G \)-a.e.\ \( h \in G \),
\[
d(x,y) < \frac{1}{c_k(h)} \quad \Rightarrow \quad k(h,x) = k(h,y).
\]
\end{enumerate}
Fix $X, Y \in \inv$. Then
\[
\delta_H(X, Y) \le \delta
\;\Longrightarrow\;
d_H\big(\sigma(A_X), \sigma(A_Y)\big) \le C\, \delta_H(X, Y),
\]
where
\[
\delta := \frac{1}{(\|c_k\|_\infty + 1)(c_\tau + 1)}
\quad \text{and} \quad
C := (\|c_k\|_\infty + 1)(c_\tau + 1) c_s \left( \frac{(2 + \kappa)^{2 + \kappa} c_1}{2 \kappa^\kappa c_0} \right)^{1/2}.
\]
\end{theorem}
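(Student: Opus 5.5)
The plan mirrors the proof of Theorem~\ref{thm:Lipschitz-Spectrum_app}: apply Lemma~\ref{lem:po(3)} with the tent-shaped cut-off
\[
\chi_r(g) := \tfrac{r-d_G(e,g)}{r}\,\mathbf{1}_{B(r)}(g).
\]
Its support is $B(r)$, and by \eqref{eq:cut-off_ball-estim_app} it satisfies $\|\chi_r^h-\chi_r\|_2/\|\chi_r\|_2\le \eta_r(h)$. The difference from the Lipschitz case is that local constancy lets the first term of Lemma~\ref{lem:po(3)} vanish entirely, instead of merely being small, provided $r$ is chosen proportional to $\delta_H(X,Y)^{-1}$. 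This removes the square-root trade-off and gives a linear bound.

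First I would fix $\rho>0$ and points $x\in X$, $y\in Y$ with $d(x,y)<\delta_H(X,Y)+\rho$. For $g\in\supp(\chi_r)\subseteq B(r)$ we have $d_G(e,g^{-1})=d_G(e,g)<r$ by left-invariance. Hence \ref{item:Loc-const:a} gives
\[
d(g^{-1}x,g^{-1}y)\le (c_\tau r+1)(\delta_H(X,Y)+\rho).
\]
I would then choose
\[
r:=\frac{1}{(\|c_k\|_\infty+1)(c_\tau+1)\,\delta_H(X,Y)}.
\]
The assumption $\delta_H(X,Y)\le\delta$ guarantees $r\ge1$, so \eqref{eq:cut-off_ball-estim_app} applies. (If $\delta_H(X,Y)=0$, then $X=Y$ and there is nothing to prove.) Using $r\ge1$ and $c_\tau r+1\le (c_\tau+1)r$, one gets
\[
(c_\tau r+1)\delta_H(X,Y)\le \frac{1}{\|c_k\|_\infty+1}<\frac{1}{c_k(h)}
\]
for a.e.\ $h$. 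Taking $\rho$ small enough keeps this inequality strict uniformly in $h$, because the bound $\|c_k\|_\infty$ is uniform. Then \ref{item:Loc-const:d} gives $k(h,g^{-1}y)=k(h,g^{-1}x)$ for a.e.\ $h$ and all $g\in B(r)$, so the first integral in Lemma~\ref{lem:po(3)} is zero.

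For the second term I would use $\eta_r(h)\le \frac{d_G(e,h)}{r}\big(\frac{(2+\kappa)^{2+\kappa}c_1}{2\kappa^\kappa c_0}\big)^{1/2}$ together with the linear decay \ref{item:Loc-const:c}. This bounds the term by
\[
\frac{c_s}{r}\Big(\frac{(2+\kappa)^{2+\kappa}c_1}{2\kappa^\kappa c_0}\Big)^{1/2}.
\]
Inserting the chosen $r$ yields exactly $C\,\delta_H(X,Y)$ with the stated constant. Letting $\rho\to0$ is harmless, since $r$ depends only on $\delta_H(X,Y)$.

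The main obstacle is the strictness bookkeeping in the local-constancy step. One needs the strict inequality $d<1/c_k(h)$ simultaneously for a.e.\ $h$ and all $g\in B(r)$, which requires $\rho$ small relative to the gap between $1/(\|c_k\|_\infty+1)$ and $1/\|c_k\|_\infty$, with the ``$+1$'' supplying the slack. The point $h$ where $c_k(h)=0$ is unproblematic, since the condition then reads $d<\infty$. A minor point is that $\kappa=0$ is allowed here, so the constant should be read with the convention $0^0=1$, as in \cite{BecTak25}.
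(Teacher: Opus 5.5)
Your proof is correct and follows the paper's argument. It applies Lemma~\ref{lem:po(3)} with the tent cut-off $\chi_r$ supported on $B(r)$, uses the Lipschitz action together with local constancy to make the first term vanish, and bounds the second term by $\tfrac{c_s}{r}\big(\tfrac{(2+\kappa)^{2+\kappa}c_1}{2\kappa^\kappa c_0}\big)^{1/2}$ via linear decay. The differences are cosmetic: the paper takes $r=\frac{1}{(\|c_k\|_\infty+1)\delta_H(X,Y)c_\tau}-\frac{1}{c_\tau}$ and the explicit $\rho=\delta_H(X,Y)/\|c_k\|_\infty$, so that $(c_\tau r+1)(\delta_H(X,Y)+\rho)=1/\|c_k\|_\infty$ exactly, whereas your slightly smaller $r$ yields the constant $C$ with equality and your slack-based choice of $\rho$ also covers the degenerate cases $\delta_H(X,Y)=0$ and $\|c_k\|_\infty=0$, which the paper passes over.
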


\begin{proof}
Let $X,Y\in\inv$ be such that $\delta_H(X, Y) \le \delta$. Then the choice of $\delta$ yields
\[
\frac{1}{(\|c_k\|_\infty + 1)\, \delta\, c_\tau} - \frac{1}{c_\tau} = 1 
\qquad\text{and}\qquad
\frac{c_\tau}{1 - (\|c_k\|_\infty + 1)\, \delta_H(X, Y)} \le c_\tau + 1.
\]
With this at hand, define the radius
\[
r := \frac{1}{(\|c_k\|_\infty + 1)\, \delta_H(X, Y)\, c_\tau} - \frac{1}{c_\tau}
\]
satisfying
\begin{equation}
\label{eq:est-r-loc-const_app}
r\ge 1
\qquad\text{and}\qquad
\frac{1}{r} \le (\|c_k\|_\infty + 1)(c_\tau + 1)\, \delta_H(X, Y).
\end{equation}
Define the cut-off function 
\[
\chi_r\in C_c(G), \qquad \chi_r(g) := \left( \frac{r - d_G(e,g)}{r} \right) \cdot \mathbf{1}_{B(r)}(g).
\] 
Observe that $\chi_r$ is supported on the ball $B(r):=B(e,r)$.
Recall for $h\in G$ that $\chi_r^h\in C_c(G)$ denotes the functiong $\chi_r^h(g):= \chi_r(gh),\, g\in G$.
Since $G$ has strict polynomial growth, \cite[Lem.~4.5]{BecTak25} leads to the estimate for $h\in G$,
\begin{equation}
\label{eq:cut-off_ball-estim2_app}
\frac{\|\chi_r^h - \chi_r\|_2}{\|\chi_r\|_2} \leq
	\eta_r(h) 
	:= \min\left\{ \frac{d_G(e,h)}{r} \left( \frac{(2 + \kappa)^{2 + \kappa} c_1}{2 \kappa^\kappa c_0} \right)^{1/2}, \sqrt{2} \right\}
\end{equation}
Let \(\rho := \tfrac{\delta_H(X, Y)}{\|c_k\|_\infty}>0.\) Thus, if $d(x, y) < \delta_H(X, Y) + \rho$, then 
\[
\sup_{g\in \supp(\chi_r)} d(g^{-1} y, g^{-1} x)
	\le (c_\tau r + 1)\, d(x, y)
	< \frac{1}{\|c_k\|_\infty}
\]
follows since the group action is Lipschitz continuous \ref{item:Loc-const:a} and by the choice of $r$. Since $k$ is locally constant \ref{item:Loc-const:d}, we conclude
\[
d(x, y) < \delta_H(X, Y) + \rho
	\quad\Longrightarrow\quad
	\sup_{g\in \supp(\chi_r)}\|k(h, g^{-1} y) - k(h, g^{-1} x)\|_M = 0.
\]
Plugging this in Lemma~\ref{lem:po(3)}, we derive
\begin{align*}
d_H(\sigma(A_X), \sigma(A_Y))
	&\le \sup_{\substack{x \in X,\ y \in Y,\ z \in Z \\ d(x,y) < \delta_H(X, Y) + \rho}} 
		\int_G \left( N\cdot\sup_{g\in \supp(\chi_r)}\|k(h, g^{-1} y) - k(h, g^{-1} x)\|_M \right) \, dh\\
		&\quad\qquad + \int_G \left( \sup_{g\in G} \|k(h, g^{-1} z)\|_M \right) \cdot \frac{\|\chi_r^h - \chi_r\|_2}{\|\chi_r\|_2} \, dh\\
\left(\substack{\small \text{previous considerations,} \\ \text{eq.~\eqref{eq:cut-off_ball-estim2_app}}}\right)\quad
	&\le 0 + \sup_{z \in Z} \int \sup_{g\in G}\|k(h, g^{-1} z)\|_M \cdot \eta_r(h)\, dh\\
\left(\small \text{$k$ linearly decaying}\right) \quad
	&\le \frac{c_s}{r} \left( \frac{(2 + \kappa)^{2 + \kappa} c_1}{2 \kappa^\kappa c_0} \right)^{1/2}\\
\left(\small \text{eq.~\eqref{eq:est-r-loc-const_app}}\right) \quad
	&\le C \delta_H(X,Y). \qedhere
\end{align*}
\end{proof}

\section{Spectral estimates for the Lamplighter group}
\label{App:SpectralEstimates_Lampl}

As discussed in Section~\ref{sec:Lamplighter}, the proof strategy for strictly polynomially growing groups applies to general amenable groups. For illustration, we prove here spectral estimates for symbolic dynamical systems over the lamplighter group as stated in Proposition~\ref{prop:Lamplighter}.

First, recall the definition of the lamplighter group
\[
G := \left\{ (m,\gamma) \;\middle|\; m \in \Z,\; \gamma \in \bigoplus_{j\in\Z} \Z_2 \right\},
\]
where \( \Z_2 \) denotes the cyclic group of order two. The second component \( \gamma \) is a finitely supported map \( \gamma:\Z\to\Z_2 \), since we consider the direct sum.

The group multiplication is defined by
\[
(m,\gamma)(n,\eta) := \left(m+n,\, S^n(\gamma) + \eta\right),
\]
where \( S : \bigoplus_{j\in\Z} \Z_2 \to \bigoplus_{j\in\Z} \Z_2 \) is the shift operator given by \( S(\gamma)(j) := \gamma(j+1) \) for all \( j \in \Z \).

The inverse of an element in \( G \) is given by
\[
(m,\gamma)^{-1} = \left(-m,\, S^{-m}(\gamma)\right).
\]
The set
\[
S = \{t, at\} = \{(1,0),\, (1, \delta_1)\}
\]
generates the lamplighter group, where \( t = (1,0) \) and \( a = (0, \delta_0) \).
Here, \( \delta_j : \Z \to \Z_2 \) denotes the sequence that vanishes everywhere except at the \( j \)th position, where it is one.

A generating set \( S \) can be used to define the word metric on \( G \), namely, \( d_G(g,h) \) is given by the minimal number \( m \in \N_0 \) of elements \( s_j \in S \cup S^{-1} \) such that
\[
g^{-1}h = s_1 \cdots s_m,
\]
with the convention that \( m = 0 \) if \( g^{-1}h = e \). Note that the word metric can also be interpreted as the shortest-path metric on the Cayley graph of \( G \) with respect to the generating set \( S \). It is straightforward to verify that every word metric is adapted on a discrete group \( G \).

In the following, we work with the word metric \( d_G \) defined by the generating set \( S = \{t, at\} \). This word metric is studied and computed in \cite[Sec.~2]{JoKe18}.

We now consider the symbolic dynamical system \( (\Aa^G, G, \tau) \) (see Example~\ref{ex:SymbDynSyst}) for the lamplighter group and a finite alphabet \( \Aa \). Recall from Section~\ref{subsec:symb_syst_as_weighted_Delone} that the metric \( d_G \) induces a metric \( \dAG \) on \( \Aa^G \). Moreover, \( \dAG \) induces the Hausdorff metric \( \delta_\Aa \) on the set \( \inv \) of invariant, closed, and non-empty subsets.

We now restate Proposition~\ref{prop:Lamplighter} and proceed with its proof.

\begin{proposition}
\label{prop:Lamplighter_app}
Let \( (\Aa^G,G,\tau) \) be a symbolic dynamical system for a finite alphabet $\Aa$ where $G$ is the lamplighter group equipped with the word metric $d_G$ defined by the generating set $S=\{t,at\}$.
For $N\in\N$, let \( k: G \times Z \to \C^{N\times N} \) be a bounded kernel with dynamically-defined operator $(A_x)_{x\in Z}$ such that $A_x$ is normal for every $x \in Z$. 
If furthermore
\begin{enumerate}[label=(\alph*)]
\item 
$k$ is decaying: there exists $c_s > 0$ such that 
\[
\sup_{x \in Z} \sum_{(j,\eta)\in G}  \sup_{g\in G} \big\| k\big((j,\eta),g^{-1}x\big)\|_M \, |j| \leq c_s.
\]
\item 
$k$ is locally constant: there exists \( 0< c_k \in \ell^\infty(G) \) such that for all \( h \in G \),
\[
d(x,y) < \frac{1}{c_k(h)} \quad \Rightarrow \quad k(h,x) = k(h,y).
\]
\end{enumerate}
Then there exists a constant $C>0$ such that
\[
d_H\big(\sigma(A_X), \sigma(A_Y)\big) \le C\, \delta_H(X, Y), \qquad X,Y\in\inv.
\]
\end{proposition}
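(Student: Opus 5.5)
The plan is to reproduce the proof of Theorem~\ref{thm:LocalConst-Spectrum_app} almost verbatim, but with the tent function on balls replaced by a cut-off function built from a F\o lner sequence of the lamplighter group. Lemma~\ref{lem:po(3)} holds for any unimodular lcsc group; the lamplighter group is discrete and hence unimodular. So for every bounded $0\neq\chi\in\ell^\infty(G)\cap\ell^2(G)$ and $\rho>0$ we get
\[
d_H(\sigma(A_X),\sigma(A_Y)) \le \sup_{\substack{x\in X,\,y\in Y,\,z\in Z\\ d(x,y)<\delta_H(X,Y)+\rho}} \sum_{h\in G} N\sup_{g\in\supp(\chi)}\|k(h,g^{-1}y)-k(h,g^{-1}x)\|_M + \sum_{h\in G}\sup_{g\in G}\|k(h,g^{-1}z)\|_M\,\frac{\|\chi^h-\chi\|_2}{\|\chi\|_2}.
\]
The task is to choose $\chi=\chi_r$ depending on a scale $r$ so that two conditions hold. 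First, $\supp(\chi_r)$ lies in a word-metric ball of radius comparable to $r$, so that the Lipschitz action $d_{\Aa}(g^{-1}x,g^{-1}y)\le (d_G(e,g)+1)d(x,y)$ from Proposition~\ref{prop:Conv_Symb-Dyn-Syst}~(b) together with local constancy kills the first term once $r\sim 1/\delta_H(X,Y)$. Second, the ratio $\|\chi_r^h-\chi_r\|_2/\|\chi_r\|_2$ is bounded by $C'|j|/r$ for $h=(j,\eta)$. This is exactly the weight appearing in the decay assumption~(a).

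For the cut-off I would use the standard F\o lner sets $F_n=\{(m,\gamma): 0\le m\le n,\ \supp(\gamma)\subseteq[0,n]\}$ (up to the paper's shift conventions), following \cite{Butk01,Pog10_Dipl}. I would take $\chi_r$ to be a tent function in the $\Z$-coordinate only. Concretely, fix $n\approx r$, set $\chi_r(m,\gamma)=\varphi_r(m)$ for $\supp(\gamma)$ contained in a window of length about $r$ depending on $m$, and $\chi_r=0$ otherwise, where $\varphi_r$ is the one-dimensional tent of width $r$.

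The key computation is that right multiplication by $h=(j,\eta)$ changes the first coordinate by $j$. The second coordinate becomes $S^j(\gamma)+\eta$, and the lamp configuration $\eta$ should be absorbable because the window for the lamps is chosen compatibly. The one-dimensional tent estimate, i.e.\ \cite[Lem.~4.5]{BecTak25} with $\kappa=1$, then gives the bound $\lesssim |j|/r$. I would also use the trivial bound $\sqrt{2}$ as a fallback, so that any $h$ whose lamp part escapes the window is controlled by the decay assumption anyway.

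The \textbf{main obstacle} is precisely this ratio estimate. The lamps $\eta$ in $h$ need not be supported near the window, and then $\chi_r^h$ and $\chi_r$ may be nearly disjoint even when $j=0$. This would ruin a bound depending only on $|j|$.

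I would try first to define $\chi_r$ with lamp windows $[m-r,m+r]$ adapted to the cursor position. In that form the hypothesis in~(a), which only weights $|j|$, is the natural one, provided one checks how the condition $\supp(\eta)$ relates to $j$ under the specific convention $(m,\gamma)(n,\eta)=(m+n,S^n\gamma+\eta)$. If this fails, I would fall back to a weaker rate or absorb the lamp displacement into the supremum using the fact that finitely supported kernels dominate.

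Once the ratio estimate is in hand, I also need to check that $\supp(\chi_r)\subseteq B(e,c r)$ in the word metric of \cite{JoKe18}. This holds because visiting lamps in an interval of length about $r$ costs $O(r)$ moves. The choice $r\approx 1/((\|c_k\|_\infty+1)(c+1)\delta_H(X,Y))$ then finishes the argument exactly as in Theorem~\ref{thm:LocalConst-Spectrum_app}. For large $\delta_H(X,Y)$ the estimate is trivial, since spectra are uniformly bounded.
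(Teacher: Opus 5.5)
\textbf{Verdict: genuine gap.} Your overall scheme matches the paper's proof: Lemma~\ref{lem:po(3)} with a F\o lner-type cut-off at scale $r\sim\delta_H(X,Y)^{-1}$, local constancy to kill the first term, and the decay hypothesis to control the second. The step you flag as the main obstacle cannot be carried out, however, and your fallbacks do not close it.

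\textbf{Why the ratio bound fails.} Take any $0\neq\chi\in\ell^2(G)$ and $h_n:=(0,\delta_n)$. Then $\langle\chi^{h_n},\chi\rangle\to0$ as $|n|\to\infty$, so $\|\chi^{h_n}-\chi\|_2/\|\chi\|_2\to\sqrt2$, while $|j|=0$. Hence no bound of the form $C'|j|/r$ uniform in $\eta$ exists for any cut-off. Adapting the lamp window to the cursor does not help. For your windowed tent, and for the paper's $\mathbf 1_{F_\ell^{-1}}$ (whose lamp window $[-2\ell-m,2\ell-m]$ is already cursor-adapted), the supports of $\chi^{h_n}$ and $\chi$ are disjoint once $|n|>3r$.

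\textbf{Why the fallbacks fail.} Hypothesis~(a) is vacuous on the lamp subgroup $\{0\}\times\bigoplus_{i\in\Z}\Z_2$. So the part of the second term in Lemma~\ref{lem:po(3)} coming from far-away $\eta$ is only bounded by $\sqrt2$ times a tail of $\sum_h\sup_g\|k(h,g^{-1}x)\|_M$. That tail tends to $0$ at a kernel-dependent rate, not like $1/r$. ``Finitely supported kernels dominate'' is not implied by the hypotheses, and a ``weaker rate'' does not give the stated Lipschitz bound.

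\textbf{The paper's proof has the same gap.} It quotes $\sharp(F_\ell\Delta hF_\ell)/\sharp F_\ell\le|j|/(2\ell+1)$ ``for each $h$''. This fails for $h=(0,\delta_n)$ with $|n|>3\ell$, where the left side equals $2$. There is a second slip in \eqref{eq:Decay_CutOff_Lampl}. For $\chi_\ell=\mathbf 1_{F_\ell^{-1}}$ the quotient $\|\chi_\ell^h-\chi_\ell\|_2/\|\chi_\ell\|_2$ is the square root of that counting ratio, not the ratio itself. So the indicator cut-off only yields $O(\delta_H(X,Y)^{1/2})$. Your tent in the $\Z$-coordinate is exactly what removes this square-root loss.

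\textbf{A repair.} Assume linear decay in the word metric instead of (a):
$\sup_x\sum_{h\in G}\sup_g\|k(h,g^{-1}x)\|_M\,d_G(e,h)\le c_s$.
This holds automatically for finitely supported kernels such as the one in Example~\ref{ex:LL_Schroedinger}. Then:
\begin{itemize}
\item A word of length at most $r$ in $\{t,at\}^{\pm1}$ gives $h=(j,\eta)$ with $|j|\le r$ and $\supp\eta\subseteq[-r,r]$.
\item Under $(m,\gamma)(j,\eta)=(m+j,S^j\gamma+\eta)$, take $\chi_r(m,\gamma)=\varphi_r(m)\,\mathbf 1_{\{\supp\gamma\subseteq W(m)\}}$ with $W(m):=[-2r-m,2r-m]$ and $\varphi_r$ the tent of width $r$. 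This is the paper's cut-off with the indicator of $|m|\le r$ replaced by a tent.
\item Then $W(m+j)=W(m)-j$, and $W(m)\supseteq[-r,r]$ whenever $\varphi_r(m)\neq0$.
\item Hence, for $d_G(e,h)\le r$, one gets $\chi_r^h(m,\gamma)=\varphi_r(m+j)\,\mathbf 1_{\{\supp\gamma\subseteq W(m)\}}$.
\item So the quotient equals the one-dimensional tent quotient, which is $\lesssim |j|/r\le d_G(e,h)/r$.
\item For $d_G(e,h)>r$, use $\sqrt2\le\sqrt2\,d_G(e,h)/r$.
\item Since $\supp\chi_r\subseteq B(e,Cr)$, the rest is verbatim the proof of Theorem~\ref{thm:LocalConst-Spectrum_app}.
\end{itemize}
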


\begin{proof}
Since $G$ is discrete, $G$ itself is a Delone set of $G$ (for suitable parameters) and so the symbolic dynamical system \( (\Aa^G,G,\tau) \) is Lipschitz continuous by Proposition~\ref{prop:Conv_Symb-Dyn-Syst}~(b). Thus, there exists \( c_\tau > 0 \) such that
\[
\dAG(gx, gy) \le (c_\tau d_G(e,g) + 1)\, \dAG(x, y),
\qquad g \in G,\ x, y \in Z.
\]
We note at this point that the metric \( \dAG \) on \( \Aa^G \) depends on the choice of the metric \( d_G \), since the latter determines the shape of balls in \( G \); see Section~\ref{subsec:symb_syst_as_weighted_Delone}.

Following~\cite[Example~1.50]{Butk01} (see also \cite[Ex.~3.7]{Pog10_Dipl}), define for each $\ell \in \N$ the finite sets
\[
F_\ell := \left\{ (m,\gamma) \;\middle|\; |m| \leq \ell,\; \gamma(j) = 0 \text{ for all } |j| > 2 \ell \right\}.
\]
For two sets $A,B \subseteq G$, denote their symmetric difference by $A \,\Delta\, B := (A \setminus B) \cup (B \setminus A)$.

According to~\cite[Example~1.50]{Butk01}, for each $h = (j,\eta) \in G$, the following estimate holds:
\[
\frac{\sharp(F_\ell \,\Delta\, h F_\ell)}{\sharp F_\ell} \leq \frac{|j|}{2\ell+1},
\]
which tends to zero as $\ell \to \infty$. Hence, the sequence $(F_\ell)_{\ell \in \N}$ forms a (left-)F\o lner sequence.

Define the indicator function $\chi_\ell : G \to \{0,1\}$ by\footnote{Note that $(F_\ell)_{\ell \in \N}$ defines a left-F\o lner sequence (action from the left). Then its inverse $(F_\ell^{-1})_{\ell \in \N}$ defines a right-F\o lner sequence. Since we need a cut-off function where we act from the right, $\chi_\ell$ is the characteristic function of $F_\ell^{-1}$.}
\[
\chi_\ell(g) :=
\begin{cases}
1, & \text{if } g \in F_\ell^{-1}, \\
0, & \text{otherwise}.
\end{cases}
\]
Define the shifted function
\[
\chi_\ell^h : G \to [0,\infty), \qquad \chi_\ell^h(g) := \chi_\ell(gh).
\]
Using the previous estimate, we conclude that for any $h = (j,\eta) \in G$,
\begin{equation}
\label{eq:Decay_CutOff_Lampl}
\frac{\| \chi_\ell^h - \chi_\ell \|_2}{\|\chi_\ell \|_2} 
	\leq \frac{\sharp(F_\ell \,\Delta\, h^{-1} F_\ell)^{-1}}{\sharp F_\ell^{-1}} 
	= \frac{\sharp(F_\ell \,\Delta\, h^{-1} F_\ell)}{\sharp F_\ell} 
	\leq \frac{|j|}{2\ell+1},
\end{equation}
where we used that the cardinality of an inverse of a set equals the cardinality of the set.

Since \( d_G \) is the word metric defined by the generating set \( S = \{t, at\} \) of \( G \), \cite[Lem.~2.2]{JoKe18} allows us to estimate \( d_G(e,h) \) for \( h := (j,\eta) \in F_\ell \). Specifically, there exists a constant \( C_F > 1 \) such that
\[
\sup_{h\in F_\ell} d_G(e,h)
	\leq C_F \ell
\]
Choose 
\[
\delta := \frac{1}{(\|c_k\|_\infty + 1)(c_\tau \, C_F + 1)}>0.
\]
Let $X,Y\in\inv$ be such that $\delta_H(X, Y) \le \delta$. Then the choice of $\delta$ yields
\[
\frac{1}{(\|c_k\|_\infty + 1)\, \delta\, c_\tau \, C_F} - \frac{1}{c_\tau \, C_F} = 1 
\qquad\text{and}\qquad
\frac{c_\tau \, C_F}{1 - (\|c_k\|_\infty + 1)\, \delta_H(X, Y)} \le c_\tau \, C_F + 1.
\]
Define the $\ell_0\in\N$ as follows
\[
\ell_0 := \left\lfloor\frac{1}{(\|c_k\|_\infty + 1)\, \delta_H(X, Y)\, c_\tau \, C_F} - \frac{1}{c_\tau \, C_F}\right\rfloor
\]
where $\lfloor z \rfloor :=\max \set{n\in\N}{n\leq z}$. Then the previous considerations imply
\begin{equation}
\label{eq:est-r-loc-const_lampl}
\ell_0\ge 1
\qquad\text{and}\qquad
\frac{1}{2\ell_0} \le \frac{1}{\ell_0+1} \le (\|c_k\|_\infty + 1)(c_\tau \, C_F + 1)\, \delta_H(X, Y).
\end{equation}
Let \(\rho := \tfrac{\delta_H(X, Y)}{\|c_k\|_\infty}>0.\) Thus, if $d(x, y) < \delta_H(X, Y) + \rho$, then 
\begin{align*}
\sup_{h\in \supp(\chi_{\ell_0})} d(h^{-1} y, h^{-1} x)
	\le &\sup_{h\in F_{\ell_0}} (c_\tau \ d_G(e,h) + 1)\, d(x, y)\\
	\le &\ (c_\tau \ C_F \ \ell_0 + 1)\, d(x, y)\\
(\text{\small choice of $\ell_0$})\qquad	< &\frac{1}{(\|c_k\|_\infty + 1)\, \delta_H(X, Y)} \big(\delta_H(X, Y)+\rho\big)\\
	= &\frac{1}{\|c_k\|_\infty}
\end{align*}
follows since the group action is Lipschitz continuous \ref{item:Loc-const:a}. Since $k$ is locally constant (b), we conclude
\[
d(x, y) < \delta_H(X, Y) + \rho
	\quad\Longrightarrow\quad
	\sup_{g\in \supp(\chi_{\ell_0})}\|k(h, g^{-1} y) - k(h, g^{-1} x)\|_M = 0.
\]
Plugging this in Lemma~\ref{lem:po(3)}, we derive
\begin{align*}
d_H(\sigma(A_X), \sigma(A_Y))
	&\le \sup_{\substack{x \in X,\ y \in Y,\ z \in Z \\ d(x,y) < \delta_H(X, Y) + \rho}} 
		\sum_{h\in G} \left( N\cdot\sup_{g\in \supp(\chi_{\ell_0})}\|k(h, g^{-1} y) - k(h, g^{-1} x)\|_M \right)\\
		&\quad\qquad + \sum_{h\in G} \left( \sup_{g\in G} \|k(h, g^{-1} z)\|_M \right) \cdot \frac{\|\chi_{\ell_0}^h - \chi_{\ell_0}\|_2}{\|\chi_{\ell_0}\|_2}\\
\left(\substack{\small \text{previous considerations,} \\ \text{eq.~\eqref{eq:Decay_CutOff_Lampl} }}\right)\quad
	&\le 0 + \sup_{z \in Z} \sum_{(j,\eta)\in G} \sup_{g\in G}\|k((j,\eta), g^{-1} z)\|_M \, \frac{|j|}{2\ell_0+1} \\
\left(\small \text{$k$ decaying}\right) \quad
	&\le \frac{1}{2\ell_0} c_s\\
\left(\small \text{eq.~\eqref{eq:est-r-loc-const_lampl} }\right) \quad
	&\le (\|c_k\|_\infty + 1)(c_\tau + 1)\, c_s\, \delta_H(X, Y).
\end{align*}
The previous estimate holds under the assumption that $\delta_H(X, Y) \le \delta$. If $\delta_H(X, Y) \ge \delta$, one gets
\[
d_H(\sigma(A_X), \sigma(A_Y))
	\le 2\sup_{x\in Z} \|A_x\|
	\le 2\sup_{x\in Z} \|A_x\|\, \tfrac{1}{\delta}\, \delta_H(X,Y),
\]
finishing the proof.
\end{proof}

We note that, in the latter proof, the cut-off function \( \chi_\ell \) was chosen to be the characteristic function on the F\o lner set \( F_\ell^{-1} \). The estimates might improve by choosing a tent function instead, particularly in the case of Lipschitz continuous kernels; see the discussion in Remark~\ref{rem:Choice-cut_Interplay-amenable}.